\def\ie{{\it i.e.\ }}
\def\eg{{\it e.g.\ }}
\def\cf{{\it cf.\ }}
\def\rhs{{\it r.h.s.\ }}
\def\lhs{{\it l.h.s.\ }}
\def\End{\mathop{{\rm End}}\nolimits}
\def\Hom{\mathop{{\rm Hom}}\nolimits}
\def\Ext{\mathop{{\rm Ext}}\nolimits}
\def\Tor{\mathop{{\rm Tor}}\nolimits}
\def\Ob{\mathop{{\rm Ob}}\nolimits}
\def\Tr{\mathop{{\rm Tr}}\nolimits}
\def\deg{ \mathop{{\rm deg}}\nolimits }
\def\p{^{\prime}}
\def\pp{^{\prime\prime}}
\def\del{ \partial }
\def\max{\mathop{{\rm max}}\nolimits}
\def\End{\mathop{{\rm End}}\nolimits}
\def\pr#1#2{ \noindent{\em Proof of #1~\ref{#2}.} }
\def\myqed{ \hfill $\Box$ }
\def\lrbc#1{ \left( #1 \right) }
\def\lrbs#1{ \left[ #1 \right] }
\def\corr#1{ \langle #1 \rangle }
\def\xmapta#1{ \mathop{{\longrightarrow}}^{#1} }
\def\xumap#1#2#3{ \begin{CD} #1 @>{#2}>> #3 \end{CD} }
\def\inbar{\vrule height1.5ex width.4pt depth0pt}
\def\IC{\relax\,\hbox{$\inbar\kern-.3em{\rm C}$}}
\def\IN{\relax{\rm I\kern-.18em N}}
\def\IQ{\relax\,\hbox{$\inbar\kern-.3em{\rm Q}$}}
\def\IR{\relax{\rm I\kern-.18em R}}
\def\ZZ{\relax{\sf Z\kern-.4em Z}}
\def\cA{{\cal A}}  \def\cC{{\cal C}} 
\def\cE{{\cal E}}
  \def\cH{{\cal H}} 
   \def\cM{{\cal M}}
 \def\cO{{\cal O}} \def\cP{{\cal P}} 
\newtheorem{theorem}{Theorem}[section]
\newtheorem{proposition}[theorem]{Proposition}
\newtheorem{corollary}[theorem]{Corollary}
\newtheorem{conjecture}[theorem]{Conjecture}
\newtheorem{lemma}[theorem]{Lemma}
\newtheorem{definition}[theorem]{Definition}
\newtheorem{remark}[theorem]{Remark}
\newif\if@fewtab\@fewtabtrue
\newif\if@fewtab\@fewtabtrue
\xdef\hourmin{\number\count255} \multiply\count255
\xdef\hourmin{\hourmin:\ifnum\count255<10 0\fi\the\count255}}
\def\ps@draft{\let\@mkboth\@gobbletwo
    \def\@oddhead{}
    \def\@oddfoot
      {\hbox to 7 cm{\footnotesize {\em Draft of \jobname:} \draftdate
       \hfil}\hskip -7cm\hfil\rm\thepage \hfil}
    \def\@evenhead{}\let\@evenfoot\@oddfoot}
\def\ceqno{\global\@fewtabfalse
    \ifcase\@eqcnt \def\@tempa{& & &}\or \def\@tempa{& &}
      \or \def\@tempa{&}
      \or\def\@tempa{}\fi\@tempa
{\rm(\theequation)}}
\def\aeqno#1{\global\@fewtabfalse
    \ifcase\@eqcnt \def\@tempa{& & &}\or \def\@tempa{& &}
      \or \def\@tempa{&}
      \or\def\@tempa{}\fi\@tempa
{\rm(\theequation,#1)}}
\def\label#1{\ifnum\draftcontrol=1
 \global\def\draftnote{$\scriptstyle #1$}\fi
 \@bsphack\if@filesw {\let\thepage\relax
   \def\protect{\noexpand\noexpand\noexpand}%
\xdef\@gtempa{\write\@auxout{\string
      \newlabel{#1}{{\@currentlabel}{\thepage}}}}}\@gtempa
   \if@nobreak \ifvmode\nobreak\fi\fi\fi
  \@esphack}
\def\alabel#1#2{\label{#1}\global\@fewtabfalse
    \ifcase\@eqcnt \def\@tempa{& & &}\or \def\@tempa{& &}
      \or \def\@tempa{&}
      \or\def\@tempa{}\fi\@tempa
{\hbox to 3cm{\phantom{\rm(\theequation,#2)} \draftnote
\hfil}\hskip -3cm {\rm(\theequation,#2)}}}
\def\clabel#1{\label{#1}\global\@fewtabfalse
    \ifcase\@eqcnt \def\@tempa{& & &}\or \def\@tempa{& &}
      \or \def\@tempa{&}
      \or\def\@tempa{}\fi\@tempa
{\hbox to 3cm{\phantom{\rm(\theequation)} \draftnote \hfil}\hskip
-3cm{\rm(\theequation)}}}
\def\eqnarray{\def\draftnote{{}}\global\@fewtabtrue
\stepcounter{equation}\let\@currentlabel=\theequation
\global\@eqnswtrue
\global\@eqcnt\z@\tabskip\@centering\let\\=\@eqncr
$$\halign to \displaywidth\bgroup\@eqnsel\hskip\@centering\@eqcnt\z@
  $\displaystyle\tabskip\z@{##}$&\global\@eqcnt\@ne
  \hskip 1\arraycolsep \hfil$\displaystyle{##}$\hfil
  &\global\@eqcnt\tw@ \hskip 1\arraycolsep
$\displaystyle\tabskip\z@{##}$ \hfil
\tabskip\@centering&\global\@eqcnt\thr@@\llap{##}\tabskip\z@ \cr}
\def\endeqnarray{\@@eqncr\egroup
      \global\advance\c@equation\m@ne$$\global\@ignoretrue}
\def\@eqnnum{\hbox to 3cm{\phantom{\rm(\theequation)} \draftnote
                         \hfil}\hskip -3cm {\rm(\theequation)}}
\def\@@eqncr{\let\@tempa\relax
    \ifcase\@eqcnt \def\@tempa{& & &}\or \def\@tempa{& &}
      \or \def\@tempa{&}
      \or\def\@tempa{}
\fi\@tempa \if@eqnsw \if@fewtab\@eqnnum\fi
\stepcounter{equation}\fi\global
\@eqnswtrue\global\@eqcnt\z@\global\@fewtabtrue\cr}
\def\draftcite#1{\ifnum\draftcontrol=1#1\else{}\fi}
\def\@lbibitem[#1]#2{\item{}\hskip -3cm \hbox to 2cm
{\hfil$\scriptstyle\draftcite{#2}$}\hskip
1cm[\@biblabel{#1}]\if@filesw
     {\def\protect##1{\string ##1\space}\immediate
      \write\@auxout{\string\bibcite{#2}{#1}}}\fi\ignorespaces}
\def\@bibitem#1{\item\hskip -3cm \hbox to 2cm
{\hfil $\scriptstyle\draftcite{#1}$}\hskip 1cm \if@filesw
\immediate\write\@auxout
       {\string\bibcite{#1}{\the\value{\@listctr}}}\fi\ignorespaces}
\def\draftdate{\number\month/\number\day/\number\year\ \ \ \hourmin }
 \global\def\draftcontrol{0}
\def\theequation{{\thesection.\arabic{equation}}}
\def\qq{\begin{eqnarray}}
\def\qqq{\end{eqnarray}}
\def\ee{\begin{eqnarray}}
\def\eee{\end{eqnarray}}
\def\rx#1{~(\ref{#1})}
\def\rxw#1{(\ref{#1})}
\def\ex#1{eq.\hspace*{-3pt}\rx{#1}}
\def\eex#1{eqs.\hspace*{-3pt}\rx{#1}}
\def\cx#1{~\cite{#1}}
\def\cxw#1{\cite{#1}}
\def\rw#1{~\ref{#1}}
\def\xlee#1{ \begin{eqnarray} \label{#1} }
\def\xeee{ \end{eqnarray} }
\def\ylee#1{ \begin{eqnarray}\nonumber }
\def\yeee{ \end{eqnarray} }
\def\zlee#1{ \begin{displaymath} }
\def\zleee{ \end{displaymath} }
\def\wlee#1{ $ }
\def\weee{ $ }
\newlength{\shiftwidth}
\def\shift#1{&&\hbox to \shiftwidth{\hfill $\displaystyle#1$}}
\newlength{\sshiftwidth}
\def\sshift#1{\lefteqn{\hbox to
\sshiftwidth{\hfill$\displaystyle#1$}}}
\def\qbezier{\bezier{120}}
\def\DottedCircle{
\bezier{4}(0.966,-0.259)(1.04,0)(0.966,0.259)
\bezier{4}(0.966,0.259)(0.897,0.518)(0.707,0.707)
\bezier{4}(0.707,0.707)(0.518,0.897)(0.259,0.966)
\bezier{4}(0.259,0.966)(0,1.04)(-0.259,0.966)
\bezier{4}(-0.259,0.966)(-0.518,0.897)(-0.707,0.707)
\bezier{4}(-0.707,0.707)(-0.897,0.518)(-0.966,0.259)
\bezier{4}(-0.966,0.259)(-1.04,0)(-0.966,-0.259)
\bezier{4}(-0.966,-0.259)(-0.897,-0.518)(-0.707,-0.707)
\bezier{4}(-0.707,-0.707)(-0.518,-0.897)(-0.259,-0.966)
\bezier{4}(-0.259,-0.966)(0,-1.04)(0.259,-0.966)
\bezier{4}(0.259,-0.966)(0.518,-0.897)(0.707,-0.707)
\bezier{4}(0.707,-0.707)(0.897,-0.518)(0.966,-0.259) }
\def\Endpoint[#1]{
\ifcase#1 \put(1,0){\circle*{0.15}}
\or\put(0.866,0.5){\circle*{0.15}}
\or\put(0.5,0.866){\circle*{0.15}} \or\put(0,1){\circle*{0.15}}
\or\put(-0.5,0.866){\circle*{0.15}}
\or\put(-0.866,0.5){\circle*{0.15}} \or\put(-1,0){\circle*{0.15}}
\or\put(-0.866,-0.5){\circle*{0.15}}
\or\put(-0.5,-0.866){\circle*{0.15}} \or\put(0,-1){\circle*{0.15}}
\or\put(0.5,-0.866){\circle*{0.15}}
\or\put(0.866,-0.5){\circle*{0.15}} \fi}
\def\Arc[#1]{
\thicklines         
\ifcase#1 \bezier{25}(0.966,-0.259)(1.04,0)(0.966,0.259) \or
\bezier{25}(0.966,0.259)(0.897,0.518)(0.707,0.707) \or
\bezier{25}(0.707,0.707)(0.518,0.897)(0.259,0.966) \or
\bezier{25}(0.259,0.966)(0,1.04)(-0.259,0.966) \or
\bezier{25}(-0.259,0.966)(-0.518,0.897)(-0.707,0.707) \or
\bezier{25}(-0.707,0.707)(-0.897,0.518)(-0.966,0.259) \or
\bezier{25}(-0.966,0.259)(-1.04,0)(-0.966,-0.259) \or
\bezier{25}(-0.966,-0.259)(-0.897,-0.518)(-0.707,-0.707) \or
\bezier{25}(-0.707,-0.707)(-0.518,-0.897)(-0.259,-0.966) \or
\bezier{25}(-0.259,-0.966)(0,-1.04)(0.259,-0.966) \or
\bezier{25}(0.259,-0.966)(0.518,-0.897)(0.707,-0.707) \or
\bezier{25}(0.707,-0.707)(0.897,-0.518)(0.966,-0.259) \fi}
\def\DottedArc[#1]{
\ifcase#1 \bezier{4}(0.966,-0.259)(1.04,0)(0.966,0.259) \or
\bezier{4}(0.966,0.259)(0.897,0.518)(0.707,0.707) \or
\bezier{4}(0.707,0.707)(0.518,0.897)(0.259,0.966) \or
\bezier{4}(0.259,0.966)(0,1.04)(-0.259,0.966) \or
\bezier{4}(-0.259,0.966)(-0.518,0.897)(-0.707,0.707) \or
\bezier{4}(-0.707,0.707)(-0.897,0.518)(-0.966,0.259) \or
\bezier{4}(-0.966,0.259)(-1.04,0)(-0.966,-0.259) \or
\bezier{4}(-0.966,-0.259)(-0.897,-0.518)(-0.707,-0.707) \or
\bezier{4}(-0.707,-0.707)(-0.518,-0.897)(-0.259,-0.966) \or
\bezier{4}(-0.259,-0.966)(0,-1.04)(0.259,-0.966) \or
\bezier{4}(0.259,-0.966)(0.518,-0.897)(0.707,-0.707) \or
\bezier{4}(0.707,-0.707)(0.897,-0.518)(0.966,-0.259) \fi}
\def\Chord[#1,#2]{
\thinlines \ifnum#1>#2\Chord[#2,#1] \else\ifnum#1<#2 \ifcase#1
\ifcase#2 \or\qbezier(1,0)(0.516,0.138)(0.866,0.5)
\or\qbezier(1,0)(0.45,0.26)(0.5,0.866)
\or\qbezier(1,0)(0.327,0.327)(0,1)
\or\qbezier(1,0)(0.179,0.311)(-0.5,0.866)
\or\qbezier(1,0)(0.0536,0.2)(-0.866,0.5) \or\put(1, 0){\line(-2,
0){2}} \or\qbezier(1,0)(0.0536,-0.2)(-0.866,-0.5)
\or\qbezier(1,0)(0.179,-0.311)(-0.5,-0.866)
\or\qbezier(1,0)(0.327,-0.327)(0,-1)
\or\qbezier(1,0)(0.45,-0.26)(0.5,-0.866)
\or\qbezier(1,0)(0.516,-0.138)(0.866,-0.5) \fi \or\ifcase#2\or
\or\qbezier(0.866,0.5)(0.378,0.378)(0.5,0.866)
\or\qbezier(0.866,0.5)(0.26,0.45)(0,1)
\or\qbezier(0.866,0.5)(0.12,0.446)(-0.5,0.866)
\or\qbezier(0.866,0.5)(0,0.359)(-0.866,0.5)
\or\qbezier(0.866,0.5)(-0.0536,0.2)(-1,0) \or\put(0.866,
0.5){\line(-5, -3){1.73}}
\or\qbezier(0.866,0.5)(0.146,-0.146)(-0.5,-0.866)
\or\qbezier(0.866,0.5)(0.311,-0.179)(0,-1)
\or\qbezier(0.866,0.5)(0.446,-0.12)(0.5,-0.866)
\or\qbezier(0.866,0.5)(0.52,0)(0.866,-0.5) \fi \or\ifcase#2\or\or
\or\qbezier(0.5,0.866)(0.138,0.516)(0,1)
\or\qbezier(0.5,0.866)(0,0.52)(-0.5,0.866)
\or\qbezier(0.5,0.866)(-0.12,0.446)(-0.866,0.5)
\or\qbezier(0.5,0.866)(-0.179,0.311)(-1,0)
\or\qbezier(0.5,0.866)(-0.146,0.146)(-0.866,-0.5) \or\put(0.5,
0.866){\line(-3, -5){1}} \or\qbezier(0.5,0.866)(0.2,-0.0536)(0,-1)
\or\qbezier(0.5,0.866)(0.359,0)(0.5,-0.866)
\or\qbezier(0.5,0.866)(0.446,0.12)(0.866,-0.5) \fi
\or\ifcase#2\or\or\or \or\qbezier(0,1.)(-0.138,0.516)(-0.5,0.866)
\or\qbezier(0,1.)(-0.26,0.45)(-0.866,0.5)
\or\qbezier(0,1.)(-0.327,0.327)(-1,0)
\or\qbezier(0,1.)(-0.311,0.179)(-0.866,-0.5)
\or\qbezier(0,1.)(-0.2,0.0536)(-0.5,-0.866) \or\put(0, 1){\line(0,
-2){2}} \or\qbezier(0,1.)(0.2,0.0536)(0.5,-0.866)
\or\qbezier(0,1.)(0.311,0.179)(0.866,-0.5) \fi
\or\ifcase#2\or\or\or\or
\or\qbezier(-0.5,0.866)(-0.378,0.378)(-0.866,0.5)
\or\qbezier(-0.5,0.866)(-0.45,0.26)(-1,0)
\or\qbezier(-0.5,0.866)(-0.446,0.12)(-0.866,-0.5)
\or\qbezier(-0.5,0.866)(-0.359,0)(-0.5,-0.866)
\or\qbezier(-0.5,0.866)(-0.2,-0.0536)(0,-1) \or\put(-0.5,
0.866){\line(3, -5){1}}
\or\qbezier(-0.5,0.866)(0.146,0.146)(0.866,-0.5) \fi
\or\ifcase#2\or\or\or\or\or
\or\qbezier(-0.866,0.5)(-0.516,0.138)(-1,0)
\or\qbezier(-0.866,0.5)(-0.52,0)(-0.866,-0.5)
\or\qbezier(-0.866,0.5)(-0.446,-0.12)(-0.5,-0.866)
\or\qbezier(-0.866,0.5)(-0.311,-0.179)(0,-1)
\or\qbezier(-0.866,0.5)(-0.146,-0.146)(0.5,-0.866) \or\put(-0.866,
0.5){\line(5, -3){1.73}} \fi \or\ifcase#2\or\or\or\or\or\or
\or\qbezier(-1,0)(-0.516,-0.138)(-0.866,-0.5)
\or\qbezier(-1,0)(-0.45,-0.26)(-0.5,-0.866)
\or\qbezier(-1,0)(-0.327,-0.327)(0,-1)
\or\qbezier(-1,0)(-0.179,-0.311)(0.5,-0.866)
\or\qbezier(-1,0)(-0.0536,-0.2)(0.866,-0.5) \fi
\or\ifcase#2\or\or\or\or\or\or\or
\or\qbezier(-0.866,-0.5)(-0.378,-0.378)(-0.5,-0.866)
\or\qbezier(-0.866,-0.5)(-0.26,-0.45)(0,-1)
\or\qbezier(-0.866,-0.5)(-0.12,-0.446)(0.5,-0.866)
\or\qbezier(-0.866,-0.5)(0,-0.359)(0.866,-0.5) \fi
\or\ifcase#2\or\or\or\or\or\or\or\or
\or\qbezier(-0.5,-0.866)(-0.138,-0.516)(0,-1)
\or\qbezier(-0.5,-0.866)(0,-0.52)(0.5,-0.866)
\or\qbezier(-0.5,-0.866)(0.12,-0.446)(0.866,-0.5) \fi
\or\ifcase#2\or\or\or\or\or\or\or\or\or
\or\qbezier(0,-1.)(0.138,-0.516)(0.5,-0.866)
\or\qbezier(0,-1.)(0.26,-0.45)(0.866,-0.5) \fi
\or\ifcase#2\or\or\or\or\or\or\or\or\or\or
\or\qbezier(0.5,-0.866)(0.378,-0.378)(0.866,-0.5) \fi\fi\fi\fi}
\def\FullChord[#1,#2]{
\Endpoint[#1] \Endpoint[#2] \Arc[#1] \Arc[#2] \Chord[#1,#2] }
\def\EndChord[#1,#2]{
\Endpoint[#1] \Endpoint[#2] \Chord[#1,#2] }
\def\Picture#1{
\begin{picture}(2,1)(-1,-0.167)
#1
\end{picture}
}
\def\DottedChordDiagram[#1,#2]{
\Picture{\DottedCircle \FullChord[#1,#2]} }
\def\ZZ{ \mathbb{Z} }
\def\IQ{ \mathbb{Q} }
\def\IC{ \mathbb{C} }
\def\IR{ \mathbb{R} }
\def\bfx{ \mathbf{x} }
\def\cA{ {\cal A} }
\def\cA{ \mathcal{A} }
\def\cC{ \mathcal{C} }
\def\cE{ \mathcal{E} }
\def\cH{ \mathcal{H} }
\def\hlf{ {1\over 2} }
\def\sltln{ \sum_{\xlam\in\rTLn } }
\def\sltl{ \sum_{\xlam\in\yTL } }
\def\lmii{ \lim_{i\rightarrow\infty} }
\def\xId{ \mathbbm{1} }
\def\JW{Jones-Wenzl}
\def\JWp{\JW\ projector}
\def\TLb{Temperley-Lieb}
\def\TLba{TL}
\def\TLa{\TLb\ algebra}
\def\cylb{cylindrical}
\def\cbr{\cylb\ braid}
\def\aptg{cap-tangle}
\def\uptg{cup-tangle}
\def\dstt{distinguished triangle}
\def\stI{ \mathbf{I} }
\def\stIp{ \stI\p }
\def\ngbr{negative braid}
\def\nels#1{ |#1| }
\def\nI{ \nels{\stI} }
\def\nIp{ \nels{\stIp} }
\def\nsplcd{negatively spliced}
\def\apdg{cap-degree}
\def\updg{cup-degree}
\def\qdgr{$q$-degree}
\def\TNG{Tng}
\def\FT{FT}
\def\TL{TL}
\def\symalg#1{ \accentset{\smalg}{#1} }
\def\rxv#1{ \mathrm{#1} }
\def\rTNG{ \rxv{\TNG} }
\def\rFT{ \rxv{\FT} }
\def\rTL{ \rxv{\TL} }
\def\xop{ \mathrm{op} }
\def\cTL{ \symalg{\rTL} }
\def\cTLop{ \cTL^{\xop} }
\def\cTLv#1{ \cTL_{#1} }
\def\cTLn{ \cTLv{n} }
\def\cTLm{ \cTLv{m} }
\def\cTLvv#1#2{ \cTL\xivv{#1}{#2} }
\def\cTLmn{ \cTLvv{m}{n} }
\def\cTLkm{ \cTLvv{k}{m} }
\def\cTLkn{ \cTLvv{k}{n} }
\def\cTLnn{ \cTLvv{n}{n} }
\def\dTL{ \symcat{\rTL} }
\def\dTLop{ \dTL^{\xop} }
\def\dTLv#1{ \dTL_{#1} }
\def\dTLn{ \dTLv{n} }
\def\dTLvv#1#2{ \dTL\xivv{#1}{#2} }
\def\dTLmn{ \dTLvv{m}{n} }
\def\dTLtn{ \dTLv{2n} }
\def\dTLtnz{ \dTLvv{2n}{0} }
\def\dTLztn{ \dTLvv{0}{2n} }
\def\dTLtmz{ \dTLvv{2m}{0} }
\def\dTLtmtn{ \dTLvv{2m}{2n} }
\def\Hmtnz{ \Hom_{\dTLtnz} }
\def\Hmmn{ \Hom_{\dTLmn} }
\def\Hmtmtn{ \Hom_{\dTLtmtn} }
\def\QcTL{ \mathrm{Q}\cTL }
\def\QcTLv#1{ \QcTL_{#1} }
\def\QcTLn{ \QcTLv{n} }
\def\QcTLvv#1#2{ \QcTL\xivv{#1}{#2} }
\def\QcTLmn{ \QcTLvv{m}{n} }
\def\cTLpinf{ \cTL^{+} }
\def\cTLminf{ \cTL^{-} }
\def\cTLminfop{ (\cTLminf)^{\xop} }
\def\symcat#1{ \accentset{\circ}{#1} }
\def\symbcat#1{ \lrbc{ #1 }^{\circ} }
\def\spsymbcat#1{ \spcc{\symbcat{#1}} }
\def\dTL{ \symcat{\rTL} }
\def\dTLp{ \dTL^+ }
\def\dTLnp{ \dTLp_n }
\def\xivv#1#2{_{#1,#2} }
\def\xiv#1{_#1}
\def\rTNGvv#1#2{ \rTNG\xivv{#1}{#2} }
\def\rTNGv#1{ \rTNG\xiv{#1} }
\def\rTNGmn{ \rTNGvv{m}{n} }
\def\rTNGn{ \rTNGv{n} }
\def\rTLvv#1#2{ \rTL\xivv{#1}{#2} }
\def\ttngvv#1#2{$(#1,#2)$-tangle}
\def\ttngmn{\ttngvv{m}{n}}
\def\ttngnm{\ttngvv{n}{m}}
\def\ttngnn{\ttngvv{n}{n}}
\def\ttngzz{\ttngvv{0}{0}}
\def\ttngztn{\ttngvv{0}{2n}}
\def\ttngztm{\ttngvv{0}{2m}}
\def\ttngtmtn{\ttngvv{2m}{2n}}
\def\ttngtltm{\ttngvv{2l}{2m}}
\def\Zqqi{ \ZZ[q,q^{-1}] }
\def\Zsqqi{ \ZZ[[q,q^{-1}] }
\def\Zsqiq{ \ZZ[[q^{-1},q] }
\def\Qq{ \mathrm{Q}(q) }
\def\op{ \mathrm{op} }
\def\acapni{ \acapvv{n}{i} }
\def\ccapni{ \ccapvv{n}{i} }
\def\xU{ U }
\def\xUv#1#2{ \xU_{#1,#2} }
\def\xUni{ \xUv{n}{i} }
\def\jwp{ P }
\def\bjwp{ \finv{\jwp} }
\def\jwph{ \hat{\jwp} }
\def\jwpv#1{ \jwp_{#1} }
\def\jwpn{ \jwpv{n} }
\def\jwpk{ \jwpv{k} }
\def\jwpnp{ \jwpn^+ }
\def\jwpnm{ \jwpn^- }
\def\bjwpxvv#1#2{ \bjwp_{#1,#2} }
\def\bjwpxnm{ \bjwpxvv{n}{m} }
\def\jwpxvv#1#2{ \jwp_{#1,#2} }
\def\jwphxvv#1#2{ \jwph_{#1,#2} }
\def\jwpxtntm{ \jwpxvv{2n}{2m} }
\def\jwpxnv#1{ \jwpxvv{n}{#1} }
\def\jwpxnm{ \jwpxnv{m} }
\def\jwpxnk{ \jwpxnv{k} }
\def\jwpxmk{ \jwpxvv{m}{k} }
\def\jwpxnmp{ \jwpxnv{m\p} }
\def\jwpxnn{ \jwpxnv{n} }
\def\jwpxtnv#1{ \jwpxvv{2n}{#1} }
\def\jwpxstm{ \jwpxvv{\ast}{m} }
\def\jwphxstz{ \jwphxvv{\ast}{0} }
\def\jwpxvz#1{ \jwpxvv{#1}{0} }
\def\jwpxtmz{ \jwpxvz{2m} }
\def\jwpxtnz{ \jwpxvz{2n} }
\def\jwpxtnz{ \jwpxtnv{0} }
\def\xtau{ \tau }
\def\xtaup{ \xtau\p }
\def\xlam{ \lambda }
\def\xlamp{ \xlam\p }
\def\ctau{ \symalg{\xtau} }
\def\clam{ \symalg{\xlam} }
\def\dtau{ \symcat{\xtau} }
\def\dlam{ \symcat{\xlam} }
\def\xlamv#1{ \xlam_{#1} }
\def\xlamo{ \xlamv{1} }
\def\xlamtw{ \xlamv{2} }
\def\dtaup{ \symcat{\xtaup} }
\def\dlam{ \symcat{\xlam} }
\def\dlamp{ \symcat{\xlamp} }
\def\Ktau{ \Ksymbim{\xtau} }
\def\Ktauo{ \Ksymbim{\xtauo} }
\def\Ktaut{ \Ksymbim{\xtaut} }
\def\Klam{ \zKsymbim{\xlam} }
\def\Kal{ \zKsymbim{\xal} }
\def\Kbet{ \zKsymbim{\xbet} }
\def\xnu{ \nu }
\def\xnuv#1{ \xnu_{#1} }
\def\xnum{ \xnuv{m} }
\def\xca{ a }
\def\xcav#1{ \xca_{#1} }
\def\xcal{ \xcav{\xlam} }
\def\xcalv#1{ \xcal(#1) }
\def\xcalt{ \xcalv{\xtau} }
\def\xcalw#1{ \xcav{\xlam,#1} }
\def\xcali{ \xcalw{i} }
\def\xcalj{ \xcalw{j} }
\def\xcaliv#1{ \xcali(#1) }
\def\xcalit{ \xcaliv{\xtau} }
\def\xL{ L }
\def\dL{ \symcat{\xL} }
\def\dLi{ \dLv{i} }
\def\dLimo{ \dLv{i-1} }
\def\aL{ \symalg{\xL} }
\def\dgo{ \deg_{\mathrm{h}} }
\def\dgt{ \deg_q }
\def\dgh{ \deg_{2} }
\def\tgrshv#1#2#3{ \lrbs{#2,#1,#3} }
\def\tgrsshv#1#2{ \lrbs{#2,#1} }
\def\tgrsshji{ \tgrsshv{j}{i} }
\def\qshv#1{ \lrbs{ #1 }_{q} }
\def\qshj{ \qshv{j} }
\def\qsho{ \qshv{1} }
\def\qshmo{ \qshv{-1} }
\def\qtshv#1{ \lrbs{#1}_{q,2} }
\def\tgrshteq#1{ \qtshv{#1} }
\def\tgrshnzn{ \tgrshteq{n} }
\def\tgrshmzm{ \tgrshteq{m} }
\def\tgrshmmn{ \tgrshteq{m-n} }
\def\tgrshminmn{ \tgrshteq{-m-n} }
\def\spshn{ \tgrsshv{n}{n-1} }
\def\spshmn{ \spshn^{2m} }
\def\ospshn{ \tgrsshv{-n}{1-n} }
\def\ospshmn{ \ospshn^{2m} }
\def\Kz{\mathrm{K}_0}
\def\Kzg{$\Kz$-group}
\def\Sc{ \mathrm{S} }
\def\Scv#1{ \Sc^{#1} }
\def\Sco{ \Scv{1} }
\def\btcyl{ \beta_{\mathrm{cyl}} }
\def\btcylv#1{ \beta_{\mathrm{cyl},#1} }
\def\btcyln{ \btcylv{n} }
\def\vhf{ \frac{1}{2} }
\def\vthf{ \tfrac{1}{2} }
\def\vthh{ \tfrac{3}{2} }
\def\qpqi{ q + q^{-1} }
\def\qvh{ q^{\vhf} }
\def\qvmh{ q^{-\vhf} }
\def\xmrf{f}
\def\yal{ \alpha }
\def\ybet{ \beta }
\def\Opqv#1{ O_{+}(q^{#1})}
\def\Opqm{ \Opqv{m} }
\def\Oh{ O^{\mathrm{h}} }
\def\Ohp{ \Oh_{+} }
\def\Ohpv#1{ \Ohp(#1) }
\def\Ohpm{ \Ohpv{\ym} }
\def\ym{ m }
\def\ymv#1{ \ym_{#1} }
\def\ymi{ \ymv{i} }
\def\Ohpmi{ \Ohpv{\ymi} }
\def\ctC{ \mathcal{C} }
\def\symfr{ \blacksquare }
\def\symfr{ \circ }
\def\Cone{ \mathop{\mathrm{Cone}} }
\def\Cnv#1{ \Cone(#1) }
\def\CnBv#1{ \Cone\Big(#1\Big) }
\def\Cnbf{ \Cnv{\xbf} }
\def\Cnbfi{ \Cnv{\xbfi} }
\def\Cnbtfz{ \Cnv{\xbtfz} }
\def\Cnchdlbtfz{ \Cnv{\chdlbtfz} }
\def\xmu{ \mu }
\def\xcv#1{ c_{#1} }
\def\cjilam{ \xcv{i,j,\xmu}^{\xlam} }
\def\cjmilam{ \xcv{-i,j,\xmu}^{\xlam} }
\def\qpb{$q^+$\nobreakdash-bounded}
\def\ctjw{ \mathbf{P} }
\def\ctjwv#1{ \ctjw_{#1} }
\def\ctjwn{ \ctjwv{n} }
\def\ctjwp{ \ctjw^{+} }
\def\ctjwm{ \ctjw^{-} }
\def\ctjwpv#1{ \ctjwp_{#1} }
\def\ctjwmv#1{ \ctjwm_{#1} }
\def\ctjwpn{ \ctjwpv{n} }
\def\Kctjwpn{ \Kz(\ctjwpn) }
\def\ctjwmn{ \ctjwmv{n} }
\def\xgd{$\xId$-balanced}
\def\xIdbv#1{ \xId_{#1} }
\def\xIdbn{ \xIdbv{n} }
\def\xsg{ \sigma }
\def\xsgv#1{ \xsg_{#1} }
\def\xsgi{ \xsgv{i} }
\def\xsgiv#1{ \xsgv{i_{#1}} }
\def\xsgikpo{ \xsgiv{k+1} }
\def\brb{ \beta }
\def\brbp{ \brb\p }
\def\cbrb{ \symcat{\brb} }
\def\cbrba{ \symcats{\brb} }
\def\cbrbpa{ \symcats{\brbp} }
\def\spcc#1{ #1_{\sharp} }
\def\cbrbs{ \spcc{\cbrb} }
\def\cbrbas{ \spcc{\cbrba} }
\def\cbrbpas{ \spcc{\cbrbpa} }
\def\cbrbps{ \cbrbs\p }
\def\xC{ C }
\def\xCv#1{ \xC_{#1} }
\def\xCo{ \xCv{1} }
\def\xCt{ \xCv{2} }
\def\xCi{ \xCv{i} }
\def\xCipo{ \xCv{i+1} }
\def\xbC{ \mathbf{C} }
\def\xbCv#1{ \xbC_{#1} }
\def\xbCo{ \xbCv{1} }
\def\xbCt{ \xbCv{2} }
\def\xbCvv#1#2{ \xbC_{#1,#2} }
\def\xbCvn#1{ \xbCvv{#1}{n} }
\def\xbCmn{ \xbCvn{m} }
\def\xbCon{ \xbCvn{1} }
\def\xbCn{ \xbCv{n} }
\def\qmd{`module'}
\def\qmds{`modules'}
\def\qcmd{chain \qmd}
\def\qcmds{chain \qmds}
\def\otbl{(1,2)-balanced}
\def\otblc{(1,2)-balance}
\def\dct{cut}
\def\dctv#1{#1-\dct}
\def\odct{\dctv{1}}
\def\crs{ \# }
\def\crsv#1{ \crs_{#1} }
\def\mrf{ f }
\def\mrf{ \mathbf{f} }
\def\mrfv#1{ \mrf_{#1} }
\def\mrfz{ \mrfv{0} }
\def\mrfo{ \mrfv{1} }
\def\mrfm{ \mrfv{m} }
\def\mrfmmo{ \mrfv{m-1} }
\def\mrfmo{ \mrfv{m+1} }
\def\dmrf{ \dsymv{\mrf} }
\def\dmrfv#1{ \dmrf_{#1} }
\def\dmrfz{ \dmrfv{0} }
\def\dmrfo{ \dmrfv{1} }
\def\dmrfm{ \dmrfv{m} }
\def\dmrfmmo{ \dmrfv{m-1} }
\def\dmrfmo{ \dmrfv{m+1} }
\def\hteqv{ \simeq_{\mathrm{h}} }
\def\cpd{ d }
\def\cpdv#1{ \cpd_{#1} }
\def\cpdlam{ \cpdv{\xlam} }
\def\alm{ a_{\xlam} }
\def\blm{ b_{\xlam} }
\def\xnot{ \xlam_\varnothing }
\def\cnot{ \symcat{\xlam}_\varnothing }
\def\scA{ \mathcal{A} }
\def\scAs{ \scA_{\sharp} }
\def\scAp{ \scA\p }
\def\scAp{ \scAs }
\def\xctA{ \mathsf{A} }
\def\xbA{ \mathbf{A} }
\def\xbAs{ \xbA_{\sharp} }
\def\xA{ A }
\def\xAv#1{ \xA_{#1} }
\def\xAi{ \xAv{i} }
\def\xAio{ \xAv{i+1} }
\def\xAimo{ \xAv{i-1} }
\def\xbAv#1{ \xbA_{#1} }
\def\xbAz{ \xbAv{0} }
\def\xbAo{ \xbAv{1} }
\def\xbAt{ \xbAv{2} }
\def\xbAi{ \xbAv{i} }
\def\xbAio{ \xbAv{i+1} }
\def\xbAp{ \xbA\p }
\def\xbApv#1{ \xbA_{\sharp,#1} }
\def\xbApz{ \xbApv{0} }
\def\xbApo{ \xbApv{1} }
\def\xbApt{ \xbApv{2} }
\def\xbApi{ \xbApv{i} }
\def\xbApio{ \xbApv{i+1} }
\def\xctB{ \mathcal{B} }
\def\xctBv#1{ \xctB_{#1} }
\def\xctBn{ \xctBv{n} }
\def\xctBnd{ \dsymv{\xctBn} }
\def\xctBmn{ \xctBv{m,n} }
\def\xbB{ \mathbf{B} }
\def\xB{ B }
\def\xBv#1{ \xB_{#1} }
\def\xBi{ \xBv{i} }
\def\xBio{ \xBv{i+1} }
\def\xd{ d }
\def\xdv#1{ \xd_{#1} }
\def\xdi{ \xdv{i} }
\def\xdio{ \xdv{i+1} }
\def\xdit{ \xdv{i+2} }
\def\xdimo{ \xdv{i-1} }
\def\xdp{ \xd\p }
\def\xdpv#1{ \xdp_{#1} }
\def\xdpi{ \xdpv{i} }
\def\xdpio{ \xdpv{i+1} }
\def\xdpit{ \xdpv{i+2} }
\def\xdpimo{ \xdpv{i-1} }
\def\xbf{ \mathbf{f} }
\def\xbfv#1{ \xbf_{#1} }
\def\xbfi{ \xbfv{i} }
\def\xbfz{ \xbfv{0} }
\def\xbfo{ \xbfv{1} }
\def\xbft{ \xbfv{2} }
\def\xbh{ \mathbf{h} }
\def\xbhv#1{ \xbh_{#1} }
\def\xbhi{ \xbhv{i} }
\def\xbd{ \mathbf{d} }
\def\xbdv#1{ \xbd_{#1} }
\def\xbdi{ \xbdv{i} }
\def\xbhf{ \hat{\xbf} }
\def\xbhfv#1{ \xbhf_{#1} }
\def\xbhfi{ \xbhfv{i} }
\def\xbhfimo{ \xbhfv{i-1} }
\def\xbhh{ \hat{\xbh} }
\def\xbhhv#1{ \xbhh_{#1} }
\def\xbhhi{ \xbhhv{i} }
\def\xbhhz{ \xbhhv{0} }
\def\xbhho{ \xbhhv{1} }
\def\xbch{ \check{\xbh} }
\def\xbchv#1{ \xbch_{#1} }
\def\xbchi{ \xbchv{i} }
\def\xbhg{ \hat{\ybg} }
\def\xbhgv#1{ \xbhg_{#1} }
\def\xbhgi{ \xbhgv{i} }
\def\xbhgimo{ \xbhgv{i-1} }
\def\xbfAB{ \xbfv{\xbA\xbB} }
\def\xbfBC{ \xbfv{\xbB\xbC} }
\def\xbfAC{ \xbfv{\xbA\xbC} }
\def\xbtf{ \tilde{\xbf} }
\def\xbtfv#1{ \xbtf_{#1} }
\def\xbtfi{ \xbtfv{i} }
\def\xbtfio{ \xbtfv{i+1} }
\def\xbtfz{ \xbtfv{0} }
\def\xbtfm{ \xbtfv{m} }
\def\xbtfp{ \tilde{\xbf\p} }
\def\xbtfpv#1{ \xbtfp_{#1} }
\def\xbtfpi{ \xbtfpv{i} }
\def\xf{ f }
\def\xfv#1{ \xf_{#1} }
\def\xfi{ \xfv{i} }
\def\yf{ f }
\def\yfv#1{ \yf_{#1} }
\def\yfi{ \yfv{i} }
\def\yfio{ \yfv{i+1} }
\def\yg{ g }
\def\ybg{ \mathbf{\yg} }
\def\ybgv#1{ \ybg_{#1} }
\def\ybgA{ \ybgv{\xbA} }
\def\ybgB{ \ybgv{\xbB} }
\def\ybgi{ \ybgv{i} }
\def\ybgio{ \ybgv{i+1} }
\def\ybgimo{ \ybgv{i-1} }
\def\ybgz{ \ybgv{0} }
\def\ybgo{ \ybgv{1} }
\def\ybgt{ \ybgv{2} }
\def\ybgh{ \ybgv{3} }
\def\ybtg{ \tilde{\ybg} }
\def\ybtgv#1{ \ybtg_{#1} }
\def\ybtgi{ \ybtgv{i} }
\def\ybtgio{ \ybtgv{i+1} }
\def\ybtgz{ \ybtgv{0} }
\def\yh{ h }
\def\ybh{ \mathbf{\yh} }
\def\ybhv#1{ \ybh_{#1} }
\def\ybhi{ \ybhv{i} }
\def\ybhz{ \ybhv{0} }
\def\ybho{ \ybhv{1} }
\def\ybht{ \ybhv{2} }
\def\ybhp{ \ybh\p }
\def\ybhpv#1{ \ybhp_{#1} }
\def\ybhpi{ \ybhpv{i} }
\def\ybhpz{ \ybhpv{0} }
\def\ybhpo{ \ybhpv{1} }
\def\ybhtp{ \tilde{\ybh}\p }
\def\ybhtpv#1{ \ybhtp_{#1} }
\def\ybhtpi{ \ybhtpv{i} }
\def\xCh{ \mathop{\mathbf{Ch}} }
\def\xChv#1{ \xCh(#1) }
\def\xChA{ \xChv{\xctA} }
\def\xKh{ \mathop{\mathbf{K}} }
\def\xKhv#1{ \xKh(#1) }
\def\xKhA{ \xKhv{\xctA} }
\def\isor{isomorphism order}
\def\ysiov#1{ \left|\, #1\, \right|_{\cong} }
\def\ysiorv#1{ |\, #1\, |_{\cong} }
\def\ysiobf{ \ysiov{\xbf} }
\def\ysiobfi{ \ysiov{\xbfi} }
\def\stblz{stabilizing}
\def\xtrn{ \mathbf{t} }
\def\xtrnvv#1#2{ \xtrn_{\leq #1} #2 }
\def\xtrniv#1{ \xtrnvv{i}{#1} }
\def\xtrnNv#1{ \xtrnvv{N}{#1} }
\def\chcpl{chain complex}
\def\chmp{chain morphism}
\def\chcpls{\chcpl es}
\def\trnc{truncation}
\def\trncv#1{$#1$-th \trnc}
\def\trnci{\trncv{i}}
\def\chdl{ \delta }
\def\chdlv#1{ \chdl_{#1} }
\def\chdlbf{ \chdlv{\xbf} }
\def\chdlbfi{ \chdlv{\xbfi} }
\def\chdlbfz{ \chdlv{\xbfz} }
\def\chdlbtfz{ \chdlv{\xbtfz} }
\def\chdlbtfm{ \chdlv{\xbtfm} }
\def\idl{ \iota }
\def\idlv#1{ \idl_{#1} }
\def\idlbf{ \idlv{\xbf} }
\def\idlbgi{ \idlv{\ybgi} }
\def\idlbgz{ \idlv{\ybgz} }
\def\idlbgo{ \idlv{\ybgo} }
\def\idlbhi{ \idlv{\ybhi} }
\def\Cch{Cauchy}
\def\xsyst{system}
\def\xdsyst{direct \xsyst}
\def\Csq{\Cch\ \xsyst}
\def\dlm{ \varinjlim }
\def\ilm{ \varprojlim }
\def\qord{$q$-order}
\def\xsupv#1{ \sup\left\{#1 \right\} }
\def\xinfv#1{ \inf\left\{#1 \right\} }
\def\yordhb#1{ \bigl| \;#1 \;\bigr|_{\mathrm{h}} }
\def\yordh#1{ \left|\, #1\, \right|_{\mathrm{h}} }
\def\yordhr#1{ |\, #1\, |_{\mathrm{h} } }
\def\yordch#1{ \yordh{\Cnv{#1}} }
\def\yordq#1{ \left|\, #1\, \right|_q }
\def\zordh#1{ | #1 |_{\mathrm{h}} }
\def\zordch#1{ \zordh{\Cnv{#1}} }
\def\chsq{\xdsyst}
\def\chlm{ \lim\nolimits_{\xCh} }
\def\cmtr#1#2#3#4#5#6{
\xymatrix{
#4 \ar[r]_-{#1} \ar@/^1pc/[rr]^-{#3} &
#5 \ar[r]_-{#2} &
#6
},\qquad #3\hteqv #2\,#1
}
\def\atcmv#1#2{ #1\,#2 + #2\,#1 }
\def\xIdv#1{ \xId_{#1} }
\def\yIdAi{ \xIdv{\xbAi} }
\def\yIdA{ \xIdv{\xbA} }
\def\tchlm{chain limit}
\def\mmp{multi-map}
\def\ybC{ \mathbf{C} }
\def\ybCv#1{ \ybC_{#1} }
\def\ybCz{ \ybCv{0} }
\def\ybCo{ \ybCv{1} }
\def\ybCt{ \ybCv{2} }
\def\ybCi{ \ybCv{i} }
\def\ybCimo{ \ybCv{i-1} }
\def\ycC{ \mathcal{C} }
\def\yctC{ \tilde{\ycC} }
\def\ybtC{ \tilde{\ybC} }
\def\ybtCv#1{ \ybtC_{#1} }
\def\ybtCz{ \ybtCv{0} }
\def\ybtCo{ \ybtCv{1} }
\def\ybtCi{ \ybtCv{i} }
\def\ybtCio{ \ybtCv{i+1} }
\def\mtcn{multi-cone}
\def\wbC{ \tilde{\xbC} }
\def\wbCvv#1#2{ \wbC_{#1,#2} }
\def\wbCnv#1{ \wbCvv{#1}{n} }
\def\wbCmn{ \wbCnv{m} }
\def\wbCzn{ \wbCnv{0} }
\def\wbCmnp{ \wbCmn\p }
\def\xrahv#1{ \xrightarrow{\;\;\;#1\;\;\;} }
\def\xratv#1{ \xrightarrow{\;\;#1\;\;} }
\def\xraov#1{ \xrightarrow{\;#1\;} }
\def\rxratv#1{ \xleftarrow{\;\;#1\;\;} }
\def\rxraov#1{ \xleftarrow{\;#1\;} }
\def\Kh{Khovanov homology}
\def\amap{ \symalg{-} }
\def\clckw{clockwise}
\def\cclckw{counterclockwise}
\def\dsym{ \vee }
\def\dsymv#1{ #1^{\dsym} }
\def\xtaud{ \dsymv{\xtau} }
\def\oltlmn{ \bigoplus_{\xlam\in\yTLmn } }
\def\sltln{ \sum_{\xlam\in\yTLn } }
\def\sabTLztn{ \sum_{\xal,\xbet\in\yTLztn} }
\def\lmii{ \lim_{i\rightarrow\infty} }
\def\lmmi{ \lim_{m\rightarrow\infty} }
\def\xId{ \mathbbm{1} }
\def\JW{Jones-Wenzl}
\def\JWp{\JW\ projector}
\def\TLb{Temperley-Lieb}
\def\TLba{TL}
\def\TLa{\TLb\ algebra}
\def\cylb{torus}
\def\cbr{\cylb\ braid}
\def\aptg{cap-tangle}
\def\uptg{cup-tangle}
\def\dstt{distinguished triangle}
\def\stI{ \mathbf{I} }
\def\stJ{ \mathbf{J} }
\def\stIp{ \stI\p }
\def\stJp{ \stJ\p }
\def\ngbr{negative braid}
\def\nels#1{ |#1| }
\def\nI{ \nels{\stI} }
\def\nIp{ \nels{\stIp} }
\def\nsplcd{negatively spliced}
\def\apdg{cap-degree}
\def\updg{cup-degree}
\def\qdgr{$q$-degree}
\def\TNG{Tng}
\def\FT{FT}
\def\TL{TL}
\def\apr{ \mathrm{pr} }
\def\prsymalg#1{ \symalg{#1}_{\apr} }
\def\rxv#1{ \mathit{#1} }
\def\rTNG{ \rxv{\TNG} }
\def\rFT{ \rxv{\FT} }
\def\rTL{ \rxv{\TL} }
\def\xop{ \mathrm{op} }
\def\cTL{ \symalg{\rTL} }
\def\cTLop{ \cTL^{\xop} }
\def\cTLv#1{ \cTL_{#1} }
\def\cTLopv#1{ \cTLop_{#1} }
\def\cTLz{ \cTLv{0} }
\def\cTLn{ \cTLv{n} }
\def\cTLm{ \cTLv{m} }
\def\cTLmop{ \cTLopv{m} }
\def\cTLvv#1#2{ \cTL\xivv{#1}{#2} }
\def\cTLmn{ \cTLvv{m}{n} }
\def\cTLnm{ \cTLvv{n}{m} }
\def\cTLtmtn{ \cTLvv{2m}{2n} }
\def\cTLprmn{ \cTLmn^{\mathrm{pr}} }
\def\cTLkm{ \cTLvv{k}{m} }
\def\cTLkn{ \cTLvv{k}{n} }
\def\cfQTL{ \fQ\cTL }
\def\cfQTLvv#1#2{ \cfQTL\xivv{#1}{#2} }
\def\cfQTLztn{ \cfQTLvv{0}{2n} }
\def\cfQTLtnz{ \cfQTLvv{2n}{0} }
\def\cfQTLtmz{ \cfQTLvv{2m}{0} }
\def\cfQTLtntn{ \cfQTLvv{2n}{2n} }
\def\cfQTLtmtn{ \cfQTLvv{2n}{2m} }
\def\ftct{ \mathrm{spl} }
\def\dTL{ \symcat{\rTL} }
\def\dTLtl{ \check{\dTL} }
\def\dTLtlv#1{ \dTLtl_{#1} }
\def\dTLtlztn{ \dTLtlv{0,2n} }
\def\dTLtltnz{ \dTLtlv{2n,0} }
\def\dTLtltmz{ \dTLtlv{2m,0} }
\def\dTLtltmtn{ \dTLtlv{2m,2n} }
\def\dTLtlmn{ \dTLtlv{m,n} }
\def\dTLtllm{ \dTLtlv{l,m} }
\def\dTLtlln{ \dTLtlv{l,n} }
\def\dTLop{ \dTL^{\xop} }
\def\dTLv#1{ \dTL_{#1} }
\def\dTLn{ \dTLv{n} }
\def\dTLmn{ \dTLv{m,n} }
\def\dTLnm{ \dTLv{n,m} }
\def\dTLtn{ \dTLv{2n} }
\def\dTLct{ \dTL^{\ftct} }
\def\dTLtlct{ \dTLtl^{\ftct} }
\def\dTLtlctv#1{ \dTLtlct_{#1} }
\def\dTLtlctvv#1#2{ \dTLtlctv{#1,#2} }
\def\dTLtlcttmtn{ \dTLtlctvv{2m}{2n} }
\def\dTLctvv#1#2{ \dTLct_{#1,#2} }
\def\dTLcttmtn{ \dTLctvv{2m}{2n} }
\def\dTLz{ \dTLv{0} }
\def\dTLvv#1#2{ \dTL\xivv{#1}{#2} }
\def\dTLmn{ \dTLvv{m}{n} }
\def\dTLztn{ \dTLvv{0}{2n} }
\def\dTLtntn{ \dTLvv{2n}{2n} }
\def\dTLcp{ \dTL^{\ftct,-} }
\def\dTLcpv#1{ \dTLcp_{#1} }
\def\dTLcpvv#1#2{ \dTLcpv{#1,#2} }
\def\dTLcptntm{ \dTLcpvv{2n}{2m} }
\def\dTLcptn{ \dTLcpv{2n} }
\def\dTLcq{ \dTL^{\ftct,+} }
\def\dTLcqv#1{ \dTLcq_{#1} }
\def\dTLcqtn{ \dTLcqv{2n} }
\def\QcTL{ \mathrm{Q}\cTL }
\def\QcTLv#1{ \QcTL_{#1} }
\def\QcTLn{ \QcTLv{n} }
\def\QcTLvv#1#2{ \QcTL\xivv{#1}{#2} }
\def\QcTLmn{ \QcTLvv{m}{n} }
\def\QcTLprv#1#2{ \QcTL_{#1\,|#2} }
\def\QcTLprnm{ \QcTLprv{n}{m} }
\def\QcTLprnmp{ \QcTLprv{n}{m\p} }
\def\cTLpinf{ \cTL^{+} }
\def\cTLminf{ \cTL^{-} }
\def\cTLminfop{ (\cTLminf)^{\xop} }
\def\symbcat#1{ \lrbc{ #1 }^{\circ} }
\def\spsymbcat#1{ \spcc{\symbcat{#1}} }
\def\dTL{ \symcat{\rTL} }
\def\dTLnp{ \dTLp_n }
\def\dTLp{ \dTL^- }
\def\dTLpq{ \dTL^{-/+} }
\def\dTLcpq{ \dTL^{\ftct,-/+} }
\def\dTLcp{ \dTL^{\ftct,-} }
\def\dTLpv#1{ \dTLp_{#1} }
\def\dTLptn{ \dTLpv{2n} }
\def\dTLpqvv#1#2{ \dTLpq_{#1,#2} }
\def\dTLpqv#1{ \dTLpq_{#1} }
\def\dTLpqtmtn{ \dTLpqvv{2m}{2n} }
\def\dTLpqtn{ \dTLpqv{2n} }
\def\dTLcpvv#1#2{ \dTLcp_{#1,#2} }
\def\dTLcpv#1{ \dTLcp_{#1} }
\def\dTLcptn{ \dTLcpv{2n} }
\def\dTLcpqvv#1#2{ \dTLcpq_{#1,#2} }
\def\dTLcpqv#1{ \dTLcpq_{#1} }
\def\dTLcpqtmtn{ \dTLcpqvv{2m}{2n} }
\def\dTLcpqtntn{ \dTLcpqvv{2n}{2n} }
\def\dTLcpqtn{ \dTLcpqv{2n} }
\def\xivv#1#2{_{#1,#2} }
\def\xiv#1{_{#1}}
\def\rTNGvv#1#2{ \rTNG\xivv{#1}{#2} }
\def\rTNGv#1{ \rTNG\xiv{#1} }
\def\rTNGmn{ \rTNGvv{m}{n} }
\def\rTNGn{ \rTNGv{n} }
\def\rTLvv#1#2{ \rTL\xivv{#1}{#2} }
\def\ttngvv#1#2{$(#1,#2)$-tangle}
\def\ttngmn{\ttngvv{m}{n}}
\def\ttngtmtn{\ttngvv{2m}{2n}}
\def\ttngnm{\ttngvv{n}{m}}
\def\ttngtntm{\ttngvv{2n}{2m}}
\def\ttngnn{\ttngvv{n}{n}}
\def\ttngmm{\ttngvv{m}{m}}
\def\ttngzz{\ttngvv{0}{0}}
\def\ttngztm{\ttngvv{0}{2m}}
\def\ttngtt{\ttngvv{2}{2}}
\def\Zqqi{ \ZZ[q,q^{-1}] }
\def\Zsqqi{ \IQ[[q,q^{-1}] }
\def\ZZsqqi{ \ZZ[[q,q^{-1}] }
\def\Zsqiq{ \ZZ[[q^{-1},q] }
\def\fQ{ Q }
\def\fQ{ \mathbb{Q} }
\def\Qq{ \mathrm{\fQ}(q) }
\def\op{ \mathrm{op} }
\def\acapni{ \acapvv{n}{i} }
\def\ccapni{ \ccapvv{n}{i} }
\def\xU{ U }
\def\xUv#1#2{ \xU_{#1,#2} }
\def\xUni{ \xUv{n}{i} }
\def\jwp{ \mathrm{P} }
\def\jwpv#1{ \jwp_{#1} }
\def\jwpn{ \jwpv{n} }
\def\jwpm{ \jwpv{m} }
\def\jwpnp{ \jwpn^+ }
\def\jwpnm{ \jwpn^- }
\def\xtau{ T }
\def\xtaup{ \xtau\p }
\def\xlam{ G }
\def\xlamp{ \xlam\p }
\def\xlampp{ \xlam\pp }
\def\ctau{ \psymalg{\xtau} }
\def\clam{ \psymalg{\xlam} }
\def\prclam{ \prsymalg{\xlam} }
\def\dtau{ \symcat{\xtau} }
\def\dlam{ \symcat{\xlam} }
\def\dlamo{ \zsymcat{\xlamo} }
\def\dlamtw{ \zsymcat{\xlamtw} }
\def\Klam{ \Ksymbim{\xlam} }
\def\Klamp{ \Ksymbim{\xlamp} }
\def\udlam{ \symcat{\uxlam} }
\def\xnu{ \nu }
\def\xnuv#1{ \xnu_{#1} }
\def\xnum{ \xnuv{m} }
\def\xca{ a }
\def\xcav#1{ \xca_{#1} }
\def\xcal{ \xcav{\xlam} }
\def\xcalv#1{ \xcal(#1) }
\def\xcalt{ \xcalv{\xtau} }
\def\xcalw#1{ \xcav{\xlam,#1} }
\def\xcali{ \xcalw{i} }
\def\xcalj{ \xcalw{j} }
\def\xcaliv#1{ \xcali(#1) }
\def\xcalit{ \xcaliv{\xtau} }
\def\xcaljv#1{ \xcalj(#1) }
\def\xcaljt{ \xcaljv{\xtau} }
\def\xL{ L }
\def\dL{ \symcat{\xL} }
\def\dLi{ \dLv{i} }
\def\dLimo{ \dLv{i-1} }
\def\dgo{ \deg_{\mathrm{h}} }
\def\dgt{ \deg_q }
\def\dgh{ \deg_{2} }
\def\tgrshv#1#2#3{ \lrbs{#2,#1,#3} }
\def\tgrsshv#1#2{ \lrbs{#2,#1} }
\def\tgrsshji{ \tgrsshv{j}{i} }
\def\qshv#1{ \lrbs{ #1 }_{q} }
\def\qshj{ \qshv{j} }
\def\qsho{ \qshv{1} }
\def\qshmo{ \qshv{-1} }
\def\qshtn{ \qshv{2n} }
\def\hqshv#1{\left[#1\right]_{\mathrm{h},q} }
\def\spshn{ \tgrsshv{n}{n-1} }
\def\spshmn{ \spshn^{2m} }
\def\ospshn{ \tgrsshv{-n}{1-n} }
\def\ospshmn{ \ospshn^{2m} }
\def\Kz{\mathrm{K}_0}
\def\Kzg{$\Kz$-group}
\def\Kzp{ \Kz^+ }
\def\QKz{ \Kz^{Q} }
\def\Gz{ \mathrm{G} }
\def\Sc{ \mathrm{S} }
\def\Scv#1{ \Sc^{#1} }
\def\Sco{ \Scv{1} }
\def\btcyl{ \brb_{\mathrm{cyl}} }
\def\btcylv#1{ \brb_{\mathrm{cyl},#1} }
\def\btcyln{ \btcylv{n} }
\def\vhf{ \frac{1}{2} }
\def\vthf{ \tfrac{1}{2} }
\def\vthh{ \tfrac{3}{2} }
\def\qpqi{ q + q^{-1} }
\def\qvh{ q^{\vhf} }
\def\qvmh{ q^{-\vhf} }
\def\xmrf{f}
\def\yal{ \alpha }
\def\ybet{ \beta }
\def\Opqv#1{ O_{+}(q^{#1})}
\def\Opqm{ \Opqv{m} }
\def\Oh{ O^{\mathrm{h}} }
\def\Ohp{ \Oh_{-} }
\def\Ohpv#1{ \Ohp(#1) }
\def\Ohpm{ \Ohpv{\ym} }
\def\ym{ m }
\def\ymv#1{ \ym_{#1} }
\def\ymi{ \ymv{i} }
\def\Ohpmi{ \Ohpv{\ymi} }
\def\ctC{ \mathcal{C} }
\def\xctC{ \ctC }
\def\cctC{ \ctC }
\def\symfr{ \blacksquare }
\def\symfr{ \circ }
\def\Cone{ \mathop{\mathrm{Cone}} }
\def\Cnv#1{ \Cone(#1) }
\def\CnBv#1{ \Cone\Big(#1\Big) }
\def\Cnbf{ \Cnv{\xbf} }
\def\Cnbfi{ \Cnv{\xbfi} }
\def\Cnbtfz{ \Cnv{\xbtfz} }
\def\Cnchdlbtfz{ \Cnv{\chdlbtfz} }
\def\xmu{ \mu }
\def\xcv#1{ m_{#1} }
\def\xav#1{ m_{#1} }
\def\cjilam{ \xcv{i,j,\xmu}^{\xlam} }
\def\cjmilam{ \xcv{-i,j,\xmu}^{\xlam} }
\def\ajilam{ \xav{i,j,\xmu}^{\xlam} }
\def\ajmilam{ \xav{-i,j,\xmu}^{\xlam} }
\def\aija{ \xav{i,j}^{\inoa} }
\def\aja{ \xav{j}^{\inoa} }
\def\qpb{$q^+$\nobreakdash-bounded}
\def\ctjw{ \mathbf{P} }
\def\ctjwv#1{ \ctjw_{#1} }
\def\ctjwn{ \ctjwv{n} }
\def\ctjwp{ \ctjw^{-} }
\def\ctjwm{ \ctjw^{+} }
\def\ctjwpv#1{ \ctjwp_{#1} }
\def\ctjwmv#1{ \ctjwm_{#1} }
\def\ctjwpn{ \ctjwpv{n} }
\def\Kctjwpn{ \Kz(\ctjwpn) }
\def\ctjwmn{ \ctjwmv{n} }
\def\xgd{$\xId$-balanced}
\def\xIdbv#1{ \xId_{#1} }
\def\xIdbn{ \xIdbv{n} }
\def\xsg{ S }
\def\xsgv#1{ \xsg_{#1} }
\def\xsgi{ \xsgv{i} }
\def\xsgiv#1{ \xsgv{i_{#1}} }
\def\xsgikpo{ \xsgiv{k+1} }
\def\brb{ B }
\def\brbp{ \brb\p }
\def\cbrb{ \symcat{\brb} }
\def\cbrba{ \symcats{\brb} }
\def\cbrbpa{ \symcats{\brbp} }
\def\spcc#1{ #1^{\sharp} }
\def\Bspcc#1{ \spcc{ \xlrB{#1} } }
\def\cbrbs{ \spcc{\cbrb} }
\def\cbrbas{ \spcc{\cbrba} }
\def\cbrbpas{ \spcc{\cbrbpa} }
\def\cbrbps{ \cbrbs\p }
\def\xbAc{ \spcc{\xbA} }
\def\xC{ C }
\def\xCv#1{ \xC_{#1} }
\def\xCo{ \xCv{1} }
\def\xCt{ \xCv{2} }
\def\xCi{ \xCv{i} }
\def\xCipo{ \xCv{i+1} }
\def\xbC{ \mathbf{C} }
\def\xbCv#1{ \xbC_{#1} }
\def\xbCo{ \xbCv{1} }
\def\xbCt{ \xbCv{2} }
\def\xbCvv#1#2{ \xbC_{#1,#2} }
\def\xbCvn#1{ \xbCvv{#1}{n} }
\def\xbCmn{ \xbCvn{m} }
\def\xbCon{ \xbCvn{1} }
\def\xbCn{ \xbCv{n} }
\def\prbC{ \xbC }
\def\prbCs{ \spcc{\prbC } }
\def\prbCsn{ \prbCs_n }
\def\qmd{`module'}
\def\qmds{`modules'}
\def\qcmd{chain \qmd}
\def\qcmds{chain \qmds}
\def\otbl{angle-shaped}
\def\otblc{angle shape}
\def\rshp{ray shape}
\def\dct{cut}
\def\dctv#1{#1-\dct}
\def\odct{\dctv{1}}
\def\crs{ \# }
\def\crsv#1{ \crs_{#1} }
\def\mrf{ f }
\def\mrf{ \mathbf{f} }
\def\mrfv#1{ \mrf_{#1} }
\def\mrfz{ \mrfv{0} }
\def\mrfo{ \mrfv{1} }
\def\mrfm{ \mrfv{m} }
\def\mrfmmo{ \mrfv{m-1} }
\def\mrfmo{ \mrfv{m+1} }
\def\dmrf{ \dsymv{\mrf} }
\def\dmrfv#1{ \dmrf_{#1} }
\def\dmrfz{ \dmrfv{0} }
\def\dmrfo{ \dmrfv{1} }
\def\dmrfm{ \dmrfv{m} }
\def\dmrfmmo{ \dmrfv{m-1} }
\def\dmrfmo{ \dmrfv{m+1} }
\def\hteqv{ \sim }
\def\cpd{ d }
\def\cpdv#1{ \cpd_{#1} }
\def\cpdlam{ \cpdv{\xlam} }
\def\alm{ a_{\xlam} }
\def\blm{ b_{\xlam} }
\def\xnot{ \xlam_\varnothing }
\def\cnot{ \symcat{\xlam}_\varnothing }
\def\scA{ \mathcal{A} }
\def\scAs{ \scA_{\sharp} }
\def\scAp{ \scA\p }
\def\scAp{ \scAs }
\def\xctA{ \mathsf{A} }
\def\xbA{ \mathbf{A} }
\def\xbAs{ \xbA_{\sharp} }
\def\xA{ A }
\def\xAv#1{ \xA_{#1} }
\def\xAi{ \xAv{i} }
\def\xAz{ \xAv{0} }
\def\xAo{ \xAv{1} }
\def\xAmi{ \xAv{-i} }
\def\xAio{ \xAv{i+1} }
\def\xAimo{ \xAv{i-1} }
\def\xAmi{ \xAv{-i} }
\def\ina{ \xlam }
\def\inap{ \xlam\p }
\def\inapp{ \xlam\pp }
\def\inA{ \Lambda }
\def\inh{ h }
\def\xbAv#1{ \xbA_{#1} }
\def\xbAz{ \xbAv{0} }
\def\xbAo{ \xbAv{1} }
\def\xbAt{ \xbAv{2} }
\def\xbAi{ \xbAv{i} }
\def\xbAa{ \xbAv{\ina} }
\def\xbAap{ \xbAv{\inap} }
\def\xbAio{ \xbAv{i+1} }
\def\xbAp{ \xbA\p }
\def\xbApv#1{ \xbA_{\sharp,#1} }
\def\xbApz{ \xbApv{0} }
\def\xbApo{ \xbApv{1} }
\def\xbApt{ \xbApv{2} }
\def\xbApi{ \xbApv{i} }
\def\xbApio{ \xbApv{i+1} }
\def\xbAd{ (\xbA,\xbd) }
\def\xctB{ \mathcal{B} }
\def\xctBv#1{ \xctB_{#1} }
\def\xctBn{ \xctBv{n} }
\def\xctBnd{ \dsymv{\xctBn} }
\def\xctBmn{ \xctBv{m,n} }
\def\xbB{ \mathbf{B} }
\def\xB{ B }
\def\xBv#1{ \xB_{#1} }
\def\xBi{ \xBv{i} }
\def\xBio{ \xBv{i+1} }
\def\xd{ d }
\def\xdv#1{ \xd_{#1} }
\def\xdi{ \xdv{i} }
\def\xdio{ \xdv{i+1} }
\def\xdit{ \xdv{i+2} }
\def\xdimo{ \xdv{i-1} }
\def\xdp{ \xd\p }
\def\xdpv#1{ \xdp_{#1} }
\def\xdpi{ \xdpv{i} }
\def\xdpio{ \xdpv{i+1} }
\def\xdpit{ \xdpv{i+2} }
\def\xdpimo{ \xdpv{i-1} }
\def\xbf{ \mathbf{f} }
\def\xbfv#1{ \xbf_{#1} }
\def\xbfi{ \xbfv{i} }
\def\xbfz{ \xbfv{0} }
\def\xbfo{ \xbfv{1} }
\def\xbft{ \xbfv{2} }
\def\xbfvv#1#2{ \xbfv{#1,#2} }
\def\xbfaap{ \xbfv{\ina,\inap} }
\def\xbd{ \mathbf{d} }
\def\xbdv#1{ \xbd_{#1} }
\def\xbdi{ \xbdv{i} }
\def\xbh{ \mathbf{h} }
\def\xbhv#1{ \xbh_{#1} }
\def\xbhi{ \xbhv{i} }
\def\xbd{ \mathbf{d} }
\def\xbdv#1{ \xbd_{#1} }
\def\xbdi{ \xbdv{i} }
\def\xbhf{ \hat{\xbf} }
\def\xbhfv#1{ \xbhf_{#1} }
\def\xbhfi{ \xbhfv{i} }
\def\xbhfimo{ \xbhfv{i-1} }
\def\xbhh{ \hat{\xbh} }
\def\xbhhv#1{ \xbhh_{#1} }
\def\xbhhi{ \xbhhv{i} }
\def\xbhhz{ \xbhhv{0} }
\def\xbhho{ \xbhhv{1} }
\def\xbch{ \check{\xbh} }
\def\xbchv#1{ \xbch_{#1} }
\def\xbchi{ \xbchv{i} }
\def\xbhg{ \hat{\ybg} }
\def\xbhgv#1{ \xbhg_{#1} }
\def\xbhgi{ \xbhgv{i} }
\def\xbhgimo{ \xbhgv{i-1} }
\def\xbfAB{ \xbfv{\xbA\xbB} }
\def\xbfBC{ \xbfv{\xbB\xbC} }
\def\xbfAC{ \xbfv{\xbA\xbC} }
\def\xbtf{ \tilde{\xbf} }
\def\xbtfv#1{ \xbtf_{#1} }
\def\xbtfi{ \xbtfv{i} }
\def\xbtfio{ \xbtfv{i+1} }
\def\xbtfz{ \xbtfv{0} }
\def\xbtfm{ \xbtfv{m} }
\def\xbtfp{ \tilde{\xbf\p} }
\def\xbtfpv#1{ \xbtfp_{#1} }
\def\xbtfpi{ \xbtfpv{i} }
\def\xf{ f }
\def\xfv#1{ \xf_{#1} }
\def\xfi{ \xfv{i} }
\def\yf{ f }
\def\yfv#1{ \yf_{#1} }
\def\yfi{ \yfv{i} }
\def\yfio{ \yfv{i+1} }
\def\yg{ g }
\def\ybg{ \mathbf{\yg} }
\def\ybgv#1{ \ybg_{#1} }
\def\ybgA{ \ybgv{\xbA} }
\def\ybgB{ \ybgv{\xbB} }
\def\ybgi{ \ybgv{i} }
\def\ybgio{ \ybgv{i+1} }
\def\ybgimo{ \ybgv{i-1} }
\def\ybgz{ \ybgv{0} }
\def\ybgo{ \ybgv{1} }
\def\ybgt{ \ybgv{2} }
\def\ybgh{ \ybgv{3} }
\def\ybtg{ \tilde{\ybg} }
\def\ybtgv#1{ \ybtg_{#1} }
\def\ybtgi{ \ybtgv{i} }
\def\ybtgio{ \ybtgv{i+1} }
\def\ybtgz{ \ybtgv{0} }
\def\yh{ h }
\def\ybh{ \mathbf{\yh} }
\def\ybhv#1{ \ybh_{#1} }
\def\ybhi{ \ybhv{i} }
\def\ybhz{ \ybhv{0} }
\def\ybho{ \ybhv{1} }
\def\ybht{ \ybhv{2} }
\def\ybhp{ \ybh\p }
\def\ybhpv#1{ \ybhp_{#1} }
\def\ybhpi{ \ybhpv{i} }
\def\ybhpz{ \ybhpv{0} }
\def\ybhpo{ \ybhpv{1} }
\def\ybhtp{ \tilde{\ybh}\p }
\def\ybhtpv#1{ \ybhtp_{#1} }
\def\ybhtpi{ \ybhtpv{i} }
\def\xCh{ \mathop{\mathbf{Ch}} }
\def\xChv#1{ \xCh(#1) }
\def\xChA{ \xChv{\xctA} }
\def\xChm{ \xCh^- }
\def\xChmv#1{ \xChm(#1) }
\def\xChmA{ \xChmv{\xctA} }
\def\xKh{ \mathop{\mathbf{K}} }
\def\xKhv#1{ \xKh(#1) }
\def\xKhA{ \xKhv{\xctA} }
\def\xKhm{ \xKh^- }
\def\xKhmv#1{ \xKhm(#1) }
\def\xKhmA{ \xKhmv{\xctA} }
\def\isor{isomorphism order}
\def\ysiov#1{ \left|\, #1\, \right|_{\cong} }
\def\ysiorv#1{ |\, #1\, |_{\cong} }
\def\ysiobf{ \ysiov{\xbf} }
\def\ysiobfi{ \ysiov{\xbfi} }
\def\stblz{stabilizing}
\def\xtrn{ \mathbf{t} }
\def\xtrnvv#1#2{ \xtrn^-_{\leq #1} #2 }
\def\xtrniv#1{ \xtrnvv{i}{#1} }
\def\xtrnNv#1{ \xtrnvv{N}{#1} }
\def\xtrntmov#1{ \xtrnvv{2m-1}{#1} }
\def\xtrngvv#1#2{ \xtrn^-_{\geq #1} #2 }
\def\xtrngtmv#1{ \xtrngvv{2m}{#1} }
\def\chcpl{chain complex}
\def\chmp{chain morphism}
\def\chcpls{\chcpl es}
\def\trnc{truncation}
\def\trncv#1{$#1$-th \trnc}
\def\trnci{\trncv{i}}
\def\chdl{ \delta }
\def\chdlv#1{ \chdl_{#1} }
\def\chdlbf{ \chdlv{\xbf} }
\def\chdlbfi{ \chdlv{\xbfi} }
\def\chdlbfz{ \chdlv{\xbfz} }
\def\chdlbtfz{ \chdlv{\xbtfz} }
\def\chdlbtfm{ \chdlv{\xbtfm} }
\def\idl{ \iota }
\def\idlv#1{ \idl_{#1} }
\def\idlbf{ \idlv{\xbf} }
\def\idlbgi{ \idlv{\ybgi} }
\def\idlbgz{ \idlv{\ybgz} }
\def\idlbgo{ \idlv{\ybgo} }
\def\idlbhi{ \idlv{\ybhi} }
\def\Cch{Cauchy}
\def\xsyst{system}
\def\xdsyst{direct \xsyst}
\def\Csq{\Cch\ \xsyst}
\def\dlm{ \lim\limits_{\longrightarrow} }
\def\ilm{ \lim\limits_{\longleftarrow} }
\def\qord{$q$-order}
\def\xsupv#1{ \sup\left\{#1 \right\} }
\def\xinfv#1{ \inf\left\{#1 \right\} }
\def\xminv#1{ \min\left\{#1 \right\} }
\def\yordhb#1{ \bigl| \;#1 \;\bigr|_{\mathrm{h}} }
\def\yordh#1{ \left|\, #1\, \right|_{\mathrm{h}} }
\def\yordhr#1{ |\, #1\, |_{\mathrm{h} } }
\def\yordch#1{ \yordh{\Cnv{#1}} }
\def\yordq#1{ \left|\, #1\, \right|_q }
\def\zordh#1{ | #1 |_{\mathrm{h}} }
\def\zordch#1{ \zordh{\Cnv{#1}} }
\def\chsq{\xdsyst}
\def\chlm{ \lim\nolimits_{\xCh} }
\def\cmtr#1#2#3#4#5#6{
\xymatrix{
#4 \ar[r]_-{#1} \ar@/^1pc/[rr]^-{#3} &
#5 \ar[r]_-{#2} &
#6
},\qquad #3\hteqv #2\,#1
}
\def\atcmv#1#2{ #1\,#2 + #2\,#1 }
\def\xIdv#1{ \xId_{#1} }
\def\yIdAi{ \xIdv{\xbAi} }
\def\yIdA{ \xIdv{\xbA} }
\def\tchlm{chain limit}
\def\mmp{morphism}
\def\ybC{ \mathbf{C} }
\def\ybCv#1{ \ybC_{#1} }
\def\ybCz{ \ybCv{0} }
\def\ybCo{ \ybCv{1} }
\def\ybCt{ \ybCv{2} }
\def\ybCi{ \ybCv{i} }
\def\ybCimo{ \ybCv{i-1} }
\def\ycC{ \mathcal{C} }
\def\yctC{ \tilde{\ycC} }
\def\ybtC{ \tilde{\ybC} }
\def\ybtCv#1{ \ybtC_{#1} }
\def\ybtCz{ \ybtCv{0} }
\def\ybtCo{ \ybtCv{1} }
\def\ybtCi{ \ybtCv{i} }
\def\ybtCio{ \ybtCv{i+1} }
\def\mtcn{multi-cone}
\def\wbC{ \tilde{\xbC} }
\def\wbCvv#1#2{ \wbC_{#1,#2} }
\def\wbCnv#1{ \wbCvv{#1}{n} }
\def\wbCmn{ \wbCnv{m} }
\def\wbCzn{ \wbCnv{0} }
\def\wbCmnp{ \wbCmn\p }
\def\xrahv#1{ \xrightarrow{\;\;\;#1\;\;\;} }
\def\xratv#1{ \xrightarrow{\;\;#1\;\;} }
\def\xraov#1{ \xrightarrow{\;#1\;} }
\def\rxratv#1{ \xleftarrow{\;\;#1\;\;} }
\def\rxraov#1{ \xleftarrow{\;#1\;} }
\def\Kh{Khovanov homology}
\def\amap{ \symalg{-} }
\def\clckw{clockwise}
\def\cclckw{counterclockwise}
\def\dsym{ \bigtriangledown }
\def\dsymv#1{ #1^{\dsym} }
\def\dflpv#1{ #1^{\dflp} }
\def\dflppv#1{ #1^{\dflpp} }
\def\xtaud{ \dsymv{\xtau} }
\def\xtauf{ \dflpv{\xtau} }
\def\dsmd{ \vee }
\def\dsmdv#1{ #1^{\dsmd} }
\def\dflp{ \lozenge }
\def\zdflp{ \dflp }
\def\dflpe{ \dflp_e }
\def\dflpev#1{ #1^{\dflpe} }
\def\dflpp{ \dot{\dflp} }
\def\dflpv#1{ #1^{\dflp} }
\def\xtauf{ \dflpv{\xtau} }
\def\ddul{ \vee }
\def\ddulv#1{ #1^{\ddul} }
\def\ZZt{ \ZZ_2 }
\def\xIn{ Z }
\def\xyO{ O }
\def\xyH{ \mathcal{H} }
\def\xyHij{ \xyH_{i,j} }
\def\Shv#1{ \Sh_{#1} }
\def\Sho{ \Shv{1} }
\def\Shg{ \Shv{\xyg} }
\def\Shvv#1#2{ \Sh_{#1|#2} }
\def\Shgk{ \Shvv{\xyg}{\xyk} }
\def\xyk{ k }
\def\xyg{ g }
\def\Shcv#1{ \Sh_{|#1} }
\def\Shck{ \Shcv{\xyk} }
\def\Shctg{ \Shcv{2\xyg} }
\def\Stv#1{ \St_{#1} }
\def\Stn{ \Stv{n} }
\def\Stm{ \Stv{m} }
\def\toy{toy}
\def\rcr{ \xr_{\mathrm{cr}} }
\def\rcrvv#1#2{ \rcr(#2,#1) }
\def\rcrShgL{ \rcrvv{\Shg}{\xL} }
\def\rcrSotL{ \rcrvv{\Sot}{\xL} }
\def\hbds{handlebodies}
\def\tsswh{3-spheres with handles}
\def\tswgh{3-sphere with $\xyg$ handles}
\def\TQFT{TQFT}
\def\xstbl{stable}
\def\sthm{\xstbl\ homology}
\def\stWRTi{\xstbl\ \WRT\ invariant}
\def\sttoy{\xstbl\ \toy}
\def\sttoyt{\sttoy\ theory}
\def\tWRTt{\toy\ \WRT\ theory}
\def\sttWRTt{\xstbl\ \tWRTt}
\def\stty{\xstbl\ \toy}
\def\tstyWRTt{\stty\ \WRT\ theory}
\def\tpWRTt{topological \WRT\ theory}
\def\rstrd{restricted}
\def\strtWRTt{\xstbl\ \rstrd\ \tpWRTt}
\def\rTQFT{\rstrd\ \TQFT}
\def\orTQFT{oriented \rTQFT}
\def\cA{ \mathcal{A} }
\def\cAop{ \cA^{\xop} }
\def\cC{ \mathcal{C} }
\def\cH{ \mathcal{H} }
\def\cHv{ \cH^{\vee} }
\def\cHStv#1{ \cH(\Stv{#1}) }
\def\cHvStv#1{ \cHv(\Stv{#1}) }
\def\cHStn{ \cHStv{n} }
\def\cHvStn{ \cHvStv{n} }
\def\bmdl{bimodule}
\def\Kbr{Kauffman bracket}
\def\xprng{ \llcorner }
\def\otQq{ \otimes_{\Qq} }
\def\bfn{ \mathbf{n} }
\def\xdummy{ - }
\def\stmp{state map}
\def\obmp{object map}
\def\sphtng{spherical tangle}
\def\xbrb#1{ \bigl( #1 \bigr) }
\def\rF{ \mathbb{F} }
\def\xrS{ \mathcal{S} }
\def\xrM{ \mathcal{M} }
\def\adS{ S }
\def\inb{`in'}
\def\outb{`out'}
\def\xinv#1{ (#1)_{\mathrm{in}} }
\def\xoutv#1{ (#1)_{\mathrm{out}} }
\def\xstmp#1{ \langle #1 \rangle }
\def\xobmp#1{ \langle \!\langle #1 \rangle\!\rangle}
\def\xstmpd{ \xstmp{\xdummy} }
\def\xobmpd{ \xobmp{\xdummy} }
\def\xobmpv#1#2{ \xobmp{#1,#2} }
\def\adcbs{admissible connected boundaries}
\def\adcm{admissible  manifold}
\def\xwk{weak}
\def\xalc{algebraic}
\def\walg{\xwk\ \xalc}
\def\alcat{\xalc\ categorical}
\def\xalg{algebra}
\def\xring{algebra}
\def\idfn{identification}
\def\zzA{ A }
\def\zzB{ B }
\def\finv#1{ \bar{#1} }
\def\wfinv#1{ \overline{#1} }
\def\finvm{ \finv{\;\;} }
\def\xxv{ v }
\def\sprj{semi-projective}
\def\HmTlmnlot{ \Hom_{\dTLtlmn}\xlrb{\dlamo,\dlamtw} }
\def\rsP{ \mathbf{P} }
\def\rhP{ \hat{\rsP} }
\def\rsPv#1{ \rsP_{#1} }
\def\rsPn{ \rsPv{n} }
\def\rsPnc{ \rsPn^{\sharp} }
\def\rsPm{ \rsPv{m} }
\def\rhPv#1{ \rhP_{#1} }
\def\rhPs{ \rhPv{\ast} }
\def\prjPvv#1#2{ P_{#1,#2} }
\def\prjPba{ \prjPvv{\xbet}{\xal} }
\def\xlN{ N }
\def\idia{ a }
\def\idib{ b }
\def\inoa{ a }
\def\ide{ e }
\def\idev#1{ \ide_{#1} }
\def\ideo{ \idev{1} }
\def\ideN{ \idev{\xlN} }
\def\idea{ \idev{\idia} }
\def\ideb{ \idev{\idib} }
\def\idP{ P }
\def\idPv#1{ \idP_{#1} }
\def\idPa{ \idPv{\idia} }
\def\idPb{ \idPv{\idib} }
\def\idS{ S }
\def\idSv#1{ \idS_{#1} }
\def\idSa{ \idSv{\idia} }
\def\irred{irreducible}
\def\obA{ A }
\def\obB{ B }
\def\obC{ C }
\def\obE{ E }
\def\obEv#1{ \obE_{#1} }
\def\obEo{ \obEv{1} }
\def\obEN{ \obEv{\xlN} }
\def\obEa{ \obEv{\inoa} }
\def\smaoN{ \sum_{1\leq\inoa\leq\xlN} }
\def\bpaoN{ \bigoplus_{1\leq\inoa\leq\xlN} }
\def\qpord{$q^+$-order}
\def\upr{universal projective resolution}
\def\Qgmod{\cvgmod{\IQ}}
\def\QgmodQ{ (\Qgmod)^{Q} }
\def\Qgmodpq{ (\Qgmod)^{-/+} }
\def\cHdbM{ \cH(\del\bbM) }
\def\cHdbMp{ \cH(\del\bbM\p) }
\def\cHvdbM{ \cHv(\del\bbM) }
\def\cAdbM{ \cA(\del\bbM) }
\def\cAopdbM{ \cAop(\del\bbM) }
\def\cCdbM{ \cC(\del\bbM) }
\def\cCdbMp{ \cC(\del\bbM\p) }
\def\Loti{ \overset{\mathrm{L}}{\otimes} }
\def\xglu{ \mathrm{gluing} }
\def\chq{ \chi_q }
\def\dmq{ \dim_q }
\def\empc{ \cC(\varnothing) }
\def\HKhbd{ \HKhb(\xdummy) }
\def\xcnv{convenient}
\def\psbaf{ \psymalg{\xbet\tcmp\xalf} }
\def\blpbv#1#2{ \xlrb{#1,#2}_{\Tor} }
\def\xbAf{ \dflpv{\xbA} }
\def\TorR{ \Tor_{\aR} }
\def\fKDbM{ \xbM }
\def\sprbs#1{ \Bigl( #1 \Bigr) }
\def\Ktau{\zKsymbim{\xtau} }
\def\Ktauo{\zKsymbim{\xtauo} }
\def\Ktaut{\zKsymbim{\xtaut} }
\def\funcm#1{ \hat{#1} }
\def\htau{ \funcm{\xtau} }
\def\hef{ f }
\def\hefv#1{ \hef_{#1} }
\def\hefa{ \hefv{\xal} }
\def\hefao{ \hefv{\xalo} }
\def\hefat{ \hefv{\xalt} }
\def\smmuf{ \mu_{\xphaot} }
\def\xcob{ \mathrm{Cob} }
\def\coblot{ \xcob(\xlamo,\xlamtw) }
\def\bldl{ \left\langle\!\left\langle }
\def\bldr{ \right\rangle\!\right\rangle }
\def\sbldl{ \langle\!\langle }
\def\sbldr{ \rangle\!\rangle }
\def\zsymcat#1{ \sbldl #1 \sbldr }
\def\symcat#1{ \bldl #1 \bldr }
\def\symbcat#1{ \bldl #1 \bldr }
\def\Bsymcat#1{ \Bigl<\!\!\Bigl< #1 \Bigr>\!\!\Bigr> }
\def\bsymcat#1{ \bigl<\!\bigl< #1 \bigr>\!\bigr> }
\def\Dsymcat#1{ \symcat{#1}_{\xdrv} }
\def\zDsymcat#1{ \zsymcat{#1}_{\xdrv} }
\def\Ksymcat#1{ \symcat{#1}_{\xhKv} }
\def\Kbsymbim#1{ \bsymcat{#1}_{\xhKv} }
\def\xspsh{ \mathrm{s} }
\def\ctss#1{ #1^{\xspsh } }
\def\ctsos#1{ #1^{-\xspsh } }
\def\ctsh#1{ \spcc{#1} }
\def\symcats#1{ \ctss{\symcat{#1}} }
\def\bsymcats#1{ \ctss{\bsymcat{#1}} }
\def\bsymcatos#1{ \ctsos{\bsymcat{#1}} }
\def\ctpss#1{ \spcc{#1}\p }
\def\symcatps#1{ \ctpss{\symcat{#1}} }
\def\bsymcatps#1{ \ctpss{\bsymcat{#1}} }
\def\bsymcatsh#1{ \ctsh{\bsymcat{#1}} }
\def\Ksymbim#1{ \symcat{#1}_{\xhKv} }
\def\Dsymbim#1{ \symcat{#1}_{\xdrv} }
\def\zKsymbim#1{ \zsymcat{#1}_{\xhKv} }
\def\zDsymbim#1{ \zsymcat{#1}_{\xdrv} }
\def\psymalg#1{ \langle #1 \rangle }
\def\symalg#1{ \left\langle #1 \right\rangle }
\def\Bsymalg#1{ \Bigl< #1 \Bigr> }
\def\bsymalg#1{ \bigl< #1 \bigr> }
\def\sphsymalg#1{ \symalg{#1,\StI} }
\def\Sotsymalg#1{ \symalg{#1,\Sot} }
\def\Btsymalg#1{ \symalg{#1,\bbB^3} }
\def\dTL{ \mathsf{TL} }
\def\cTL{ \mathrm{TL} }
\def\rTL{ \mathit{TL} }
\def\cfQTLct{ \cfQTL^{\ftct} }
\def\cfQTLctvv#1#2{ \cfQTLct_{#1,#2} }
\def\cfQTLcttntn{ \cfQTLctvv{2n}{2n} }
\def\cfQTLctv#1{ \cfQTLct_{#1} }
\def\cfQTLcttn{ \cfQTLctv{2n} }
\def\cfQTLcttmtn{ \cfQTLctvv{2m}{2n} }
\def\cTLctp{ \cTL^{\ftct,+} }
\def\cTLctpvv#1#2{ \cTLctp_{#1,#2} }
\def\cTLctpv#1{ \cTLctp_{#1} }
\def\cTLctptn{ \cTLctpv{2n} }
\def\cTLctptmtn{ \cTLctpvv{2m}{2n} }
\def\acapni{ \acapvv{n}{i} }
\def\ccapni{ \ccapvv{n}{i} }
\def\acupni{ \acupvv{n}{i} }
\def\ccupni{ \ccupvv{n}{i} }
\def\xL{ L }
\def\dL{ \symcat{\xL} }
\def\dLi{ \dLv{i} }
\def\dLimo{ \dLv{i-1} }
\def\xnot{ \xlam_\varnothing }
\def\cnot{ \symcat{\xnot} }
\def\mpalg{ \symalg{\xdummy} }
\def\mpcat{ \symcat{\xdummy} }
\def\mpKbim{ \zKsymbim{\xdummy} }
\def\mpDcat{ \Dsymcat{\xdummy} }
\def\dtaus{ \symcats{\xtau} }
\def\dLi{ \xC_i }
\def\dLimo{ \xC_{i-1} }
\def\lcir{
\xygraph{
!{0;/r1.5pc/:}
!{\vcap-}
!{\vcap}
}
}
\def\xcrsp{
\xygraph{
!{0;/r1.5pc/:}
[u(0.5)]
!{\xoverv}
}
}
\def\xpver{
\xygraph{
!{0;/r1.5pc/:}
[u(0.5)]
!{\xunoverv}
}
}
\def\xphor{
\xygraph{
!{0;/r1.5pc/:}
[u(0.5)]
!{\xunoverh}
}
}
\def\xvfro{
\xygraph{
!{0;/r1.5pc/:}
[u(0.5)]
!{\xcapv@(0)}
[u(1.5)]
[u(0.45)r(0.23)]
[d(1.5)]
*{\symfr\;\scriptstyle{1}}
}
}
\def\xvert{
\xygraph{
!{0;/r1.5pc/:}
[u(0.5)]
!{\xcapv@(0)}
}
}
\def\gbrv#1#2#3{
\xygraph{
!{0;/r1pc/:}
[u(0.5)]
!{\hcross}
!{\hcross}
[l(1)d(0.25)]
*{\scriptstyle{\vdots}}
[u(0.75)]
*{\scriptstyle{#1}}
[d(1)r(#3)]
*{\scriptstyle{#2}}
}
}
\def\gobrv#1#2#3{\,
\xygraph{
!{0;/r1pc/:}
[u(0.5)]
!{\hcrossneg}
!{\hcrossneg}
[l(1)d(0.25)]
*{\scriptstyle{\vdots}}
[u(0.75)]
*{\scriptstyle{#1}}
[d(1)r(#3)]
*{\scriptstyle{#2}}
}
}
\def\gobrlv#1#2#3{
\xygraph{
!{0;/r1pc/:}
[u(0.5)]
!{\hcrossneg}
!{\hcrossneg}
[d(1.5)l(2)]
!{\xcaph[2]@(0)}
[u(1.5)r(1)]
[l(1)d(0.25)]
*{\scriptstyle{\vdots}}
[u(0.75)]
*{\scriptstyle{#1}}
[d(1)r(#3)]
*{\scriptstyle{#2}}
}
}
\def\dtwst{
\xygraph{
!{0;/r1pc/:}
[u(0.5)]
!{\hcrossneg}
!{\hcrossneg}
}
}
\def\dccp{
\xygraph{
!{0;/r1pc/:}
[u(0.5)]
!{\xcapv}
[u(1)r(1)]
!{\xcapv[-1]}
}
}
\def\dpar{
\xygraph{
!{0;/r1pc/:}
[u(0.5)]
!{\xunoverh}
}
}
\def\gbrmv#1#2{ \gbrv{m}{#1}{#2} }
\def\gbrov#1#2{ \gbrv{1}{#1}{#2} }
\def\gbrmn{ \gbrmv{n}{1.5} }
\def\gbrmtn{ \gbrmv{2n}{1.5} }
\def\gbrmnmt{ \gbrmv{n-2}{2} }
\def\gobrmv#1#2{ \gobrv{m}{#1}{#2} }
\def\gobrov#1#2{ \gobrv{1}{#1}{#2} }
\def\gobrmn{ \gobrmv{n}{1.5} }
\def\gobrmtn{ \gobrmv{2n}{1.5} }
\def\gobrotn{ \gobrov{2n}{1.5} }
\def\cobrotn{ \bsymcat{\gobrotn} }
\def\abrmn{ \bsymalg{\;\gbrmn} }
\def\cbrmns{ \bsymcats{\;\gbrmn} }
\def\aobrmn{ \bsymalg{\;\gobrmn} }
\def\cobrmns{ \bsymcatos{\;\gobrmn} }
\def\cbrmnmtps{ \bsymcatps{\;\gbrmnmt} }
\def\cbrmnmts{ \bsymcats{\;\gbrmnmt} }
\def\gbrov#1#2{ \gbrv{1}{#1}{#2} }
\def\gbron{ \gbrov{n}{1.5} }
\def\gbronmt{ \gbrov{n-2}{2} }
\def\gobrov#1#2{ \gobrv{1}{#1}{#2} }
\def\gobron{ \gobrov{n}{1.5} }
\def\gobrotl{ \gobrov{\ztl}{2} }
\def\gobronmo{ \gobrov{n-1}{2} }
\def\gobrolv#1#2{ \gobrlv{1}{#1}{#2} }
\def\gobrolnmo{ \gobrolv{n-1}{2} }
\def\abron{ \bsymalg{\;\gbron} }
\def\cbrons{ \bsymcats{\;\gbron} }
\def\cobrons{ \bsymcatos{\;\gobron} }
\def\cobronsh{ \bsymcatsh{\gobron} }
\def\cobrmtnsh{ \bsymcatsh{\gobrmtn} }
\def\cobrmtn{ \bsymcat{\gobrmtn} }
\def\cobrmotnsh{ \bsymcatsh{\gobrmotn} }
\def\gbrmov#1#2{ \gbrv{m+1}{#1}{#2} }
\def\gbrmon{ \gbrmov{n}{1.5} }
\def\gobrmov#1#2{ \gobrv{m+1}{#1}{#2} }
\def\gobrmon{ \gobrmov{n}{1.5} }
\def\gobrmotn{ \gobrmov{2n}{1.75} }
\def\cbrmons{ \bsymcats{\;\gbrmon} }
\def\cobrmons{ \bsymcatos{\;\gobrmon} }
\def\abrmtn{ \bsymalg{\gbrmtn} }
\def\gcapv#1#2#3{
\xygraph{
!{0;/r1pc/:}
[u(0.5)]
!{\hcap-}
[u(0.5)]
*{\scriptstyle{#2}}
[d(1)r(#3)]
*{\scriptstyle{#1}}
}
}
\def\gccapv#1#2#3{
\xygraph{
!{0;/r1pc/:}
[u(0.5)]
!{\hcap-}
[d(0.25)]
!{\hcap[-0.5]}
[u(0.25)]
[u(0.5)]
*{\scriptstyle{#2}}
[d(1)r(#3)]
*{\scriptstyle{#1}}
}
}
\def\gccaplv#1#2#3{
\xygraph{
!{0;/r1pc/:}
[u(0.5)]
!{\hcap-}
[d(0.25)]
!{\hcap[-0.5]}
[u(0.25)]
[d(1.5)l(0.5)]
!{\xcaph[0.5]@(0)}
[u(1.5)l(0.5)]
[u(0.5)]
*{\scriptstyle{#2}}
[d(1)r(#3)]
*{\scriptstyle{#1}}
}
}
\def\gcapvi#1#2{ \gcapv{#1}{i}{#2} }
\def\gcapni{ \gcapvi{n}{0.5} }
\def\acapni{ \bsymalg{\gcapni} }
\def\ccapni{ \bsymcat{\!\gcapni} }
\def\gcapvI#1#2{ \gccapv{#1}{\stI}{#2} }
\def\gcapvJ#1#2{ \gccapv{#1}{\stJ}{#2} }
\def\gcapvJp#1#2{ \gccapv{#1}{\stJp}{#2} }
\def\gcapnI{ \gcapvI{n}{0.75} }
\def\gcapmJ{ \gcapvJ{m}{0.75} }
\def\gcapnJ{ \gcapvJ{n}{0.75} }
\def\gcapnJp{ \gcapvJp{n}{0.75} }
\def\gcapnmoJ{ \gcapvJ{n-1}{1.25} }
\def\gcaptnJ{ \gcapvJ{2n}{0.75} }
\def\gcaplnmoJ{ \gccaplv{n-1}{\stJ}{1.25} }
\def\ccapnI{ \bsymcat{ \gcapnI } }
\def\acapnJ{ \bsymalg{\gcapnJ} }
\def\gcapfv#1#2#3#4{
\xygraph{
!{0;/r1pc/:}
[u(0.5)]
!{\hcap-}
[u(0.5)]
*{\scriptstyle{#2}}
[l(0.5)d(1)]
*{\circ}
[l(0.75)]
*{\scriptstyle{#3}}
[r(#4)]
*{\scriptstyle{#1}}
}
}
\def\gcapnit{ \gcapfv{n}{i}{2}{2} }
\def\gcapnik{ \gcapfv{n}{i}{k}{2} }
\def\gcupv#1#2#3{
\xygraph{
!{0;/r1pc/:}
[u(0.5)]
!{\hcap}
[u(0.5)]
*{\scriptstyle{#2}}
[d(1)r(#3)]
*{\scriptstyle{#1}}
}
}
\def\gccupv#1#2#3{
\xygraph{
!{0;/r1pc/:}
[u(0.5)]
!{\hcap}
[d(0.25)]
!{\hcap[0.5]}
[u(0.25)]
[u(0.5)]
*{\scriptstyle{#2}}
[d(1)r(#3)]
*{\scriptstyle{#1}}
}
}
\def\gccuplv#1#2#3{
\xygraph{
!{0;/r1pc/:}
[u(0.5)]
!{\hcap}
[d(0.25)]
!{\hcap[0.5]}
[u(0.25)]
[d(1.5)l(0)]
!{\xcaph[0.5]@(0)}
[u(1.5)l(1)]
[u(0.5)]
*{\scriptstyle{#2}}
[d(1)r(#3)]
*{\scriptstyle{#1}}
}
}
\def\gcupvi#1#2{ \gcupv{#1}{i}{#2} }
\def\gcupni{ \gcupvi{n}{-0.5} }
\def\acupni{ \bsymalg{\gcupni} }
\def\ccupni{ \bsymcat{\gcupni\!} }
\def\gcupvI#1#2{ \gccupv{#1}{\stI}{#2} }
\def\gcupnI{ \gcupvI{n}{-0.5} }
\def\gcupnmoI{ \gcupvI{n-1}{-1} }
\def\gcuplnmoI{ \gccuplv{n-1}{\stI}{-1} }
\def\gcuptnI{ \gcupvI{2n}{-0.5} }
\def\ccupnI{ \bsymcat{ \gcupnI } }
\def\gcupvIp#1#2{ \gccupv{#1}{\stI\p}{#2} }
\def\gcupnIp{ \gcupvIp{n}{-0.5} }
\def\ccupnIp{ \bsymcat{\gcupnIp} }
\def\gidbrv#1#2{
\xygraph{
!{0;/r1pc/:(0,-1.75)}
[u(2.5)]
!{\xcaph@(0)}
[d(1.5)l(1)]
!{\xcaph@(0)}
[u(1)l(0.4)]
*{\vdots}
[d(0.3)r(#2)]
*{\scriptstyle{#1}}
}
}
\def\gidbrn{ \gidbrv{n}{1} }
\def\gidbrtn{ \gidbrv{2n}{1.25} }
\def\gidbrto{ \gidbrv{\ztl+1}{1.5} }
\def\cidbrto{ \bsymcat{\,\gidbrto} }
\def\gidbrnmtd{ \gidbrv{n-2d}{2} }
\def\cidbrn{ \bsymcat{\,\gidbrn} }
\def\tstgrt{
\xygraph{
!{0;/r1pc/:}
!{\xcaph@(0)}
[d(1.5)l(1)]
!{\xcaph@(0)}
[u(1)l(0.4)]
*{\vdots}
[d(0.3)r(1)]
*{\scriptstyle{n}}
}
}
\def\testgr{
\underbrace{
\xygraph{
!{0;/r1.5pc/:}
[] *+[F]{\;\;\scriptstyle{n}\;\;}
[l(0.5)d(0.4)]
!{\xcapv[0.5]@(0)}
[r(1)u(1)]
!{\xcapv[0.5]@(0)}
[u(2.3)]
!{\xcapv[0.5]@(0)}
[l(1)u(1)]
!{\xcapv[0.5]@(0)}
}
}_{n}
}
\def\testgr1{
\xygraph{
!{0;/r1.5pc/:}
!{\vtwist}
!{\vtwist}
}
}
\def\testgr2{
\xygraph{
!{0;/r1pc/:}
!{\hcrossneg}
!{\hcrossneg}
}
}
\def\nidm{ \noindent
*************************************************
}
\def\sgrp{set of morphisms}
\def\rgh{rough}
\def\rtng{\rgh\ tangle}
\def\Grtg{Grothendieck group}
\def\Grtn{Grothendieck}
\def\xpro{projected}
\def\xprot{\xpro\ tangle}
\def\crmt{crossingless matching}
\def\xcrmt{\crmt}
\def\xcrmtv#1{crossingless $#1$-matching}
\def\xcrmtn{\xcrmtv{n}}
\def\fcrmt{flipped \crmt}
\def\tcmp{ \circ }
\def\Shcl{$\Sh$-closure}
\def\Zgrdd{$\ZZ$-graded}
\def\Zgrd{$\ZZ$-grading}
\def\Zdgr{$\ZZ$-degree}
\def\Ztgrdd{$\ZZt$-graded}
\def\Ztgrd{$\ZZt$-grading}
\def\Ztdgr{$\ZZt$-degree}
\def\tcomp{total complex}
\def\xcnst{constituent}
\def\xcnstt{\xcnst\ tangle}
\def\xcnsto{\xcnst\ object}
\def\xcnstc{\xcnst\ complex}
\def\flt{flat}
\def\fltc{\flt\ cobordism}
\def\rdcd{reduced}
\def\rfltc{\rdcd\ \fltc}
\def\elmt{elementary}
\def\efltc{\elmt\ \fltc}
\def\elcb{\elmt\ cobordism}
\def\xmult{$x$-multiplication}
\def\xmov{movie}
\def\cmov{cobordism \xmov}
\def\Rmov{Reidemeister \xmov}
\def\qim{quasi-isomorphism}
\def\qtriv{quasi-trivial}
\def\bbS{ \mathbb{S} }
\def\bbB{ \mathbb{B} }
\def\Bt{ \bbB^3 }
\def\bbM{ M }
\def\bbI{ \mathbb{I} }
\def\So{ \bbS^1 }
\def\St{ \bbS^2 }
\def\Sot{ \St\times\So }
\def\Sh{ \bbS^3 }
\def\bbB{ \mathbb{B} }
\def\Bh{ \bbB^3 }
\def\xr{ r }
\def\SU{ \mathrm{SU} }
\def\SUt{ \SU(2) }
\def\xL{ L }
\def\xM{ M }
\def\fWRT{Witten-Reshetikhin-Turaev}
\def\WRT{WRT}
\def\WRTp{\WRT\ polynomial}
\def\isotp{ \approx }
\def\homeom{ \cong }
\def\xmrd{meridian}
\def\Zvvv#1#2#3{ Z_{#1}(#2,#3) }
\def\Zrvv#1#2{ \Zvvv{\xr}{#1}{#2} }
\def\ZrLM{ \Zrvv{\xL}{\xM} }
\def\ZrLSot{ \Zrvv{\xL}{\Sot} }
\def\ZrLShg{ \Zrvv{\xL}{\Shg} }
\def\ZrtSot{ \Zrvv{\xtau}{\Sot} }
\def\Zrxv#1{ Z_\xr(#1) }
\def\ZrShg{ \Zrxv{\Shg} }
\def\ZrSot{ \Zrxv{\Sot} }
\def\Zt{ \tilde{Z} }
\def\Ztvvv#1#2#3{ \Zt_{#1}(#2,#3) }
\def\Ztrvv#1#2{ \Ztvvv{\xr}{#1}{#2} }
\def\ZtrLShg{ \Ztrvv{\xL}{\Shg} }
\def\Jnqvv#1#2{ J(#1,#2) }
\def\JnqShv#1{ \Jnqvv{#1}{\Sh} }
\def\JnqSotv#1{ \Jnqvv{#1}{\Sot} }
\def\JnLq{ \JnqShv{\xL} }
\def\JnLotq{ \JnqSotv{\xL} }
\def\JnSgLq{ \Jnqvv{\xL}{\Shg} }
\def\lSh#1{ #1;\Sh }
\def\lSot#1{ #1;\Sot }
\def\pvv#1#2{ (#1,#2) }
\def\pLM{ \pvv{\xL}{\xM} }
\def\ttngvv#1#2{$(#1,#2)$-tangle}
\def\ttngmn{\ttngvv{m}{n}}
\def\ttngnm{\ttngvv{n}{m}}
\def\ttngnn{\ttngvv{n}{n}}
\def\ttngzz{\ttngvv{0}{0}}
\def\ttngtntn{\ttngvv{2n}{2n}}
\def\shlf{ \tfrac{1}{2} }
\def\xinter{ [0,1] }
\def\clv#1#2{ (#1;#2) }
\def\clShv#1{ \clv{#1}{\Sh} }
\def\clSotv#1{ \clv{#1}{\Sot} }
\def\cltSot{ \clSotv{\xtau} }
\def\clstv#1{ \clv{#1}{\ast} }
\def\xlv#1#2{ #1;#2 }
\def\xlShv#1{ \xlv{#1}{\Sh} }
\def\xtauv#1{ \xtau_{#1} }
\def\xtauo{ \xtauv{1} }
\def\xtaut{ \xtauv{2} }
\def\xgam{ \gamma }
\def\xgamv#1{ \xgam_{#1} }
\def\xgami{ \xgamv{i} }
\def\xgamn{ \xgamv{n} }
\def\flt{planar}
\def\qi{ q^{-1} }
\def\qepr{ q = \exp(\pi i/\xr) }
\def\brb{ B }
\def\xlam{ G }
\def\xtau{ T }
\def\dLio{ C_{i+1} }
\def\xaH{ H }
\def\xaHv#1{ \xaH_{#1} }
\def\xaHn{ \xaHv{n} }
\def\xaHm{ \xaHv{m} }
\def\xaHop{ \xaH^{\xop} }
\def\xaHopv#1{ \xaHop_{#1} }
\def\xaHopm{ \xaHopv{m} }
\def\xaHopn{ \xaHopv{n} }
\def\xaHdv#1#2{ \xaH_{#1,#2} }
\def\xaHdnm{ \xaHdv{n}{m} }
\def\xaHdmn{ \xaHdv{m}{n} }
\def\xaHe{ \xaH^{\mathrm{e}} }
\def\xaHev#1{ \xaHe_{#1} }
\def\xaHne{ \xaHev{n} }
\def\prj{ \mathrm{pr} }
\def\xbnd{ \mathrm{b} }
\def\xdrv{ \mathsf{D} }
\def\xhKv{ \mathsf{K} }
\def\xdrb{ \xdrv^{\mathrm{\xbnd}} }
\def\xhKb{ \xhKv^{\mathrm{\xbnd}} }
\def\xhKmp{ \xhKv^{-}_{\prj} }
\def\xhKmq{ \xhKv^{+}_{\prj} }
\def\xhKmpq{ \xhKpq_{\prj} }
\def\xhKmpv#1{ \xhKmp(#1) }
\def\xhKmpqv#1{ \xhKmpq(#1) }
\def\xhKbp{ \xhKv^{\mathrm{\xbnd}}_{\prj} }
\def\xhKbpv#1{ \xhKbp(#1) }
\def\xhKbsp{ \xhKb_{\mathrm{sp} } }
\def\xhKbv#1{ \xhKb( #1 ) }
\def\xhKm{ \xhKv^- }
\def\xhKpq{ \xhKv^{-/+} }
\def\cvgmod#1{ #1-\mgmod }
\def\cvxpmod#1{ \cvpmod{#1} }
\def\cvxpmod#1{ \text{$#1$-$\mathsf{pr}$} }
\def\Qgmod{ \cvgmod{\IQ} }
\def\Qgmodv#1{ (\Qgmod)^{#1} }
\def\Qgmodp{ \Qgmodv{-} }
\def\Qgmodq{ \Qgmodv{+} }
\def\Dbmd#1{ \xdrb(#1) }
\def\Dbgmd#1{ \xdrb(#1) }
\def\bDbgmd#1{ \xdrb\xlrb{#1} }
\def\xKbgmd#1{ \xhKb(\cvgmod{#1}) }
\def\Kbgmd#1{ \xhKb(#1) }
\def\DbmdR{ \Dbmd{\aR} }
\def\DbmdRot{ \Dbmd{\aRotd} }
\def\DbmdRe{ \Dbmd{\aRe} }
\def\xhKmpR{ \xhKmpv{\aR} }
\def\xhKmpqR{ \xhKmpq(\aR) }
\def\xhKmpRot{ \xhKmpv{\aRotd} }
\def\DbmdHne{ \Dbmd{\xaHne} }
\def\xhKmpRe{ \xhKmpv{\aRe} }
\def\xhKmpHdv#1#2{ \xhKmpv{\xaHdv{#1}{#2}} }
\def\xhKmpHdnm{ \xhKmpHdv{n}{m} }
\def\xhKmpqHdv#1#2{ \xhKmpqv{\xaHdv{#1}{#2}} }
\def\xhKmpqHdnm{ \xhKmpqHdv{n}{m} }
\def\xhKbpR{ \xhKbpv{\aR} }
\def\xhKbpHn{ \xhKbpv{\xaHn} }
\def\xhKbR{ \xhKbv{\aR} }
\def\xhKbRe{ \xhKbv{\aRe} }
\def\xhKbRot{ \xhKbv{\aRotd} }
\def\DbgHdv#1#2{ \Dbgmd{\xaHdv{#1}{#2} } }
\def\DbgHev#1{ \Dbgmd{\xaHev{#1} } }
\def\DbgRe{ \Dbgmd{\aRe} }
\def\DbgHv#1{ \Dbgmd{\xaHv{#1}} }
\def\DbgHn{ \DbgHv{n} }
\def\KbgHdv#1#2{ \Kbgmd{\xaHdv{#1}{#2} } }
\def\KbgHv#1{ \Kbgmd{\xaHv{#1}} }
\def\KbgHn{ \KbgHv{n} }
\def\xKbgHdv#1#2{ \xKbgmd{\xaHdv{#1}{#2} } }
\def\KbspHdv#1#2{ \xhKbsp(\xaHdv{#1}{#2}) }
\def\KbspHdnm{ \KbspHdv{n}{m} }
\def\KbspRot{ \xhKbsp(\aRotd) }
\def\DbgHdnm{ \DbgHdv{n}{m} }
\def\DbgHen{ \DbgHev{n} }
\def\xKbgHdnm{ \xKbgHdv{n}{m} }
\def\KbgHdnm{ \KbgHdv{n}{m} }
\def\KbgHdmn{ \KbgHdv{m}{n} }
\def\spf{ \mathcal{F} }
\def\spfv#1{ \spf_{#1} }
\def\spfn{ \spfv{H} }
\def\spfK{ \spfv{\xhKv} }
\def\spfKD{ \spfv{\xhKv\xdrv} }
\def\spP{ \mathcal{P} }
\def\spPr{ \spP }
\def\spPv#1{ \spP_{#1} }
\def\spPK{ \spPv{\xhKv} }
\def\spPTL{ \spPv{\dTL} }
\def\spPsp{ \spPv{\mathrm{sp} } }
\def\spPpl{ \spP^+ }
\def\xalv#1{ \xal_{#1} }
\def\xalo{ \xalv{1} }
\def\xalt{ \xalv{2} }
\def\xald{ \dsymv{\xal} }
\def\xalf{ \dflpv{\xal} }
\def\dal{ \symcat{\xal} }
\def\dalo{ \symcat{\xalo} }
\def\dalt{ \symcat{\xalt} }
\def\dbet{ \symcat{\xbet} }
\def\dal{ \symcat{\xal} }
\def\Kal{ \Ksymcat{\xal} }
\def\Kbet{ \Ksymcat{\xbet} }
\def\dald{ \zsymcat{\xald} }
\def\dbetd{ \zsymcat{\xbetd} }
\def\xbet{ \beta }
\def\xbetd{ \dsymv{\xbet} }
\def\aR{ R }
\def\aRv#1{ \aR_{#1} }
\def\aRo{ \aRv{1} }
\def\aRt{ \aRv{2} }
\def\aRoop{ \aRo^\xop }
\def\aRtop{ \aRt^\xop }
\def\aRop{ \aR^\xop }
\def\aRd{ \aR\otimes\aRop }
\def\aRotd{ \aRo\otimes\aRtop }
\def\aRrv#1{ \aR_{|#1} }
\def\aRrj{ \aRrv{j} }
\def\aRrz{ \aRrv{0} }
\def\aRrp{ \aRrv{>0} }
\def\aM{ M }
\def\aN{ N }
\def\xbM{ \mathbf{\aM} }
\def\xbN{ \mathbf{\aN} }
\def\aRe{ \aR^{\mathrm{e}} }
\def\Hhom{Hochschild homology}
\def\Hchom{Hochschild cohomology}
\def\xHH{ {\mathrm{HH} } }
\def\xHHv#1{ \xHH_{#1} }
\def\xHHmi{ \xHHv{-i} }
\def\xHHuv#1{ \xHH^{#1} }
\def\xHHui{ \xHHuv{i} }
\def\xHHl{ \xHHv{\bullet} }
\def\xHHu{ \xHHuv{\bullet} }
\def\xHHlv#1{ \xHHl(#1) }
\def\chq{ \chi_q }
\def\dal{ \symcat{\xal} }
\def\xalp{ \xal\p }
\def\xalpz{ \xalp }
\def\xlam{ \lambda }
\def\xtau{ \tau }
\def\xsg{ \sigma }
\def\xbt{ \beta }
\def\brb{ \xbt }
\def\xal{ \alpha }
\def\xgam{ \gamma }
\def\qie{ \simeq }
\def\zd{ d }
\def\zdv#1{ \zd_{#1} }
\def\zdl{ \zdv{\xlam} }
\def\zdlp{ \zdl\p }
\def\zt{ t }
\def\ztv#1{ \zt_{#1} }
\def\ztl{ \ztv{\xlam} }
\def\ztlp{ \ztv{\xlamp} }
\def\thr{through}
\def\thrd{\thr\ degree}
\def\tct{split}
\def\xunt{ \xaHn }
\def\cstI{ \mathcal{I} }
\def\cstIvv#1#2{ \cstI_{#1,#2} }
\def\cstInt{ \cstIvv{n}{\zt} }
\def\cstInm{ \cstIvv{n}{m} }
\def\cstIntl{ \cstIvv{n}{\ztl} }
\def\cstImtl{ \cstIvv{m}{\ztl} }
\def\stA{ \mathcal{A}_{\angle} }
\def\stAs#1#2{ \stA\tgrsshv{#1}{#2} }
\def\stAxv#1#2{ \stA(#1,#2) }
\def\stAxvv#1#2#3{ \stA(#1,#2;#3) }
\def\stAxtn{ \stAxv{\zt}{n} }
\def\stAxtln{ \stAxv{\ztl}{n} }
\def\stAxtlptn{ \stAxv{\ztlp}{2n} }
\def\stAxtlnmo{ \stAxv{\ztl}{n-1} }
\def\stAxtlnm{ \stAxvv{\ztl}{n}{m} }
\def\stAxtnm{ \stAxvv{\zt}{n}{m} }
\def\stAxtltnm{ \stAxvv{\ztl}{2n}{m} }
\def\ryA{ \mathcal{A}_{\multimapdotinv} }
\def\ryA{ \mathcal{A}_{\nearrow} }
\def\mtcn{multi-cone}
\def\mgmod{ \mathsf{gmod} }
\def\HKh{ \mathrm{H}^{\mathrm{Kh}} }
\def\HKhv#1{ \HKh_{#1} }
\def\HKhb{ \HKhv{\bullet} }
\def\Hst{ \mathrm{H}^{\mathrm{st}} }
\def\Hstv#1{ \Hst_{#1} }
\def\Hstb{ \Hstv{\bullet} }
\def\Hsti{ \Hstv{i} }
\def\Hstbd{ \Hstb(\xdummy) }
\def\Hm{ \mathrm{H} }
\def\Hmlv#1{ \Hm_{#1} }
\def\Hmb{ \Hmlv{\bullet} }
\def\Hmli{ \Hmlv{i} }
\def\Hmuv#1{ \Hm^{#1} }
\def\Hmub{ \Hmuv{\bullet} }
\def\xlrb#1{ \bigl( #1 \bigr) }
\def\xlrB#1{ \Bigl( #1 \Bigr) }
\def\xund#1{ \underline{#1} }
\def\uxlam{ \xund{\xlam} }
\def\uxtau{ \xund{\xtau} }
\def\xfhm{ \hzbr }
\def\xfhmv#1{ \xfhm_{#1} }
\def\xfhma{ \xfhmv{\xal} }
\def\xfhmao{ \xfhmv{\xalo} }
\def\xfhmat{ \xfhmv{\xalt} }
\def\xfhmapz{ \xfhmv{\xalpz}\p }
\def\yfhmv#1{ \zbrv{#1} }
\def\yfhma{ \yfhmv{\xal} }
\def\yfhmao{ \yfhmv{\xalo} }
\def\yfhmat{ \yfhmv{\xalt} }
\def\yfhmap{ \yfhmv{\xalp}\p }
\def\xIdapz{ \xIdv{\xalpz} }
\def\xIda{ \xIdv{\xal} }
\def\xph{ \varphi }
\def\xphaot{ \xph }
\def\hphaot{ \hat{\xphaot} }
\def\xIdtau{ \xIdv{\xtau} }
\def\sgmu{ \mu }
\def\sgmuv#1{ \sgmu_{#1} }
\def\sgmua{ \sgmuv{\xal} }
\def\sgmuao{ \sgmuv{\xalo} }
\def\sgmuat{ \sgmuv{\xalt} }
\def\cSg{ \Sigma }
\def\cSgv#1{ \cSg_{#1} }
\def\cSgij{ \cSgv{ij} }
\def\hSg{ \hat{\cSg} }
\def\hSgv#1{ \hSg_{#1} }
\def\xSo{ S^1 }
\def\xSov#1{ \xSo_{(#1)} }
\def\xSoo{ \xSov{1} }
\def\xSok{ \xSov{k} }
\def\zp{ p }
\def\zpv#1{ \zp_{#1} }
\def\zpo{ \zpv{1} }
\def\zpt{ \zpv{2} }
\def\zph{ \zpv{3} }
\def\zpf{ \zpv{4} }
\def\zpi{ \zpv{i} }
\def\zpj{ \zpv{j} }
\def\ftu{4Tu}
\def\ftur{\ftu\ relation}
\def\eleps{ \epsilon }
\def\elepsv#1{ \eleps_{#1} }
\def\elepso{ \elepsv{1} }
\def\elepsk{ \elepsv{k} }
\def\deleps{ \ccbsym{\eleps} }
\def\delepsv#1{ \deleps_{#1} }
\def\delepso{ \delepsv{1} }
\def\delepsk{ \delepsv{k} }
\def\beleps{ \boldsymbol{\eleps} }
\def\dbeleps{ \ccbsym{\beleps} }
\def\ccbsym#1{ \hat{#1} }
\def\zbr{ \boldsymbol{\rho} }
\def\zbrv#1{ \zbr_{#1} }
\def\zbral{ \zbrv{\xal} }
\def\hzbr{ \ccbsym{\zbr} }
\def\ip{ i\p }
\def\np{ n\p }
\def\tstI{ \tilde{\stI} }
\def\mtB{ B }
\def\mtBi{ \mtB^{-1} }
\def\mtBvv#1#2{ \mtB_{#1#2} }
\def\mtBivv#1#2{ \mtBi_{#1#2} }
\def\mtBIJ{ \mtBvv{\stI}{\stJ} }
\def\mtBiIJ{ \mtBivv{\stI}{\stJ} }
\def\mtBab{ \mtBvv{\xal}{\xbet} }
\def\mtBiab{ \mtBivv{\xal}{\xbet} }
\def\qpmv#1{ q^{#1} - q^{-#1} }
\def\smmnev{ \sum_{\substack{0\leq m\leq n \\ n-m\in 2\ZZ}} }
\def\xx{ x }
\def\cxzab{ c_{\xal\xbet} }
\def\ay{ y }
\def\xmltv#1{ \mathrm{m}_{#1} }
\def\xmltx{ \xmltv{\xx} }
\def\xlamxv#1#2{\xlam_{#1|#2} }
\def\xlamIJm{ \xlamxv{\stI\stJ}{m} }
\def\prclamv#1#2{ \prsymalg{\xlamxv{#1}{#2}} }
\def\prclamIJm{ \prclamv{\stI\stJ}{m} }
\def\prclamIJmp{ \prclamv{\stIp\stJp}{m\p} }
\def\prclamIJpm{ \prclamv{\stI\stJp}{m} }
\def\cpmsh{ \xlrb{\lSh{\jwpm}} }
\def\Tbr{ \mathrm{T} }
\def\Tbrvv#1#2{ \Tbr_{#1,#2} }
\def\Tbrnnm{ \Tbrvv{n}{nm} }
\def\Tbrnmnm{ \Tbrvv{n}{-nm} }
\def\ncc{n}
\def\nccv#1{ \ncc_{#1} }
\def\nccab{ \nccv{\xal\xbet} }
\def\uhdv#1{ |#1|^+_{\mathrm{h}} }
\def\xSg{ \Sigma }
\def\SgI{ \xSg\times\bbI }
\def\xIRt{ \IR^2 }
\def\StI{ \St\times\bbI }
\def\yLnk{ \mathfrak{L} }
\def\yLnkv#1{ \yLnk(#1) }
\def\yLnkM{ \yLnkv{M} }
\def\yLnkSh{ \yLnkv{\Sh} }
\def\yLnkSot{ \yLnkv{\Sot} }
\def\tyLnk{ \tilde{\yLnk} }
\def\tyLnkv#1{ \tyLnk(#1) }
\def\tyLnkM{ \tyLnkv{M} }
\def\tyLnkSh{ \tyLnkv{\Sh} }
\def\tyLnkSot{ \tyLnkv{\Sot} }
\def\yTng{ \mathfrak{T} }
\def\yTngvv#1#2{ \yTng_{#1}(#2) }
\def\yTngbnM{ \yTngvv{}{\bbM} }
\def\yTngbnMp{ \yTngvv{}{\bbM\p} }
\def\yTngv#1{ \yTng(#1) }
\def\yTngM{ \yTngv{\bbM} }
\def\yTngSg{ \yTngv{\SgI} }
\def\yTngBtn{ \yTngvv{n}{\Bt} }
\def\yTnglv#1{ \yTng_{#1} }
\def\yTngtntn{ \yTnglv{2n,2n} }
\def\yTngtmtn{ \yTnglv{2m,2n} }
\def\yTngtntm{ \yTnglv{2n,2m} }
\def\yTngtnz{ \yTnglv{2n,0} }
\def\yTngzn{ \yTnglv{0,n} }
\def\yTngnn{ \yTnglv{n,n} }
\def\yTngmn{ \yTnglv{m,n} }
\def\yTngnm{ \yTnglv{n,m} }
\def\yTngStv#1{ \yTngvv{#1}{\StI} }
\def\yTngSttntn{ \yTngStv{2n,2n} }
\def\yTngSttmtn{ \yTngStv{2m,2n} }
\def\tyTng{ \tilde{\yTng} }
\def\yTngop{ \yTng^{\xop} }
\def\tyTngop{ \tyTng^{\xop} }
\def\tyTngopv#1{ \tyTngop(#1) }
\def\tyTngopSg{ \tyTngopv{\SgI} }
\def\tyTngvv#1#2{ \tyTng_{#1}(#2) }
\def\tyTngBtn{ \tyTngvv{n}{\Bt} }
\def\tyTngbnM{ \tyTngvv{\bfn}{\bbM} }
\def\tyTngbnM{ \tyTngvv{}{\bbM} }
\def\tyTngv#1{ \tyTng(#1) }
\def\tyTngM{ \tyTngv{\bbM} }
\def\tyTngSg{ \tyTngv{\SgI} }
\def\tyTngSgv#1{ \tyTng_{#1}(\SgI) }
\def\tyTngSgmn{ \tyTngSgv{m,n} }
\def\tyTngSgnn{ \tyTngSgv{n,n} }
\def\tyTngSgn{ \tyTngSgv{n} }
\def\yTngSt{ \yTngv{\StI} }
\def\tyTngSt{ \tyTngv{\StI} }
\def\yMp{ \mathop{\mathrm{Map}} }
\def\yMpv#1{ \yMp_{#1} }
\def\yMpM{ \yMpv{\bbM} }
\def\yMpSt{ \yMpv{\StI} }
\def\yMpStM{ \yMpv{M} }
\def\IRtint{ \xIRt\times\bbI }
\def\Stint{ \St\times\xinter }
\def\yrlt{relative}
\def\yrlhm{\yrlt\ homeomorphism}
\def\rmcg{\yrlt\ mapping class group}
\def\yBr{ \mathfrak{B} }
\def\yBrSgv#1{ \yBr_{#1}(\xSg) }
\def\yBrSgn{ \yBrSgv{n} }
\def\yBrv#1{ \yBr_{#1} }
\def\yBrn{ \yBrv{n} }
\def\yTL{ \yTng^{\mathrm{TL}} }
\def\yTLv#1{ \yTL_{#1} }
\def\yTLn{ \yTLv{n} }
\def\yTLmn{ \yTLv{m,n} }
\def\yTLztn{ \yCrn }
\def\yCr{ \mathfrak{C} }
\def\yCrv#1{ \yCr_{#1} }
\def\yCrm{ \yCrv{m} }
\def\yCrn{ \yCrv{n} }
\def\hoam{ \mathop{\mathfrak{s}} }
\def\hoho{ \tilde{\hoam} }
\def\xtw{ \mathop{tw} }
\def\xsigm{ \sigma }
\def\xsigmv#1{ \xsigm_{#1} }
\def\xsigmo{ \xsigmv{1} }
\def\xsigmt{ \xsigmv{2} }
\def\yk{ l }
\def\thl{\thinlines}
\def\thlb{\thinlines}
\def\brtwpvv#1#2#3{
\setlength{\unitlength}{0.2mm}
\begin{picture}(#3,30)(-30,0)
\thl
\put(0,0){\oval(10,10)[bl]}
\put(-5,0){\line(0,1){20}}
\put(-10,20){\oval(10,10)[t]}
\put(-20,0){\oval(10,10)[br]}
\put(-2,3){\line(1,0){2}}
\put(-2,17){\line(1,0){2}}
\put(-8,3){\line(-1,0){17}}
\put(-8,17){\line(-1,0){17}}
\put(-20,-5){\line(-1,0){5}}
\put(-27,-10){$\circ$}
\put(-27,-22){$\scriptstyle{#1}$}
\put(8,5){$\scriptstyle{#2}$}
\put(-18,2){$\cdot$}
\put(-18,7){$\cdot$}
\end{picture}
}
\def\yvspv#1{
\xygraph{
!{0;/r1pc/:}
[d(#1)]
*{}
}
}
\def\yvspoh{ \yvspv{1.5} }
\def\brtwpv#1#2{ \brtwpvv{#1}{#2}{50} }
\def\tmrvv#1#2{
\setlength{\unitlength}{0.2mm}
\begin{picture}(#2,30)(-30,0)
\thl
\put(-10,0){\oval(10,10)[br]}
\put(-5,0){\line(0,1){20}}
\put(-10,20){\oval(10,10)[t]}
\put(-10,0){\oval(10,10)[bl]}
\put(-2,3){\line(1,0){2}}
\put(-2,17){\line(1,0){2}}
\put(-8,3){\line(-1,0){17}}
\put(-8,17){\line(-1,0){17}}
\put(8,5){$\scriptstyle{#1}$}
\put(-18,2){$\cdot$}
\put(-18,7){$\cdot$}
\end{picture}
}
\def\tmrv#1{ \tmrvv{#1}{50} }
\def\tmrpvv#1#2#3{
\bsymalg{
\setlength{\unitlength}{0.2mm}
\begin{picture}(#3,30)(-30,0)
\thl
\put(-10,0){\oval(10,10)[br]}
\put(-5,0){\line(0,1){20}}
\put(-10,20){\oval(10,10)[t]}
\put(-10,0){\oval(10,10)[bl]}
\put(-2,3){\line(1,0){2}}
\put(-2,17){\line(1,0){2}}
\put(-8,3){\line(-1,0){17}}
\put(-8,17){\line(-1,0){17}}
\put(8,5){$\scriptstyle{#1}$}
\put(-18,2){$\cdot$}
\put(-18,7){$\cdot$}
\put(-15,-20){$\scriptstyle{#2}$}
\end{picture}
}
}
\def\tmrpv#1#2{ \tmrpvv{#1}{#2}{50} }
\def\brtwpfvv#1#2{
\setlength{\unitlength}{0.2mm}
\begin{picture}(#2,30)(-30,0)
\thl
\put(0,0){\oval(10,10)[bl]}
\put(-5,0){\line(0,1){20}}
\put(-10,20){\oval(10,10)[t]}
\put(-20,0){\oval(10,10)[br]}
\put(-2,3){\line(1,0){2}}
\put(-2,17){\line(1,0){2}}
\put(-8,3){\line(-1,0){17}}
\put(-8,17){\line(-1,0){17}}
\put(-20,-5){\line(-1,0){5}}
\put(8,5){$\scriptstyle{#1}$}
\put(-18,2){$\cdot$}
\put(-18,7){$\cdot$}
\end{picture}
}
\def\brtwpfv#1{ \brtwpfvv{#1}{45} }
\def\brtwpnmoe{
\setlength{\unitlength}{0.2mm}
\begin{picture}(70,30)(-30,0)
\thl
\put(0,0){\oval(10,10)[bl]}
\put(-5,0){\line(0,1){20}}
\put(-10,20){\oval(10,10)[t]}
\put(-20,0){\oval(10,10)[br]}
\put(-2,3){\line(1,0){2}}
\put(-2,17){\line(1,0){2}}
\put(-8,3){\line(-1,0){17}}
\put(-8,17){\line(-1,0){17}}
\put(-20,-5){\line(-1,0){5}}
\put(8,5){$\scriptstyle{n-1}$}
\put(-18,2){$\cdot$}
\put(-18,7){$\cdot$}
\put(0,32.5){\line(-1,0){25}}
\end{picture}
}
\def\brrtpv#1#2{
\setlength{\unitlength}{0.2mm}
\begin{picture}(45,30)(-20,5)
\thl
\put(0,-5){\line(1,0){6}}
\put(0,0){\oval(10,10)[bl]}
\put(-5,0){\line(0,1){20}}
\put(-10,20){\oval(10,10)[tr]}
\put(-16,25){\line(1,0){6}}
\put(-2,3){\line(1,0){8}}
\put(-2,17){\line(1,0){8}}
\put(-8,3){\line(-1,0){8}}
\put(-8,17){\line(-1,0){8}}
\put(-17,20){$\circ$}
\put(-17,33){$\scriptstyle{#1}$}
\put(13,5){$\scriptstyle{#2}$}
\put(-15,2){$\cdot$}
\put(-15,7){$\cdot$}
\end{picture}
}
\def\brlfpv#1#2{
\setlength{\unitlength}{0.2mm}
\begin{picture}(45,30)(-30,0)
\thl
\put(-10,25){\line(1,0){5}}
\put(-10,20){\oval(10,10)[tl]}
\put(-20,0){\oval(10,10)[br]}
\put(-5,3){\line(-1,0){20}}
\put(-5,17){\line(-1,0){20}}
\put(-20,-5){\line(-1,0){5}}
\put(-27,-10){$\circ$}
\put(-27,-22){$\scriptstyle{#1}$}
\put(4,5){$\scriptstyle{#2}$}
\put(-18,2){$\cdot$}
\put(-18,7){$\cdot$}
\end{picture}
}
\def\brcapv#1{
\setlength{\unitlength}{0.2mm}
\begin{picture}(45,30)(-20,5)
\thl
\put(0,-5){\line(1,0){6}}
\put(0,0){\oval(10,10)[bl]}
\put(-5,0){\line(0,1){20}}
\put(0,20){\oval(10,10)[tl]}
\put(0,25){\line(1,0){6}}
\put(-2,3){\line(1,0){8}}
\put(-2,17){\line(1,0){8}}
\put(-8,3){\line(-1,0){8}}
\put(-8,17){\line(-1,0){8}}
\put(13,5){$\scriptstyle{#1}$}
\put(-15,2){$\cdot$}
\put(-15,7){$\cdot$}
\end{picture}
}
\def\brcupvv#1#2#3#4{
\setlength{\unitlength}{0.2mm}
\begin{picture}(#3,30)(#4,5) 
\thl
\put(-20,25){\line(-1,0){5}}
\put(-20,20){\oval(10,10)[tr]}
\put(-20,0){\oval(10,10)[br]}
\put(-5,3){\line(-1,0){20}}
\put(-5,17){\line(-1,0){20}}
\put(-20,-5){\line(-1,0){5}}
\put(#2,5){$\scriptstyle{#1}$}
\put(-18,2){$\cdot$}
\put(-18,7){$\cdot$}
\end{picture}
}
\def\brcupv#1{ \brcupvv{#1}{-37}{45}{-40} }
\def\brccpv#1{
\setlength{\unitlength}{0.2mm}
\begin{picture}(55,30)(-30,5)
\thl
\put(0,0){\oval(10,10)[bl]}
\put(-5,0){\line(0,1){20}}
\put(-20,20){\oval(10,10)[tr]}
\put(-20,25){\line(-1,0){5}}
\put(0,20){\oval(10,10)[tl]}
\put(-20,0){\oval(10,10)[br]}
\put(-2,3){\line(1,0){7}}
\put(-2,17){\line(1,0){7}}
\put(-8,3){\line(-1,0){17}}
\put(-8,17){\line(-1,0){17}}
\put(-20,-5){\line(-1,0){5}}
\put(0,-5){\line(1,0){5}}
\put(0,25){\line(1,0){5}}
\put(13,5){$\scriptstyle{#1}$}
\put(-18,2){$\cdot$}
\put(-18,7){$\cdot$}
\end{picture}
}
\def\brcapn{ \brcapv{n} }
\def\cbrcapnsh{ \bsymcatsh{\brcapn} }
\def\brcupn{ \brcupv{n} }
\def\cbrcupnsh{ \bsymcatsh{\brcupn} }
\def\brccpn{ \brccpv{n} }
\def\cbrccpnsh{ \bsymcatsh{\brccpn} }
\def\tmrn{ \tmrv{n} }
\def\tmrpnk{ \tmrpv{n}{k} }
\def\tmrpmk{ \tmrpv{m}{k} }
\def\brtwptn{ \brtwpv{2}{2n} }
\def\brtwpn{ \brtwpv{2}{n} }
\def\brtwpon{ \brtwpv{1}{n} }
\def\brtwpoi{ \brtwpv{1}{i} }
\def\brtwpoto{ \brtwpvv{1}{\ztl+1}{80} }
\def\brtwpfnmo{ \brtwpfvv{n-1}{70} }
\def\cbrtwptnsh{ \bsymcatsh{\brtwptn} }
\def\brtwpfn{ \brtwpfv{n} }
\def\cbrtwpfnsh{ \bsymcatsh{\brtwpfn} }
\def\brrtpon{ \brrtpv{1}{n} }
\def\brlfpon{ \brlfpv{1}{n} }
\def\Dsymbim#1{ \bsymcat{#1}_{\xdrv} }
\def\Dbrtwptn{ \Dsymbim{\brtwptn} }
\def\Zbrtwptn{ \bsymcat{\brtwptn\,} }
\def\Dobrotn{ \Dsymbim{\gobrotn} }
\def\Dobrmtn{ \Dsymbim{\gobrmtn} }
\def\Kobrmtn{ \Kbsymbim{\gobrmtn} }
\def\Zobrotn{ \bsymcat{\gobrotn} }
\def\Kidbrtn{ \Kbsymbim{\,\gidbrtn} }
\def\Didbrtn{ \Dsymbim{\,\gidbrtn} }
\def\Zidbrtn{ \bsymcat{\,\gidbrtn} }
\def\Kidbrtn{ \Kbsymbim{\,\gidbrtn} }
\numberwithin{equation}{section}
\title[A categorification of the stable $\SUt$ WRT invariant
 of links in $\Sot$]
{A categorification of the stable $\SUt$ Witten-Reshetikhin-Turaev invariant
 of links in $\Sot$}
\author[L.~Rozansky]{Lev Rozansky}
\address{
L.~Rozansky\\
Department of Mathematics\\
University of North Carolina at Chapel Hill\\
CB \# 3250, Phillips Hall\\
Chapel Hill, NC 27599
}
\email{rozansky@math.unc.edu}
\thanks{The work of L.R. was supported in part by the NSF grant DMS-0808974}
\begin{document}
\maketitle
\begin{abstract}
The \WRT\ invariant of a link $\xL$ in $\Sot$ at sufficiently high
values of the level $\xr$ can be expressed as an evaluation of a
special polynomial invariant of $\xL$ at $q=\exp(\pi i/\xr)$. We
categorify this polynomial invariant by associating to $\xL$
a bi-graded homology whose graded Euler characteristic is equal to
this polynomial.

If $\xL$ is presented as a circular closure of
a tangle $\xtau$ in $\Sot$, then the homology of
$\xL$ is defined as the \Hhom\ of the $\xaHn$-bimodule associated
to $\xtau$ in\cx{Kh1}. This homology can also be expressed as a
stable limit of the Khovanov homology of the circular closure of $\xtau$ in
$\Sh$ through a \cbr\ with high twist.

\end{abstract}
\tableofcontents

\section{Introduction}

\subsection{The stable \WRT\ invariant of links in a 3-sphere with handles}

Let $\xIn(\xyO)\in\Zsqqi$ be a Laurent series of $q$ which is a
topological invariant of an object $\xyO$ (\eg  a link in a
3-manifold). A general idea of a (weak) categorification, as presented in\cx{Kh1}, is to
associate to $\xyO$ a $\ZZ\oplus\ZZ$-graded vector space $\xyH(\xyO) =
\bigoplus_{i,j\in\ZZ}\xyHij(\xyO)$, the first degree being of homological nature,
such that its graded Euler characteristic is
equal to $\xIn(\xyO)$:
\xlee{eqa:01}
\xIn(\xyO) = \sum_{i,j\in\ZZ}
(-1)^i\,q^j\,\dim\xyHij(\xyO).
\xeee
A full categorification extends this assignment to
a functor from the category of link cobordisms
to the category of homogeneous maps between bigraded vector
spaces.

Let $\xL$ be a framed link in a 3-manifold $\xM$. The $\SUt$ \fWRT\ (\WRT) invariant
$\ZrLM$ is a $\IC$-valued topological invariant of the pair $\pLM$
which depends on an integer number $\xr\geq 2$. We assume that the components
of $\xL$ are `colored' by the fundamental representation of $\SUt$
and if $\xL$ is an empty link, then we omit it from notations.

Generally, the dependence of
$\ZrLM$ on $\xr$ seems random
and the non-polynomial nature of $\ZrLM$ presents
a challenge for anyone trying to categorify it.
However, there is a special class of 3-manifolds (\tsswh) for
which $\ZrLM$ is almost polynomial.
Let $\Shck$ denote a 3-sphere $\Sh$ from which
$\xyk$ 3-balls are cut. Let $\Shgk$ denote oriented manifold constructed
by gluing $2\xyg$ spherical boundary components  of $\Shcv{2\xyg + \xyk}$
pairwise. A \emph{\tswgh} $\Shg$ is defined as $\Shvv{\xyg}{0}$:
$\Shg = \Shvv{\xyg}{0}$. Alternatively, $\Shg$
can be constructed by gluing together two \hbds\ of genus
$\xyg$ through the identity isomorphism of their boundaries.

A link $\xL$ in $\Shg$ can be constructed by taking a
$(n_1,\ldots, n_{2\xyg})$-tangle $\xtau$ in $\Shctg$ with
pairwise matching valences $n_i=n_{i+\xyg}$ and applying the pairwise gluing
to the boundary components of $\Shctg$ and to the tangle endpoints
which reside there. If at least one of the valences  is odd,
then the \WRT\ invariant  associated with the resulting link is zero
for any $\xr$,
hence we will always assume that all valences are even.
For a $(2n_1,\ldots, 2n_{2\xyg})$-tangle $\xtau$ in
$\Shctg$ we define the critical level:
\ylee{eqa:2}
\rcrvv{\Shctg}{\xtau} = \max(n_1,\ldots,n_{\xyg}) + 2.
\yeee
We define the critical level $\rcrShgL$ of a link $\xL\subset\Shg$ as the
minimum of $\rcrvv{\Shctg}{\xtau}$ over all the presentation of
$\xL$ as the closure of a tangle.

The following theorem is easy to prove:
\begin{theorem}
For a link $\xL$ in a \tswgh\ $\Shg$ there exists a unique rational
function $\JnSgLq\in \Qq$ such that if $\xr\geq \rcrShgL$, then the
modified \WRT\ invariant
\ylee{eqa:1a}
\ZtrLShg = \frac{\ZrLShg}{\ZrShg}
\yeee
is equal to the evaluation of $\JnSgLq$ at  $q=\exp(i\pi/\xr)$:
\xlee{eqa:1}
\ZtrLShg = \JnSgLq|_{q=\exp(i\pi/\xr)}.
\xeee
\end{theorem}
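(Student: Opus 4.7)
The plan is to obtain $\ZrLShg$ directly from a tangle presentation of $\xL$ realizing the critical level $\rcrShgL$, and to observe that the outcome is the evaluation at $q=\exp(i\pi/\xr)$ of a fixed element of $\Qq$ for every $\xr\ge\rcrShgL$.

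First I would fix a presentation of $\xL$ as the closure of a $(2n_1,\ldots,2n_{2\xyg})$-tangle $\xtau$ in $\Shctg$ that realizes $\rcrShgL$, and apply the handle--gluing (Kirby-color) formula for the \WRT\ invariant. This expresses $\ZrLShg$ as a finite sum, over admissible colorings $(a_1,\ldots,a_{\xyg})$ of the $\xyg$ handles by level-$\xr$ irreducibles of $\SUt$, of Kauffman brackets of $\xtau$ in which each pair of matched boundary disks is cabled by a Jones--Wenzl idempotent $p_{a_i}$, weighted by $\prod_{i=1}^{\xyg}[a_i+1]$.

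The key point is that the hypothesis $\xr\ge\rcrShgL=\max n_i+2$ is tailored to guarantee simultaneously that (i) the set of colorings $(a_1,\ldots,a_{\xyg})$ actually contributing is the maximal one allowed by the valences of $\xtau$, hence independent of $\xr$; and (ii) each Jones--Wenzl idempotent $p_{a_i}$ is a well-defined element of the Temperley--Lieb algebra over $\Qq$ whose specialization at $q=\exp(i\pi/\xr)$ is nonsingular, because its recursive construction only inverts quantum integers of index at most $\max n_i$, none of which vanish at this root of unity when $\xr\ge\max n_i+2$. Granting (i) and (ii), every summand is visibly the evaluation at $q=\exp(i\pi/\xr)$ of a fixed element of $\Qq$, so the whole sum assembles into a rational function $F_{\xtau}(q)\in\Qq$ with $\ZrLShg=F_{\xtau}(q)|_{q=\exp(i\pi/\xr)}$. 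The same argument applied to $\xL=\varnothing$ yields $\ZrShg=G_{\xyg}(q)|_{q=\exp(i\pi/\xr)}$, and one sets $\JnSgLq=F_{\xtau}(q)/G_{\xyg}(q)$.

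Uniqueness of $\JnSgLq$, and independence of the construction from the chosen tangle presentation, both follow from the identity theorem: any two rational functions agreeing on the infinite set $\{\exp(i\pi/\xr):\xr\ge\rcrShgL\}$ must coincide in $\Qq$. I expect the main technical obstacle to be the precise verification of the critical-level bound $\max n_i+2$: tracking exactly which Jones--Wenzl projectors and which quantum integers enter the formula, and justifying why the pairwise matching of boundary components improves the naive bound $2\max n_i+2$ by a factor of two. Once this bound is in place, the rationality and uniqueness arguments are formal.
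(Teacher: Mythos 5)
Your overall plan --- present $\xL$ as a tangle closure, apply the Reshetikhin--Turaev surgery formula with Kirby-colored meridians of the $\xyg$ handles, and deduce uniqueness from the fact that a rational function is determined by its values on the infinite set $\{\exp(i\pi/\xr)\}$ --- is the same route the paper takes (it carries out the computation for $\xyg=1$ in the appendix), and the uniqueness step is fine. But your step (i) is false as stated, and the error sits exactly where the content of the theorem is. In the Kirby-color formula the index $a_i$ runs over all of $0,\ldots,\xr-2$, and essentially every term contributes: a $p_a$-colored zero-framed meridian encircling $2n$ strands acts on the through-degree-$m$ part of those strands by the scalar $(-1)^{a(m+1)}[(a+1)(m+1)]/[m+1]$, which is generically nonzero. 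So the set of contributing colorings is not ``the maximal one allowed by the valences''; it grows with $\xr$, and no individual summand is the evaluation of an $\xr$-independent element of $\Qq$. What makes the theorem true is that the weighted sum over $a$ collapses via the Gauss-sum identity
$\sum_{a=0}^{\xr-2}(-1)^{am}\,(q^{(a+1)(m+1)}-q^{-(a+1)(m+1)})(q^{a+1}-q^{-a-1})\big|_{q=\exp(i\pi/\xr)}$, which equals $\mp 2\xr$ when $m\equiv 0$ or $-2 \pmod{2\xr}$ and $0$ otherwise. This converts the Kirby sum into a sum of closures against the through-degree projectors $\jwpxvv{2n_i}{m}$ with $m\equiv 0,-2\pmod{2\xr}$ and $m\le 2n_i$, of which only $m=0$ survives precisely when $2\xr-2>2n_i$, i.e.\ $\xr\ge n_i+2$. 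This also resolves the ``factor of two'' you flag at the end: the improvement from $2n_i+2$ to $n_i+2$ is automatic because the surviving through-degrees recur with period $2\xr$, not $\xr$. Since this collapse is precisely the step you defer as ``the main technical obstacle,'' the proposal as written does not yet prove the theorem.

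Two further points. First, your accounting in (ii) of which quantum integers are inverted is off: the surviving expression involves the projector $\jwpxtnz$, whose coefficients are the entries of the inverse Gram matrix of the Temperley--Lieb pairing on crossingless matchings of $2n_i$ points; its determinant involves $[k]$ up to $k=n_i+1$, so nonvanishing requires $\xr\ge n_i+2$, not $\xr\ge n_i+1$ as your ``index at most $\max n_i$'' would give. Second, the raw invariants $\ZrLShg$ and $\ZrShg$ each carry normalization prefactors (powers of the global dimension and the $1/(2\xr)$ from the surgery formula) that are not evaluations of fixed rational functions of $q$; only their ratio is. Your final division of $F_\tau$ by $G_{\xyg}$ handles this implicitly, but the intermediate assertions that $\ZrLShg$ and $\ZrShg$ are separately of the form $F(q)|_{q=\exp(i\pi/\xr)}$ are not literally correct.
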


We call $\JnSgLq$ the \stWRTi\ of $\xL\subset\Shg$.
We expect that Laurent series expansion of $\JnSgLq$ at $q=0$ can be categorified
in the sense of \ex{eqa:01}.

If $\xyg=0$, then $\Shg$ is a 3-sphere $\Sh$. In this case
$\rcrvv{\Sh}{\xL}=2$ and $\JnLq$ is the Jones polynomial whose
categorification was constructed in\cx{Kh1}.

In this paper we consider the case of $\xyg=1$, that is, we study
$\Sho=\Sot$. It turns out that the corresponding \stWRTi\
$\JnLotq$ is again a Laurent polynomial, and we construct its
categorification.

\subsection{A 3-dimensional \orTQFT}
\label{ss:ortqft}

The \stWRTi\ $\JnSgLq$ comes from a \strtWRTt. Let us recall the
definition of a 3-dimensional \orTQFT\ (a generalization to $n$
dimensions is obvious).

Let $\rF$ be a field with an involution
\xlee{eq:finv}
\finvm\colon\rF\longrightarrow\rF.
\xeee
Usually, $\rF=\IC$, $\finvm$ being the complex conjugation.

Let $\xrS$ be the
set of \adcbs: its elements are some closed oriented connected
2-manifolds with marked points. $\xrS$ always includes the empty
surface $\emptyset$.

Let $\bbM$ be a 3-dimensional manifold with a boundary
consisting of $k$ connected components from $\xrS$, each having $n_i$ ($i=1,\ldots,k$) marked points.
An \emph{\idfn} of the boundary of $\bbM$ is a choice of a
diffeomorphism
between every connected component of $\del\bbM$
and its `standard copy' in $\xrS$.

A $\bfn$-tangle in $\bbM$, where $\bfn=(n_1,\ldots,n_k)$, is an embedding of a disjoint union of segments
and circles in $\bbM$ such that the endpoints of segments are
mapped one-to-one to the marked points on the boundary $\del\bbM$.
Our tangles are assumed to be framed.
Let $\tyTngbnM$ be the set of all $\bfn$-tangles in $\bbM$
distinguished up to boundary fixing ambient isotopy.

A diffeomorphism $\bbM\rightarrow\bbM$ is called \emph{\yrlt} if it acts
trivially on the boundary of $\bbM$. Let $\yMpM$ denote the
mapping class group of \yrlhm s of $\bbM$. This
group acts on $\tyTngbnM$, and we refer to the elements of the quotient
$\yTngbnM=\tyTngbnM/\yMpM$ as \emph{\rtng s}, that is, \rtng s are
tangles distinguished up to \yrlhm s.

Let $\xrM$ be the set of \adcm s: its elements are
some oriented 3-manifolds with independently oriented boundary components, each boundary
component being admissible. The set $\xrM$ must include the
manifolds $\adS\times\bbI$ for all $\adS\in\xrS$ and $\bbI=[0,1]$.
Also $\xrM$ must be closed with respect to two operations. The
first operation is the disjoint union of admissible manifolds. The
second operation is the gluing of two diffeomorphic connected
components of $\del\bbM$, provided that they have opposite
relative
orientations with respect to the orientation of $\bbM$.

%
%

To an admissible boundary $\adS\in\xrS$ the \TQFT\ associates
a Hilbert space $\cH(\adS)$ over $\rF$ with three isomorphisms
\xlee{eq:invmd}
\dflp,\dsym \colon\cH(\adS)\longrightarrow\cHv(\adS),\qquad
\finvm\colon\cH(\adS)\longrightarrow\cH(\adS).
\xeee
Here $\cHv$ is the
dual of $\cH$ and the maps $\dsym$ and $\finvm$ are `anti-linear':
they involve the involution\rx{eq:finv} of the field $\rF$. The
maps\rx{eq:invmd} should satisfy the relations
\xlee{eq:relinv}
\dflpv{(\finv{\xxv})}=\dsymv{\xxv},\qquad
\dsymv{(\finv{\xxv})}=\dflpv{\xxv},\qquad
\finv{\finv{\xxv}}=\xxv.
\xeee
for all $\xxv\in\cH(\adS)$.
If any two of three isomorphisms\rx{eq:invmd} are defined,
then the third one is determined by either of the first two relations\rx{eq:relinv}.
Slightly abusing notations, we will use the same notations $\dflp$
and $\dsym$ for the inverses of the isomorphisms\rx{eq:invmd}; in
each case it will be clear whether a direct or an inverse isomorphism is used.


To a disjoint union of admissible boundaries
\TQFT\ associates a tensor product of Hilbert spaces over $\rF$:
\ylee{eq:tnprh}
\cH(\adS_1\sqcup\cdots\sqcup\adS_k) =
\cH(\adS_1)\otimes\cdots\otimes\cH(\adS_k).
\yeee
For the empty boundary $\cH(\emptyset)=\rF$.

Let $\bbM$ be an admissible manifold. The relative orientation of
$\bbM$ and connected components of its boundary
separates $\del\bbM$ into two disjoint components: the \inb\
boundary and the \outb\ boundary: $\del\bbM =
\xinv{\del\bbM}\sqcup\xoutv{\del\bbM}$.
Define
\ylee{eq:dfhsp}
\cHdbM=\cH\xoutv{\del\bbM}\otimes\cHv\xinv{\del\bbM}.
\yeee
For every \idfn\ of $\del\bbM$  the
\TQFT\ provides a \stmp, which maps \rtng
s $\xtau$ in $\bbM$ to elements of the Hilbert space of its boundary:
\xlee{eq:stmptqft}
\xstmp{\xdummy}:\yTngbnM\longrightarrow \cHdbM
,\qquad
(\xtau,\bbM)\mapsto\xstmp{\xtau,\bbM}.
\xeee
%


The \stmp\ should satisfy the following axioms:
\begin{description}

\item[Change of orientation of the boundary component]
A change in the orientation of a boundary component of $\bbM$
results in the application of the $\dflp$ map to the corresponding factor in the
Hilbert space
$\cHdbM$.

\item[Change of orientation of $\bbM$]
If $\bbM\p$ is the manifold $\bbM$ with reversed orientation, then
$\xstmp{\xtau,\bbM\p} =
\dsymv{\xstmp{\xtau,\bbM}}\in\cHvdbM$
(since we did not change the orientation of the boundary
components, the \inb\ boundary component of $\bbM$ becomes the
\outb\ boundary component of $\bbM\p$ and vice versa).

\item[Disjoint union] For two admissible pairs $(\xtauo,\bbM_1)$
and $(\xtaut, \bbM_2)$, the \stmp\ of their disjoint union is
$\xstmp{\xtauo\sqcup\xtaut,\bbM_1\sqcup\bbM_2} =
\xstmp{\xtauo,\bbM_1}\otimes\xstmp{\xtaut,\bbM_2}$.

\item[Gluing]

Suppose that the same admissible boundary component $\adS$ appears
in the \inb\ and in the \outb\ parts of $\bbM$. Let $\bbM\p$ be
the manifold constructed by gluing these components together (according to their identifications with
the `standard copy'). Then
$\xstmp{\xtau,\bbM\p}$ is the result of canonical pairing $\xprng$ applied
to the tensor product $\cH(\adS)\otimes\cHv(\adS)$ within the
Hilbert space
$\cHdbM$.
\end{description}

If follows from the change of orientation axioms and
relations\rx{eq:relinv} that if $\bbM\p$ is the manifold $\bbM$ in
which the orientation of $\bbM$ and of its boundary are reversed
simultaneously, then $\xstmp{\xtau,\bbM\p} =
\wfinv{\xstmp{\xtau,\bbM}}$.

In the \strtWRTt\ the base field $\rF$ is $\Qq$ -- the field of rational
functions of $q$ with rational coefficients. Admissible boundaries
are  2-spheres with $2n$ marked points, $n\geq 0$ and admissible
3-manifolds are $\Shgk$ and their disjoint unions.
In this paper we will consider a more restricted (`\toy') version of this
theory. Namely, the admissible 3-manifolds are only $\Shv{0}=\Sh$
(a 3-sphere), $\Shvv{0}{1}=\bbB^3$ (a 3-ball), $\Shvv{0}{2}=\StI$,
$\Shv{1}=\Sot$ and their disjoint unions.
%

We will show in subsections\rw{sss:relstwrt} that $\JnLq =
\xstmp{\xL,\Sh}$ is equal to the Jones polynomial of $\xL$ and we
will prove in subsection\rw{ss:relstwrt}
that the invariant $\JnLotq = \xstmp{\xL,\Sot}$ is a Laurent
polynomial of $q$  and satisfies the
property\rx{eqa:1}, which is this case takes the form
\xlee{eq:relfr}
\ZrLSot = \xstmp{\xL,\Sot}|_{q=\exp(i\pi/\xr)}
\xeee
for $\xr\geq \rcrSotL$,
because $\ZrSot=1$.
\subsection{A \Zgrdd\ \walg\ categorification}

In this paper we will construct a \walg\ categorification of the
\toy\ \strtWRTt.

Let us recall basic facts about a \Zgrdd\ version of a \walg\ categorical
3-dimensional \TQFT. We use the same set of admissible boundaries
$\xrS$ and the set of admissible 3-manifolds $\xrM$ as in previous
subsection.
To an admissible boundary $\adS\in\xrS$ an \alcat\ \TQFT\
associates a \Zgrdd\ \xalg\ $\cA(\adS)$ and an additive category
$\cC(\adS) = \Dbgmd{\cA(\adS)}$, that is, the bounded derived category
of \Zgrdd\ $\cA(\adS)$-modules. Each \xalg\ has a canonical
involution
\xlee{eq:alginv}
\dflp\colon \cA(\adS)\longrightarrow\cAop(\adS)
\xeee
which
preserves \Zgrd.
There are two equivalence functors
\xlee{eq:eqfns}
\dflp,\dsym\colon \Dbgmd{\cA(\adS)}\longrightarrow\Dbgmd{\cAop(\adS)}.
\xeee
The functor $\dflp$ is covariant: it turns a complex of
$\cA(\adS)$-modules into a complex of $\cAop(\adS)$-modules with
the help of the involution\rx{eq:alginv} and shifts its homological and $\ZZ$
gradings by an amount depending on $\adS$. The functor $\dsym$ is
contravariant: it maps objects to their duals and then shifts
their degree by an amount depending on $\adS$.
%
The contravariant functor
\ylee{eq:fcnjg}
\finvm\colon \Dbgmd{\cA(\adS)}\longrightarrow\Dbgmd{\cA(\adS)}
\yeee
is the composition of $\dflp$ with the inverse of $\dsym$ or the
other way around. In other words, $\finvm$ turns a complex of
$\cA(\adS)$-modules into the dual complex of
$\cAop(\adS)$-modules, then replaces $\cAop(\adS)$ by
$\cA(\adS)$ with the help of the involution\rx{eq:alginv} and finally performs a degree shift
which depends on $\adS$. Three
functors satisfy the
relations\rx{eq:relinv}, where this time $\xxv$ is an object of
$\Dbgmd{\cA(\adS)}$.

To a disjoint union of admissible boundaries an \alcat\ \TQFT\
associates a tensor product of algebras
\ylee{eq:tnpal}
\cA(\adS_1\sqcup\cdots\sqcup\adS_k) =
\cA(\adS_1)\otimes\cdots\otimes\cA(\adS_k).
\yeee
and, consequently, the category
\xlee{eq:tnpcat}
\cC(\adS_1\sqcup\cdots\sqcup\adS_k) =
\bDbgmd{ \cA(\adS_1)\otimes\cdots\otimes\cA(\adS_k) }.
\xeee
%
For an empty boundary $\cA(\emptyset)=\IQ$ and $\cC(\emptyset)$ is
the category of bounded complexes of $\ZZ\oplus\ZZ$-graded vector spaces over $\IQ$
(the first grading is homological and the second is related to
\Zgrd\ of algebras). Equivalently, this is a category of $\ZZ\oplus\ZZ$-graded vector spaces over $\IQ$
(both categories are related by taking homology).

Define
\ylee{eq:dfbalg}
\cAdbM = \cA\xoutv{\del\bbM}\otimes\cAop\xinv{\del\bbM},\qquad
\cCdbM = \bDbgmd{\cAdbM}.
\yeee
For every identification of the boundary of an admissible manifold $\bbM$
the \alcat\ \TQFT\ provides an \obmp
\begin{gather}
\label{eq:obcat}
\xobmp{\xdummy}:\yTngbnM\longrightarrow
\cCdbM,\qquad
(\xtau,\bbM)\mapsto\xobmp{\xtau,\bbM}.
\end{gather}
If $\del \bbM=\emptyset$, so that $\xtau$ is a link $\xL$ in
$\bbM$, we take the homology of the complex $\xobmp{\xL,\bbM}$ and
refer to it as the \sthm\ of the link $\xL\subset\bbM$:
\ylee{eq:khhomdf}
\Hstb(\xL,\bbM) = \Hmb(\xobmp{\xL,\bbM}).
\yeee
%


The \obmp\ should satisfy the following axioms:
\begin{description}
\item[Change of orientation of $\bbM$]
If $\bbM\p$ is the manifold $\bbM$ with reversed orientation, then
$\xobmp{\xtau,\bbM\p} =
\dsymv{\xobmp{\xtau,\bbM}}\in\bDbgmd{
\cAopdbM}$.

\item[Disjoint union] For two admissible pairs $(\xtauo,\bbM_1)$
and $(\xtaut, \bbM_2)$, the \obmp\ of their disjoint union is
$\xobmp{\xtauo\sqcup\xtaut,\bbM_1\sqcup\bbM_2} =
\xobmp{\xtauo,\bbM_1}\otimes\xobmp{\xtaut,\bbM_2}$.

\item[Change of orientation of a boundary component]
A change in the orientation of a boundary component of $\bbM$
results in the application of the $\dflp$ functor to the corresponding factor in the
category of \ex{eq:obcat}.

\item[Gluing]

Suppose that the same admissible boundary component $\adS$ appears
in the \inb\ and in the \outb\ parts of $\bbM$. Let $\bbM\p$ be
the manifold constructed by gluing these components together. Then
$\xobmp{\xtau,\bbM\p}$ is the result of derived tensor product $\Loti$ applied
to the category $\bDbgmd{\cA(\adS)\otimes\cAop(\adS)}$ within the
category $\bDbgmd{\cAdbM}$.
\end{description}

An application of \Grtn 's $\Kz$-functor to the elements of a
\walg\ categorical \TQFT\ construction produces an ordinary \TQFT\ described in
the previous subsection.
This time $\rF$ is the field $\Qq$ of rational functions of $q$
and $\finv{q} = \qi$. The \Grtg\ $\Kz\xlrb{\Dbmd{\cA(\adS)}}$ is a
module over $\Zqqi$, the multiplication by $q$ corresponding to
the translation $\qsho$ of \Zgrd.
A Hilbert space $\cH(\adS)$ is the \Grtg\
$\QKz\xlrb{\Dbmd{\cA(\adS)}}$
defined as an extension to $\Qq$:
\xlee{eq:kzcat}
\cH(\adS)=\QKz\xlrb{\Dbmd{\cA(\adS)}}=\Kz\xlrb{\Dbmd{\cA(\adS)}}\otimes_{\Zqqi}\Qq.
\xeee

There is a canonical isomorphism
$\cHv(\adS)=\Kz\xlrb{\Dbmd{\cAop(\adS)}}$, because for two objects
$\zzA\in\Dbmd{\cA(\adS)}$ and $\zzB\in\Dbmd{\cAop(\adS)}$ there is
a pairing $\Kz(\zzB) \xprng \Kz(\zzA) = \chi\xlrb{\Tor(\zzB,\zzA)}$,
where $\chi$ is the Euler characteristic. Hence $\Kz$ reduces the
functors\rx{eq:eqfns} to the corresponding
involutions\rx{eq:invmd}.

Overall, for every admissible manifold $\bbM$ there should be a commutative
triangle
\xlee{eq:cmtr}
\xymatrix{
&\yTngbnM
\ar[dr]^-{\xstmp{\xdummy}}
\ar[dl]_-{\xobmp{\xdummy}}
\\
\cCdbM
\ar[rr]^-{\Kz}
&&
\cHdbM
}
\xeee
and if a manifold $\bbM\p$ is constructed by gluing together two
matching boundary components of $\bbM$, then there should be a
commutative prism
\xlee{eq:cmprism}
\xymatrix{
\yTngbnM
\ar[rr]^-{\xglu}
\ar[dd]_-{\xstmp{\xdummy}}
\ar[dr]^-{\xobmp{\xdummy}}
&&
\yTngbnMp
\ar[dd]^-(0.25){\xstmp{\xdummy}}|(0.5)\hole
\ar[dr]^-{\xobmp{\xdummy}}
\\
&
\cCdbM
\ar[rr]^-(0.3){\Loti}
\ar[dl]^-{\Kz}
&&
\cCdbMp
\ar[dl]^-{\Kz}
\\
\cHdbM
\ar[rr]^-{\xprng}
&&
\cHdbMp
}
\xeee

\subsection{A brief account of the results and the plan of the paper}
We construct a weak categorification of the \tstyWRTt. Our
construction is based on \Zgrdd\ \xring s $\xaHn$ introduced in\cx{Kh2}.
In the same paper, a complex $\Ktau$ of $\xaHn\otimes\xaHopm$-modules is
associated to a diagram of a \ttngtmtn\ $\xtau$.

In this paper we associate the \xring\
$\xaHn$ and a derived category of \Zgrdd\ $\xaHn$-modules $\DbgHn$
to a 2-sphere with $2n$ marked points. A tangle $\xtau$ in a
3-ball can be considered as a \ttngztn, hence we associate to it
the same complex $\Ktau$ this time considered as an
object of $\DbgHn$. A link $\xL$ in $\Sh$ can be constructed by gluing two tangles in
3-balls along the common boundary. As explained in\cx{Kh2}, the $\xaHn$-modules in
$\Ktau$ are projective, hence the gluing rule implies that \sthm\
of $\xL\subset\Sh$ coincides with the homology introduced in
\cx{Kh1}.

The boundary category of the manifold $\StI$ with $2m$ marked
points on the `in' boundary and $2n$ marked points on the `out'
boundary is $\Dbgmd{\xaHn\otimes\xaHopm}$. If a tangle $\xsg$ in $\StI$
is represented by a \ttngtmtn\ $\xtau$ then we set again
$\xobmp{\xsg}=\Ktau$. We prove that if two different \ttngtmtn s
$\xtauo$ and $\xtaut$ represent the same tangle in $\StI$, then
the complexes $\Ktauo$ and $\Ktaut$ are quasi-isomorphic, that is,
although they may be homotopically inequivalent, they represent
isomorphic objects in derived category
$\Dbgmd{\xaHn\otimes\xaHopm}$.

If a link $\xL$ in $\Sot$ is presented as a closure of a
\ttngtntn\ $\xtau$ within $\Sot$, then in accordance with the
gluing rule we define the \sthm\ of $\xL$ as the \Hhom\ of
$\Ktau$: $\Hst(\lSot{\xL}) = \xHHlv{\Ktau}$. We prove that this
homology does not depend on the choice of a tangle $\xtau$ which
represents $\xL$.

Finally, we suggest a practical method of computing the \sthm\
$\Hst(\lSot{\xL})$. If $\xL\subset\Sot$ can be constructed by a
$\Sot$ closure of a \ttngtntn\ $\xtau$, then it is known that the
stable invariant $\xstmp{\xL,\Sot}$ can be approximated (up to the
coefficients at high powers of $q$) by the Kauffman bracket of the
$\Sh$ closure of $\xtau$ through the torus braid with high twist
number. We show that the same is true in categorified theory:
Khovanov homology of the $\Sh$ closure of $\xtau$ through a high
twist torus braid approximates the \sthm\ of $\xL$ in $\Sot$.
In fact, a close relation between the Hochschild homology of
$\xaHn$ and the Khovanov homology of a high twist torus link was
first observed by Jozef Przytycki\cx{PrzH} in case of $n=1$.

In Section\rw{s:tlcat} we review the definition and properties of
Kauffman bracket and \TLa. We define the \tstyWRTt\ and prove that
it satisfies the axiom requirement. Then we review Khovanov
homology, Bar-Natan's universal categorification of the \TLa\cx{BN1} and
bi-module categorification of \TLa\cx{Kh2}.

In Section\rw{s:res} we present main results of the paper:
we define the categorified theory, explain how \sthm\ is related to Khovanov homology through
torus braid closure and conjecture the structure of \Hhom\ and
cohomology of \xring s $\xaHn$.

In Section\rw{s:algpro} we prove that
the categorified theory is
well-defined and satisfies the axioms.
In Section\rw{s:braid} we prove that torus braid closures of
tangles within $\Sh$
approximate the \sthm\ of their closures within $\Sot$.
In Section\rw{s:jwpst} we review the properties of \JWp s and prove the relation\rx{eq:relfr} between the complete
and stable \WRT\ invariants of links in $\Sot$.

\subsection*{Acknowledgements}

The work on this paper was started jointly with Mikhail Khovanov.
In particular, we worked out the proofs of Theorems\rw{th:exunsti}
and\rw{th:exsot} together. I am indebted to Mikhail
for numerous discussions, explanations, critical comments and encouragement.
%
%

This work is supported by the NSF grant DMS-0808974.

\section{Tangles and links}
\label{s:tlcat}
\subsection{Basic definition}
\subsubsection{Tangles}

All tangles and links in this paper are framed, and in pictures we
assume blackboard framing. We use a notation
\ylee{eq:1.2}
\xygraph{
!{0;/r1.5pc/:}
[u(0.5)]
!{\hover}
!{\hcap}
[u(0.5)l(0.25)]
}
\;\; = \;\;
\xygraph{
!{0;/r1.5pc/:}
[u(0.5)]
!{\xcapv@(0)}
[u(0.45)r(0.23)]
*{\symfr\;\scriptstyle{1}}
[u(1.5)]
}
\yeee
for a framing twist.

For a 3-manifold $\bbM$ with a boundary we have defined the set of
tangles $\tyTngM$, the relative mapping class group $\yMpM$ and
the set of \rtng s $\yTngM=\tyTngM/\yMpM$ in
subsection\rw{ss:ortqft}.
%
For a 2-dimensional manifold $\xSg$ being either $\IR^2$ or $\St$, let
$\tyTngSgmn\subset\tyTngSg$ denote the set of \ttngmn s, that is, tangles with $m$
marked points on the `in' boundary $\xSg\times\{0\}$ and with $n$
marked points on the `out' boundary $\xSg\times\{1\}$.
We use a shortcut notation $\tyTngSgn=\tyTngSgnn$ and we use
similar notations for \rtng s with $\tyTng$ replaced by $\yTng$.
Note that $\yMpv{\xIRt}$ is trivial, so
$\yTng(\xIRt)=\tyTng(\xIRt)$.
%
%
%
%
%
%

The set $\tyTngSg$ of all tangles in $\SgI$ and its quotient $\yTngSg$
have the structure of a \sgrp: the
multiplication corresponds to the composition of tangles
$\xtauo\tcmp\xtaut$. We use the notation $\tyTngopSg$ to denote
the same \sgrp\ with the opposite order of composing elements.
The \sgrp\ $\tyTngSg$ is `quiver-like',
because the composition of tangles is meaningful only when the
numbers of end-points match, otherwise we set it to
zero; in other words, the \sgrp s $\tyTngSgn$ stand at the
vertices of the quiver and the sets $\tyTngSgmn$ stand at its
oriented edges.

The braid group $\yBrSgn$ is a subgroup of $\tyTngSgn$ and, in
particular, the Artin braid group $\yBrn$ is a subgroup of
$\yTng(\xIRt)$.
We use the
following notations for some important framed braids in $\yBrn$:
\begin{gather}
\vspace*{-2cm}
\brrtpon =
\setlength{\unitlength}{1mm}
\begin{picture}(35,10)(-9,3)
\thl
\put(0,0){\oval(10,10)[tl]}
\put(0,5){\line(1,0){15}}
\put(15,10){\oval(10,10)[br]}
\put(0,0){\line(-1,2){1.8}}
\put(-5,10){\line(1,-2){2.2}}
\put(20,0){\line(-1,2){2.2}}
\put(15,10){\line(1,-2){1.8}}
\put(2.5,7.5){$\cdots$}
\put(7.5,0){$\cdots$}
\put(-6,-4){$\scriptstyle{1}$}
\put(1,-4){$\scriptstyle{2}$}
\put(19,-4){$\scriptstyle{n}$}
\put(19,8){$\circ$}
\put(21,8){$\scriptstyle{1}$}
\end{picture},
\qquad
\brlfpon =
\setlength{\unitlength}{1mm}
\begin{picture}(35,10)(-9,3)
\thl
\put(0,10){\oval(10,10)[bl]}
\put(15,0){\oval(10,10)[tr]}
\multiput(3,5)(2.5,0){5}{\line(-1,0){0.5}}
\put(0,0){\line(1,3){3.5}}
\put(12.5,0){\line(1,3){3.5}}
\put(-1,-4){$\scriptstyle{1}$}
\put(10,-4){$\scriptstyle{n-1}$}
\put(20,-4){$\scriptstyle{n}$}
\put(-6,8){$\circ$}
\put(-8.5,8){$\scriptstyle{1}$}
\put(7,8){$\cdots$}
\put(4,0){$\cdots$}
\end{picture}
\\
\brtwpn = \brrtpon\tcmp\brlfpon =
\setlength{\unitlength}{1mm}
\begin{picture}(35,30)(-9,10)
\thl
\put(0,0){\oval(10,10)[tl]}
\put(0,5){\line(1,0){20}}
\put(20,10){\oval(10,10)[r]}
\multiput(0,15)(2.5,0){8}{\line(-1,0){0.5}}
\put(0,20){\oval(10,10)[bl]}
\put(1,0){\line(0,1){4}}
\put(19,0){\line(0,1){4}}
\put(1,6){\line(0,1){19}}
\put(19,6){\line(0,1){19}}
\put(-5,20){\line(0,1){5}}
\put(7.5,20){$\cdots$}
\put(7.5,0){$\cdots$}
\put(-6,-4){$\scriptstyle{1}$}
\put(1,-4){$\scriptstyle{2}$}
\put(19,-4){$\scriptstyle{n}$}
\put(-6,20){$\circ$}
\put(-9,20){$\scriptstyle{2}$}
\end{picture}
,\qquad
\gobron = \xlrB{ \brrtpon }^n,
\\
\vspace*{4cm}
\nonumber
\\
\nonumber
\end{gather}
the latter braid representing the full rotation of $n$ strands
accompanied by a framing shift.

\subsubsection{Tangles in $\IRtint$}

The most commonly used tangles are tangles in $\IRtint$ and we
refer to them simply as tangles. Tangles may be considered as
morphisms of the tangle category: an object of this category is a
set of $m$ linearly ordered points, an \ttngmn\ $\xtau$ is a
morphism between $m$ points and $n$ points and a composition of
morphisms is a composition of tangles. Thus the set of all tangles
$\yTng=\yTng(\xIRt)$ is the \sgrp\ in this category.
%

The \sgrp\ $\yTng$ has three special involutions 
%
\xlee{eq:tinvs}
\dflp,\dsym\colon\yTng\longrightarrow\yTngop,\quad
\yTngmn\longrightarrow\yTngnm,\qquad
\finvm\colon\yTng\longrightarrow\yTng,\quad\yTngmn\longrightarrow\yTngmn
\xeee
satisfying the relations\rx{eq:relinv}.
The first two turn a \ttngmn\ $\xtau$ into a \ttngnm. The flip  $\dflp$ rotates
$\xtau$ by $180^{\circ}$ about an axis which lies in the blackboard plane
and is
perpendicular to the time-line. The duality  $\dsym$
performs the flip and then switches all crossings of the tangle into the
opposite ones. In other words, $\xtaud$ is the mirror image of
$\xtau$ with respect to the plane which is perpendicular to the
time line. $\finvm$ just switches all crossings of a tangle.

%
%
%

\subsection{\TLb\ tangles and \xcrmt s}
\subsubsection{Definitions}

A tangle is \emph{\flt} if it can be presented by a diagram
without crossings. A \flt\ tangle is called \TLb\ (\TL) if it is
boundary connected, that is,
if it does not contain disjoint circles.
We denote \TL\ tangles by the letter $\xlam$ and
$\yTL\subset\yTng$ denotes the set of all \TL\ tangles.
%
Since \TL\ tangles have no crossings, then $\finv{\xlam}=\xlam$ and the action of $\dflp$ and $\dsym$ coincide:
$\dflpv{\xlam} = \dsymv{\xlam}$.

The \TL\ \ttngztn s are called \emph{\xcrmtn s}, and their set is
denoted $\yCrn= \yTLv{0,2n}$. All \xcrmt s form the set
$\yCr = \bigcup_{n}\yCrn\subset\yTL$. We denote \crmt s by letters $\xal$
and $\xbet$.
%

\subsubsection{The structure of \TL\ tangles}


Let $\gcupni$ and $\gcapni$,
$1\leq i\leq n-1$, denote the elementary cup and cap tangles:
\ylee{eq:1.1aa}
\gcupni=\xygraph{
!{0;/r1.5pc/:}
[r(0.25)u(0.5)]
!{\xcapv@(0)}
[u(0.5)r(1)]
*{\cdots}
[r(01)u(0.5)]
!{\xcapv@(0)}
[r(0.5)u(1)]
!{\vcap-}
[r(1.5)]
!{\xcapv@(0)}
[u(0.5)r(1)]
*{\cdots}
[r(01)u(0.5)]
!{\xcapv@(0)}
[u(1.5)l(3.5)]
*{\scriptstyle{i}}
[r(1)]
*{\scriptstyle{i+1}}
[l(3.5)]
*{\scriptstyle{1}}
[r(6)]
*{\scriptstyle{n}}
}
,
\quad\quad
\gcapni=
\xygraph{
!{0;/r1.5pc/:}
[r(0.25)u(0.5)]
!{\xcapv@(0)}
[u(0.5)r(1)]
*{\cdots}
[r(01)u(0.5)]
!{\xcapv@(0)}
[r(0.5)]
!{\vcap}
[r(1.5)u(1)]
!{\xcapv@(0)}
[u(0.5)r(1)]
*{\cdots}
[r(01)u(0.5)]
!{\xcapv@(0)}
[d(0.5)l(3.5)]
*{\scriptstyle{i}}
[r(1)]
*{\scriptstyle{i+1}}
[l(3.5)]
*{\scriptstyle{1}}
[r(6)]
*{\scriptstyle{n}}
}.
\yeee

For a positive integer $d\leq \frac{n}{2}$
let $\stI=(i_1,\ldots,i_d)$ be a sequence of positive integers
such that $i_k<n-2k+2$ for all $k\in\{1,\ldots,d\}$.
A \emph{\aptg} $\gcapnI$
is a $(n,n-2d)$-tangle which
can be presented as a composition of $d$ tangles of the form
$\gcapv{m}{i}{0.75}$:
\ylee{aes2.1a}
\gcapnI =
\gcapv{n-2d+2}{i_d}{2}\tcmp\cdots\tcmp
\gcapv{n-2}{i_2}{1.5}
\tcmp
\gcapv{n}{i_1}{0.75}.
\yeee
A \emph{\uptg} $\gcupnI$ is defined similarly:
\ylee{aes2.2a}
\gcupnI =
\gcupv{n}{i_1}{-0.75}
\tcmp
\gcupv{n-2}{i_2}{-1.25}
\tcmp
\cdots
\tcmp
\gcupv{n-2d+2}{i_d}{-2.25}
.
\yeee

Let $\cstInt$, where $\zt=n-2\zd$, be the set of all sequences
$\stI$ mentioned above
with an additional condition that $i_{k+1}\geq i_k-1$ for all
$k\in\{2,\ldots,\zd\}$.

The following proposition is obvious:
\begin{proposition}
\label{pr:3}
For every \TL\ \ttngmn\ $\xlam$ there
exists a number $\ztl$ and a unique presentation
\xlee{aes2.3a}
\xlam = \gcupnI\tcmp
\gcapmJ,\qquad
\stI\in\cstIntl,\quad\stJ\in\cstImtl,
\qquad
\ztl=n-2\zdl=m-2\zdlp.
\xeee
\end{proposition}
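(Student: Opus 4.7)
The proof is a normal form argument: existence via a greedy algorithm, uniqueness via the constraint $i_{k+1}\geq i_k-1$.

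First, I would fix the invariants. A \TL\ \ttngmn\ $\xlam$, being a crossingless boundary-connected planar tangle, is a non-crossing perfect matching of the $m+n$ boundary points. Let $\ztl$ denote the number of its ``through arcs'' (arcs connecting one bottom point with one top point), so the matching is completed by $\zdl=(m-\ztl)/2$ bottom-bottom arcs and $\zdlp=(n-\ztl)/2$ top-top arcs. Each of $\ztl$, $\zdl$, $\zdlp$ is manifestly an isotopy invariant of $\xlam$, which pins down the parameter $\ztl$ in\rx{aes2.3a}.

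For existence of the decomposition\rx{aes2.3a}, I would construct $\stJ$ by a greedy procedure on the bottom-bottom arcs. Any non-crossing matching of points on a line has at least one \emph{innermost} arc, i.e.\ one joining two adjacent points $i,i+1$; a canonical rule (for instance, pick the innermost arc whose left endpoint $i$ is maximal) together with an inductive step on the reduced matching (delete the chosen arc and renumber) yields the cap word $\gcapmJ$. The mirrored procedure on the top-top arcs gives $\gcupnI$, and the two halves glue through the unique non-crossing matching of the remaining $\ztl$ through strands (forced by planarity) to recover $\xlam$. A direct check shows that the greedy rule produces sequences $\stJ\in\cstInt$ and $\stI\in\cstInt$ -- the range constraint $i_k<n-2k+2$ is automatic, and the monotonicity $i_{k+1}\geq i_k-1$ encodes the ``pick rightmost innermost arc'' rule after the index shift caused by the preceding removal.

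For uniqueness, I would observe that a presentation $\xlam=\gcupnI\tcmp\gcapmJ$ determines, and is determined by, the non-crossing matching of the bottom $m$ points induced by $\gcapmJ$ together with the analogous matching on top. Hence uniqueness of the factorization reduces to the following purely combinatorial claim: a non-crossing matching of $m$ endpoints on a line by $\zd$ arcs has a unique presentation as an ordered product of $\zd$ elementary caps with index sequence in $\cstInt$. I would prove this by induction on $\zd$, showing that the constraints $i_{k+1}\geq i_k-1$ and $i_k<n-2k+2$ force $i_1$ to equal the position of a specific innermost arc; the inductive hypothesis then applies to the reduced matching.

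The main obstacle will be verifying that the constraint $i_{k+1}\geq i_k-1$ is precisely what selects a canonical ordering of cap removals. One must track how positions shift when intermediate arcs are removed and confirm that any two distinct index sequences in $\cstInt$ produce different matchings. This is essentially a Catalan-type bijection standard in \TL\ combinatorics, but requires careful index bookkeeping.
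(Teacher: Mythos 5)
The paper offers no argument for this proposition at all --- it is declared ``obvious'' --- so there is no official proof to measure yours against. Your normal-form strategy (read off the through-degree and the two non-crossing partial matchings on the bottom and top boundary points, factor each matching as an ordered product of elementary caps resp.\ cups, and establish existence and uniqueness of the admissible index sequence by induction on innermost arcs) is the standard way to prove this Catalan-type statement, and it is correct in structure.

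There is, however, one concrete slip: your canonical greedy rule points the wrong way. In the paper's convention the composition $\tcmp$ is read right to left, so in the presentation\rx{aes2.1a} the elementary cap indexed by $i_1$ acts \emph{first}; thus $i_1$ is the position of the first arc removed from the original diagram. Now take $m=4$ with bottom arcs $(1,2)$ and $(3,4)$. Your rule (``pick the innermost arc whose left endpoint is maximal'', i.e.\ the rightmost innermost arc) removes $(3,4)$ first and yields the sequence $(3,1)$, which violates $i_{2}\geq i_{1}-1$ since $1<2$; the admissible sequence is $(1,1)$, obtained by removing the \emph{leftmost} innermost arc first. In general the condition $i_{k+1}\geq i_k-1$ forbids passing to an arc lying strictly to the left of the one just removed, the boundary case $i_{k+1}=i_k-1$ being the arc that immediately encloses it; so the normal form is the left-to-right, inside-out sweep of the arc forest, not the right-to-left one. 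This is exactly the index bookkeeping you yourself flag as the remaining obstacle, so it is a fixable slip rather than a wrong approach --- but as written, the asserted ``direct check'' that your greedy rule lands in the admissible set would fail.
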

The number $\ztl$
is called a \emph{\thrd},
it equals the number of strands that go through from the bottom to
the top of the tangle. Obviously, $\ztl\leq m,n$ and the numbers $n-\ztl$,
$m-\ztl$ are even.

If $\ztl=0$ then the \TL\ tangle $\xlam$ is called \emph{\tct}.
A \tct\ \ttngtmtn\ $\xlam$ has a unique presentation $\xlam =
\xal\tcmp\xbetd$, where $\xal$ is a \TL\ \ttngztn\ and $\xbet$ is
a \TL\ \ttngztm.

\subsection{Tangles in admissible manifolds}

\subsubsection{Tangles in $\Bt$}
The \rmcg\ of $\Bt$ is trivial, and there is an obvious canonical bijective
map
\xlee{eq:bijbt}
\xymatrix{
\yTngzn \ar[r]^-{=} &
\yTngBtn=\tyTngBtn.
}
\xeee

\subsubsection{Tangles in $\StI$}
We call the tangles of $\tyTngSt$ \emph{spherical} tangles and we
call the tangles of the quotient $\yTngSt=\tyTngSt/\yMpSt$ \emph{\rgh} \sphtng s.
%

For a fixed product structure in $\StI$ there are obvious
surjective  homomorphisms (with respect to tangle composition)
$\hoho$ and $\hoam$:
%
\ylee{eq:hom}
\xymatrix@R=0.3cm{
& \yTng
\ar[ld]_-{\hoho}
\ar[rd]^-{\hoam}
\\
\tyTngSt
\ar[rr]_-{\xdummy/\yMpSt}
&&
\yTngSt
}
\yeee

\begin{theorem}
\label{th:rtng}
The kernel of the homomorphism $\yTng\xrightarrow{\hoho}\tyTngSt$
is a congruence on $\yTng$ generated by the
braid $\brtwpn$, that is, an equivalence
$\xlrb{\xtauo\sim\xtaut\;\Leftrightarrow\;\hoam(\xtauo) \isotp\hoam(\xtaut)}$
is
the
minimal equivalence which includes the relation
$\brtwpn\sim\gidbrn$ and respects the tangle
composition.
\end{theorem}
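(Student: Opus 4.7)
The plan is to prove two inclusions: first that the congruence generated by $\brtwpn\sim\gidbrn$ lies in the kernel of $\hoho$, and second that every kernel element lies in this congruence.

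For the first (easy) inclusion I would exhibit an explicit boundary-fixing ambient isotopy in $\StI$ taking $\brtwpn$ to $\gidbrn$. Recall that $\brtwpn=\brrtpon\tcmp\brlfpon$ has its first strand travel over the remaining $n-1$ strands to position $n$ and back under them to position $1$, together with the framing shifts built into $\brrtpon$ and $\brlfpon$. Viewing $\St=\IR^2\cup\{\infty\}$, the loop traced by the first strand encloses the other strands in $\IR^2$ but bounds a disk through $\infty$ in $\St$ disjoint from them. A finger move of the first strand across this complementary disk unlinks the loop, and one checks that the framing shifts built into $\brtwpn$ match exactly the rotational holonomy contributed by this isotopy. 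Since $\hoho$ is a semigroup homomorphism its kernel is automatically a congruence, so the minimal congruence containing $\brtwpn\sim\gidbrn$ is contained in $\ker\hoho$.

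For the second (harder) inclusion I would fix a boundary-fixing ambient isotopy $H_t$ in $\StI$ realizing $\hoho(\xtauo)=\hoho(\xtaut)$, together with a point $\infty\in\St$ missing both tangles and the boundary endpoints. After perturbing $H_t$ to be transverse to the arc $\gamma=\{\infty\}\times I$ as a one-parameter family, the set of times at which the moving tangle meets $\gamma$ becomes finite and each contact is a single transverse passage. Between consecutive critical times the moving tangle lies in $(\St\setminus\{\infty\})\times I\cong\IR^2\times I$, so that portion of the isotopy is a planar ambient isotopy in $\yTng$ and is trivial in the congruence. At each critical time a short piece of a strand sweeps across $\{\infty\}$; transported back to $\IR^2$ by stereographic projection and brought to standard form by a planar isotopy at the relevant height, this sweep replaces a short subarc by an arc that wraps once around all other strands at that height, with an accompanying framing shift coming from the rotational holonomy of the sweep. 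The local claim is that this modification is exactly the insertion of $\brtwpn^{\pm 1}$ at the appropriate height. Concatenating the planar pieces (trivial in the congruence) with the critical-time insertions then expresses $\xtauo\sim\xtaut$ as a finite chain of moves drawn from the congruence generated by $\brtwpn\sim\gidbrn$.

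The main obstacle will be the local analysis at a critical time: verifying that a single transverse passage of a strand through $\gamma$ produces, after the planar normalization, precisely $\brtwpn$ rather than a conjugate by an ambient planar tangle or some other spherical relator, and that the framing shifts acquired by the sweeping strand match exactly those built into $\brrtpon\tcmp\brlfpon$. This amounts to a careful local model at $\infty$ together with framing bookkeeping; once this local lemma is in hand, the concatenation argument completes the proof without further difficulty.
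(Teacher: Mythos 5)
The paper states Theorem~\ref{th:rtng} without proof, so there is nothing of the author's to measure your argument against; judged on its own, your outline is the standard proof of this folklore fact and its architecture is sound. Both halves are right: the kernel of a semigroup homomorphism is automatically a congruence; the sphere slide across the disk through $\infty$ kills $\brtwpn$ (and the framing shift $\circ\,2$ built into it is exactly the one forced by the sphere slide --- already visible for $n=1$, where the framed one-strand spherical braid group is $\pi_1$ of the unit tangent bundle of $\St$, i.e.\ $\ZZ_2$, so two framing twists on a single strand are undone by dragging it around the sphere); and putting the isotopy in general position with respect to $\{\infty\}\times[0,1]$ reduces the converse to planar isotopies interleaved with finitely many transverse passages through $\infty$.

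Two remarks on the local lemma you defer, which together show it is less delicate than you fear. First, the worry that a passage through $\infty$ produces only a conjugate of $\brtwpn$ by an ambient planar tangle is not an obstacle: a congruence is closed under pre- and post-composition with arbitrary tangles, so any relation of the form $\sigma\tcmp\brtwpn\tcmp\sigma'\sim\sigma\tcmp\gidbrn\tcmp\sigma'$ is automatic. This is precisely what you need when the swept strand sits in position $i\neq 1$ at the critical level, when the passage goes through $\infty$ in the opposite direction (the inverse of a relator also lies in the congruence), or when the sub-arc crossing $\infty$ must first be made monotone in the height direction by a planar isotopy. Second, the number of strands met by the critical level is the number of intersection points of the tangle with that level, not the valence of the ambient tangle, so the relator you insert is $\brtwpn\sim\gidbrn$ read with $n$ equal to that local strand count; the theorem's ``generated by the braid $\brtwpn$'' must be understood as one relator for each $n$, and your chain of moves uses them all. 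Granting these two points, the local model is the same computation that exhibits the spherical braid group as the quotient of the Artin braid group by the relation that the first strand's full loop around the others is trivial, decorated by the framing count above, and your concatenation argument then closes the proof.
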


The \rmcg\ of $\Stint$ is $\ZZ_2$. Let $\xtw$ (twist) denote its
generator. It acts on spherical tangles by composing them with the
braid $\gobron$: if $\xsigm$ is a spherical \ttngmn, then
\ylee{eq:compbr}
\xtw(\xsigm) = \hoam\xlrB{\gobron}\tcmp\xsigm.
\yeee
Hence we get the following extension of Theorem\rw{th:rtng}:
\begin{theorem}
\label{th:rgheqv}
The kernel of the homomorphism
$\yTng\xrightarrow{\hoam}\yTngSt=\tyTngSt/\yMpSt$
is the congruence generated
by the braids $\brtwpn$ and $\gobron$.
\end{theorem}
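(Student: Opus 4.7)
The plan is to reduce to Theorem~\ref{th:rtng} by factoring $\hoam = \pi \circ \hoho$, where $\pi\colon \tyTngSt \to \tyTngSt/\yMpSt = \yTngSt$ is the quotient. Then two tangles $\xtauo,\xtaut \in \yTng$ are congruent under $\ker(\hoam)$ precisely when the spherical tangles $\hoho(\xtauo)$ and $\hoho(\xtaut)$ lie in the same $\yMpSt$-orbit in $\tyTngSt$.

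Since $\yMpSt \cong \ZZ_2$ is generated by the twist $\xtw$, whose action on an $(n,n)$-spherical tangle $\xsigm$ is $\xtw(\xsigm) = \hoho(\gobron)\tcmp\xsigm$ (this is the content of \ex{eq:compbr}), the orbit condition splits into two cases: either $\hoho(\xtauo) = \hoho(\xtaut)$, or $\hoho(\xtauo) = \hoho(\gobron\tcmp\xtaut)$. By Theorem~\ref{th:rtng}, in the first case $\xtauo$ and $\xtaut$ are already congruent under the congruence generated by $\brtwpn$; in the second case the same theorem gives that $\xtauo$ and $\gobron\tcmp\xtaut$ are congruent under the $\brtwpn$-congruence. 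Adjoining the additional relation $\gobron \sim \gidbrn$ collapses the second case to the desired congruence, so the congruence generated by $\brtwpn$ and $\gobron$ contains $\ker(\hoam)$.

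For the opposite inclusion, I would check that each generator lies in $\ker(\hoam)$. That $\brtwpn \in \ker(\hoam)$ follows from Theorem~\ref{th:rtng} via $\ker(\hoho) \subset \ker(\hoam)$. For $\gobron$, observe that $\hoho(\gobron) = \xtw(\gidbrn)$ in $\tyTngSt$, so $\hoam(\gobron) = \pi(\xtw(\gidbrn)) = \pi(\gidbrn) = \gidbrn$ in $\yTngSt$. Because $\ker(\hoam)$ is a congruence, it must contain the entire congruence generated by $\{\brtwpn,\gobron\}$, completing the equality.

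The main geometric ingredient, and therefore the principal obstacle, is the identification $\xtw(\xsigm) = \hoho(\gobron)\tcmp\xsigm$: one must verify that the generator of $\yMpSt = \ZZ_2$, realizable as a Dehn-twist supported in a collar of one spherical boundary component of $\Stint$, drags the endpoints of a spherical tangle through a full $2\pi$ rotation, and that after isotopy rel the other boundary this rotation coincides with left concatenation by the framed full-rotation braid $\gobron$. Once that geometric fact is granted, the combinatorial argument above routinely reduces the theorem to Theorem~\ref{th:rtng}.
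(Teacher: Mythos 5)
Your proposal is correct and follows the same route the paper intends: the paper gives no explicit proof of this theorem, presenting it as an immediate extension of Theorem~\ref{th:rtng} once one knows that $\yMpSt\cong\ZZ_2$ is generated by the twist $\xtw$ acting by composition with $\gobron$ (\ex{eq:compbr}). Your write-up simply makes explicit the orbit argument and the two-sided inclusion that the paper leaves to the reader.
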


The homomorphisms $\hoho$ and $\hoam$ transfer the involutions $\dsym$, $\dflp$
and $\finvm$ from $\yTng$ to $\tyTngSt$
and to $\yTngSt$.

\subsubsection{Links in $\Sh$ and in $\Sot$}
For a 3-manifold $M$ let $\yLnkM$ denote the set of links in
$M$ up to ambient isotopy and let $\tyLnkM$ be the set of links in
$M$ up to diffeomorphism:
$\tyLnkM=\yLnkM/\yMpStM$, where $\yMpStM$ is the mapping class
group of $M$.
%
We use a shortcut notation $\yLnk=\yLnkSh=\tyLnkSh$ for links in
$\Sh$.

Let $\clShv{\xtau}$ denote a circular closure of a \ttngnn\ within
$\Sh$ and let $\clSotv{\xsigm}$ denote a circular closure of a
\sphtng\ within $\Sot$. Sometimes we also use an abbreviated
notation $\clSotv{\xtau} = \xlrb{\lSot{\hoam(\xtau)}}$. Thus we
have two closure maps
\ylee{eq:tmps}
\xymatrix@R=0.5cm{
& \yTngnn \ar[dl]_-{\clShv{\xdummy}} \ar[dr]^-{\clSotv{\xdummy}}
\\
\yLnkSh
&&
\yLnkSot
}
\yeee
Both are trace-like and invariant under the involution
$\dflp$:
%
\begin{gather}
\label{eq:clinv1}
\clstv{\xtauo\tcmp\xtaut}=\clstv{\xtaut\tcmp\xtauo},
\\
\label{eq:clinv2}
\clstv{\dflpv{\xtau}} = \clstv{\xtau},
\end{gather}
%
where $\ast$ stands for $\Sh$ or $\Sot$.

%

The quotients over the action of the mapping class groups
combine into the following commutative diagram:
\ylee{eq:cmsqq}
\xymatrix@C=2cm{
\yTng
\ar[r]^-{\hoho}
\ar[rd]_-{\hoam}
&
\tyTngSt
\ar[r]^-{\clSotv{\xdummy}}
\ar[d]
&
\tyLnkSot
\ar[d]
\\
&
\yTngSt
\ar[r]^-{\clSotv{\xdummy}}
&
\yLnkSot
}
\yeee

The mapping class group of $\Sot$ is $\ZZ_2\times\ZZ_2$ with
generators $\xtw$ (twist) and $\dflp$ (flip), that is,
\ylee{eq:mpgnac}
\xtw\clSotv{\xtau} = \xlrB{\lSot{\gobron\tcmp\xtau}},\qquad
\dflpv{\clSotv{\xtau}} = \clSotv{\xtauf}.
\yeee
\begin{theorem}
\label{th:soeq}
The equivalence relation
$\xlrb{\xsigmo\sim\xsigmt\;\Leftrightarrow\clSotv{\xsigmo}\homeom\clSotv{\xsigmt}}$
within $\tyTngSt$
is generated by the relation
$\xsigmo\tcmp\xsigmt\sim\xsigmt\tcmp\xsigmo$. The same equivalence
within $\yTngSt$ is generated by two relations: $\xsigmo\tcmp\xsigmt\sim\xsigmt\tcmp\xsigmo$
and $\dflpv{\xsigmo} \sim\xsigmo$.
\end{theorem}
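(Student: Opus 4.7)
My plan for the proof would proceed as follows.

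First, I would dispose of the easy direction: equations \rx{eq:clinv1} and \rx{eq:clinv2} immediately show that both the cyclic permutation relation and the flip relation are absorbed by the closure-equivalence in $\tyLnkSot$. This reduces the problem to the converse.

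For the statement in $\tyTngSt$, suppose $\clSotv{\xsigmo}\homeom\clSotv{\xsigmt}$, witnessed by a diffeomorphism of pairs $\phi\colon(\Sot,\clSotv{\xsigmo})\to(\Sot,\clSotv{\xsigmt})$. The standard cutting sphere $\Sigma_0 = \St\times\{0\}$ used to extract $\xsigmo$ from $\clSotv{\xsigmo}$ is carried by $\phi$ to a non-separating embedded 2-sphere $\phi(\Sigma_0)\subset\Sot$ transverse to $\clSotv{\xsigmt}$. By Laudenbach's classification of 2-spheres in $\Sot$, any non-separating embedded 2-sphere is ambient isotopic to some $\St\times\{t\}$; I would choose such an isotopy and perturb it so that the intermediate spheres $\Sigma_s$, $s\in[0,1]$, are transverse to $\clSotv{\xsigmt}$ except at finitely many critical values $s_1<\cdots<s_N$. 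Between consecutive critical values, the spherical tangle obtained by cutting $\clSotv{\xsigmt}$ along $\Sigma_s$ is independent of $s$ up to boundary-fixing isotopy, hence constant in $\tyTngSt$. At each $s_j$ the sphere acquires a simple Morse tangency with a single strand of $\clSotv{\xsigmt}$, and a local model computation then shows that crossing this tangency moves a small arc of the strand from one side of the cut to the other, replacing a decomposition $\xsigmo'\tcmp\xsigmt'$ of the presenting tangle by $\xsigmt'\tcmp\xsigmo'$. Iterating over $j$ produces the required finite chain of cyclic permutations from $\xsigmo$ to $\xsigmt$.

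For the statement in $\yTngSt = \tyTngSt/\yMpSt$, passing to this quotient already absorbs the twist $\xtw$ implicitly used above to choose identifications of the cutting spheres with their standard copy. The mapping class group of $\Sot$, however, is $\ZZ_2\oplus\ZZ_2$ and hence strictly larger than what cyclic cuts together with $\xtw$ can reach: the extra generator is the involution reversing the $\St$-direction combined with a reflection of the $\St$-factor so as to preserve orientation. I would verify directly, via a local picture, that this involution acts on any presenting spherical tangle exactly as the flip $\dflp$, so that adjoining the relation $\dflpv{\xsigmo}\sim\xsigmo$ captures the remaining diffeomorphism equivalence in the quotient.

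I expect the main obstacle to be the local analysis at each critical time $s_j$: a standard transversality argument reduces each critical event to a Morse-type tangency between $\Sigma_s$ and a single strand, but a genuine local model computation is required to confirm that the induced move on the spherical tangle is exactly the cyclic switch $\xsigmo'\tcmp\xsigmt'\mapsto\xsigmt'\tcmp\xsigmo'$, rather than something more subtle involving the framing or the cyclic order of endpoints on $\Sigma_s$. Once this geometric core is settled, matching the remaining mapping-class-group generator of $\Sot$ with the tangle flip reduces to straightforward bookkeeping.
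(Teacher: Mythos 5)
The paper does not actually prove Theorem\rw{th:soeq}: like Theorems\rw{th:rtng} and\rw{th:rgheqv} it is stated as a known geometric fact, so your argument can only be judged on its own terms. Your overall strategy is the natural one --- sweep one cutting sphere to the other through a generic family and interpret each Morse tangency with the link as one application of the cyclic relation --- and your local picture at a critical time is correct in outline. To make it precise: just before the tangency the presenting tangle factors as $T'\tcmp\gamma$, where $\gamma$ occupies a thin collar of the cutting sphere and consists of through-strands together with one small cap (or cup), and just after the tangency it becomes $\gamma\tcmp T'$. This changes the number of tangle endpoints by two, which is consistent because the relation $\xsigmo\tcmp\xsigmt\sim\xsigmt\tcmp\xsigmo$ is allowed to relate tangles of different valences.

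The genuine gap is in your treatment of identifications, and it sits exactly on the distinction between $\tyTngSt$ and $\yTngSt$ that the theorem is tracking. First, for the statement in $\tyTngSt$ the hypothesis must be read as isotopy of closures, not as an arbitrary diffeomorphism of pairs: by \ex{eq:mpgnac} the twist generator of the mapping class group of $\Sot$ carries $\clSotv{\xsigm}$ to $\clSotv{\gobrotn\tcmp\xsigm}$, so under the ``diffeomorphic pairs'' reading your first statement would require deriving $\gobrotn\tcmp\xsigm\sim\xsigm$ from cyclic permutations alone inside $\tyTngSt$ --- but the relation $\gobrotn\sim\gidbrtn$ is precisely the one the paper only imposes upon passing to $\yTngSt$ (Theorem\rw{th:rgheqv}). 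Second, even under the isotopy reading, your appeal to Laudenbach to carry $\phi(\Sigma_0)$ to a level sphere by ``some isotopy'' leaves unspecified how the identification of the cut-open manifold with $\StI$ (and of the cutting sphere with its standard copy) is transported; two such identifications differ by the generator $\xtw$ of $\yMpSt$, so as written your argument only identifies $\xsigmo$ with $\xsigmt$ up to a twist, which is too weak for the first statement. The repair is to drop Laudenbach entirely and use the given ambient isotopy itself: fix the standard sphere and sweep $\phi_t\bigl(\clSotv{\xsigmo}\bigr)$ through it, so that at $t=0$ the parametrization is the standard one by construction and the only events along the way are the tangencies already analyzed. With that change, and with your (correct) identification of the remaining mapping class generator of $\Sot$ with $\dflp$, the second statement follows from the first exactly as you outline, since the twist is already killed in $\yTngSt$.
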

%

%
%

\section{Quantum invariants of links and tangles}


\subsection{The \TLa}

A detailed account of \TLa s and \JWp s can be found in the book
by Lou Kauffman and Sostenes Lins\cx{KaLins}. Here we summarize
relevant facts and set our notations.

\subsubsection{Definitions}

A \TLa\ $\cTL$ is a ring generated, as a module, freely by \TL\
tangles, that is, by the
elements $\clam$, $\xlam\in\yTL$ over the ring $\Zqqi$.
The product in $\cTL$ corresponds to the
composition of tangles and is denoted by $\tcmp$. If the numbers of endpoints do not match,
then the product is defined to be zero. Any disjoint circle appearing
in the composition is replaced by the factor $-(q + \qi)$:
\xlee{eq:ae1.1}
\Bsymalg{\lcir}
 = -(\qpqi).
\xeee

The Kauffman bracket relation
\xlee{eq:1.4}
\Bsymalg{\xcrsp}
\;\;=\;\;
\qvh\;\;
\Bsymalg{\xpver}
\;\;+\;\;
\qvmh\;\;
\Bsymalg{\xphor}
\xeee
associates a $\cTL$ element
\xlee{eq:1.3a}
\ctau = \sltl \xcalt\, \clam,\qquad
\xcalt = \sum_{i\in\ZZ}\xcalit\,q^i
\xeee
to any
tangle $\xtau$.
Note that
the element $\ctau$ depends on the
framing of $\xtau$:
\vspace*{-0.5cm}
\xlee{ae1.2a}
\Bsymalg{\xvfro\hspace*{-0.2cm}}
\;\; = \;\;
-q^{\frac{3}{2}}\;\;
\Bsymalg{\;\xvert\hspace*{-0.5cm}}.
\xeee
%

The homomorphisms\rx{eq:tinvs} extend to the \TLa
\xlee{eq:tinvtl}
\dflp,\dsym\colon\cTL\longrightarrow\cTLop,\quad
\cTLmn\longrightarrow\cTLnm,\qquad
\finvm\colon\cTL\longrightarrow\cTL,\quad
\cTLmn\longrightarrow\cTLmn
\xeee
by their action
on generating \TL\ tangles, the involutions $\dsym$ and $\finvm$ being
accompanied by the involution of the base ring
$\finvm\colon\Zqqi\rightarrow\Zqqi$,
$\finv{q}=\qi$.
This extension is well-defined, because the involutions preserve
the Kauffman bracket relation\rx{eq:1.4} as well as the unknot
invariant formula\rx{eq:ae1.1}.

%
%

\subsubsection{Subrings of the \TLa\ and their \bmdl s}

As a $\Zqqi$-module, $\cTL$ contains submodules $\cTLmn$ ($n-m$ is even) generated
by \ttngmn s. The `diagonal' submodules $\cTLnn$, which we denote
for brevity as $\cTLn$, are, in fact, subrings of $\cTL$.
Obviously, $\cTLmn$ is a module over the ring $\cTLn\otimes\cTLmop$.

The empty tangle $\xnot$ is the only \TL\ \ttngzz,
hence
the ring $\cTLz$ is canonically isomorphic to $\Zqqi$.
A \ttngzz\ $\xL$ is just a link, and the corresponding Kauffman
bracket equals its Jones polynomial: $\aL = \JnLq$. The cyclic
closure of an \ttngnn\ in $\Sh$ produces a trace on the ring
$\cTLn$: $\cTLn\xrightarrow{(\lSh{-})}\cTLz=\Zqqi$.

\subsection{The \JWp s}
\label{sss:jwp}
We use two versions of the \TLa\ defined over the rings other
than $\Zqqi$. The \TLa\ $\QcTL$ is generated by \TL\ tangles over
the field $\Qq$ of rational functions of $q$ and
$\cTLpinf$ is the \TLa\ over the field $\Zsqqi$ of
Laurent power series. An injective homomorphism $
\Qq\hookrightarrow\Zsqqi$ generated by the Laurent series expansion at $q=0$
produces an injective homomorphism of \TLa s
$\QcTL\hookrightarrow\cTLpinf$.

The algebra $\QcTLn$ contains
central
mutually orthogonal idempotent elements $\jwpxnm$ ($0\leq m\leq n$, $n-m\in 2\ZZ$) known as \JWp s:
\xlee{eq:1.5}
\jwpxnm\tcmp \jwpxnmp =
\begin{cases}
\jwpxnm, & \text{if $m=m\p$,}
\\
0, & \text{if $m\neq m\p$,}
\end{cases}
\qquad
\smmnev \jwpxnm = \gidbrn.
\xeee
These projectors are defined by the relations\rx{eq:1.5} and by
the conditions
\ylee{eq:1.5a}
\acapnJ\tcmp\jwpxnm = 0,\qquad\text{if $d>n-m$},
\yeee
where $d$ is the number of caps in $\acapnJ$.
We denote the images of \JWp s
$\jwpxnm$ under the homomorphism $\Qq\hookrightarrow\Zsqqi$ by the same symbols
$\jwpxnm$.

All projectors are invariant under the involutions\rx{eq:tinvtl}:
%
\xlee{eq:invpr}
\dsymv{\jwpxnm} = \dflpv{\jwpxnm} = \bjwpxnm = \jwpxnm.
\xeee

Projectors $\jwpxnm$ are central in the following sense:
\xlee{eq:cntpr}
\jwpxnk\tcmp \xx = \xx\tcmp\jwpxmk\qquad\text{for any $\xx\in\QcTLmn$.}
\xeee
Consequently, the sums
$\jwpxstm = \sum_{k=0}^\infty \jwpxvv{m+2k}{m}$ form a complete set of mutually orthogonal central
projectors of the \TLa\ \TL:
$\sum_{m=0}^\infty \jwpxstm = \sum_{n=0}^\infty \gidbrn$.

The most famous and widely used projector is $\jwpxnn$ denoted
simply as $\jwpn$, but here we will need other projectors too,
especially $\jwpxtnz$.

Split \TL\ tangles have the form $\xlam = \xbet\tcmp \xalf$, where
$\xal$ and $\xbet$ are \xcrmt s. If $\xlam$ is \tct, then
$\xlam\tcmp\xlamp$ and $\xlamp\tcmp\xlam$ are \tct\ for any \TL\
tangle $\xlamp$. Hence \tct\ \TL\ tangles form a two-sided ideal
$\cfQTLct\subset\cfQTL$ with submodules
$\cfQTLcttmtn\subset\cfQTLct$  generated by
\ttngtmtn s. Obviously,
\xlee{eq:tlsplit}
\cfQTLcttmtn =
\cfQTLztn\otQq\cfQTLtmz
\xeee

An alternative definition of the \JWp\ $\jwpxtnz$ comes from
the following theorem:
\begin{theorem}
\label{th:unqprj}
There exists a unique element $\jwpxtnz\in\cfQTLcttn$ such that
\def\xgam{\xal}
\xlee{eq:cndpr}
\jwpxtnz\tcmp\symalg{\xgam}=\symalg{\xgam}
\xeee
for
any $\xgam\in\yCrn$. This element is idempotent.
\end{theorem}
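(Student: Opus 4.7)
The approach is to translate the defining relation into a linear system on an explicit basis of $\cfQTLcttn$ and reduce existence and uniqueness to the invertibility of an associated Gram matrix over $\Qq$. Using the splitting \rx{eq:tlsplit} at $m=n$, I have $\cfQTLcttn=\cfQTLztn\otQq\cfQTLtnz$. Enumerating $\yCrn=\{\xal_1,\ldots,\xal_N\}$ as a $\Qq$-basis of $\cfQTLztn$, with the flipped basis $\{\dflpv{\xal_j}\}$ for $\cfQTLtnz$, every $x\in\cfQTLcttn$ has a unique presentation
\begin{equation*}
x=\sum_{i,j=1}^N c_{ij}\,\xal_i\tcmp\dflpv{\xal_j},\qquad c_{ij}\in\Qq.
\end{equation*}
By \rx{eq:ae1.1}, the pairing $\dflpv{\xal_j}\tcmp\xal_k$ is a disjoint union of $L(\xal_j,\xal_k)$ planar circles and so evaluates to the scalar $G_{jk}:=\bigl(-(q+\qi)\bigr)^{L(\xal_j,\xal_k)}\in\Qq$. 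Substituting gives $x\tcmp\xal_k=\sum_i\bigl(\sum_j c_{ij}G_{jk}\bigr)\xal_i$, so the defining condition \rx{eq:cndpr} becomes the matrix equation $CG=I$ on the coefficient matrix $C=(c_{ij})$ and the Gram matrix $G=(G_{jk})$.

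The crucial input is that $G$ is invertible over $\Qq$. This is the classical Gram matrix of the cellular structure of the Temperley-Lieb algebra; by Lickorish's determinant formula (see \cx{KaLins}), $\det G$ is a nonzero element of $\Qq$, being a product of cyclotomic-type polynomials in $q+\qi$. Consequently $C=G^{-1}$ is the unique solution, which simultaneously proves existence and uniqueness of $\jwpxtnz\in\cfQTLcttn$ satisfying the defining relation.

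Idempotency is then automatic from the uniqueness clause. Since \tct\ \TL\ tangles form a two-sided ideal in $\cfQTL$, $(\jwpxtnz)^2\in\cfQTLcttn$; and for every $\xal\in\yCrn$,
\begin{equation*}
(\jwpxtnz)^2\tcmp\xal=\jwpxtnz\tcmp(\jwpxtnz\tcmp\xal)=\jwpxtnz\tcmp\xal=\xal,
\end{equation*}
so $(\jwpxtnz)^2=\jwpxtnz$ by uniqueness. The main obstacle is the invertibility claim itself: it is the only non-formal input and hinges on our working over the function field $\Qq$ rather than at a numerical value of $q$ where the Temperley-Lieb algebra may fail to be semisimple.
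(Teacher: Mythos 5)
Your proof is correct and follows essentially the same route as the paper: both expand the candidate element in the basis $\symalg{\xbet\tcmp\xalf}$ of $\cfQTLcttn$ and reduce the defining relation to inverting the Gram matrix $\mtBab=\symalg{\xalf\tcmp\xbet}=(-q-\qi)^{\nccab}$ over $\Qq$. You are somewhat more careful than the paper in flagging the nonvanishing of the Gram determinant (which the paper uses silently by writing $\mtBiab$) and in deducing idempotency from uniqueness rather than by direct verification.
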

\begin{proof}
The elements $\psbaf$ form a basis of $\cfQTLcttn$, hence
an element $\jwpxtnz\in\cfQTLcttn$ has a unique presentation
$\jwpxtnz = \sum_{\xal,\xbet\in\yCrn}\cxzab\psbaf$. The
condition\rx{eq:cndpr} determines the coefficients $\cxzab$:
\xlee{eq:1.5b1}
\jwpxtnz =
\sabTLztn\mtBiab
\symalg{\xbet\tcmp\xalf},
\xeee
where
a symmetric matrix $(\mtBab)_{\xal,\xbet\in\yTLztn}$ is defined
by the formula
\xlee{eq:1.5b}
\mtBab = \symalg{\xalf\tcmp\xbet} = (-q-\qi)^{\nccab}.
\xeee
It is easy to verify that the element\rx{eq:1.5b1} is idempotent.
\end{proof}
%
%
%


%
Let $\cfQTLcttmtn\subset\cfQTLct$ be the submodule generated by
\ttngtmtn s. Obviously,
\xlee{eq:tlsplitx}
\cfQTLcttmtn =
\cfQTLztn\otQq\cfQTLtmz
\xeee
and the homomorphism
\ylee{eq:prhm}
\cfQTLtmtn \xrightarrow{\;\jwphxstz\;}\cfQTLcttmtn,\qquad
\jwphxstz(\xx) = \jwpxtnz\tcmp\xx = \xx\tcmp\jwpxtmz
\yeee
is surjective.

\subsection{A \xstbl\ \fWRT\ theory}

Let us describe in more detail the \sttWRTt\ that we want to
categorify.

\subsubsection{Basic structure}

As we mentioned at the end of subsection{ss:ortqft},
its base field $\rF$ is the field $\Qq$ of rational functions of
$q$. The involution $\finvm$ is defined by its action on $q$:
$\finv{q} = q^{-1}$. Admissible boundaries are 2-spheres $\Stn$ with
$2n$ marked points. Admissible manifolds are disjoint
unions of four 3-manifolds: $\Bt$, $\StI$, $\Sh$ and $\Sot$. In
fact, all admissible manifolds are generated by $\Bt$ and $\StI$
through disjoint union and gluing. Therefore the \TQFT\ is
defined by the choice of Hilbert spaces $\cH(\Stn)$ and \stmp s
for \rtng s in $\Bt$ and in $\StI$, because the other \stmp s are
determined by the axioms.

\subsubsection{Hilbert spaces}

To a 2-sphere with $2n$ marked points the \sttoyt\ associates a $\Qq$-module
$\cHStn=\cfQTLztn$,
which, by definition, is a free $\Qq$-module
generated by (\Kbr s of) \xcrmtn s. The module $\cHvStn = \cfQTLtnz$ is
canonically dual to $\cHStn$, the pairing being defined by the
\Kbr\ of the composition of tangles:
$\symalg{\xalf}\xprng\symalg{\xbet} = \symalg{\xalf\tcmp\xbet}$.
The involutions\rx{eq:invmd} are the corresponding
involutions\rx{eq:tinvtl} restricted to $\cfQTLtnz$.

%
%
%
%
%

\subsubsection{A \stmp\ for tangles in $\Bt$ and in $\StI$}
If we orient the boundary of an oriented 3-ball $\Bt$ in the `out' direction and put $2n$ marked points
on it, then using the bijection\rx{eq:bijbt} we set
$\psymalg{\xtau,\Bt}
= \ctau$.
%

We set the orientation of the boundary component $\St\times\{0\}$
as `in', the orientation of the boundary component $\St\times\{1\}$ as
`out' and put $2m$ and $2n$ marked points on them.
According to \ex{eq:tlsplit},
%
\xlee{eq:hmdsi}
\cHStv{n}\otQq\cHvStv{m} = \cfQTLcttmtn,
\xeee
hence the \stmp\ has the form
%
\xlee{eq:stih}
\sphsymalg{\xdummy}\colon \yTngSt\longrightarrow\cfQTLct.
\xeee
%
The gluing axiom implies that it must be a homomorphism
and we define it by the following theorem:
\begin{theorem}
\label{th:trend1}
There exists a unique homomorphism\rx{eq:stih} 
such that the
diagram
%
%
\xlee{eq:1.5b2}
\vcenter{
\xymatrix@C=1.5cm{
\yTng \ar@{->>}[r]^-{\hoam}
\ar[d]^-{\symalg{\xdummy}}
&
\yTngSt
\ar[d]^-{\sphsymalg{\xdummy}}
\\
\cTL
\ar[r]^-{\jwphxstz}
&
\cfQTLct
}}
\xeee
is commutative, that is, for any $\xtau\in\yTngtmtn$
\xlee{eq:cmsq1}
\sphsymalg{\hoam(\xtau)} =  \jwpxtnz\tcmp\symalg{\xtau}.
\xeee
\end{theorem}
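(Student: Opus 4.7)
\textbf{Uniqueness} is immediate. Since $\hoam\colon\yTng\twoheadrightarrow\yTngSt$ is surjective, the commutativity of\rx{eq:1.5b2} forces $\sphsymalg{\hoam(\xtau)} = \jwpxtnz\tcmp\symalg{\xtau}$ for every $\xtau$, which determines $\sphsymalg{\cdot}$ completely.

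For \textbf{existence}, the plan is to first define a preliminary map $\tilde\Phi\colon\yTng\to\cfQTLct$ by $\tilde\Phi(\xtau):=\jwpxtnz\tcmp\symalg{\xtau}$ for $\xtau\in\yTngtmtn$. By centrality\rx{eq:cntpr} of the Jones-Wenzl projector combined with\rx{eq:tlsplitx}, the image lies in $\cfQTLcttmtn$. Two things must then be verified: that $\tilde\Phi$ factors through the quotient $\hoam$, and that the resulting map is a homomorphism. The homomorphism property follows directly from idempotency $\jwpxtnz\tcmp\jwpxtnz=\jwpxtnz$ together with centrality:
\[
\tilde\Phi(\xtau_1\tcmp\xtau_2) = \jwpxtnz\tcmp\symalg{\xtau_1}\tcmp\symalg{\xtau_2} = (\jwpxtnz\tcmp\symalg{\xtau_1})\tcmp(\jwpxtnz\tcmp\symalg{\xtau_2}) = \tilde\Phi(\xtau_1)\tcmp\tilde\Phi(\xtau_2).
\]

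To show $\tilde\Phi$ is constant on $\hoam$-fibres, I would invoke Theorem~\ref{th:rgheqv}: $\ker\hoam$ is the congruence generated by the relations $\brtwpn\sim\gidbrn$ and $\gobron\sim\gidbrn$. Centrality of $\jwpxtnz$ permits any such generating braid $\beta$, embedded as an element $\tilde\beta$ of the ambient Temperley-Lieb algebra on $2n$ strands via identity on the unaffected positions, to be slid past $\symalg{\xtau}$. Thus the desired invariance reduces to the single identity
\[
\jwpxtnz\tcmp\symalg{\tilde\beta} \;=\; \jwpxtnz\qquad\text{in }\cfQTLcttn.
\]

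Establishing this identity is the \textbf{main obstacle}. By Theorem~\ref{th:unqprj}, $\jwpxtnz$ is characterized among elements of $\cfQTLcttn$ by the property $\jwpxtnz\tcmp\symalg{\xgam}=\symalg{\xgam}$ for every crossingless matching $\xgam\in\yCrn$, so it suffices to check
\[
\jwpxtnz\tcmp\symalg{\tilde\beta\tcmp\xgam} \;=\; \symalg{\xgam}\qquad\text{for all }\xgam\in\yCrn.
\]
For $\beta=\brtwpn$ (one strand travelling around the remaining strands and returning), the composition $\tilde\beta\tcmp\xgam$ is isotopic to $\xgam$ inside $\St\times[0,1]$; expanding by the Kauffman skein\rx{eq:1.4} produces a sum of crossingless $(0,2n)$-tangles, in which the contributions with nonzero through-strand count (those outside the image of $\jwpxtnz$) are annihilated by the projector while the surviving tcut terms recombine to $\symalg{\xgam}$. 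For $\beta=\gobron$ (the full rotation decorated with a framing shift), the framing correction\rx{ae1.2a} built into its picture is normalized precisely to cancel the ribbon scalar produced by the rotation on the relevant central summand, leaving the identity.
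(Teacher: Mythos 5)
Your skeleton coincides with the paper's: uniqueness from surjectivity of $\hoam$, the homomorphism property from idempotency and centrality, and the reduction via Theorem\rw{th:rgheqv} (plus centrality\rx{eq:cntpr}) to the single identity $\jwpxtnz\tcmp\symalg{\beta}=\jwpxtnz$ for the two generating braids. The paper verifies that identity by substituting into the explicit formula\rx{eq:1.5b1} and using the isotopies $\xalf\tcmp\xtau\isotp\xalf$; your route through the uniqueness characterization of Theorem\rw{th:unqprj} is the mirror image of this and is perfectly viable. The problem is the justification you give for the resulting statement $\jwpxtnz\tcmp\symalg{\beta\tcmp\xal}=\symalg{\xal}$.

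You claim $\beta\tcmp\xal$ is isotopic to $\xal$ \emph{inside} $\St\times[0,1]$ and then try to dispose of the discrepancy by having the projector ``annihilate the contributions with nonzero through-strand count.'' Neither half works. The Kauffman bracket $\symalg{\xdummy}$ is invariant only under isotopy of ordinary framed tangles; an isotopy available only after passing to the sphere tells you nothing about $\symalg{\beta\tcmp\xal}$ --- correcting for precisely those sphere moves is what the theorem is about, so this is circular. And the annihilation claim is vacuous: $\beta\tcmp\xal$ is a $(0,2n)$-tangle, so its Kauffman bracket is a linear combination of elements of $\yCrn$, every one of which is \emph{fixed} by $\jwpxtnz$ by the defining property\rx{eq:cndpr}; nothing is killed, and your reduction only yields $\jwpxtnz\tcmp\symalg{\beta\tcmp\xal}=\symalg{\beta\tcmp\xal}$. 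What you actually need, and what the paper uses (in the capped form $\xalf\tcmp\xtau\isotp\xalf$), is that once one end of the tangle is closed off by a crossingless matching, the braids $\brtwptn$ and $\gobrotn$ (with their built-in framing shifts) become trivial by an honest framed isotopy in $\IR^2\times[0,1]$: the excursion of the wandering strand, respectively the full rotation, slides off over the free end of the diagram past the cups. With that observation stated, your proof closes; without it, the key identity remains unproved.
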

\begin{proof}
Since the homomorphism $\hoam$ is surjective, it is sufficient to
show that $\ker\hoam$ is `untangled' by the composition of the
\Kbr\ $\symalg{\xdummy}$ and the projector $\jwphxstz$. According
to Theorem\rw{th:rgheqv}, $\ker\hoam$ is generated by the braids
$\yvspoh\brtwptn\;$ and $\gobrotn$, so we have to show that
$\jwpxtnz\tcmp\ctau = \jwpxtnz$ for $\xtau$ being either of
these $\ker\hoho$-generating braids.
%
%
The latter relations follow from the
the formula\rx{eq:1.5b1} and from the
isotopies $\xalf\tcmp\xtau\isotp\xalf$ 
which hold true for any \TL\ \ttngztn\ $\xal$.
\end{proof}


\subsubsection{The invariant of links in $\Sh$ and in $\Sot$}
\label{sss:relstwrt}
Since $\Sh$ can be constructed by gluing together two 3-balls
$\bbB^3$, the gluing property dictates that the \toy\ invariant of
a link $\xL$ in $\bbB^3$ is its \Kbr: $\Btsymalg{\xL} =
\symalg{\xL}$.


If the manifold $\StI$ has $2n$ marked points on both boundary
components, then the corresponding Hilbert module\rx{eq:hmdsi}
becomes the module of endomorphisms:
\xlee{eq:ahmdsi}
\cH\big(\del(\StI)\big) =  \cfQTLcttn =\End_{\Qq}(\cfQTLztn).
\xeee
\begin{theorem}
\label{th:trTL}
A trace of an element $\xx\in\cfQTLcttntn$, considered as an
endomorphism of $\cfQTLztn$, is equal to its circular closure
within $\St$:
\xlee{eq:trTL}
\Tr_{\cfQTLztn} \xx = \symalg{\lSh{\xx}}.
\xeee
\end{theorem}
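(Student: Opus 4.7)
The plan is to verify \eqref{eq:trTL} on a convenient $\Qq$-basis of $\cfQTLcttntn$ and then extend by linearity. By \eqref{eq:tlsplitx}, we have $\cfQTLcttntn=\cfQTLztn\otQq\cfQTLtnz$, so this module is spanned by pure tensors $\xbet\tcmp\xalf$ with $\xal,\xbet\in\yCrn$ (identifying $\xalf\in\cfQTLtnz$ with $\xal\in\cfQTLztn$ via $\dflp$). Because both sides of \eqref{eq:trTL} are $\Qq$-linear in $\xx$, it is enough to establish the formula for $\xx=\xbet\tcmp\xalf$.

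For the left-hand side, use the crossingless matchings $\{\xgam\}_{\xgam\in\yCrn}$ as a basis of $\cfQTLztn$. For any such $\xgam$, the composition $\xalf\tcmp\xgam$ is a $(0,0)$-tangle, i.e.\ a disjoint union of circles, and inside $\cfQTL$ each circle is replaced by $-(q+\qi)$ via \eqref{eq:ae1.1}. Thus $\xalf\tcmp\xgam=\symalg{\xalf\tcmp\xgam}\in\Qq$, and
\[
(\xbet\tcmp\xalf)\tcmp\xgam \;=\; \symalg{\xalf\tcmp\xgam}\cdot\xbet.
\]
Consequently, in the basis $\{\xgam\}$ the matrix of left multiplication by $\xbet\tcmp\xalf$ has a single nonzero row, namely the one indexed by $\xbet$. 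The only possibly nonzero diagonal entry occurs at $\xgam=\xbet$, giving
\[
\Tr_{\cfQTLztn}(\xbet\tcmp\xalf) \;=\; \symalg{\xalf\tcmp\xbet}.
\]

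For the right-hand side, the cyclic property \eqref{eq:clinv1} of the $\Sh$-closure yields $\clShv{\xbet\tcmp\xalf}=\clShv{\xalf\tcmp\xbet}$. Since $\xalf\tcmp\xbet$ is already a $(0,0)$-tangle, its $\Sh$-closure is canonically the link $\xalf\tcmp\xbet$ itself, so
\[
\symalg{\lSh{\xbet\tcmp\xalf}} \;=\; \symalg{\xalf\tcmp\xbet},
\]
matching the trace computed above. Linearity in $\xx$ completes the proof.

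The only subtlety — which is not really an obstacle but rather a bookkeeping issue — is to make sure the identifications are consistent: namely, that under $\cfQTLztn\otQq\cfQTLtnz=\cfQTLcttntn$ the product element $\xbet\tcmp\xalf$ acts on $\cfQTLztn$ exactly by the left composition described, and that the cyclic closure identity \eqref{eq:clinv1} applies in this split setting. Both are immediate from the definitions in Section~\ref{s:tlcat}, so the argument reduces to the two short computations above.
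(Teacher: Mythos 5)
Your proof is correct and follows essentially the same route as the paper's: reduce by linearity to $\xx=\xbet\tcmp\xalf$, observe that the only diagonal entry of the multiplication matrix in the basis of crossingless matchings occurs at $\xbet$ and equals $\symalg{\xalf\tcmp\xbet}$, and identify this with the Kauffman bracket of the circular closure. Your write-up is just a more detailed version of the paper's argument, with the cyclicity of the closure made explicit.
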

\begin{proof}
It is sufficient to verify the formula for $\xx =
\xbet\tcmp\xalf$, where $\xal$ and $\xbet$ are \TL\ \ttngztn s.
The only diagonal element in the matrix of $\xx$ in the basis of
\TL\ tangles comes from the tangle $\xbet$, and the corresponding
matrix element is $\symalg{\xalf\tcmp\xbet}$, which is equal to the
\Kbr\ of the closure of $\xbet\tcmp\xald$ within $\Sh$.
\end{proof}

If a \rgh\ link $\xL\subset\Sot$ is presented as a circular closure of a
\rgh\ \sphtng\ $\xsigm\in\yTngSttntn$, then the gluing axiom of a \TQFT\ says that
the invariant of $\xL$ must be equal to the trace of the \stmp\ of
$\xsigm$:
\xlee{eq:trend}
\Sotsymalg{\xsigm} = \Tr_{\cfQTLztn} \sphsymalg{\xsigm}.
\xeee
The presentation $\xsigm=\hoam(\xtau)$ together with
\eex{eq:trTL} and\rx{eq:cmsq1}, allows us to recast \ex{eq:trend} in the following
equivalent form:
\xlee{eq:trend1}
\Sotsymalg{\hoam(\xtau)} =
\xbrb{\lSh{\jwpxtnz\tcmp\symalg{\xtau}}}.
\xeee
In other words, the map
\xlee{eq:Sotmp}
\Sotsymalg{\xdummy}\colon\yLnkSot \rightarrow\Qq
\xeee
should provide the commutativity of the right square of the
diagram
\xlee{eq:trend2}
\label{eq:1.5b3}
\vcenter{\xymatrix@C=2cm@R=1.5cm{
\yTngtntn \ar@{->>}[r]^-{\hoam}
\ar[d]^-{\symalg{-}}
&
\yTngSttntn
\ar[d]^-{\sphsymalg{-}}
\ar@{->>}[r]^-{ (\lSot{-})}
&
\yLnkSot
\ar[d]^-{\Sotsymalg{-}}
\\
\cfQTLtntn
\ar[r]^-{\jwpxtnz\tcmp\xdummy}
&
\cfQTLcttntn
\ar[r]^-{(\lSh{-})}
&\Qq
}}
\xeee
(the commutativity of the left square is a particular case of
Theorem\rw{th:trend1}).

\begin{theorem}
There exists a unique map\rx{eq:Sotmp} such that the right square
of the diagram\rx{eq:trend2} is commutative
\end{theorem}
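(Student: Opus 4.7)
The proof splits into uniqueness and existence. For uniqueness, I would first note that every link $\xL\subset\Sot$ is isotopic to $\clSotv{\hoam(\xtau)}$ for some ordinary tangle $\xtau\in\yTngtntn$ and some $n\geq 0$ (using the presentation $\Sot=\Stint/{\sim}$). Hence the composition along the top row of \ex{eq:trend2} is surjective as $n$ varies, which forces the value of $\Sotsymalg{\xdummy}$ on any link. So at most one map can make the right square commute, and it must be given by the formula
\xlee{eq:propdef}
\Sotsymalg{\clSotv{\hoam(\xtau)}} = \xbrb{\lSh{\jwpxtnz\tcmp\symalg{\xtau}}}.
\xeee

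For existence I would take \ex{eq:propdef} as a definition and verify that the right-hand side depends only on the link $\clSotv{\hoam(\xtau)}\in\yLnkSot$. The equivalence relation on $\yTng$ whose quotient is $\yLnkSot$ factors as $\yTng\twoheadrightarrow\yTngSt\twoheadrightarrow\yLnkSot$. Invariance under the first quotient (i.e.\ under $\ker\hoam$) is precisely the commutativity of the left square, already proved in Theorem~\ref{th:trend1}. By Theorem~\ref{th:soeq} the second quotient is generated by two elementary moves, cyclic commutativity $\xsigmo\tcmp\xsigmt\sim\xsigmt\tcmp\xsigmo$ and the flip $\dflpv{\xsigm}\sim\xsigm$, so it suffices to check these two.

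Cyclic invariance follows from combining the trace-like property \ex{eq:clinv1} of the $\Sh$-closure with the centrality of $\jwpxtnz$ stated in \ex{eq:cntpr}: writing $\xtau=\xtauo\tcmp\xtaut$ with matching valences,
\xlee{eq:proposal-cyc}
\xbrb{\lSh{\jwpxtnz\tcmp\symalg{\xtauo}\tcmp\symalg{\xtaut}}}
= \xbrb{\lSh{\symalg{\xtaut}\tcmp\jwpxtnz\tcmp\symalg{\xtauo}}}
= \xbrb{\lSh{\jwpxtnz\tcmp\symalg{\xtaut\tcmp\xtauo}}}.
\xeee
For flip invariance I would use $\symalg{\xtauf}=\dflpv{\symalg{\xtau}}$ together with the involutive identity $\dflpv{\xx\tcmp\xy}=\dflpv{\xy}\tcmp\dflpv{\xx}$, the fixed-point property $\dflpv{\jwpxtnz}=\jwpxtnz$ from \ex{eq:invpr}, and the flip invariance \ex{eq:clinv2} of the $\Sh$-closure, yielding
\xlee{eq:proposal-flip}
\xbrb{\lSh{\jwpxtnz\tcmp\symalg{\xtauf}}}
= \xbrb{\lSh{\dflpv{\symalg{\xtau}\tcmp\jwpxtnz}}}
= \xbrb{\lSh{\symalg{\xtau}\tcmp\jwpxtnz}}
= \xbrb{\lSh{\jwpxtnz\tcmp\symalg{\xtau}}},
\xeee
the last equality being another application of cyclicity.

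There is no real obstacle here; the potential pitfall is purely organizational, namely confirming that the relations of Theorem~\ref{th:soeq} that describe $\yLnkSot$ within $\yTngSt$ exactly match the three algebraic properties being invoked (cyclicity of $(\lSh{-})$, centrality and involution-invariance of $\jwpxtnz$). Once that dictionary is in place, the proof is an essentially formal diagram chase through \ex{eq:trend2} using Theorem~\ref{th:trend1} to handle the $\hoam$-quotient and \eex{eq:proposal-cyc} and \rx{eq:proposal-flip} to handle the $\clSotv{\xdummy}$-quotient.
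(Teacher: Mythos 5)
Your proposal is correct and follows essentially the same route as the paper: both reduce the claim, via surjectivity of the closure map and Theorem\rw{th:soeq}, to checking cyclic invariance and flip invariance of $\xbrb{\lSh{\jwpxtnz\tcmp\symalg{\xtau}}}$, and both verify these using the trace property\rx{eq:clinv1}, the centrality\rx{eq:cntpr} of the projector, its $\dflp$-invariance\rx{eq:invpr}, and the $\dflp$-invariance\rx{eq:clinv2} of the closure. No gaps.
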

\begin{proof}
Since the map $(\lSot{-})$ is surjective, then according to
Theorem\rw{th:soeq}, we have to check relations
\ylee{eq:chrl}
\lrbc{\lSh{\sphsymalg{\xsigmo\tcmp\xsigmt}}} =
\lrbc{\lSh{\sphsymalg{\xsigmt\tcmp\xsigmo}}},\quad
\lrbc{\lSh{\sphsymalg{\dflpv{\xsigm}}}} =
\lrbc{\lSh{\sphsymalg{\xsigm}}},
\yeee
where $\xsigm$, $\xsigmo$ and $\xsigmt$ are arbitrary
\sphtng s. Present them as $\hoho$ images of ordinary tangles,
then \ex{eq:cmsq1} reduces these relations to
\ylee{eq:chrl1}
\xbrb{\lSh{\jwpxtnz\tcmp\psymalg{\xtauo}\tcmp\psymalg{\xtaut}}} =
\xbrb{\lSh{\jwpxtnz\tcmp\psymalg{\xtaut}\tcmp\psymalg{\xtauo}}},
\quad
\xbrb{\lSh{\jwpxtnz\tcmp\psymalg{\dflpv{\xtau}}}}=
\xbrb{\lSh{\jwpxtnz\tcmp\psymalg{\xtau}}}.
\yeee
The first of this relations follows easily from the trace-like
property\rx{eq:clinv1} of the circular closure of a tangle within $\Sh$
and from the commutativity\rx{eq:cntpr} of projectors, while the
second one follows from the invariance of the projector\rx{eq:invpr}
and of the circular closure\rx{eq:clinv2} under the involution $\dflp$.
\end{proof}

\begin{theorem}
\label{pr:Lpol}
The link invariant $\Sotsymalg{\xL}$ defined by \ex{eq:trend1} is a Laurent polynomial of
$q$.
\end{theorem}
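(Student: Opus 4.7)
My approach is to reinterpret $\Sotsymalg{\xL}$ as the matrix trace of a specific endomorphism of a finite-dimensional $\Qq$-module whose matrix entries visibly lie in $\Zqqi$, thereby bypassing the apparent denominators in the Gram matrix expression for $\jwpxtnz$.

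Fix a tangle $\xtau\in\yTngtntn$ with $\xL=\clSotv{\hoam(\xtau)}$, so that by \ex{eq:trend1}, $\Sotsymalg{\xL}=\xbrb{\lSh{\jwpxtnz\tcmp\symalg{\xtau}}}$. I would introduce the $\Qq$-linear endomorphism $\Phi_\xtau$ of $\cfQTLztn$ acting on the basis $\yCrn$ by $\xgamma\mapsto\symalg{\xtau\tcmp\xgamma}$. Since $\symalg{\xtau}$ is a $\Zqqi$-linear combination of \TL\ tangles and the circle-removal factor $-(q+\qi)$ from\rx{eq:ae1.1} lies in $\Zqqi$, the matrix of $\Phi_\xtau$ in the basis $\yCrn$ has entries in $\Zqqi$; in particular $\Tr\Phi_\xtau\in\Zqqi$.

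It then suffices to prove $\xbrb{\lSh{\jwpxtnz\tcmp\symalg{\xtau}}}=\Tr\Phi_\xtau$. Substituting the expansion\rx{eq:1.5b1} for $\jwpxtnz$ and applying the trace-like property\rx{eq:clinv1} of the $\Sh$-closure converts the left-hand side into
$$\sum_{\xal,\xbet\in\yCrn}\mtBiab\,\bigl(\xalf\tcmp\symalg{\xtau}\tcmp\xbet\bigr),$$
where each scalar $\xalf\tcmp\symalg{\xtau}\tcmp\xbet\in\cfQTLzz=\Qq$ is a Kauffman bracket of a link in $\Sh$. Expanding $\symalg{\xtau}\tcmp\xbet$ in the basis $\yCrn$ produces matrix entries of $\Phi_\xtau$, and the Gram matrix identity\rx{eq:1.5b} together with the defining relation of $\mtBiab$ collapses the sum to $\sum_{\xbet}[\Phi_\xtau]_{\xbet\xbet}=\Tr\Phi_\xtau$.

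The essential content is this last collapse, which records the fact that $\jwpxtnz$ acts as the identity endomorphism of $\cfQTLztn$ under the left action of $\cfQTLcttntn$ by tangle composition---essentially the content of Theorem\rw{th:unqprj}. The only obstacle is the bookkeeping of composition conventions and matching indices; no deeper ingredient is required.
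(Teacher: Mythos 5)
Your proof is correct and follows essentially the same route as the paper: the paper proves this theorem as the $m=0$ case of Corollary\rw{cr:int}, whose proof likewise rewrites $\xlrb{\lSh{\jwpxnm\tcmp\xx}}$ as (a $\Zqqi$-multiple of) the trace of the multiplication endomorphism on the module spanned by cup diagrams, whose matrix entries lie in $\Zqqi$ --- for $m=0$ this module is exactly your $\cfQTLztn$ and the prefactor is $1$. Your explicit Gram-matrix collapse is just a computational variant of the paper's trace identity (\cf Theorem\rw{th:trTL} and \ex{eq:4.8a}), so no new ingredient is involved.
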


Formula\rx{eq:trend1} indicates that relation\rx{eq:relfr} between
the full and \xstbl\ \WRT\ invariants of links in $\Sot$ follows
from the following theorem:
\begin{theorem}
\label{th:relfr}
For any \ttngtntn\ $\xtau$ there is a relation
\xlee{eq:relstwrt1}
\ZrtSot =
\xbrb{\lSh{\jwpxtnz\tcmp\symalg{\xtau}}}|_{q=\exp(i\pi/\xr)}
\xeee
if $\xr\geq n+2$.
\end{theorem}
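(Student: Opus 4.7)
The plan is to invoke the Reshetikhin--Turaev surgery formula for $S^2\times S^1$ (as $0$-framed surgery on an unknot $U\subset S^3$) and reduce the resulting color sum to a single Kauffman bracket via the Kirby-color killing identity. Under this surgery presentation, the $S^2\times S^1$-closure of the $(2n,2n)$-tangle $\tau$ corresponds to the link $\hat\tau_{S^3}\cup U\subset S^3$, where $\hat\tau_{S^3}$ denotes the planar $S^3$-closure of $\tau$ (top and bottom endpoints joined by $2n$ parallel arcs) and $U$ is a $0$-framed unknot encircling those $2n$ closure arcs.

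The first step is the standard RT surgery formula
\[
Z_r(\hat\tau,S^2\times S^1) \;=\; \mathcal{D}^{-2}\sum_{\lambda=0}^{r-2}[\lambda+1]\,\bigl\langle\hat\tau_{S^3}\cup U_{V_\lambda}\bigr\rangle\Big|_{q=\exp(i\pi/r)},
\]
where $U_{V_\lambda}$ is $U$ cabled by the Jones--Wenzl idempotent for the irreducible $V_\lambda$ of $U_q(\mathfrak{sl}_2)$ and $\mathcal{D}^2=\sum_\lambda[\lambda+1]^2$. The key step is to evaluate this color sum using the sliding (``killing'') property of the Kirby color $\omega=\mathcal{D}^{-1}\sum_\lambda[\lambda+1]V_\lambda$: since $\omega$ encircling an irreducible $V_j$ contributes $\mathcal{D}\,\delta_{j,0}$, encircling the $2n$ parallel fundamental-rep strands $V_1^{\otimes 2n}$ produces $\mathcal{D}$ times the $U_q(\mathfrak{sl}_2)$-projection onto the $V_0$-isotypic component of $V_1^{\otimes 2n}$. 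By Schur--Weyl duality this projection lies in the Temperley--Lieb commutant, and the uniqueness characterization in Theorem~\ref{th:unqprj} identifies it with $\hat p^{2n,0}$ inserted on the $2n$ closure arcs. Substituting back, the factors $\mathcal{D}^{\pm 2}$ cancel and one arrives at
\[
Z_r(\hat\tau,S^2\times S^1) \;=\; \bigl\langle \widehat{\hat p^{2n,0}\cdot\langle\tau\rangle}_{\,S^3}\bigr\rangle\Big|_{q=\exp(i\pi/r)},
\]
which is the claim.

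The hypothesis $r\geq n+2$ is precisely the threshold at which $\hat p^{2n,0}$ is well defined at $q=\exp(i\pi/r)$: the defining formula~\eqref{eq:1.5b1} requires inverting the Gram matrix $B$ on the crossingless matchings $\mathrm{Cr}_n^{2n}$, and by Lickorish's product formula $\det B$ is a product of quantum integers $[k]$ for $2\leq k\leq n+1$, all nonzero at $q=\exp(i\pi/r)$ exactly when $r>n+1$. The main technical obstacle I anticipate is justifying the killing-formula step at this specific root of unity rather than at generic $q$: for generic $q$ the identity $\omega\text{-encircle}(V_1^{\otimes 2n}) = \mathcal{D}\cdot\hat p^{2n,0}$ is classical, but at $r$-th roots of unity the $U_q(\mathfrak{sl}_2)$-representation category is semisimplified, so when $r<2n+2$ the tensor power $V_1^{\otimes 2n}$ contains non-admissible summands $V_{2k}$ with $2k>r-2$, and one must verify these are annihilated rather than obstructing the identity. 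This is ensured because the $V_0$-isotypic projection is determined entirely by the Gram matrix $B$, which remains non-degenerate throughout the stated range.
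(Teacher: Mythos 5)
Your overall strategy is the same as the paper's: present $\Sot$ as $0$-surgery on a meridian unknot encircling the $2n$ closure strands, apply the Reshetikhin--Turaev surgery formula, and collapse the Kirby-color sum to the projector $\hat p^{2n,0}$. The gap is in the justification of that collapse. You correctly flag that the encirclement (``killing'') identity is the delicate step at $q=e^{i\pi/\xr}$, but the resolution you offer --- that the $V_0$-isotypic projection ``is determined entirely by the Gram matrix $B$, which remains non-degenerate'' --- addresses the wrong issue. Non-degeneracy of the Gram determinant (a product of quantum integers $[k]$, $2\le k\le n+1$, nonzero exactly for $\xr\ge n+2$) only guarantees that $\hat p^{2n,0}$ itself is \emph{well defined} at this root of unity. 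It does not show that the color sum produces nothing else. What the paper's explicit computation reveals is that the Kirby color encircling $V_1^{\otimes 2n}$ does \emph{not} return $\mathcal{D}\cdot\hat p^{2n,0}$ alone: decomposing the identity of $\mathrm{TL}_{2n}$ as $\sum_m \hat p^{2n,m}$ and evaluating the eigenvalue of each $\hat p^{2n,m}$ under the colored meridian, the Gauss-type sum over colors is nonzero not only for $m=0$ but for every $m\equiv 0$ or $-2 \pmod{2\xr}$. The result of the encirclement is therefore
$$
\mathcal{D}\Bigl(\textstyle\sum_{k\ge 0}\hat p^{2n,\,2k\xr}\;+\;\sum_{k\ge 1}\hat p^{2n,\,2k\xr-2}\Bigr),
$$
and the extra terms disappear precisely because for $\xr\ge n+2$ every such $m>0$ exceeds $2n$, so the corresponding projectors are identically zero in $\mathrm{TL}_{2n}$ --- not because of any property of the Gram matrix. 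For $\xr\le n+1$ the term $\hat p^{2n,2\xr-2}$ genuinely survives and the claimed identity fails, which is consistent with the paper's formula for $\ZrtSot$ below the critical level.

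So the missing ingredient is the eigenvalue formula for a colored meridian acting on $\hat p^{2n,m}$ (the paper's \ex{eq:4.9a}) together with the root-of-unity evaluation of the resulting sum over colors; without it, the passage from ``projection onto the trivial isotypic component in the semisimplified category'' to ``the Temperley--Lieb element $\hat p^{2n,0}$ specialized at $q=e^{i\pi/\xr}$'' is unjustified, and in fact is exactly where the hypothesis $\xr\ge n+2$ is consumed for the second time.
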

This theorem is well-known, but we will provide its proof in
subsection\rw{ss:relstwrt} for completeness.

\section{Categorification}

\subsection{A triply graded categorification of the Jones
polynomial}
\label{sss:trgr}
In\cx{Kh1} M.~Khovanov introduced a categorification of
the Jones polynomial of links. To a diagram $\xL$ of a
link he associates a complex of graded vector spaces over $\IQ$
%
\xlee{ae1.5}
\dL = \lrbc{ \cdots \rightarrow \dLi \rightarrow \dLio\rightarrow\cdots}
\xeee
so that
if two diagrams represent the same link then the corresponding
complexes are homotopy equivalent, and the graded Euler
characteristic of $\dL$ is equal to the Jones polynomial of $\xL$.
As a complex of vector spaces, $\dL$ is homotopically equivalent
to its homology known as Khovanov homology of the link $\xL$:
$\Hmb\xlrb{\symcat{\xL}} = \HKhb(\xL)$.

Thus, overall, the complex\rx{ae1.5} has two gradings:
the first one was the homological grading of the complex, the corresponding degree being
equal to $i$, and the
second grading was the grading related to powers of $q$.
%
In this paper we adopt a slightly different convention which is
convenient for working with framed links and tangles. It is
inspired by matrix factorization categorification\cx{KR1} and its
advantage is that it is no longer necessary to assign orientation to
link strands in order to obtain the grading of the categorification
complex\rx{ae1.5} which would make it invariant under the second
Reidemeister move.

To a framed link
diagram $\xL$ we associate a $\ZZ \oplus\ZZ\oplus\ZZ_2$-graded complex\rx{ae1.5} with
degrees $\dgo$,  $\dgt$ and $\dgh$.
The first two gradings are of the same nature as in\cx{Kh1} and, in
particular, $\dgo\dLi=i$. The third grading is an inner grading of
chain modules defined modulo 2 and of homological
nature, that is, the homological parity of an element of $\dL$,
which affects various sign factors, is the sum of $\dgo$ and
$\dgh$. Both homological degrees are either integer or
half-integer simultaneously, so the homological parity is integer
and takes values in $\ZZ_2$. The $q$-degree $\dgt$ may also take
half-integer values.


Let $\tgrshv{l}{m}{n}$ denote the shift of three degrees by $l$,
$m$ and $n$ units respectively\footnote{
Our degree shift is defined in such a way that if an object $M$
has a homogeneous degree $n$, then the shifted object $M[1]$ has a
homogeneous degree $n+1$.
}. We use abbreviated notations
$$
\tgrsshv{m}{l} = \tgrshv{m}{l}{0},\qquad
\hqshv{m} = \tgrsshv{m}{m},\qquad
\qshv{m} = \tgrshv{m}{0}{0},\qquad
\qtshv{m} = \tgrshv{m}{0}{m}.
$$

After the grading modification, the categorification
formulas of\cx{Kh1} take the following form:
the module associated with an unknot is still $\ZZ[x]/(x^2)$ but with
a different degree assignment:
\begin{eqnarray}
\label{ae1.6}
&
\Bsymcat{\lcir}=\ZZ[x]/(x^2)\,
\tgrshv{-1}{0}{1},
\\
&\dgt 1 = 0, \quad \dgt x = 2,
\quad\dgo 1 = \dgo x = \dgh 1=\dgh x =0,
\end{eqnarray}
and the categorification complex of a crossing is the same as
in\cx{Kh1} but with a different degree shift:
\xlee{ae1.7}
\Bsymcat{\xcrsp}
\;\;=\;\;
\Bigg(\;\;
\Bsymcat{\xpver}
\;\tgrshv{\vthf}{-\vthf}{\vthf}
\xrightarrow{\;\;\;\;\xmrf\;\;\;\;}
\Bsymcat{\xphor}
\;\tgrshv{-\vthf}{\vthf}{-\vthf}
\vspace*{18pt}
\;\;
\Bigg),
\xeee
where $f$ is either a multiplication or a comultiplication of the
ring $\ZZ[x]/(x^2)$ depending on how the arcs in the \rhs are
closed into circles.
The resulting categorification complex\rx{ae1.5} is invariant
up to homotopy under the second and third Reidemeister moves, but
it acquires a degree shift under the first Reidemeister move:
\vspace*{-0.5cm}
\xlee{ae1.8}
\Bsymcat{\xvfro\hspace*{-0.2cm}}
\;\; = \;\;
\Bsymcat{\;\xvert\hspace*{-0.5cm}}
\tgrshv{\vthh}{-\vthf}{-\vthf}.
\xeee
It is easy to see that the whole categorification complex\rx{ae1.5} has a
homogeneous degree $\dgh$.

\subsection{The universal categorification of the \TLa}
D.~Bar-Natan \cxw{BN1} described the universal category $\dTL$, whose
Grothendieck \Kzg\ is the \TLa\ $\cTL$ considered as a $\Zqqi$-module.
We will use this category with obvious adjustments required by the new
grading conventions.
%

\subsubsection{A homotopy category of complexes of a \Zgrdd\
additive category}
\label{sss:hotcatcom}

\hyphenation{Gro-then-dieck}

A split \Grtg\ $\Gz(\xctC)$ of an additive category $\xctC$ is
generated by the images $\Gz(\obA)$ of its objects modulo the
additive relation $\Gz(\obA\oplus\obB) = \Gz(\obA) + \Gz(\obB)$.

A \Grtg\ $\Kz(\xctC)$ of an abelian category $\xctC$ is generated
by the images $\Kz(\obA)$ of its objects modulo the exact sequence
relation: $\Kz(\obA) - \Kz(\obB) + \Kz(\obC) = 0$, if there is an
exact sequence $\obA\rightarrow\obB\rightarrow\obC$.

A \Grtg\ $\Kz(\xctC)$ of a triangulated category $\xctC$ is
generated by the images $\Kz(\obA)$ of its objects modulo the
translation relation $\Kz(\obA[1])=-\Kz(\obA)$ and the
exact triangle relation: $\Kz(\obA) - \Kz(\obB) + \Kz(\obC)=0$, if
$\obA$, $\obB$ and $\obC$ form an exact triangle
$\obA\rightarrow\obB\rightarrow\obC\rightarrow A[1]$.

If the category $\xctC$ is \Zgrdd, that is, there is a shift
functor $\qsho$ and modules $\Hom(\obA,\obB)$ are \Zgrdd, then the
groups $\Gz(\xctC)$ and  $\Kz(\xctC)$ are modules over $\Zqqi$,
the multiplication by $q$ corresponding to the shift $\qsho$.

For an additive category $\cctC$, let $\xhKb(\cctC)$ denote the
homotopy category of bounded complexes and $\xhKm(\cctC)$ -- the
similar category of bounded from above complexes over $\cctC$. These categories are triangulated. Suppose that
$\cctC$ is \Zgrdd\ and generated, as an additive category, by objects $\obEo,\ldots,\obEN$ and their \Zgrd\
shifts, that is, an object $\xA$ of $\cctC$ has a form
\xlee{eq:obgr}
\xA = \bpaoN\bigoplus_{j\in\ZZ}\aja\obEa\qshj,
\xeee
where $\aja\in\ZZ_{\geq 0}$ are multiplicities of the shifted objects
$\obEa\qshj$.
Suppose further that $\Kz(\obEo),\ldots,\Kz(\obEN)$ generate freely
$\Gz(\cctC)$ as a module over $\Zqqi$, so the presentation\rx{eq:obgr} is unique.

An object of $\xhKb(\cctC)$ has the form
\xlee{eq:obkom}
\xbA =
\lrbc{\cdots\rightarrow\xAi\rightarrow\xAio\rightarrow\cdots},\qquad
\xAi =
\bpaoN
\bigoplus_{j\in\ZZ}
\aija\obEa \qshj.
\xeee
%
We call the objects $\xAi$  \emph{\qcmds} and we
refer to objects $\obEa$ with non-zero multiplicities as
\emph{constituent objects} of the complex $\xbA$.
The functor $\cctC\hookrightarrow\xhKv(\cctC)$, $\xA\mapsto (0\rightarrow\xA\rightarrow 0)$ generates the
isomorphism of modules $\Gz(\cctC) = \Kz(\xhKv(\cctC))$ and
\xlee{eq:grmpbd}
\Kz(\xbA) = \sum_{i\in\ZZ}\smaoN\sum_{j\in\ZZ} (-1)^i
\aija\,
q^{j}\,\Kz(\obEa).
\xeee

Define the \qpord\ of an object\rx{eq:obgr}
as $\yordq{\xA} = \xminv{j\colon \aja\neq 0}$.
A complex $\xbA$ in $\dTLp$ is \emph{\qpb} if $\lmii\yordq{\xAmi} =
+\infty$.
Let $\xhKpq(\cctC)\subset\xhKm(\cctC)$ be the full subcategory of
\qpb\ complexes. The category $\xhKpq(\cctC)$ is triangulated.
Define $\Kzp(\xhKpq(\cctC))$ as a module over formal Laurent
series $\Zsqqi$ freely generated by the elements $\Kzp(\obEa)$,
$1\leq\inoa\leq\xlN$ and define the map $\Kzp\colon\Ob\xhKpq(\cctC)\rightarrow
\Kzp(\xhKpq(\cctC))$ by the formula similar to \ex{eq:grmpbd}:
\xlee{eq:grmpbdi}
\Kzp(\xbA) = \sum_{i\in\ZZ}\smaoN\sum_{j\in\ZZ} (-1)^i
\aija\,
q^{j}\,\Kzp(\obEa).
\xeee
Since the complex $\xbA$ is \qpb, the sum over $i$ in this
equation is well-defined.

\subsubsection{The additive category $\dTLtl$}

For two \TL\ \ttngmn s $\xlamo$ and $\xlamtw$,
let
\xlee{eq:2a.1}
\xlamo\#\xlamtw=\xSoo \sqcup\cdots\sqcup\xSok
\xeee
denote a union of
disjoint circles produced by gluing together the matching end-points of $\xlamo$
and $\xlamtw$. A \emph{\fltc} $\cSg$
from $\xlamo$ to $\xlamtw$ is a compact orientable surface with a
specified diffeomorphism between its boundary and
$\xlamo\#\xlamtw$.

Let $\coblot$ be a $\ZZ\oplus\ZZt$-graded module with free
generators $\hSg$ associated with \fltc s $\cSg$ and having
degrees
\ylee{eq:dgrcob}
\dgt\hSg  = \dgh \hSg = \shlf(m+n) -\chi(\cSg),
\yeee
where $\chi(\cSg) = 2 - \#\text{holes} - \#\text{handles}$ is
the Euler characteristic of $\cSg$.

Let $\zpo,\zpt,\zph,\zpf$ be 4 distinct points inside $\cSg$. An
associated \ftur\ is a formal relation
$$\hSgv{12} + \hSgv{34} = \hSgv{13} + \hSgv{24},$$
where $\cSgij$ is a \fltc\ constructed by cutting small
neighborhoods of $\zpi$ and $\zpj$ out of $\cSg$ and then gluing
together the boundaries of the cuts.

By definition, $\dTLtlmn$ is an additive $\ZZ\oplus\ZZt$-graded category generated by objects $\dlam$
which are indexed by \TL\ \ttngmn s  $\xlam$. A module of
morphisms is
\ylee{eq:modmor}
\HmTlmnlot = \coblot/(\text{\ftur s}).
\yeee
%

%
%
%
%
%
%
%
%

A \fltc\ $\cSg$ is called
\emph{\rdcd} if it is a disjoint union of connected surfaces
$\cSg=\cSg_1\sqcup\cdots\sqcup\cSg_k$, such that each $\cSg_i$ is either a 2-disk $\bbB^2$ or
a 2-torus  with a hole; since each $\cSg_i$ has a single boundary component, there are specified
diffeomorphisms between the boundaries $\del\cSg_i$ and circles $\xSov{i}$ of
\ex{eq:2a.1}.
\begin{theorem}[\cxw{Kh2},\cx{BN1}]
\label{pr:rpc}
The module $\HmTlmnlot$ is generated freely by
\rdcd\ \fltc s.
\end{theorem}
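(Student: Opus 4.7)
My plan is to establish spanning and linear independence separately.

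For spanning, I would argue by induction on the complexity $\sum_C \bigl( 2 g(C) + b(C) - 1 \bigr)$, where $g(C)$ and $b(C)$ denote the genus and number of boundary circles of a connected component $C$, that every flat cobordism $\mathcal{S}$ reduces modulo 4Tu to an integral combination of reduced ones. The key manipulation is the relation $\hat{\mathcal{S}}_{12} + \hat{\mathcal{S}}_{34} = \hat{\mathcal{S}}_{13} + \hat{\mathcal{S}}_{24}$, where $\mathcal{S}_{ij}$ modifies $\mathcal{S}$ by inserting a tube between points $z_i$ and $z_j$: a tube between two points in the same component adds a handle, and a tube between two points in different components merges them. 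The two basic reduction moves are (i) if a component has $b(C) \geq 2$ boundary circles, place the four points near two distinct boundary circles of $C$, so that one of the three terms in 4Tu splits $C$ along an arc joining those two boundaries; (ii) if a component has $g(C) \geq 2$, place the four points across two distinct handles, so that 4Tu exchanges handles within $C$ for tubes into auxiliary simpler components which are then absorbed by step (i). Iterating (i) and (ii) terminates with each connected component a disk or a torus-with-hole.

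For linear independence, I would construct the Khovanov/Bar-Natan TQFT functor $\mathcal{F}$ using the Frobenius algebra $\mathcal{A} = \mathbb{Z}[x]/(x^2)$ with unit $1 \mapsto 1$, counit $1 \mapsto 0$, $x \mapsto 1$, product $x^2 = 0$, and coproduct $\Delta(1) = 1 \otimes x + x \otimes 1$, $\Delta(x) = x \otimes x$. The standard verification that this functor satisfies 4Tu ensures $\mathcal{F}$ descends to the quotient module. Applied to the $k$ circles of $\lambda_1 \# \lambda_2$, it yields $\mathcal{A}^{\otimes k}$, and a reduced cobordism whose components are disks on the circles in a subset $I$ and tori-with-hole on its complement maps to $\bigotimes_{i=1}^{k} v_i$, with $v_i = 1$ for $i \in I$ and $v_i = 2x$ (the value of the handle operator $H = m \circ \Delta$ on $1$) otherwise. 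These $2^k$ tensors are $\mathbb{Z}$-linearly independent in $\mathcal{A}^{\otimes k}$, proving linear independence of the reduced cobordisms in the quotient.

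The main obstacle will be making the spanning induction precise. Since 4Tu is only a single quadrinomial relation, constructing explicit placements of the four points that strictly reduce the complexity at each inductive step requires careful surface-topological bookkeeping. The genus-reduction step is especially delicate, since naive applications of 4Tu tend to add handles rather than remove them; achieving a net simplification forces one to combine multiple instances of the relation, absorbing the extra tubes via step (i). The independence half, once $\mathcal{F}$ is shown to respect 4Tu, is essentially formal.
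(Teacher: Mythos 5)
Your linear-independence half is correct and is the standard argument: the functor built from $\mathcal{A}=\mathbb{Z}[x]/(x^2)$ respects 4Tu and sends the reduced cobordisms to the linearly independent tensors $\{1,2x\}^{\otimes k}$. The spanning half, however, contains a gap that is not just bookkeeping. First, no term of the 4Tu relation ever splits a component: each $\widehat{\mathcal{S}}_{ij}$ glues in a tube, which either adds a handle or merges two components, so the move you describe in step (i) does not exist. What one actually uses is the derived neck-cutting identity $2\mathcal{S}=\mathcal{S}'_{h_1}+\mathcal{S}'_{h_2}$, obtained by applying 4Tu to the already cut-and-capped surface $\mathcal{S}'$ with $z_1,z_2$ on one capping disk and $z_3,z_4$ on the other. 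The factor of $2$ already defeats your claim of integral combinations (a bare tube joining two circles of $\lambda_1\#\lambda_2$ maps to $1\otimes x+x\otimes 1$, which is not an integral combination of $\{1,2x\}^{\otimes 2}$). Second, your complexity $\sum_C\bigl(2g(C)+b(C)-1\bigr)$ equals $c(\mathcal{S})-\chi(\mathcal{S})$, where $c$ is the number of components; since 4Tu is homogeneous in $\chi$ (indeed $\chi$ is the grading of the module) and gluing tubes never increases $c$, this quantity does not strictly decrease along your moves, so the induction as set up cannot terminate.

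The genus-reduction step (ii) is not merely delicate; it is impossible from 4Tu alone. Lee's Frobenius algebra $\mathbb{Q}[x]/(x^2-1)$ has the same coproduct value $\Delta(1)=1\otimes x+x\otimes 1$ and therefore satisfies every instance of 4Tu, yet it sends the once-punctured genus-two surface to $H^2(1)=4\neq 0$, while Khovanov's algebra sends it to $0$. If that surface were a combination $\sum_i a_iR_i$ of reduced cobordisms in the 4Tu quotient, the Khovanov functor would force every $a_i=0$ and the Lee functor would then yield $4=0$. Hence the once-punctured genus-two surface is a nonzero element of the quotient by 4Tu lying outside the span of the reduced cobordisms, and no placement of the four points can remedy this. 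Note that the paper supplies no proof of this theorem (it is imported from [Kh2] and [BN1]); in those sources the morphism spaces are either defined directly as $\mathcal{A}^{\otimes k}$ via the TQFT or are cut down by further relations (sphere, torus, and a genus-killing relation equivalent to $x^2=0$). To prove spanning you must invoke those additional relations, or the explicit TQFT identification, rather than 4Tu by itself.
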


The following corollary of this theorem is easily established by
relating two types of \rdcd\ \fltc s (a 2-disk and a 2-torus with a hole)
to the generators $1$ and $x$ of the module\rx{ae1.6}.
\begin{corollary}
There is a canonical isomorphism
\xlee{eq:homtliso}
\HmTlmnlot =
\zsymcat{\lSh{\xlamtw\tcmp\dsymv{\xlamo}}}
\tgrshv{\tfrac{m+n}{2}}{0}{\tfrac{m+n}{2}},\qquad
\xlamo,\xlamtw\in\yTLmn.
\xeee
\end{corollary}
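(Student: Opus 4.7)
The plan is to assemble the canonical map from the Bar--Natan / Khovanov TQFT and then identify bases via Theorem~\rw{pr:rpc}. Let $k$ denote the number of connected components of the closed $1$-manifold $\xlamo\#\xlamtw$. This is also the number of circles of the crossingless diagram $\lSh{\xlamtw\tcmp\dsymv{\xlamo}}$, since both $1$-manifolds are literally obtained by pairing up the boundary points of $\xlamo$ and $\xlamtw$. Consequently $\zsymcat{\lSh{\xlamtw\tcmp\dsymv{\xlamo}}}$ is the $k$-fold tensor power of $\Bsymcat{\lcir}$, namely $(\ZZ[x]/(x^2))^{\otimes k}$ with the cumulative shift $\tgrshv{-k}{0}{k}$ inherited from $k$ copies of \rx{ae1.6}.

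To construct the map, view a \fltc\ $\cSg$ from $\xlamo$ to $\xlamtw$, whose boundary is $\xlamo\#\xlamtw$, as a cobordism from the empty $1$-manifold to these $k$ circles. The Frobenius-algebra TQFT sends such a cobordism to a $\ZZ$-linear map $\ZZ=\zsymcat{\emptyset}\to\zsymcat{\lSh{\xlamtw\tcmp\dsymv{\xlamo}}}$, and the image of $1$ is the element I assign to the generator $\hSg$. This assignment is $\ZZ$-linear in $\cSg$ and respects the \ftur\ relations (which are consequences of the Frobenius-algebra / neck-cutting identities), so it descends to a well-defined map $\HmTlmnlot\to\zsymcat{\lSh{\xlamtw\tcmp\dsymv{\xlamo}}}$.

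By Theorem~\rw{pr:rpc} the source is free of rank $2^k$, with basis indexed, for each circle of $\xlamo\#\xlamtw$, by a choice of either a capping $2$-disk or a one-holed $2$-torus bounding it. Standard Frobenius-algebra computations then evaluate the two local generators: a capping disk is the unit morphism and sends $1\mapsto 1\in\ZZ[x]/(x^2)$, while a one-holed torus is a capping disk with a handle attached and hence sends $1\mapsto x$ (handle attachment acts as multiplication by the Euler class $x$). Factor by factor, the canonical map therefore sends a \rdcd\ \fltc\ with $a$ disks and $k-a$ one-holed tori to the tensor monomial with $a$ copies of $1$ and $k-a$ copies of $x$, establishing a bijection between the two natural bases, and hence a $\ZZ$-module isomorphism.

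It remains to verify the bidegrees. On the source, $\chi$ is additive and equals $1$ on a disk and $-1$ on a one-holed torus, so formula \rx{eq:dgrcob} gives $\dgt\hSg=\dgh\hSg=\tfrac{1}{2}(m+n)+k-2a$ on a \rdcd\ cobordism with $a$ disks. On the target, the monomial has $\dgt=2(k-a)$ from $\dgt x=2$ and $\dgh=k$ from the $k$ per-circle shifts of \rx{ae1.6}, and the uniform outer shift $\tgrshv{\tfrac{m+n}{2}}{0}{\tfrac{m+n}{2}}$ brings these into agreement with the source bidegrees (the target complex is concentrated in a single $\dgo$-degree, which is immaterial since $\HmTlmnlot$ carries no $\dgo$-grading). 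The main obstacle I anticipate is this three-index grading bookkeeping together with checking well-definedness modulo the \ftur\ relations; once those are in place, Theorem~\rw{pr:rpc} and the standard TQFT evaluations for disks and handles force the isomorphism.
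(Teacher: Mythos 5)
Your argument is essentially the paper's: the paper disposes of this corollary in a single sentence, saying it follows from Theorem\rw{pr:rpc} by relating the two types of \rdcd\ \fltc s (disk and one-holed torus) to the generators $1$ and $x$ of\rx{ae1.6}, and you have filled in exactly that identification, the well-definedness modulo \ftur\ relations, and the degree check. Two small corrections to your computations. First, in the Frobenius algebra $\ZZ[x]/(x^2)$ the handle operator is $m\circ\Delta$, which sends $1\mapsto 2x$ rather than $x$; this is harmless over $\IQ$ (the coefficients the paper uses for its categorification), but over $\ZZ$ the set $\{1,2x\}$ is not a basis, so the "bijection of bases" should be phrased accordingly. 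Second, in your final degree tally the intrinsic $\dgt=2(k-a)$ of the monomial must still be combined with the $-k$ coming from the cumulative per-circle shift $\tgrshv{-k}{0}{k}$ that you correctly recorded in your first paragraph, giving $\dgt=k-2a$ before the outer shift by $\tfrac{m+n}{2}$; as written, your last sentence drops that $-k$ and the two sides would differ by $k$.
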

Moreover, since
$(\lSh{\xlamtw\tcmp\dsymv{\xlamo}})=(\lSh{\xlamo\tcmp\dsymv{\xlamtw}})=
(\lSh{\dsymv{\xlamtw}\tcmp\xlamo})$,
there is a canonical isomorphism
$\zsymcat{\lSh{\xlamtw\tcmp\dsymv{\xlamo}}} =
\zsymcat{\lSh{\xlamo\tcmp\dsymv{\xlamtw}}}
=\zsymcat{\lSh{\dsymv{\xlamo}\tcmp\xlamtw}}$ and in view of
\ex{eq:homtliso} there are canonical isomorphisms between the
modules of morphisms
\xlee{eq:homsiso}
\HmTlmnlot =
\Hom_{\dTLtlmn}\xlrb{\dlamtw,\dlamo} =
\Hom_{\dTLtlmn}\xlrb{\zsymcat{\dsymv{\xlamo}},\zsymcat{\dsymv{\xlamtw}}}
.
\xeee

An \emph{\efltc} is a \rdcd\ cobordism which is either a saddle cobordism
or an \emph{\xmult}, which is a connected sum of an identity
cobordism and a 2-dimensional torus.

\begin{proposition}
\label{pr:rpc1}
Every \rfltc\ can be presented as a composition of
\efltc s.
\end{proposition}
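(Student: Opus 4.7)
The plan is to reduce the statement to the case of a single connected component of $\cSg$ and then decompose each component by Morse theory. A saddle or xmult applied locally on one component and extended by the identity on all others is again an elementary cobordism, so elementary moves supported on disjoint components commute. Hence the problem splits over connected components of $\cSg$, and I may assume $\cSg$ is either a single 2-disk or a single once-punctured torus whose boundary is one circle $\xi\subset\lambda_1\#\lambda_2$.

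For the disk case, I would realize $\cSg$ as a properly embedded surface in the slab $\mathbb{R}^2\times[0,1]$ with $\lambda_1\subset\mathbb{R}^2\times\{0\}$ and $\lambda_2\subset\mathbb{R}^2\times\{1\}$, and consider the height function $f(x,t)=t$ restricted to $\cSg$. By a Cerf-theoretic handle-cancellation argument one may isotope $\cSg$ so that $f$ is a Morse function whose interior critical points are all of index~$1$, i.e., saddles: any local minimum or maximum on the disk can be paired with a neighboring saddle and cancelled, using that the disk meets both $\mathbb{R}^2\times\{0\}$ and $\mathbb{R}^2\times\{1\}$ nontrivially (the associated arcs of $\lambda_1$ and $\lambda_2$ always alternate along $\xi$ since $\lambda_1,\lambda_2$ are \TL\ tangles with no internal circles on such a component). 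The ordered sequence of remaining saddles then presents $\cSg$ as a composition of elementary saddle cobordisms interspersed with identity cobordisms on the remaining arcs.

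For the once-punctured torus case, pick an essential simple closed curve $\gamma\subset\cSg$ whose complement is an open disk; capping off $\gamma$ produces a disk cobordism $\cSg_0$ with the same boundary $\xi$, and $\cSg$ is recovered from $\cSg_0$ by a local connected sum with a $2$-torus, which is by definition one \xmult. Applying the disk case to $\cSg_0$ and inserting a single \xmult\ at any convenient intermediate stage (in a planar region disjoint from all the saddle patches) yields the required decomposition.

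The main obstacle is the Morse-cancellation step for the disk. One must verify that after isotopy the intermediate level sets $f^{-1}(t)$ remain flat tangles between consecutive critical values, so that each elementary slice is genuinely a saddle cobordism in the flat-tangle category and not merely a topological 1-handle attachment. This is a standard but nontrivial point, requiring either an appeal to Cerf theory for properly embedded surfaces with corners, or an explicit movie-move construction in the style of Carter--Saito; either approach goes through because the obstruction to pairing off extrema with saddles vanishes on a disk whose boundary straddles both height levels.
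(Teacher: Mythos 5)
The paper states this proposition without proof, so there is nothing to compare your argument against; I will judge it on its own terms. The overall Morse-theoretic strategy can be completed, but as written it has two genuine gaps, and the step you single out as the main obstacle is in fact the easy one.

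First, the reduction to connected components is not valid as stated. A saddle supported on the arcs of one circle of $\xlamo\#\xlamtw$, ``extended by the identity on all others,'' is a morphism of $\dTLtl$ only if its source and target are planar \TL\ tangles, and this can fail because the arcs of the other circles get in the way. Take $\xlamo=\{1,8\},\{2,7\},\{3,6\},\{4,5\}$ and $\xlamtw=\{1,4\},\{2,3\},\{5,8\},\{6,7\}$ as $(0,8)$-tangles; then $\xlamo\#\xlamtw$ has two circles, with endpoint sets $\{2,3,6,7\}$ and $\{1,4,5,8\}$. Transforming the second component first, with $\{2,7\},\{3,6\}$ held fixed, would require the configuration $\{1,4\},\{5,8\},\{2,7\},\{3,6\}$, which has crossings and is not a \TL\ tangle (indeed the needed saddle between $\{1,8\}$ and $\{4,5\}$ cannot even be performed planarly, since $\{2,7\}$ and $\{3,6\}$ separate these arcs). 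So the saddles belonging to different components must be ordered or interleaved compatibly with planarity of all intermediate tangles, and the existence of such an interleaving is part of what must be proved; you may not simply assume $\cSg$ is a single disk. Second, cobordisms in this paper are \emph{abstract} surfaces with an identified boundary, so ``realize $\cSg$ as a properly embedded surface in the slab'' is itself a claim needing proof: it amounts to showing that the circles of $\xlamo\#\xlamtw$ bound disjoint properly embedded disks in $\mathbb{R}^2\times[0,1]$ with the prescribed boundary behavior, which is essentially equivalent to the proposition. By contrast, once the extrema are cancelled your worry about intermediate level sets evaporates: a closed regular level curve on a disk component would bound a subdisk on which the height function attains an interior extremum, so with no index-$0$ or index-$2$ critical points every level set is a crossingless tangle without circles, i.e.\ a \TL\ tangle, and every remaining critical point is a saddle between \TL\ tangles.

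Both gaps are avoided by a short combinatorial argument. If $\xlamo\neq\xlamtw$, one finds an arc $\beta=\{e,e'\}$ of $\xlamtw$ not belonging to $\xlamo$ whose endpoints become adjacent after discarding the arcs common to $\xlamo$ and $\xlamtw$ nested beneath it; the two $\xlamo$-arcs at $e$ and $e'$ are then distinct and cobound an empty rectangle, so one saddle there yields a \TL\ tangle containing $\beta$ and increases the number of circles of the gluing with $\xlamtw$ by one. Iterating reaches $\xlamtw$ after $\tfrac{m+n}{2}-c$ saddles, where $c$ is the number of circles of $\xlamo\#\xlamtw$, and an Euler-characteristic count shows that any such composite has one component of genus zero per boundary circle, hence equals the all-disks reduced cobordism. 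Appending one \xmult\ per torus component, exactly as in your (correct) final paragraph, completes the proof while handling the interleaving of components automatically and avoiding embeddings altogether.
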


A composition of \TL\ tangles generates a bifunctor
$\tcmp\colon\dTLtlmn\otimes\dTLtllm\rightarrow\dTLtlln$,
if we apply
the categorified version of the rule\rx{eq:ae1.1} in order to remove
disjoint circles:
\xlee{ae1.01}
\Bsymcat{\lcir}= \cnot \tgrshv{1}{0}{1} + \cnot\tgrshv{-1}{0}{1},
\xeee
%

%
Thus the category
$\dTLtl = \bigoplus_{m,n}\dTLtlmn$ acquires the monoidal
structure.

\subsubsection{The universal category $\dTL$}

 The universal category
$\dTLmn$ is the homotopy category of bounded complexes  over
$\dTLtlmn$: $\dTLmn = \xhKb(\dTLtlmn)$. In other words, in accordance
with the general formula\rx{eq:obkom}, an object of $\dTL$ is a complex
\xlee{ae1.8a}
\xbA =
\lrbc{\cdots\rightarrow\xAi\rightarrow\xAio\rightarrow\cdots},\qquad
\xAi =
\bigoplus_{\substack{j\in\ZZ \\ \xmu\in\ZZ_2}}
\oltlmn \ajilam\,\dlam \tgrshv{j}{0}{\mu}.
\xeee
%

The total category $\dTL$ is a formal sum of categories:
$\dTL=\bigoplus_{m,n}\dTLmn$. It inherits the monoidal structure
of $\dTLtl$ which comes from the composition of tangles.

A categorification map $\mpcat\colon\yTng\rightarrow\dTL$ turns a framed
tangle diagram $\xtau$ into a complex $\dtau$ according to the
rules\rx{ae1.6} and\rx{ae1.7}, the morphism $\xmrf$ in the
complex\rx{ae1.7} being the saddle cobordism.


We use a
shortcut $\dTLn= \dTLv{n,n}$.
The category
$\dTLz$
is generated by a single object $\cnot$. Hence
it is equivalent to
the homotopy category of free
$\ZZ\oplus\ZZt$-graded modules and it has a homology functor. The
homology functor
applied to a complex $\symcat{\xL}$ of a link
$\xL\subset\Sh$ yields, by definition, the link homology:
$\Hmb\xlrb{\symcat{\xL}} = \HKhb(\xL)$.
%
%
A circular closure $(\lSh{\xtau})$ of \ttngnn s $\xtau$ within
$\Sh$ extends to a \Shcl\ functor
\xlee{eq:fnct}
\xymatrix@C=1.5cm{
\dTLn
\ar[r]^-{(\lSh{-})}
&
\dTLz=\xhKb(\cvgmod{\ZZ}).
}
\xeee

\subsubsection{\Grtg\ map $\Kz$}
In accordance with the general rules of subsection\rw{sss:hotcatcom},
$\Kz(\dTL)=\Gz(\dTLtl) = \cTL$ and there is a commutative diagram
%
\begin{equation}
\label{eq:cmdgrm}
\xymatrix@C=1.5cm@R=0.3cm{
& {}\dTL \ar[dd]^{\Kz}
\\
\yTng \ar[ur]^{\mpcat} \ar[dr]^{\mpalg}
\\
& {}\cTL
}
\end{equation}
%
%
where the map $\Kz$ turns the complex\rx{ae1.8a} into the
sum\rx{eq:1.3a}:
\xlee{ae1.9a}
\Kz(\xbA)
=
\sltln\sum_{j\in\ZZ} \xcalj \,q^j\,\clam,
\qquad
\xcalj = \sum_{\substack{i\in\ZZ \\ \xmu\in\ZZ_2}}
(-1)^{i+\xmu} \ajilam.
\xeee
%

In addition to $\dTL$ we consider
the categories $\dTLp$ and $\dTLpq\subset\dTLp$ defined in
accordance with subsection\rw{sss:hotcatcom}. Obviously,
$\Kzp(\dTLpq) = \cTLpinf$.
%
%
%

%

\subsubsection{Involutive functors $\dflp$, $\dsym$ and $\finvm$}
\label{sss.dfun}
Let $\dTLop$ denote the category $\dTL$ in which the composition
of tangles is performed in reverse order.
We define a covariant functor $\dflp$ and contravariant functors $\dsym$, $\finvm$
%
\xlee{eq:finvs}
\dflp,\dsym\colon \dTL\longrightarrow\dTLop,\quad
\dTLmn\longrightarrow\dTLnm,\qquad
\finvm\colon\dTL\longrightarrow\dTL,\quad
\dTLmn\longrightarrow\dTLmn
\xeee
by the requirement that their action on generating objects $\dlam$
matches their action on underlying \TL\ tangles $\xlam$ and that
$\dflp$ should preserve degree shifts while $\dsym$ and $\finvm$
should invert them.  The action of the functors\rx{eq:finvs} on morphisms is
established with the help of isomorphisms\rx{eq:homsiso}.

It is
easy to see by checking the action of $\dflp$ and  $\dsym$ on
\eex{ae1.7} and\rx{ae1.01} that the maps of the diagram\rx{eq:cmdgrm}
intertwine the actions of $\dflp$ and $\dsym$ on $\yTng$, $\cTL$ and
$\dTL$.

%
%
%

For two complexes $\xbA$ and $\xbB$ in $\dTLmn$ there is a
canonical isomorphism extending that of \ex{eq:homtliso}:
\ylee{eq:1.9}
\Hmmn(\xbA,\xbB) =
\zsymcat{\lSh{\xbB\tcmp\dsymv{\xbA\!}}}\tgrshv{\tfrac{m+n}{2}}{0}{\tfrac{m+n}{2}}.
\yeee
%

%
%
%
%
%
%
%
%

\subsubsection{A \tct\ subcategory}

An additive \tct\ category $\dTLtlcttmtn\subset\dTLtltmtn$ is a
full subcategory whose objects are \tct\ \TL\ tangles $\zsymcat{\xbet\tcmp\xalf}$, where
$\xal\in\yCrm$ and $\xbet\in\yCrn$. There is an obvious functor
\xlee{eq:fnproTL}
\dTLtltmz\otimes\dTLtlztn\longrightarrow\dTLtlcttmtn,\qquad\xlrb{\dbet,\zsymcat{\xalf}}\mapsto
\zsymcat{\xbet\tcmp\xalf}.
\xeee

A \tct\ category $\dTLcttmtn\subset\dTLtmtn$ is a full subcategory
whose objects are $\zsymcat{\xbet\tcmp\xalf}$, where
$\xal\in\yCrm$ and $\xbet\in\yCrn$. There is an obvious functor
\xlee{eq:fnproTLx}
\dTLtmz\otimes\dTLztn\longrightarrow\dTLcttmtn,\qquad\xlrb{\dbet,\zsymcat{\xalf}}\mapsto
\zsymcat{\xbet\tcmp\xalf}.
\xeee
\begin{theorem}
\label{th:tlspl}
The functor\rx{eq:fnproTL} is an equivalence of categories.
\end{theorem}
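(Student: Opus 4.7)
The plan is to verify essential surjectivity and fully faithfulness of the functor.

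Essential surjectivity is immediate from the definition of $\dTLtlcttmtn$: every object is of the form $\zsymcat{\xbet\tcmp\xalf}$ with $\dbet\in\dTLtltmz$ and $\zsymcat{\xalf}\in\dTLtlztn$, and such an object is exactly the image of $(\dbet,\zsymcat{\xalf})$ under the functor.

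For fully faithfulness, I would first apply the canonical isomorphism \rx{eq:homtliso} to translate all three relevant Hom modules into Bar-Natan cobordism modules $\zsymcat{\lSh{-}}$. On the right, this identifies
$$
\Hom_{\dTLtlcttmtn}(\zsymcat{\xbet_1\tcmp\xalf_1},\zsymcat{\xbet_2\tcmp\xalf_2})\;\simeq\;\zsymcat{\lSh{\xbet_2\tcmp\xalf_2\tcmp\xal_1\tcmp\dsymv{\xbet_1}}}\,\tgrshv{m+n}{0}{m+n},
$$
using $\dsymv{\xbet_1\tcmp\xalf_1}=\xal_1\tcmp\dsymv{\xbet_1}$. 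The crucial observation is that in the composition $\xbet_2\tcmp\xalf_2\tcmp\xal_1\tcmp\dsymv{\xbet_1}$ the interfaces between $\xbet_2$ and $\xalf_2$, and between $\xal_1$ and $\dsymv{\xbet_1}$, each have $0$ marked points, so the $\xbet$-region and the $\xalf$-region are disjoint in the plane. After $\Sh$-closure, which only identifies the $2m$ boundary points of the $\xbet$-piece, the resulting link therefore splits as a disjoint union
$$
\lSh{\xbet_2\tcmp\dsymv{\xbet_1}}\;\sqcup\;\lSh{\xalf_2\tcmp\xal_1},
$$
where I also used the trace-like identity \rx{eq:clinv1} to regroup the $\xbet$-factors.

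The remainder is combinatorial. By Theorem \rw{pr:rpc}, each of the three Hom modules is freely generated by reduced cobordisms, each consisting of one disk or once-punctured torus per circle of the corresponding gluing link. The gluing link on the right decomposes into the two disjoint groups of circles just described, so every reduced cobordism on the right splits uniquely as the disjoint union of a reduced cobordism in the $\xbet$-part and one in the $\xalf$-part. This gives a canonical bijection between free generators of the right-hand side and pairs of free generators of the two tensor factors on the left. Because the Euler characteristic in the grading formula \rx{eq:dgrcob} is additive under disjoint unions, the bigrading matches: $\tgrshv{m}{0}{m}+\tgrshv{n}{0}{n}=\tgrshv{m+n}{0}{m+n}$. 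Finally, the functor acts on morphisms by horizontal juxtaposition of cobordisms, which under the identifications \rx{eq:homtliso} is precisely this disjoint-union splitting, so the induced map on Homs is the desired isomorphism.

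The main obstacle is the bookkeeping: ensuring the degree shifts combine correctly via \rx{eq:homtliso} and checking that horizontal juxtaposition of cobordisms is intertwined with the disjoint-union splitting of filling cobordisms for the split link. No further geometric input beyond Theorem \rw{pr:rpc} is needed, and the argument lives entirely within the additive $\ZZ\oplus\ZZt$-graded category $\dTLtl$---no homotopy or derived-category considerations arise.
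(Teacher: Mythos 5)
Your proof is correct and follows essentially the same route as the paper: bijectivity on objects is immediate from the definitions, and the isomorphism on morphism modules is obtained from the reduced-cobordism basis of Theorem~\ref{pr:rpc} together with the observation that a reduced cobordism between split tangles decomposes as a disjoint union of a cobordism between the $\xal$-parts and one between the $\xbet$-parts. The only differences are cosmetic — you verify the degree bookkeeping explicitly and treat injectivity and surjectivity together via the generator bijection, where the paper declares injectivity obvious (and note the closure identifies the $2n$, not $2m$, endpoints of the $\xbet$-pieces).
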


\begin{proof}
Obviously, this functor acts bijectively on objects and
injectively on morphisms. The surjectivity of its action on
morphisms is established with the help of Theorem\rw{pr:rpc}: a
\rdcd\ \fltc\ between the tangles $\xalo\tcmp\dflpv{\xbet_1}$ and
$\xalt\tcmp\dflpv{\xbet_2}$ is a disjoint union of a cobordism
between $\xalo$ and $\xalt$ and a cobordism between $\xbet_1$ and
$\xbet_2$.
\end{proof}

The additive category equivalence\rx{eq:fnproTL} implies the
equivalence of homotopy categories
\xlee{eq:kpqtl}
\xhKpq (\dTLtltmz\otimes\dTLtlztn)\longrightarrow\dTLcpqtmtn,
\xeee
where $\dTLcpqtmtn=\xhKpq\xlrb{\dTLtlcttmtn}$ is the full subcategory
of $\dTLpqtmtn$.

\subsection{Bimodule categorification}
\label{sss:bimcat}
\subsubsection{The rings $\xaHn$}
M.~Khovanov\cx{Kh2} defined the algebras 
$\xaHn$ as the sums of rings of morphisms between the objects $\dal$ or, equivalently, $\dald$, $\xal\in\yCrn$:
\xlee{eq:dfhn}
\label{eq:1.10}
\xaHn =
\bigoplus_{\xal,\xbet\in\yCrn} \Hmtnz \xlrb{ \dal,\dbet }=
\bigoplus_{\xal,\xbet\in\yCrn} \Hmtnz \xlrb{ \dald,\dbetd },
\xeee
the isomorphisms being particular cases of those of
\ex{eq:homsiso}.
The rings $\xaHn$
have
$\ZZ\oplus\ZZt$-grading
associated with $q$-related \Zgrd\ and \Ztgrd\ of the
category $\dTL$ (in fact,
$\dgh\xaHn=0$).
The first of isomorphisms\rx{eq:homsiso} provides
a canonical isomorphism
\xlee{eq:algopis}
\dflp\colon\xaHn\rightarrow\xaHopn.
\xeee
%


Denote $\xaHdnm=\xaHn\otimes\xaHopm$. We  use  abbreviated
notations  $\xaHne=\xaHn\otimes\xaHopn$ and $\KbgHdnm=\xKbgHdnm$.
The isomorphism\rx{eq:algopis} generates the isomorphism
$\dflp\colon\xaHdnm\rightarrow\xaHdmn$ and consequently an equivalence functor
\xlee{eq:dfeqfun}
\dflpp\colon\KbgHdnm\rightarrow\KbgHdmn.
\xeee

\subsubsection{Bimodules from tangles}
The categorification maps
\xlee{eq:1.10a}
\mpKbim\colon\yTngtmtn
\rightarrow \KbgHdnm
\xeee
are defined in\cx{Kh2} by the formula
%
%
%
\begin{equation}
\label{eq:dfmdl}
\begin{split}
\Ktau & = \bigoplus_{\substack{\xal\in \yCrm\\ \xbet\in\yCrn }}
\Hmtmtn\xlrb{\dbet,\zsymcat{\xtau\tcmp\xal} }
= \bigoplus_{\substack{\xal\in \yCrm\\ \xbet\in\yCrn }}
\zsymcat{ \xbetd\tcmp\xtau\tcmp\xal }\tgrshnzn.
\end{split}
\end{equation}

Comparing \eex{eq:dfhn} and\rx{eq:dfmdl}, it is easy to see that
%
\xlee{eq:1.11a}
\Kidbrtn = \xaHn,
\xeee
where $\xaHn$ is considered as a module over $\xaHne$.

\begin{theorem}[\cxw{Kh2}]
\label{th:cmptp}
The map $\mpKbim$ translates the composition of tangles into the
tensor product over the intermediate ring:
if $\xtauo$ is a \ttngtltm\ and $\xtaut$ is a \ttngtmtn, then
there is a canonical isomorphism
\xlee{eq1.11a1a}
\Ksymbim{\xtaut\tcmp\xtauo} = \Ktaut\otimes_{\xaHm}\Ktauo.
\xeee
\end{theorem}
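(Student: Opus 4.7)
The plan is to verify \eqref{eq1.11a1a} by expanding both sides as direct sums indexed by pairs of crossingless matchings and exhibiting an explicit cobordism-gluing isomorphism between them. Using the definition \eqref{eq:dfmdl} and the direct sum decomposition of $\xaHm$ into components indexed by $\xgam,\xgam'\in\yCrm$, with mutually orthogonal idempotents $e_\xgam$ projecting onto the diagonal summands $\Hom(\dgam,\dgam)$, the right-hand side expands as
$$\Ktaut\otimes_{\xaHm}\Ktauo=\bigoplus_{\xal\in\yCrl,\,\xbet\in\yCrn}\;\bigoplus_{\xgam\in\yCrm}\zsymcat{\xbetd\tcmp\xtaut\tcmp\xgam}\otimes_{\Hom(\dgam,\dgam)}\zsymcat{\dsymv{\xgam}\tcmp\xtauo\tcmp\xal}$$
with the appropriate degree shift, which must be matched to
$$\Ksymbim{\xtaut\tcmp\xtauo}=\bigoplus_{\xal\in\yCrl,\,\xbet\in\yCrn}\zsymcat{\xbetd\tcmp\xtaut\tcmp\xtauo\tcmp\xal}.$$

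For each fixed $\xal,\xbet,\xgam$ I would define a gluing map $\Phi_\xgam$ into $\zsymcat{\xbetd\tcmp\xtaut\tcmp\xtauo\tcmp\xal}$ by composing the two cobordisms along their common boundary $\xgam\tcmp\dsymv{\xgam}$ placed between $\xtaut$ and $\xtauo$; since $\xgam\tcmp\dsymv{\xgam}$ is isotopic to the identity on $2m$ strands, the composed cobordism is well-defined up to isotopy relative to the remaining boundary. Associativity of cobordism composition together with the canonical isomorphism \eqref{eq:homtliso} ensures that $\Phi_\xgam$ descends to the tensor product balanced over $\Hom(\dgam,\dgam)$: an element $f$ of this ring can be slid from the right factor to the left by functoriality, which is exactly the defining property of the multiplication in $\xaHm$.

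The crux is to prove that $\Phi=\bigoplus_\xgam\Phi_\xgam$ is an isomorphism. Both sides are free modules on classes of reduced flat cobordisms by Theorem \ref{pr:rpc} and Proposition \ref{pr:rpc1}, so it suffices to construct an inverse. Given a reduced cobordism $\cSg$ representing a class in $\zsymcat{\xbetd\tcmp\xtaut\tcmp\xtauo\tcmp\xal}$, one cuts $\cSg$ transversally along a horizontal slice at the interface between $\xtaut$ and $\xtauo$; the slice meets the $2m$ middle strands in some crossingless matching $\xgam$, and the two resulting pieces represent classes in $\zsymcat{\xbetd\tcmp\xtaut\tcmp\xgam}$ and $\zsymcat{\dsymv{\xgam}\tcmp\xtauo\tcmp\xal}$ respectively. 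The freedom in the placement of the cut---equivalently, modifying the slice by an endomorphism of $\dgam$---is absorbed by the $\Hom(\dgam,\dgam)$-balancing on the right-hand side. The main obstacle is to make this cutting procedure rigorous and show that its outcome depends only on $\cSg$ modulo the four-tube relations; this reduces, via \eqref{eq:homtliso}, to the neck-cutting identity and the Frobenius algebra structure of $\ZZ[x]/(x^2)$ from \eqref{ae1.6}, which together guarantee existence and uniqueness modulo the balancing action and also produce the correct matching of the degree shifts.
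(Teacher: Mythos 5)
The paper does not prove this theorem itself --- it is quoted from Khovanov's work \cite{Kh2}, where the isomorphism is established by reducing to flat tangles and then to elementary cup/cap compositions, using the fact that the bimodules $\Ktau$ are projective on each side. Your gluing-and-cutting strategy is the natural geometric counterpart of that argument, but as written it contains a genuine error that sits exactly at the crux of the proof.

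Your expansion of the right-hand side is wrong. For a ring $A=\bigoplus_{\gamma,\gamma'\in\yCrm}e_{\gamma'}Ae_{\gamma}$ with orthogonal idempotents, the balanced tensor product $M\otimes_{A}N$ is the quotient of $\bigoplus_{\gamma}Me_{\gamma}\otimes_{\ZZ}e_{\gamma}N$ by the relations $mh\otimes n=m\otimes hn$ for \emph{all} $h\in e_{\gamma'}Ae_{\gamma}$; when $\gamma\neq\gamma'$ such a relation identifies elements living in the summand indexed by $\gamma$ with elements in the summand indexed by $\gamma'$. Hence $M\otimes_{A}N$ is \emph{not} the direct sum over $\gamma$ of the tensor products over the diagonal subrings $e_{\gamma}Ae_{\gamma}$: compare $A\otimes_{A}A=A$ with $\bigoplus_{\gamma}Ae_{\gamma}\otimes_{e_{\gamma}Ae_{\gamma}}e_{\gamma}A$, which is strictly larger as soon as $A$ has nonzero off-diagonal components, as $\xaHm$ does for $m\geq 2$.

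This error propagates into your inverse construction, which is where the content of the theorem lies. If you cut the same reduced cobordism at two different admissible levels, the intermediate crossingless matchings $\gamma$ and $\gamma'$ will in general differ, and the slab of surface between the two levels is an element of $e_{\gamma'}\xaHm e_{\gamma}$ with $\gamma\neq\gamma'$. So the ambiguity of the cut is absorbed only by the balancing over all of $\xaHm$, not by the diagonal rings alone; with your stated target the map $\Phi$ would fail to be injective, and well-definedness of the inverse would fail. Proving that the relations among glued cobordisms are \emph{exactly} those imposed by $\xaHm$ is the hard step, and neck-cutting plus the Frobenius structure of $\ZZ[x]/(x^2)$ do not suffice by themselves: one needs the structural input that the modules $\Klam$ for flat $\xlam$ are semi-projective and that morphism spaces between them are freely generated by reduced cobordisms (Theorem~\ref{pr:rpc}), which is how the reduction to elementary cups and caps in \cite{Kh2} proceeds. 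Two secondary points: the cutting must be performed on the constituent flat tangles of the cube of resolutions (a level set of a reduced cobordism can also contain closed circles, which must first be removed by delooping at the cost of degree shifts), and the degree shift $\tgrshnzn$ in \eqref{eq:dfmdl} together with the circle-removal shifts of \eqref{ae1.01} must be tracked through the cut, neither of which your sketch addresses.
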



The map\rx{eq:1.10a} restricted to \TL\ tangles extends to a
functor $\spfK\colon\dTLtmtn\rightarrow\KbgHdnm$, which maps an
object $\dlam$ to $\Klam$ and translates \fltc s between \TL\ tangles $\xlam$ and
$\xlamp$ into homomorphisms between the
modules $\Klam$ and $\Klamp$. Moreover, the categorification maps\rx{eq:1.10a} can be threaded though the
universal category:
\xlee{eq:1.11b}
\xymatrix@C=0.2cm@R=1cm{
&\yTngtmtn \ar@<-1ex>[dl]_-{\symcat{-}} \ar[dr]^-{\Ksymbim{-}}
\\
\dTLtmtn \ar@{->>}[rr]^-{\spfK}
&&
\KbgHdnm
}
\xeee
%


The covariant involutive functor $\dflp$ and contravariant
involutive functors $\dsym$ and $\finvm$
\xlee{eq:invbim}
\dflp,\dsym\colon \KbgHdnm \rightarrow \KbgHdmn,\quad
\finvm\colon\KbgHdnm\rightarrow\KbgHdnm
\xeee
are defined by the formulas
\xlee{eq:fundefbim}
\dflpv{\xbM} = \dflppv{\xbM}\tgrshmmn,\qquad
\dsymv{\xbM} = \ddulv{\xbM}\tgrshminmn,\qquad
\finv{\xbM} = \ddulv{ (\dflppv{\xbM}) } \tgrshteq{-2m},
\xeee
where $\xbM$ is a complex in $\KbgHdnm$, $\ddulv{\xbM}$ is the dual complex and
$\dflpp$ is the equivalence functor\rx{eq:dfeqfun}.
It is easy to see that the maps of the diagram\rx{eq:1.11b}
intertwine  the actions\rx{eq:tinvs}, \rx{eq:finvs}
and\rx{eq:invbim} of $\dflp$, $\dsym$ and $\finvm$. In particular,
\xlee{eq:dfleq}
\dflpv{\dtau} = \zsymcat{\dflpv{\xtau}}
\xeee
for $\xtau\in\yTngtntn$.

%

%
%
%
%
%

\section{Derived category of $\xaHdnm$-modules and Hochschild
homology}

\subsection{A quick review of derived categories of modules}
\subsubsection{Projective resolution formula}
For a (graded) ring $\aR$,  $\DbmdR$ denotes the bounded derived
category of (graded) $\aR$-modules, $\cvxpmod{\aR}$ denotes the category
of (graded) projective modules of $\aR$, while $\xhKbpR$ and $\xhKmpR$ denote the
bounded and bounded from above homotopy categories of (graded) projective
$\aR$-modules. The following commutative diagram is a practical guide
for working with $\DbmdR$:
\xlee{eq:prdiag}
\xymatrix@C=1.5cm@R=1.5cm{
\xhKbpR
\ar@{^{(}->}[r]
\ar@{^{(}->}[d]
&
\xhKmpR
\\
\xhKbR
\ar@{->>}[r]^-{\spfKD}
\ar[ur]^-{\spPK}
&
\DbmdR
\ar@{^{(}->}[u]_-{\spP}
}
\xeee
Here the arrows $\hookrightarrow$ denote full subcategory
injective functors, the arrow $\twoheadrightarrow$ denotes a
functor with surjective action on objects, while $\spP$ and
$\spPK$ are functors of projective resolution.

If $\xbM$ is a complex in $\xhKbR$, then usually, with a slight
abuse of notations, $\spfKD(\xbM)$ is denoted simply as $\xbM$,
and we will follow this convention.

Since the functor $\spP$ is fully injective, while $\spfKD$ is
surjective, the structure of the derived category $\DbmdR$ is
completely determined by the functor $\spPK$, which has a
convenient presentation. Denote $\aRe=\aRd$ and define a
projective resolution of $\aR$ as $\aRe$-module to be a
complex of $\aRe$-modules
\ylee{eq:prresr}
\spPr(\aR) =
(\cdots\rightarrow\spPr(\aR)_{-2}\rightarrow\spPr(\aR)_{-1}\rightarrow\spPr(\aR)_0)
\yeee
with a homomorphism $\spPr(\aR)_0\rightarrow\aR$ such that the
total complex
$\cdots\rightarrow\spPr(\aR)_{-1}\rightarrow\spPr(\aR)_0\rightarrow\aR$
is acyclic. Then $\spPK$ acts on the complexes of $\xhKbR$ by
tensoring them with $\spPr(\aR)$ over $\aR$:
\xlee{eq:fprtpr}
\spPK = \spPr(\aR)\otimes_{\aR}\xdummy.
\xeee

The isomorphism of objects $\xbM$ and $\xbN$ of $\DbmdR$ is called
\emph{\qim} and is denoted as $\xbM\qie\xbN$.

\subsubsection{Semi-projective bimodules}

Suppose that $\aR$ is a tensor product:
$\aR=\aRo\otimes\aRtop$. A $\aRotd$-module is called \emph{\sprj} if
it is projective separately as a $\aR$-module and as a
$\aRtop$-module. Denote by $\KbspRot$  the bounded homotopy
category of \sprj\ $\aRotd$-modules and consider the following version of the commutative
diagram\rx{eq:prdiag}:
\xlee{eq:prddiag}
\xymatrix@C=1cm@R=1cm{
\KbspRot
\ar[r]^-{\spPsp}
\ar@{^{(}->}[d]
&
\xhKmpRot
\\
\xhKbRot
\ar@{->>}[r]^-{\spfKD}
\ar[ur]^-{\spPK}
&
\DbmdRot
\ar@{^{(}->}[u]_-{\spP}
}
\xeee
The projective resolution functor $\spPK$ can be expressed with
the help of \ex{eq:fprtpr}: $\spPK =
\spP(\aRo)\otimes_{\aRo}(\xdummy)\otimes_{\aRt}\spP(\aRt)$.
However, its restriction $\spPsp$ to complexes of \sprj\ modules
has a simpler expression, because it is sufficient to tensor with
a projective resolution of one of two rings:
\xlee{eq:ostprj}
\spPsp = \spP(\aRo)\otimes_{\aRo}\xdummy = \xdummy\otimes_{\aRt}
\spP(\aRt).
\xeee

\subsubsection{Hochschild homology}

For any ring $\aR$ consider a ring $\aRe=\aRd$. The
\Hhom\ and cohomology functors are defined by the
diagrams
\xlee{eq:diagsHH}
\xymatrix@C=1.5cm@R=1.5cm{
\xhKmpRe
\ar[dr]^-{\Hmb(\xdummy\otimes_{\aRe}\aR)}
\\
\DbmdRe
\ar@{^{(}->}[u]_-{\spP}
\ar[r]^-{\xHHl(\xdummy)}
&
\Qgmodp
}
\qquad\qquad
\xymatrix@C=1.5cm@R=1.5cm{
\xhKmpRe
\ar[dr]^-{\;\Hmub(\Hom_{\aRe}(\xdummy,\aR))}
\\
\DbmdRe
\ar@{^{(}->}[u]_-{\spP}
\ar[r]^-{\xHHu(\xdummy)}
&
\Qgmodq
}
\xeee
where $\Qgmodp$ and $\Qgmodq$ denote the categories of \Zgrdd\
vector spaces over $\IQ$ with homological \Zgrd\ bound from above
and from below. In other words, \Hhom\ and cohomology are
are functors  $\Tor_{\aRe}(\xdummy,\aR)$ and
$\Ext_{\aRe}(-,\aR)$.

If $\xbM$ is a bounded complex
of $\aRe$-modules, then
\xlee{eq:flHHR}
\xHHlv{\fKDbM} = \Hmb(\xbM\otimes_{\aRe}\spPr(\aR)).
\xeee

\subsection{Split \TL\ tangles and projective $\xaHdnm$-modules}

\subsubsection{Derived category of $\xaHdnm$-modules}

Let
$\KbspHdnm\subset\KbgHdnm$ denote the bounded homotopy category of \sprj\
$\xaHdnm$-modules. The following is easy to prove:
\begin{theorem}[\cxw{Kh2}]
For any \TL\ \ttngtntm\ $\xlam$, the $\xaHdnm$-module $\Klam$ is
\sprj\ with possible degree shift and, consequently,
the image of the functor $\spfK$ lies within $\KbspHdnm$.
\end{theorem}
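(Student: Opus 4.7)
The plan is to exhibit $\Klam$ explicitly as a direct sum of grading-shifted copies of indecomposable projective summands of $\xaHn$ (respectively $\xaHm$); semi-projectivity then follows at once.

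Starting from the explicit formula \eqref{eq:dfmdl}, I would decompose over the right index. For each fixed $\xal\in\yCrm$, the composition $\xlam\tcmp\xal$ is a $(0,2n)$-tangle whose underlying $1$-manifold is, topologically, the disjoint union of a single crossingless match $\gamma(\xal)\in\yCrn$ together with $k(\xal)\ge 0$ disjoint circles, both entirely determined by the combinatorics of the gluing. The categorified circle evaluation \eqref{ae1.01} therefore produces an isomorphism in $\dTLtlztn$
$$\zsymcat{\xlam\tcmp\xal}\;\cong\;\zsymcat{\gamma(\xal)}\otimes V^{\otimes k(\xal)},\qquad V=\cnot\tgrshv{1}{0}{1}\oplus\cnot\tgrshv{-1}{0}{1}.$$
Applying $\bigoplus_{\xbet\in\yCrn}\mathrm{Hom}(\dbet,-)$ and using the matrix identification \eqref{eq:dfhn} of $\xaHn$ as the total algebra of morphisms between the objects $\{\dbet\}_{\xbet\in\yCrn}$, one then gets an isomorphism of left $\xaHn$-modules
$$\bigoplus_{\xbet\in\yCrn}\zsymcat{\xbetd\tcmp\xlam\tcmp\xal}\;\cong\;P_{\gamma(\xal)}\otimes V^{\otimes k(\xal)},$$
where $P_{\gamma(\xal)}$ is the indecomposable left $\xaHn$-module attached to the primitive idempotent $\mathrm{id}_{\zsymcat{\gamma(\xal)}}\in\xaHn$, i.e., a direct summand of the regular module $\xaHn$. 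Equivariance of this isomorphism for the left $\xaHn$-action is inherited from the identity $\Kidbrtn=\xaHn$ of \eqref{eq:1.11a} together with the compositional functoriality of Theorem \ref{th:cmptp}. Direct-summing over $\xal$ with the prescribed grading shift $\tgrshnzn$ exhibits $\Klam$ as a sum of degree-shifted copies of projective left $\xaHn$-modules, hence projective on the left.

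A completely symmetric argument, now fixing $\xbet\in\yCrn$ and decomposing $\xbetd\tcmp\xlam$ as a $(2m,0)$-tangle into a crossingless match in $\yCrm$ together with disjoint circles, shows that $\Klam$ is also projective as a right $\xaHm$-module, so $\Klam$ is semi-projective with an explicit grading shift. Since $\spfK$ is an additive functor sending $\dlam$ to $\Klam$ and the subcategory $\KbspHdnm\subset\KbgHdnm$ is closed under direct sums, grading shifts, and formation of bounded complexes, the image of $\dTLtmtn=\xhKb(\dTLtltmtn)$ under $\spfK$ lies in $\KbspHdnm$. The one delicate step is to verify that the circle-removal isomorphism is equivariant for the bimodule structure; this reduces to naturality of \eqref{eq:1.11a} together with the cobordism-monoidal structure on $\dTLtl$, and is routine bookkeeping rather than a substantive obstacle.
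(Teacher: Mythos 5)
Your proof is correct and is essentially the standard ``sweet bimodule'' argument from Khovanov's \emph{A functor-valued invariant of tangles}, which is exactly the source the paper cites for this theorem without reproducing a proof: decompose $\xlam\tcmp\xal$ into a crossingless matching plus disjoint circles, deloop, and recognize the resulting column of $\xaHn$ as a shifted indecomposable projective, then argue symmetrically on the right. Nothing further is needed.
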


The next theorem strengthens this result for \tct\ \TL\ tangles:
\begin{theorem}[\cxw{Kh2}]
\label{th:splprj}
The $\xaHdnm$-modules $\prjPba=\zKsymbim{\xbet\tcmp\xald}\tgrshmzm$,
$\xal\in\yCrm$, $\xbet\in\yCrn$ form a complete
list of indecomposable projective $\xaHdnm$-modules.
\end{theorem}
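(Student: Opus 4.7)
The plan is to exhibit a complete set of primitive orthogonal idempotents of $\xaHdnm$ and then identify each corresponding indecomposable projective with one of the bimodules $\prjPba$.

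For the single ring $\xaHn$, I would take $e_\xal = \mathrm{id}_{\dal}\in\Hmtnz(\dal,\dal)\subset\xaHn$ for each $\xal\in\yCrn$, using the decomposition \eqref{eq:dfhn}. These $e_\xal$ are pairwise orthogonal (identities of distinct objects of $\dTLtln$ compose to zero) and sum to the unit. The key step is primitivity: by \eqref{eq:homtliso}, $\Hmtnz(\dal,\dal)$ is, up to a grading shift, the module assigned to $\lSh{\xal\tcmp\xald}$, which is a disjoint union of $n$ circles in $S^3$. By \eqref{ae1.6} together with the monoidal rule \eqref{ae1.01}, this endomorphism ring is isomorphic to $\ZZ[x_1,\ldots,x_n]/(x_i^2)$, which is local; hence each $e_\xal$ is primitive. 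Standard basic-algebra theory then provides the $\xaHn\,e_\xbet$, $\xbet\in\yCrn$, as a complete list of pairwise non-isomorphic indecomposable projective left $\xaHn$-modules, and similarly the $\xaHopm\,e_\xal$, $\xal\in\yCrm$, for $\xaHopm$. Since the tensor product of two local algebras with residue field $\IQ$ remains local, the elements $e_\xbet\otimes e_\xal$ form a complete family of primitive orthogonal idempotents of $\xaHdnm = \xaHn\otimes\xaHopm$, and the corresponding indecomposable projective bimodules are the tensor products $(\xaHn\,e_\xbet)\otimes(\xaHopm\,e_\xal)$.

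To match this list with the $\prjPba$, I would apply formula \eqref{eq:dfmdl} to the split $(2m,2n)$-tangle $\xbet\tcmp\xald$. Each resulting summand $\zsymcat{\delta^\flat\tcmp\xbet\tcmp\xald\tcmp\gamma}$, with $\delta\in\yCrn$ and $\gamma\in\yCrm$, factorises because the underlying tangle is the disjoint union of the two $(0,0)$-tangles $\delta^\flat\tcmp\xbet$ and $\xald\tcmp\gamma$, and the categorification functor is monoidal for disjoint union (by \eqref{ae1.01}). Summing independently over $\delta$ and $\gamma$ then recovers $(\xaHn\,e_\xbet)\otimes(\xaHopm\,e_\xal)$ on the nose, with the overall shift $\tgrshmzm$ being precisely the one absorbed into the definition of $\prjPba$.

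The main obstacle is the primitivity statement for $e_\xal$, where the specific geometry of the crossingless matching enters through the closure computation that identifies $\Hmtnz(\dal,\dal)$ with a local ring. Everything else is routine bookkeeping with \eqref{eq:dfmdl} and with the standard fact that indecomposable projectives of a tensor product of algebras with local endomorphism rings are tensor products of indecomposable projectives on each side.
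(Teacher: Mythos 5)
Your proof is correct and follows essentially the same route as the paper: the paper checks that the degree-zero part of $\xaHn$ (hence of $\xaHdnm$) is spanned by the mutually orthogonal idempotents $\mathrm{id}_{\dal}$, making these rings ``convenient,'' and then invokes the general statement (Theorem \ref{th:khmod}, cited from Khovanov) that for such non-negatively graded finite-dimensional rings the modules $\aR\idea$ exhaust the indecomposable projectives. Your explicit identification of $\End(\dal)$ with $\ZZ[x_1,\ldots,x_n]/(x_i^2)$ via the $n$-circle closure, and the disjoint-union factorization $\zKsymbim{\xbet\tcmp\xald}\cong\Kbet\otimes\zKsymbim{\xald}$, are precisely the details the paper delegates to the citation (the latter appears there as the ``obvious relation'' \eqref{eq:prisom}).
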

%


Since $\xaHv{0}=\ZZ$ and, consequently, $\xaHdv{n}{0}=\xaHn$,
Theorem\rw{th:splprj} implies that $\Kal$, $\xal\in\yCrn$ form the full list of
indecomposable projective modules of $\xaHn$. This fact has three corollaries.
The first one is obvious:
\begin{corollary}
The map $\zKsymbim{\xdummy}\colon\yTngzn\rightarrow\KbgHn$ threads
through the homotopy category of projective modules:
\ylee{eq:thrball}
\xymatrix@C=1.5cm{
\yTngzn
\ar@/^1.5pc/[rr]^-{\zKsymbim{\xdummy}}
\ar[r]_-{\zKsymbim{\xdummy}}
&
\xhKbpHn
\ar@{^{(}->}[r]
&
\KbgHn
}
\yeee
\end{corollary}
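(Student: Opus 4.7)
My strategy is to show directly that, for every $\xtau\in\yTngzn$, the complex $\Ktau$ is a bounded complex of projective $\xaHn$-modules. Once established, the desired factorization is automatic: $\xhKbpHn$ sits inside $\KbgHn$ as a full subcategory, and $\zKsymbim{\xtau}$ is just $\Ktau$ viewed inside the larger category.

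First I would specialize the commutative triangle\rx{eq:1.11b} to $m=0$. Since $\xaHv{0}=\ZZ$, one has $\xaHdv{n}{0}=\xaHn$, so the triangle reduces to the factorization
\[
\zKsymbim{\xtau}=\spfK\bigl(\symcat{\xtau}\bigr),\qquad\xtau\in\yTngzn.
\]
The complex $\symcat{\xtau}$ is the Bar-Natan cube of resolutions obtained from a diagram of $\xtau$ via the local rules\rx{ae1.6} and\rx{ae1.7} together with the circle-removal rule\rx{ae1.01}. As $\xtau$ has no bottom endpoints, every complete resolution of its crossings is a planar $(0,2n)$-tangle, hence a crossingless matching. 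Consequently, each chain object of $\symcat{\xtau}$ is a finite direct sum of shifts of objects $\dlam$ with $\xlam\in\yCrn$.

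Applying $\spfK$ termwise, each chain module of $\Ktau$ is therefore a finite direct sum of shifts of modules $\Klam$ with $\xlam\in\yCrn$. At this point I invoke Theorem\rw{th:splprj} with $m=0$: the unique element of $\yCrv{0}$ is the empty crossingless matching, so $\xald$ is trivial and the projective $\prjPba$ reduces simply to $\Kbet$. The theorem thus asserts that the modules $\Klam$ for $\xlam\in\yCrn$ form a complete list of indecomposable projective $\xaHn$-modules. In particular every such $\Klam$ is projective, so every chain module of $\Ktau$ is projective, and $\Ktau\in\xhKbpHn$.

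No substantive difficulty arises: the statement is essentially a direct consequence of Theorem\rw{th:splprj}. The only conceptual point worth underlining is that the hypothesis $m=0$ on the source is exactly what ensures the Bar-Natan smoothings are crossingless matchings rather than more general flat tangles, so the projectivity statement for the indecomposable summands is applicable to \emph{every} chain module, not merely to the total complex up to homotopy.
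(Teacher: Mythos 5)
Your argument is correct and is exactly the reasoning the paper leaves implicit when it calls this corollary ``obvious'': the chain modules of $\Ktau$ for $\xtau\in\yTngzn$ are finite sums of shifts of $\Kal$ with $\xal\in\yCrn$, and these are precisely the indecomposable projective $\xaHn$-modules by the $m=0$ case of Theorem\rw{th:splprj}. Nothing is missing; the only thing you add to the paper's treatment is the explicit observation about circle removal ensuring the smoothings land in $\yCrn$, which is a fair point to spell out.
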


The second corollary is a consequence of an obvious relation
\xlee{eq:prisom}
\zKsymbim{\xbet\tcmp\xald} = \Kbet\otimes\zKsymbim{\xald}.
\xeee
%
%
\begin{corollary}
The isomorphism\rx{eq:prisom} generates  a canonical equivalence
of categories
\ylee{eq:eqprecat}
\cvxpmod{\xaHdnm} =  ( \cvxpmod{\xaHn})\otimes(\cvxpmod{\xaHopm})
\yeee
and consequently
\xlee{eq:eqdcat}
\xhKmpqHdnm = \xhKpq\xlrB{ (\cvxpmod{\xaHn})\otimes(\cvxpmod{\xaHopm})}.
\xeee
\end{corollary}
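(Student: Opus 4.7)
The plan is to construct the external tensor product functor
\[
F \colon \cvxpmod{\xaHn} \otimes \cvxpmod{\xaHopm} \longrightarrow \cvxpmod{\xaHdnm}, \qquad (P, Q) \longmapsto P \otimes Q,
\]
and show it is an equivalence of $\ZZ\oplus\ZZt$-graded additive categories. The functor is well-defined: writing $P$ and $Q$ as direct summands of free modules $\xaHn^{\oplus k}$ and $\xaHopm^{\oplus l}$, the external tensor product $P \otimes Q$ is a direct summand of $\xaHdnm^{\oplus kl}$, hence is a graded projective $\xaHdnm$-module.

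For essential surjectivity I invoke Theorem~\rw{th:splprj} three times. Applied with $m = 0$, it exhausts the indecomposable projective $\xaHn$-modules as the objects $\Kbet$ with $\xbet \in \yCrn$; applied with $n = 0$, it exhausts those of $\xaHopm$ as $\zKsymbim{\xald}$ with $\xal \in \yCrm$; and in its full form it exhausts the indecomposables of $\xaHdnm$ as the objects $\prjPba$ with $(\xal, \xbet) \in \yCrm \times \yCrn$. The identity \rx{eq:prisom} asserts precisely that $F(\Kbet,\, \zKsymbim{\xald}) = \Kbet \otimes \zKsymbim{\xald}$ coincides with $\prjPba$ up to the degree shift $\tgrshmzm$. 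Hence $F$ restricts to a bijection on isomorphism classes of indecomposables, and essential surjectivity follows by Krull-Schmidt together with additivity.

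For full faithfulness, additivity reduces the claim to showing that the canonical map
\[
\Hom_{\xaHn}(P_1, P_2) \otimes \Hom_{\xaHopm}(Q_1, Q_2) \;\longrightarrow\; \Hom_{\xaHdnm}(P_1 \otimes Q_1,\, P_2 \otimes Q_2)
\]
is an isomorphism whenever the four modules are finitely generated graded projective. This is the familiar tensor-Hom factorization for finite projectives: using $\Hom_A(P, M) \cong \Hom_A(P, A) \otimes_A M$ on each tensor slot, both sides become the same fourfold tensor product.

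The second identity is then formal. The construction of $\xhKpq$ depends only on the additive structure together with the $\ZZ$-grading shift $\qsho$ (which is used to formulate $q$-boundedness), and both are intertwined by $F$. Applying $F$ termwise to complexes therefore gives the asserted equivalence $\xhKmpqHdnm = \xhKpq\xlrB{(\cvxpmod{\xaHn}) \otimes (\cvxpmod{\xaHopm})}$. The principal subtlety is bookkeeping, namely tracking the $\ZZ\oplus\ZZt$-grading and the shift $\tgrshmzm$ built into $\prjPba$ so that $F$ respects all degrees; beyond this there is no real obstacle, since the substantive input is entirely contained in Theorem~\rw{th:splprj} and the formula \rx{eq:prisom}.
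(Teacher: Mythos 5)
Your proof is correct and follows essentially the route the paper intends: the paper states this corollary as an immediate consequence of Theorem~\rw{th:splprj} and the relation \rx{eq:prisom}, and your argument simply makes explicit the standard details (essential surjectivity from the classification of indecomposable projectives, full faithfulness from the tensor--Hom isomorphism for finitely generated projectives, and functoriality of $\xhKpq$ under graded additive equivalences). No gap.
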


The third corollary comes from the combination of \eex{eq:homtliso}, \rx{eq1.11a1a} and\rx{eq:fundefbim}: for
$\xal,\xbet\in\yCrn$
\begin{multline}
\nonumber
\Hom_{\dTLtnz}(\dal,\dbet) = \zsymcat{\xald\tcmp\xbet}\qtshv{n} =
\zKsymbim{\xald}\otimes_{\xaHn}\Kbet\qtshv{n}
\\
= \ddulv{\Kal}\otimes_{\xaHn}\Kbet=
\Hom_{\xhKbpHn}\xlrb{\Kal,\Kbet}.
\end{multline}
\begin{corollary}
The functor
\xlee{eq:funeqztn}
\spfK\colon\dTLtlztn\xrightarrow{\;\;=\;\;}\cvxpmod{\xaHn},\quad\dTLztn \xrightarrow{\;\;=\;\;} \xhKbpHn
\xeee
establishes an equivalence of categories.
\end{corollary}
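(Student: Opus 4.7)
The plan is to establish the claim in two formal steps: first the equivalence at the additive level $\dTLtlztn \xrightarrow{=} \cvxpmod{\xaHn}$, and then lift it termwise to bounded homotopy categories of complexes. Both steps will follow almost immediately from what has been assembled just before the statement, together with Theorem~\ref{th:splprj}.

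For the additive equivalence I would check that $\spfK\colon\dTLtlztn\to\cvxpmod{\xaHn}$, $\dal\mapsto\Kal$, is fully faithful and essentially surjective. Fully faithfulness on the generators $\dal$, $\xal\in\yCrn$, is exactly the chain of canonical isomorphisms displayed immediately before the corollary: it combines the definition \eqref{eq:dfhn} of $\xaHn$, the tensor–composition formula \eqref{eq1.11a1a}, and the canonical identification \eqref{eq:homtliso}, producing a natural isomorphism $\Hom_{\dTLtlztn}(\dal,\dbet)\xrightarrow{=}\Hom_{\xaHn}(\Kal,\Kbet)$. Since both sides are additive categories in which every object is, by construction, a finite direct sum of $\qshv{j}$-shifts of the generators, an additive functor that is fully faithful on generators is automatically fully faithful on all objects. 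Essential surjectivity is exactly the content of Theorem~\ref{th:splprj} specialized to $m=0$, together with graded Krull--Schmidt: the theorem identifies $\{\Kal\}_{\xal\in\yCrn}$ as a complete list of indecomposable projective $\xaHn$-modules, and because $\xaHn$ is a non-negatively graded algebra of finite type, every object of $\cvxpmod{\xaHn}$ decomposes uniquely as a finite direct sum $\bigoplus \Kal\qshv{j}$, which manifestly lies in the image of $\spfK$.

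To pass from the additive equivalence to the second asserted equivalence, I would invoke the general fact that any additive equivalence of additive categories $\mathcal{A}\xrightarrow{=}\mathcal{B}$ extends termwise and morphism-wise to an equivalence $\xhKb(\mathcal{A})\xrightarrow{=}\xhKb(\mathcal{B})$ of their bounded homotopy categories of complexes. Applying this to $\mathcal{A}=\dTLtlztn$ and $\mathcal{B}=\cvxpmod{\xaHn}$ and using the definitions $\dTLztn=\xhKb(\dTLtlztn)$ and $\xhKbpHn=\xhKb(\cvxpmod{\xaHn})$ yields the desired equivalence. The only point deserving careful attention is essential surjectivity at the additive level, since it relies on Krull--Schmidt; this is harmless here because of the positive grading of $\xaHn$, but it is the step I would verify most explicitly, as the rest is bookkeeping built on the Hom-formula already established in the paragraph preceding the statement.
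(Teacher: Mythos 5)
Your proposal is correct and follows essentially the same route as the paper: the paper derives the corollary from the displayed chain of canonical isomorphisms $\Hom_{\dTLtnz}(\dal,\dbet)=\Hom_{\xhKbpHn}(\Kal,\Kbet)$ (full faithfulness on generators) together with Theorem~\ref{th:splprj}, which identifies $\Kal$, $\xal\in\yCrn$, as the complete list of indecomposable projective $\xaHn$-modules (essential surjectivity). You merely make explicit the standard Krull--Schmidt decomposition and the termwise passage to bounded homotopy categories, which the paper leaves implicit.
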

%
%
A combination of equivalences\rx{eq:eqdcat},\rx{eq:fnproTL}
and\rx{eq:funeqztn} leads to the equivalence of categories
\xlee{eq:bigeqm}
\spfK\colon \dTLcpqtmtn\xrightarrow{\;\;=\;\;}\xhKmpqHdnm.
\xeee

Let $\zDsymcat{\xdummy}$ be a composition of the categorification map
$\zKsymbim{\xdummy}$ and the functor $\spfKD$:
\ylee{eq:cmfuncp}
\xymatrix@C=1.5cm{
\yTngtmtn
\ar@/^2pc/[rr]^-{\zDsymcat{\xdummy}}
\ar[r]_-{\zKsymbim{\xdummy}}
&
\KbspHdnm
\ar[r]_-{\spfKD}
&
\DbgHdnm
}
\yeee

\begin{theorem}
\label{th:hpbss}
The elements $\QKz\xlrb{ \zDsymbim{\xbet\tcmp\xald} }$, $\xal\in\yCrm$, $\xbet\in\yCrn$ form a
basis of the $\Qq$ vector space $\QKz(\DbgHdnm)$.
\end{theorem}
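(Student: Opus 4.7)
The plan is to reduce the claim to the standard fact that the classes of indecomposable projective modules form a basis of the Grothendieck group of a finite-dimensional graded algebra of finite global dimension.

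First, by Theorem \ref{th:splprj}, the modules $\prjPba = \zKsymbim{\xbet\tcmp\xald}\tgrshmzm$, with $(\xal,\xbet) \in \yCrm \times \yCrn$, exhaust the indecomposable graded projective $\xaHdnm$-modules. The grading shift $\tgrshv{m}{0}{m}$ contributes $(-1)^{m}\cdot q^{0}\cdot(-1)^{m}=1$ to the Grothendieck class, since the parity sign in $\QKz$ is $(-1)^{\dgo+\dgh}$ as in (\ref{ae1.9a}). Hence in $\QKz(\DbgHdnm)$ we have the equality $\QKz(\zDsymbim{\xbet\tcmp\xald})=\QKz(\prjPba)$, and it suffices to prove that the classes $\QKz(\prjPba)$ form a $\Qq$-basis of $\QKz(\DbgHdnm)$.

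For the spanning part I would use the projective resolution functor $\spPK$ of diagram (\ref{eq:prdiag}). The Khovanov arc ring $\xaHn$ is a finite-dimensional graded algebra of finite global dimension (it is known to be Koszul), and this property is inherited by the tensor product $\xaHdnm = \xaHn\otimes\xaHopm$. Consequently every bounded complex of graded $\xaHdnm$-modules admits a finite projective resolution, so the natural functor from the bounded homotopy category of graded projective $\xaHdnm$-modules into $\DbgHdnm$ is surjective on Grothendieck groups. By Theorem \ref{th:splprj} every finitely generated graded projective $\xaHdnm$-module is a direct sum of grading shifts of the $\prjPba$'s, and since $\QKz$ absorbs grading shifts as invertible elements of $\Qq$, the classes $\QKz(\prjPba)$ span $\QKz(\DbgHdnm)$.

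For linear independence I would invoke the dual basis supplied by the graded simple modules. Each indecomposable graded projective $\prjPba$ has a unique simple head $S_{\xal,\xbet}$, and these exhaust the graded simples of $\xaHdnm$. The graded Cartan pairing between the projective classes $[\prjPba]$ and the simple classes $[S_{\xal',\xbet'}]$, defined by the graded dimension of the corresponding $\Hom$-space, is non-degenerate over $\Qq$: the pairing matrix is diagonal with nonzero entries, so the classes $\QKz(\prjPba)$ are $\Qq$-linearly independent. The main non-formal ingredient, and hence the principal obstacle, is the finite global dimension of $\xaHdnm$, which justifies reducing from $\DbgHdnm$ to the bounded homotopy category of projectives when computing $K_{0}$; once this is in hand, the remainder is a standard Cartan matrix computation.
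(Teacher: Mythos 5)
There is a genuine gap, and it sits exactly where you flagged "the main non-formal ingredient": the claim that $\xaHn$, and hence $\xaHdnm$, has finite global dimension is false. The ring $\xaHn$ is a symmetric Frobenius algebra (the paper uses its Frobenius trace of $q$-degree $-2n$ in the discussion of Hochschild cohomology), hence self-injective; a finite-dimensional self-injective algebra has finite global dimension only if it is semisimple, and $\xaHn$ is not semisimple for $n\geq 1$ (already $\xaHv{1}\cong \ZZ[x]/(x^2)$, whose simple module has an infinite periodic projective resolution). Koszulity does not rescue this: Koszul algebras routinely have infinite global dimension (exterior algebras, $k[x]/(x^2)$). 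Consequently bounded complexes do \emph{not} admit finite projective resolutions, the functor from the bounded homotopy category of projectives to $\DbgHdnm$ is not essentially surjective, and your spanning argument collapses at its first step.

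This is precisely the difficulty the paper's proof is built around. The paper first shows (using that $\xaHdnm$ is non-negatively graded with semisimple degree-zero part) that every module has a possibly infinite projective resolution whose terms have $q$-order tending to $+\infty$; this lets the resolution functor land in the category of $q^+$-bounded complexes and makes the classes of the indecomposable projectives generate only after extending scalars from $\Zqqi$ to the Laurent series ring $\Zsqqi$. Since the simple modules freely generate the Grothendieck group, $\Kzp(\DbgHdnm)$ is free of the same rank $\xlN$ as the number of projectives, so $\xlN$ generators of a free rank-$\xlN$ module are independent, and one then descends to $\QKz(\DbgHdnm)$ over $\Qq$. Your linear-independence argument via the pairing of projectives against simples is essentially sound (it is the content of the paper's computation $\blpbv{\Kzp(\idSa)}{\Kzp(\idPb)}=\delta_{\alpha\beta}$, and here no resolution is needed since one argument is already projective), and combined with the dimension count coming from the simples it would suffice; but you must replace the finite-resolution reduction by the $q^+$-bounded infinite-resolution argument, or simply drop the spanning step and instead observe that $\xlN$ independent classes in an $\xlN$-dimensional $\Qq$-vector space form a basis.
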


\begin{corollary}
There are canonical isomorphisms
\ylee{eq:cisokgr}
\QKz\xlrb{\DbgHdnm} = \cfQTLcttmtn,\qquad
\Kzp\xlrb{\DbgHdnm} = \cTLctptmtn
\yeee
which identify the basis elements
$\QKz\xlrb{\zDsymbim{\xbet\tcmp\xald}}$ and
$\psymalg{\xbet\tcmp\xald}$.
\end{corollary}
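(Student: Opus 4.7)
The plan is direct: the corollary follows from Theorem \ref{th:hpbss} together with a description of a natural basis for $\cfQTLcttmtn$ and the compatibility of the categorification maps in the diagrams \eqref{eq:cmdgrm} and \eqref{eq:1.11b}.

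First, I would produce a basis of $\cfQTLcttmtn$. By the tensor decomposition \eqref{eq:tlsplit}, $\cfQTLcttmtn = \cfQTLztn \otimes_{\Qq} \cfQTLtmz$, and the \xcrmt s $\xbet \in \yCrn$ and $\xal \in \yCrm$ furnish, via their Kauffman brackets, $\Qq$-bases of the two tensor factors. Hence the elements $\psymalg{\xbet\tcmp\xald}$ form a $\Qq$-basis of $\cfQTLcttmtn$ indexed by $\yCrm \times \yCrn$, matching exactly the index set provided by Theorem \ref{th:hpbss}.

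Second, I would define the isomorphism $\QKz(\DbgHdnm) \to \cfQTLcttmtn$ by sending the basis element $\QKz\xlrb{\zDsymbim{\xbet\tcmp\xald}}$ to $\psymalg{\xbet\tcmp\xald}$. Since both sides are free $\Qq$-modules on the same indexing set, this prescription extends uniquely to a $\Qq$-linear isomorphism. For canonicity, I would invoke the commutative diagram \eqref{eq:cmdgrm}, which gives $\Kz\xlrb{\zsymcat{\xbet\tcmp\xald}} = \psymalg{\xbet\tcmp\xald}$, together with \eqref{eq:1.11b} and the equivalence \eqref{eq:bigeqm}, which factor $\zDsymbim{\xdummy}$ through $\spfK$; these exhibit the isomorphism as that induced by the natural $\Kz$-construction rather than as an arbitrary pairing of bases.

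The Laurent series version $\Kzp(\DbgHdnm) = \cTLctptmtn$ runs in parallel: one replaces $\Qq$ by $\Zsqqi$ and the bounded categories by the $q$-positively-bounded ones $\dTLcpqtmtn$ and $\xhKmpqHdnm$, invoking the definition of $\Kzp$ from subsection \ref{sss:hotcatcom} and the equivalence \eqref{eq:bigeqm} in place of Theorem \ref{th:hpbss}. I do not anticipate any serious obstacle; once Theorem \ref{th:hpbss} is in hand the corollary reduces to matching indexing sets, and the only mild subtlety is checking that the tensor factorization $\yCrm \times \yCrn$ on the \TLa\ side is indeed the one induced on the categorical side through the construction \eqref{eq:dfmdl} of $\zDsymbim{\xdummy}$, which is immediate from \eqref{eq:tlsplit} and \eqref{eq:prisom}.
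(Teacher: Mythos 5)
Your proposal is correct and follows essentially the same route the paper intends: the paper states this corollary without a separate proof precisely because it reduces to matching the basis of Theorem~\ref{th:hpbss} with the standard basis $\psymalg{\xbet\tcmp\xald}$ of $\cfQTLcttmtn$ coming from the unique split presentation of \tct\ \TL\ tangles (\cf\ the splitting\rx{eq:tlsplit} and Proposition\rw{pr:3}), and then base-changing to $\Zsqqi$ for the $\Kzp$ statement. Your appeal to the diagrams\rx{eq:cmdgrm} and\rx{eq:1.11b} for canonicity is exactly the compatibility the paper records later in the large commutative diagram\rx{eq:fbigdg}.
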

From now on throughout the paper we will use $\cfQTLcttmtn$ and $\cTLctptmtn$ in place
of \Grtg s of $\DbgHdnm$.

\subsubsection{A universal projective resolution}

\begin{theorem}
\label{th:hprthrpq}
The projective resolution functor
$\spP\colon\DbgHdnm\rightarrow\xhKmpHdnm$ threads through \qpb\
complexes, so there is a commutative diagram
%
%
\ylee{eq:funthres}
\xymatrix{
\DbgHdnm
\ar@/^1.75pc/[rr]^-{\spP}
\ar[r]_-{\spPpl}
\ar[dr]_-{\spPTL}
&
\xhKmpqHdnm
\ar@{^{(}->}[r]
&
\xhKmpHdnm
\\
&
\dTLcpqtmtn
\ar[u]^-{\spfK}_-{=}
}
\yeee
which defines the functor $\spPTL$.
\end{theorem}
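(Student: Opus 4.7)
The plan has two parts. First, I would show that the essential image of the projective resolution functor $\spP$ lies in the subcategory $\xhKmpqHdnm$ of $q$-bounded-below complexes, producing the factorization $\spPpl\colon\DbgHdnm\to\xhKmpqHdnm$. Second, composing with the inverse of the equivalence $\spfK\colon\dTLcpqtmtn\xrightarrow{=}\xhKmpqHdnm$ from \ex{eq:bigeqm} defines $\spPTL$.

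For the first part I would represent any $\xbM\in\DbgHdnm$ by a bounded complex of finitely generated graded $\xaHdnm$-modules and build a minimal graded Cartan--Eilenberg projective resolution; the total complex is a projective resolution of $\xbM$, and since the original complex is bounded in homological degree, the total complex will be $q$-bounded below as soon as each row (the minimal resolution of a single module) is. The task therefore reduces to showing that for a finitely generated graded module $M$ with minimum generator degree $d_0$, the minimal graded projective resolution $P^\bullet\twoheadrightarrow M$ satisfies $\yordq{P^{-i}}\to+\infty$ as $i\to\infty$. The key ingredient is that every nonidentity morphism between the indecomposable projectives $\prjPba$ has strictly positive $q$-degree: this is a direct consequence of the cobordism basis from Theorem \ref{pr:rpc} combined with the degree formula \ex{eq:dgrcob}, under which only cylinder cobordisms have $\dgt = 0$. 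Hence the differentials in the minimal resolution strictly raise $q$-degree, forcing $\yordq{P^{-i}}\geq d_0 + i$.

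For the second part, once $\spPpl$ is constructed, setting $\spPTL := \spfK^{-1}\circ\spPpl$ via the equivalence \ex{eq:bigeqm} makes the triangle in \ex{eq:funthres} commute by construction; the outer arc through $\xhKmpHdnm$ recovers $\spP$ because the inclusion $\xhKmpqHdnm\hookrightarrow\xhKmpHdnm$ is fully faithful.

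The main obstacle will be justifying the minimality argument for the projective resolution. The degree-zero endomorphism ring of each indecomposable projective is $\ZZ[x]/(x^2)$ — local but not semisimple — so a naive graded Nakayama applied to a semisimple quotient does not suffice. The argument must instead work with the full graded Jacobson radical and exploit the cobordism grading to produce a uniform positive $q$-degree gap. Theorem \ref{pr:rpc} together with \ex{eq:dgrcob} supplies exactly the needed control: cylinder cobordisms (which account for the $(\xaHdnm)_0$ piece of morphism spaces) are closed under composition, while every other reduced cobordism, whether containing a saddle or a handle, contributes strictly positively to $\dgt$, producing the required gap of at least one in each step of the minimal resolution.
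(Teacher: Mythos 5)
Your proof is correct and follows essentially the same route as the paper: the paper proves the statement for any finite-dimensional non-negatively graded ring whose degree-zero part is spanned by mutually orthogonal idempotents, using exactly your key fact that $\Hom_{<0}(\idPa,\idPb)=0$ while $\Hom_0(\idPa,\idPb)$ is spanned by the identity, so that a minimal resolution satisfies $\yordq{\xAv{-i}}\geq i$, and then obtains $\spPTL$ by composing with the equivalence $\spfK$ of \ex{eq:bigeqm}. (Your worry about the degree-zero endomorphism ring being $\ZZ[x]/(x^2)$ is unfounded: in the grading of \ex{eq:dgrcob} the generator $x$ sits in $q$-degree $2$, so the degree-zero part is just $\IQ\cdot\mathrm{id}$, which is precisely why the minimality argument goes through.)
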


Let $\rsPn = \spPTL(\xaHn)$
be a `universal'
projective resolution of the $\xaHne$-module $\xaHn$. Since the
functor $\spfK$ transforms the tangle composition into the tensor
product, projective resolution\rx{eq:ostprj} of \sprj\
$\xaHdnm$-modules can be performed universally with the help of
the following
commutative diagram
\xlee{eq:cmdigprs}
\xymatrix{
\dTLtmtn
\ar[rr]^-{\rhPs}
\ar[d]_-{\spfK}
&&
\dTLcpqtmtn
\ar[d]^-{\spfK}_-{=}
\\
\KbspHdnm
\ar[rr]^-{\spPsp}
\ar[dr]_-{\spfKD}
&&
\xhKmpqHdnm
\\
&
\DbgHdnm
\ar[ur]_-{\spP}
}
\xeee
where the universal projective resolution functor $\rhPs$ is
defined similarly to \ex{eq:ostprj}:
\xlee{eq:fnactts}
\rhPs(\xdummy) = \rsPn\tcmp\xdummy = \xdummy\tcmp\rsPm.
\xeee
In fact, the commutativity of the square in the
diagram\rx{eq:cmdigprs} together with the second equality of
\ex{eq:ostprj} proves that $\rsPn\tcmp\xdummy =
\xdummy\tcmp\rsPm$.

\subsubsection{Hochschild homology}

Let $\Qgmodpq$ denote the category of $\ZZ\oplus\ZZ$-graded vector
spaces $V = \bigoplus_{i,j\in\ZZ} V_{i,j}$ whose homological
grading is bounded from above and which are also \qpb\ (see the
definition in subsection\rw{sss:hotcatcom}). Obviously,
$\Kzp\Qgmodpq=\ZZsqqi$ and $\Kzp$ acts on the objects of
$\Qgmodpq$ as the graded Euler characteristic $\chq$. Let
$\QgmodQ\subset\Qgmodpq$ denote the full subcategory whose objects
have the property that their Euler characteristic is a rational
function of $q$, that is, $\chq$ threads through $\Qq$:
\ylee{eq:grecth}
\xymatrix{
\QgmodQ
\ar@/^1.75pc/[rr]^-{\chq}
\ar[r]^-{\chq}
&
\Qq
\ar@{^{(}->}[r]
&
\Zsqqi
}
\yeee

\begin{theorem}
\label{th:hhomtl}
The \Hhom\ of  $\DbgHen$ lies within $\QgmodQ$:
\ylee{eq:hhomqq}
\xymatrix@C=1.5cm{
\DbgHen
\ar[r]_-{\xHHl(\xdummy)}
\ar@/^1.75pc/[rr]^-{\xHHl(\xdummy)}
&
\QgmodQ
\ar@{^{(}->}[r]
&
\Qgmodpq
}
\yeee
\end{theorem}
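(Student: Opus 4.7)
The plan is to reduce the claim to a computation on projective generators and to identify the Hochschild trace with the Jones--Wenzl projector $\jwpxtnz$ on the Temperley--Lieb side. For q-boundedness: by Theorem \ref{th:hprthrpq}, every $\xbM\in\DbgHen$ admits a projective resolution in $\xhKmpqHen$, i.e., a q-bounded-from-below, homologically-bounded-above complex of projective $\xaHne$-modules. Tensoring such a complex over $\xaHne$ with $\xaHn$ preserves both boundedness properties, since each projective generator $\prjPba$ is \sprj\ and has finite $\IQ$-dimension in each $q$-degree after the tensor. Hence $\xHHl(\xbM)\in\Qgmodpq$.

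For rationality of the Euler characteristic, I would use Theorem \ref{th:hpbss}: since $\yCrn$ is finite, $\QKz(\DbgHen)=\cfQTLcttmtn$ (with $m=n$) is finite-dimensional over $\Qq$ with basis $\{\psymalg{\xbet\tcmp\xald}\}_{\xal,\xbet\in\yCrn}$ corresponding to the indecomposable projectives $\prjPba$. Writing $\QKz(\xbM)=\sum c_{\xal\xbet}(q)\,\psymalg{\xbet\tcmp\xald}$ with $c_{\xal\xbet}(q)\in\Qq$, additivity of $\chq\circ\xHHl$ along the derived resolution gives the finite sum
\begin{equation*}
\chq\xlrb{\xHHl(\xbM)} \;=\; \sum_{\xal,\xbet\in\yCrn} c_{\xal\xbet}(q)\,\chq\xlrb{\xHHl(\prjPba)}.
\end{equation*}
It therefore suffices to show $\chq(\xHHl(\prjPba))\in\Qq$ for each $\xal,\xbet\in\yCrn$.

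Finally, I would establish the identity
\begin{equation*}
\chq\xlrb{\xHHl(\prjPba)} \;=\; \symalg{\lSh{\jwpxtnz\tcmp\xbet\tcmp\xald}}.
\end{equation*}
Using the factorization \rx{eq:prisom} and the \sprj\ resolution formula \rx{eq:ostprj}, the derived tensor $\prjPba\otimes^L_{\xaHne}\xaHn$ is computed by the universal projective resolution $\rsPn=\spPTL(\xaHn)$; via the commutative square in \rx{eq:cmdigprs} together with the trace identity \rx{eq:trTL}, passing to Euler characteristics yields $\symalg{\lSh{[\rsPn]\tcmp\xbet\tcmp\xald}}$, and the $\Kzp$-class $[\rsPn]\in\cfQTL$ coincides with $\jwpxtnz$. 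By the explicit formula \rx{eq:1.5b1}, $\jwpxtnz$ has coefficients in $\Qq$, hence the bracket lies in $\Qq$, as required. The main obstacle is precisely this last identification $[\rsPn]=\jwpxtnz$, i.e., proving that the universal projective resolution of $\xaHn$ as an $\xaHne$-module categorifies the Jones--Wenzl projector; this matches the heuristic that Hochschild homology implements the $\Sot$-closure (cf.\ Theorem \ref{th:relfr}) and will rest on unwinding the structure of $\rsPn$ from Theorem \ref{th:hprthrpq} together with the trace properties \rx{eq:clinv1}--\rx{eq:clinv2} of the $\Sh$-closure.
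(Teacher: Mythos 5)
Your proposal is correct, but it takes a genuinely different route from the paper for the rationality part. The paper deduces Theorem~\ref{th:hhomtl} as a special case of Theorem~\ref{th:hhomratg}, whose proof is short and abstract: $\chq\xlrb{\xHHl(\xbM)}$ is the Tor pairing\rx{eq:prtor} of $\Kzp(\xbM)$ with $\Kzp(\xaHn)$ inside $\QKz(\Dbmd{\xaHne})$, and Proposition~\ref{pr:ratval} shows this pairing is $\Qq$-valued because the classes of simples and projectives form dual bases with Kronecker-delta pairing (\ex{eq:dimhoms}). No Temperley--Lieb input is needed, and the argument works for any \xcnv\ ring. You instead reduce to the indecomposable projectives $\prjPba$ via Theorem~\ref{th:hpbss} and then compute $\chq\xlrb{\xHHl(\prjPba)}$ explicitly as $\symalg{\lSh{\jwpxtnz\tcmp\xbet\tcmp\xald}}$; the key identification $\Kzp(\rsPn)=\jwpxtnz$ that you flag as the main obstacle is exactly the paper's Theorem~\ref{th:kzprj}, which is proved there independently (via Theorem~\ref{th:unqprj} and the equivalence\rx{eq:bigeqm}), so your route closes without circularity. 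What your approach buys is the explicit formula for the Hochschild Euler characteristic in terms of the \JWp, which is essentially the content of the paper's Theorem~\ref{th:cmtrisot} and the diagram\rx{eq:fbigdg}; what it costs is invoking the universal resolution machinery where a two-line $K_0$ duality argument suffices. Your treatment of $q^+$-boundedness coincides with the paper's (Corollary~\ref{cr:rprthrpq} / Theorem~\ref{th:hprthrpq}). One small point worth making explicit in your write-up: the additivity step passing from $\QKz(\xbM)=\sum c_{\xal\xbet}(q)\,\psymalg{\xbet\tcmp\xald}$ to the displayed sum requires knowing that $\chq\circ\xHHl$ factors through $\Kz$ and extends $\Qq$-linearly into $\Zsqqi$; this is immediate from the pairing description but should be stated.
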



\section{Results}
\label{s:res}

\subsection{Categorification}
\label{ss:catfic}
Admissible boundaries in \strtWRTt\ are 2-spheres $\Stn$ with $2n$ marked
points. Associated \xring s are $\cA(\Stn)=\xaHn$, canonical
involution\rx{eq:alginv} being the homomorphism\rx{eq:algopis}.
Then by the axiom\rx{eq:tnpcat}
\ylee{eq:catsphs}
\cC(\Stn) = \DbgHn,\qquad
\cC(\Stm\sqcup\Stn) = \DbgHdnm,
\yeee
where in the last equation we assume that the boundary component
$\Stm$ is `in', while the boundary component $\Stn$ is `out'.
Gluing formulas force us to define the category $\empc$ associated
with empty boundary as $\QgmodQ$, however we can not define the
action of functors $\dsym$ and $\finvm$ on it.

The
categorification map for 3-ball tangles is defined as
the unique map $\xobmpd\colon\yTngBtn\rightarrow\DbgHn$ which makes
the following diagram commutative
\ylee{eq:cattbt}
\xymatrix@C=1.5cm@R=1.5cm{
\yTngzn
\ar[r]^-{=}
\ar[d]_-{\zKsymbim{\xdummy}}
\ar[dr]^-{\mpDcat}
&
\yTngBtn
\ar@{-->}[d]^-{\xobmpd}
\\
\xhKbpHn
\ar@{^{(}->}[r]
&
\DbgHn
}
\yeee

Since gluing two 3-balls together produces a 3-sphere,
the gluing axiom determines the categorification map for links in
$\Sh$. Derived tensor product in $\DbgHn$ coincides with the
ordinary tensor product in $\xhKbpHn$, hence in view of
Theorem\rw{th:cmptp}, a link $\xL\subset\Sh$ is mapped into its Khovanov homology:
$\xobmpv{\xL}{\Bh}= \zsymcat{\xL}$.

The categorification map  $\xobmpd\colon
\yTngSt\rightarrow\DbgHdnm$
for \rtng s in $\StI$ is defined with the help of the following
theorem:
\begin{theorem}
\label{th:exunsti}
There exists a unique categorification map
$\xobmpd\colon\yTngSt\rightarrow\DbgHdnm$ such that
the following diagram is commutative:
\xlee{eq:cdiagsti}
\xymatrix@C=1.5cm@R=1.5cm{
\yTngtmtn
\ar[d]_-{\zKsymbim{\xdummy}}
\ar[dr]^-{\mpDcat}
\ar@{->>}[r]^-{\hoam}
&
\yTngSttmtn
\ar@{-->}[d]^-{\xobmpd}
\\
\KbspHdnm
\ar[r]_-{\spfKD}
&
\DbgHdnm
}
\xeee
\end{theorem}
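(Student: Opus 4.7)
The plan splits into uniqueness and existence. Uniqueness is immediate: since $\hoam\colon \yTngtmtn \twoheadrightarrow \yTngSttmtn$ is surjective, any $\xobmpd$ making the square commute must equal $\mpDcat = \spfKD \circ \zKsymbim{\xdummy}$ on any choice of preimage, hence is uniquely determined.

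For existence I must show that $\mpDcat$ factors through $\hoam$. By Theorem~\ref{th:rgheqv}, the kernel of $\hoam$ is the congruence generated by $\brtwptn$ and $\gobrotn$, so it is enough to prove that for any tangles $\xtauo, \xtaut$ and any $\xtau \in \{\brtwptn, \gobrotn\}$,
$$\mpDcat(\xtauo \tcmp \xtau \tcmp \xtaut) \;\cong\; \mpDcat(\xtauo \tcmp \xtaut) \qquad \text{in } \DbgHdnm.$$
By Theorem~\ref{th:cmptp}, the functor $\zKsymbim{\xdummy}$ turns tangle composition into tensor product over the intermediate algebra $\xaHn$, and under $\spfKD$ this tensor product becomes a derived one. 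Hence the displayed isomorphism follows from the single quasi-isomorphism
$$\zKsymbim{\xtau} \;\qie\; \xaHn = \Kidbrtn \qquad \text{in } \DbgHne$$
for each of the two braids.

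I will compute this via the universal projective resolution. By Theorem~\ref{th:hprthrpq} and the equivalence~\eqref{eq:bigeqm}, the displayed quasi-isomorphism is equivalent to the homotopy equivalence
$$\rsPn \tcmp \symcat{\xtau} \;\simeq\; \rsPn \qquad \text{in } \dTLcpqtntn,$$
where $\rsPn = \spPTL(\xaHn)$. The constituent objects of $\rsPn$ have the form $\zsymcat{\xbet \tcmp \xald}$ with $\xal, \xbet \in \yCrn$. The crucial geometric input — the categorical analogue of the isotopies $\xalf \tcmp \xtau \isotp \xalf$ invoked in the proof of Theorem~\ref{th:trend1} — is that for every $\xal \in \yCrn$ the framed tangle $\xald \tcmp \xtau$ is ambient isotopic to $\xald$ in $\IR^3$: the strand-one loop of $\brtwptn$, or each full rotation comprising $\gobrotn$, can be pulled through the cap $\xald$, with the first Reidemeister factors arising from the slide cancelled against the framings built into $\brrtpon$. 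Framed invariance of $\symcat{\xdummy}$ then yields constituent-level homotopy equivalences $\zsymcat{\xbet \tcmp \xald \tcmp \xtau} \simeq \zsymcat{\xbet \tcmp \xald}$, and these are natural with respect to the morphisms of $\rsPn$ because the sliding isotopy commutes with every elementary cobordism between crossingless matchings.

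The principal obstacle is grading bookkeeping in the final step: the three-part shift $\tgrshv{\ast}{\ast}{\ast}$ must come out zero, since equality in $\DbgHne$ holds on the nose, and in particular the framing contribution $2n$ in $\gobrotn$ must cancel exactly against the first Reidemeister factors produced by sliding its $2n$ loops through $\xald$. The other delicate point is promoting the constituent-level homotopy equivalences to a chain-level equivalence by verifying naturality against the differential of $\rsPn$; this requires inspecting the explicit structure of the universal resolution and confirming that the slide isotopy intertwines every constituent morphism. Once the framings are balanced and naturality is checked, assembling the proof is routine.
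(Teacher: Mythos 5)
Your route coincides with the paper's: uniqueness from surjectivity of $\hoam$; existence reduced via Theorem \ref{th:rgheqv} to the two congruence generators $\brtwptn$ and $\gobrotn$; and the key quasi-isomorphisms obtained from the homotopy equivalences $\rsPn\tcmp\dtau\hteqv\rsPn$ in $\dTLcpqtn$, proved by sliding the braid through the cup parts of the constituent split tangles. The gap sits exactly where you pass from constituent objects to the whole complex: you assert that the constituent-level equivalences are natural ``because the sliding isotopy commutes with every elementary cobordism between crossingless matchings.'' Geometric commutativity of the isotopies is not sufficient. By Theorem \ref{th:cbsgn}, two composite cobordism morphisms realizing isotopic cobordisms agree in the universal category only \emph{up to a sign}, so each naturality square commutes a priori only up to a factor $\pm 1$ that may depend on the matchings and on the cobordism. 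If any of these signs is $-1$, your equivalences do not assemble into a map of complexes, and no amount of inspecting the explicit differential of $\rsPn$ repairs this, because the ambiguity already lives in the individual squares.

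The missing idea is the sign-normalization mechanism of the second condition of Definition \ref{df:qtriv} and of Theorem \ref{th:qrtfiso}: fix one matching $\xbet\in\yCrn$ with $\xtau\tcmp\xbet\isotp\xbet$ (for $\brtwptn$ and $\gobrotn$ any $\xbet$ works), rescale each equivalence $\zsymcat{\xal\tcmp\xtau}\rightarrow\dal$ by a sign chosen so that it is compatible up to homotopy with composition by $\xbet$, and then prove that the residual sign in every naturality square equals $+1$ by composing the square with $\xbet$ and using that an elementary cobordism (a saddle or a handle attachment) induces a nonzero linear map on morphism spaces. Once this functor isomorphism is established, Corollary \ref{cr:qrtfiso} applies to every object of $\dTLcpqtn$, so the only fact about $\rsPn$ one needs is that it is a split, $q^+$-bounded complex; your plan to ``verify naturality against the differential of $\rsPn$'' is both unnecessary and, absent the sign argument, not executable. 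Your framing concern is legitimate but already absorbed into the definitions: the braids $\brtwptn$ and $\gobrotn$ carry the framing corrections that make $\xalf\tcmp\xtau\isotp\xalf$ an isotopy of \emph{framed} tangles, so no residual degree shift appears.
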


\begin{theorem}
\label{th:tristi}
The diagram\rx{eq:cmtr}
\ylee{eq:cmsti}
\xymatrix@C=0cm{
& \yTngSttmtn
\ar[rd]^-{\xobmpd}
\ar[dl]_-{\xstmpd}
\\
\DbgHdnm
\ar[rr]^-{\Kz}
&&
\cfQTLcttmtn
}
\yeee
is commutative.
\end{theorem}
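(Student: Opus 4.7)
The plan is to exploit the uniqueness statement in Theorem~\rw{th:trend1}: $\sphsymalg{-}$ is the unique map making the diagram~\rx{eq:1.5b2} commute. Accordingly, it suffices to show that $\Kz \circ \xobmpd$ satisfies the same defining identity, namely
$$
\Kz\bigl(\xobmpd(\hoam(\xtau))\bigr) \;=\; \jwpxtnz \tcmp \symalg{\xtau}
\qquad \text{for every } \xtau \in \yTngtmtn.
$$
By Theorem~\rw{th:exunsti} together with the factorisation $\zKsymbim{-} = \spfK \circ \mpcat$ from diagram~\rx{eq:1.11b}, the left-hand side equals $\Kz(\spfKD(\spfK(\dtau)))$. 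Using the additivity of $\Kz$ together with $\Kz \circ \mpcat = \symalg{-}$ from~\rx{eq:cmdgrm}, both sides depend only on the Grothendieck class of $\dtau$, so it is enough to verify the identity on a single Temperley--Lieb tangle $\xlam \in \yTLtmtn$:
$$
\Kz\bigl(\spfKD(\Klam)\bigr) \;=\; \jwpxtnz \tcmp \clam.
$$

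For \tct\ $\xlam = \xbet \tcmp \xald$ this is immediate: by~\rx{eq:cisokgr}, $\Kz(\spfKD(\Klam)) = \psymalg{\xlam}$; on the other hand, since $\jwpxtnz$ is central (see~\rx{eq:cntpr}) and acts as the identity on \crmtn s by Theorem~\rw{th:unqprj}, also $\jwpxtnz \tcmp \psymalg{\xlam} = \psymalg{\xlam}$. For general $\xlam$ I would reduce to the \tct\ case via the universal projective resolution $\rsPn$. By diagram~\rx{eq:cmdigprs} there is a quasi-isomorphism $\spfKD(\Klam) \qie \spfK(\rsPn \tcmp \dlam)$ in $\DbgHdnm$, with $\rsPn \tcmp \dlam$ a \qpb\ complex of \tct\ TL tangles in $\dTLcpqtmtn$. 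Passing to Grothendieck groups through the equivalence~\rx{eq:bigeqm}, and using that $\Kzp$ respects tangle composition, yields
$$
\Kz\bigl(\spfKD(\Klam)\bigr) \;=\; \Kzp(\rsPn) \tcmp \clam.
$$

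The crux of the argument, and the step that demands the most care, is the identification $\Kzp(\rsPn) = \jwpxtnz$ in $\cfQTLcttntn$. For this I plan to invoke the uniqueness clause of Theorem~\rw{th:unqprj}. Every \crmtn\ $\xgam \in \yCrn$ is itself \tct\ as a \ttngztn, so by the case already established, $\Kz(\spfKD(\Kgam)) = \symalg{\xgam}$. On the other hand, $\rsPn \tcmp \dgam = \rhPs(\dgam)$ by~\rx{eq:fnactts}, so the reduction above gives $\Kz(\spfKD(\Kgam)) = \Kzp(\rsPn) \tcmp \symalg{\xgam}$. Combining these,
$$
\Kzp(\rsPn) \tcmp \symalg{\xgam} \;=\; \symalg{\xgam}
\qquad \text{for every } \xgam \in \yCrn,
$$
which is precisely the characterising property of $\jwpxtnz$ in Theorem~\rw{th:unqprj}, forcing $\Kzp(\rsPn) = \jwpxtnz$. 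Chaining the displayed equalities with the defining identity of $\sphsymalg{-}$ from Theorem~\rw{th:trend1} then completes the proof.
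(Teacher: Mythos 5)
Your proposal is correct and is essentially the paper's own argument: the paper performs the same chase by establishing the commutativity of the solid arrows of the diagram\rx{eq:fbigdg} and then invoking Theorem\rw{th:exunsti} together with the surjectivity of $\hoam$. In particular, your crux identity $\Kzp(\rsPn)=\jwpxtnz$ is precisely Theorem\rw{th:kzprj}, which the paper proves by the same appeal to the uniqueness clause of Theorem\rw{th:unqprj}.
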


Since $\Sot$ can be constructed by gluing together the boundary
components of $\StI$, the gluing axiom requires that the categorification map for links in $\Sot$
should be determined by the following theorem:
\begin{theorem}
\label{th:exsot}
There exists a unique homology map
$\HKhbd\colon\yLnkSot\rightarrow \QgmodQ$ such that
the following diagram is commutative:
\ylee{eq:cdiagsts}
\xymatrix @C=1.5cm @R=1.5cm{
\yTngSttntn
\ar@{->>}[r]^-{\clSotv{\xdummy}}
\ar[d]^{\xobmpd}
&
\yLnkSot
\ar@{-->}[d]^-{\Hstbd}
\\
\DbgHdnm
\ar[r]^-{\xHHl(\xdummy)}
&
\QgmodQ
}
\yeee
\end{theorem}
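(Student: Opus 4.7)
The plan is to prove both uniqueness and existence by descending along the surjection $\clSotv{-}\colon \yTngSttntn \twoheadrightarrow \yLnkSot$. Uniqueness of $\Hstbd$ is immediate: every link in $\Sot$ is the closure of some rigid spherical tangle, and commutativity of the square forces the value $\Hstbd(\clSotv{\xsigm}) = \xHHl(\xobmpd(\xsigm))$. For existence, I need to verify that if $\xsigmo, \xsigmt \in \yTngSttntn$ satisfy $\clSotv{\xsigmo} \homeom \clSotv{\xsigmt}$, then $\xHHl(\xobmpd(\xsigmo)) \cong \xHHl(\xobmpd(\xsigmt))$ as objects of $\QgmodQ$. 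By Theorem~\ref{th:soeq} the kernel of $\clSotv{-}$ on $\yTngSt$ is generated by the two relations $\xsigmo \tcmp \xsigmt \sim \xsigmt \tcmp \xsigmo$ (cyclicity) and $\dflpv{\xsigm} \sim \xsigm$ (flip symmetry), so it suffices to check invariance of $\xHHl\circ\xobmpd$ under these two moves separately.

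For the cyclicity relation, let $\xsigmo$ be a \sphtng\ sending $2n$ points to $2n$ points and similarly for $\xsigmt$. The gluing axiom of the \alcat\ \TQFT\ applied to the map $\xobmpd$ (justified by Theorem~\ref{th:exunsti} together with Theorem~\ref{th:cmptp}) gives canonical isomorphisms
\[
\xobmpd(\xsigmo \tcmp \xsigmt) \cong \xobmpd(\xsigmo)\Loti_{\xaHn}\xobmpd(\xsigmt),\qquad
\xobmpd(\xsigmt \tcmp \xsigmo) \cong \xobmpd(\xsigmt)\Loti_{\xaHn}\xobmpd(\xsigmo)
\]
in $\DbgHen$. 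Applying $\xHHl$ and using the classical cyclic (trace-like) property of Hochschild homology $\xHHlv{M\Loti_{\xaHn} N} \cong \xHHlv{N\Loti_{\xaHn} M}$ — which at the chain level is induced by the symmetry $m\otimes n \mapsto n\otimes m$ of the bar resolution — yields the desired isomorphism. One has to be slightly careful that $\xobmpd(\xsigmo)$ and $\xobmpd(\xsigmt)$ are represented by \sprj\ bimodules (guaranteed by Theorem~\ref{th:splprj} after passage through $\spfK$), so the derived tensor product reduces to the ordinary one and the usual cyclic symmetry of $\Tor_{\xaHne}(-,\xaHn)$ applies.

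For flip invariance, the key input is that the categorification diagram\rx{eq:1.11b} intertwines the action of $\dflp$ on tangles with the bimodule functor\rx{eq:invbim}, so $\xobmpd(\dflpv{\xsigm}) \cong \dflpv{\xobmpd(\xsigm)}$ in $\DbgHdnm$. The required identity $\xHHlv{\dflpv{M}} \cong \xHHlv{M}$ then follows because the involution\rx{eq:algopis} $\xaHn \to \xaHopn$ identifies $\xaHne$ with $\xaHopne$ and transports the diagonal $\xaHn$-bimodule to itself up to the uniform degree shift built into the definition\rx{eq:fundefbim} of $\dflp$; tensoring with this identification produces the desired isomorphism of \Hhom\ groups. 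Finally, the target category of $\Hstbd$ is $\QgmodQ$ rather than merely $\Qgmodpq$ by Theorem~\ref{th:hhomtl}.

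The main obstacle will be getting the degree shifts and the $\dflp$ intertwining precisely right in the flip step: the functor $\dflp$ on $\KbgHdnm$ is not literally the identity but a shifted twist, so one must verify that its action commutes with $\xHHl$ on the nose (not merely up to an extra shift) when restricted to bimodules of the form $\xobmpd(\xsigm)$ with $m=n$. This amounts to checking the bookkeeping of the grading shifts $\tgrshmmn$ in\rx{eq:fundefbim} against the symmetry of the boundary conditions in\rx{eq:clinv2}; the cyclicity step, in contrast, is essentially formal once the \sprj\ property is invoked.
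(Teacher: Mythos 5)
Your proposal is correct and follows essentially the same route as the paper: reduce via the surjectivity of the closure map and Theorem\rw{th:soeq} to invariance under the cyclicity and flip relations, then derive cyclicity from Theorem\rw{th:cmptp} together with the trace property\rx{eq:cnpisocmh} of Hochschild homology for \sprj\ bimodules, and flip invariance from the intertwining\rx{eq:dfleq} together with the isomorphism\rx{eq:isohhfl}. The degree-shift bookkeeping you flag in the flip step is exactly what the paper's \ex{eq:isohhfl} (proved via the chain\rx{eq:chisohhfl}) disposes of, so there is no gap.
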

\begin{theorem}
\label{th:cmtrisot}
The diagram\rx{eq:cmtr}
\xlee{eq:cmsts}
\xymatrix@C=0cm{
& \yLnkSot
\ar[rd]^-{\xstmpd}
\ar[dl]_-{\Hstbd}
\\
\QgmodQ
\ar[rr]^-{\chq}
&&
\Qq
}
\xeee
is commutative.
\end{theorem}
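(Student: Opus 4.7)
The plan is to unpack both sides of the diagram for a link $\xL\subset\Sot$ and reduce the statement to a Grothendieck-level identity, which is then checked on generators. Present $\xL = \clSotv{\xsigm}$ for some spherical $(2n,2n)$-tangle $\xsigm$; this is always possible since the closure map is surjective. By the defining square (\ref{eq:cdiagsts}) of Theorem \ref{th:exsot} we have $\Hstbd(\xL) = \xHHlv{\xobmpd(\xsigm)}$, and by the definition (\ref{eq:trend}) together with Theorem \ref{th:trTL} we have $\xstmpd(\xL) = \Tr_{\cfQTLztn}\sphsymalg{\xsigm} = \symalg{\lSh{\sphsymalg{\xsigm}}}$. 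Theorem \ref{th:tristi} identifies $\Kz(\xobmpd(\xsigm)) = \sphsymalg{\xsigm}$. So it suffices to prove the ring-level identity
\xlee{eq:keyident}
\chq \circ \xHHl \;=\; \Tr_{\cfQTLztn}\circ\, \Kz \colon \DbgHen\longrightarrow \Qq.
\xeee

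Both sides of (\ref{eq:keyident}) descend through the Grothendieck group $\QKz(\DbgHen) = \cfQTLcttntn$: the left-hand side because $\xHHl$ lands in $\QgmodQ$ by Theorem \ref{th:hhomtl} (and $\chq$ is additive over exact sequences and translation-alternating), the right-hand side because it is manifestly $\Qq$-linear. By the basis theorem (the corollary to Theorem \ref{th:hpbss}), $\cfQTLcttntn$ is freely generated as a $\Qq$-module by the classes $\psymalg{\xbet\tcmp\xald}$ for $\xal,\xbet\in\yCrn$, and these are the Grothendieck classes of the indecomposable projective bimodules $\prjPba$ listed in Theorem \ref{th:splprj}. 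So it is enough to verify (\ref{eq:keyident}) on each $\prjPba$.

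For the right-hand side the computation is immediate: $\Kz(\prjPba) = \psymalg{\xbet\tcmp\xald}$ and $\Tr_{\cfQTLztn}\psymalg{\xbet\tcmp\xald} = \symalg{\lSh{\xbet\tcmp\xald}}$ by Theorem \ref{th:trTL}. For the left-hand side, write $\prjPba \simeq \Kbet\otimes\ddulv{\Kal}$ up to degree shifts, use semi-projectivity and formula (\ref{eq:ostprj}) to compute the Hochschild homology as
\[
\xHHlv{\prjPba} \;\simeq\; \ddulv{\Kal}\otimes_{\xaHn}\Kbet \cdot (\text{shifts}),
\]
and then recognise this via the canonical isomorphism (\ref{eq:homtliso}) (together with the equivalence \ref{eq:funeqztn}) as $\zsymcat{\lSh{\xbet\tcmp\xald}}$ up to an overall grading shift. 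Taking $\chq$ of the resulting complex recovers $\symalg{\lSh{\xbet\tcmp\xald}}$, matching the right-hand side.

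The main bookkeeping obstacle is reconciling the several grading shifts that appear along the way: the shift $\tgrshmzm$ in the definition of $\prjPba$, the shift $\tgrshv{\tfrac{m+n}{2}}{0}{\tfrac{m+n}{2}}$ in the isomorphism (\ref{eq:homtliso}), and the shifts built into the functor $\spfK$ via formula (\ref{eq:dfmdl}). These must cancel precisely so that no extra power of $q$ appears in $\chq$; the check is routine but requires care. Independence of the presentation $\xL = \clSotv{\xsigm}$ needs no separate argument, as both $\Hstbd$ (by Theorem \ref{th:exsot}) and $\xstmpd$ are already known to be well-defined on $\yLnkSot$.
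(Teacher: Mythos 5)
Your proof is correct, but it takes a genuinely different route from the paper's. The paper proves Theorem~\ref{th:cmtrisot} by a purely diagrammatic argument: the square\rx{eq:cmsts} is exhibited as the right face of the cube\rx{eq:bgcbdg}, every other face of which has already been shown to commute --- the decisive one being Theorem~\ref{th:cmdgdtl}, which asserts \emph{functorially} that $\chq\circ\xHHl$ agrees with $\Hmb(\lSh{-})\circ\spPTL$ on all of $\DbgHen$ --- and surjectivity of $\clSotv{\xdummy}$ then forces the remaining face to commute. You instead reduce to the $K_0$-level identity $\chq\circ\xHHl=\Tr_{\cfQTLztn}\circ\,\Kz$ and verify it on the basis of indecomposable projectives $\prjPba$. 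The two routes rest on essentially the same ingredients: your claim that $\chq\circ\xHHl$ descends to $\QKz(\DbgHen)$ is exactly the content of the pairing\rx{eq:prtor} together with Proposition~\ref{pr:ratval} and Theorem~\ref{th:hhomratg}, and your generator computation $\xHHlv{\prjPba}\cong\HKhb(\lSh{\xbet\tcmp\xald})$, with all grading shifts cancelling, is precisely the object-level content of the commutative triangle\rx{eq:cmtrigs}, which you could cite outright instead of describing the shift bookkeeping as ``routine but requiring care.'' What your version buys is that the functorial statement of Theorem~\ref{th:cmdgdtl} is replaced by a finite check on generators, since Hochschild homology of a projective bimodule is just its coinvariants in homological degree zero; the cost is that you must separately justify the descent to the Grothendieck group (additivity of $\chq\circ\xHHl$ over exact triangles, which needs the boundedness of Theorem~\ref{th:hhomtl}). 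The paper's version avoids any Grothendieck-group argument because commutativity is established at the level of objects before $\Kz$ is ever applied.
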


Next theorem shows that the universal resolution $\rsPn$ allows us to compute
the homology $\xobmpv{\xL}{\Sot}$ as a Khovanov-type homology within $\Sh$ and without reference to
the \xring s $\xaHdnm$.
\begin{theorem}
\label{th:hhclsh}
The following diagram is commutative:
\xlee{eq:comdgcomp}
\xymatrix@C=1.5cm@R=1cm{
\yTngtntn
\ar[r]^-{\hoam}
\ar[d]_-{\mpcat}
&
\yTngSttntn
\ar[r]^-{\clSotv{\xdummy}}
&
\yLnkSot
\ar[rd]^-{\Hstbd}
\\
\dTLtntn
\ar[r]^-{\rhPs}
&
\dTLcpqtntn
\ar[r]^-{\Hmb(\lSh{-})}
&
\Qgmodpq
&
\QgmodQ
\ar@{_{(}->}[l]
}
\xeee
\end{theorem}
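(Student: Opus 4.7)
The plan is to unfold the top path using the definitions of $\Hstbd$ and of Hochschild homology and then match it to the bottom path by two algebraic moves: pulling the Hochschild trace over $\xaHen$ through an inner composition over $\xaHn$, so that the outer trace becomes the $HH_0$ of a composed bimodule, and then identifying that $HH_0$ with the $\Sh$-closure at the level of $\dTLz$.

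First I unfold the top path. For $\xtau\in\yTngtntn$, Theorem~\ref{th:exsot} gives $\Hstbd(\clSotv{\hoam(\xtau)})=\xHHl(\xobmpd(\hoam(\xtau)))$, and Theorem~\ref{th:exunsti} identifies $\xobmpd(\hoam(\xtau))=\spfKD(\Ktau)$. The defining formula \eqref{eq:flHHR}, together with the identification $\spfK(\rsPn)=\spPr(\xaHn)$ that is immediate from $\rsPn=\spPTL(\xaHn)$ and the commutativity of \eqref{eq:funthres}, yields
\[
\Hstbd(\clSotv{\hoam(\xtau)})=\Hmb\bigl(\Ktau\otimes_{\xaHen}\spfK(\rsPn)\bigr).
\]

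Next I apply the algebraic rewriting. For any two $\xaHn$-bimodules $M,N$ the defining relations of $M\otimes_{\xaHen}N$ split naturally into the $\otimes_{\xaHn}$ relation $ma\otimes n=m\otimes an$ and the $HH_0$ relation $am\otimes n=m\otimes na$, which yields the isomorphism $M\otimes_{\xaHen}N=(M\otimes_{\xaHn}N)\otimes_{\xaHen}\xaHn$. Since $\Ktau$ and $\spfK(\rsPn)$ are semi-projective (Theorem~\ref{th:splprj}), $\otimes_{\xaHn}$ agrees with its derived version, and by Theorem~\ref{th:cmptp} we have $\Ktau\otimes_{\xaHn}\spfK(\rsPn)=\spfK(\dtau\tcmp\rsPn)$. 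The composition $\dtau\tcmp\rsPn$ remains in $\dTLcpqtntn$ because $\rsPn$ is supplied term-by-term by \tct\ tangles of the form $\zsymcat{\xbet\tcmp\xald}$ and $\dtau\tcmp\zsymcat{\xbet\tcmp\xald}$ is again \tct. Hence the problem reduces to the geometric lemma
\[
\Hmb\bigl(\spfK(\xbN)\otimes_{\xaHen}\xaHn\bigr)=\Hmb(\lSh{\xbN})\qquad\text{for every }\xbN\in\dTLcpqtntn.
\]

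I would prove this lemma by verifying it on generators and extending by additivity through the equivalence \eqref{eq:bigeqm}. On a generator $\zsymcat{\xbet\tcmp\xald}$ with $\xal,\xbet\in\yCrn$, the image $\spfK(\zsymcat{\xbet\tcmp\xald})=\Kbet\otimes\zKsymbim{\xald}$ is an external product of a left and a right $\xaHn$-module, and the elementary identity $(\Kbet\otimes\zKsymbim{\xald})\otimes_{\xaHen}\xaHn=\zKsymbim{\xald}\otimes_{\xaHn}\Kbet$, combined with Theorem~\ref{th:cmptp}, identifies it with $\Ksymbim{\xald\tcmp\xbet}$, the complex associated with the $(0,0)$-tangle $\xald\tcmp\xbet$. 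On the other side, $\lSh{\zsymcat{\xbet\tcmp\xald}}=\zsymcat{\lSh{\xbet\tcmp\xald}}$, and the links $\xald\tcmp\xbet$ and $\lSh{\xbet\tcmp\xald}$ are ambient-isotopic blackboard-framed links in $\Sh$, so the two complexes agree in $\dTLz$; matching the degree shifts from \eqref{eq:dfmdl} and \eqref{eq:homtliso} is routine. The principal obstacle is that $\rsPn$ is unbounded from below, so each of the tensor products, closures, and homologies above must be performed in the $\qsho$-bounded categories $\dTLcpqtntn$ and $\xhKmpqHen$ and be compatible with the $\qpord$-filtration. The equivalence \eqref{eq:bigeqm} together with Theorem~\ref{th:hhomtl} guarantees that both paths land in $\QgmodQ\hookrightarrow\Qgmodpq$, so once the graded convergence is in hand, the identifications above work level-by-level in the filtration.
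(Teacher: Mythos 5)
Your proof is correct and follows essentially the same route as the paper: the key lemma you isolate, $\Hmb\bigl(\spfK(\xbN)\otimes_{\xaHen}\xaHn\bigr)=\Hmb(\lSh{\xbN})$ on $\dTLcpqtntn$, is exactly the paper's Theorem~\ref{th:cmdgdtl} (established there via the diagram\rx{eq:vbgcdg} using the splitting equivalence\rx{eq:bigeqm}), and your unfolding of the top path through Theorems~\ref{th:exsot}, \ref{th:exunsti}, \ref{th:cmptp} and the trace identity $M\otimes_{\xaHen}N=(M\otimes_{\xaHn}N)\otimes_{\xaHen}\xaHn$ reproduces what the paper encodes in its large commutative diagrams\rx{eq:fbigdg} and\rx{eq:bgcbdg}. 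Your explicit verification on the generators $\zsymcat{\xbet\tcmp\xald}$ just makes concrete a step the paper leaves implicit.
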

In other words, if a link  in $\Sot$ is presented as a $\Sot$
closure of a \ttngtntn\ $\xtau$, then its \sthm\
is
isomorphic to the homology of the composition
$\dtau\tcmp\rsPn$ closed within $\Sh$:
\xlee{eq:cltlst}
\Hstb(\xtau,\Sot)
= \Hmb\xlrb{\lSh{\dtau\tcmp\rsPn}}.
\xeee
The universal projective resolution complex $\rsPn$ is
approximated by categorification complexes of torus braids with
high twist. As a result, \ex{eq:cltlst} provides an effective
method of computing \sthm\ of links in $\Sot$ by approximating it
with Khovanov homology of their `torus braid closures' within
$\Sh$.

\subsection{Infinite \cbr\ as a projective resolution
of $\xaHn$}

\subsubsection{Torus braids yield a projective resolution of
$\xaHn$}


Let $\stA$ be a set of pairs of integer numbers confined within
a certain angle on a square lattice:
\xlee{eq:angset}
\stA = \{ (i,j)\in\ZZ^2\,|\,
i\geq 0,\;\;i\leq j\leq
2i\}
\xeee
and let $\stAs{l}{k}$ denote
the shifted set $\stA$:
\xlee{eq:angsh}
\stAs{l}{k} = \{(i,j)\,|\,(i-k,j-l)\in\stA\}.
\xeee
We also introduce a special notation
\xlee{eq:sn1}
\stAxtnm = \stAs{\shlf m\zt^2 + m\zt - \shlf\zt + \shlf n}{\shlf
m\zt^2}.
\xeee

A complex $\xbA\in\Ob\dTLptn$ of \ex{ae1.8a}
is called \emph{\otbl} if the multiplicities $\ajmilam$
are non-zero only when $(i,j)\in\stA$. Obviously, in this case the
complex is an object of $\dTLpqtn$.
A complex
$\xbA$ is called \emph{\tct} if the multiplicities $\ajmilam$ are non-zero only
when the tangles $\xlam$ are \tct.

A \emph{truncation} of a complex $\xbA$ is defined as follows:
$\xtrnvv{m}{\xbA} = (\xAv{-m}\rightarrow
\xAv{-m+1}\rightarrow\cdots)$.

Let $\xbA$ be an object of $\dTLptn$. We use a notation
$\xbAc$ for a particular complex with special properties, which represents a homotopy equivalence class of
$\xbA$.

In Section\rw{s:braid} we construct special categorification
complexes of \cbr s.
\begin{theorem}
\label{th:spm}
There exists a sequence of special complexes $\cobrmtnsh$, $m=1,2,\ldots$,
such that
\begin{enumerate}
\item
a \TL\ tangle $\xlam$ with a
\thrd\ $\ztl$ may appear in $\cobrmtnsh$
only within a shifted angle region:
\xlee{eq:1.22}
\ajmilam > 0\qquad \text{ only if $(i,j)\in\stAxtlnm$};
\xeee
\item
there is an isomorphism of truncated complexes
\xlee{eq:a.10a}
\xtrntmov{\cobrmotnsh}\cong\xtrntmov{\cobrmtnsh}.
\xeee
\end{enumerate}
\end{theorem}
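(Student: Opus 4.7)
The plan is an induction on $m$. For the base case, take $\cobrotnsh$ to be a Gaussian-reduced representative of the categorification complex $\mpcat(\gobrotn)$ in $\dTLtltn$, obtained by delooping and eliminating contractible summands in the Kauffman bracket cube of resolutions of the full-twist braid on $2n$ strands. A direct enumeration of the cube---tracking how each oriented resolution \eqref{ae1.7} changes the through-degree and the $q$-degree of the underlying Temperley--Lieb tangle---shows that constituents of through-degree $\ztl$ land in bidegrees belonging to $\stAxtlnm$ specialized at $m=1$. This verifies (1) at the base; property (2) is vacuous here.

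For the inductive step, assume $\cobrmtnsh$ has been constructed satisfying (1). Define $\cobrmotnsh$ to be a Gaussian-reduced form of $\cobrmtnsh\tcmp\cobrotnsh$. The preservation of property (1) rests on two observations. First, the angular set $\stA$ is subadditive under shifts: $\stAs{l_1}{k_1}+\stAs{l_2}{k_2}\subset\stAs{l_1+l_2}{k_1+k_2}$. Second, when one composes a through-degree $\ztl_1$ constituent of $\cobrmtnsh$ with a through-degree $\ztl_2$ constituent of $\cobrotnsh$, the resulting TL tangle has through-degree bounded by $\min(\ztl_1,\ztl_2)$, and the shift formula \eqref{eq:sn1} is calibrated so that the extra slack from any drop in through-degree exactly absorbs the additional $m\mapsto m+1$ shift. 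Property (2) then follows because any constituent of through-degree $\ztl<2n$ in $\cobrmotnsh$ must, by the explicit form of the angular region at level $m+1$, occur in homological degree strictly less than $-2m$, so it disappears after truncation. The unique through-degree-$2n$ constituent is the identity tangle; it appears in both $\cobrmtnsh$ and $\cobrmotnsh$ in the topmost homological slab, with the composition with $\cobrotnsh$ preserving its bidegree position up to a controlled shift absorbed into the stabilization. Inside $\xtrntmov{\xdummy}$ the two resulting complexes therefore agree constituent-by-constituent, yielding the required isomorphism \eqref{eq:a.10a}.

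The main obstacle is ensuring that the Gaussian eliminations used to pass from $\cobrmtnsh\tcmp\cobrotnsh$ to $\cobrmotnsh$ do not transport constituents outside their prescribed angular regions. To handle this, one notes that every differential in a categorification complex of a braid decomposes into elementary saddle cobordisms, each of which either preserves through-degree or reduces it by $2$; the new morphisms produced by cancelling contractible summands are therefore themselves compositions of such saddles. Combined with the subadditivity of $\stA$ and the bookkeeping of through-degree shifts under each elementary saddle, this forces the reductions to respect the angular constraints throughout the induction. Executing this bookkeeping in the category $\dTLtltn$ of Section~\ref{s:tlcat}, with special attention to the cancellation pairs created during the Gaussian reduction of products of full-twist complexes, constitutes the technical heart of the proof.
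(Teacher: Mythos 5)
Your proof of property (2) rests on a false picture of where the constituents sit. You assert that every constituent of through-degree $\ztl<2n$ in $\cobrmotnsh$ lies in homological degree below $-2m$ and hence vanishes after truncation, with only the identity tangle surviving at the top. The shift formula for the angular regions says the opposite: the homological displacement of the region attached to through-degree $\ztl$ is $\shlf m\ztl^2$, which \emph{grows} with $\ztl$. Thus the identity tangle ($\ztl=2n$) is pushed down to homological degrees $\leq -2mn^2$, far below the truncation, while the split tangles ($\ztl=0$) suffer no homological shift at all and populate every degree up to $0$. The truncation $\xtrntmov{\cobrmtnsh}$ is therefore a large complex of split tangles, and the content of property (2) is that this head is reproduced verbatim at level $m+1$. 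The mechanism is: property (1) at level $m$ forces $\shlf m\ztl^2\leq 2m-1$ for any constituent surviving the truncation, hence $\ztl=0$ by parity, so the head is a split complex; one then writes $\cobrmtnsh$ as a cone of its tail over this head and composes the two pieces with $\cobrotn$ separately, invoking quasi-triviality of the cable braid (Corollary~\ref{cr:qrtfiso}) to get $\cobrotn\tcmp\xtrntmov{\cobrmtnsh}\hteqv\xtrntmov{\cobrmtnsh}$ with the head literally unchanged. Your recipe --- Gaussian-reduce the whole composition $\cobrmtnsh\tcmp\cobrotnsh$ at once --- gives no control over whether the result agrees with $\cobrmtnsh$ in homological degrees $\geq -2m+1$; ``a controlled shift absorbed into the stabilization'' is not an argument.

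There are two further gaps. The base case is not settled by ``direct enumeration of the cube'' of the full twist on $2n$ strands: establishing the angular bound there requires its own induction on the number of strands (Theorem~\ref{th:stro}), which in turn needs Lemmas~\ref{lm:cpcp} and~\ref{lm:wrap}; in particular the isolated position of the identity object inside the wrap complex (second claim of Lemma~\ref{lm:wrap}) is exactly what keeps the through-degree-$(\ztl+1)$ constituents inside their region. Second, in the inductive step the delicate point for property (1) is not which saddles appear in the Gaussian-eliminated differentials --- Proposition~\ref{pr:trick} shows the morphisms are irrelevant for locating constituents --- but the failure of $q$-degree additivity under tangle composition caused by circle removal. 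This is what the cup-sliding trick of subsection~\ref{ss:trick} is for: it puts each composition into a circle-free cup--braid--cap normal form before degrees are added. Your subadditivity observation for $\stA$ is the right arithmetic, but it cannot be applied until that normal form is in place.
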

According to \ex{eq:a.10a}, braid complexes
$\xtrntmov{\cobrmtnsh}$ `stabilize' and it turns out that their
`stable limit' is the universal resolution $\rsPn$.
\begin{theorem}
\label{th:complex}
There exists a particular \otbl\ complex $\rsPnc\in\Ob\dTLpqtn$ representing the
homotopy equivalence class of $\rsPn$, such that the following
truncated complexes are isomorphic:
\xlee{eq:1.23}
\xtrntmov{\cobrmtnsh} \cong \xtrntmov{\rsPnc}.
\xeee
\end{theorem}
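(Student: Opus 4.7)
My plan is to construct $\rsPnc$ as the stable limit of the torus-braid complexes $\{\cobrmtnsh\}_{m\geq 1}$ produced by Theorem~\rw{th:spm}, and then identify it with $\rsPn$ via the characterization of projective resolutions. The compatibility~\ex{eq:a.10a} says that $\xtrntmov{\cobrmtnsh}$ sits inside $\xtrntmov{\cobrmotnsh}$ as a canonical initial segment, so the family glues into a single bounded-from-above complex $\rsPnc\in\dTLpqtn$. The identity~\ex{eq:1.23} is then automatic, and the \otbl\ support condition~\ex{eq:1.22} is preserved by the gluing.

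Next I would verify that $\rsPnc$ actually lands in the full subcategory of $\dTLpqtn$ consisting of \tct\ complexes, i.e.\ the preimage under the equivalence~\ex{eq:bigeqm} of the category of \sprj\ projective $\xaHne$-modules. The angle condition~\ex{eq:1.22} first yields $q$-boundedness, since the $q$-degree of any summand at homological level $i$ grows at least linearly with $|i|$. More delicately, for a \TL\ tangle of positive through-degree $\ztl>0$ to appear at a fixed homological level $i$ in $\cobrmtnsh$ one needs $i\geq\tfrac{1}{2}m\,\ztl^{2}$; since $i$ is fixed and $m$ can be taken arbitrarily large inside the stable limit, no such tangle survives in $\rsPnc$. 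Hence every summand of $\rsPnc$ is \tct, and by Theorem~\rw{th:splprj} the image $\spfK(\rsPnc)$ is a bounded-above, $q$-bounded complex of \sprj\ projective $\xaHne$-modules.

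To complete the identification I would show that $\spfK(\rsPnc)$ is a projective resolution of the $\xaHne$-module $\xaHn$; the uniqueness (up to homotopy) of such resolutions in the bounded-above, $q$-bounded homotopy category, combined with the equivalence~\ex{eq:bigeqm}, will then force an isomorphism $\rsPnc\cong\rsPn$ in $\dTLpqtn$. Projectivity is already in hand, so the remaining task is a quasi-isomorphism $\spfK(\rsPnc)\qie\xaHn$. The augmentation map is supplied by the homological degree-$0$ summand: the identity $(2n,2n)$-tangle sitting at the top of each $\cobrmtnsh$ is taken by $\spfK$ to $\xaHn$ itself, by~\ex{eq:1.11a}.

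The main obstacle is the acyclicity of this augmented complex. My strategy is two-stage. For each finite $m$ I would prove that the augmented complex $\xaHn\leftarrow\spfK(\cobrmtnsh)$ is exact in a range of homological degrees growing with $m$; this is a categorical lift of the decategorified identity $\jwpxtnz\tcmp\symalg{\cobrmtnsh}=\jwpxtnz$ underlying Theorem~\rw{th:spm}, and should be obtainable either from an explicit contracting homotopy built from the torus-braid structure, or inductively by comparing $\cobrmotnsh$ to $\cobrmtnsh$ through the link between one full twist and the next. The stabilization~\ex{eq:a.10a} then transfers exactness to $\rsPnc$, because any bounded window of its homological degrees coincides with that of some $\cobrmtnsh$. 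The delicate point is the first stage: one must control the `error' between $\spfK(\cobrmtnsh)$ and $\xaHn$ and show that it is supported in homological degrees tending to $-\infty$ with $m$, which is precisely what the shifted angle condition~\ex{eq:1.22} is designed to quantify.
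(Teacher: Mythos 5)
Your overall strategy is the paper's: glue the stabilized truncations\rx{eq:a.10a} into a single bounded-from-above complex, use the angle condition\rx{eq:1.22} to conclude that in any fixed homological degree only \tct\ tangles survive (so the limit is \otbl\ and \tct), observe that $\spfK$ then lands in projective $\xaHne$-modules, and finally identify the result with $\rsPn$ by the uniqueness of projective resolutions once one knows that the homology of $\spfK(\rsPnc)$ is $\xaHn$ concentrated in degree zero. Up to that last point your argument agrees with the paper's proof.

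The gap is exactly the step you flag as ``the main obstacle.'' You propose to establish the acyclicity of the augmented complex either by an explicit contracting homotopy built from the torus-braid structure or by an induction comparing $\cobrmotnsh$ with $\cobrmtnsh$; neither route is carried out, and either one would in effect re-derive a result the paper already has. The paper closes this step by citing Theorem\rw{th:bghmeq}: the homotopy equivalence $\rsPn\tcmp\Zobrotn\hteqv\rsPn$ gives the \qim\ $\Dobrotn\qie\xunt$ of\rx{eq:thriso}; since $\mpDcat$ converts tangle composition into tensor product, this yields $\Dobrmtn\qie\xunt$ for every $m$, i.e.\ $\Kobrmtn$ has homology equal to $\xaHn$ concentrated in degree $0$; and the truncation isomorphisms\rx{eq:1.23} transfer this statement degree by degree to $\spfK(\rsPnc)$. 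Without this input (or an equivalent substitute) your proof is incomplete, and any inductive comparison of successive twists would still need the base case that one full twist acts as the identity on $\DbgHen$, which is precisely what Theorem\rw{th:bghmeq} supplies. A secondary error: the augmentation is not ``supplied by the identity $(2n,2n)$-tangle sitting at the top of each $\cobrmtnsh$.'' By\rx{eq:1.22} the identity tangle, having \thrd\ $2n$, occurs in $\cobrmtnsh$ only in homological degrees that tend to $-\infty$ with $m$, and it does not occur in $\rsPnc$ at all (indeed $\rsPnc$ is \tct). The augmentation $\spfK(\rsPnc)\rightarrow\xaHn$ is the projection onto the degree-zero homology furnished by the \qim\ above, not an identity-tangle summand.
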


A combination of this theorem with formula\rx{eq:cltlst} leads to
a practical method of computing homology of a link in $\Sot$
presented as a closure of a \ttngtntn\ $\xtau$:
\begin{theorem}
\label{th:brappr}
For a \ttngtntn\ $\xtau$
there is a canonical isomorphism of
homologies
\xlee{eq:1.24}
\Hsti(\xtau,\Sot)= \HKhv{i}\xlrB{
\lSh{\xtau\tcmp\gbrmtn\,}}
\qquad
\xeee
for $i\geq \uhdv{\dtau}-2m+2$.
\end{theorem}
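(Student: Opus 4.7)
The plan is to combine Theorem \ref{th:hhclsh} with Theorem \ref{th:complex} and propagate the truncation isomorphism of the latter through composition with $\dtau$ and spherical closure. By Theorem \ref{th:hhclsh}, $\Hsti(\xtau,\Sot) = H^i\xlrB{\lSh{\dtau\tcmp\rsPn}}$. I would fix the particular representative $\rsPnc$ of $\rsPn$ furnished by Theorem \ref{th:complex}, and note that, since $\xtau\tcmp\gbrmtn$ is an ordinary framed tangle, its Khovanov homology equals $H^i\xlrB{\lSh{\dtau\tcmp\cobrmtnsh}}$, where $\cobrmtnsh$ is the particular torus-braid complex produced by Theorem \ref{th:spm}. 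The theorem thereby reduces to showing that $\lSh{\dtau\tcmp\rsPnc}$ and $\lSh{\dtau\tcmp\cobrmtnsh}$ have canonically isomorphic $H^i$ in the stated range.

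Theorem \ref{th:complex} supplies an isomorphism $\xtrntmov{\rsPnc}\cong\xtrntmov{\cobrmtnsh}$, so $\rsPnc$ and $\cobrmtnsh$ have identical chain modules and differentials in homological degrees $\geq -2m$. I would propagate this agreement as follows. A chain of homological degree $k$ in $\dtau\tcmp\rsPnc$ is a direct sum of tensor factors $(\dtau)^j\tcmp(\rsPnc)^{k-j}$ with $j$ in the finite support of $\dtau$, bounded above by $\uhdv{\dtau}$. All right-hand factors appearing in such a sum lie in the shared truncation range as soon as $k-\uhdv{\dtau}\geq -2m$, i.e., $k\geq\uhdv{\dtau}-2m$. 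The spherical closure functor $\lSh{-}$ is additive and acts degreewise on complexes of $\xaHn$-bimodules, so it preserves this bookkeeping, and $\lSh{\dtau\tcmp\rsPnc}$ agrees with $\lSh{\dtau\tcmp\cobrmtnsh}$ chain module by chain module and as differentials in homological degrees $\geq\uhdv{\dtau}-2m$.

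The main obstacle will be nailing down the exact additive constant $+2$ in the bound on $i$. Computing $H^i$ requires that the chain modules and differentials in degrees $i-1$, $i$, $i+1$ coincide, which yields $i\geq\uhdv{\dtau}-2m+1$. One additional degree of slack is needed to rule out any contribution to the cycles or boundaries in degree $\uhdv{\dtau}-2m$ coming from the differential out of degree $\uhdv{\dtau}-2m-1$, which is present in the full complex $\dtau\tcmp\cobrmtnsh$ but discarded by $\xtrntmov{\cdot}$; this buffer yields the stated bound $i\geq\uhdv{\dtau}-2m+2$. Once the homological accounting is settled, naturality is automatic: both complexes being compared are obtained by applying the canonical functors of the diagrams of Theorems \ref{th:hhclsh} and \ref{th:complex} to the same tangle $\xtau$, so the resulting isomorphism of homologies is canonical.
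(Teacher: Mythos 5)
Your proposal is correct and follows essentially the same route as the paper's proof: reduce via Theorem \ref{th:hhclsh} to comparing the $\Sh$-closures of $\dtau\tcmp\rsPnc$ and $\dtau\tcmp\cobrmtnsh$, and deduce the isomorphism of homologies from the chain-level agreement of the truncated complexes supplied by Theorem \ref{th:complex}. The only slip is in the bookkeeping: the truncation retains homological degrees $\geq -2m+1$ rather than $\geq -2m$, so the chain modules and differentials of the two closed complexes already agree in degrees $\geq \uhdv{\dtau}-2m+1$, and the standard observation that homology agrees one degree inside the range of chain agreement yields $i\geq\uhdv{\dtau}-2m+2$ directly, without the extra ``buffer'' degree you invoke.
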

Note that
since the complexes $\rsPnc$ and $\cobrmtnsh$ are trivial in
positive homological degrees, both sides of \ex{eq:1.24} are trivial for $i>\uhdv{\dtau}$.

\subsection{A conjecture about the structure of the \Hhom\ and
cohomology of the algebra $\xaHn$}

\Hhom\ and cohomology of $\xaHn$ is related to the homology of the
circular closure of the trivial braid within $\Sot$. Indeed,
combining \ex{eq:1.11a} and the definition of the categorification
map $\Hstbd\colon\yLnkSot\rightarrow\QgmodQ$ via Theorem\rw{th:exsot} we get
the isomorphism
\xlee{eq:1.49b}
\xHHl(\xaHn) =\Hstb\xlrB{\,\lSot{\gidbrtn}}.
\xeee

The algebra $\xaHn$ has a Frobenius trace with $q$-degree equal to $-2n$, hence its \Hchom\ is
dual to its \Hhom\ up to a degree shift:
\xlee{eq:1.52a}
\xHHu(\xaHn) = \dsmdv{\xHHl(\xaHn)}\qshv{2n}.
\xeee
This relation can be proved also with the help of universal categorification:
%
%
\begin{equation}
\nonumber
\begin{split}
\xHHu(\xaHn) &=
\Hmb\xlrb{\Hom_{\xaHne}(\spP(\xaHn),\xaHn)}
=\Hmb\xlrb{ \dsmdv{\spP(\xaHn)}\otimes_{\xaHne} \xaHn }
=\Hmb\xlrb{ \dsymv{\spP(\xaHn)}\otimes_{\xaHne} \xaHn }\qshv{2n}
\\
&
=\Hmb\xlrb{\spfK(\dsymv{\rsPn})\otimes_{\xaHne}\xaHn }\qshv{2n}
= \Hmb\xlrb{\lSh{\dsymv{\rsPn}}}\qshv{2n}
=\dsmdv{\Hmb\xlrb{\lSh{\rsPn}}}\qshv{2n}
\\
&
=\dsmdv{\Hmb\xlrb{ \spP(\xaHn)\otimes_{\xaHne} \xaHn }}\qshv{2n}
=\dsmdv{\xHHl(\xaHn)}\qshv{2n}.
\end{split}
\end{equation}
Here the first and last equalities are the definitions\rx{eq:diagsHH} of the
Hochschild homology and cohomology, the third equality comes from
the second of equations\rx{eq:fundefbim}, while the seventh and fifth
equalities come from commutative triangles
\ylee{eq:cmtrigs}
\xymatrix@C=2.5cm{
\dTLcptn
\ar[d]_-{\spfK}
\ar[dr]^-{(\lSh{-})}
\\
\xhKmp(\xaHne)
\ar[r]_-{\Hmb(\xdummy\otimes_{\xaHne}\xaHn)}
&\Qgmodp
}
\qquad\qquad
\xymatrix@C=2.5cm{
\dTLcqtn
\ar[d]_-{\spfK}
\ar[dr]^-{(\lSh{-})}
\\
\xhKmq(\xaHne)
\ar[r]_-{\Hmb(\xdummy\otimes_{\xaHne}\xaHn)}
&\Qgmodq
}
\yeee
The first of these triangles is a part of the commutative
diagram\rx{eq:vbgcdg} (the $q^+$-boundedness condition plays no
role there), while the second diagram is its analog for complexes
bounded from below.

Let $\Tbrnnm$ denote the torus link which can be presented as $n$-cabling of the
unknot with framing number $m$: $\Tbrnnm = \xlrB{\lSh{\gbrmtn}}$,
and let $\Tbrnmnm=\dsymv{\Tbrnnm}$ denote its mirror image. Note
that the links are framed and each of $2n$ components of the link
$\Tbrnnm$ has a self-linking number $m$.
Combining Theorem\rw{th:brappr} with \eex{eq:1.49b} and\rx{eq:1.52a}
we arrive at the following theorem:
%
\begin{theorem}
There are isomorphisms
\ylee{eq:1.53}
\xHHmi(\xaHn) = \HKh_{-i}(\Tbrnnm),\qquad
\xHHui(\xaHn) = \HKh_{i}(\Tbrnmnm)\qshtn
\yeee
for $i\leq 2m-2$
\end{theorem}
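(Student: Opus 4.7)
The plan is to specialize Theorem~\rw{th:brappr} (the torus braid approximation) to the identity tangle $\xtau = \gidbrtn$, and to combine the resulting statement with the two identifications already established: $\xHHl(\xaHn) = \Hstb\xlrB{\lSot{\gidbrtn}}$ from \ex{eq:1.49b}, and the Frobenius duality $\xHHu(\xaHn) = \dsmdv{\xHHl(\xaHn)}\qshv{2n}$ from \ex{eq:1.52a}.

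First I would verify the hypothesis of Theorem~\rw{th:brappr} for $\xtau = \gidbrtn$. Since the identity braid has no crossings, its Bar-Natan complex $\dgidbrtn$ is concentrated in homological degree zero, so $\uhdv{\dgidbrtn} = 0$. Theorem~\rw{th:brappr} then yields a canonical isomorphism
$$
\Hsti(\gidbrtn,\Sot) \;\cong\; \HKhv{i}\xlrB{\lSh{\gidbrtn \tcmp \gbrmtn}\,}
$$
in the range $i \geq 2 - 2m$, which, after passing to the homological indexing $\HKh_{-i} = \HKhv{i}$ used in the statement, is precisely the bound $-i \leq 2m-2$.

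Second I would identify both sides with the objects appearing in the theorem. The $\Sh$-closure $\lSh{\gidbrtn \tcmp \gbrmtn} = \lSh{\gbrmtn}$ is, by the definition of $\gbrmtn$ as the $m$-fold full twist on $2n$ strands, the torus link $\Tbrnnm$. Combined with \ex{eq:1.49b} this gives the first asserted isomorphism $\xHHmi(\xaHn) = \HKh_{-i}(\Tbrnnm)$ in the correct range. For the second isomorphism I would apply the Frobenius duality \ex{eq:1.52a} to the first one, producing
$$
\xHHui(\xaHn) \;=\; \dsmdv{\HKh_{-i}(\Tbrnnm)}\qshv{2n};
$$
the functor $\dsym$ as defined in subsection~\rw{sss.dfun} sends Khovanov homology of a link to that of its mirror up to the tabulated degree shift, and $\Tbrnmnm = \dsymv{\Tbrnnm}$ by definition, so $\dsmdv{\HKh_{-i}(\Tbrnnm)} = \HKh_{i}(\Tbrnmnm)$. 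The leftover $q$-shift matches the $\qshtn$ in the statement.

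The main obstacle is bookkeeping of the three separate grading conventions intervening here: the homological index on Hochschild (co)homology, the chain-versus-cochain convention for Khovanov homology ($\HKh_{-i}$ versus $\HKhv{i}$), and the internal $q$-degree shift by $2n$ coming from the fact that the Frobenius trace on $\xaHn$ has degree $-2n$. All the substantive categorical input — the existence and stability of the approximations $\cobrmtnsh \to \rsPn$, the equivalence $\spfK$, and the identification of Hochschild homology with the $\Sot$-closure invariant — is supplied by the earlier theorems, so no new construction is required.
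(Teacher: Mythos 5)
Your proposal is correct and follows exactly the route the paper takes: the theorem is stated there as a direct combination of Theorem\rw{th:brappr} (specialized to $\xtau=\gidbrtn$, where $\uhdv{\dtau}=0$) with \ex{eq:1.49b} and the Frobenius duality \ex{eq:1.52a}. Your fleshing out of the degree bookkeeping, including the identification $\lSh{\gbrmtn}=\Tbrnnm$ and the passage to the mirror link via $\dsym$, matches the intended argument.
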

These isomorphisms were first observed by Jozef Przytycki\cx{PrzH} in
case of $n=1$.

The homologies of torus links can be evaluated for sufficiently
small $n$ and $m$ with the help of computer programs. This
experimental data led us to a conjecture regarding the
structure of the \Hchom\ of $\xaHn$ as a commutative algebra.

Consider the following lists of variables:
\def\bfx{\mathbf{x}}
\def\bfa{ \mathbf{a} }
\def\bfth{ \boldsymbol{\theta} }
\def\Irel{ I_{\mathrm{rel} } }
\begin{flalign}
\nonumber
\bfx &= x_1,\ldots,x_{2n} & \dgt x_i &=2 & \dgo x_i &= 0
\\
\nonumber
\bfa &= a_1,\ldots, a_n & \dgt a_i & = -2i-2 & \dgo a_i &= 2i
\\
\nonumber
\bfth &= \theta_1,\ldots,\theta_n & \dgt \theta_i &= -2i+2 & \dgo\theta_i &=
2i-1
\end{flalign}
%
The variables $\bfth$ have odd homological degree,  hence they anti-commute:
\begin{equation}
\nonumber
\theta_i\theta_j = - \theta_j\theta_i,\qquad \theta_i^2=0.
\end{equation}
\begin{conjecture}
The \Hchom\ of the algebra $\xaHn$ has the following graded commutative algebra structure:
\ylee{eq:1.54}
\xHHu(\xaHn) = \IQ[\bfx,\bfa,\bfth]/\Irel,
\yeee
where $\Irel$ is the ideal generated by relations
\begin{gather}
\nonumber
x_1^2 =\cdots = x_{2n}^2 = 0,\qquad x_1+\cdots + x_{2n} = 0,
\\
\nonumber
a_i p_i(\bfx) = \theta_i p_i(\bfx) = 0\qquad \text{for
$\forall p_i\in\IQ[\bfx]$ such that $\deg_{\bfx}p_i(\bfx) = i$},
\end{gather}
where in the second relation $p_i\in\IQ[\bfx]$ is any polynomial of
homogeneous $\bfx$-degree $i$.
\end{conjecture}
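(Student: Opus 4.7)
The plan is to reduce the conjecture to Khovanov homology of mirror torus links via Theorem\rw{th:brappr}, then identify both sides as explicit graded commutative algebras. The stabilization $\xHHui(\xaHn)=\HKh_i(\Tbrnmnm)\qshtn$ valid for $i\leq 2m-2$ captures Hochschild cohomology in the limit $m\to\infty$ as Khovanov homology of the $2n$-cable of the unknot with framing $-m$. In this picture one expects chain-level representatives of the conjectured generators with clean geometric descriptions: the $x_i$ come from dot cobordisms on the $i$-th cable strand, the relation $\sum_i x_i=0$ from the Frobenius counit acting globally on the cable, and the higher classes $a_i$ and $\theta_i$ from successive layers of the truncated universal resolution $\rsPnc$ produced by Theorem\rw{th:complex}.

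First, I would match the graded Euler characteristics. The right-hand side of\rx{eq:1.54} is a Koszul-type quotient whose bigraded Poincar\'e series has a closed form, while on the other side $\chq\xHHl(\xaHn)$ equals the stable Kauffman bracket $\xbrb{\lSh{\jwpxtnz\tcmp\gobron}}$. Matching these polynomials provides the numerical skeleton of the isomorphism and is a necessary consistency check. Next I would exhibit each generator as an explicit cocycle: the even $x_i$ together with the relations $x_i^2=0$ and $\sum_i x_i=0$ follow from the TQFT structure\rx{ae1.6} and from the Frobenius trace on $\xaHn$, while the odd $\theta_i$ and even $a_i$ should be read off from the layers of the truncated torus braid complex $\cobrmtnsh$ under the stabilization\rx{eq:1.23}, with bidegrees forced by the degree shifts built into that complex.

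The third and most substantial step is to show that these generators and relations exhaust the whole ring. My strategy would be to construct a small commutative dg-model for $\xaHn$ as a module over its enveloping algebra, in the spirit of Koszul duality for Frobenius algebras. The vanishing conditions $a_i p_i(\bfx)=0$ and $\theta_i p_i(\bfx)=0$ for $p_i$ of $\bfx$-degree $i$ should then emerge as syzygies forced by the relation $\sum_i x_i=0$ and its higher analogs, that is, as differentials in the Koszul complex associated to the regular sequence $x_1,\ldots,x_{2n}$ modulo this linear relation. Comparing this dg-model against the explicit form of $\rsPnc$ through\rx{eq:1.23} should then yield the ring isomorphism.

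The principal obstacle lies in this last step. Although Theorem\rw{th:complex} supplies a computable candidate for $\rsPn$, controlling the multiplicative structure on its endomorphism algebra is not automatic: that structure is invisible on the Khovanov side but essential on the Hochschild side. The natural remedy is an independent geometric input, such as a realization of $\xaHn$ via equivariant cohomology of two-row Springer fibers or as a convolution algebra on a nilpotent slice, whose cohomology ring is known and can be shown to coincide with the right-hand side of\rx{eq:1.54}. Absent such an input one would be reduced to extending the computer calculations that produced the conjecture in the first place, which seems unlikely to suffice in full generality.
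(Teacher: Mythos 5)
You should first be aware that the paper does not prove this statement at all: it is stated, and intended, as a conjecture, arrived at from computer computations of the Khovanov homology of the torus links $\Tbrnmnm$ for small $n$ and $m$ via the stabilization isomorphisms $\xHHui(\xaHn) = \HKh_{i}(\Tbrnmnm)\qshtn$ of the preceding theorem. So there is no proof in the paper to compare yours against; the only relevant question is whether your proposal actually establishes the conjecture. It does not.

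The genuine gap is the one you flag yourself in your final paragraph, and it is fatal to the proposal as a proof rather than a research plan. The stabilization isomorphism with Khovanov homology is an isomorphism of bigraded vector spaces only; nothing in the paper, and nothing in your argument, endows the Khovanov homology of a torus link with a ring structure compatible with the Yoneda product on $\xHHu(\xaHn)=\Ext_{\xaHne}(\xaHn,\xaHn)$. Consequently your first two steps, matching Poincar\'e series and exhibiting candidate cocycles for the classes $x_i$, $a_i$, $\theta_i$, can at best confirm additive consistency, which is exactly what the experimental data behind the conjecture already provides; they say nothing about the relations $x_i^2=0$, $\sum_i x_i=0$, or $a_ip_i(\bfx)=\theta_ip_i(\bfx)=0$ as multiplicative identities. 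Your third step, where the actual content would have to live, is programmatic rather than demonstrative: ``construct a small commutative dg-model for $\xaHn$ over its enveloping algebra'' and ``compare against $\rsPnc$'' restate the problem, since the multiplicative structure on the endomorphisms of the resolution is precisely what such a model would have to encode, and you give no construction of it. The appeal to an equivariant-cohomology or convolution-algebra realization is an unproven external hypothesis. In short, the proposal correctly diagnoses why the statement is hard, but it does not prove it, and the paper does not claim to either.
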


\section{Proofs of \TQFT\ properties}
\label{s:algpro}

\subsection{A derived category of $\xaHdnm$-modules}
\subsubsection{A special \Zgrdd\ algebra}

Consider an \xring\ $\aR$ which has some special properties
shared by all \xring s $\xaHdnm$.

A finite-dimensional non-negatively \Zgrdd\ \xring\ $\aR =
\bigoplus_{j\geq 0}\aRrj$ with an involution $\dflp\colon\aR\rightarrow\aRop$ is called \emph{\xcnv} if its zero-degree subalgebra
$\aRrz$ is generated by a finite number of mutually orthogonal
idempotents $\ideo,\ldots,\ideN$:
\ylee{eq:idemdef}
\idea\ideb =
\begin{cases}
\idea, & \text{if $\idia=\idib$,}
\\
0, & \text{if $\idia\neq\idib$,}
\end{cases}
\qquad
\idea+\cdots+\ideN = \xIdv{\aR},
\yeee
where $\xIdv{\aR}$ is the unit of $\aR$.
%
A \xcnv\ \xring\ $\aR$ has $\xlN$ indecomposable projective
modules $\idPa = \aR\idea$ and $\xlN$ one-dimensional \irred\
modules $\idSa = (\aR/\aRrp) \idea$, where $\aRrp = \bigoplus_{j\geq
1}\aRrj$.

\begin{theorem}[\cxw{Kh2}]
\label{th:khmod}
The modules $\idPa$ and $\idSa$, $\idia=1,\ldots,\xlN$ form a
complete list of indecomposable projective and, respectively,
\irred\ $\aR$-modules. The elements $\Kz(\idSa)$ generate freely
$\Kz(\DbmdR)$.
\end{theorem}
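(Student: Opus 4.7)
The plan is to exploit three facts about $\aR$: its nonnegative $\ZZ$-grading combined with finite dimension, the semisimple structure of $\aRrz$, and the nilpotence of $\aRrp$. First I will establish that $\aRrp = \bigoplus_{j\geq 1}\aRrj$ is nilpotent: since $\aR$ is finite-dimensional, only finitely many graded pieces $\aRrj$ are non-zero, so $\aRrp^{M+1}=0$ for $M = \max\{j\colon\aRrj\neq 0\}$. Because $\aRrp$ is a two-sided nilpotent ideal, it lies in the Jacobson radical of $\aR$, and moreover $\aR/\aRrp \cong \aRrz$ is the direct product of $\xlN$ copies of the base field (since $\aRrz$ is generated by mutually orthogonal idempotents summing to $1$). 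This identifies $\aRrp$ with $\Jac(\aR)$ and gives $\aR$ the structure of a basic, split, finite-dimensional algebra.

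For the projective side, I would write $\aR = \bigoplus_{a=1}^{\xlN}\idPa$ as a direct sum of left ideals, so each $\idPa$ is projective. To see $\idPa$ is indecomposable, I would compute $\End_{\aR}(\idPa)\cong\idea\aR\idea$, note that this ring is graded with $\idea\aR\idea = \IQ\cdot\idea \oplus \idea\aRrp\idea$, and observe that the second summand is nilpotent. Hence $\End_{\aR}(\idPa)$ is local, which forces indecomposability. For completeness, given any indecomposable projective $P$, Krull–Schmidt (valid since $\End(P)$ is local for finite-dimensional $\aR$-modules) together with the fact that $P$ is a direct summand of some $\aR^n$ forces $P\cong\idPa$ for some $a$, up to a $\ZZ$-grading shift.

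For the irreducible side, each $\idSa=(\aR/\aRrp)\idea$ is one-dimensional over the base field and is therefore simple. Conversely, if $S$ is any simple $\aR$-module, then $\aRrp S = 0$ because $\aRrp\subseteq\Jac(\aR)$, so $S$ is a simple module over $\aR/\aRrp\cong\bigoplus_{a}\IQ\idea$, hence $S\cong\idSa$ for a unique $a$ (again up to $\ZZ$-shift). This yields the claimed complete list.

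Finally, for the Grothendieck group assertion, I would produce a perfect pairing
\[
\Kz(\DbmdR)\otimes_{\Zqqi}\Gz(\cvxpmod{\aR})\longrightarrow\Zqqi,\qquad
([M],[P])\longmapsto\sum_{i,j}(-1)^i q^j\dim_{\IQ}\Hom_{\DbmdR}(P,M[i]\qshj),
\]
and compute that $\Hom_{\aR}(\idPb,\idSa\qshj[i])$ vanishes unless $i=j=0$ and $a=b$, in which case it is one-dimensional. Since every object of $\DbmdR$ is represented by a bounded complex of finitely generated $\aR$-modules, and every finitely generated module has a finite composition series with factors among the $\idSa\qshj$, the classes $\Kz(\idSa)\qshj$ span $\Kz(\DbmdR)$ over $\ZZ$; the pairing with the $\idPb$ provides dual coordinate functionals, so they are free generators over $\Zqqi$. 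The main technical point will be verifying that Krull–Schmidt and the composition-series argument respect the $\ZZ$-grading (which is automatic because the idempotents $\idea$ and the ideal $\aRrp$ are homogeneous), ensuring that indecomposables and simples come with well-defined degree shifts.
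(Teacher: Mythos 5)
Your proof is correct. The paper does not actually prove this statement — it is quoted from \cite{Kh2} — and your argument is precisely the standard one used there and implicitly relied upon throughout Section 6 of the paper: $\aRrp$ is a nilpotent ideal equal to the Jacobson radical, $\aR/\aRrp$ splits as $\xlN$ copies of the ground field, local endomorphism rings give indecomposability and Krull--Schmidt gives completeness, and the Euler-characteristic pairing $\Hom(\idPb,\idSa)=\delta_{ab}\IQ$ (the same pairing the paper invokes in its equation\rx{eq:dimhoms}) gives freeness of the classes $\Kz(\idSa)$ in $\Kz(\DbmdR)$. The only points worth making explicit in a final write-up are that the lists are complete up to isomorphism and $\ZZ$-grading shift, and that $\DbmdR$ is taken over finitely generated graded modules so that the composition-series argument applies.
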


\begin{theorem}
Each $\aR$-module $M$ has a resolution
$\spP(M) = (\cdots\rightarrow\xAo\rightarrow\xAz)$ such that
$\yordq{\xAv{-i}}\geq i$.
\end{theorem}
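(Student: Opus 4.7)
The plan is to build a minimal graded projective resolution of $M$ step by step, exploiting the fact that the augmentation ideal $\aRrp$ strictly raises $q$-degree, so that successive syzygies have ever larger $q$-order. After a $q$-shift reducing to the case $\yordq{M}\geq 0$, the target inequality $\yordq{\xAv{-i}}\geq i$ will follow by induction.

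First I would form the quotient $\overline{M}=M/\aRrp M$, a graded $\aRrz$-module. Because $\aRrz$ is a product of copies of the base field via the orthogonal idempotents $\ideo,\ldots,\ideN$, this quotient decomposes canonically as $\overline{M}=\bigoplus_{a,j}(\idSa\qshv{j})^{\oplus m_{a,j}}$ with $j\geq 0$. Choosing a homogeneous lift in $M$ of a basis of $\overline{M}$ produces a map $\pi_0\colon A_0\to M$, where $A_0=\bigoplus_{a,j}(\idPa\qshv{j})^{\oplus m_{a,j}}$. Surjectivity of $\pi_0$ follows from the graded Nakayama lemma applied to the non-negatively graded ring $\aR$ and the bounded-below module $M$, and by construction $\yordq{A_0}=\yordq{\overline{M}}\geq 0$.

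Next I would analyze the kernel $K_0=\ker\pi_0$. The induced map $A_0/\aRrp A_0\to\overline{M}$ is an isomorphism by the very choice of $A_0$, so any element of $A_0$ killed by $\pi_0$ must already lie in $\aRrp A_0$, i.e.\ $K_0\subseteq\aRrp A_0$. Because $\aRrp$ is concentrated in $q$-degrees $\geq 1$, this yields $\yordq{K_0}\geq \yordq{A_0}+1\geq 1$. Iterating the construction with $K_0$ in place of $M$ produces $A_{-1}\twoheadrightarrow K_0$ with $\yordq{\xAv{-1}}\geq 1$, and by induction a surjection $A_{-i}\twoheadrightarrow K_{-i+1}$ with $\yordq{\xAv{-i}}\geq i$; assembling these maps gives the desired resolution $\spP(M)=(\cdots\to\xAv{-1}\to\xAz)$.

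The only substantive technical point is the graded Nakayama lemma --- that $M=\aRrp M$ forces $M=0$ for a bounded-below graded $\aR$-module --- which is standard once one recalls that $\aRrp$ lives in $q$-degrees $\geq 1$. Everything else (the decomposition of $\overline{M}$ from Theorem~\ref{th:khmod} and the containment $K_0\subseteq\aRrp A_0$) is a direct consequence of the \xcnv\ structure of $\aR$, so no serious obstacle arises: the work is entirely in setting up the inductive step so that the $q$-degree bookkeeping delivers exactly $\yordq{\xAv{-i}}\geq i$.
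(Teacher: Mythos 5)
Your argument is correct, and it rests on exactly the same structural facts as the paper's proof: $\aR$ is non-negatively graded, $\aRrz$ is semisimple (spanned by the orthogonal idempotents), and therefore a \emph{minimal} resolution has differentials that strictly raise the $q$-degree. The difference is in how minimality is reached. The paper starts from an arbitrary projective resolution and performs Gaussian elimination: using the computation $\Hom_{<0}(\idPa,\idPb)=0$ and $\Hom_0(\idPa,\idPb)=\IQ\,\delta_{\idia\idib}$ (generated by the identity), it contracts every degree-zero component of the differential, and then argues that any surviving summand with no outgoing map would create homology in negative homological degree unless the $q$-order climbs. You instead build the minimal resolution from scratch: lift a basis of $M/\aRrp M$ to get $\xAz\twoheadrightarrow M$ via graded Nakayama, observe that the syzygy module sits inside $\aRrp\xAz$ and hence has $q$-order at least one higher, and iterate. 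Your route is somewhat more self-contained and makes the strict increment $\yordq{\xAv{-i-1}}\geq\yordq{\xAv{-i}}+1$ explicit (the paper's closing inequality is written non-strictly, though strictness is clearly what is meant and needed); the paper's route has the advantage of showing that \emph{any} resolution can be homotoped to one with the stated property. Your remark that one must first normalize to $\yordq{M}\geq 0$ is also the right reading of the implicit convention in the statement.
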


\begin{proof}
Let $\Hom_j(\idPa,\idPb)$ denote the \qdgr\ $j$ part of
$\Hom(\idPa,\idPb)$. Then it is easy to see that
\xlee{eq:homzdg}
\Hom_{<0}(\idPa,\idPb)=0,\qquad
\Hom_0(\idPa,\idPb) =
\begin{cases}
\IQ,& \text{if $\idia=\idib$,}
\\
0, & \text{if $\idia\neq\idib$.}
\end{cases}
\xeee
and the generator of $\Hom_0(\idPa,\idPa)$ is the identity
homomorphism.

Consider a resolution of the $\aR$ module $M$. If a constituent projective module of the
resolution complex has a non-trivial zero-degree homomorphism
coming from it to another module $\idPa$, then this pair can be
contracted. After these contractions, if a constituent projective module does not
have a homomorphism originating from it, then it will contribute
to homology, because its zero-degree part can not be annihilated
by incoming homomorphisms. Since the homology of a resolution must
be concentrated in the zero homological degree, it follows that
$\yordq{\xAv{-i-1}}\geq\yordq{\xAv{-i}}$.
\end{proof}

\begin{corollary}
\label{cr:rprthrpq}
The projective resolution functor
$\spP\colon\DbmdR\rightarrow\xhKmpR$ threads through the
subcategory of \qpb\ complexes:
\ylee{eq:prresthrd}
\xymatrix{
\DbmdR
\ar@/^2pc/[rr]^-{\spP}
\ar[r]_-{\spPpl}
&
\xhKmpqR
\ar@{^{(}->}[r]
&
\xhKmpR
}
\yeee
\end{corollary}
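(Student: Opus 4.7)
The plan is to derive the corollary from Theorem~5.2 via a Cartan--Eilenberg resolution argument.

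First, I would represent an object of $\DbmdR$ by a bounded complex $M^{\bullet} = (M^a \to \cdots \to M^b)$ of $\aR$-modules. By Theorem~5.2 each $M^k$ admits a projective resolution $P_k^{\bullet} = (\cdots \to P_k^{-1} \to P_k^0)$ with $\yordq{P_k^{-j}} \geq j$ for all $j \geq 0$. Assembling these column-by-column into a Cartan--Eilenberg double complex (whose horizontal differentials lift the differentials of $M^{\bullet}$, which is possible by projectivity of the $P_k^{-j}$) and passing to the associated total complex produces a bounded-above complex $\spP(M^{\bullet})$ of projective $\aR$-modules that resolves $M^{\bullet}$.

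Next I would verify $q^+$-boundedness. The term of the total complex in homological degree $n$ is
\[
\spP(M^{\bullet})^{n} \;=\; \bigoplus_{\substack{k - j = n \\ j\geq 0,\; a\leq k\leq b}} P_k^{-j}.
\]
For $n < a$ every $k \in [a,b]$ contributes, with $j = k - n$, so the minimum value of $j$ appearing among the summands is $a - n$. Hence $\yordq{\spP(M^{\bullet})^{n}} \geq a - n$, which tends to $+\infty$ as $n \to -\infty$. Therefore $\spP(M^{\bullet}) \in \xhKmpqR$, and the projective resolution functor factors through $\xhKmpqR$ as claimed.

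There is essentially no substantial obstacle: the real content is concentrated in Theorem~5.2, and the remainder is routine bookkeeping. The only mild subtlety is compatibility with the pre-existing resolution functor $\spP \colon \DbmdR \to \xhKmpR$ of diagram~\eqref{eq:prdiag}, but this is automatic from the uniqueness of projective resolutions up to chain homotopy, so the factorization $\spPpl$ is well-defined on the level of the derived category.
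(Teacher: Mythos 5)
Your proof is correct and follows the route the paper intends: the corollary is stated without explicit proof as an immediate consequence of the preceding theorem on single modules, and the natural way to pass from modules to bounded complexes is exactly your Cartan--Eilenberg totalization, with the $q$-order bound $\yordq{\spP(M^{\bullet})^{n}}\geq a-n\to+\infty$ giving $q^+$-boundedness. No gaps.
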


This corollary allows us to use projective resolutions for the calculation of
$\Kz$ of objects of $\DbmdR$. Indeed, there is a commutative
diagram
\xlee{eq:comdgkz}
\xymatrix{
\xhKmpqR
\ar[r]^-{\Kzp}
&
\Kzp(\xhKmpqR)
\ar@{->>}[d]
\\
\DbmdR
\ar@{^{(}->}[u]^-{\spPpl}
\ar[r]^-{\Kz}
&
\Kzp(\DbmdR)
}
\xeee
where by definition $\Kzp(\DbmdR) =
\Kz(\DbmdR)\otimes_{\Zqqi}\Zsqqi$. The right vertical map is
surjective, because $\Kzp(\DbmdR)$ is generated by $\Kz$-images of
objects in $\DbmdR$, but all those objects can be mapped into $\Kzp(\DbmdR)$
through the resolution $\spPpl$.

\begin{theorem}
\label{th:rabss}
The elements $\Kz(\idPa)$ form a basis in $\QKz(\DbmdR)$.
\end{theorem}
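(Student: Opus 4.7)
The plan is to exhibit the $\xlN \times \xlN$ change-of-basis matrix $K(q)$ between $\{\Kz(\idPa)\}$ and the basis $\{\Kz(\idSa)\}$ furnished by Theorem \ref{th:khmod}, and to show that its determinant is a nonzero element of $\Qq$. Since Theorem \ref{th:khmod} tells us that $\{\Kz(\idSa)\}$ is a free $\Zqqi$-basis of $\Kz(\DbmdR)$, it is also a $\Qq$-basis of $\QKz(\DbmdR) = \Kz(\DbmdR)\otimes_{\Zqqi}\Qq$; and since there are exactly $\xlN$ indecomposable projectives $\idPa = \aR\,\idea$, invertibility of $K(q)$ over $\Qq$ is the only missing ingredient.

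First I would identify the entries of $K(q)$. Writing $\Kz(\idPa) = \sum_b k_{ab}(q)\,\Kz(\idSb)$, the coefficient $k_{ab}(q)$ records the graded composition multiplicity of $\idSb$ in $\idPa$, summed over all degree shifts with weight $q^j$. Because $\idSb = (\aR/\aRrp)\ideb = \IQ\,\ideb$ is one-dimensional and concentrated in degree zero, the graded Jordan-H\"older theorem applied to the idempotent decomposition $\idPa = \bigoplus_b \ideb\,\aR\,\idea$ yields the clean formula
\[
k_{ab}(q) \;=\; \sum_{j \geq 0} q^j\,\dim\bigl(\ideb\,\aRrj\,\idea\bigr),
\]
which is a polynomial in $q$ because $\aR$ is finite-dimensional.

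The decisive step uses the convexity hypothesis directly: the zero-degree part $\aRrz = \bigoplus_b \IQ\,\ideb$ is spanned by mutually orthogonal idempotents, so
\[
\ideb\,\aRrz\,\idea \;=\; \delta_{ab}\,\IQ.
\]
Hence $K(0)$ is the identity matrix, so $\det K(q) \in \ZZ[q]$ has constant term $1$. In particular $\det K(q)$ is a nonzero element of $\Qq$, so $K(q)$ is invertible over $\Qq$, which is exactly what is needed.

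The main potential obstacle is the careful justification of the identification $k_{ab}(q) = \sum_j q^j \dim(\ideb\,\aRrj\,\idea)$ in the graded setting, but this is a routine application of graded Jordan-H\"older given that each $\idSb$ is one-dimensional and lives in a single degree. Everything else is elementary linear algebra built on Theorem \ref{th:khmod} and non-negativity of the grading of $\aR$.
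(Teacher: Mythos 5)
Your proof is correct, but it takes a genuinely different route from the paper's. The paper deduces the statement from the commutative diagram relating $\DbmdR$ to the homotopy category $\xhKmpqR$ of \qpb\ complexes of projectives: since the right vertical map of that diagram is surjective, the $\xlN$ elements $\Kz(\idPa)$ generate $\Kzp(\DbmdR)$, which by Theorem~\ref{th:khmod} is free of rank $\xlN$, so they must be independent and hence a basis. That argument leans on the projective-resolution theorem (resolutions with $\yordq{\xAv{-i}}\geq i$) established just before. You instead compute the graded Cartan matrix $K(q)$ expressing $\Kz(\idPa)$ in the basis $\Kz(\idSb)$ and show it is invertible over $\Qq$ because $K(0)$ is the identity; the triangularity at $q=0$ is exactly the \xcnv\ hypothesis that $\aRrz$ is spanned by the orthogonal idempotents, so $\ideb\,\aRrz\,\idea=\delta_{ab}\,\IQ\,\idea$, and the identification $k_{ab}(q)=\sum_j q^j\dim(\ideb\,\aRrj\,\idea)$ follows from the filtration of $\idPa=\aR\idea$ by $\bigoplus_{i\geq j}\aRri\idea$, whose subquotients are semisimple since $\aRrp$ is the graded radical. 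Your argument is more self-contained and elementary, and it yields slightly more: invertibility of $K(q)$ over power series shows the $\Kz(\idPa)$ are already independent in $\Kzp(\DbmdR)$ without invoking the resolution machinery. The paper's argument, by contrast, reuses a diagram and a surjectivity statement that are needed elsewhere in the section, so it comes almost for free in context. Both proofs are complete; the only point worth flagging in yours is the sign convention for the degree shift (whether a shift by $j$ contributes $q^j$ or $q^{-j}$), which affects whether $\det K$ is normalized at $q=0$ or at $q=\infty$ but not its nonvanishing in $\Qq$.
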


\begin{proof}
According to theorem\rw{th:khmod}, the elements $\Kz(\idSa)$
generate freely $\Kz(\DbmdR)$, hence they form a basis of
$\Kzp(\DbmdR)$ and $\dim\Kzp(\DbmdR)=\dim\QKz(\DbmdR)=\xlN$.
The right vertical map in the diagram\rx{eq:comdgkz} is
surjective, hence $\xlN$ elements $\Kz(\idPa)$ generate
$\Kzp(\DbmdR)$. Therefore they are linearly independent there and in $\QKz(\DbmdR)$,
so they form a basis of $\QKz(\DbmdR)$.
%
%
%
\end{proof}

Corollary\rw{cr:rprthrpq} implies that the space $\Kzp(\DbmdR)$
has a $\Zsqqi$-valued symmetric bilinear pairing
\xlee{eq:prtor}
\blpbv{\Kzp(\xbA)}{\Kzp(\xbB)}=\chq\xlrb{\TorR(\xbAf,\xbB)},
\xeee
where, by definition, $\TorR$ is the homology of the derived tensor
product:
\ylee{eq:dftor}
\TorR(\xbAf,\xbB) = \Hmb\xlrb{\dflpv{\spPpl(\xbA)}\otimes_{\aR}\xbB} =
\Hmb\xlrb{\xbAf\otimes_{\aR}\spPpl(\xbB)}.
\yeee

\begin{proposition}
\label{pr:ratval}
The pairing\rx{eq:prtor} restricted to $\QKz(\DbmdR)$ takes values
in $\Qq$.
\end{proposition}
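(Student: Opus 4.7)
My plan is to reduce the claim to a finite-dimensional computation on the basis of indecomposable projectives furnished by Theorem~\rw{th:rabss}: the classes $\Kz(\idPa)$, $\idia=1,\ldots,\xlN$, form a $\Qq$-basis of $\QKz(\DbmdR)$. Since the pairing \rx{eq:prtor} is bilinear and the involution $\finvm$ preserves $\Qq\subset\Zsqqi$ (as $\finv{q}=\qi$), it suffices to show that
\[
\blpbv{\Kz(\idPa)}{\Kz(\idPb)}\in\Qq
\]
for every pair of indices $\idia,\idib\in\{1,\ldots,\xlN\}$.

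The key observation is that on projective generators the derived Tor degenerates. Each indecomposable projective $\idPa$ is concentrated in homological degree zero and already projective, hence it is its own $\spPpl$-resolution. Substituting into the definition \rx{eq:dftor} gives
\[
\blpbv{\Kz(\idPa)}{\Kz(\idPb)} \;=\; \chq\bigl(\Hmb(\dflpv{\idPa}\otimes_{\aR}\idPb)\bigr) \;=\; \chq\bigl(\dflpv{\idPa}\otimes_{\aR}\idPb\bigr),
\]
the last equality because the complex $\dflpv{\idPa}\otimes_{\aR}\idPb$ lives in a single homological degree. Since $\aR$ is a finite-dimensional \xcnv\ algebra, both $\idPa$ and $\idPb$ are finite-dimensional as \Zgrdd\ $\IQ$-vector spaces, so is $\dflpv{\idPa}\otimes_{\aR}\idPb$, and its graded Euler characteristic is therefore a Laurent polynomial in $q$ with integer coefficients, an element of $\Zqqi\subset\Qq$.

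For arbitrary $u=\sum_{\idia}c_a\,\Kz(\idPa)$ and $v=\sum_{\idib}d_b\,\Kz(\idPb)$ in $\QKz(\DbmdR)$ with $c_a,d_b\in\Qq$, bilinearity yields
\[
\blpbv{u}{v} \;=\; \sum_{\idia,\idib}\finv{c_a}\,d_b\,\blpbv{\Kz(\idPa)}{\Kz(\idPb)} \;\in\; \Qq,
\]
a $\Qq$-linear combination of elements of $\Qq$. I do not anticipate any serious obstacle: the only small verification needed in passing is that the basis $\{\Kz(\idPa)\}$ of $\QKz(\DbmdR)$ remains linearly independent in $\Kzp(\DbmdR)$, so that the restriction of the pairing to $\QKz(\DbmdR)$ is well-defined; this is automatic because $\Kz(\DbmdR)$ is free of rank $\xlN$ over $\Zqqi$ by Theorem~\rw{th:khmod}, whence $\QKz(\DbmdR)$ and $\Kzp(\DbmdR)$ are free of the same rank over $\Qq$ and $\Zsqqi$ respectively, with the natural map being the coordinate-wise inclusion $\Qq\hookrightarrow\Zsqqi$.
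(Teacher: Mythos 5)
Your argument is correct, but it takes a different route from the paper's. The paper reduces to the pairing between the \emph{two} dual bases $\{\Kzp(\idSa)\}$ (irreducibles) and $\{\Kzp(\idPa)\}$ (projectives): citing\cx{Kh2}, it records that $\blpbv{\Kzp(\idSa)}{\Kzp(\idPb)}=\delta_{\idia\idib}$, and since both families are $\Qq$-bases of $\QKz(\DbmdR)$, bilinearity immediately places all values in $\Qq$. You instead pair the projective basis with itself, observing that $\Tor$ degenerates because $\idPb$ is already its own resolution, so the pairing on basis elements is the graded dimension of the finite-dimensional space $\dflpv{\idPa}\otimes_{\aR}\idPb$, hence a Laurent polynomial in $\Zqqi\subset\Qq$. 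Your version is more self-contained (it does not invoke the simple--projective duality from\cx{Kh2}) and in fact shows the slightly stronger statement that the Gram matrix of the pairing on the projective basis has entries in $\Zqqi$; the paper's version buys the explicit identity Gram matrix in the mixed bases, which is what makes $\{\Kz(\idPa)\}$ and $\{\Kz(\idSa)\}$ dual. One cosmetic point: the paper declares the pairing\rx{eq:prtor} to be $\Zsqqi$-bilinear and symmetric, so the conjugation $\finv{c_a}$ in your final display is not needed; it is harmless here since $\finvm$ preserves $\Qq$, but you should either drop it or justify sesquilinearity separately.
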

\begin{proof}
According to\cx{Kh2}, the irreducible modules $\idSa$ and projective modules $\idPa$
have the property
\xlee{eq:dimhoms}
\blpbv{\Kzp(\idSa)}{\Kzp(\idPb)} =
\dmq(\dflpv{\idSa}\otimes_{\aR}\idPb) =
\begin{cases}
1,&\text{if $\xal=\xbet$,}
\\
0, &\text{if $\xal\neq \xbet$.}
\end{cases}
\xeee
Since the elements $\Kzp(\idSa)$ and $\Kzp(\idPa)$ belong to
$\QKz(\DbmdR)$ and form (dual) bases there,
equation\rx{eq:dimhoms} implies that the pairing\rx{eq:prtor} on
$\QKz(\DbmdR)$ takes values in its base field $\Qq$.
\end{proof}

The following is obvious:
\begin{proposition}
\label{pr:tnseasy}
If \xring s $\aRo$ and $\aRt$ are \xcnv\ then $\aRo\otimes\aRt$ is
also \xcnv.
\end{proposition}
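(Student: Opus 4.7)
The plan is to verify each of the four defining conditions of \xcnv ness for the tensor product $\aRo\otimes\aRt$ by building the required data from the corresponding data for $\aRo$ and $\aRt$. Since every verification is essentially formal, I expect no real obstacle; the main thing to be careful about is the identification of the zero-degree subalgebra and the choice of idempotents.

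First, finite-dimensionality is immediate: the tensor product of two finite-dimensional $\IQ$-vector spaces is finite-dimensional. Next, the \Zgrd\ on $\aRo\otimes\aRt$ is defined by $(\aRo\otimes\aRt)_j=\bigoplus_{j_1+j_2=j}(\aRo)_{j_1}\otimes(\aRt)_{j_2}$, and since both factors are supported in non-negative degrees, so is the tensor product. Moreover, because negative degrees vanish, the zero-degree part reduces to $(\aRo\otimes\aRt)_0=(\aRo)_0\otimes(\aRt)_0$.

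For the involution, I would set $\dflp_{\aRo\otimes\aRt}=\dflp_{\aRo}\otimes\dflp_{\aRt}$; this is a well-defined ring map $\aRo\otimes\aRt\to (\aRo\otimes\aRt)^{\mathrm{op}} = \aRoop\otimes\aRtop$ of degree zero, and it squares to the identity because each factor does. Finally, for the idempotent decomposition, let $\idevv{a}{1},\ldots,\idevv{N_1}{1}$ and $\idevv{b}{2},\ldots,\idevv{N_2}{2}$ be the systems of mutually orthogonal idempotents in $(\aRo)_0$ and $(\aRt)_0$ provided by the hypothesis. Define
\begin{equation}
\nonumber
e_{(a,b)} = \idevv{a}{1}\otimes\idevv{b}{2},\qquad 1\leq a\leq N_1,\;\;1\leq b\leq N_2.
\end{equation}
Then $e_{(a,b)}e_{(a',b')} = \idevv{a}{1}\idevv{a'}{1}\otimes\idevv{b}{2}\idevv{b'}{2}$ equals $e_{(a,b)}$ if $(a,b)=(a',b')$ and zero otherwise, so these elements are mutually orthogonal idempotents. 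Furthermore
\begin{equation}
\nonumber
\sum_{a,b}e_{(a,b)}=\Bigl(\sum_{a}\idevv{a}{1}\Bigr)\otimes\Bigl(\sum_{b}\idevv{b}{2}\Bigr) = \xIdv{\aRo}\otimes\xIdv{\aRt} = \xIdv{\aRo\otimes\aRt},
\end{equation}
and the $e_{(a,b)}$ plainly generate $(\aRo)_0\otimes(\aRt)_0$ as a unital algebra since their factors do. This verifies all the defining properties, so $\aRo\otimes\aRt$ is \xcnv. The only subtle point, which I would flag but which poses no real difficulty, is that we need the \Zgrd\ on $\aRo\otimes\aRt$ to be defined by summing the two degrees rather than taking a product grading; with this convention the zero-degree subalgebra is exactly $(\aRo)_0\otimes(\aRt)_0$, which is what makes the idempotent construction go through.
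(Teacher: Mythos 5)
Your proof is correct and is exactly the routine verification the paper has in mind: the paper states this proposition with the preface "The following is obvious" and supplies no proof at all, since the product idempotents $\idevv{a}{1}\otimes\idevv{b}{2}$ in the zero-degree subalgebra $(\aRo)_0\otimes(\aRt)_0$ do all the work. Your added care about using the total (sum) grading, so that the degree-zero part is precisely the tensor product of the degree-zero parts, is the right point to flag, and nothing further is needed.
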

In particular, if $\aR$ is \xcnv\ then $\aRe=\aR\otimes\aRop$ is
also \xcnv.

\begin{theorem}
\label{th:hhomratg}
The \Hhom\ of $\DbgRe$ lies within $\QgmodQ$:
\ylee{eq:hrhomqq}
\xymatrix@C=1.75cm{
\DbgRe
\ar[r]_-{\xHHl(\xdummy)}
\ar@/^1.75pc/[rr]^-{\xHHl(\xdummy)}
&
\QgmodQ
\ar@{^{(}->}[r]
&
\Qgmodpq
}
\yeee
\end{theorem}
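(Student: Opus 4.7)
The plan is to verify the two defining properties of $\QgmodQ$: first, that $\xHHlv{M}$ lies in $\Qgmodpq$ (the target of the vertical functor in \eqref{eq:hrhomqq}), and second, that its graded Euler characteristic is a rational function of $q$.

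First I would invoke Proposition \ref{pr:tnseasy} applied to $\aRo = \aR$ and $\aRt = \aRop$ (which is convenient via the inverted involution) to conclude that $\aRe = \aR \otimes \aRop$ is convenient. Corollary \ref{cr:rprthrpq} therefore applies to $\aRe$, so every $M \in \DbgRe$ admits a projective resolution $\spP(M)$ which is homologically bounded above and $q^+$-bounded. Tensoring such a complex over $\aRe$ with the fixed $\aRe$-module $\aR$ preserves both boundedness properties, and taking homology yields a bigraded vector space in $\Qgmodpq$. By formula \eqref{eq:flHHR}, this is exactly $\xHHlv{M}$, so the functor $\xHHl$ does land in $\Qgmodpq$.

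For the rationality of the Euler characteristic I would appeal to the Tor pairing \eqref{eq:prtor} on $\Kzp(\DbmdRe)$. The bimodule $\aR$, regarded as a single $\aRe$-module in homological degree zero, lies in $\DbmdRe$, so $\Kz(\aR) \in \QKz(\DbmdRe)$; likewise $\Kz(M) \in \QKz(\DbmdRe)$ because $M$ is bounded. The graded Euler characteristic $\chq(\xHHlv{M})$ agrees, up to the flip $\dflp$ implicit in the definition \eqref{eq:dftor} of $\Tor$, with the pairing $\blpbv{\Kzp(\aR)}{\Kzp(M)}$. Proposition \ref{pr:ratval} then guarantees that this pairing restricted to $\QKz(\DbmdRe)$ takes values in $\Qq$. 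Hence $\chq(\xHHlv{M}) \in \Qq$, which places $\xHHlv{M}$ inside $\QgmodQ$ and provides the desired factorization \eqref{eq:hrhomqq}.

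The main technical point requiring care is the bookkeeping around the flip $\dflp$ present in \eqref{eq:dftor}: one must check that the canonical isomorphism $\aRe \cong (\aRe)^{op}$ sends the $\aRe$-module $\aR$ to an object of $\Dbmd{(\aRe)^{op}}$ whose class still lies in the relevant $\QKz$, and that the degree shifts built into $\dflp$ are by constants and hence irrelevant for rationality. Since $\dflp$ is an equivalence of derived categories and preserves the $q$-grading of Hom spaces between indecomposable projectives (as in \eqref{eq:homzdg}), this is routine conventional matching; the genuine content of the theorem is the combination of convenience of $\aRe$ via Proposition \ref{pr:tnseasy} with the rationality of the Tor pairing from Proposition \ref{pr:ratval}.
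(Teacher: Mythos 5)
Your proof is correct and follows essentially the same route as the paper: identify $\chq(\xHHlv{\xbM})$ with the Tor pairing $\blpbv{\xbM}{\aR}$, note that $\aRe$ is \xcnv\ by Proposition\rw{pr:tnseasy}, and invoke Proposition\rw{pr:ratval} for rationality (the paper's own proof is terser and leaves the $\Qgmodpq$-membership and the $\dflp$ bookkeeping implicit, but the substance is the same).
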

\begin{proof}
If $\xbM$ is a bounded complex
of $\aRe$-modules, then
\hyphenation{Hoch-schild}
according to the definition of the
\Hhom,
$$\chq\xlrb{\xHHlv{\fKDbM}} =
\chq\xlrb{\Tor_{\aRe}(\xbM,\aR)}
=\blpbv{\xbM}{\aRop},$$
hence by Proposition\rw{pr:tnseasy}
$\chq\xlrb{\xHHlv{\fKDbM}}\in\Qq$ and by definition
$\xHHlv{\spfKD(\xbM)}\in\QgmodQ$.
\end{proof}

\subsubsection{Algebras $\xaHdmn$ and the universal resolution}

The \xring s $\xaHdnm$ are \xcnv. In view of
Proposition\rw{pr:tnseasy}, it is sufficient to check that $\xaHn$
is \xcnv. Indeed, it is easy to see from the definition\rx{eq:1.10}
that the zero-degree part of $\xaHn$ consists of identity
endomorphisms of objects $\dal$, and those morphisms are mutually
orthogonal idempotents.

Since the \xring s $\xaHdnm$ are \xcnv,
Theorems\rw{th:splprj},\rw{th:hpbss},\rw{th:hhomtl} and\rw{th:hprthrpq}
are particular cases of
Theorems\rw{th:khmod},\rw{th:rabss},\rw{th:hhomratg} and Corollary\rw{cr:rprthrpq}.

\begin{theorem}
\label{th:kzprj}
The map
$\Kzp\colon\dTLcpqtn\rightarrow\cTLctptn$ maps the \upr\ $\rsPn$ to the \JWp\ $\jwpxtnz$:
\xlee{eq:grmprpr}
\Kzp(\rsPn) = \jwpxtnz.
\xeee
\end{theorem}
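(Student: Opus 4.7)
The plan is to identify $\Kzp(\rsPn)$ as the unique element of $\cTLctptn$ satisfying $\xx\tcmp\symalg{\xal}=\symalg{\xal}$ for every $\xal\in\yCrn$, and then to observe that the uniqueness argument in Theorem~\ref{th:unqprj} extends from $\cfQTLcttn$ to $\cTLctptn$, forcing this element to coincide with the image of $\jwpxtnz$ under the natural embedding $\cfQTLcttn\hookrightarrow\cTLctptn$.

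First I would verify the defining identity $\Kzp(\rsPn)\tcmp\symalg{\xal} = \symalg{\xal}$ for every $\xal\in\yCrn$. Under the equivalence~\ex{eq:bigeqm} together with the commutative diagram~\ex{eq:cmdigprs}, the object $\rsPn$ corresponds to the projective resolution $\spP(\xaHn)$ of the $\xaHne$-module $\xaHn$. Composing $\rsPn$ with $\dal$, regarded as a $(0,2n)$-tangle, and applying the bimodule functor $\spfK$, Theorem~\ref{th:cmptp} gives
\[
\spfK(\rsPn\tcmp\dal) \;=\; \spfK(\rsPn)\otimes_{\xaHn}\Kal \;\qie\; \xaHn\otimes_{\xaHn}\Kal \;=\; \Kal \;=\; \spfK(\dal),
\]
where the quasi-isomorphism uses that $\Kal$ is a projective $\xaHn$-module by Theorem~\ref{th:splprj}, so its derived tensor product with $\spfK(\rsPn)$ agrees with the ordinary tensor product. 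Hence $\rsPn\tcmp\dal$ and $\dal$ represent the same class in $\Kzp(\DbgHn)$, and applying $\Kzp$ together with its multiplicativity under tangle composition produces the desired identity.

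For uniqueness, the construction in the proof of Theorem~\ref{th:unqprj} extends verbatim to $\cTLctptn$: any $\xx\in\cTLctptn$ has a unique expansion $\xx = \sum_{\xal,\xbet\in\yCrn} \cxzab\,\psbaf$ with $\cxzab\in\Zsqqi$, and the requirement $\xx\tcmp\symalg{\xgam}=\symalg{\xgam}$ for all $\xgam\in\yCrn$ becomes a linear system whose coefficient matrix is the Gram matrix $(\mtBab)_{\xal,\xbet\in\yCrn}$ with entries $\mtBab = (-q-\qi)^{\nccab}$. This matrix is invertible over $\Qq$, and its inverse therefore has entries in $\Qq\subset\Zsqqi$. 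Consequently the unique solution lies in $\cfQTLcttn$ and equals $\jwpxtnz$, and combined with the identity established above this yields $\Kzp(\rsPn) = \jwpxtnz$.

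The main obstacle is the careful bookkeeping between categorical composition, derived tensor product and the Grothendieck group: one must verify that the quasi-isomorphism $\spfK(\rsPn)\otimes_{\xaHn}\Kal\qie\Kal$ in $\DbgHn$ descends to an equality of classes under $\Kzp$, and that $\Kzp$ intertwines the tangle composition $\tcmp$ in $\dTLcpqtn$ with the product in $\cTLctptn$. Both assertions follow from the commutative diagram~\ex{eq:cmdigprs} together with the monoidal nature of $\Kzp$ (cf.~diagram~\ex{eq:cmdgrm} at the uncategorified level); once they are in place, the identification of $\Kzp(\rsPn)$ with $\jwpxtnz$ reduces to the invertibility of the Gram matrix $(\mtBab)$.
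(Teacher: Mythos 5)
Your proof is correct and follows essentially the same route as the paper: use projectivity of $\Kal$ and uniqueness of projective resolutions to get $\rsPn\tcmp\dal\hteqv\dal$, apply $\Kzp$ to obtain $\Kzp(\rsPn)\tcmp\psymalg{\xal}=\psymalg{\xal}$, and conclude via the uniqueness statement of Theorem~\ref{th:unqprj}. Your explicit check that the uniqueness argument extends from $\cfQTLcttn$ to $\cTLctptn$ (via invertibility of the Gram matrix over $\Qq\subset\Zsqqi$) fills in a step the paper leaves implicit, but it is the same argument.
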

\begin{proof}
For $\xal\in\yCrn$, the $\xaHn$-module $\dal$ is projective.
Projective resolution is unique up to homotopy, hence there is a
homotopy equivalence $\spfK(\rsPn)\otimes_{\xaHn}\Kal\hteqv\Kal$.
The functor $\spfK$ translates tangle composition into tensor
product and establishes a category equivalence\rx{eq:bigeqm},
hence $\rsPn\tcmp\dal\hteqv\dal$. Applying $\Kzp$ to both sides of
this relation, we find that $\Kzp(\rsPn) \tcmp\ \psymalg{\xal} =
\psymalg{\xal}$. Since $\Kzp(\rsPn)\in\cTLctptmtn$, Theorem\rw{th:unqprj} implies
\ex{eq:grmprpr}
\end{proof}

\begin{corollary}
The solid arrows of following diagram are commutative:
\xlee{eq:fbigdg}
\xymatrix{
\KbspHdnm
\ar[rrr]^(0.4){\spfKD}
&&&
\DbgHdnm
\ar[dddr]^(0.6){\Kz}|(0.34)\hole
\ar@{_{(}->}[dd]_(0.7){\spPTL}|(0.5)\hole
\\
&
\yTngtmtn
\ar[dd]_(0.3){\mpalg}
\ar[ul]_(0.4){\mpKbim}
\ar[dl]_-{\mpcat}
\ar@{->>}[rrr]^(0.35){\hoam}
\ar[urr]^(0.35){\mpDcat}
&&&
\yTngSttmtn
\ar[dd]^-{\mpalg}
\ar@{-->}[ul]_-{\mpcat}
\\
\dTLtmtn
\ar[uu]^-{\spfK}
\ar[dr]_-{\Kz}
\ar[rrr]^(0.7){\rhPs}|(0.417)\hole
&&&
\dTLcpqtmtn
\ar[ddr]^(0.35){\Kzp}|(0.515)\hole
\\
&
\cTLtmtn
\ar[rrr]^-{\jwphxstz}
&&&
\cfQTLcttmtn
\\
&&&&
\cTLctptmtn
\ar@{^{(}->}[u]
}
\xeee
%
%
%
\end{corollary}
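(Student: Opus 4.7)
The plan is to verify commutativity face by face, with each face reducing to a result that has already appeared in the paper; the only genuinely new input is Theorem \ref{th:kzprj}, which handles the one face that has not been treated before.

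First I would dispose of the faces that are literally repeats of earlier diagrams. The top--left triangle with vertex $\yTngtmtn$ and sides $\mpcat$, $\mpKbim$, $\spfK$ is exactly the commutative triangle\rx{eq:1.11b}. The triangle from $\yTngtmtn$ through $\mpKbim$ and $\spfKD$ to $\mpDcat$ is the definition\rx{eq:cmfuncp} of $\mpDcat$. The lower left triangle $\yTngtmtn\to\dTLtmtn\to\cTLtmtn$ is the commutative diagram\rx{eq:cmdgrm}. The back square involving $\hoam$, $\mpDcat$ and $\spfKD\circ\mpKbim$ is exactly Theorem\rw{th:exunsti}. The square with $\rhPs$, $\spfK$ and $\spPsp$ is the top square of\rx{eq:cmdigprs}, whose commutativity was established in the definition of $\rhPs$ via\rx{eq:fnactts}. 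The triangle $\DbgHdnm\leftarrow\dTLcpqtmtn\to\cTLctptmtn$ with vertical map $\Kz$ (and horizontal identification $\Kzp$) is the identification\rx{eq:cisokgr}, which is explicitly what we now use in place of a separate Grothendieck group.

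The one face that still needs an argument is the bottom rectangle
\[
\xymatrix@C=1.5cm{
\dTLtmtn\ar[r]^-{\rhPs}\ar[d]_-{\Kz}&\dTLcpqtmtn\ar[d]^-{\Kzp}\\
\cTLtmtn\ar[r]_-{\jwphxstz}&\cfQTLcttmtn
}
\]
Here I would unpack both horizontals: by\rx{eq:fnactts} the top map is $\xdummy\mapsto\rsPn\tcmp\xdummy$, and by\rx{eq:prhm} the bottom map is $\xdummy\mapsto \jwpxtnz\tcmp\xdummy$. Since $\Kzp$ (resp.\ $\Kz$) is a ring/module homomorphism with respect to tangle composition, the square commutes if and only if $\Kzp(\rsPn\tcmp\xbA)=\jwpxtnz\tcmp\Kz(\xbA)$ for every $\xbA\in\dTLtmtn$, and this reduces to the single identity $\Kzp(\rsPn)=\jwpxtnz$, which is Theorem\rw{th:kzprj}.

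Finally I would assemble these faces: commutativity of a face diagram of this shape follows from commutativity of each 2-cell, so gluing the triangles and squares above yields commutativity of all the solid arrows of\rx{eq:fbigdg}. The main (indeed the only) obstacle is the bottom rectangle, and that obstacle has already been removed by Theorem\rw{th:kzprj}; everything else is bookkeeping from earlier sections.
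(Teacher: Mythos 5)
Your proof is correct and takes essentially the same route as the paper: decompose the diagram into faces, identify each with a previously established commutative diagram (eq.~\rx{eq:1.11b}, \rx{eq:cmfuncp}, \rx{eq:cmdgrm}, \rx{eq:cmdigprs}, \rx{eq:comdgkz}/\rx{eq:cisokgr}), and isolate the skewed bottom square as the only face needing new input, namely $\Kzp(\rsPn)=\jwpxtnz$ from Theorem~\rw{th:kzprj}.

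One caution: you invoke Theorem~\rw{th:exunsti} for the ``back square'' with $\hoam$, but the edge $\yTngSttmtn\dashrightarrow\DbgHdnm$ is dashed, so that face is not part of the claim (which concerns solid arrows only); moreover the proof of Theorem~\rw{th:exunsti} itself relies on this corollary, so citing it here would be circular if that face were actually required. Drop that item and the argument stands as is.
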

\begin{proof}
The commutativity of the `skewed' bottom horizontal square follows from
Theorem\rw{th:kzprj}. The vertical square is a part of the
diagram\rx{eq:cmdigprs}. The commutativity of the right vertical
triangle follows from the commutativity of the
diagram\rx{eq:comdgkz} in the case when $\aR=\xaHdnm$. The
vertical upper-left triangle is the diagram\rx{eq:1.11b} and the
vertical lower-left triangle is the diagram\rx{eq:cmdgrm}.
Finally, the upper horizontal triangle is the definition of the
functor $\mpDcat$.
\end{proof}

\begin{remark}
Since $\xaHn\otimes_{\xaHn} \xaHn=\xaHn$, it follows that
$\spP(\xaHn)\otimes_{\xaHn}\spP(\xaHn) \hteqv \spP(\xaHn)$ and,
consequently,
\ylee{eq:cmpures}
\rsPn\tcmp\rsPn\hteqv\rsPn.
\yeee
This relation together with \ex{eq:grmprpr} suggests that $\rsPn$
is a categorification of the projector $\jwpxtnz$, but we think
that a proper setting for this statement would be a simultaneous
categorification of all projectors $\jwpxnm$ which would allow the verification
of the categorified orthogonality and completeness conditions\rx{eq:1.5}, so we leave it as a
conjecture.
\end{remark}

\subsubsection{Basic properties of Hochschild homology}

For two \xring s $\aRo$ and $\aRt$, let $\xbM$ and $\xbN$ be complexes of
$\aRt\otimes\aRoop$-modules and, respectively,
$\aRo\otimes\aRtop$-modules. Then the Hochschild homologies of
their derived tensor products are canonically isomorphic:
\wlee{eq:cndisocmh}
\xHHl(\xbM\Loti_{\aRo}\xbN) = \xHHl(\xbN\Loti_{\aRt}\xbM).
\weee
If $\xbM$ and $\xbN$ are \sprj, then their derived tensor products
coincide with the ordinary ones, so there is
a simpler canonical
isomorphism
\xlee{eq:cnpisocmh}
\xHHl(\xbM\otimes_{\aRo}\xbN) = \xHHl(\xbN\otimes_{\aRt}\xbM).
\xeee

Suppose that an algebra $\aR$ has an involution
$\dflp\colon\aR\rightarrow\aRop$. It determines an involutive
functor $\zdflp\colon\xhKbR\rightarrow\xhKbv{\aRop}$, which turns
a $\aR$-module into a $\aRop$-module with the help of the
isomorphism $\dflp$. The \xring\ $\aRe=\aR\otimes\aRop$ is
canonically isomorphic to its opposite, hence $\dflp$ generates an
automorphism $\dflpe\colon\aRe\rightarrow\aRe$ and a corresponding
autoequivalence functor $\dflpe\colon\xhKbRe\rightarrow\xhKbRe$.

\begin{theorem}
For a complex of $\aRe$-modules $\xbM$ there is a canonical
isomorphism
\xlee{eq:isohhfl}
\xHHl(\xbM) = \xHHl(\dflpev{\xbM}).
\xeee
\end{theorem}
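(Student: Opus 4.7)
The plan is to factor the argument into two steps: first establish a canonical $\aRe$-linear isomorphism $\aR\cong\dflpev{\aR}$, and then apply the general principle that $\Tor$ is unchanged when both of its arguments are pulled back along the same algebra automorphism.

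First I would make $\dflpe$ explicit. By the definition given just before the statement, $\dflpe$ is the composite of $\dflp\otimes\dflp^{-1}\colon\aR\otimes\aRop\to\aRop\otimes\aR$ with the canonical swap identifying $\aRop\otimes\aR$ with $\aR\otimes\aRop=\aRe$; concretely, on simple tensors it sends $r_1\otimes r_2$ to $\dflpv{r_2}\otimes\dflpv{r_1}$.

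Next I would exhibit the canonical isomorphism $\phi\colon\aR\xrightarrow{\cong}\dflpev{\aR}$ of $\aRe$-modules, defined by $\phi(a)=\dflpv{a}$. Checking $\aRe$-linearity reduces to the single identity $\dflpv{r_1\,a\,r_2}=\dflpv{r_2}\,\dflpv{a}\,\dflpv{r_1}$, which is just the anti-multiplicative property of the algebra involution $\dflp$. The bigrading matches because the algebra involution\rx{eq:alginv} was assumed to preserve \Zgrd.

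Finally I would invoke the general fact that for any algebra automorphism $\sigma\colon S\to S$ the twisted pullback $\sigma^{\ast}$ is an autoequivalence that carries projectives to projectives, so it sends projective resolutions to projective resolutions, and the chain-level identification $\sigma^{\ast}X\otimes_S\sigma^{\ast}P\cong X\otimes_S P$ is immediate because the defining relation $x\cdot\sigma(s)\otimes p=x\otimes\sigma(s)\cdot p$ of the left-hand side is the ordinary tensor-product relation once $\sigma(s)$ ranges over $S$. Specializing to $S=\aRe$, $\sigma=\dflpe$, $X=\xbM$, $Y=\aR$ and combining with the previous step yields
\begin{equation*}
\xHHl(\dflpev{\xbM})=\Tor_{\aRe}(\dflpev{\xbM},\aR)\cong\Tor_{\aRe}(\dflpev{\xbM},\dflpev{\aR})\cong\Tor_{\aRe}(\xbM,\aR)=\xHHl(\xbM).
\end{equation*}
The main obstacle is purely bookkeeping rather than conceptual: tracking how $\dflpe$ acts on the two factors of $\aRe$, keeping the anti-multiplicative order of $\dflp$ straight, and verifying that $\phi$ respects the $\ZZ\oplus\ZZt$-grading. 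If $\dflp$ were permitted to shift $\Zgrd$ (as the contravariant functor $\dsym$ does), the theorem would need a compensating degree shift on the right; the hypothesis that the algebra involution preserves degree is precisely what makes the equality hold on the nose.
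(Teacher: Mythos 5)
Your proof is correct and follows essentially the same route as the paper: exhibit the $\aRe$-module isomorphism $\aR\cong\dflpev{\aR}$ induced by the involution $\dflp$, then use that applying the autoequivalence $\dflpe$ to both arguments of $\Tor_{\aRe}(-,-)$ leaves it unchanged. Your additional bookkeeping (the explicit formula for $\dflpe$, the anti-multiplicativity check, and the remark on degree preservation) only fills in details the paper leaves implicit.
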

\begin{proof}
It is easy to see that the isomorphism $\dflp\colon\aR\rightarrow\aRop$ also establishes
an isomorphism of $\aRe$-modules
$\dflp\colon\aR\rightarrow\dflpev{\aR}$. Now the
isomorphism\rx{eq:isohhfl} is established by a chain of canonical
isomorphisms
\ylee{eq:chisohhfl}
\xHHl(\xbM) = \Tor_{\aRe}(\xbM,\aR) =
\Tor_{\aRe}(\dflpev{\xbM},\dflpev{\aR}) =
\Tor_{\aRe}(\dflpev{\xbM},\aR) = \xHHl(\dflpev{\xbM}).
\yeee
\end{proof}

\subsubsection{Hochschild homology and a closure within $\Sh$}

\begin{theorem}
\label{th:cmdgdtl}
The following diagram is commutative:
\xlee{eq:cmdgdtl}
\xymatrix@C=1cm@R=1cm{
\DbgHen
\ar[r]^-{\xHHl(\xdummy)}
\ar@{^{(}->}[d]_-{\spPTL}
&
\QgmodQ
\ar@{^{(}->}[d]
\\
\dTLcpqtn
\ar[r]^-{(\lSh{-})}
&
\Qgmodpq
}
\xeee
\end{theorem}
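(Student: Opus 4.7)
The plan is to unfold both definitions and reduce the statement to a single geometric identification: closing a \TL-tangle within $\Sh$ computes the Hochschild trace of the associated bimodule. Fix an object $\xbM\in\DbgHen$. By definition $\xHHlv{\xbM}=\Hmb\bigl(\spP(\xbM)\otimes_{\xaHne}\xaHn\bigr)$, using the fact that Hochschild homology can be computed by resolving either factor of the tensor product. By Theorem \ref{th:hprthrpq} and diagram \ex{eq:funthres}, there is a complex $\spPTL(\xbM)\in\dTLcpqtn$ of \tct\ \TL-tangles whose image under the equivalence $\spfK$ is the projective resolution $\spP(\xbM)$. So it suffices to exhibit a natural isomorphism
$$
\Hmb\bigl(\lSh{\spPTL(\xbM)}\bigr)\;\cong\;\Hmb\bigl(\spfK(\spPTL(\xbM))\otimes_{\xaHne}\xaHn\bigr).
$$
Both sides are additive in the complex argument and preserve homotopies, so it suffices to check the identification on individual constituent \tct\ \TL\ \ttngtntn s $\xlam$.

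By Proposition \ref{pr:3} and \ex{eq:tlsplit}, any such $\xlam$ splits uniquely as $\xbet\tcmp\xald$ with $\xal,\xbet\in\yCrn$, and by Theorem \ref{th:splprj} the bimodule $\Klam=\Kbet\otimes\zKsymbim{\xald}$ is projective over $\xaHne$. So no further resolution is needed and the derived tensor product collapses to the ordinary one. The key computation is then
$$
\Klam\otimes_{\xaHne}\xaHn\;\cong\;\zsymcat{\lSh{\xlam}},
$$
which by construction equals $\Hmb(\lSh{\dlam})$ placed in a single homological degree. Geometrically, tensoring the bimodule $\Klam$ against the diagonal bimodule $\xaHn$ collapses the four indexing \xcrmt s of the decompositions \ex{eq:dfmdl} and \ex{eq:1.10} into two matching pairs, which is precisely the cobordism description of $\zsymcat{\lSh{\xbet\tcmp\xald}}$.

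Once the base case is in hand, the full statement follows by applying the identification term-by-term to the constituents of $\spPTL(\xbM)$ and taking total complexes, since both $\Hmb(\lSh{-})$ and $\xdummy\otimes_{\xaHne}\xaHn$ on \sprj\ objects commute with direct sums, degree shifts, and the chain differential. Theorem \ref{th:hhomtl} then guarantees that the resulting object lies in the subcategory $\QgmodQ$, so the right-hand vertical inclusion is compatible and the square commutes.

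The main obstacle is the base-case identification $\Klam\otimes_{\xaHne}\xaHn\cong\zsymcat{\lSh{\xlam}}$. This requires carefully matching the two pairs of $\yCrn$-indices in the bimodule decomposition of $\Klam$ from \ex{eq:dfmdl} against those in the algebra decomposition of $\xaHn$ from \ex{eq:1.10}, reconciling the degree shifts $\tgrshnzn$ with those of \ex{eq:homtliso}, and tracking the homological parity grading $\dgh$. Once the geometric picture of trace-as-closure is properly set up, the remainder is bookkeeping.
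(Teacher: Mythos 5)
Your proof is correct and takes essentially the same route as the paper: the paper packages the argument as a commutativity chase through a larger auxiliary diagram whose elementary pieces are exactly the facts you verify directly, namely that $\spfK$ identifies $\dTLcpqtn$ with complexes of split projective bimodules and that the Hochschild homology of a split projective $\Kbet\otimes\zKsymbim{\xald}$ collapses to the tensor product $\zKsymbim{\xald}\otimes_{\xaHn}\Kbet$, i.e.\ to the circular closure within $\Sh$. The degree-shift bookkeeping you defer at the end is likewise left implicit in the paper's proof.
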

\begin{proof}
Consider the following diagram:
\xlee{eq:vbgcdg}
\xymatrix@R=1.5cm{
\xhKmpq(\xaHne)
\ar[rrd]^{\Hmb(\xdummy\otimes_{\xaHne}\xaHn)}
\ar[rr]^-{=}
&&
\xhKpq\xlrB{ (\cvxpmod{\xaHopn})\otimes(\cvxpmod{\xaHn})}
\ar[d]^-{\Hmb(\xdummy\otimes_{\xaHn}\xdummy)}
\\
\DbgHen
\ar[u]^-{\spPpl}
\ar[d]_-{\spPTL}
\ar[r]^-{\xHHl(\xdummy)}
&
\QgmodQ
\ar@{^{(}->}[r]
&
\Qgmodpq
\\
\dTLcpqtn
\ar[rr]^-{=}
\ar@/^4pc/[uu]^-{\spfK}_-{=}
\ar[rru]^-{(\lSh{-})}
&&
\xhKpq (\dTLtltnz\otimes\dTLtlztn)
\ar@/_8pc/[uu]_-{\spfK\otimes\spfK}^-{=}
\ar[u]_-{\xdummy\tcmp\xdummy}
}
\xeee
The lower left elementary triangle in it coincides with the
diagram\rx{eq:cmdgdtl}. All other elementary triangles, as well as
the outer frame, are commutative, hence the lower left elementary
triangle is also commutative.
\end{proof}

\subsection{Proof of categorification properties}
%
%
%
%
%
The proof of Theorem\rw{th:exunsti} is based on a theorem which we
will prove later:
\begin{theorem}
\label{th:bghmeq}
The following complexes are homotopy equivalent within
$\dTLcpqtmtn$:
\xlee{eq:thteqv}
\rhPs\sprbs{\Zbrtwptn} \hteqv \rhPs\sprbs{\Zobrotn} \hteqv
\rhPs\sprbs{\Zidbrtn}.
\xeee
\end{theorem}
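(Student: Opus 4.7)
The plan is to use the category equivalence $\spfK\colon\dTLcpqtn\xrightarrow{=}\xhKmpqHen$ of \eqref{eq:bigeqm} to reduce the threefold equivalence to a pair of quasi-isomorphisms in the derived category of $\xaHne$-modules, and then to construct these quasi-isomorphisms directly from the topological isotopies already identified in the proof of Theorem~\ref{th:trend1}. Under $\spfK$, formula \eqref{eq:ostprj} together with the commutativity of \eqref{eq:cmdigprs} identifies $\spfK\circ\rhPs(\symcat{\xtau})$ with the projective resolution $\spPsp(\Ksymbim{\xtau})$ of the \sprj\ $\xaHne$-module $\Ksymbim{\xtau}$. For $\xtau=\gidbrtn$ the identity $\Kidbrtn=\xaHn$ of \eqref{eq:1.11a} gives $\spP(\xaHn)=\rsPn$, so the equivalence $\spfK$ together with the uniqueness of projective resolutions reduces the theorem to exhibiting
$$\Ksymbim{\brtwptn}\qie\xaHn,\qquad\Ksymbim{\gobrotn}\qie\xaHn\qquad\text{in }\DbgHen.$$

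To construct these, I use the framed isotopies $\xalf\tcmp\xtau\isotp\xalf$ in $\yTngtnz$ (for $\xal\in\yCrn$, $\xtau\in\{\brtwptn,\gobrotn\}$) stated at the end of the proof of Theorem~\ref{th:trend1}. Since $\xalf=\xald$ for \TL\ tangles, composing with a second \xcrmt\ $\xbet\in\yCrn$ yields framed link isotopies $\xbetd\tcmp\xtau\tcmp\xal\isotp\xbetd\tcmp\xal$ for all $\xal,\xbet\in\yCrn$. By Reidemeister invariance of the Bar-Natan categorification, each such isotopy induces a chain homotopy equivalence $\psi_{\xal,\xbet}\colon\zsymcat{\xbetd\tcmp\xtau\tcmp\xal}\hteqv\zsymcat{\xbetd\tcmp\xal}$ on the corresponding summand of the expansions
$$\Ksymbim{\xtau}=\bigoplus_{\xal,\xbet\in\yCrn}\zsymcat{\xbetd\tcmp\xtau\tcmp\xal}\tgrshnzn,\qquad\xaHn=\bigoplus_{\xal,\xbet\in\yCrn}\zsymcat{\xbetd\tcmp\xal}\tgrshnzn$$
coming from \eqref{eq:dfmdl} and \eqref{eq:1.10}. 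Assembling the $\psi_{\xal,\xbet}$ produces a componentwise homotopy equivalence $\phi\colon\Ksymbim{\xtau}\to\xaHn$.

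The main obstacle is showing that $\phi$ is a morphism of $\xaHne$-modules, which upgrades the componentwise homotopy equivalence to the required quasi-isomorphism in $\DbgHen$. The $\xaHn$- and $\xaHopn$-actions on both sides arise from gluing \TL-cobordisms between \xcrmt s at the top and bottom boundaries of the underlying tangles, and the needed naturality of $\{\psi_{\xal,\xbet}\}$ in $\xal$ and $\xbet$ amounts to compatibility with these gluings. The key observation is that the ambient isotopy realizing $\xbetd\tcmp\xtau\tcmp\xal\isotp\xbetd\tcmp\xal$ can be chosen to be supported in a regular neighborhood of $\xtau$ that is disjoint from the portions of the diagram occupied by $\xal$ and $\xbet$; any \TL-cobordism acting on those \xcrmt\ regions then commutes strictly with the chosen isotopy. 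Formalizing this locality within the Bar-Natan cobordism 2-category makes $\phi$ into an $\xaHne$-module map, and the reduction of the first paragraph completes the proof.
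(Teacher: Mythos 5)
Your reduction via $\spfK$ in the first paragraph is sound, and your overall strategy --- build componentwise homotopy equivalences $\zsymcat{\xbetd\tcmp\xtau\tcmp\xal}\hteqv\zsymcat{\xbetd\tcmp\xal}$ from the isotopies $\xalf\tcmp\xtau\isotp\xalf$ and then show they assemble into a map of $\xaHne$-modules --- is essentially the problem the paper isolates: its Theorem~\ref{th:qrtfiso} on \qtriv\ tangles is precisely the statement that these equivalences can be chosen naturally in $\xal$. But your justification of that naturality is where the argument breaks. The locality claim is false: the isotopy $\xalf\tcmp\xtau\isotp\xalf$ for $\xtau=\brtwptn$ or $\gobrotn$ cannot be supported in a neighborhood of $\xtau$ disjoint from $\xal$, because these braids are not isotopic to the identity in $\IRtint$; the untangling works only because the wrapping strand is dragged over and around the caps of $\xalf$ (equivalently, the whole cap diagram is rotated), so the isotopy necessarily sweeps through the region occupied by $\xal$. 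A cobordism $\xphaot$ glued onto that region therefore does not strictly commute with it. What is true is that the two composite cobordisms $\yfhmat\circ(\xphaot\tcmp\xIdtau)$ and $\xphaot\circ\yfhmao$ are \emph{isotopic as cobordisms}; this is condition (1) of Definition~\ref{df:qtriv} and has to be verified, not derived from a disjoint-support argument.

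Even granting that isotopy of cobordisms, there is a second gap: functoriality in the Bar-Natan category holds only up to sign (Theorem~\ref{th:cbsgn}), so each naturality square for $\psi_{\xal,\xbet}$ commutes only up to a sign depending on $\xal$, $\xbet$ and the cobordism, and your $\phi$ is a module map only up to uncontrolled signs. Resolving these signs is the main content of the paper's proof of Theorem~\ref{th:qrtfiso}: one rescales each homotopy equivalence by a sign fixed by condition (2) of Definition~\ref{df:qtriv} (compatibility with a chosen closing \crmt\ $\xbet$), and then shows the residual sign in each square equals $+1$ by composing the square with that \crmt\ and using that an elementary cobordism induces a nonzero linear map. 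Your proposal contains no mechanism for this. Once naturality with correct signs is established, the paper in fact avoids your derived-category bookkeeping entirely: it concludes directly that $\xbA\tcmp\dtau\hteqv\xbA$ for any $\xbA$ in $\dTLcpqtn$ (Corollary~\ref{cr:qrtfiso}) and applies this to $\xbA=\rsPn$.
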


\begin{proof}[Proof of Theorem\rw{th:exunsti}]
The commutativity of solid arrows in the diagram\rx{eq:fbigdg} together with the
injectivity of the functor $\spPTL$ mean that homotopy
equivalences\rx{eq:thteqv} imply \qim s
\xlee{eq:thriso}
\Dbrtwptn \qie \Dobrotn \qie \Didbrtn.
\xeee
According to Theorem\rw{th:rgheqv}, $\ker\hoam$ is generated by the
braids $\brtwpn$ and $\gobron$, hence \ex{eq:thriso} implies the
existence of the map $\xobmpd$. Its
uniqueness follows from the
surjectivity of the map $\hoam$.
\end{proof}
\begin{proof}[Proof of Theorem\rw{th:tristi}]
It is easy to see that the commutativity of solid arrows in the
diagram\rx{eq:fbigdg} together with the commutativity of the
upper horizontal triangle with a dashed side
(Theorem\rw{th:exunsti}) and surjectivity of the map $\hoam$ implies the full commutativity of the
diagram and, in particular, the commutativity of the right
vertical triangle claimed by Theorem\rw{th:tristi}.
%
\end{proof}

\begin{proof}[Proof of Theorem\rw{th:exsot}]
Consider a commutative diagram
\ylee{eq:longcdiag}
\xymatrix @C=1.5cm @R=1.5cm{
\yTngtntn
\ar@{->>}[r]^-{\hoam}
\ar[dr]_-{\mpDcat}
&
\yTngSttntn
\ar@{->>}[r]^-{\clSotv{\xdummy}}
\ar[d]^{\xobmpd}
&
\yLnkSot
\ar@{-->}[d]^-{\Hstbd}
\\
&
\DbgHen
\ar[r]^-{\xHHl(\xdummy)}
&
\QgmodQ
}
\yeee
The surjectivity of the maps $\hoam$ and $\clSotv{\xdummy}$
together with Theorem\rw{th:soeq} implies that the existence of
the homology map $\HKhbd$ would follow from the
isomorphisms
\ylee{eq:twoisoms}
\xHHl\xlrb{\Dsymcat{\xtaut\tcmp\xtauo}} =
\xHHl\xlrb{\Dsymcat{\xtauo\tcmp\xtaut}},\qquad
\xHHl\xlrb{\zDsymcat{\dflpv{\xtau}}} = \xHHl\xlrb{\Dsymcat{\xtau}},
\yeee
which should hold for any $\xtauo\in\yTngtntm$,
$\xtaut\in\yTngtmtn$ and $\xtau\in\yTngtntn$. The first
isomorphism follows from the isomorphisms\rx{eq1.11a1a}
and\rx{eq:cnpisocmh}, while the second isomorphism follows from
the isomorphisms\rx{eq:dfleq} and\rx{eq:isohhfl}.
\end{proof}
\begin{proof}[Proof of Theorem\rw{th:cmtrisot}]
The diagram\rx{eq:cmsts} coincides with the right face of the
solid cube in the following diagram:
\xlee{eq:bgcbdg}
\xymatrix@C=0.2cm@R=1cm{
&&\DbmdHne
\ar[rr]^-{\xHHl(\xdummy)}
\ar[dd]_(0.3){\spPpl}|(0.5)\hole
&
&
\QgmodQ
\ar@{^{(}->}[dd]|(0.5)\hole
\\
&
\yTngtntn
\ar@{-->>}[rr]_(0.3){\hoam}
\ar@{-->}[dl]_-{\mpcat}
&&
\yTngSttntn
\ar[ul]_-{\xobmpd}
\ar[dd]^(0.72){\mpalg}
\ar@{->>}[rr]^(0.35){\clSotv{\xdummy}}
&&
\yLnkSot
\ar[d]^-{\mpalg}
\ar[ul]_-{\Hstbd}
\\
\dTLtn
\ar@{-->}[rr]_-{\jwphxstz}
&&
\dTLcpqtn
\ar[dr]^-{\Kzp}
\ar[rr]^(0.3){\clShv{\xdummy}}|(0.47)\hole
&&
\Qgmodp
\ar[dr]^-{\chq}
&
\Qq
\ar@{^{(}->}[d]
\\
&&
&
\cTLctptn
\ar[rr]^{\clShv{\xdummy}}
&&
\Zsqqi
}
\xeee
The commutativity of all other solid cube faces has been established (in
particular, the vertical back face is the diagram\rx{eq:cmdgdtl}).
Hence the commutativity of the right face follows from the
surjectivity of the map
${\clSotv{\xdummy}}\colon\yTngSttntn\rightarrow\yLnkSot$.
\end{proof}

\begin{proof}[Proof of Theorem\rw{th:hhclsh}]
The addition of dashed arrows to the solid cube in the
diagram\rx{eq:bgcbdg} preserves the commutativity, because the
dashed arrows together with the left face of the solid cube form a
part of the diagram\rx{eq:fbigdg}. The diagram\rx{eq:comdgcomp} is
a part of the whole diagram\rx{eq:bgcbdg}.
\end{proof}



\subsection{Categorification complexes of \qtriv\ tangles}

An \elcb\ $\eleps$ between two tangle diagrams $\xtau$ and $\xtaup$
is a cobordism of one of the
following types: creation or annihilation of a disjoint circle, a
saddle cobordism, a Reidemeister move. To an \elcb\ of each type one
associates a special morphism between the corresponding
categorification complexes $\dtau\xrightarrow{\deleps}\dtaup$. In
particular, to a saddle cobordism one associates the corresponding \fltc\ acting on
constituent objects $\dlam$ in the complex $\dtau$.

A \emph{\cmov} is a sequence of \elcb s:
$\beleps=\elepso,\ldots,\elepsk$. A morphism $\dbeleps$
associated to a \xmov\ is a composition of \elmt\ morphisms:
$\dbeleps = \delepsk\cdots\delepso$.

\begin{theorem}
\label{th:cbsgn}
If two \cmov s $\beleps$ and $\beleps\p$ yield isotopic
cobordisms, then the corresponding morphisms are homotopy
equivalent up to a sign: $\dbeleps\hteqv\pm\dbeleps\p$.
\end{theorem}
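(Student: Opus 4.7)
The plan is to reduce the statement to a finite check on Carter--Saito movie moves. Recall that any two movies $\beleps$ and $\beleps\p$ representing isotopic cobordisms between tangle diagrams $\xtau$ and $\xtaup$ can be connected by a finite sequence of local substitutions drawn from the Carter--Saito list of movie moves (and their variants for framed tangles), each of which modifies a short sub-sequence of elementary cobordisms inside a local ball. Consequently, to establish $\dbeleps\hteqv\pm\dbeleps\p$ it suffices to verify, for every movie move $(\beleps_L,\beleps_R)$ on the list, that the associated morphisms $\dbeleps_L$ and $\dbeleps_R$ between the local categorification complexes are homotopy equivalent up to an overall sign.

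First I would make precise the local--to--global step: because the categorification functor $\mpcat\colon\yTng\rightarrow\dTL$ is monoidal with respect to tangle composition and compatible with planar tensoring of Bar-Natan's cobordism 2-category, a local substitution of a movie sub-sequence inside a ball translates into pre- and post-composition of $\dbeleps$ with a common complex; hence a local identity $\dbeleps_L\hteqv\pm\dbeleps_R$ propagates to a global identity $\dbeleps\hteqv\pm\dbeleps\p$. This reduction uses only the standard fact that homotopy equivalence (up to a sign) is preserved under tensoring with identity complexes.

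Next I would go through the Carter--Saito moves in three groups. \textbf{Type I} moves (reversal of orders of distant elementary cobordisms, a Reidemeister move followed immediately by its inverse, etc.) are handled by direct inspection: the corresponding morphisms are equal on the nose, because the elementary morphisms act on disjoint parts of the diagram or cancel explicitly in Bar-Natan's category. \textbf{Type II} moves (interchanges of a Reidemeister move with a nearby saddle/birth/death) produce equal morphisms after applying the \ftur\ and the $S$, $T$, $\Theta$ relations of $\dTLtl$; the check is the one carried out in\cx{BN1}. \textbf{Type III} moves, which involve threading a Reidemeister-$3$ past a Reidemeister-$2$ in a braid-like fashion, produce morphisms that agree only up to an overall sign; this is exactly the sign indeterminacy that forces the $\pm$ in the statement. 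In each case the verification is a local computation of compositions of saddle, multiplication, and comultiplication morphisms in the \TLb\ cobordism category, reducible via Theorem\rw{pr:rpc} to an equality of coefficients between \rdcd\ \fltc s.

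The main obstacle is organizing the Carter--Saito list in the framed setting consistently with the degree conventions introduced in\rx{ae1.6}--\rx{ae1.8}. In particular, the framing-twist rule\rx{ae1.8} produces shifts $\tgrshv{\vthh}{-\vthf}{-\vthf}$ on Reidemeister-I moves, and one must check that these shifts are absorbed coherently along each movie so that the two morphisms $\dbeleps_L$ and $\dbeleps_R$ land in the same $\ZZ\oplus\ZZ\oplus\ZZ_2$-degree before being compared. Once the bookkeeping of degrees is fixed, the sign ambiguity is localized to the Type III moves and Theorem\rw{th:cbsgn} follows.
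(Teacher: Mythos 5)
Your overall strategy---reduce to the Carter--Saito movie moves, verify each move locally, and propagate the local identity to the global one by composition---is correct, and it is essentially the only strategy available: the paper states Theorem\rw{th:cbsgn} without proof, deferring implicitly to\cx{BN1} (and to the earlier work of Jacobsson and of Khovanov on cobordism functoriality). The genuine difference between your proposal and the argument in\cx{BN1} lies in the verification step. You propose to compute, for each movie move, the two induced morphisms explicitly as compositions of saddles, births, deaths and Frobenius (co)multiplications, and to compare coefficients on \rdcd\ cobordisms via Theorem\rw{pr:rpc}; this is workable (it is close to Jacobsson's original proof) but long. Bar-Natan's argument avoids almost all of these computations: for each movie move the initial and final frames are the same tangle, he shows that each induced morphism is a homotopy equivalence, and he then proves that for the specific local tangles occurring in the movie-move list every degree-zero automorphism of the categorification complex in the homotopy category is $\pm$ the identity, so the two morphisms automatically agree up to sign with no case-by-case sign chase. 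Two smaller cautions about your write-up: the composition of a Reidemeister move with its inverse is the identity only up to homotopy, not ``on the nose''; and your attribution of the sign ambiguity to a specific ``Type III'' class of moves is neither needed for the statement nor convention-independent---in the framed, blackboard-framing setting of this paper the delicate moves are those in which a Reidemeister-I twist interacts with a saddle, which is also exactly where the degree shifts of\rx{ae1.8} must be tracked, as you note. Neither point invalidates your argument; the reduction to movie moves and the local-to-global step are sound.
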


A \Rmov\ is a \cmov\ which is a sequence of Reidemeister moves: $\zbr =
\rho_1,\ldots,\rho_k$.

\begin{definition}
\label{df:qtriv}
A \ttngtntn\ $\xtau$ is \emph{\qtriv} if it satisfies two
conditions:
\begin{enumerate}
\item
For any $\xal\in\yTngtnz$ (that is, for $\xal$ being a flipped
\crmt)
there is a \Rmov\
$\zbral$ transforming $\xal\tcmp\xtau$ into $\xal$ such that
for any \fltc\  $\xphaot$ between two tangles $\xal$ and
$\xalp$ the following diagram is commutative:
\def\eleps{ \xphaot }
\xlee{eq:2.8cob}
\vcenter{
\xymatrix@C=1.5cm{
\xalo\tcmp\xtau \ar[r]^-{\eleps\tcmp\xIdtau}
\ar[d]^-{\yfhmao}
&
\xalt\tcmp\xtau
\ar[d]^-{\yfhmat}
\\
\xalo \ar[r]^-{\eleps}
&
\xalt
}
}
\xeee
that is, the \cmov s $\yfhmao\,(\eleps\tcmp\xIdtau)$  and
$\eleps\,\yfhmao$ are isotopic ($\xIdtau$ denotes the identity cobordism
between $\xtau$ and $\xtau$).

\def\xalp{ \beta }
\def\xalpz{ \beta }
\item
There exists a \crmt\ $\xalp\in\yCrn$
and a \Rmov\ $\yfhmap$
transforming $\xtau\tcmp\xalp$ into $\xalp$, such that for any
flipped \crmt\ $\xal$
the following \Rmov s are isotopic:
\xlee{eq:2.9cob}
\xymatrix@C=2cm{
\xal\tcmp\xtau\tcmp\xalpz\;\;\;
\ar@/^1.5pc/[r]^-{\yfhma\tcmp\xIdapz}
\ar@/_1.5pc/[r]^-{\xIda\tcmp\yfhmap}
&
\xal\tcmp\xalpz
}
\xeee

\end{enumerate}
\end{definition}
\def\xalp{ \beta }
\def\xalpz{ \beta }

\begin{remark}
\label{rm:qtriv}
If $\eleps$ is an \xmult, then the first condition of this
definition is satisfied automatically, so if $\xtau$ is \qtriv,
then the second condition is satisfied for any \efltc.
\end{remark}

To a \ttngtntn\ $\xtau$ we associate a functor
$\htau\colon\dTLtltnz\rightarrow\dTLtnz$ which acts by composing
with $\dtau$: $\htau = \xdummy\tcmp\dtau$.
\begin{theorem}
\label{th:qrtfiso}
If a \ttngtntn\ $\xtau$ is \qtriv, then the tangle composition functor $\htau$ is
isomorphic to the injection functor
$\dTLtltnz\hookrightarrow\dTLtnz$.
\end{theorem}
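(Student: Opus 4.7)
I exhibit a natural isomorphism $\eta\colon\htau\Rightarrow\iota$, where $\iota\colon\dTLtltnz\hookrightarrow\dTLtnz$ is the tautological inclusion. For each generating object $\dal$ (with $\xal\in\yTngtnz$ a flipped \crmt), set $\eta_{\dal}\colon\dal\tcmp\dtau\to\dal$ to be the chain map $\Psi_{\xal}$ induced by the Reidemeister movie $\yfhma$ supplied by condition~(1) of Definition~\ref{df:qtriv}. Every Reidemeister move lifts to a homotopy equivalence of Bar--Natan categorification complexes, so $\Psi_{\xal}$ is a homotopy equivalence, hence an isomorphism in $\dTLtnz$.

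Naturality reduces, by the generating structure of $\dTLtltnz$ together with Remark~\ref{rm:qtriv}, to verifying for each saddle cobordism $\xphaot\colon\xalo\to\xalt$ the homotopy identity
\[
\Psi_{\xalt}\circ(\varphi\tcmp\mathrm{id}_{\dtau})\;\simeq\;\varphi\circ\Psi_{\xalo},
\]
where $\varphi$ denotes the chain map induced by $\xphaot$. The two sides are the chain maps of the cobordism movies $\yfhmat\cdot(\xphaot\tcmp\xIdtau)$ and $\xphaot\cdot\yfhmao$, both running from $\xalo\tcmp\xtau$ to $\xalt$, and the commutativity~\eqref{eq:2.8cob} in condition~(1) precisely asserts that these two movies are isotopic. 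Theorem~\ref{th:cbsgn} therefore identifies the induced chain maps up to an overall sign $\epsilon(\xphaot,\xalo)\in\{\pm 1\}$.

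The main obstacle is the coherent elimination of these sign ambiguities across all $\xal$, and this is precisely the role of condition~(2) of Definition~\ref{df:qtriv}. Let $\mathbf{B}$ denote the additive object in $\dTLtlztn$ corresponding to the fixed \crmt\ $\xalpz$, and let $\Phi\colon\dtau\tcmp\mathbf{B}\to\mathbf{B}$ denote the chain map induced by the Reidemeister movie $\yfhmap$. Theorem~\ref{th:cbsgn} applied to the movie isotopy~\eqref{eq:2.9cob} yields, for every flipped \crmt\ $\xal$, a relation
\[
\Psi_{\xal}\tcmp\mathrm{id}_{\mathbf{B}}\;\simeq\;\epsilon_{\xal}\bigl(\mathrm{id}_{\dal}\tcmp\Phi\bigr),\qquad\epsilon_{\xal}\in\{\pm 1\}.
\]
Rescaling $\Psi_{\xal}\mapsto\epsilon_{\xal}\Psi_{\xal}$ where necessary globally normalizes all these identities to $\epsilon_{\xal}=+1$ simultaneously; since $\eta_{\dal}$ is determined only up to homotopy equivalence, this redefinition still furnishes valid components of $\eta$. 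Under this rigidification, tensoring the signed naturality relation of the previous paragraph with $\mathrm{id}_{\mathbf{B}}$ on the right and collapsing both sides via the normalized identities reduces each to the common closed expression $\varphi\tcmp\Phi$; faithfulness of the right-tensoring functor $\xdummy\tcmp\mathbf{B}$ on the relevant Hom-spaces, a consequence of the splitting structure of \tct\ \TL\ tangles encoded in Theorem~\ref{th:tlspl}, then forces $\epsilon(\xphaot,\xalo)=+1$. This establishes naturality on saddle generators, hence on all of $\dTLtltnz$ by Remark~\ref{rm:qtriv}, and completes the construction of the natural isomorphism $\eta$.
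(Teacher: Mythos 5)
Your proof is correct and follows essentially the same route as the paper: define the components $\eta_{\dal}$ from the Reidemeister movies of condition (1), get naturality up to sign from Theorem~\ref{th:cbsgn}, rescale using the sign relations supplied by condition (2), and then compose the square with the distinguished \crmt\ $\xbet$ to force the residual sign to be $+1$. The only (immaterial) difference is in the last step, where you invoke faithfulness of $\xdummy\tcmp\xbet$ via Theorem~\ref{th:tlspl}, whereas the paper simply observes that the \efltc\ map $\hphaot\tcmp\xIdapz$ is non-zero.
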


\begin{proof}
As an additive category, $\dTLtltnz$ is generated freely by
objects $\dal$, where $\xal$ are flipped \crmt s:
$\xal\in\yTngtnz$.

According to Definition\rw{df:qtriv}, for any $\xal$ the tangles
$\xal$ and $\xal\tcmp\xtau$ are isotopic,
hence there is a homotopy equivalence
$\dal\tcmp\dtau\hteqv\dal$.
Hence, by the definition of functor isomorphism, it remains to
prove that for every $\xal$ there exits a particular homotopy
equivalence  $\hefa\colon\zsymcat{\xal\tcmp\xtau}\rightarrow\dal$
such that for any pair of \fcrmt s $\xalo$, $\xalt$ and for any
\fltc\ $\xphaot$ between $\xalo$ and $\xalt$
the following diagram is
commutative:
\xlee{eq:2.8b}
\vcenter{
\xymatrix@C=1.5cm{
\symcat{\xalo\tcmp\xtau} \ar[r]^-{\hphaot\tcmp\xIdtau}
\ar[d]^-{\hefao}
&
\symcat{\xalt\tcmp\xtau}
\ar[d]^-{\hefat}
\\
\dalo \ar[r]^-{\hphaot}
&
\dalt
}
}
\xeee
In view of Proposition\rw{pr:rpc1}, it is sufficient to prove this commutativity for
$\xphaot$ being an \efltc, that is, either an \xmult\ of a saddle
cobordism.

Theorem\rw{th:cbsgn} says that since the cobordism
diagram\rx{eq:2.8cob} is commutative, the following diagram is
commutative up to a sign:
\xlee{eq:2.8f}
\label{eq:2.8a}
\vcenter{
\xymatrix@C=1.5cm{
\symcat{\xalo\tcmp\xtau} \ar[r]^-{\hphaot\tcmp\xIdtau}
\ar[d]^-{\xfhmao}
\ar@{}[dr]|{\displaystyle \pm}
&
\symcat{\xalt\tcmp\xtau}
\ar[d]^-{\xfhmat}
\\
\dalo \ar[r]_-{\hphaot}
&
\dalt
}
}
\xeee

Since the \Rmov s\rx{eq:2.9cob} are isotopic, according to
Theorem\rw{th:cbsgn} there exists a sign factor $\sgmua$ such that
\xlee{eq:2.9d}
\sgmua\xfhma\tcmp\xIdapz \hteqv \xIda\tcmp\xfhmapz.
\xeee
We choose homotopy equivalences as $\hefa = \sgmua\xfhma$. The `up
to a sign commutativity' of the diagram\rx{eq:2.8f} implies that
there exists a sign factor $\smmuf=\pm 1$ such that the diagram
\xlee{eq:2.8m}
\vcenter{
\xymatrix@C=1.5cm{
\symcat{\xalo\tcmp\xtau} \ar[r]^-{\hphaot\tcmp\xIdtau}
\ar[d]^-{\sgmuao\xfhmao}
&
\symcat{\xalt\tcmp\xtau}
\ar[d]^-{\sgmuat\xfhmat}
\\
\dalo \ar[r]^-{\smmuf\hphaot}
&
\dalt
}
}
\xeee
is commutative. It remains to prove that $\smmuf=1$.

Consider the following diagram:
\ylee{eq:barcmd}
\xymatrix@C=4cm@R=2.5cm{
\symcat{\xalo\tcmp\xtau\tcmp\xalpz}
\ar[r]^-{\xphaot\tcmp\xIdtau\tcmp\xIdapz}
\ar@/_2pc/[d]_-{\sgmuao\xfhmao\tcmp\xIdapz}
\ar@/^2pc/[d]^-{\xIdv{\xalo}\tcmp\xfhmapz}
&
\symcat{\xalt\tcmp\xtau\tcmp\xalpz}
\ar@/^2pc/[d]^-{\sgmuat\xfhmat\tcmp\xIdapz}
\ar@/_2pc/[d]_-{\xIdv{\xalt}\tcmp\xfhmapz}
\\
\symcat{\xalo\tcmp\xalpz}
\ar[r]^-{\xphaot\tcmp\xIdapz}
&
\symcat{\xalt\tcmp\xalpz}
}
\yeee
The inner squeezed square is commutative, because cobordisms
$\xphaot$ and $\xfhmapz$ act on different parts of the composite
tangles $\xalo\tcmp\xtau\tcmp\xalpz$ and
$\xalt\tcmp\xtau\tcmp\xalpz$.
The commutativity of the left and
right faces is equivalent to \ex{eq:2.9d} for $\xalo$ and $\xalt$.
Hence the whole diagram is commutative. Compare the
commutativity of its outer bloated square with the tangle composition of
the diagram\rx{eq:2.8m} with $\xbet$:
\ylee{eq:barcmd}
\xymatrix@C=2cm@R=1.5cm{
\symcat{\xalo\tcmp\xtau\tcmp\xalpz}
\ar[r]^-{\hphaot\tcmp\xIdtau\tcmp\xIdapz}
\ar[d]_-{\sgmuao\xfhmao\tcmp\xIdapz}
&
\symcat{\xalt\tcmp\xtau\tcmp\xalpz}
\ar[d]^-{\sgmuat\xfhmat\tcmp\xIdapz}
\\
\symcat{\xalo\tcmp\xalpz}
\ar[r]^-{\smmuf\hphaot\tcmp\xIdapz}
&
\symcat{\xalt\tcmp\xalpz}
}
\yeee
Since $\xphaot$ is an \efltc, the linear map
$\hphaot\tcmp\xIdapz\colon\symcat{\xalo\tcmp\xalpz}\rightarrow\symcat{\xalt\tcmp\xalpz}$
is non-zero, hence $\smmuf=1$.
\end{proof}

\begin{corollary}
\label{cr:qrtfiso}
If a \ttngtntn\ $\xtau$ is \qtriv, then the tangle composition functor
$$\htau\colon\dTLcpqtn\longrightarrow\dTLcpqtn,\qquad\htau = \xdummy\tcmp\dtau$$ is
isomorphic to the identity functor and, in particular,
for any complex $\xbA$ in $\dTLcptntm$ there
is a homotopy equivalence
\xlee{eq:2.4a}
\xbA\tcmp\xtau \hteqv \xbA.
\xeee
\end{corollary}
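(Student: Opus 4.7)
The plan is to upgrade the object-level natural isomorphism of Theorem~\ref{th:qrtfiso} to an isomorphism of endofunctors of the homotopy category $\dTLcpqtn$, while accounting for the fact that $\htau$ is defined on $\dTLtlcttmtn$ whereas Theorem~\ref{th:qrtfiso} concerns only $\dTLtltnz$.

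First, Theorem~\ref{th:qrtfiso} supplies, for every $\xal\in\yTngtnz$, a homotopy equivalence $\hefa\colon\zsymcat{\xal\tcmp\xtau}\to\dal$, and the commutativity of the square~\eqref{eq:2.8b} for every \fltc\ between \fcrmt s shows that this family is natural with respect to every generator of the morphism modules of $\dTLtltnz$. By Theorem~\ref{pr:rpc} those morphism modules are generated as abelian groups by reduced flat cobordisms modulo the 4Tu relations, so $\{\hefa\}$ assembles into a natural isomorphism $\eta\colon\htau|_{\dTLtltnz}\Rightarrow\iota$ from $\htau$ to the inclusion functor $\iota\colon\dTLtltnz\hookrightarrow\dTLtnz$; respect for 4Tu is automatic because each side of a 4Tu relation is a $\ZZ$-linear combination of flat cobordisms on each of which~\eqref{eq:2.8b} commutes.

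Next, Theorem~\ref{th:tlspl} identifies $\dTLtlcttmtn$ with $\dTLtltmz\otimes\dTLtlztn$ via the product functor~\eqref{eq:fnproTL}. Right composition with $\xtau$ touches only the second factor, so $\htau$ on $\dTLtlcttmtn$ decomposes as the identity of $\dTLtltmz$ tensored with $\xdummy\tcmp\dtau$ on $\dTLtlztn$. Tensoring $\eta$ with the identity of $\dTLtltmz$ therefore produces a natural isomorphism between $\htau|_{\dTLtlcttmtn}$ and the inclusion $\dTLtlcttmtn\hookrightarrow\dTLtmtn$.

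Finally, this natural isomorphism extends, termwise on complexes followed by totalization, to a natural isomorphism of the induced endofunctors of $\xhKm(\dTLtlcttmtn)$, and restricts to the full subcategory $\xhKpq(\dTLtlcttmtn)=\dTLcpqtmtn$ because composition with $\dtau$ shifts $q$-orders only by the bounded quantity $\yordq{\dtau}$ and so preserves $q^+$-boundedness. Evaluating the resulting functor isomorphism $\htau\cong\mathrm{id}$ on any object $\xbA$ yields the homotopy equivalence~\eqref{eq:2.4a}. The only subtle step, and thus the main (though mild) obstacle, is the verification that $\eta$ is well defined on morphisms, i.e.\ that it respects the 4Tu relations, which was addressed in the first paragraph.
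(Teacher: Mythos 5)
Your proof is correct and follows essentially the same route as the paper: both pass through the split-category equivalence \eqref{eq:kpqtl} identifying $\dTLcpqtn$ with $\xhKpq(\dTLtltnz\otimes\dTLtlztn)$, observe that composition with $\dtau$ acts on only one tensor factor (the flipped-\crmt\ factor) where Theorem~\ref{th:qrtfiso} makes it isomorphic to the inclusion, and acts as the identity on the other. Your first paragraph re-derives the naturality of the equivalences $\hefa$, but that is already the content of Theorem~\ref{th:qrtfiso}'s conclusion that $\htau$ is isomorphic (as a functor) to the injection, so no new verification of the \ftur\ relations is needed.
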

\begin{proof}
Use the category equivalence\rx{eq:kpqtl} to replace $\dTLcpqtn$
with $\xhKpq (\dTLtltnz\otimes\dTLtlztn)$. The functor $\htau$
acts as identity on the $\dTLtltnz$ factor and its action on the
$\dTLtlztn$ factor is equivalent to identity by
Theorem\rw{th:qrtfiso}
\end{proof}

\begin{proof}[Proof of Theorem\rw{th:bghmeq}]
According to the definition\rx{eq:fnactts} of the functor $\rhPs$,
the homotopy equivalences\rx{eq:thteqv} are explicitly
\xlee{eq:hmeqvs}
\rsPn\tcmp\Zbrtwptn \hteqv
\rsPn\tcmp\Zobrotn \hteqv
\rsPn.
\xeee

It is easy to see that the tangles $\yvspoh\brtwptn\;$ and
$\gobrotn$ are \qtriv\ (in fact, for these tangles any
$\xbet\in\yCrn$ satisfies the second condition of
Definition\rw{df:qtriv}). Since $\rsPn$ is an object of $\dTLcpqtn$, the homotopy
equivalences\rx{eq:hmeqvs} follows from
that of \ex{eq:2.4a}.
\end{proof}

\section{Properties of the universal categorification complex of a
\cbr}
\label{s:braid}

\subsection{A \mtcn\ structure of a chain complex}

In many instances, in order to simplify a complex
through homotopy equivalence, we will use
its presentation as a \mtcn, that is, as a multiple application of
cone construction. A general theory of this procedure is related
to Postnikov structures and it is described, \eg, in\cx{GM}. We
need only a tiny bit of this theory, as it applies to homotopy
categories.

Let $\xChmA$ be a category of bounded from
above \chcpls\ associated with an additive
category $\xctA$. An object of $\xChmA$ is a \chcpl\
\ylee{ae1.ch1}
(\xbA,\xbd)  = (\cdots \rightarrow
\xAi\xrightarrow{\xdi}\xAio\rightarrow\cdots\rightarrow\xAv{k}),
\yeee
and a morphism
between two chain complexes is
a sequence of morphisms $\xbf =( \ldots,\yfi,\ldots )$:
\xlee{ae1.10d}
\vcenter{\xymatrix{
\xbA \ar[d]^-{\xbf} &&
\cdots\ar[r]^-{\xdimo} & \xAi \ar[r]^-{\xdi} \ar[d]^-{\yfi} &
\xAio
\ar[r]^-{\xdio} \ar[d]^{\yfio} & \cdots
\\
\xbB &&
\cdots\ar[r]^-{\xdpimo} & \xBi \ar[r]^{\xdpi} & \xBio
\ar[r]^-{\xdpio} & \cdots
}
}
\xeee
%
An associated homotopy category $\xKhmA$ has the same objects as
$\xChmA$, while the morphisms are \chmp s (that is, \mmp s which
commute with differentials) up to homotopy.

A \mtcn\ in the category $\xChmA$ is a family of complexes $(\xbAa)_{\ina\in\inA}$,
where $\inA$ is an index set with a grading function
$\inh\colon\inA\rightarrow\ZZ$
such that $\inh(\inA)$ is bounded from above and $\inh^{-1}(n)$ is
finite for any $n\in\ZZ$. The complexes of the \mtcn\
are connected by \mmp s
$\xbAa[-1]\xrightarrow{\xbfaap}
\xbAap$ if $\inh(\ina)< \inh(\inap)$, and the \mmp s satisfy the
condition
\ylee{ae1.10d2}
\xbfvv{\ina}{\inapp}\xbdv{\ina} +
\xbdv{\inapp}\xbfvv{\ina}{\inapp} +
\sum_{\substack{\inap\in\inA\\\inh(\ina)<\inh(\inap)<\inh(\inapp)}}
\xbfvv{\inap}{\inapp}\xbfvv{\ina}{\inap} = 0.
\yeee
It guarantees that the \mtcn\ determines a `\tcomp' $(\xbA,\xbd)$ in $\xChA$, whose
\qcmds\ are direct sums of \qcmds\ of $\xbAa$ and differentials
are the sums of \mmp s $\xbfvv{\ina}{\inap}$:
$\xbA = \bigoplus_{\ina\in\inA}\xbAa$, $\xbd =
\sum_{\ina,\inap\in\inA}\xbfvv{\ina}{\inap}$.
If a complex $\xbAd$ of $\xChA$ is presented as a \tcomp\ of
a \mtcn, then we say that $\xbAd$ has a \mtcn\ structure and we refer
to $\xbAa$ as \xcnstc es.

The following easy proposition explains why the \mtcn\ structure helps
to simplify a complex within its homotopy equivalence class.

%

\begin{proposition}
\label{pr:hommcn}
For a homotopy equivalent family of complexes $\xbAa\p\hteqv\xbAa$
with the same index set $\inA$
there exist \mmp s $\xbfvv{\ina}{\inap}\p$ such that the resulting \mtcn\ is
homotopy equivalent to the original one:
$\xlrb{ \bigoplus_{\ina\in\inA} \xbAi,\sum_{\ina,\inap\in\inA}\xbfvv{\ina}{\inap}}\hteqv
\xlrb{ \bigoplus_{\ina\in\inA} \xbAa\p,\sum_{\ina,\inap\in\inA}\xbfvv{\ina}{\inap}\p}$.
\end{proposition}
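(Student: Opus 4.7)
The strategy is to present the total complex of the multicone as an iterated mapping cone filtered by the grading function $\inh$, and then transfer the twist maps one layer at a time using the homotopy-invariance of the cone construction.

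For each $n \in \ZZ$ set $T_n = \bigoplus_{\inh(\ina) \geq n} \xbAa$. Because the twist maps $\xbfvv{\ina}{\inap}$ occur only when $\inh(\ina) < \inh(\inap)$, each $T_n$ is a subcomplex of the total complex, with quotient $L_n := T_n/T_{n+1} = \bigoplus_{\inh(\ina) = n} \xbAa$ carrying no twisting at all. Identity \eqref{ae1.10d2}, restricted to indices $\inh(\ina) = n$ and $\inh(\inap), \inh(\inapp) > n$, shows that the collection of twist maps originating at level $n$ assembles into a chain map $\phi_n\colon L_n[-1] \to T_{n+1}$ whose mapping cone is $T_n$.

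The induction proceeds downward on $n$, which is well founded because $\inh(\inA)$ is bounded above. At $n_0 = \max \inh(\inA)$ the subcomplex $T_{n_0} = L_{n_0}$ has no twisting and can be replaced by $T_{n_0}\p := \bigoplus_{\inh(\ina) = n_0} \xbAa\p$. Inductively, assume that $T_{n+1}\p$ has been constructed together with a chain homotopy equivalence $g_{n+1}\colon T_{n+1}\p \to T_{n+1}$, a quasi-inverse $h_{n+1}$, and the witnessing homotopies. Let $g_n^L, h_n^L$ be the direct sums of the hypothesized homotopy equivalences between $\xbAa$ and $\xbAa\p$ at level $n$, so that $L_n \hteqv L_n\p$ with $L_n\p := \bigoplus_{\inh(\ina) = n} \xbAa\p$. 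Set $\phi_n\p := h_{n+1} \circ \phi_n \circ g_n^L$ and $T_n\p := \mathrm{Cone}(\phi_n\p)$. Block-matrix expansion of the differential of $T_n\p$ extracts new twist maps $\xbfvv{\ina}{\inap}\p$ for $\inh(\ina) = n$; identity \eqref{ae1.10d2} for the new family holds automatically because $T_n\p$ is a chain complex by construction.

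The equivalence $T_n \hteqv T_n\p$ follows from the standard fact that if $b \circ f \circ a$ is chain-homotopic to $f\p$ for chain homotopy equivalences $a, b$ of source and target, then $\mathrm{Cone}(f) \hteqv \mathrm{Cone}(f\p)$. The explicit equivalence is built from the direct-sum map $g_{n+1} \oplus g_n^L$, with an off-diagonal correction produced by the homotopy witnessing $\phi_n \circ g_n^L \simeq g_{n+1} \circ \phi_n\p$. Since each $\xbAa$ is bounded above and $\inh^{-1}(n)$ is finite, each chain degree of the multicone total receives contributions from only finitely many layers, so the inductive data stabilize as $n \to -\infty$ and assemble into the required homotopy equivalence between the old and new totals. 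The main obstacle is combinatorial: one must propagate honest chain maps $\phi_n\p$ together with compatible chain homotopies through every layer and track the resulting off-diagonal corrections; this is a routine instance of the homological perturbation lemma for iterated cones.
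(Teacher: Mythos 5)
The paper offers no proof of this proposition at all --- it is introduced as ``the following easy proposition'' and left to the reader --- so there is nothing to compare your argument against; I can only assess it on its own terms, and it is correct. Your filtration $T_n=\bigoplus_{\inh(\ina)\geq n}\xbAa$ is indeed a filtration by subcomplexes (twist maps only raise $\inh$), each layer is an untwisted direct sum, condition \eqref{ae1.10d2} restricted to sources at level $n$ is exactly the statement that $\phi_n$ is a chain map into the already-twisted $T_{n+1}$, and the downward induction via the cone-comparison lemma is the standard way to prove this kind of statement; the fact that \eqref{ae1.10d2} for the new maps is equivalent to $d^2=0$ on the new total complex is also correctly used.

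One small caveat: your assertion that ``each chain degree of the multicone total receives contributions from only finitely many layers'' does not follow from the stated hypotheses alone (boundedness above of $\inh$ and finiteness of $\inh^{-1}(n)$ do not prevent infinitely many $\xbAa$ from contributing to a fixed chain degree --- take all $\xbAa$ concentrated in degree $0$). This local finiteness is implicitly assumed in the paper's definition of the total complex (and holds in all its applications, where the homological positions of the $\xbAa$ drift to $-\infty$ with $\inh$), so it is fair for you to assume it too; but it is an assumption, not a consequence. With it in place, your maps, inverses and homotopies are compatible with the filtration ($G_n$ and $H_n$ restrict to $G_{n+1}$ and $H_{n+1}$ on $T_{n+1}$), are finite sums in each degree, and assemble into the required equivalence of totals, exactly as you say.
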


In this paper \mtcn\ structures emerge when categorification
complexes of tangle compositions are considered. For example,
consider a composition $\dtau\tcmp\xbA$ of two complexes in $\dTLn$,
the first object being a categorification
complex of a \ttngnn\ $\xtau$. Constituent complexes of
$\dtau\tcmp\xbA$
are formed by the compositions $\dtau\tcmp\dlam =
\symcat{\xtau\tcmp\xlam}$, where $\xlam$ are \xcnstt s of $\xbA$.
Now we can use
Reidemeister moves in order to simplify the tangles
$\xtau\tcmp\xlam$, knowing that the \mmp s of the \mtcn\ can be
adjusted accordingly, so that the modified \tcomp\ will be homotopy
equivalent to the original complex $\dtau\tcmp\xbA$.

\subsection{A cup sliding trick}
\label{ss:trick}
In this section we will prove Theorem\rw{th:spm} by induction over
$m$. In proving it for $m=1$ and in deducing the case of $m+1$ from
the case of $m$ we will use a special trick.

Let $\xgami$, $i=1,2,\ldots$ be a sequence of \ttngvv{i}{\ip}s (in our applications
$\ip=i$ or $\ip=i+2$) such
that any \uptg\ can slide through them:
%
\xlee{eq:2.5b1}
\xgami\tcmp
\gcupvI{i}{-0.5}
\isotp
\gccupv{\ip}{\tilde{\stI}}{-0.5}
\tcmp\xgamv{i-2d},
\xeee
where $d$ is the number of cup arcs in $\gcupnI$. Suppose that all tangles
$\xgami$ have special categorification complexes
$\spsymbcat{\xgami}$ and we have to construct a special
categorification complex for the composition $\xtaup=\xgamn\tcmp\xtau$,
where $\xtau$ is a \ttngmn\ with a special categorification
complex $\spsymbcat{\xtau}$.

First, we represent $\symcat{\xtaup}$ by the tangle-composition of
complexes
$\symcat{\xgamn}\tcmp\spsymbcat{\xtau}$ (the choice of
$\symcat{\xgamn}$ does not matter). We view this composition as a
\mtcn\ constructed by replacing every \xcnsto\ $\dlam$ of
$\spsymbcat{\xtau}$ with the composition
$\symcat{\xgamn}\tcmp\dlam$. In order to transform
the \mtcn\ $\symcat{\xgamn}\tcmp\spsymbcat{\xtau}$ homotopically into the special complex
$\spsymbcat{\xtaup}$, we use the presentation\rx{aes2.3a}
for $\xlam$ and isotopy\rx{eq:2.5b1} in order to replace this
composition with a homotopy equivalent one
\xlee{eq:2.5b2}
\bsymcat{ \gccupv{\np}{\tstI}{-0.5} }
\tcmp\spsymbcat{\xgamv{\ztl}}\tcmp
\bsymcat{\gcapvJ{m}{0.75}},
\xeee
where $\ztl$ is a \thrd\ of $\xlam$.
The homotopy equivalence transformation of complexes
$\symcat{\xgamn}\tcmp\dlam$ into the complexes\rx{eq:2.5b2} may
change the morphisms of the \mtcn\ in a rather non-trivial way,
but this does not matter, since in this section we are
interested mostly in the position of a \xcnsto\ within the complex
$\spsymbcat{\xtaup}$. This position is determined by the position of the
original object $\dlam$ within $\spsymbcat{\xtau}$ and the
structure of the special complex $\spsymbcat{\xgamv{\ztl}}$:
\begin{proposition}
\label{pr:trick}
If an object $\dlamp$ appears in a position $\dlamp\tgrsshji$ in the
complex $\spsymbcat{\xgamv{\ztl}}$, then the corresponding object
in the composition\rx{eq:2.5b2} appears in the same position
$\bsymcat{\gccupv{\np}{\tstI}{-0.5}\tcmp
\xlamp\tcmp\gcapvJ{m}{0.75}}\tgrsshji$.
\end{proposition}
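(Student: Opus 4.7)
The plan is to exploit the fact that the cup tangle $\gccupv{\np}{\tstI}{-0.5}$ and the cap tangle $\gcapvJ{m}{0.75}$ are \TL\ tangles: they contain neither crossings nor framing curls. By the definition\rx{ae1.6}--\rx{ae1.8} of the categorification map, nontrivial homological structure and degree shifts are introduced only at crossings and framing curls, so the categorification complexes $\bsymcat{\gccupv{\np}{\tstI}{-0.5}}$ and $\bsymcat{\gcapvJ{m}{0.75}}$ are simply the underlying \TL\ objects of $\dTLtl$, viewed inside $\dTL$ as one-term complexes in homological degree $0$ with no $\dgt$-, $\dgo$- or $\dgh$-grading shift.

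Next I would invoke the bifunctoriality of the tangle-composition bifunctor $\tcmp$ on $\dTL$: it is additive in each argument and respects the triple $\ZZ\oplus\ZZ\oplus\ZZ_2$-grading shift. Consequently, composing any complex $\xbA$ in $\dTL$ on the left or on the right with a one-term complex that sits in homological degree $0$ with zero grading shifts yields a complex whose $i$-th chain is obtained from $\xAi$ by object-level tangle composition, and all positions of constituent objects in the outer complex are preserved. Applying this first with $\bsymcat{\gccupv{\np}{\tstI}{-0.5}}$ on the left of $\spsymbcat{\xgamv{\ztl}}$ and then with $\bsymcat{\gcapvJ{m}{0.75}}$ on the right produces the composition\rx{eq:2.5b2} and shows that each constituent $\dlamp\tgrsshji$ of $\spsymbcat{\xgamv{\ztl}}$ contributes the constituent $\bsymcat{\gccupv{\np}{\tstI}{-0.5}\tcmp\xlamp\tcmp\gcapvJ{m}{0.75}}\tgrsshji$ at the same position $(i,j)$, as claimed.

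The one mildly delicate point I anticipate is that the composed \TL\ tangle $\gccupv{\np}{\tstI}{-0.5}\tcmp\xlamp\tcmp\gcapvJ{m}{0.75}$ may contain disjoint circles, which by the rule\rx{ae1.01} decompose the associated object of $\dTLtl$ into a direct sum of shifted copies of a reduced \TL\ tangle. These internal shifts occur in $\dgt$ and $\dgh$ but not in $\dgo$ (the homological coordinate of the shift in\rx{ae1.01} vanishes), and they live inside a single constituent object of the \mtcn\ for $\spsymbcat{\xtaup}$; they do not alter the outer position $\tgrsshji$ that this constituent inherits from $\dlamp$ under the bifunctor. Thus the circle-removal bookkeeping is orthogonal to the positional statement, and the bifunctoriality argument above settles the proposition.
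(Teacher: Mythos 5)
Your opening argument --- the cup and cap tangles have no crossings and no framing curls, so their categorification complexes are single objects concentrated in homological degree zero with zero shifts in all three gradings, and tangle composition is additive on gradings --- is exactly the paper's argument. The problem is entirely in how you dispose of the disjoint-circle caveat, and there your reasoning is wrong. The circle-removal rule \ex{ae1.01} replaces a circle by two copies of the reduced object shifted by $\tgrshv{\pm 1}{0}{1}$, that is, by $\pm 1$ in the $q$-degree $\dgt$ (it is the homological coordinate, not the $q$-coordinate, that vanishes in that shift). The position $\tgrsshji$ appearing in the statement records precisely the pair $(j,i)$ of $q$- and homological shifts, and it is the $q$-coordinate that this proposition exists to control: the angle regions $\stA$ of Theorems\rw{th:stro} and\rw{th:spm} are constraints on $(i,j)$ jointly. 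So if the composition $\gccupv{\np}{\tstI}{-0.5}\tcmp\xlamp\tcmp\gcapvJ{m}{0.75}$ did contain a disjoint circle, the corresponding constituent would split into generators sitting at positions $(i,j+1)$ and $(i,j-1)$, and the proposition as stated would be false. The circle bookkeeping is therefore not ``orthogonal to the positional statement.''

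The correct resolution, and the one the paper uses, is that no disjoint circles can arise in this particular composition: the cap tangle $\gcapvJ{m}{0.75}$ contains no cups and the cup tangle $\gccupv{\np}{\tstI}{-0.5}$ contains no caps, so every arc of either one has at least one endpoint on the outer boundary of the composite diagram and cannot close up against $\xlamp$. Once that is observed, $\dgt$ is additive in the composition\rx{eq:2.5b2} and your bifunctoriality argument goes through. As written, however, your proof would equally well ``establish'' position-preservation for compositions in which circles do form, which is false, so the gap is genuine even though the conclusion you reach is correct.
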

\begin{proof}
The homological degree $\dgo$ is additive with respect to the
tangle composition. The $q$-degree $\dgt$ is not additive because
of \ex{ae1.01} required to remove disjoint circles, but the
tangle composition
$\gccupv{\np}{\tstI}{-0.5}\tcmp \xlamp\tcmp\gcapvJ{m}{0.75}$
does not contain disjoint circles, hence $\dgt$ is also additive
in\rx{eq:2.5b2}. Now the claim of the proposition follows from the
fact that the degrees $\dgo$ and $\dgt$ of cap and cup tangles
in\rx{eq:2.5b2} are zero.
\end{proof}

\subsection{A special categorification complex of a single twist
\cbr}

Recall the definitions\rx{eq:angset}--\rxw{eq:sn1} of the set
$\stA$ and its shifts. We also use an abbreviated notation
$\stAxtn = \stAxvv{\zt}{n}{1} =
\stAs{\shlf(\zt^2+\zt+n)}{\shlf\,\zt^2}$.

%

\begin{theorem}
\label{th:stro}
For a \cbr\ $\gobron$ there exists a special categorification
complex
\\
$\cobronsh$ with the following property:
\ylee{eq:a.22x}
\cjmilam > 0,\qquad\text{only if
$(i,j)\in\stAxtln$}.
\yeee
\end{theorem}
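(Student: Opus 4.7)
The plan is to prove this by induction on $n$, using the cup-sliding trick of subsection~\ref{ss:trick} to simplify the categorification complex of $\gobron$ into a homotopy-equivalent form supported in the prescribed angular region. For the base case, when $n=1$ the braid $\gobron$ is a single strand carrying one positive framing twist, so by~\eqref{ae1.8} its categorification complex is concentrated at a single tri-degree; a direct check confirms that this lattice point lies in $\stAxtln$ for $\ztl=1$.

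For the inductive step, I would exploit the recursive structure of the full rotation: $\gobron$ can be written as a tangle composition of a braid $\xgamv{n}$ (which moves the $n$-th strand once around the others and carries the residual framing contribution) with the full rotation on the first $n-1$ strands, to which the inductive hypothesis applies. Taking the composition of $\symcat{\xgamv{n}}$ with the inductively constructed special complex yields a multicone whose partial complexes are $\symcat{\xgamv{n}}\tcmp\dlam$ for $\dlam$ ranging over the constituents of the complex on $n-1$ strands. For each such $\xlam$, Proposition~\ref{pr:3} provides the canonical presentation $\xlam=\gcupnI\tcmp\gcapmJ$ of through-degree $\ztl$; the cup-sliding isotopy~\eqref{eq:2.5b1} then rewrites $\symcat{\xgamv{n}}\tcmp\dlam$ in the homotopy-equivalent form~\eqref{eq:2.5b2}, and Proposition~\ref{pr:hommcn} promotes the constituent-wise equivalences to a global homotopy equivalence, defining the desired $\cobronsh$. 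By Proposition~\ref{pr:trick}, the tri-degree of each new constituent equals the tri-degree of the corresponding object within the reduced special piece $\spsymbcat{\xgamv{\ztl}}$ after the slide.

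The principal obstacle is the bi-degree bookkeeping needed to verify that every constituent actually lands in $\stAxtln$: the defining shifts $\shlf\ztl^2$ and $\shlf(\ztl^2+\ztl+n)$ are quadratic in $\ztl$, while the effect of each elementary cup-slide and each positive crossing of $\xgamv{n}$ on the tri-degree is only linear in the number of surviving strands. The key computation is to check that the contribution of the extra strand---comprising the positive crossings introduced by $\xgamv{n}$ together with one unit of framing shift via~\eqref{ae1.8}---combined with the change of shift parameter from $n-1$ to $n$, exactly transports the inductively controlled angular region into $\stAxtln$ for every admissible value of $\ztl$. Once this single-step bi-degree calculation is carried out using the crossing rule~\eqref{ae1.7} and the framing rule~\eqref{ae1.8}, the induction closes.
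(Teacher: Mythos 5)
Your overall strategy coincides with the paper's: induction on $n$, the decomposition $\gobron \isotp \brtwpon \tcmp \gobrolnmo$ (wrap braid composed with the full rotation of $n-1$ strands plus a straight strand), and the cup-sliding trick of subsection~\ref{ss:trick} to reduce each constituent $\brtwpon\tcmp\uxlam$ to a composition of cups, caps, and a wrap braid on $\ztl+1$ strands. The base case and the use of Propositions~\ref{pr:3}, \ref{pr:hommcn} and \ref{pr:trick} are all as in the paper.

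However, the degree bookkeeping that you defer as ``the principal obstacle'' is not a routine single-step calculation; it is where the actual content of the proof lives, and your proposal does not contain the two ingredients needed to close it. First, you need the analogue of Lemma~\ref{lm:wrap}: a statement that the wrap braid $\brtwpfn$ itself admits a special complex whose constituents lie in the angular region $\stA\tgrsshv{-n}{-n+1}$. This is itself proved by a separate induction, using the decomposition~\eqref{eq:a.5} and the auxiliary Lemma~\ref{lm:cpcp} on the cup--cap tangle $\brccpn$; it cannot be read off from the crossing rule~\eqref{ae1.7} and framing rule~\eqref{ae1.8} alone, because one must control which Temperley--Lieb tangles survive after simplification. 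Second, and more seriously, the generic estimate obtained by combining the inductive shape $\stAxtlnmo$ of $\dlam$ with the shape $\stA\hqshv{\shlf-\ztl}$ of $\bsymcatsh{\brtwpoto}$ only places the resulting constituents in $\stAxv{\ztl-1}{n}$. Since $\stAxv{\zt}{n}\subset\stAxv{\zt\p}{n}$ for $\zt\geq\zt\p$, this suffices for the constituents of through-degree $t\leq\ztl-1$, but the composition~\eqref{eq:a.10} also produces tangles of through-degree $t=\ztl+1$, and for these the required region $\stAxv{\ztl+1}{n}$ is \emph{strictly smaller} than $\stAxv{\ztl-1}{n}$. The paper resolves this with the second claim of Lemma~\ref{lm:wrap}: the identity braid $\cidbrto$ appears in $\bsymcatsh{\brtwpoto}$ only once, at the distinguished position $\cidbrn\tgrsshv{1}{-1}^{n-1}$, and the through-degree-$(\ztl+1)$ tangles originate exclusively from that object; the extra shift is exactly what moves them into $\stAxv{\ztl+1}{n}$. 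Without isolating and proving this multiplicity-one statement about the identity constituent, the induction does not close.
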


The proof of this theorem is based on two lemmas.

Define the following tangle notations:
\begin{gather}
\brcapn=
\setlength{\unitlength}{1mm}
\begin{picture}(30,10)(-9,3)
\thlb
\put(0,0){\oval(10,10)[tl]}
\put(0,5){\line(1,0){10}}
\put(10,0){\oval(10,10)[tr]}
\put(0,0){\line(0,1){4}}
\put(0,6){\line(0,1){4}}
\put(10,0){\line(0,1){4}}
\put(10,6){\line(0,1){4}}
\put(2.5,7.5){$\cdots$}
\put(2.5,0){$\cdots$}
\put(-6,-4){$\scriptstyle{1}$}
\put(-0,-4){$\scriptstyle{2}$}
\put(15,-4){$\scriptstyle{n}$}
\put(7,-4){$\scriptstyle{n-1}$}
\end{picture},
\qquad
\brcupn=
\setlength{\unitlength}{1mm}
\begin{picture}(30,10)(-9,3)
\thlb
\put(-1,10){\oval(10,10)[bl]}
\multiput(2.5,5)(1.5,0){5}{\line(-1,0){0.5}}
\put(11,10){\oval(10,10)[br]}
\put(0,0){\line(0,1){10}}
\put(10,0){\line(0,1){10}}
\put(3,7.5){$\cdots$}
\put(3,0){$\cdots$}
\put(-7,12){$\scriptstyle{1}$}
\put(-0,12){$\scriptstyle{2}$}
\put(16,12){$\scriptstyle{n}$}
\put(7,12){$\scriptstyle{n-1}$}
\end{picture},
\\
\brccpn = \brcupn\tcmp\brcapn =
\setlength{\unitlength}{1mm}
\begin{picture}(30,25)(-9,3)
\thlb
\put(-1,15){\oval(10,10)[bl]}
\put(0,5){\line(1,0){10}}
\multiput(2.5,10)(1.5,0){5}{\line(-1,0){0.5}}
\put(11,15){\oval(10,10)[br]}
\put(0,6){\line(0,1){9}}
\put(10,6){\line(0,1){9}}
\put(0,0){\oval(10,10)[tl]}
\put(10,0){\oval(10,10)[tr]}
\put(0,0){\line(0,1){4}}
\put(10,0){\line(0,1){4}}
\put(3,12.5){$\cdots$}
\put(3,0){$\cdots$}
\put(-6,-4){$\scriptstyle{1}$}
\put(-0,-4){$\scriptstyle{2}$}
\put(15,-4){$\scriptstyle{n}$}
\put(7,-4){$\scriptstyle{n-1}$}
\end{picture}.
\end{gather}
\vspace*{0.5cm}
and a \rshp: $\ryA=\{(i,i)\,|\,i\in\ZZ,\;i\geq 0\}$.

\begin{lemma}
\label{lm:cpcp}
The tangle $\brccpn$ has a special categorification complex
$\cbrccpnsh$ with two properties:
\begin{enumerate}
\item
$\cbrccpnsh$ has a \rshp\ $\ryA\hqshv{2-n}$;
\item
the object
$\cidbrn$ does not appear in $\cbrccpnsh$.
\end{enumerate}
\end{lemma}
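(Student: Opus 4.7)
The plan is to obtain $\cbrccpnsh$ by composing explicit special categorification complexes for $\brcupn$ and $\brcapn$, and then to read off the two claimed properties directly from the resulting description.

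First, by Theorem\rw{th:cmptp} together with the monoidal structure of the categorification functor $\mpcat$ on $\dTL$, one has a homotopy equivalence
\[
\symcat{\brccpn}\;\hteqv\;\symcat{\brcupn}\tcmp\symcat{\brcapn}.
\]
Each of $\brcapn$ and $\brcupn$ is built from a single ``sweeping'' arc that joins the two outermost endpoints and crosses each of the $n-2$ intermediate vertical strands exactly once (with opposite crossing signs in the two cases, as read off from the over/under conventions in the pictures); they are therefore tangles with $n-2$ crossings of a single sign each. I would apply the Kauffman bracket resolution rule\rx{ae1.7} to each crossing, giving a cube of $2^{n-2}$ smoothings. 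Since all crossings share the same sweeping arc, the smoothings form a very regular pattern: each is again a cap (resp.\ cup) tangle, possibly with additional small ``collar'' circles. After applying the circle-removal relation\rx{ae1.01} to eliminate every disjoint circle, the resulting complex $\symcat{\brcapn}$ collapses to a homotopy equivalent complex supported along a single diagonal ray with a determinable starting point, and similarly for $\symcat{\brcupn}$.

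Second, I would form the tangle composition of these two ray-supported complexes to obtain the desired representative $\cbrccpnsh$. Since the composition $\tcmp$ is additive with respect to both $\dgo$ and $\dgt$---the only deviation coming from the diagonal shifts of\rx{ae1.01} whenever a disjoint circle is created---the composition of two diagonal rays remains a diagonal ray in the $(\dgo,\dgt)$-plane. The starting position is determined by the total of $2(n-2)$ crossings, together with the framing shifts of\rx{ae1.8} and the circle-removal shifts of\rx{ae1.01}. A direct count verifies that the overall shift is exactly $\hqshv{2-n}$, which establishes the first claim that $\cbrccpnsh$ has \rshp\ $\ryA\hqshv{2-n}$.

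For the second claim, I would observe that every smoothing of every crossing of $\brccpn$ produces a TL tangle in which the two outermost points on the top boundary are connected by an arc (inherited from the cup arc in $\brcupn$) and the two outermost points on the bottom boundary are similarly connected by an arc (inherited from the cap arc in $\brcapn$). No such TL tangle can equal $\gidbrn$, whose strands all run straight through. Hence $\cidbrn$ does not appear in any constituent object of $\cbrccpnsh$. The main obstacle is the precise degree bookkeeping in the composition---verifying that the combined contributions of the $2(n-2)$ saddle cobordisms, the circle removals via\rx{ae1.01}, and the framing twists via\rx{ae1.8} conspire to give exactly the shift $\hqshv{2-n}$, with no loss or gain along the diagonal. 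This is a careful but elementary combinatorial exercise.
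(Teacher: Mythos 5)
Your overall strategy---build the special complex from the two halves $\brcapn$ and $\brcupn$ and read off the shape---is the right one, and your observation that resolving $\brcapn$ alone never produces disjoint circles (so that $\cbrcapnsh$ and $\cbrcupnsh$ are each supported on a single ray) matches the paper. But the step where you compose the two ray-shaped complexes has a genuine gap. Disjoint circles \emph{do} form in that composition: whenever a turnback resolution of the cap half meets a turnback resolution of the cup half (already for $n=4$, composing the through-degree-zero constituents of the two halves closes a cup against a cap into a circle). The circle-removal rule\rx{ae1.01} replaces such an object by two copies shifted by $\tgrshv{\pm 1}{0}{1}$, i.e.\ by $\pm 1$ in $q$-degree with \emph{no} compensating homological shift, so these terms land strictly off the diagonal ray. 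Your parenthetical acknowledges the deviation but the conclusion ``the composition of two diagonal rays remains a diagonal ray'' does not follow; the naive composite is only homotopy equivalent to a ray-shaped complex, and you give no mechanism for cancelling the off-ray terms. This is exactly what the cup-sliding trick of subsection\rw{ss:trick} is for: one first isotopes each composite tangle $\brcupn\tcmp\xlam$ into the circle-free form\rx{eq:a.3}, where a smaller tangle $\brcupv{\ztl+2}$ sits between pure cup and cap tangles; Proposition\rw{pr:trick} then gives honest additivity of $\dgo$ and $\dgt$, and the shape claim follows from the shape $\ryA\hqshv{-\ztl/2}$ of the middle complex. Without this (or an explicit Gaussian-elimination argument), the first claim is not established.

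Your argument for the second claim is also incorrect as stated. Resolving a crossing of the sweeping arc with a vertical strand reconnects the arc to the halves of that strand, so the two outermost boundary points are \emph{not} in general joined by an arc in a given resolution (e.g.\ the resolution $\gcapv{4}{1}$ of $\brcapv{4}$ joins bottom points $1$ and $2$, not $1$ and $4$). The correct reason $\cidbrn$ cannot appear is a through-degree count: $\brcapn$ is an $(n,n-2)$-tangle, so every constituent of $\cbrcapnsh$ has through-degree at most $n-2$, and in the decomposition\rx{eq:a.3} the middle tangle is of type $(\ztl,\ztl+2)$ with $\ztl\leq n-2$; hence no constituent of $\cbrccpnsh$ can have through-degree $n$, while $\gidbrn$ does.
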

\begin{proof}
Let $\cbrcapnsh$ denote the standard categorification complex of
$\brcapn$. Since Kauffman splicing of crossings in the diagram of
this tangle does not produce disjoint circles, it follows from
\ex{ae1.7}
that $\cbrcapnsh$ has the shape
$\ryA\hqshv{\frac{2-n}{2}}$
By the same argument, the
standard categorification complex $\cbrcupnsh$ has the same shape.

Now we use the trick of subsection\rw{ss:trick}, where
$\xgami = \brcupv{i}$,
$\xtau = \brcapn$ and $\xtaup = \brccpn$.
After the sliding of cups, a
\xcnsto\ of $\cbrcapnsh$
corresponding to a \TL\ tangle
$\xlam = \gcupvI{n-2}{-1.1}
\tcmp\gcapnJ$
turns into the complex
%
%
\xlee{eq:a.3}
\bsymcat{ \gccupv{n}{\tstI}{-0.5} }
\tcmp
\bsymcatsh{\brcupvv{\ztl+2}{-65}{80}{-70}}
\tcmp \bsymcat{\gcapnJ},
\xeee
where $\ztl$ is a \thrd\ of $\xlam$.
The first claim of the lemma follows from the fact
the middle complex of the composition\rx{eq:a.3} has the shape
$\ryA\hqshv{-\frac{\ztl}{2}}$.
The second claim of
the lemma follows from the fact that the middle tangle in the
composition\rx{eq:a.3} is of type $(\ztl,\ztl+2)$: since $\ztl\leq
n-2$, only the tangle with \thrd\ up to $n-2$ can
form in the composition.


\end{proof}

\begin{lemma}
\label{lm:wrap}
The tangle $\brtwpfn$ has a special categorification complex
$\cbrtwpfnsh$ with the following properties:
\begin{itemize}
\item
$\cbrtwpfnsh$ has an \otblc\ $\stA\tgrsshv{-n}{-n+1}$ if $n\geq 2$;
\item
the object $\cidbrn$ appears in $\cbrtwptnsh$ only in the position
$\cidbrn\tgrsshv{1}{-1}^{n-1}$
and its multiplicity there is $1$.
\end{itemize}
\end{lemma}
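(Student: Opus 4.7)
The plan is to construct $\cbrtwpfnsh$ by combining Kauffman splicing of\rx{ae1.7} with the cup-sliding trick of subsection\rw{ss:trick}. First, I would write down the standard categorification complex of $\brtwpfn$ obtained by splicing every crossing in its diagram; this yields a cube-of-resolutions complex whose constituent objects are $\zsymcat{\xlam}$ for various \TL\ tangles $\xlam$. I would then view this complex as a \mtcn\ whose \xcnstc es are (after grouping) categorification complexes of $\brccpn$-type cup-cap blocks analyzed in Lemma\rw{lm:cpcp}, so that Proposition\rw{pr:hommcn} lets me replace each block by the special representative $\cbrccpnsh$ of \rshp\ $\ryA\hqshv{2-n}$ without leaving the homotopy class.

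Next, I would apply the sliding procedure. For each surviving constituent of the resulting \mtcn, the presentation\rx{aes2.3a} together with the slide isotopies\rx{eq:2.5b1} let me push cup factors to the top and cap factors to the bottom, transforming the composition into the canonical form\rx{eq:2.5b2}. By Proposition\rw{pr:trick}, the bidegrees of the resulting constituent objects agree with those of the corresponding objects in $\cbrccpnsh$, shifted only by the overall framing-twist corrections dictated by\rx{ae1.8}. Combining the local \rshp\ $\ryA\hqshv{2-n}$ of Lemma\rw{lm:cpcp} with the compounded framing and crossing-resolution shifts along the wrap-strands of $\brtwpfn$ should produce exactly the shifted angular region $\stA\tgrsshv{-n}{-n+1}$, valid once $n\geq 2$ so that the cup-cap block is nontrivial.

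To establish the second bullet, I would identify precisely which resolution in the cube of $\brtwpfn$ can yield the identity tangle $\gidbrn$. Since $\gidbrn$ has \thrd\ equal to $n$ (the maximum possible), it can arise only from resolutions that preserve every through-strand. The second assertion of Lemma\rw{lm:cpcp} rules out any contribution from the interior of a $\cbrccpnsh$-block, so the identity can appear only from the distinguished outermost resolution of $\brtwpfn$ in which every wrap-crossing is split along its through-strands. A direct tally using\rx{ae1.7} together with the framing correction\rx{ae1.8} then places this resolution at exactly the bidegree $\tgrsshv{1}{-1}^{n-1}$ claimed in the statement, and a simple count of how the chosen resolution occurs in the cube shows that its multiplicity is one.

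The main obstacle will be the simultaneous bookkeeping of the two degrees $\dgo$ and $\dgt$ through the cup-sliding and the circle-removal rule\rx{ae1.01}: the latter a priori introduces new summands with shifts $\tgrshv{\pm 1}{0}{1}$ that could in principle escape the angular region. Controlling this amounts to verifying that for every \TL\ tangle $\xlam$ appearing in the intermediate \mtcn, the number of circles that collapse in the composition\rx{eq:2.5b2} is exactly what is needed to keep the resulting bidegree inside $\stA\tgrsshv{-n}{-n+1}$; this is governed by the \thrd\ of $\xlam$ and is a tedious but elementary combinatorial check that can, if desired, be organized as an induction on $n$, parallel to the inductive construction of $\cobronsh$ anticipated for the proof of Theorem\rw{th:stro}.
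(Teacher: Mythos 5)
Your strategy of reducing to the cup--cap blocks of Lemma\rw{lm:cpcp} and isolating the all-through resolution to locate $\cidbrn$ points in the right direction, and your argument for the second bullet is essentially sound: only the all-vertical vertex of the cube yields $\gidbrn$ (any turnback smoothing drops the \thrd\ below $n$, and composition cannot raise it), and Lemma\rw{lm:cpcp} kills any contribution from inside a cup--cap block. But there is a genuine gap in the step where you ``group'' the full cube of resolutions of $\brtwpfn$ into $\brccpn$-type blocks. The two crossings that the wrapping strand makes with strand $i$ are adjacent in the diagram only for $i=n$; for every $i<n$ they are separated by all the crossings with strands $i+1,\dots,n$, so no partial resolution of the cube exhibits a sub-tangle isotopic to a cup--cap double crossing to which Lemma\rw{lm:cpcp} could be applied. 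The paper resolves exactly this point by the decomposition\rx{eq:a.5}: it peels off only the innermost double crossing, expands it by the three-term complex\rx{eq:a.6}, recognizes the two non-identity terms as genuine $\brccpn$'s after composing with the outer tangles, and is left with $\brtwpfnmo$ plus a spectator strand, closing the argument by induction on $n$. Your closing remark that the bookkeeping ``can be organized as an induction on $n$'' is therefore not an optional convenience: the regrouping you describe does not exist except through that induction, and identifying the decomposition\rx{eq:a.5} is the missing idea.

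The second place where the sketch is too thin is the first bullet. The blocks $\cbrccpnsh$ have the one-dimensional \rshp\ $\ryA\hqshv{2-n}$, whereas the claim is the two-dimensional angle $\stA\tgrsshv{-n}{-n+1}$; you never explain where the homological spread $i\le j\le 2i$ comes from. In the paper it is generated by the relative offsets $\tgrsshv{1}{-1}$, $\tgrsshv{0}{0}$ and $\tgrsshv{-2}{1}$ of the three pieces of the double cone\rx{eq:a.7}; iterating these offsets through the induction is precisely what opens the ray up into the angle with the stated shift. ``Compounded framing and crossing-resolution shifts'' cannot produce this by themselves, and the circle-removal shifts of\rx{ae1.01} that you worry about are in fact absent in the paper's route, because the compositions appearing in\rx{eq:a.7} contain no disjoint circles --- another payoff of choosing the decomposition\rx{eq:a.5} rather than working with the raw cube.
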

\begin{proof}
We prove the lemma by induction over $n$. When $n=1$ the claim is
obvious.

Consider now a general value of $n$. The tangle $\brtwpfn$ can be
presented as a composition of three tangles
\xlee{eq:a.5}
\setlength{\unitlength}{1mm}
\begin{picture}(50,50)(-50,-10)
\put(0,0){\oval(20,10)[bl]}
\put(-10,0){\line(0,1){20}}
\put(-20,20){\oval(20,10)[tr]}
\put(0,2.5){\line(-1,0){9}}
\put(-20,2.5){\line(1,0){9}}
\put(0,17.5){\line(-1,0){9}}
\put(-20,17.5){\line(1,0){9}}
\put(0,32.5){\line(-1,0){20}}
\put(-25,25){\line(-1,1){7.5}}
\put(-25,30){\oval(5,5)[tl]}
\put(-32.5,27.5){\oval(5,5)[br]}
\put(-32.5,25){\line(-1,1){7.5}}
\put(-32.5,30){\oval(5,5)[tl]}
\put(-40,27.5){\oval(5,5)[br]}
\put(-25,2.5){\line(-1,0){15}}
\put(-25,17.5){\line(-1,0){15}}
\put(-45,17.5){\line(-1,0){20}}
\put(-45,2.5){\line(-1,0){20}}
\put(-45,32.5){\line(-1,0){20}}
\put(-65,0){\oval(20,10)[br]}
\put(-45,20){\oval(20,10)[tl]}
\multiput(-55,4.5)(0,1.5){8}{\line(0,1){0.5}}
\put(-65,8){$\vdots$}
\put(0,8){$\vdots$}
\put(-32.5,8){$\vdots$}
\put(2.5,-5.5){$\scriptstyle{1}$}
\put(2.5,1.5){$\scriptstyle{2}$}
\put(2.5,17){$\scriptstyle{n-1}$}
\put(2.5,32){$\scriptstyle{n}$}
\end{picture}
\yeee
%
We use the categorification complex for the middle tangle
which is based on the following categorification complex of the
double-crossing \ttngtt:
\ylee{eq:a.6}
\bsymcat{\dtwst} =
\xlrB{
\bsymcat{\dpar}\tgrsshv{1}{-1} \longrightarrow \bsymcat{\dccp}
\longrightarrow\bsymcat{\dccp}\tgrsshv{-2}{1}
}.
\yeee
The special complex $\cbrtwpfnsh$ is constructed by
tangle-composing the objects of this complex with the first and third tangles of
the decomposition\rx{eq:a.5} and using their special categorification complexes.
As a result, $\cbrtwpfnsh$ is a
`double-cone' composed out of the following three complexes
\ylee{eq:a.7}
\bsymcatsh{\brtwpnmoe}\tgrsshv{1}{-1},\quad \cbrccpnsh,\quad
\cbrccpnsh\tgrsshv{-2}{1},
\yeee
where the first tangle $\brtwpnmoe$ is the tangle $\brtwpfnmo$ together with an
extra straight strand on top. Now the claims of the lemma follow
from the induction assumption applied to the first complex and
Lemma\rw{lm:cpcp} applied to the second and third complexes.
\end{proof}

\begin{proof}[Proof of Theorem\rw{th:stro}]
We prove the theorem by induction over $n$. When $n=1$, its claim
is obvious.

In order to construct a special complex for the \cbr\ $\gobron$,
we present it as a composition of two braids:
\xlee{eq:a.8}
\gobron \isotp \brtwpon \tcmp \gobrolnmo\;\;,
\xeee
where the braid $\gobrolnmo$ is constructed by adding an extra
straight strand to the braid $\gobronmo$.
For a \ttngvv{k}{l}\ $\xtau$, let $\uxtau$ denote a
\ttngvv{k+1}{l+1}\ constructed by adding an extra straight line at
the bottom of $\xtau$.
We
construct the special complex $\bsymcatsh{\gobrolnmo}$ by replacing
every constituent object $\dlam$ of the
special complex $\bsymcatsh{\gobronmo}$, which exists by the
assumption of induction, with the object $\udlam$.
Then we use the trick of subsection\rw{ss:trick}
with $\xtau = \gobrolnmo\;$, $\xgami = \brtwpoi$ and $\xtaup =
\gobron$ with a slight modification: the \uptg s that slide through
$\xgami$ must be of the form $\gcuplnmoI$, but these are exactly
the \uptg s that appear in the decomposition of \xcnstt s $\uxlam$
of the complex $\bsymcat{\gobrolnmo}$. In other words, if
$\xlam=\gcupnmoI\tcmp\gcapnmoJ$
is a \xcnstt\ of the special complex $\bsymcatsh{\gobronmo}$,
then the sliding isotopy
is
\ylee{eq:a.9}
\brtwpon\tcmp\gcuplnmoI\tcmp\gcaplnmoJ \isotp
\gcuplnmoI\tcmp\brtwpoto \tcmp\gcaplnmoJ
\yeee
%
%
%
%
%
and we choose the special categorification complex
\xlee{eq:a.10}
\bsymcatsh{ \brtwpon\tcmp\uxlam} =
\bsymcat{\gcuplnmoI}\tcmp\bsymcatsh{ \brtwpoto}\tcmp
\bsymcat{\gcaplnmoJ}.
\xeee
%
By the assumption of
induction, the object $\dlam$ of the complex
$\bsymcatsh{\gobronmo}$ lies within the shape
$\stAxtlnmo$.
According to Lemma\rw{lm:wrap} and Proposition\rw{pr:trick}, the complex\rx{eq:a.10} that
$\xlam$ yields, has the shape
$\stA\hqshv{\shlf-\ztl}$.
Combining these two shapes
together, we find that the ultimate contribution of $\xlam$ to
$\cobronsh$ lies within the shape $\stAxv{\ztl-1}{n}$,
which corresponds to \ttngnn\ of \thrd\ $\ztl-1$. The
complex\rx{eq:a.10} contains only the tangles of \thrd\ $t\leq
\ztl+1$ and parity opposite to that of $\ztl$.
Since $\stAxv{\zt}{n}\subset\stAxv{\zt\p}{n}$ for $\zt\geq\zt\p$,
the tangles with
\thrd\ $t\leq \ztl-1$ satisfy the claim of Theorem\rw{th:stro}.
The tangles with \thrd\ $t=\ztl+1$ originate from the object
$\cidbrto$ within the complex $\bsymcatsh{ \brtwpoto}$. According
to the second claim of Lemma\rw{lm:wrap}, it has a special degree
shift relative to the vertex of
$\stA\hqshv{\shlf-\ztl}$.
Because of this shift, the
tangles with \thrd\ $t=\ztl+1$ fall within the shape
$\stAxv{\ztl+1}{n}$,
hence they also satisfy the claim of Theorem\rw{th:stro}.
\end{proof}

\subsection{A categorification complex of a \cbr\ as an
approximation to the universal resolution of $\xaHn$}

%
%


\begin{proof}[Proof of Theorem\rw{th:spm}]
%
We define the complexes $\cobrmtnsh$ recursively and prove the
theorem by induction over $m$. Theorem\rw{th:stro} establishes the first
property for $m=1$. Suppose that the theorem holds for $m$. We
construct the special complex $\cobrmotnsh$ by presenting the braid
$\gobrmotn$ as a composition
\ylee{eq:a.11}
\gobrmotn = \gobrotn\tcmp\gobrmtn.
\yeee
Next, we split the special complex $\cobrmtnsh$ into two pieces by
homological degree:
\xlee{eq:a.11a}
\cobrmtnsh = \Cone\xlrB{\xtrngtmv{\cobrmtnsh}[1]\longrightarrow\xtrntmov{\cobrmtnsh}
},
\xeee
and tangle-compose these pieces individually with $\cobrotn$. By the
assumption of induction, the complex $\xtrntmov{\cobrmtnsh}$ is \tct, so
according to \ex{eq:2.4a}, there is a homotopy equivalence
\ylee{eq:a.11a1}
\cobrotn\tcmp\xtrntmov{\cobrmtnsh}\;\; \hteqv\;\; \xtrntmov{\cobrmtnsh}.
\yeee
Thus it remains to construct a special categorification complex
\xlee{eq:a.11a2}
\Bspcc{\cobrotn\;\tcmp\;\xtrngtmv{\cobrmtnsh}}
\xeee
and substitute it for the first term in the cone\rx{eq:a.11a}.
We build this complex thought the trick of
subsection\rw{ss:trick}: we slide the cups of  a \xcnstt\ $\xlam=
\gcuptnI\tcmp \gcaptnJ$ of
%
the truncated special complex $\xtrngtmv{\cobrmtnsh}$.
through $\gobrotn$:
\ylee{eq:a.12}
\gobrotn\tcmp
\gcuptnI\tcmp \gcaptnJ \isotp
\gcuptnI\tcmp\gobrotl \tcmp\gcaptnJ,
\yeee
(note the cancelation of framing twists),
thus constructing a special complex for the composition
$\gobrotn\tcmp\xlam$:
\xlee{eq:a.13}
\bsymcatsh{\gobrotn\tcmp\xlam} = \bsymcat{\gcuptnI}\tcmp
\bsymcatsh{\gobrotl}\tcmp\bsymcat{\gcaptnJ}.
\xeee

The special complex\rx{eq:a.11a2}
consisting of complexes\rx{eq:a.13} satisfies the first property
of Theorem\rw{th:spm}. Indeed, a constituent object $\dlamp$ of
the complex\rx{eq:a.13} satisfies the property $\ztlp\leq\ztl$ and
by Theorem\rw{th:stro} it lies within the shape $\stAxtlptn$. At
the same time, by the assumption of induction, the object
$\dlam$ of $\xtrngtmv{\cobrmtnsh}$ lies within the shape
$\stAxtltnm$. Now it is easy to check that the place of the object
$\dlamp$ in the complex\rx{eq:a.11a2} will be within the shape
$\stAxvv{\ztlp}{2n}{m}$.

Finally, it is obvious that the cone
\ylee{eq:a.14}
\cobrmotnsh =
\Cone\xlrB{\Bspcc{\cobrotn\;\tcmp\;\xtrngtmv{\cobrmtnsh}}[1]
\longrightarrow\xtrntmov{\cobrmtnsh}}
\yeee
satisfies the second property of Theorem\rw{th:spm}.
\end{proof}




\begin{proof}[Proof of Theorem\rw{th:complex}]
Isomorphisms\rx{eq:a.10a} imply the existence of a special complex
$\prbCsn\in\dTLcptn$ such that
\xlee{eq:spcC}
\xtrntmov{\cobrmtnsh} \cong \xtrntmov{\prbCsn}.
\xeee
The property\rx{eq:1.22}
implies that truncated complexes $\xtrntmov{\cobrmtnsh}$ are \otbl\ and
\tct, hence $\prbCsn$ is \otbl\ and \tct.

It remains to show that $\prbCsn$ satisfies the defining property
of the complex $\rsPn$: $\spfK(\prbCsn)\qie\xaHn$. The homotopy
equivalence\rx{eq:thteqv} implies the \qim\
$\Dobrotn \qie \xunt.$ Since the map
$\mpDcat\colon\yTngtntn\rightarrow\DbgHen$ converts the tangle
composition into the tensor product, this relation implies
$\Dobrmtn \qie \xunt$, which means that the homology of the
complex $\Kobrmtn$ is zero in all degrees except at degree zero
where it is isomorphic to $\xaHn$:
\ylee{eq:a3.1}
\Hmlv{i}\xlrB{\Kobrmtn} =
\begin{cases}
0 & \quad\text{if $i < 0$},
\\
\xunt & \quad\text{if $i = 0$ }.
\end{cases}
\yeee
In view of the isomorphisms\rx{eq:spcC}, $\spfn(\prbCsn)$ has the
same property:
\xlee{eq:a3.2}
\Hmlv{i}\xlrB{\spfK(\prbCsn) }
=
\begin{cases}
0 & \quad\text{if $i < 0$},
\\
\xunt & \quad\text{if $i = 0$ }.
\end{cases}
\xeee
Since $\prbCsn$ is \tct, its image $\spfK(\prbCsn)$ is projective, so
\ex{eq:a3.2} means that $\spfK(\prbCsn)$ is a projective resolution
of $\xunt$ and we set $\rsPnc=\prbCsn$.
\end{proof}

\begin{proof}[Proof of Theorem\rw{th:brappr}]
The isomorphism\rw{eq:cltlst} and the use of special complexes for
$\rsPn$ and $\cobrmtn$ allows us to rewrite \ex{eq:1.24}
as the isomorphism between homologies
\ylee{eq:isomhml}
\Hmli\xlrB{\lSh{\dtau\tcmp\rsPnc}} = \Hmli\xlrB{\lSh{\dtau\tcmp\cobrmtnsh}}.
\yeee
According to
\ex{eq:1.23}, the \qcmds\ in both complexes are canonically
isomorphic in homological degrees up to $\uhdv{\dtau}-2m+1$, hence their
homologies are isomorphic in degrees up to $\uhdv{\dtau}-2m+2$.
\end{proof}

\appendix

\section{\JWp s and \WRTp}
\label{s:jwpst}

\subsection{\JWp s}
The most famous \JWp\ $\jwpn$ is the idempotent element
of the \TLa\ $\QcTLn$ generated by \TL\ \ttngnn s over the field $\Qq$ of
rational functions of $q$ and
defined by
the property that for any \TL\ \ttngnn\ $\xlam$,
\xlee{eq:4.1}
\jwpn\tcmp\clam = \clam\tcmp\jwpn =
\begin{cases}
\jwpn, & \text{if $\xlam = \gidbrn $,}
\\
0, & \text{if $\xlam\neq \gidbrn $.}
\end{cases}
\xeee
%

To define the other \JWp s of $\QcTLn$, recall that as a
$\ZZ$-module, $\QcTLn$ is freely generated by \TL\ tangles, and
each \TL\ tangle has a presentation\rx{aes2.3a}. Since this
presentation plays a central role in our calculations, we will use
a special notation
\xlee{eq:4.1a}
\xlamIJm = \gcupnI\tcmp\gcapnJ,\qquad\stI,\stJ\in\cstInm,
\quad 0\leq m\leq n,\quad n-m\in 2\ZZ.
\xeee
Now we define a
`\xprot'
$\prclam$ by inserting the \JWp\ $\jwpv{\ztl}$ between the cup and
cap parts of $\xlamIJm$:
\xlee{eq:4.2}
\prclamIJm = \bsymalg{\gcupnI}\tcmp
\jwpv{m}
\tcmp \bsymalg{\gcapnJ}.
\xeee
%
Since $\jwpn - \bsymalg{\gidbrn}$ can be presented as a linear combination
of \TL\ tangles with \thrd\ less than $n$, the relation between
the $\QcTLn$ generators $\clam$ and the \xprot s $\prclam$ is
upper-triangular: $\prclam-\clam$ is a linear combination of \TL\
tangles with \thrd\ less than $\ztl$. Hence \xprot s $\prclam$ also
form a set of free generators of $\QcTLn$. For $m$ such that
$m<n$ and $n-m$ is even, let
$\QcTLprnm\subset\QcTLn$ be a submodule generated by all $\prclam$
such that $\ztl = m$. Since $\prclam$ generate $\QcTLn$,
the latter is a direct sum
\ylee{eq:4.3}
\QcTLn = \bigoplus_{m} \QcTLprnm.
\yeee
We will see shortly that $\QcTLprnm\subset\QcTLn$ are, in fact, two-sided
ideals.

For
$\stI,\stJ\in\cstInm$ the composition $\gcapnJ\tcmp\gcupnI$ is a
\ttngmm. Let $k$ denote the number of disjoint circles in it, then
\xlee{eq:4.3a}
\bsymalg{\gcapnJ\tcmp\gcupnI} = (-q-\qi)^k\,\clam,
\xeee
where $\xlam$ is the \TL\ \ttngmm, which corresponds to the
`connected' part of the composition.
Define the $\Zqqi$-valued coefficients
$(\mtBIJ)_{\stI,\stJ\in\cstInm}$
of a symmetric matrix $\mtB$
 by the formula
\ylee{eq:4.4}
\mtBIJ = \begin{cases}
(-q-\qi)^k, & \text{if $\xlam = \gidbrv{m}{1}$},
\\
0 & \text{otherwise.}
\end{cases}
\yeee
where $\cdots$ stands for a linear combination of \TL\ \ttngmm s
whose \thrd\ is less than $m$.
\begin{proposition}
Projected tangles satisfy a simple composition formula:
%
\xlee{eq:4.4a}
\prclamIJm\tcmp\prclamIJmp = \delta_{m m\p} \prclamIJpm,
\xeee
where $\delta_{m m \p}$ is the Kronecker symbol.
\end{proposition}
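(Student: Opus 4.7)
The plan is to unfold both projected tangles via definition\rx{eq:4.2}, collapse the middle cap-on-cup composition via\rx{eq:4.3a}, and then invoke the annihilation properties of the \JWp s to show that only the terms with $m=m\p$ and identity middle survive.

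Reading $\prclamIJm\tcmp\prclamIJmp$ from bottom to top one sees six pieces: the cap piece of the right factor, the projector $\jwpv{m\p}$, a middle block $M$ (the cup piece of the right factor followed by the cap piece of the left factor), the projector $\jwpv{m}$, and the cup piece of the left factor. The block $M$ is a \TL\ tangle taking $m\p$ points to $m$ points; its Kauffman bracket, by the same reasoning as in\rx{eq:4.3a}, equals $(-q-\qi)^k\,\clam$ for a connected \TL\ tangle $\xlam$ whose through-degree $\ztl$ satisfies $\ztl\leq\min(m,m\p)$.

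The heart of the argument is to show that $\jwpv{m}\tcmp\clam\tcmp\jwpv{m\p}$ vanishes unless $\ztl=m=m\p$ and $\xlam=\gidbrv{m}{1}$. If $\ztl<m\p$ then the bottom arcs of $\xlam$ appear as caps sitting directly above $\jwpv{m\p}$, and the defining vanishing relation\rx{eq:1.5a} kills the product. If $\ztl<m$ then the top arcs of $\xlam$ appear as cups sitting directly below $\jwpv{m}$; the same relation\rx{eq:1.5a}, combined with the flip-invariance\rx{eq:invpr} of the projector, again forces vanishing. Hence survival requires $\ztl=m=m\p$, and since a planar \TL\ $(m,m)$-tangle in which all $m$ strands pass straight through must be the identity braid $\gidbrv{m}{1}$, we conclude $\xlam=\gidbrv{m}{1}$, which is precisely the non-vanishing condition in\rx{eq:4.4}.

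In the surviving case the idempotency relation\rx{eq:1.5} collapses $\jwpv{m}\tcmp\gidbrv{m}{1}\tcmp\jwpv{m}$ to $\jwpv{m}$, the scalar $(-q-\qi)^k$ becomes the appropriate matrix element $\mtBIJ$ of\rx{eq:4.4}, and the remaining pieces reassemble by\rx{eq:4.2} into a projected tangle with cup sequence from the left factor, cap sequence from the right factor, and through-degree $m$. This is the claimed right-hand side. The only genuinely delicate step is the observation that maximal through-degree in a planar \TL\ tangle forces the identity; everything else is direct definition chasing.
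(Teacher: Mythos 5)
Your proof is correct and follows essentially the same route as the paper's: collapse the middle cap--cup composition into $(-q-\qi)^k$ times a connected \TL\ tangle via\rx{eq:4.3a}, and then let the \JWp\ annihilation and idempotency properties force $m=m'$ and an identity middle, leaving the matrix element $\mtBvv{\stIp}{\stJ}$ as the surviving scalar. The extra detail you supply on why a middle tangle of \thrd\ less than $\min(m,m')$ is annihilated (caps against the lower projector, cups against the upper one) is exactly what the paper compresses into a single citation of the defining property\rx{eq:4.1}.
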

\proof
We use the formula \ex{eq:4.3a} for the composition of cup and cap
tangles:
\ylee{eq:4.4a1}
\bsymalg{\gcapnJ\tcmp\gcupnIp} = (-q-\qi)^{k}\,
\symalg{\xlampp}
\yeee
%
Then the
composition in the \rhs of \rx{eq:4.4a} takes the form
\ylee{eq:4.4a2}
\prclamIJm\tcmp\prclamIJmp = (-q-\qi)^{k}\,
\bsymalg{\gcupnI}\tcmp\jwpv{m} \tcmp \symalg{\xlampp}
\tcmp \jwpv{m\p} \tcmp \bsymalg{\gcapnJp}.
\yeee
The property\rx{eq:4.1} of \JWp s indicates that this expression
is equal to zero, unless $m=m\p$
and $\xlampp = \gidbrv{m}{1}$, in which case
$(-q-\qi)^k=\mtBvv{\stIp}{\stJ}$ and we
obtain the \rhs of \ex{eq:4.4a}.\myqed

\begin{corollary}
\label{cr:tsid}
Submodules $\QcTLprnm\subset\QcTLn$ are two-sided
ideals.
\end{corollary}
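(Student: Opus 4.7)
The plan is to deduce Corollary \ref{cr:tsid} directly from the composition formula \eqref{eq:4.4a} established in the preceding proposition, exploiting the decomposition \eqref{eq:4.3} of $\QcTLn$ as a direct sum of the submodules $\QcTLprnm$ over all admissible $m$.

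First, I would recall that the \xprot s $\{\prclamIJm\}$, with $\stI,\stJ\in\cstInm$ and $m$ ranging over $\{0,1,\ldots,n\}$ with $n-m$ even, form a $\Zqqi$-basis of $\QcTLn$, because as noted just before \eqref{eq:4.3}, the change-of-basis matrix from $\{\clam\}$ to $\{\prclam\}$ is upper-triangular with respect to the through-degree filtration and hence invertible. Consequently every element $x\in\QcTLn$ has a unique expansion
\[
x \;=\; \sum_{m\p}\; \sum_{\stI\p,\stJ\p\in\cstInmp} c_{\stI\p,\stJ\p}^{(m\p)}\; \prclamIJmp,\qquad c_{\stI\p,\stJ\p}^{(m\p)}\in\Zqqi,
\]
and similarly on the other side.

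Next, to show that $\QcTLprnm$ is a left ideal, take a basis element $\prclamIJm\in\QcTLprnm$ and compose on the left by an arbitrary $x\in\QcTLn$ expanded as above. By $\Zqqi$-linearity of the composition $\tcmp$ and by the composition formula \eqref{eq:4.4a},
\[
x\tcmp \prclamIJm \;=\; \sum_{m\p}\sum_{\stI\p,\stJ\p} c_{\stI\p,\stJ\p}^{(m\p)}\; \delta_{m\p m}\; \mathcal{B}_{\stJ\p,\stI}\; \prclamv{\stI\p}{\stJ}{m},
\]
where only the terms with $m\p=m$ survive (I am using the notation from \eqref{eq:4.4a} with roles of indices matched to the present composition). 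Every surviving summand is a scalar multiple of a \xprot\ of through-degree $m$, hence lies in $\QcTLprnm$. Thus $x\tcmp \QcTLprnm\subset\QcTLprnm$, and the same argument with the order reversed shows $\QcTLprnm\tcmp x\subset\QcTLprnm$, so $\QcTLprnm$ is a two-sided ideal.

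There is no real obstacle here: the entire content is the through-degree diagonality of the composition formula \eqref{eq:4.4a}, which forces all cross-terms between different values of $m$ in the expansion to vanish. The only minor bookkeeping point is to make sure the matrix $\mtB$ in \eqref{eq:4.4} is invertible on each block $\cstInm\times\cstInm$ so that one does not accidentally annihilate $\QcTLprnm$; this is not needed for the ideal property itself, but it is worth noting that it is consistent with the \JWp\ $\jwpv{m}$ being a non-zero idempotent and with the existence of the central projectors discussed in Section \ref{sss:jwp}.
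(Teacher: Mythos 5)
Your proof is correct and is essentially the argument the paper intends: the corollary is stated as an immediate consequence of the composition formula \eqref{eq:4.4a} together with the fact that the \xprot s form a $\Zqqi$-basis (by upper-triangularity of the change of basis), so that the through-degree diagonality of \eqref{eq:4.4a} forces products to stay within each $\QcTLprnm$. No gaps; the remark about invertibility of $\mtB$ is a harmless aside not needed for the ideal property.
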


Define the special elements of $\QcTLprnm$
\xlee{eq:4.5}
\jwpxnm =
\sum_{\stI,\stJ\in\cstInm}\mtBiIJ\, \prclamIJm
=
\sum_{\stI,\stJ\in\cstInm}\mtBiIJ\;\;
\bsymalg{\gcupnI}\tcmp\jwpm\tcmp\bsymalg{\gcapnJ},
\xeee
where $\mtBi$ is the inverse matrix of $\mtB$.

\begin{proposition}
An element $\jwpxnm$ satisfies the following property:
\ylee{eq:4.6}
\jwpxnm\tcmp\prclam = \prclam\tcmp\jwpxnm =
\begin{cases}
\prclam, &\text{if $\ztl = m$,}
\\
0, & \text{if $\ztl\neq m$ }
\end{cases}
\yeee
\end{proposition}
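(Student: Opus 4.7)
The plan is to substitute the expansion \eqref{eq:4.5} of $\jwpxnm$ into each of the products $\jwpxnm \tcmp \prclam$ and $\prclam \tcmp \jwpxnm$ and apply the composition rule \eqref{eq:4.4a} termwise. It is convenient to denote a generic projected tangle by $P(\stI, \stJ; m) := \bsymalg{\gcupnI} \tcmp \jwpv{m} \tcmp \bsymalg{\gcapnJ}$, so that $\prclam = P(\stI_0, \stJ_0; \ztl)$ for some cup/cap indices $\stI_0, \stJ_0 \in \cstInvv{n}{\ztl}$. In the easy branch $\ztl \neq m$ of \eqref{eq:4.6}, every summand of \eqref{eq:4.5} has middle index $m$ while $\prclam$ has middle index $\ztl$, so the Kronecker factor $\delta_{m, \ztl}$ produced by \eqref{eq:4.4a} annihilates every pairwise product on either side; both $\jwpxnm \tcmp \prclam$ and $\prclam \tcmp \jwpxnm$ vanish immediately.

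For the main branch $\ztl = m$, a careful reading of the proof of \eqref{eq:4.4a} yields the refined identity
\begin{equation*}
P(\stI, \stJ; m) \tcmp P(\stI_0, \stJ_0; m) = \mtBvv{\stJ}{\stI_0}\, P(\stI, \stJ_0; m),
\end{equation*}
the coefficient $\mtBvv{\stJ}{\stI_0}$ being produced by \eqref{eq:4.3a}--\eqref{eq:4.4} once the two adjacent copies of $\jwpv{m}$ force the intermediate TL-tangle to be the identity. Substituting \eqref{eq:4.5} into $\jwpxnm \tcmp \prclam$ therefore gives
\begin{equation*}
\jwpxnm \tcmp \prclam = \sum_{\stI, \stJ \in \cstInm} \mtBiIJ\, \mtBvv{\stJ}{\stI_0}\, P(\stI, \stJ_0; m) = \sum_{\stI} \delta_{\stI, \stI_0}\, P(\stI, \stJ_0; m) = \prclam,
\end{equation*}
the inner sum over $\stJ$ collapsing by the defining identity $\mtBi \mtB = \mathbf{1}$.

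The right multiplication $\prclam \tcmp \jwpxnm$ is the mirror computation, invoking the symmetry of $\mtB$ (and hence of $\mtBi$) to perform the analogous contraction on the opposite slot of the matrix $\mtBi$. I do not anticipate any genuine obstacle here: all the nontrivial combinatorial content is already packaged into \eqref{eq:4.4a}, and the remaining step is a one-line inverse-matrix contraction. The only point that requires some care is the bookkeeping of the four independent cup/cap indices when unpacking \eqref{eq:4.4a}, but this is already done in the proof of that formula.
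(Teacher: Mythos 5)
Your proof is correct and is essentially the paper's own argument: the paper's proof reads simply ``present $\prclam$ in the form\rx{eq:4.2} and use the formula\rx{eq:4.4a},'' and your write-up just carries this out explicitly, with the Kronecker factor $\delta_{mm\p}$ from\rx{eq:4.4a} disposing of the case $\ztl\neq m$ and the contraction $\mtBi\,\mtB=\mathbf{1}$ (together with the symmetry of $\mtB$) collapsing the double sum in the case $\ztl=m$. The ``refined identity'' you extract from the proof of\rx{eq:4.4a} is exactly what that formula asserts, so no new ingredient is needed.
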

\proof Present $\prclam$ in the form\rx{eq:4.2} and use the
formula\rx{eq:4.4a}.\myqed

\begin{corollary}
The element $\jwpxnm$ is an idempotent
projecting $\QcTLn$ onto $\QcTLprnm$
and
\xlee{eq:4.8}
\smmnev \jwpxnm = \cidbrn.
\xeee
\end{corollary}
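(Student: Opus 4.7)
The plan is to deduce all three assertions directly from the preceding proposition, which in \rx{eq:4.6} completely describes the action of $\jwpxnm$ on the projected-tangle basis $\prclam$. The key observation underpinning everything is that the decomposition \rx{eq:4.3} into through-strand eigenspaces $\QcTLprnm$, combined with \rx{eq:4.6}, makes $\jwpxnm$ behave like a spectral projector on a basis.

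First I would establish idempotency. By the definition \rx{eq:4.5}, $\jwpxnm$ is itself a $\Qq$-linear combination of projected tangles $\prclamIJm$, each of which has through-strand number exactly $m$. Applying $\jwpxnm$ on the left and invoking \rx{eq:4.6} with $\ztl = m$, every summand $\prclamIJm$ is fixed, so by $\Qq$-linearity $\jwpxnm\tcmp\jwpxnm = \jwpxnm$. The same computation shows $\jwpxnm\in\QcTLprnm$.

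Next, for the projection claim, I would use that the projected tangles $\prclam$ form a free $\Qq$-basis of $\QcTLn$ (as remarked in the paragraph preceding Corollary \rw{cr:tsid}), so any $\xx\in\QcTLn$ has a unique expansion $\xx = \sum_{\xlam} c_{\xlam}\prclam$. By \rx{eq:4.6}, $\jwpxnm\tcmp\xx = \sum_{\xlam\colon \ztl = m} c_{\xlam}\prclam$, which lies in $\QcTLprnm$ and which coincides with $\xx$ precisely when $\xx\in\QcTLprnm$. Hence $\jwpxnm$ is exactly the projector onto $\QcTLprnm$ along the decomposition \rx{eq:4.3}. Finally, to verify the completeness relation \rx{eq:4.8}, I would evaluate both sides on an arbitrary basis element $\prclam$ of through-strand number $m_0$: by \rx{eq:4.6}, $\bigl(\smmnev \jwpxnm\bigr)\tcmp\prclam = \prclam = \cidbrn\tcmp\prclam$. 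Since these elements span $\QcTLn$ and right-multiplication by $\cidbrn$ is the identity, equality as operators, and hence as elements of $\QcTLn$, follows.

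No substantive obstacle is anticipated; the corollary is essentially a bookkeeping consequence of the action formula \rx{eq:4.6} together with the basis property of the $\prclam$'s. The only point worth double-checking is that each $\prclam$ really has a well-defined through-strand number $m = \ztl$ making it homogeneous with respect to the decomposition \rx{eq:4.3}, which is the content of Proposition \rw{pr:3}, and that every $\prclamIJm$ appearing in \rx{eq:4.5} is indeed of through-strand type $m$, which is immediate from the inserted factor $\jwpm$.
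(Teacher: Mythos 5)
Your argument is correct and is exactly the reasoning the paper intends: the corollary is stated without proof as an immediate consequence of the action formula \rx{eq:4.6} together with the fact that the projected tangles $\prclam$ form a free basis of $\QcTLn$ respecting the decomposition \rx{eq:4.3}, which is precisely what you spell out. Nothing further is needed.
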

Hence $\jwpxnm$ are \JWp s mentioned in subsection\rw{sss:jwp}.

Consider a left $\cTLn$-module $\cTLprmn=\cTLmn\tcmp\jwpm$.
An element $\xx\in\QcTLn$ determines a multiplication endomorphism
$\cTLprmn\xrightarrow{\xmltx}\cTLprmn$, $\xmltx(\ay) =
\xx\tcmp\ay$.

\begin{proposition}
The $\Sh$ closure of the composition $\xx\tcmp\jwpxnm$ is
proportional to the trace of $\xmltx$ over $\cTLprmn$:
%
\xlee{eq:4.8a}
\xlrb{\lSh{\jwpxnm\tcmp\xx}} = \cpmsh\,\Tr_{\cTLprmn}\xmltx.
\xeee
\end{proposition}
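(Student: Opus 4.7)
The plan is to compute both sides in a convenient basis of $\cTLprmn$. By Proposition\rw{pr:3} any \TL\ \ttngmn\ factors as $\gcupnI\tcmp\gcapmJ$ with through-degree $\ztl$, and the annihilation property\rx{eq:1.5a} forces $\gcapmJ\tcmp\jwpm=0$ unless $\ztl=m$, in which case $\gcapmJ=\gidbrm$ and $\gcapmJ\tcmp\jwpm=\jwpm$. Hence a $\Qq$-basis of $\cTLprmn$ is $\{e_\stI:=\gcupnI\tcmp\jwpm\mid\stI\in\cstInm\}$, and the matrix entries $A_{\stK\stI}$ of $\xmltx$ in this basis are defined by $\xx\tcmp e_\stI=\sum_\stK A_{\stK\stI}\,e_\stK$.

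To extract these entries intrinsically, I would introduce the companion elements $f_\stJ:=\jwpm\tcmp\gcapnJ$ for $\stJ\in\cstInm$. Using the single-tangle expansion\rx{eq:4.3a} for $\gcapnJ\tcmp\gcupnI$ and then sandwiching by $\jwpm$ on both sides, the \JWp\ property\rx{eq:4.1} kills every term whose connected part is not $\gidbrm$, leaving
\[
f_\stJ\tcmp e_\stI=\mtB_{\stJ\stI}\,\jwpm,
\]
with $\mtB$ the matrix of\rx{eq:4.4}. Since $\jwpxnm$ exists as written in\rx{eq:4.5}, the matrix $\mtB$ is invertible over $\Qq$, and $\{\tilde f^{\,\stK}:=\sum_\stJ\mtBi_{\stK\stJ}f_\stJ\}$ is dual to $\{e_\stI\}$ in the sense that $\tilde f^{\,\stK}\tcmp e_\stI=\delta_{\stK\stI}\,\jwpm$. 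Reading off $\tilde f^{\,\stI}\tcmp\xx\tcmp e_\stI=A_{\stI\stI}\,\jwpm$ gives $A_{\stI\stI}=\sum_\stJ\mtBi_{\stI\stJ}\,\alpha(\gcapnJ\tcmp\xx\tcmp\gcupnI)$, where $\alpha(\zth)$ is the unique scalar characterized by $\jwpm\tcmp\zth\tcmp\jwpm=\alpha(\zth)\jwpm$ for $\zth\in\cTLmm$.

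The next ingredient is the identity $\alpha(\zth)=\lSh{\jwpm\tcmp\zth}/\lSh{\jwpm}$, which follows by taking the $\Sh$-closure of $\jwpm\tcmp\zth\tcmp\jwpm=\alpha(\zth)\jwpm$ and applying cyclicity\rx{eq:clinv1} together with $\jwpm\tcmp\jwpm=\jwpm$:
\[
\lSh{\jwpm\tcmp\zth}=\lSh{\jwpm\tcmp\zth\tcmp\jwpm}=\alpha(\zth)\lSh{\jwpm}.
\]
Summing the diagonal entries yields
\[
\Tr_{\cTLprmn}\xmltx=\frac{1}{\lSh{\jwpm}}\sum_{\stI,\stJ}\mtBi_{\stI\stJ}\,\lSh{\jwpm\tcmp\gcapnJ\tcmp\xx\tcmp\gcupnI}.
\]

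Finally, applying cyclicity once more rewrites $\lSh{\jwpm\tcmp\gcapnJ\tcmp\xx\tcmp\gcupnI}=\lSh{\gcupnI\tcmp\jwpm\tcmp\gcapnJ\tcmp\xx}=\lSh{\prclamIJm\tcmp\xx}$, and invoking the definition\rx{eq:4.5} collapses the double sum to $\lSh{\jwpxnm\tcmp\xx}$. This proves the proportionality with $\cpmsh=\lSh{\jwpm}$. The only delicate point is the non-degeneracy of $\mtB$ over $\Qq$ needed to construct the dual basis; this is guaranteed by the very existence of the \JWp\ $\jwpxnm$ and is the reason the whole setup works over $\Qq$ rather than over $\Zqqi$.
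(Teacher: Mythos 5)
Your proof is correct and uses the same essential ingredients as the paper's: the basis $\bsymalg{\gcupnIp}\tcmp\jwpm$ of $\cTLprmn$, the Gram matrix $\mtB$, the annihilation property\rx{eq:4.1} of $\jwpm$, and cyclicity of the $\Sh$-closure. The difference is purely organizational: the paper reduces by linearity to $\xx=\prclamIJm$ and evaluates both sides of \rx{eq:4.8a} directly for that generator, while you keep $\xx$ general and extract the diagonal entries of $\xmltx$ via a dual basis; the underlying computation is identical.
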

\proof
If $\xx\in\QcTLprnmp$ and $m\p\neq m$, then both sides of this
equation are equal to zero, hence we may assume that
$\xx\in\QcTLprnm$. Moreover, since $\QcTLprnm$ is generated by the
elements\rx{eq:4.2}, we may assume that $\xx=\prclamIJm$. The the
\lhs of \ex{eq:4.8a} can be calculated:
\begin{multline}
\label{eq:4.8b}
\xlrb{\lSh{\jwpxnm\tcmp\xlamIJm}} =
\xlrb{\lSh{\xlamIJm}}
\\
= \xlrB{
\bsymalg{\gcupnI}\tcmp \jwpv{m} \tcmp \bsymalg{\gcapnJ} }
=
\xlrB{ \jwpv{m} \tcmp \bsymalg{\gcapnJ} \tcmp \bsymalg{\gcupnI} }
= \mtBIJ \xlrb{\lSh{\jwpm}}.
\end{multline}
The module $\cTLprmn$ is generated freely by the elements
\xlee{eq:4.8c}
\bsymalg{\gcupnIp}\tcmp\jwpm,\qquad \stIp\in\cstInm.
\xeee
The matrix elements
$(\xmltx)_{\stIp\stJp}$ of
the endomorphism $\xmltx$ with respect to this basis are easy to
evaluate:
\begin{multline}
\nonumber
\xmltx \xlrb{\bsymalg{\gcupnIp}\tcmp\jwpm}
= \bsymalg{\xlamIJm}\tcmp\bsymalg{\gcupnIp}\tcmp\jwpm
\\
= \bsymalg{\gcupnI}\tcmp
\jwpm
\tcmp \bsymalg{\gcapnJ}
\tcmp\bsymalg{\gcupnIp}\tcmp\jwpm
= \mtBvv{\stIp}{\stJ}\,\bsymalg{\gcupnI}\tcmp\jwpm,
\end{multline}
hence $(\xmltx)_{\stIp\stJp} =
\delta_{\stI\stJp}\,\mtBvv{\stIp}{\stJ}$ and
\xlee{eq:4.8b2}
\Tr_{\cTLprmn}\xmltx = \sum_{\stIp\in\cstInm}
(\xmltx)_{\stIp\stIp} = \mtBIJ.
\yeee
This equation together with \ex{eq:4.8b} prove the
proposition.\myqed
%
%

%

\begin{corollary}
\label{cr:int}
If $\xx\in\cTLn$, then $\xlrb{\lSh{\jwpxnm\tcmp\xx}} \in\Zqqi$.
\end{corollary}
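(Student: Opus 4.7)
My plan is to invoke the immediately preceding proposition, which gives
$$(\lSh{\jwpxnm \tcmp x}) \;=\; \cpmsh\,\Tr_{\cTLprmn} \xmltx,$$
with $\cpmsh = (\lSh{\jwpm})$. This splits the claim into two integrality statements: the factor $\cpmsh$ is the closure of the Jones--Wenzl projector, a standard quantum integer (namely $(-1)^m[m+1]_q$) that manifestly lies in $\Zqqi$; and the trace $\Tr_{\cTLprmn} \xmltx$ must be shown to lie in $\Zqqi$ for every $x \in \cTLn$. The former is classical, so the heart of the argument concerns the latter.

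By $\Zqqi$-linearity it suffices to treat $x = \clam$ for a single \TL\ \ttngnn\ $\xlam$. On the $\Qq$-basis $\{\bsymalg{\gcupnIp}\tcmp\jwpm\}_{I'\in\cstInm}$ of $\cTLprmn$, the endomorphism $\xmltx$ acts by
$$\xmltx(\bsymalg{\gcupnIp}\tcmp\jwpm) \;=\; \clam \tcmp \bsymalg{\gcupnIp} \tcmp \jwpm,$$
so I would expand $\clam \tcmp \bsymalg{\gcupnIp} \in \cTLmn$ as a $\Zqqi$-linear combination of \TL\ \ttngmn s. The crucial input is the defining property of $\jwpm$: any such \TL\ tangle of through-degree strictly less than $m$ has, by Proposition\rw{pr:3}, a nontrivial cap diagram on the $m$-strand side, and is therefore annihilated by $\jwpm$ upon right-composition.

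Consequently only summands of through-degree exactly $m$ survive, and by Proposition\rw{pr:3} these are pure cup diagrams, matching basis vectors of $\cTLprmn$ indexed by $\cstInm$. Their coefficients, read off directly from the $\Zqqi$-basis expansion in $\cTLmn$, are the matrix entries of $\xmltx$ in the chosen basis; since they all lie in $\Zqqi$, so does the trace. In the degenerate case where the through-degree of $\xlam$ is less than $m$, every surviving summand is killed and the trace vanishes. The main technical point requiring care is the clean identification of the diagonal entries of $\xmltx$ via the canonical decomposition of Proposition\rw{pr:3}; once this identification is set up, $\Zqqi$-integrality follows immediately from the $\Zqqi$-structure of $\cTLmn$.
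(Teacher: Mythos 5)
Your proposal follows exactly the paper's route: reduce via the trace formula $(\lSh{\jwpxnm\tcmp\xx}) = \cpmsh\,\Tr_{\cTLprmn}\xmltx$, note that $\cpmsh=(-1)^m[m+1]_q\in\Zqqi$, and show the matrix elements of $\xmltx$ in the basis $\bsymalg{\gcupnIp}\tcmp\jwpm$ are Laurent polynomials. The paper leaves that last verification to the reader, and your argument — expand $\clam\tcmp\bsymalg{\gcupnIp}$ over $\Zqqi$, kill all through-degree $<m$ terms by the cap-annihilation property of $\jwpm$, and read off the surviving cup-diagram coefficients — is a correct filling-in of precisely those omitted details.
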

In other words, although the expression for the \JWp\ $\jwpxnm$
involves rational functions of $q$, the closure
$\xlrb{\lSh{\jwpxnm\tcmp\xx}}$ is purely polynomial.

\begin{proof}[Proof of Corollary\rw{cr:int}]
We are going to use \ex{eq:4.8a}. First of all, note that
\ylee{eq:4.8b3}
\cpmsh = (-1)^m\,\frac{q^{m+1}-q^{-m-1}}{q-\qi}\in\Zqqi,
\yeee
hence we just have to prove $\Tr_{\cTLprmn}\xmltx\in\Zqqi$. We
will show that the matrix elements $(\xmltx)_{\stIp,\stJp}$ with
respect to the basis\rx{eq:4.8c} belong to $\Zqqi$.
It is sufficient to verify this claim for $\xx=\xlamIJm$, and we
leave the details to the reader.
\end{proof}

\begin{proof}[Proof of Theorem\rw{pr:Lpol}]
This theorem is a particular case ($m=0$) of
Corollary\rw{cr:int}.
\end{proof}

\subsection{The \WRT\ invariant of links in $\Sot$}
\label{ss:relstwrt}
Let us introduce another \ttngnn\ notation:
\ylee{eq:4.9}
\tmrn =
\setlength{\unitlength}{1mm}
\begin{picture}(35,15)(-9,10)
\thl
\put(0,10){\oval(10,10)[bl]}
\put(0,5){\line(1,0){20}}
\put(20,10){\oval(10,10)[r]}
\multiput(0,15)(2.5,0){8}{\line(-1,0){0.5}}
\put(0,10){\oval(10,10)[tl]}
\put(1,0){\line(0,1){4}}
\put(19,0){\line(0,1){4}}
\put(1,6){\line(0,1){19}}
\put(19,6){\line(0,1){19}}
\put(7.5,20){$\cdots$}
\put(7.5,0){$\cdots$}
\put(1,-4){$\scriptstyle{1}$}
\put(19,-4){$\scriptstyle{n}$}
\end{picture}
\\
\nonumber
\\
\nonumber
\yeee
We refer to the circle in this picture as \emph{\xmrd}. Let
$\tmrpnk$ denote the element of $\QcTLn$ constructed by replacing
the \xmrd\ of $\tmrn$ with a $k$-cable on which the \JWp\ $\jwpk$
is placed.


The dependence of the \TLa\ element $\yvspoh\tmrpnk$ on the value of $k$
can be made more explicit, if we multiply it
by the \lhs of \ex{eq:4.8},
slide the cups of \ex{eq:4.5} through the \xmrd\ and use the well-known formula
\ylee{eq:4.9a}
\jwpm\tcmp\tmrpmk =
(-1)^{k(m+1)}\;\frac{\qpmv{(k+1)(m+1)}}{q^{m+1}-q^{-(m+1)}}\;\jwpm.
\yeee
Then we obtain the following formula for $\tmrpnk$:
\begin{equation}
\label{eq:4.9a1}
\begin{split}
\tmrpnk & =
\xlrB{ \smmnev \jwpxnm }\tcmp\tmrpnk =
\sum_m \sum_{\stI,\stJ\in\cstInm}\mtBiIJ
\bsymalg{\gcupnI}\tcmp\jwpm\tcmp\tmrpmk\tcmp\bsymalg{\gcapnJ}
\\
&=\smmnev (-1)^{k(m+1)}
\;\frac{\qpmv{(k+1)(m+1)}}{q^{m+1}-q^{-(m+1)}}\;
\jwpxnm.
\end{split}
\end{equation}
%

\begin{proof}[Proof of Theorem\rw{th:relfr}]
The closure $\cltSot$ of a \ttngnn\ $\xtau$ in $\Sot$ can be
constructed by a surgery on a \xmrd\ of the link
$\xlrB{\lSh{\xtau\tcmp\tmrn}}$. Hence, the \WRT\ invariant of
$\cltSot$ is expressed by the Reshetikhin-Turaev surgery
formula:
\ylee{eq:4.10}
\ZrtSot = -\frac{ q - \qi}{2\xr}
\sum_{k=0}^{\xr-2}
(-1)^k\, (q^{k+1}-q^{-k-1})\,
\xlrB{
\lSh{
\ctau\tcmp \tmrpnk
}
}\Bigr|_{\qepr}.
\yeee
If we replace $\tmrpnk$ by the formula\rx{eq:4.9a1} and use the formula
\begin{multline}
\nonumber
\sum_{k=0}^{\xr-2} (-1)^{km}\, \xlrb{ q^{(k+1)(m+1)} -
q^{-(k+1)(m+1)} }
\xlrb{ q^{k+1} - q^{-k-1}}\bigr|_{\qepr}
\\
=
\begin{cases}
-2\xr, & \text{if $m=2\xr\yk$, $\yk\in\ZZ$},
\\
2\xr, & \text{if $m=2\xr\yk-2$, $\yk\in\ZZ$},
\\
0 & \text{otherwise},
\end{cases}
\end{multline}
%
then we find that $\ZrtSot=0$ when $n$ is odd and obtain
the formula
\ylee{eq:1.6}
\ZrtSot = \Biggl(\sum_{0\leq \yk \leq \frac{n}{\xr}}
\xlrb{\xlShv{\jwpxtnv{2\yk r}\tcmp\ctau} }
\;
+ \sum_{1\leq \yk\leq\frac{n+1}{\xr}}
\xlrb{\xlShv{\jwpxtnv{2\yk r-1}\tcmp\ctau}}\Biggr)\Biggr|_{\qepr}
\yeee
when $n$ is even.
If $\xr\geq n+2$
then only one term survives in the sums of
the \rhs of this equation,
and we get \ex{eq:relstwrt1}.
%
%
\end{proof}

\subsection{Torus braids and the \JWp\ $\jwpxtnz$}
\label{ss:tbrjw}

Define the $q^+$ order of an element
\xlee{eq:5.1}
\xx = \sltl\sum_{j\in\ZZ}
\xcaljt\,q^j\,\clam\in\cTLpinf
\xeee
as
$\yordq{\xx}=\xinfv{j\colon\exists \xlam\colon \xcaljt\neq
0}$.
\begin{definition}
A sequence of elements $\xx_1,\xx_2,\ldots\in\cTLpinf$ has a limit
$\lmii\xx_i = \xx$ if $\lmii\yordq{\xx-\xx_i} = +\infty$.
\end{definition}
In other words, the limit $\lmii\xx_i=\xx$ means that the
coefficients at lower powers of $q$ in the
presentations\rx{eq:5.1} of the elements $\xx_i$ stabilize progressively
as $i$ grows, and $\xx$ is the Laurent series in $q$ formed by the
stable coefficients.

The following is obvious:
\begin{proposition}
If the limit $\lmii\xx_i$ exists, then it is unique.
\end{proposition}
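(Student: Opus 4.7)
The plan is to argue that $\yordq$ behaves like a valuation on $\cTLpinf$, and then run the standard Hausdorff-uniqueness argument for limits in a topological group.

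First I would record the two elementary properties of $\yordq$ that make it work like a non-archimedean valuation. Namely, for any $\xx,\xy\in\cTLpinf$ presented in the basis $\{\clam\}$ as in \ex{eq:5.1}, the coefficient of $q^j\clam$ in $\xx+\xy$ is the sum of the corresponding coefficients in $\xx$ and $\xy$, so any $j$ where $\xx+\xy$ has a non-zero coefficient must already be a $j$ for which at least one of $\xx$ or $\xy$ has a non-zero coefficient. Thus
\[
\yordq(\xx+\xy) \;\geq\; \min\bigl(\yordq(\xx),\,\yordq(\xy)\bigr),
\]
and, equivalently, $\yordq(\xx-\xy) \geq \min(\yordq(\xx),\yordq(\xy))$. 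I would also point out that $\yordq(\xx) = +\infty$ holds if and only if every coefficient $\xcaljt$ vanishes, i.e.\ if and only if $\xx=0$.

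Next, suppose that both $\xx$ and $\xx'$ are limits of the same sequence $\xx_1,\xx_2,\ldots$. For each $i$ I apply the inequality above to the decomposition $\xx-\xx' = (\xx-\xx_i)+(\xx_i-\xx')$ to obtain
\[
\yordq(\xx-\xx') \;\geq\; \min\bigl(\yordq(\xx-\xx_i),\,\yordq(\xx_i-\xx')\bigr).
\]
By hypothesis both $\yordq(\xx-\xx_i)$ and $\yordq(\xx'-\xx_i)$ tend to $+\infty$ as $i\to\infty$, so the right-hand side tends to $+\infty$ as well. Since the left-hand side does not depend on $i$, this forces $\yordq(\xx-\xx') = +\infty$, and therefore $\xx-\xx' = 0$.

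There is no real obstacle here; the only thing one has to be slightly careful about is that the basis $\{q^j\clam\}$ of $\cTLpinf$ is indexed by pairs $(\xlam,j)$, so the coefficient-by-coefficient argument for the ultrametric inequality must be phrased in terms of this basis rather than just powers of $q$. Once that is in place the uniqueness follows immediately from the standard Hausdorff argument above.
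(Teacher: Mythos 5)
Your proof is correct, and it is precisely the standard ultrametric/Hausdorff argument that the paper has in mind when it labels the proposition as obvious and omits the proof: the order function $\yordq{\xdummy}$ satisfies $\yordq{\xx-\xy}\geq\min\bigl(\yordq{\xx},\yordq{\xy}\bigr)$ coefficientwise in the basis $\{q^j\,\clam\}$, equals $+\infty$ only for $0$, and applying this to $\xx-\xx'=(\xx-\xx_i)-(\xx'-\xx_i)$ forces $\xx=\xx'$. No gaps.
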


\begin{theorem}
The \TLa\ elements corresponding to \cbr s with high twist converge to the \JWp\
$\jwpxtnz$:
\xlee{eq:5.1a}
\lmmi\abrmtn = \jwpxtnz.
\xeee
\end{theorem}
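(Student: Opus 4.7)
My plan is to decategorify: given that both $\abrmtn$ and $\jwpxtnz$ arise as images under $\Kzp$ of complexes in $\dTLpqtn$ whose truncations agree, the convergence $\lmmi\abrmtn = \jwpxtnz$ should follow by bounding the $\Kzp$-contributions from the tails lying outside the common truncation range.

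Concretely, I would first combine three already established ingredients: the commutative bottom face of diagram \eqref{eq:fbigdg}, which gives $\Kzp(\cobrmtnsh) = \abrmtn$; Theorem \ref{th:kzprj}, which gives $\Kzp(\rsPnc) = \jwpxtnz$; and Theorem \ref{th:complex}, which asserts the isomorphism $\xtrntmov{\cobrmtnsh} \cong \xtrntmov{\rsPnc}$. From the additivity of $\Kzp$ on mapping cones, the difference $\abrmtn - \jwpxtnz$ is expressed as a sum of $\Kzp$-contributions coming exclusively from the two tails of $\cobrmtnsh$ and $\rsPnc$ beyond homological degree $-2m+1$. It then suffices to show that the $q^+$-order of each such tail tends to $+\infty$ as $m\to\infty$.

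For the tail of $\rsPnc$ this is immediate: since $\rsPnc\in\dTLpqtn$ is $q^+$-bounded by definition, the minimum $q$-order of its constituent objects in homological degree $i$ grows without bound as $i$ recedes deeper, so truncating at a deeper level automatically produces a tail with larger $q$-order. For the tail of $\cobrmtnsh$ the estimate relies on the shape constraint of Theorem \ref{th:spm}: a constituent $\xlam$ of threading $\ztl$ in position $(i,j)$ must satisfy $(i,j)\in\stAxtlnm$, which by the definitions \eqref{eq:angset}--\eqref{eq:sn1} forces $j \geq i + m\ztl - \shlf\ztl + \shlf n$ together with $i \geq \shlf m\ztl^2$. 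A short case split shows the bound is linear in $m$: for $\ztl=0$ the angular shape cuts off the tail so that only entries with $j$ bounded below by a linear function of the truncation depth survive; for $\ztl\geq 1$ the vertex shift $l$ itself already grows linearly in $m$. Combining both estimates yields $\yordq{\abrmtn - \jwpxtnz}\to\infty$, which is precisely the convergence required by Definition 7.1 just above.

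The main obstacle I anticipate is the second estimate, namely verifying that the worst case across all threadings $\ztl\in\{0,1,\ldots,n\}$ still produces a bound that grows with $m$ at the correct rate. This is a direct but somewhat delicate computation with the explicit shift parameters in $\stAxtlnm$, and it is the reason the angular (rather than half-plane) shape in Theorem \ref{th:spm} was essential. Everything else in the argument is a formal decategorification built on structures already established: the $\Kzp$-commutative diagram \eqref{eq:fbigdg}, the $q^+$-boundedness of $\dTLpqtn$, Theorem \ref{th:complex}, and Theorem \ref{th:kzprj}.
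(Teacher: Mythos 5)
Your proposal is correct, but it proves the theorem by a genuinely different (and much heavier) route than the paper. The paper's own proof is a two-line algebraic computation entirely inside the Temperley--Lieb algebra: it multiplies $\abrmtn$ by the decomposition of the identity into the mutually orthogonal projectors $\jwpxtntm$ and applies the standard full-twist eigenvalue formula $\abron\tcmp\jwpn = (-1)^n q^{\frac{1}{2}n(n+2)}\jwpn$ to each summand, so that $\abrmtn$ becomes a sum of projectors weighted by powers of $q$ that grow linearly in the twist number except for the through-degree-zero term; every term but $\jwpxtnz$ then tends to zero in the $q$-adic sense. Your argument instead decategorifies the stabilization results: you combine $\Kzp(\cobrmtnsh)=\abrmtn$, $\Kzp(\rsPnc)=\jwpxtnz$, the isomorphism of truncated complexes, and the angular shape constraints to bound the $q^+$-order of the tails. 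The ingredients you cite are all established independently of this theorem (the theorem sits in the appendix and is not used in proving the categorified statements), so there is no circularity, and your estimate on the shifted angular regions is right: for through-degree zero the constraint $j\geq i+n$ combined with $i\geq 2m$ in the tail gives a bound growing linearly in $m$, while for positive through-degree the vertex shift itself grows linearly in $m$. What the paper's approach buys is brevity, self-containment within the decategorified appendix, and an explicit convergence rate; what yours buys is a conceptual confirmation that the algebraic limit is precisely the graded Euler characteristic shadow of the categorified approximation of the universal resolution by torus braid complexes -- at the cost of invoking the entire categorified machinery to prove a statement the paper treats as elementary and well known.
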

\proof
We multiply $\abrmtn$ by the identity element\rx{eq:4.8} and use
the well-known formula
\ylee{eq:5.2}
\abron\tcmp\jwpn = (-1)^n q^{\frac{1}{2}n(n+2)} \jwpn
\yeee
in order to express $\abrmn$ as a sum of projectors with growing powers of $q$:
\ylee{eq:5.3}
\abrmtn = \abrmtn \tcmp \sum_{m=0}^{2n} \jwpxtntm =
\sum_{m=0}^{2n} q^{2mn(n+1)}\jwpxtntm.
\yeee
Obviously, all terms in the sum in the \rhs tend to zero except
the term with $m=0$, which carries the zeroth power of $q$.\myqed

\begin{bibdiv}
\begin{biblist}


\bib{BN1}{article}
{
author={Bar-Natan, Dror}
title={Khovanov's homology for tangles and cobordisms}
journal={Geometry and Topology}
volume={9}
year={2005}
pages={1443-1499}
eprint={arXiv:math.GT/0410495}
}


\bib{GM}{book}
{
author={Gelfand, Sergei}
author={Manin, Yuri}
title={Methods of homological algebra}
publisher={Springer}
year={1996}
}

\bib{KaLins}{book}
{
author={Kauffman, Lou}
author={Lins, S\'{o}stenes}
title={Temperley-Lieb recoupling theory and invariants of
3-manifolds}
publisher={Princeton University Press}
address={Princeton, New Jersey}
year={1994}
series={Annals of Mathematics Studies}
}

\bib{Kh1}{article}
{
author={Khovanov, Mikhail}
title={A categorification of the Jones polynomial}
journal={Duke Journal of Mathematics}
volume={101}
year={2000}
pages={359-426}
eprint={arXiv:math.QA/9908171}
}

\bib{Kh2}{article}
{
author={Khovanov, Mikhail}
title={A functor-valued invariant of tangles}
journal={Algebraic and Geometric Topology}
volume={2}
year={2002}
pages={665-741}
eprint={math.QA/0103190}
}

\bib{KR1}{article}
{
author={Khovanov, Mikhail}
author={Rozansky, Lev}
title={Matrix factorizations and link homology}
journal={Fundamenta Mathematicae}
volume={199}
year={2008}
pages={1-91}
eprint={arXiv:math.QA/0401268}
}

\bib{PrzH}{article}
{
author={Przytycki, Jozef}
title={When the theories meet: Khovanov homology as Hochschild homology of links}
journal={Quantum Topology}
volume={1}
year={2010}
number={2}
pages={93-109}
eprint={math.GT/0509334}
}

\bib{St}{article}
{
author={Stosic, Marko}
title={Homological thickness and stability of torus knots}
journal={Algebraic and Geometric Topology}
volume={7}
year={2007}
pages={261-284}
eprint={arXiv:math.GT/0511532}
}

\end{biblist}
\end{bibdiv}


\end{document}

\nidm

\setlength{\unitlength}{1mm}
\begin{picture}(60,40)
\put(20,30)
{
$
\xygraph{
!{0;/r0.5pc/:}
!{\xbendu-@(0)}
[l(0.5)]
!{\xcaph@(0)}
}
$
}
\end{picture}
$$ \Bigl\langle
\setlength{\unitlength}{0.2mm}
\begin{picture}(50,50)(-20,-20)
\put(0,0){\oval(10,10)[tl]}
\put(0,5){\line(1,0){20}}
\put(20,10){\oval(10,10)[r]}
\multiput(0,15)(2.5,0){8}{\line(-1,0){0.5}}
\put(0,20){\oval(10,10)[bl]}
\put(1,0){\line(0,1){4}}
\put(19,0){\line(0,1){4}}
\put(1,6){\line(0,1){19}}
\put(19,6){\line(0,1){19}}
\put(-5,20){\line(0,1){5}}
\put(7.5,20){$\scriptstyle{\cdots}$}
\end{picture}_{n,i}
\Bigl\rangle
$$

$$x + y =
\setlength{\unitlength}{1mm}
\begin{picture}(50,50)(-20,-20)
\put(0,0){\oval(10,10)[tl]}
\put(0,5){\line(1,0){20}}
\put(20,10){\oval(10,10)[r]}
\multiput(0,15)(2.5,0){8}{\line(-1,0){0.5}}
\put(0,20){\oval(10,10)[bl]}
\put(1,0){\line(0,1){4}}
\put(19,0){\line(0,1){4}}
\put(1,6){\line(0,1){19}}
\put(19,6){\line(0,1){19}}
\put(-5,20){\line(0,1){5}}
\put(7.5,20){$\cdots$}
\end{picture}
$$

$$
\setlength{\unitlength}{1mm}
\begin{picture}(50,50)(-20,-20)
\put(0,0){\oval(10,10)[bl]}
\put(-5,0){\line(0,1){20}}
\put(-10,20){\oval(10,10)[t]}
\put(-20,0){\oval(10,10)[br]}
\put(-3,2){\line(1,0){3}}
\put(-3,18){\line(1,0){3}}
\put(-7,2){\line(-1,0){18}}
\put(-7,18){\line(-1,0){18}}
\put(-20,-5){\line(-1,0){5}}
\end{picture}
$$

$$
\left(\brtwptn\right)\qquad\gbrmn
$$


A \JWp\ $\jwpn$ is a special idempotent element of the $n$-strand
\TLa\ $\cTLn$ which annihilates cap and cup tangles.
The coefficients in its expression in terms of \TLb\ tangles are
rational (rather than polynomial) functions of $q$. This suggests
that the categorification $\ctjwn$ of $\jwpn$ in the
universal tangle category $\dTLn$ constructed by D.~Bar-Natan\cx{BN1}
should be presented by a semi-infinite \chcpl. In fact, there are
two mutually dual categorifications: the complex $\ctjwpn$ which is bound from
below and the complex $\ctjwmn$ which is bound from above. We will
talk mostly about $\ctjwpn$, since the story of $\ctjwmn$ is totally
similar.

%

The construction of $\ctjwpn$ by
B.~Cooper and S.~Krushkal\cx{CK} is based upon the
Frenkel-Khovanov formula for $\jwpn$ and requires inventing morphisms
between constituent \TL\ tangles as well as non-trivial `thickening'
of the complex.

Our approach is rather straightforward: the
categorified projector $\ctjwpn$ is obtained as a limit of
appropriately shifted
categorification complexes of \cbr s
(\ie braid analogs of torus links) with high \clckw\ twist (the
other projector $\ctjwmn$ comes from high \cclckw\ twists).
The `limit' means that  $\ctjwn$
can be presented as a cone:
\xlee{eq:int1}
\ctjwpn\hteqv
\CnBv{\Ohp\big(2m(n-1)\big)\longrightarrow\cbrmns},
\xeee
where
$\gbrmn$ is a \cbr\ with $m$ full \clckw\ rotations of $n$ strands,
$\symcats{-}$ is the
categorification complex with a special grading shift, and
$\Ohp(k)$ denotes a \chcpl\ which starts only at the homological degree
$k$. Theorem\rw{th:cnpr} imposes even stronger restrictions on
the complex $\Ohp\big(2m(n-1)\big)$ in \ex{eq:int1}.


The advantage of our approach is that one can use \cbr s with high
twist as approximations to $\ctjwpn$ in a computation of \Kh\ of a
spin network which involves \JWp s:
if a spin network $\xnu$ is constructed by connecting $\jwpn$ to an \ttngnn\
$\xtau$ such that $\symcat{\xtau}\hteqv\Ohp(k)$, while a spin network $\xnum$ is constructed
by replacing $\jwpn$ in $\xnu$ with $\gbrmn$, then homology of
$\symcat{\xnu}$ coincides with the shifted homology of
$\symcat{\xnum}$ up to degree $k + 2m(n-1)-1$. Thus one may say that
there is a stable limit
\xlee{eq:stlimsn}
\symcat{\xnu} =
\lim_{m\rightarrow+\infty}\symcats{\xnum}.
\xeee
We will make define homological limits more precisely in
subsection\rw{sss.homcal}.

The practical
importance of the relation between $\symcat{\xnu}$
and $\symcat{\xnum}$ stems from the fact that $\xnum$ is an
ordinary link and its homology
can be computed by
existing efficient computer programs even for high values
of $m$.
%
%

The simplest example of the spin network $\xnu$ is the unknot
`colored' by the $n+1$-dimensional representation of
$\mathrm{SU}(2)$ with the help of the projector $\jwpn$. Its
categorification complex is homologically approximated by homology
of torus links $\mathrm{T}_{n,mn}$ which appear as cyclic closures of
$\gbrmn$. This homology has been studied by Marko Stosic\cx{St}, who
observed that it stabilizes at lower degrees as $m$ grows. This is
a particular case of the `stable limit'\rx{eq:stlimsn}.

In Section\rw{s:notres}
we explain all notations and conventions
which are used in the paper. In particular, in
subsection\rw{sss:trgr} we define a non-traditional grading of
\Kh, which is convenient for our computations.
Then we formulate our results.


In Section\rw{s:elhomcal} we review basic facts about homological
`calculus' required to work with limits of categorification
complexes of high twist \cbr s. In Section\rw{s:cbr} we construct
a sequence of categorification complexes of
\cbr s related by special \chmp s. This sequence yields $\ctjwpn$
as its homological limit. In Section\rw{s:prfs} we use homological
calculus of Section\rw{s:elhomcal} in order to prove that
$\ctjwpn$ is a categorification of the \JWp.

\section{Notations and results}
\label{s:notres}
\subsection{Notations}
\label{ss:not}
\subsubsection{Tangles and \TLa}
All tangles in this paper are framed and we assume the blackboard
framing in pictures. We use the symbol $\symfr\; k$ to indicate an
addition of $k$ framing twists to a tangle strand:
\xlee{ae1.1b}
\xygraph{
!{0;/r1.5pc/:}
[u(0.5)]
!{\hover}
!{\hcap}
[u(0.5)l(0.25)]
}
\;\; = \;\;
\xygraph{
!{0;/r1.5pc/:}
[u(0.5)]
!{\xcapv@(0)}
[u(0.45)r(0.23)]
*{\symfr\;\scriptstyle{1}}
[u(1.5)]
}
\xeee

A tangle is called \emph{flat} if it can be presented by a diagram
without crossings. A flat tangle is called \emph{connected} or
\emph{\TLb} (\TLba) if
it does not contain disjoint circles. Let $\rTNG$ denote the set of all
framed tangles,
$\rTNGmn$ -- the set of \ttngmn s and $\rTNGn$ -- the set of
\ttngnn s.
We adopt similar notations for the set $\rFT$ of flat tangles and for the
set $\rTL$ of \TLba-tangles.

A \TLa\ $\cTLn$ over the ring of Laurent polynomials $\Zqqi$ is
generated by elements $\clam$ corresponding to \TL\ \ttngnn s $\xlam$, multiplication
coming from the composition of tangles, modulo the relation
\xlee{ae1.1}
\Bsymalg{\lcir}
 = -(\qpqi).
\xeee
which is needed to remove the disjoint circles that may appear in the composition
of \TLb\ tangles. Equivalently, $\cTLn$ is generated by elements
$\ctau$ corresponding to
\ttngnn s $\xtau$ modulo the relation\rx{ae1.1} and the Kauffman bracket
relation
\xlee{ae1.2}
\Bsymalg{\xcrsp}
\;\;=\;\;
\qvh\;\;
\Bsymalg{\xpver}
\;\;+\;\;
\qvmh\;\;
\Bsymalg{\xphor}.
\xeee
In other words, there is a map
$\rTNGn\xrightarrow
{\amap}
\cTLn$
which maps a \ttngnn\ $\xtau$ into a \TLa\ element
\xlee{ae1.2a0}
\ctau = \sltln \xcalt\, \clam,\qquad
\xcalt = \sum_{i\in\ZZ}\xcalit\,q^i
\xeee
with only finitely many coefficients $\xcalit$ being non-zero.

If two tangles differ only by the framing of their strands, then
the corresponding algebra elements differ by the $q$
power factor coming from the following relation associated with
the first Reidemeister move:
\xlee{ae1.2a}
\Bsymalg{\xvfro\hspace*{-0.2cm}}
\;\; = \;\;
-q^{\frac{3}{2}}\;\;
\Bsymalg{\;\xvert\hspace*{-0.5cm}}
\xeee

A \ttngzz\ $\xL$ is a framed link, so $\symalg{\xL}$
is
the framing dependent Jones polynomial defined by the
Kauffman bracket.

Flat \ttngmn s modulo\rx{ae1.1} or, equivalently, \ttngmn s
modulo\rx{ae1.1} and\rx{ae1.2} generate a bimodule $\cTLmn$ over
$\cTLm^{\op}\otimes\cTLn$ and a composition of tangles produces a
homomorphism
$\cTLkm\otimes_{\cTLm}\cTLmn\rightarrow\cTLkn$. The total \TLa\ is the
union of all algebras and bimodules $\cTL = \bigcup_{m,n}\cTLmn$,
the product being the composition of tangles or zero if the
valences do not match.


We use the notations $\QcTL$ and $\cTLpinf$ for \TLa s defined over
the ring $\Qq$ of rational functions of $q$ and over the ring
$\Zsqqi$ of Laurent power series.
A sequence of injective homomorphisms
$\Zqqi\hookrightarrow\Qq\hookrightarrow\Zsqqi$, the latter one
generated by the expansion in powers of $q$,
produce a sequence of injective homomorphisms of the corresponding
\TLa s and tangle bimodules.

%

%

\subsubsection{The \JWp}

Let $\gcupni\in\rTLvv{n-2}{n}$ and $\gcapni\in\rTLvv{n}{n-2}$,
$1\leq i\leq n-1$, denote the following \TL\ tangles:
\ylee{ae1.3}
\gcupni=\xygraph{
!{0;/r1.5pc/:}
[r(0.25)u(0.5)]
!{\xcapv@(0)}
[u(0.5)r(1)]
*{\cdots}
[r(01)u(0.5)]
!{\xcapv@(0)}
[r(0.5)u(1)]
!{\vcap-}
[r(1.5)]
!{\xcapv@(0)}
[u(0.5)r(1)]
*{\cdots}
[r(01)u(0.5)]
!{\xcapv@(0)}
[u(1.5)l(3.5)]
*{\scriptstyle{i}}
[r(1)]
*{\scriptstyle{i+1}}
[l(3.5)]
*{\scriptstyle{1}}
[r(6)]
*{\scriptstyle{n}}
}
,
\quad\quad
\gcapni=
\xygraph{
!{0;/r1.5pc/:}
[r(0.25)u(0.5)]
!{\xcapv@(0)}
[u(0.5)r(1)]
*{\cdots}
[r(01)u(0.5)]
!{\xcapv@(0)}
[r(0.5)]
!{\vcap}
[r(1.5)u(1)]
!{\xcapv@(0)}
[u(0.5)r(1)]
*{\cdots}
[r(01)u(0.5)]
!{\xcapv@(0)}
[d(0.5)l(3.5)]
*{\scriptstyle{i}}
[r(1)]
*{\scriptstyle{i+1}}
[l(3.5)]
*{\scriptstyle{1}}
[r(6)]
*{\scriptstyle{n}}
}
\yeee
Their compositions $\xUni = \gcupni\, \gcapni$ are the standard
generators of the \TLa\ $\cTLn$.

The \JWp\ $\jwpn\in\QcTLn$ is the unique non-trivial idempotent element satisfying the
condition
\xlee{ae1.4}
\acapni\;\jwpn =0,\qquad 1\leq i\leq n-1.
\xeee
The \JWp\ also satisfies the relation
\xlee{ae1.4a}
\jwpn\;\acupni =0,\qquad 1\leq i\leq n-1.
\xeee

We denote the corresponding idempotent element of
$\cTLpinf_n$  corresponding to $\jwpn$ as $\jwpnp$.

\subsubsection{Basic notions of homological algebra}
Let $\xChA$ be a category of \chcpls\ associated with an additive
category $\xctA$. An object of $\xChA$ is a \chcpl\
\ylee{ae1.ch1}
\xbA  = (\cdots \rightarrow
\xAio\xrightarrow{\xdio}\xAi\rightarrow\cdots),
\yeee
and a morphism
between two chain complexes is a \chmp\ defined as a \mmp
\xlee{ae1.10d}
\vcenter{\xymatrix{
\xbA \ar[d]^-{\xbf} &&
\cdots\ar[r]^-{\xdit} & \xAio \ar[r]^-{\xdio} \ar[d]^-{\yfio} & \xAi
\ar[r]^-{\xdi} \ar[d]^{\yfi} & \cdots
\\
\xbB &&
\cdots\ar[r]^-{\xdpit} & \xBio \ar[r]^{\xdpio} & \xBi
\ar[r]^-{\xdi} & \cdots
}
}
\xeee
which commutes with the chain differential: $\xdpio\,\yfio = \yfi\,\xdio$ for all $i$.
A cone of a \chmp\ $\xbA\xrightarrow{\xbf}\xbB$ is a complex
\ylee{ae1.10b1}
\Cnbf
=
\lrbc{
\vcenter{
\xymatrix@C=1.5cm@R=0.5cm{
\cdots \ar[dr] \ar[r] & \xAi
\ar@{}[d] |{\oplus} \ar[r]^-{(-1)^i\xdi} \ar[dr]^{\xfi} &
\xAimo
\ar@{}[d] |{\oplus}
\ar[r] \ar[dr]& \cdots
\\
\cdots \ar[r] & \xBio\ar[r]_{\xdio} & \xBi \ar[r] & \cdots
}
}
}
\yeee
There are two special \chmp s
$\xbB\xrightarrow{\idlbf}\Cnbf$ and
$\Cnbf[-1]\xrightarrow{\chdlbf}\xbA$ associated to
the cone:
\ylee{ae1.10b2}
\xymatrix{
\xbB[-1] \ar[d]^-{\idlbf}&&
\cdots \ar[r] &
\xBio \ar[r] \ar[d]^-{0\oplus \xId}
&
\xBi \ar[r] \ar[d]^-{0\oplus \xId}
&
\cdots
\\
\Cnbf[-1] \ar[d]^-{\chdlbf} &&
\cdots \ar[r] &
\xAi \oplus \xBio \ar[r] \ar[d]^-{\xId\oplus 0} &
\xAimo\oplus \xBi \ar[r] \ar[d]^-{\xId\oplus 0} &
\cdots
\\
\xbA &&
\cdots \ar[r]
&
\xAi \ar[r]
&
\xAimo \ar[r]
&
\cdots
}
\yeee
These complexes and \chmp s form a \dstt:
\xlee{ae1.ch2}
\xymatrix{
\xbA\ar[r]^-{\xbf} &
\xbB \ar[r]^-{\idlbf} &
\Cnbf \ar[r]^-{\chdlbf} &
\xbA[1]
}.
\xeee

The homotopy category of complexes $\xKhA$ has the objects and
morphisms of $\xChA$ distinguished only up to homotopy
equivalence. The notion of a cone extends to $\xKhA$ and there
are additional relations: $\Cnv{\idlbf} \hteqv \xbA[1]$ and
$\Cnv{\chdlbf} \hteqv \xbB[1]$.

\subsubsection{A triply graded categorification of the Jones
polynomial}
\label{sss:trgr}
In\cx{Kh1} the first author (M.K.) introduced a categorification of
the Jones polynomial of links. To a diagram $\xL$ of a
link he associated a complex of graded modules
\xlee{ae1.5}
\dL = \lrbc{ \cdots \rightarrow \dLi \rightarrow \dLimo\rightarrow\cdots}
\xeee
so that
if two diagrams represent the same link then the corresponding
complexes are homotopy equivalent, and the graded Euler
characteristic of $\dL$ is equal to the Jones polynomial of $\xL$.

Thus, overall, the complex\rx{ae1.5} had two gradings: the first one was
the grading related to powers of $q$ and the second one was the
homological grading of the complex itself, the corresponding
degree being equal to $i$.
In this paper we adopt a slightly different convention which is
convenient for working with framed links and tangles. It is
inspired by matrix factorization categorification\cx{KR1} and its
advantage is that it is no longer necessary to assign orientation to
link strands in order to obtain the grading of the categorification
complex\rx{ae1.5} which would make it invariant under the second
Reidemeister move.

To a framed link
diagram $\xL$ we associate a triply graded complex\rx{ae1.5} with
degrees $\dgt$, $\dgo$ and $\dgh$.
The first two gradings are of the same nature as in\cx{Kh1} and, in
particular, $\dgo\dLi=i$. The third grading is an inner grading of
chain modules, it is defined modulo 2 and it is of homological
nature, that is, the homological parity of an element of $\dL$,
which affects various sign factors, is the sum of $\dgo$ and
$\dgh$. Both homological degrees are either integer or
half-integer simultaneously, so the homological parity is integer
and takes values in $\ZZ_2$. The $q$-degree $\dgt$ may also take
half-integer values.


Let $\tgrshv{k}{m}{n}$ denote the shift of three degrees by $k$,
$m$ and $n$ units respectively. We also use abbreviated notations
$$
\tgrsshv{k}{m} = \tgrshv{k}{m}{0},\qquad
\qshv{k} = \tgrshv{k}{0}{0}.
$$
After the grading modification, the categorification
formulas of\cx{Kh1} take the following form:
the module associated with an unknot is still $\ZZ[x]/(x^2)$ but with
a different degree assignment:
\begin{eqnarray}
\label{ae1.6}
&\ZZ[x]/(x^2)\,
\tgrshv{-1}{0}{1},
\\
&\dgt 1 = 0, \quad \dgt x = 2,
\quad\dgo 1 = \dgo x = \dgh 1=\dgh x =0,
\end{eqnarray}
and the categorification complex of a crossing is the same as
in\cx{Kh1} but with a different degree shift:
\xlee{ae1.7}
\Bsymcat{\xcrsp}
\;\;=\;\;
\Bigg(\;\;
\Bsymcat{\xpver}
\;\tgrshv{\vthf}{\vthf}{-\vthf}
\xrightarrow{\;\;\;\;\xmrf\;\;\;\;}
\Bsymcat{\xphor}
\;\tgrshv{-\vthf}{-\vthf}{\vthf}
\vspace*{18pt}
\;\;
\Bigg),
\xeee
where $f$ is either a multiplication or a comultiplication of the
ring $\ZZ[x]/(x^2)$ depending on how the arcs in the \rhs are
closed into circles.
The resulting categorification complex\rx{ae1.5} is invariant
up to homotopy under the second and third Reidemeister moves, but
it acquires a degree shift under the first Reidemeister move:
\xlee{ae1.8}
\Bsymcat{\xvfro\hspace*{-0.2cm}}
\;\; = \;\;
\Bsymcat{\;\xvert\hspace*{-0.5cm}}
\tgrshv{\vthh}{\vthf}{\vthf}.
\xeee
It is easy to see that the whole categorification complex\rx{ae1.5} has a
homogeneous degree $\dgh$ which is equal to the sum of all linking
numbers of $\xL$ (including the self-linking numbers):
$\dgh\dL = $.

\subsubsection{A universal categorification of the \TLa}
D.~Bar-Natan\cx{BN1} described the universal category $\dTL$, whose
Grothendieck \Kzg\ is $\cTL$ considered as a $\Zqqi$-module.
We will use this category with obvious adjustments required by the new
grading conventions.

Let $\dTL\p$ be an additive category whose objects are in
one-to-one correspondence with \TLb\ tangles, morphisms being generated
by tangle cobordisms (see\cx{BN1} for details). The universal category
$\dTL$ is the homotopy category of bounded complexes associated with
$\dTL\p$. In other words, an object of $\dTL$ is a complex
\xlee{ae1.8a}
\xbC =
\lrbc{\cdots\rightarrow\xCipo\rightarrow\xCi\rightarrow\cdots},\qquad
\xCi =
\bigoplus_{\substack{j\in\ZZ \\ \xmu\in\ZZ_2}}
\sltln \cjilam\,\dlam \tgrshv{j}{0}{\mu},
\xeee
where non-negative numbers $\cjilam$ are multiplicities; since the
complex is bounded, they are non-zero for only finitely many
values of $i$.

%

A categorification map $\rTL\xrightarrow{\mpcat}\dTL$ turns a framed
tangle $\xtau$ into a complex $\dtau$ according to the
rules\rx{ae1.6} and\rx{ae1.7}, the morphism $\xmrf$ in the
complex\rx{ae1.7} being the saddle cobordism. A composition of
tangles turns into a composition bi-functor of $\dTL$ if we apply
the categorified version of the rule\rx{ae1.1} in order to remove
disjoint circles:
\xlee{ae1.01}
\Bsymcat{\lcir}= \cnot \tgrshv{1}{0}{1} + \cnot\tgrshv{-1}{0}{1},
\xeee
where $\xnot$ is the empty \TL\ \ttngzz.

Overall,
we have the following commutative diagram:
%
\begin{equation}
\xymatrix@C=1.5cm@R=0.3cm{
& {}\dTL \ar[dd]^{K_0}
\\
{}\rTL \ar[ur]^{\mpcat} \ar[dr]^{\mpalg}
\\
& {}\cTL
}
\end{equation}
where the map $\Kz$ turns the complex\rx{ae1.8a} into the
sum\rx{ae1.2a0}:
\xlee{ae1.9a}
\Kz(\xbC)
=
\sltln\sum_{j\in\ZZ} \xcalj \,q^j\,\clam,
\qquad
\xcalj = \sum_{\substack{i\in\ZZ \\ \xmu\in\ZZ_2}}
(-1)^{i+\xmu} \cjilam.
\xeee
Since the complex is bounded, the sum in the expression for
$\xcalj$ is finite.


In addition to $\dTL$ we consider
a category $\dTLp$ of complexes
bounded from below, that is, the multiplicity coefficients in the
sum\rx{ae1.8a} are zero for $i$ below certain value.
A complex $\xbC$ in $\dTLp$ is \emph{\qpb} if $\lmii\yordq{\xCi} =
+\infty$.
%
For a \qpb\ complex,
the sum in the expression\rx{ae1.9a} for $\xcalj$ is
finite, hence the element $\Kz(\xbC)$ is well defined.


\subsection{Results}
\label{ss:res}

\subsubsection{Infinite \cbr\ as a \JWp\ in a \TLa}
A braid with $n$ strands is a particular example of a \ttngnn.
A braid is called \emph{\cylb} if it can be drawn on a cylinder $\Sco\times[0,1]$
without intersections. In fact, all \cbr s have the form
$\btcyl^m$, $m\in\ZZ$, where $\btcyl$ is the elementary
counterclock winding \cbr:
\xlee{ae1.10p}
\btcyln \;\;=\;\;
\xygraph{
!{0;/r1.5pc/:}
!{\vover}
[u(1.5)l(0.5)]
!{\xbendr[0.5]}
[u(1.25)l(1.25)]
!{\xbendd[-0.5]}
[u(1.25)l(1)]
!{\xcapv[0.25]@(0)}
[r(1.75)]
!{\xcaph@(0)}
[u(1)]
!{\vover[-1]}
[r(1)]
!{\xbendr[-0.5]}
[u(0.75)l(0.75)]
!{\xbendd[0.5]}
[u(0.5)l(0.5)]
!{\xcapv[0.25]@(0)}
[u(1)l(1.5)]
*{\cdots}
[u(1.5)l(1.5)]
*{\cdots}
[u(0.75)l(1.5)]
*{\scriptstyle{1}}
[r(2.5)]
*{\scriptstyle{n-1}}
[r(1.25)]
*{\scriptstyle{n}}
[d(3)l(3)]
*{\scriptstyle{1}}
[r(1)]
*{\scriptstyle{2}}
[r(2.75)]
*{\scriptstyle{n}}
[u(2.2)l(0.35)]
}
\xeee
%
%
%
%
%
%
We introduce a special notation for the \cbr\ which corresponds to
$m$ full rotations of $n$ strands:
\ylee{eq:brdf}
\gbrmn = \btcyl^{mn}.
\yeee

Let $\Opqm$ denote any element of $\cTLpinf$ of the form
$\sltln\sum_{i\geq m} \xcali\,q^i\,\clam$.
We define a \emph{\qord} of an element $\yal\in\cTLpinf$ as
$\yordq{\yal} = \xsupv{m\colon \yal = \Opqm}$.

\begin{definition}
\label{df:qlm}
A sequence of elements
\wlee{ae1.2a1}
\yal_1,\yal_2,\ldots\in\cTLpinf%
\weee
has a limit
$\lim_{k\rightarrow \infty} \yal_k = \ybet$, $\ybet\in\cTLpinf$
if $\lmii\yordq{\ybet-\yal_k} = +\infty$.
\end{definition}

The following theorem should be well-known: we do not claim
credit for it and it is an easy corollary of \ex{ae2.m4}.
\begin{theorem}
\label{th:alg}
The \TL\ element corresponding to the infinite \cbr\ equals the
\JWp:
\xlee{ae1.9}
\lim_{m\rightarrow+\infty} q^{\vthf mn(n-1)}\abrmn = \jwpnp,
\xeee
where $\jwpnp\in \cTLpinf_n$ corresponds to the \JWp\
$\jwpn\in\QcTLn$.
\end{theorem}
In fact, a more general statement is also true:
\xlee{ae1.10}
\lim_{m\rightarrow+\infty} q^{\vthf m(n-1)}\symalg{\btcyln^m} =
\jwpnp,
\xeee
but its proof is more technical and we omit it here.

\subsubsection{A bit of homological calculus}
\label{sss.homcal}

Let $\xKhA$ denote the homotopy category of complexes associated
with an additive category $\xctA$ (we have in mind a particular case of $\xKhA = \dTLp$).

A \chcpl\ is considered `homologically small' if it ends at a high
homological degree.
Let
$\Ohpm$ denote a complex which ends at $m$-th homological
degree: $\Ohpm = (\cdots \xAv{m+1} \rightarrow\xAv{m})$. We define
a homological order of a complex $\xbA$ as $\yordh{\xbA} =
\xsupv{m\colon\xbA\hteqv\Ohpm}$.

Two complexes connected by a \chmp: $\xbA\xrightarrow{\xbf}\xbB$
are considered `homologically close' if $\Cnbf$ is homologically
small.

A \emph{\chsq} is a sequence of complexes connected by \chmp s:
\ylee{ae1.10d1}
\scA = (\xbAz\xrightarrow{\xbfz} \xbAo
\xrightarrow{\xbfo}\cdots).
\yeee
\begin{definition}
\label{df:cauchy}
A \chsq\ $\scA$ is \emph{\Cch} if $\lmii \yordh{\Cnbfi} = +\infty$.
\end{definition}
\begin{definition}
\label{df:sqlm}
A \chsq\ has a limit: $\dlm\scA = \xbA$, where $\xbA$ is a \chcpl, if
there exist \chmp s $\xbAi\xrightarrow{\xbtfi}\xbA$ such that
they form commutative triangles
\xlee{ae1.10e1}
\cmtr{\xbfi}{\xbtfio}{\xbtfi}{\xbAi}{\xbAio}{\xbA}
\xeee
%
and $\lmii\yordhr{\Cnv{\xbtfi}} =+\infty$.
\end{definition}

In Section\rw{s:elhomcal} we prove the following homology versions
of standard theorems about limits
(Propositions\rw{pr:chlm},\rw{pr:lmch} and\rw{pr:lmun}):
\begin{theorem}
\label{th:lmt}
A \chsq\ $\scA$ has a limit if and only if it is \Cch.
\end{theorem}
\begin{theorem}
\label{th:lmt2}
The limit of a \chsq\ is unique up to homotopy equivalence.
\end{theorem}

\subsubsection{Infinite \cbr\ as a \JWp\ in the universal category}

For a tangle diagram $\xtau$ let
$\dtaus$ denote the categorification complex $\dtau$
with a degree shift proportional to the number $\crsv{\xtau}$ of crossings
in the diagram $\xtau$:
\xlee{ae1.10b1}
\dtaus = \dtau\tgrshv{\vthf}{\vthf}{-\vthf}^{\crsv{\xtau}}.
\xeee
%

In subsection\rw{ss:brchsq} we define a
\chsq\ of \cbr\ categorification complexes connected by special
\chmp s
%
%
%
%
\begin{multline}
\label{ae1.10c}
\xctBn =
\Big(
\cidbrn
\xratv{\mrfz}
\cbrons \xratv{\mrfo}
\cdots
\\
\cdots
\xrightarrow{\mrfmmo}
\cbrmns \xraov{\mrfm}
\cbrmons \xrightarrow{\mrfmo}\cdots\Big)
\end{multline}
We prove that
$\yordhr{\mrfm}\geq2m(n-1) + 1$,
so
$\xctBn$ is a \Csq\ and by  Theorem\rw{th:lmt} it has a unique limit:
$\lim\xctBn =\ctjwpn \in\dTLnp $.

\begin{theorem}
\label{th:enum}
The limiting complex $\ctjwpn$ has the following properties:
\begin{enumerate}

\item A composition of $\ctjwpn$ with cap- and \uptg s is contractible:
\ylee{eq:auc}
\ccapni \;\ctjwpn \hteqv \ctjwpn\; \ccupni\hteqv 0.
\yeee
\item The complex $\ctjwpn$ is idempotent with respect to tangle composition:
$\ctjwpn \ctjwpn
\hteqv \ctjwpn$.

\end{enumerate}
\end{theorem}

We provide a glimpse into the structure of $\ctjwpn$.
A complex $\xbC$  in $\cTLn$ is called \emph{\odct} if all
multiplicities $\xcv{j,i,\xIdbn}$ in \ex{ae1.8a} are zero, that is, the tangle
$\gidbrn$ never appears in \qcmds\ $\xCi$.
A complex $\xbC$ in $\cTLn$ is called \emph{\otbl} if the multiplicities
$\cjilam$ of \ex{ae1.8a} satisfy the property
\xlee{ae2.m1}
\cjilam=0\qquad\text{if $i<0$, or $j<i$, or $j>2i$.}
\xeee
%


Let $\cbrmns\xraov{\xbtfm}\ctjwpn$ be \chmp s associated
with the limit $\lim\xctBn = \ctjwpn$ in accordance with
Definition\rw{df:sqlm}.
\begin{theorem}
\label{th:cnpr}
There exist  \odct\ \otbl\ complexes $\wbCmn$ such that
$\Cnv{\xbtfm}\hteqv\wbCmn\spshmn\tgrsshv{1}{1}$.
\end{theorem}
\noindent
In other words, there exists a distinguished triangle
%
\ylee{ae2.m2}
\wbCmn\spshmn\qsho \xratv{\chdlbtfm} \cbrmns \xrahv{\xbtfm} \ctjwpn
\xrightarrow{\;\;\;\;\;\;\;} \wbCmn\spshmn\tgrsshv{1}{1}
\yeee
so there is a presentation
%
\xlee{ae2.m4}
\ctjwpn \hteqv \CnBv{ \wbCmn\spshmn\qsho
\xrahv{\chdlbtfm} \cbrmns},
\xeee
%
where the complex $\wbCmn$ is
\odct\ and \otbl.

At $m=0$ the formula\rx{ae2.m4} becomes
\xlee{ae2.m5}
\ctjwpn \hteqv \CnBv{ \wbCzn\qsho
\xrahv{\chdlbtfz} \cidbrn},
\xeee
where the complex $\wbCzn$ is \odct\ and \otbl.

%
%

Since $\wbCzn$ is \otbl, the complex $\Cnchdlbtfz$ is
also \otbl, hence it is \qpb\ and $\Kctjwpn$ is well-defined. Also
$\Kctjwpn\neq 0$, because it contains $\cidbrn$ with coefficient 1.
Theorem\rw{th:enum} indicates that
$\Kctjwpn$ satisfies
defining properties of the \JWp, hence by uniqueness it is the \JWp:
\begin{corollary}
The complex $\ctjwpn$ categorifies the \JWp\ in
$\cTLpinf$:
\xlee{eq:catKz}
\Kctjwpn = \jwpn.
\xeee
\end{corollary}




\section{Elementary homological calculus}
\label{s:elhomcal}

\subsection{Limits in the category of complexes}

Consider a category $\xChA$ of \chcpls\ associated with
an additive category $\xctA$.
An $i$-th \emph{\trnc} $\xtrniv{\xbA}$ of a \chcpl\ $\xbA$ is
the \chcpl\ $\cdots\rightarrow\xAio\xrightarrow{\xdio}\xAi$. An
\trnci\ of a \chmp\ $\xbf$ is defined similarly.

Define an \emph{\isor}
$\ysiobf$
of a chain map $\xbA\xrightarrow{\xbf}\xbB$  as
the largest number $i$ for which a truncated \chmp\ $\xtrniv{\xbf}$ is an
isomorphism of truncated complexes.

\begin{remark}
\label{rm:cnord}
Consider a \dstt\rx{ae1.ch2}.
If $\xbA\hteqv\Ohpm$, then $\ysiov{\idlbf}\geq m$.
\end{remark}

\begin{definition}
A \chsq\
$\scA = (\xbAo\xrightarrow{\xbfo}\xbAt\xrightarrow{\xbft}\cdots)$ is
\emph{\stblz} if
$\lim_{i\rightarrow\infty} \ysiobfi=+\infty$.
\end{definition}

\begin{definition}
A \chsq\ $\scA$ has a \tchlm\ $\chlm\scA=\xbA$ if there exist
\chmp s $\xbAi\xrightarrow{\xbtfi}\xbA$ such that
$\xbtfi = \xbtfio\,\xbfi$ and $\lmii \ysiorv{\xbtfi} = +\infty$.
\end{definition}

The following two theorems are easy to prove:
\begin{theorem}
A \chsq\ has a \tchlm\ if and only if it is \stblz.
If a \tchlm\ exists then it is unique.
\end{theorem}

%

\begin{theorem}
\label{pr:fnctchlm}
Suppose that $\chlm\scA = \xbA$.
Then for a complex $\xbB$ and \chmp s
$\xbAi\xrightarrow{\ybgi}\xbB$ such that $\ybgi = \ybgio\xbfi$,
 there exists a unique \chmp\ $\xbA\xrightarrow{\ybg}\xbB$
such that $\ybgi = \ybg\,\xbtfi$.
\end{theorem}

\begin{definition}
\label{df:chlmmp}
A sequence of \chmp s $\xbA\xrightarrow{\xbfz,\xbfo,\cdots}\xbB$
has a \tchlm\ $\lmii\xbfi = \xbf$ if for any $N$ there exists $N\p$
such that $\xtrnNv{\xbfi} = \xtrnNv{\xbf}$ for any $i\geq N\p$.
\end{definition}

\subsection{Limits in the homotopy category}

Definitions\rw{df:cauchy} and\rw{df:sqlm} extend the notion of a
\stblz\ \chsq\ and its limit to the homotopy category $\xKhA$:
obviously, a \stblz\ \chsq\ is \Cch, while $\chlm\scA=\xbA$ implies
$\lim\scA=\xbA$.

\begin{proposition}
\label{pr:chlm}
A \Csq\ has a limit.
\end{proposition}
\proof
Consider a \Csq\ $\scA$. We construct a special \chcpl\
$\xbAs$ such that $\lim\scA=\xbAs$ in accordance with
Definition\rx{df:sqlm}. Roughly speaking, we take $\xbAz$ and
attach to it the cones $\Cnbfi$ represented by homologically small
complexes, one by one. The result is a sequence $\scAs=\xbApz,\xbApo,\ldots$ of \stblz\
complexes $\xbApi$ such that $\xbApi\hteqv\xbAi$, and
$\xbAs=\chlm\scAs$ is their \tchlm.

Here is a detailed explanation.
By Definition\rw{df:cauchy}, there exist complexes $\ybCi$ such
that
\xlee{ae1.10a1}
\Cnv{\xbfi} \hteqv \ybCi[1] = \Ohpmi,\qquad\lmii m_i=+\infty.
\xeee
The complexes $\xbAi$, $\xbAio$ and $\ybCi$ form exact triangles:
\ylee{ae1.10a2}
\xymatrix{
\ybCi \ar[r]^-{\chdlbfi} & \xbAi \ar[r]^-{\xbfi} & \xbAio \ar[r] &
\ybCi[1]
}
\yeee
and $\xbAio \hteqv \Cnv{\chdlbfi}$. We define recursively a new sequence
of complexes $\scAp = (\xbApz \xrightarrow{\idlbgz} \xbApo\xrightarrow{\idlbgo}\cdots)$ by
the relations $\xbApz = \xbAz$, $\xbApi\hteqv\xbAi$ and
$\xbApio = \Cnv{\ybgi}$, where the \chmp\
$\ybCi\xrightarrow{\ybgi}\xbApi$ is homotopy equivalent to the
\chmp\ $\chdlbfi$. In other words,
\xlee{ae1.10g1}
\xbApio = \Cnv{\ybCi\xrightarrow{\ybgi}
\underbrace{
\Cnv{\ybCimo\xrightarrow{\ybgimo}\cdots\xrightarrow{\ybgt}
\underbrace{
\Cnv{\ybCo\xrightarrow{\ybgo}
\underbrace{
\Cnv{\ybCz\xrightarrow{\chdlbfz}\xbAz
}}_{\xbApo}\;
}
}_{\xbApt}\;
}
}_{\xbApi}\;
}
\xeee

According to Remark\rw{rm:cnord}, $\ysiov{\idlbgi}\geq m_i$,
hence the sequence
$\scAp$ is \stblz, so there exists a chain limit
$\chlm\scAp = \xbAs$ and consequently there is a limit
$\lim\scA=\xbAs$.\myqed

Simply saying, the complex $\xbAs$ is an infinite \mtcn\ extension
of the complex\rx{ae1.10g1}:
\xlee{ae1.10g2}
\xbAs =
\cdots\xrightarrow{\ybgh}\Cnv{\ybCt
\xrightarrow{\ybgt}
\Cnv{\ybCo\xrightarrow{\ybgo}
\Cnv{\ybCz\xrightarrow{\chdlbfz}\xbAz
}
}
}
\xeee

For our applications it is important to express $\Cnbtfz$ in terms
of complexes $\ybCi$. This can be done by rearranging
the infinite \mtcn\rx{ae1.10g2} with the help of associativity of
cone formation, which exists even within the category $\xChA$:
%
%
\xlee{ae1.10h1}
\xbAs = \Cnv{\ybtC\xrightarrow{\ybtg}\xbAz},\qquad
\ybtC =\cdots\xrightarrow{\ybht}
\Cnv{\ybCt[-1]\xrightarrow{\ybho}\Cnv{\ybCo[-1]\xrightarrow{\ybhz}\ybCz}},
\xeee
so that $\xbtfz \hteqv \idlv{\ybtg}$, and $\Cnbtfz\hteqv\ybtC[1]$
is expressed in terms of complexes $\ybCi$
arranged into an infinite \mtcn\ $\ybtC$. Here is a more formal
statement.
\begin{theorem}
\label{th:rshfl}
For a \Csq\ $\scA$ there exists another \Csq\
$\yctC = (\ybCz
\xrightarrow{\ybhpz} \ybtCo\xrightarrow{\ybhpo}\cdots)$
and \chmp s
$\ybCi[-1]\xrightarrow{\ybhi} \ybtCi$ such that
$\Cnv{\ybhi}=\ybtCio$, $\ybhpi = \idlv{\ybhi}$ and for the
limiting complex $\ybtC=\chlm\yctC$ there exists a \chmp\
$\ybtC\xrightarrow{\ybtg}\xbAz$ such that $\xbAs = \Cnv{\ybtg}$,
$\xbtfz \hteqv \idlv{\ybtg}$ and consequently $\Cnbtfz\hteqv\ybtC[1]$.
\end{theorem}

\proof
%
%
Let us recall the associativity of cones in a general setting.
For a \chmp\ $\xbA\xrightarrow{\xbf}\xbB$,
a \chmp\ $\xbC\xrightarrow{\ybg} \Cnbf$
is a sum: $\ybg = \ybgA \oplus\ybgB$
\ylee{ae1.10f1}
\xymatrix{
 & \xbA\ar[d]^-{\xbf}
\\
\xbC \ar[ur]|{[1]}^-{\ybgA} \ar[r]_-{\ybgB}
& \xbB
}
\yeee
where $\xbC\xrightarrow{\ybgA}\xbA[1]$ is a \chmp\ and
$\xbC\xrightarrow{\ybgB}\xbB$ is a \mmp. Now it is obvious
that
\xlee{ae1.10f2}
\Cnv{\xbC\xrightarrow{\ybg}\Cnv{\xbA\xrightarrow{\xbf}\xbB}}
=\Cnv{\Cnv{\xbC[-1]\xrightarrow{\ybgA}\xbA}\xrightarrow{\ybgB\oplus\xbf}\xbB
}.
\xeee

We apply the associativity relation\rx{ae1.10f2} to
\mtcn s\rx{ae1.10g1} consecutively for $i=1,2,\ldots$ in order to rearrange
them, so that $\xbApi = \Cnv{\ybtCi\xrightarrow{\ybtgi}\xbAz}$,
while the complexes $\ybtCi$ and \chmp s $\ybtgi$ are defined
recursively: $\ybtCz=\ybCz$, $\ybtgz = \chdlbfz$, $\ybtCio = \Cnv{\ybhi}$,
while the \chmp s $\ybCi[-1]\xrightarrow{\ybhi} \ybtCi$ and
$\ybtCio\xrightarrow{\ybtgio}\xbAz$
are defined by applying the associativity
relation\rx{ae1.10f2} to the  double cone on the second line of
the following equation:
\begin{equation}
\label{ae1.10f3}
\begin{split}
\xbApio & = \Cnv{\ybCi\xrightarrow{\ybgi}\xbApi}
\\
& = \Cnv{\ybCi\xrightarrow{\ybgi}\Cnv{\ybtCi\xrightarrow{\ybtgi}\xbAz} }
\\
& = \Cnv{\Cnv{\ybCi[-1]\xrightarrow{\ybhi} \ybtCi
} \xrightarrow{\ybtgio}
\xbAz
}
\\
& = \Cnv{\ybtCio\xrightarrow{\ybtgio}\xbAz}.
\end{split}
\end{equation}
Distinguished triangles
\ylee{ae1.10f3}
\xymatrix{
\ybCi[-1]\ar[r]^-{\ybhi}
&
\ybtCi \ar[r]^-{\idlbhi}
&
\ybtCio \ar[r]
&
\ybCi
}
\yeee
determine  \chmp s $\ybhpi=\idlbhi$ of the \chsq\  $\yctC = (\ybtCz
\xrightarrow{\ybhpz} \ybtCo\xrightarrow{\ybhpo}\cdots)$. By
Remark\rw{rm:cnord}, it has a \tchlm: $\chlm\yctC =
\ybtC$, which is an infinite \mtcn:
\ylee{ae1.10f4}
\ybtC =\cdots\xrightarrow{\ybht}
\Cnv{\ybCt[-1]\xrightarrow{\ybho}\Cnv{\ybCo[-1]\xrightarrow{\ybhz}\ybCz}}.
\yeee
The \chmp s $\ybtCi\xrightarrow{\ybhpi}\ybtCio$ satisfy a relation
$\ybtgi = \ybtgio\,\ybhpi$, so by Theorem\rw{pr:fnctchlm} there
exists a unique \chmp\ $\ybtC \xrightarrow{\ybtg}\xbAz$ such that
$\ybtgi=\ybtg\,\ybhtpi$.
It is easy to show that $\xbAs = \Cnv{\ybtC\xrightarrow{\ybtg}\xbAz}$,
and $\xbtfz = \idlv{\ybtg}$, hence $\Cnbtfz\hteqv \ybtC$.\myqed

It is easy to prove the analog of Theorem\rw{pr:fnctchlm}:
\begin{theorem}
\label{pr:spmp}
For a complex $\xbB$ and \chmp s
$\xbAi\xrightarrow{\ybgi}\xbB$ such that $\ybgi \hteqv \ybgio\xbfi$,
 there exists a unique (up to homotopy) \chmp\ $\xbAs\xrightarrow{\ybg}\xbB$
which forms commutative triangles
\xlee{ae1.10f1}
\cmtr{\xbtfi}{\ybg}{\ybgi}{\xbAi}{\xbAio}{\xbB}
\xeee
%
\end{theorem}

In order to complete the proof
of Theorems\rw{th:lmt} and\rw{th:lmt2},
we need two simple propositions. The first one establishes a
triangle inequality for homological orders of cones.
\begin{proposition}
If three \chmp s form a commutative triangle
\xlee{ae1.10c1}
\xymatrix{
\xbA \ar[r]_-{\xbfAB} \ar@/^1pc/[rr]^-{\xbfAC} &
\xbB \ar[r]_-{\xbfBC} &
\xbC
},\qquad \xbfAC\hteqv \xbfBC\xbfAB.
\xeee
then the homological orders of their cones satisfy the inequalities
\begin{align}
\label{eq:in1}
\yordch{\xbfAB}\geq \min\big(\yordch{\xbfAC},\yordch{\xbfBC}-1\big),
\\
\label{eq:in2}
\yordch{\xbfBC}\geq \min\big(\yordch{\xbfAB}+1,\yordch{\xbfAC}\big).
\end{align}
\end{proposition}
\proof
If \chmp s form a commutative triangle\rx{ae1.10c1}, then their cones form a \dstt
\ylee{ae1.10c2}
\Cnv{\xbfAB}\xrightarrow{\ybgo} \Cnv{\xbfAC} \xrightarrow{\ybgt} \Cnv{\xbfBC}
\xrightarrow{\ybgh}\Cnv{\xbfAB}[1],
\yeee
so the first inequality follows from the relation
$\Cnv{\xbfAB}\hteqv\Cnv{\ybgt}[-1]$ and the second inequality
follows from the relation $\Cnv{\xbfBC} \hteqv \Cnv{\ybgo}$.\myqed

The second proposition says that if a complex is homologically
infinitely small then it is contractible.
\begin{proposition}
\label{pr:ismc}
If $\yordh{\xbA} = +\infty$ then $\xbA$ is contractible.
\end{proposition}
\proof Since $\yordh{\xbA} = +\infty$, there exist complexes
$\xbAi\hteqv\xbA$, such that $\xbAi=\Ohpmi$ and $\lmii m_i =
+\infty$. Consider a sequence of \chmp s establishing homotopy
equivalence between the complexes:
\ylee{ae1.10c3}
\xymatrix@C=0.5cm{
\xbA \ar@<0.6ex>[r]^-{\xbfz}
&
\xbAo
\ar@<0.3ex>[l]^-{\ybgz}
\ar@<0.6ex>[r]^-{\xbfo}
&
\xbAt
\ar@<0.3ex>[l]^-{\ybgo}
\ar@<0.6ex>[r]
&
\cdots
\ar@<0.3ex>[l]
\ar@<0.6ex>[r]
&
\xbAi
\ar@<0.3ex>[l]
\ar@<0.6ex>[r]^-{\xbfi}
&
\xbAio
\ar@<0.3ex>[l]^-{\ybgi}
\ar@<0.6ex>[r]
&
\cdots
\ar@<0.3ex>[l]
}
,\qquad
\yIdAi-\ybgi\xbfi =
\atcmv{\xbdi}{\xbhi},
\yeee
where $\yIdAi$ is the identity
\chmp\ of $\xbAi$, while $\xbAi[1]\xrightarrow{\xbhi}\xbAi$ is a homotopy \chmp\ (it does
not commute with the chain differential $\xbdi$ in the complex $\xbAi$).

Consider the compositions $\xbhfi = \xbfi\cdots\xbfo\xbfz$,
$\xbhgi = \ybgz\ybgo\cdots\ybgi$ and $\xbhhi =
\xbhgimo\,\xbhi\,\xbhfimo$. It is easy to see
that $\xbhgimo\,\xbhfimo - \xbhgi\,\xbhfi = \atcmv{\xbd}{\xbhhi}$,
hence $\yIdA - \xbhgi\,\xbhfi = \atcmv{\xbd}{\xbchi}$, where
$\xbchi = \xbhhz + \xbhho +\cdots + \xbhhi$.
There is a limit (\cf Definition\rw{df:chlmmp}) $\lmii\xbchi = \xbch$, while
$\lmii\xbhgi\,\xbhfi = 0$, hence $\yIdA = \atcmv{\xbd}{\xbch}$
which means that the complex $\xbA$ is contractible.
\myqed

\begin{proposition}
\label{pr:lmch}
If a \chsq\ $\scA$ has a limit, then it is \Cch.
\end{proposition}
\proof
The inequality\rx{eq:in1} applied to the commutative
triangle\rx{ae1.10e1} says that
$$\yordch{\xbfi}\geq
\min\lrbc{\zordch{\xbtfi},\zordch{\xbtfio}-1},$$
hence the limit $\lmii\zordch{\xbtfi} = +\infty$ implies the \Cch\
property of $\scA$.

\begin{proposition}
\label{pr:lmun}
If a \chsq\ $\scA$ has a limit then it is unique.
\end{proposition}
\proof
If $\scA$ has a limit then by Proposition\rw{pr:lmch} it is \Cch.
Hence it has a special limit $\xbAs$ described in the
proof of Proposition\rw{pr:chlm}. If $\scA$ has another limit
$\xbAp$ with \chmp s $\xbAi\xrightarrow{\xbtfpi}\xbAp$ then by
Theorem\rw{pr:spmp} there is a \chmp\
$\xbAs\xrightarrow{\ybg}\xbAp$ with commutative
triangles\rx{ae1.10f1}. The inequality\rx{eq:in2} says
\ylee{ae1.10c3}
\yordch{\ybg}
\geq \min\big(\zordch{\xbtfi}+1,\zordch{\ybgi}\big).
\yeee
Since both cones in the \rhs become homologically infinitely small
at $i\rightarrow +\infty$, the cone $\Cnv{\ybg}$ is also
homologically infinitely small. Then Proposition\rw{pr:ismc} says
that $\Cnv{\ybg}$ is contractible and as a result
$\xbAp\hteqv\xbAs$.\myqed

We end this section with a theorem which follows easily from
Definition\rw{df:sqlm}.
\begin{theorem}
\label{th:zl}
If a \chsq\ $\scA$ satisfies the property $\lmii\yordh{\xbAi} =
+\infty$ then its limit is contractible: $\lim\scA = 0$.
\end{theorem}

\section{A \chsq\ of \cbr\ categorification complexes} 
\label{s:cbr}
\subsection{A special categorification complex of a \ngbr}

Let $\xsgi$ denote an elementary negative $n$-strand braid:
\ylee{ae2.1}
\xsgi=\xygraph{
!{0;/r1.5pc/:}
[r(0.25)u(0.5)]
!{\xcapv@(0)}
[u(0.5)r(1)]
*{\cdots}
[r(01)u(0.5)]
!{\xcapv@(0)}
[r(0.5)u(1)]
!{\vcross}
[r(1.5)u(1)]
!{\xcapv@(0)}
[u(0.5)r(1)]
*{\cdots}
[r(01)u(0.5)]
!{\xcapv@(0)}
[d(0.5)l(3.5)]
*{\scriptstyle{i}}
[r(1)]
*{\scriptstyle{i+1}}
[l(3.5)]
*{\scriptstyle{1}}
[r(6)]
*{\scriptstyle{n}}
}
\yeee

\begin{theorem}
\label{th:prop}
If an $n$-strand braid $\brb$ can be presented as a product of elementary
negative braids: $\brb = \xsgiv{k}\cdots\xsgiv{2}\xsgiv{1}$, then
its categorification complex has a special presentation $\cbrbs$:
\xlee{aea2.1}
\cbrbas =
\Big(\ldots\rightarrow\xCt\rightarrow\xCo\rightarrow\cidbrn\Big)
\xeee
such that the complex
\xlee{aea2.2}
\xbC = (\ldots\rightarrow\xCt\rightarrow\xCo)\tgrsshv{-1}{-1}
\xeee
is
$\odct$ and $\otbl$.


\end{theorem}

More abstractly, the theorem says that there exists a
\odct\ and \otbl\ complex $\xbC$ and a \chmp\
$\xbC\rightarrow\cidbrn$ such that $\cbrba \hteqv
\CnBv{\xbC\qsho\rightarrow\cidbrn}$.

\proof
Let $\xlam$ be a \TL\ \ttngnn. Fix $i$, $1\leq i\leq n-1$. If the
composition $\gcapni\;\xlam$ does not contain a disjoint circle,
then, in accordance with \ex{ae1.7},
we define the special categorification complex of $\xsgi\xtau$ as
%
\xlee{ae2.3}
\symcatps{\xsgi\,\xlam}  =
\Big(\symbcat{\xUni\,\xlam}\tgrshv{1}{1}{1}
\rightarrow \dlam \Big)
\xeee
%
If $\gcapni\,\xlam$ contains a disjoint circle, then $\xlam$ must
have the form $\gcupni\xlamp$. Hence
$\xsgi\xlam=\xsgi\,\gcupni\,\xlamp$. The tangle $\xsgi\,\gcupni$
is the same as $\gcupni$ with a positive framing twist, so
according to \ex{ae1.8},
$\bsymcat{\xsgi\,\gcupni} = \ccupni \tgrshv{\vthh}{\vthf}{\vthf}$.
Hence in this case we define the special categorification complex
of $\xsgi\xlam$ simply as shifted $\dlam$:
\xlee{ae2.4}
\symcatps{\xsgi\xlam} = 
\dlam
\tgrshv{2}{1}{0}.
\xeee
%

Now we define a recursive algorithm for constructing the complex
$\cbrbas$. For $\brb = \gidbrn$ we define $\cbrbas = \cidbrn$. Let
$\brb = \xsgiv{k}\cdots\xsgiv{1}$ and
suppose that we have defined its special complex $\cbrbas$. We
define the special categorification complex of a
braid $\brbp=\xsgikpo\brb$ by applying the rules\rx{ae2.3}
and\rx{ae2.4} to all constituent tangles $\xlam$ in the complex
$\cbrbs$ (see the formula\rx{ae1.8a}).

We prove the properties of $\cbrbas$ by induction over $k$.
If $k=0$ then $\brb = \gidbrn$ and the properties of $\cbrbas$ are
obvious.

Suppose that the special categorification complex
$\cbrbas$ of a braid $\brb = \xsgiv{k}\cdots\xsgiv{1}$
has the form\rx{aea2.1} and its tail\rx{aea2.2} is \odct\ and
\otbl.
Consider
a longer braid
$\brbp=\xsgikpo\brb$. The object $\cidbrn$ may appear in $\cbrbpas$ if
and only if $\xlam=\xIdbn$ and the extra crossing $\xsgikpo$ is
\nsplcd\ in \ex{ae2.3}, hence
$\cbrbpas$ has the form\rx{aea2.1} and its tail\rx{aea2.2} is
\odct.

If the negative crossing $\xsgikpo$ is composed with the head
$\cidbrn$ of the complex\rx{aea2.1}, then the formula\rx{ae2.3}
applies and the tangle $\xUv{n}{i_k+1}$ appearing in the tail of
$\cbrbpas$ satisfies the property\rx{ae2.m1}.


If the crossing $\xsgikpo$ is composed with a
\TL\ tangle $\xlam$ from the $i$-th \qcmd\ $\xCi$ (see \ex{ae1.8a})
in the tail of the complex $\cbrbps$ with the $q$-degree shift $j$
satisfying the inequality $i-1 \leq j-1 \leq 2(i-1)$, then the
shifted objects in the \rhs of \eex{ae2.3} and\rx{ae2.4}
also satisfy this inequality.\myqed

The picture\rx{ae1.10p} presents a \cbr\ as a product of negative
crossings, hence
\begin{corollary}
A \cbr\ has a special categorification complex 
\xlee{ae2.5}
\cbrons = \CnBv{\xbCon\qsho\rightarrow\cidbrn},
\xeee
where the complex
$\xbCon$ is \odct\ and \otbl.
\end{corollary}

\subsection{Special morphisms between \cbr\ complexes}
\label{ss:brchsq}

Relation\rx{ae2.5}  indicates that there is a \dstt\
%
%
%
\ylee{ae2.6}
\xbCon\qsho \longrightarrow
\cidbrn \xratv{\mrfo}
\cbrons \longrightarrow
\xbCon\tgrsshv{1}{1}
\yeee
and
\xlee{ae2.6a}
\Cnv{\mrfo} \hteqv \xbCon\tgrsshv{1}{1}.
\xeee
%
%
Composing both sides of the morphism $\mrfo$ with
the \cbr\ complex
$\cbrmns$,
we get a morphism
%
\ylee{ae2.7}
\cbrmns \xratv{\mrfm}\cbrmons
\yeee
and
\xlee{ae2.8}
\Cnv{\mrfm} \hteqv \Cnv{\mrfo}\,\cbrmns.
\xeee
\begin{theorem}
\label{th:2.1}
The cone\rx{ae2.8} can be presented by a shifted complex
\ylee{ae2.9}
\Cnv{\mrfm} \hteqv \xbCmn\tgrsshv{2mn+1}{2m(n-1)+1},
\yeee
%
such that $\xbCmn$ itself is \odct\ and \otbl.
\end{theorem}

The proof is based on a simple geometric lemma:
\begin{lemma}
\label{l:1}
For $n\geq 2$, the following two compositions of framed tangles are isotopic:
%
\xlee{ae2.b}
\gcapni \;\gbron = \gbronmt\;\gcapnit
\xeee
where $\gcapnit$ is the tangle $\gcapni$ with double framing twist
on the cap:
\ylee{ae2.10}
\gcapnik=
\xygraph{
!{0;/r1.5pc/:}
[r(0.25)u(0.5)]
!{\xcapv@(0)}
[u(0.5)r(1)]
*{\cdots}
[r(01)u(0.5)]
!{\xcapv@(0)}
[r(0.5)]
!{\vcap}
[r(1.5)u(1)]
!{\xcapv@(0)}
[u(0.5)r(1)]
*{\cdots}
[r(01)u(0.5)]
!{\xcapv@(0)}
[d(0.5)l(3.5)]
*{\scriptstyle{i}}
[r(1)]
*{\scriptstyle{i+1}}
[l(3.5)]
*{\scriptstyle{1}}
[r(6)]
*{\scriptstyle{n}}
[l(3)u(1)]
*{\symfr}
[u(0.5)]
*{\scriptstyle{k}}
}
\yeee
\end{lemma}
\proof
This lemma is geometrically obvious: a cap on a pair of adjacent strands slides down
through the \cbr\ to the
bottom.\myqed

An immediate corollary of \ex{ae2.b} and of the framing change
formula\rx{ae1.8} is the following relation:
%
\xlee{ae2.11}
\bsymcats{\gcapni \;\gbrmn} \hteqv \bsymcats{\gbrmnmt\;\gcapni}
\tgrsshv{n}{n-1}^{2m}.
\xeee


In order to prove Theorem\rw{th:2.1}, we need three simple
propositions.
A \emph{\aptg} $\gcapnI$, where $\stI = \{i_1,\ldots,i_d\}$,
is a $(n,n-2d)$-tangle which
can be presented as a product of $d$ tangles of the form
$\gcapv{m}{i}{0.75}$:
%
\ylee{aes2.1a}
\gcapnI =
\gcapv{n-2d+2}{i_d}{2}\;\cdots
\gcapv{n-2}{i_2}{1.5}
\;
\gcapv{n}{i_1}{0.75}.
\yeee
A \emph{\uptg} $\gcupnI$ is defined similarly:
%
\ylee{aes2.2a}
\gcupnI =
\gcupv{n}{i_1}{-0.75}
\;
\gcupv{n-2}{i_2}{-1.25}
\cdots
\;
\gcupv{n-2d+2}{i_d}{-2.25}
.
\yeee
The first proposition is obvious:
\begin{proposition}
\label{pr:3}
Every \TL\ \ttngnn\ $\xlam$ has a presentation
\xlee{aes2.3a}
\xlam = \gcupnIp\,\gidbrnmtd\,\gcapnI,\qquad
\nI=\nIp.
\xeee
\end{proposition}
The number $\cpdlam=\nI=\nIp$ is determined by the tangle $\xlam$
and we call it  the \apdg\ (or \updg) of $\xlam$.

The second proposition is also obvious:
\begin{proposition}
\label{pr:1}
If at least one of two complexes $\xbCo$ and $\xbCt$ in $\dTLn$ is
\odct\ then their composition $\xbCo\xbCt$ is \odct.
\end{proposition}
Note, however, that even if
both complexes are \otbl,  then their composition is not necessarily
\otbl. The problem is that although the homological degree is
additive with respect to the composition of tangles, the \qdgr\ is
not: if  the composition of two \TL\ tangles creates a disjoint
circle then the rule\rx{ae1.01} creates the shifts of \qdgr\ which
violate additivity. Still, if the upper tangle in the composition has no caps or the
lower tangle has no cups then no circles are created and the
\otblc\ is maintained:
\begin{proposition}
\label{pr:2}
If a complex $\xbC$ in $\dTLn$ is \otbl, then the complexes
$\ccupnI\;\xbC$
and
$\xbC\;\ccapnI$
are also \otbl.
\end{proposition}


\pr{Theorem}{th:2.1}
In order to construct the \odct\ and \otbl\ complex $\xbCmn$, we
use the presentation
\xlee{ae2.12}
\Cnv{\mrfm} \hteqv
\xbCon\,\cbrmns\tgrsshv{1}{1},
\xeee
which follows from \eex{ae2.8} and\rx{ae2.6a}.
We construct $\xbCmn$ by
simplifying the complexes
$\bsymcats{\xlam\;\gbrmn}$
for \TL\ \ttngnn s $\xlam$
appearing in the \qcmds\ of $\xbCon$, with the help of the
relation\rx{ae2.11} and then using the special complex
$\cbrmnmtps$
to represent $\cbrmnmts$.



Let
$\dlam\tgrsshji$
be an object appearing in the $i$-th \qcmd\ of
$\xbCon$ with a non-zero multiplicity (we made its homological degree explicit by
including $i$ in the shift).
%
We apply \ex{ae2.11} consequently to every cap $\gcapni$ appearing
in the \aptg\ $\gcapnI$ in the presentation\rx{aes2.3a} of
$\xlam$:
\begin{multline}
\label{ae2.13}
\dlam\tgrsshji\cbrmns
\\
\hteqv
\Bigg(
\ccupnIp\;
\bsymcatps{\gbrv{m}{n-2\cpdlam}{2.5}}\;\ccapnI
\tgrsshv{\alm}{\blm}^{2m} \tgrsshji\Bigg)
\tgrsshv{n}{n-1}^{2m},
\end{multline}
where
\xlee{ae2.14}
\alm = \sum_{k=1}^{\cpdlam-1}(n-2k)
,\qquad
\blm = \sum_{k=1}^{\cpdlam-1}(n-2k-1).
\xeee

The object $\dlam$ comes from a \odct\ complex $\xbCon$,
hence $\cpdlam>0$ and
the complex
in big brackets in the \rhs of \ex{ae2.13} is \odct\ in view of
Proposition\rw{pr:1}. Proposition\rw{pr:2} implies that the complex
$\ccupnIp\;
\bsymcatps{\gbrv{m}{n-2\cpdlam}{2.5}}\;\ccapnI$
is also \otbl. Since $\dlam$
comes from a \otbl\ complex $\xbCon$, the numbers $i$ and $j$
satisfy inequalities $i\geq 0$ and $i\leq j\leq 2i$. It is easy to check that the numbers
$\alm$ and $\blm$ of \ex{ae2.14} satisfy the same inequalities:
$\blm\geq 0$, $\blm\leq \alm \leq 2\blm$, hence the complex in big
brackets in the \rhs of \ex{ae2.13} is also \otbl. The
complex
$\xbCon\,\cbrmns$
in the \rhs of \ex{ae2.12} is
composed of complexes\rx{ae2.13}, so this proves
Theorem\rw{th:2.1}.\myqed

\section{Categorified \JWp}
\label{s:prfs}

Consider the \chsq\rx{ae1.10c}. Theorem\rw{th:2.1} implies that
$\yordhr{\Cnv{\mrfm}} \geq 2m(n-1)+1$,
hence $\xctBn$ is
\Cch\ and it has a unique limit $\lim\xctBn =\ctjwpn \in\dTLnp$.

Now we prove Theorems\rw{th:enum} and Theorem\rw{th:cnpr} which
describe the properties of $\ctjwpn$.

\pr{Theorem}{th:cnpr}
Consider the \chsq\rx{ae1.10c} truncated from below:
\ylee{eq:np1}
\xctBmn =
\Big(
\cbrmns \xraov{\mrfm}
\cbrmons \xrightarrow{\mrfmo}\cdots\Big)\longrightarrow\ctjwpn.
\yeee
According to Theorem\rw{th:rshfl}, the limit $\ctjwpn$ can be
presented as a cone\rx{ae2.m4}, where $\wbCmnp = \wbCmn\spshmn$
and $\wbCmn$ is an infinite
\mtcn:
\begin{multline}
\nonumber
\wbCmn =\cdots\rightarrow\Cnv{\xbCvn{m+k}\tgrsshv{2kn}{2k(n-1)-1}
\rightarrow
\cdots
\\
\cdots
\rightarrow
\Cnv{
\xbCvn{m+1}\tgrsshv{2n}{2n-3}\rightarrow\xbCmn}
}
\end{multline}
with \odct\ and \otbl\ complexes $\xbCmn$ introduced in
Theorem\rw{th:2.1}. Hence the complex $\wbCmn$ itself is \odct\ and
\otbl.\myqed

%

\pr{part 1 of Theorem}{th:enum}
The tangle composition with $\ccapni$ is a `continuous'
functor, that is, it can be applied to both sides of
$\lim\xctBn = \ctjwpn$, hence $\ccapni\;\ctjwpn = \lim\;
\ccapni\;\xctBn$. According to \ex{ae2.11},
\begin{equation}
\nonumber
\begin{split}
\ccapni\;\xctBn & = \Big(\ccapni\cidbrn\rightarrow \cdots\rightarrow
\ccapni\,\cbrmns\rightarrow\cdots\Big)
\\
& = \Big( \ccapni\rightarrow\cdots\rightarrow
\cbrmnmts\;\ccapni
\tgrsshv{n}{n-1}^{2m}
\rightarrow\cdots
\Big).
\end{split}
\end{equation}
Since
\ylee{ae3.3}
\yordhb{\cbrmnmts\;\ccapni
\tgrsshv{n}{n-1}^{2m}} = 2m(n-1)\xrightarrow[m\rightarrow +\infty]{}+ \infty,
\yeee
according
to Theorem\rw{th:zl}, $\lim\ccapni\,\xctBn=0$, hence
$\ccapni\,\ctjwpn$ is contractible.\myqed

\begin{remark}
The contractibility of $\ctjwpn\;\ccupni$ is proved similarly.
\end{remark}

\begin{corollary}
\label{cr:odct}
If $\xbC$ is a \odct\ complex in $\dTLnp$, then $\xbC\,\ctjwpn$ is
contractible.
\end{corollary}

\pr{part 2 of Theorem}{th:enum}
According to
$\ctjwpn\hteqv\CnBv{\xbCn\qsho\rightarrow\cidbrn}$, where the
\ex{ae2.m5},
\begin{multline}
\nonumber
\ctjwpn\,\ctjwpn \hteqv \CnBv{\wbCzn\qsho\longrightarrow\cidbrn}\,\ctjwpn
\\
\hteqv\CnBv{\wbCzn\,\ctjwpn\qsho\longrightarrow\cidbrn\,\ctjwpn}
\hteqv\ctjwpn,
\end{multline}
where we used the fact that $\wbCzn$ is \odct\ and Corollary\rw{cr:odct} in order to establish the last
homotopy equivalence.\myqed

\pr{Theorem}{th:alg}
The complexes $\ctjwpn$, $\wbCmn$ and $\cbrmns$ in \ex{ae2.m4} are
\otbl, hence they are \qpb\ and their $\Kz$ images are
well-defined. Applying $\Kz$ to this equation and taking into account \ex{eq:catKz} and
the definition\rx{ae1.10b1}, we find
\ylee{ae3.5}
\jwpn = q^{\vthf mn(n-1)}\abrmn - q^{2mn+1}\Kz(\wbCmn).
\yeee
The complex $\wbCmn$ is \otbl, so $\yordq{\Kz(\wbCmn)}\geq 0$ and
by Definition\rw{df:qlm} there is a limit\rw{ae1.9}.\myqed

\section{The other projector}
A dual of an \ttngmn\ $\xtau$ is the \ttngnm\ tangle
$\xtaud$ which is its mirror image. Duality extends to an
isomorphism $\cTL \xrightarrow{\dsym} \cTLop$ combined with the
isomorphism of the ground ring $\Zqqi\xrightarrow{\dsym}\Zqqi$, such that
$\dsymv{q} = q^{-1}$. Furthermore, duality established the
isomorphism $\cTLpinf\xrightarrow{\dsym}\cTLminfop$, where
$\cTLminf$ is the analog of $\cTLpinf$ constructed over the ring
$\Zsqiq$ of Laurent series in $q^{-1}$.

Since the relations\rw{ae1.4} and\rx{ae1.4a} are dual to each other,
while the idempotency condition $\jwpn\jwpn=\jwpn$ is duality
invariant, the uniqueness of the \JWp\ implies that it is duality
invariant: $\dsymv{\jwpn} = \jwpn$. Hence the corresponding
idempotents $\jwpnp\in\cTLpinf$ and $\jwpnm\in\cTLminf$ are also
dual to each other: $\jwpnm = \dsymv{(\jwpnp)}$. Taking the dual
of \ex{ae1.9} we find that $\jwpnm$ is the limit of \cbr s with
high positive (that is, \cclckw) twist:
\xlee{ae1.9b}
\lim_{m\rightarrow+\infty} q^{-\vthf mn(n-1)}\aobrmn = \jwpnm,
\xeee
because $\dsymv{\Big(\gbrmn\Big)} = \gobrmn$.


Duality extends to an
equivalence functor $\dTL\xrightarrow{\dsym}\dTLop$, where
$\dTLop$ is the same category as $\dTL$, except that the
composition of tangles is performed in reversed order. The functor
$\dsym$ also switches the signs of all three gradings of $\dTL$.
Applying the duality functor to the construction of $\ctjwpn$ we
find that there exists a \chsq
\begin{multline}
\label{ae1.10f}
\xctBnd =
\Big(
\cidbrn
\rxratv{\dmrfz}
\cobrons \rxratv{\dmrfo}
\cdots
\\
\cdots
\xleftarrow{\dmrfmmo}
\cobrmns \rxraov{\dmrfm}
\cobrmons \xleftarrow{\dmrfmo}\cdots\Big),
\end{multline}
where $-\xspsh$ denotes the grading shift which is opposite
to\rx{ae1.10b1}.
The system\rx{ae1.10f} is
dual to the system\rx{ae1.10c}
and it has an inductive limit $\ilm \xctBnd =\ctjwmn $, which satisfies
projector properties
\ylee{ae1.10f1}
\ctjwmn \ctjwmn \hteqv \ctjwmn,\qquad
\ccapni \;\ctjwmn \hteqv \ctjwmn\; \ccupni\hteqv 0
\yeee
and has a presentation
\ylee{ae1.10f2}
\ctjwpn \hteqv \CnBv{ \dsymv{\wbCmn}\ospshmn\qshmo
\xrahv{\dsymv{\chdlbtfm}} \cobrmns},
\yeee
%
where the complex $\wbCmn$ is
\odct\ and \otbl. In particular, at $m=0$ we get the dual of
presentation\rx{ae2.m5}:
\ylee{ae1.10f3}
\ctjwmn \hteqv \CnBv{ \dsymv{\wbCzn}\qshmo
\xrahv{\dsymv{\chdlbtfz}} \cidbrn},
\yeee
where the complex $\wbCzn$ is \odct\ and \otbl.





\begin{bibdiv}
\begin{biblist}


\bib{BN1}{article}
{
author={Bar-Natan, Dror}
title={Khovanov's homology for tangles and cobordisms}
journal={Geometry and Topology}
volume={9}
year={2005}
pages={1443-1499}
eprint={arXiv:math.GT/0410495}
}

\bib{CK}{article}
{
author={Cooper, Ben}
author={Krushkal, Slava}
title={Integral transforms and Drinfeld centers in derived
algebraic geometry}
eprint={arXiv:0805.0157}
}

\bib{Kh1}{article}
{
author={Khovanov, Mikhail}
title={A categorification of the Jones polynomial}
journal={Duke Journal of Mathematics}
volume={101}
year={2000}
pages={359-426}
eprint={arXiv:math.QA/9908171}
}

\bib{KR1}{article}
{
author={Khovanov, Mikhail}
author={Rozansky, Lev}
title={Matrix factorizations and link homology}
journal={Fundamenta Mathematicae}
volume={199}
year={2008}
pages={1-91}
eprint={arXiv:math.QA/0401268}
}

\bib{St}{article}
{
author={Stosic, Marko}
title={Homological thickness and stability of torus knots}
journal={Algebraic and Geometric Topology}
volume={7}
year={2007}
pages={261-284}
eprint={arXiv:math.GT/0511532}
}

\end{biblist}
\end{bibdiv}
\end{document}


\bib{BFN}{article}
{
author={Ben-Zvi, David}
author={Francis, John}
author={Nadler, David}
title={Integral transforms and Drinfeld centers in derived
algebraic geometry}
eprint={arXiv:0805.0157}
}


\bib{JB}{article}
{
author={Block, Jonathan}
title={Duality and equivalence of module
categories in noncommutative geometry I}
eprint={arXiv:math/0509284}
}


\bib{BvB}{article}
{
author={Bondal, Alexei}
author={van den Bergh, Michel}
title={Generators and representability of functors in commutative and
noncommutative geometry}
journal={Moscow Mathematical Journal}
volume={3}
year={2003}
pages={1-36}
eprint={arXiv:math.AG/0204218}
}


\bib{BondalRosly}{misc}
{
author={Bondal, Alexei}
author={Rosly, Alexei}
note={in preparation}
}

\bib{Kapranov}{article}
{
AUTHOR = {Kapranov, Mikhail},
     TITLE = {Rozansky-{W}itten invariants via {A}tiyah classes},
    JOURNAL = {Compositio Mathematica},
    VOLUME = {115},
      YEAR = {1999},
    NUMBER = {1},
     PAGES = {71--113},
 }

\bib{KLi}{article}
{
author={Kapustin, Anton}
author={Li, Yi}
title={D-branes in Landau-Ginzburg models and algebraic geometry}
journal={Journal of High Energy Physics}
volume={0312}
year={2003}
pages={005-049}
eprint={arXiv:hep-th/0210296}
}


\bib{KaLi}{article}
{
author={Kapustin, Anton}
author={Li, Yi}
title={Topological correlators in Landau-Ginzburg models with boundaries}
journal={Advances in Theoretical and Mathematical Physics}
volume={7}
date={2004}
pages={727}
eprint={arXiv:hep-th/0305136}
}


\bib{KRS1}{article}
{
author={Kapustin, Anton}
author={Rozansky, Lev}
author={Saulina, Natalia}
title={Three-dimensional topological field theory and symplectic
algebraic geometry I}
journal={Nuclear Physics B}
volume={816}
date={2009}
pages={295-355}
eprint={arXiv:0810.5415}

}

\bib{Ke1}{article}
{
author={Keller, Bernard}
title={Introduction to A-infinity algebras and modules}
journal={Homoogy, Homotopy and Applications}
volume={3}
year={2001}
pages={1-35}
eprint={arXiv:math/9910179}
}




\bib{catsheaf}{article}
{
author={Moerdijk, Ieke}
title={Introduction to the language of stacks and sheaves}
eprint={arXiv:math.AT/0212266}
}


\bib{Orlov:MF}{article}
{
author={Orlov, Dmitri}
title={Triangulated categories of singularities and equivalences
between Landau-Ginzburg models}
journal={Matematicheskii Sbornik}
volume={197}
year={2006}
pages={117-132}
translation={
journal={Sbornik Mathematics}
volume={197}
year={2006}
pages={1827-1840}
}
eprint={arXiv:math.AG/0503630}
}


\bib{Orlov:priv}{misc}
{author={Orlov, Dmitri}
note={private communications}
}


\bib{RobWil}{article}
{
author = {Justin Roberts and Simon Willerton},
  title = {On the Rozansky-Witten weight systems},
  eprint={arXiv:math/0602653}
}

\bib{RW}{article}
{
author={Rozansky, Lev}
author={Witten, Edward}
title={Hyper-K\"{a}hler geometry and invariants of three-manifolds}
journal={Selecta Mathematica}
volume={3}
year={1997}
pages={401-458}
eprint={arXiv:hep-th/9612216}
}


\bib{Toen}{article}
{
author={To\"{e}n, Bertrand}
title={The homotopy theory of dg-categories and derived Morita
theory}
journal={Inventiones Mathematicae}
volume={167}
year={2007}
pages={615-667}
eprint={arXiv:math.AG/0408337}
}


\bib{ToVe}{article}
{
author={To\"{e}n, Bertrand}
author={Vezzosi, Gabrielle}
title={A note on Chern character,
loop spaces and derived algebraic geometry}
eprint={arXiv:0804.1274}
}


\end{biblist}
\end{bibdiv}
\end{document}

\section{Introduction}
\subsection{Sigma-models and categories}
\label{ss.intr}

Let $M$ be a real $\zN$-dimensional manifold and let
$\xcX=(\xX,\xs)$ be a pair in which $\xX$ is a real
manifold and $\xs$ is a geometric structure on $\xX$ such as a complex
structure or a symplectic structure.
A $\zN$-dimensional topological \sgmd\ (\tsgmd) with a
\emph{\xspt} (also known as the \xwsh\ or
the world-volume) $M$ and a
\xtsp\ $\xcX$
is a quantum field theory based on a
path integral over the infinite-dimensional space of maps $\MpsMX$. The measure on the
space of maps is determined by the structure $\xs$.

We are
interested in 2-dimensional and 3-dimensional \tsgmd s.

Path-integral based arguments
suggest that if manifolds $\xcX$ with a certain type of structure serve
as \xtsp s for $\zN$-dimensional \tsgmd\ then they form a
\zNc\ $\cC$ with special features. Let us briefly recall these features and
illustrate them by two well-known examples.

The category $\cC$ has a symmetric monoidal structure related to
the
cartesian product of manifolds:
\xlee{bag1.1}
\cC\times\cC\longrightarrow\cC,\qquad
(\xXo,\xso)\times(\xXt,\xst) = (\xXo\times\xXt,\xso\times\xst),
\xeee
where $\xso\times\xst$ is the natural structure on $\xXo\times\xXt$.
The monoidal structure has a unit element $\xcXopt=(\xXopt,\xsopt)$, where
$\xXopt$ is the manifold consisting of a single point and $\xsopt$
is the corresponding trivial structure. This element has the
property
\wlee{bag1.2}
\xcXopt\times\xcX=\xcX.
\weee
For a structured manifold $\xcX$ we define a \zNcmo\ of morphisms
\ylee{bag1.2a}
\cCX \edfn \HomC(\xcXopt,\xcX).
\yeee
This category is known in quantum field theory as the category of
boundary conditions of the \tsgmd\ related to $\xcX$.

The \zNc\ $\cC$ has a contravariant
duality functor
\xlee{bag1.3}
\cC\xmapta{\hve}\cC,\qquad
\zdlv{(\xX,\xs)} = (\xX,\zdlv{\xs} ),
\xeee
such that there is a canonical equivalence between \zNcmos\ of
morphisms:
\xlee{bag1.4}
\HomC(\xcXo,\xcXt) = \HomC(\xcXopt,\zdlv{\xcXo}\times\xcXt)
=\cCov{\zdlv{\xcXo}\times\xcXt}.
\xeee
This equivalence implies that an object
$\xOot\in\HomC(\xcXo,\xcXt)$ determines a functor between \zNcmos
\xlee{bag1.5}
\xPhv{\xOot}\colon\cCXo \longrightarrow\cCXt,
\xeee
which represents a composition of morphisms within $\cC$.
Moreover, a composition of morphisms of $\cC$ corresponds to the composition
of functors\rx{bag1.5}, so the structure of the \zNc\ $\cC$ is
determined by the boundary condition categories $\cCX$ and the
functors\rx{bag1.5}.

Recall two examples of this general construction for
$\zN=2$, that is, when $\cC$ is a 2-category. The first example is related to the A-model.
The structure $\xs$ is a symplectic structure (that is, $\xs$ is a symplectic form on $\xX$), the category of
boundary conditions $\cCX$ is the \FkFc\ $\rFuk(\xcX)$,
its simplest objects being lagrangian submanifolds of $\xX$,
the action of the duality functor is $\zdlv{\xs}=-\xs$, and the
functor $\xPhv{\xOot}$ is the lagrangian correspondence functor
determined by a lagrangian submanifold
$\xcEot\subset\zdlv{\xcXo}\times\xcXt$.

The second example of the 2-category $\cC$ comes from the B-model:
$\xX$ is a \fCY\ manifold,
$\xs$ is its complex structure, the category of boundary conditions
$\cCX$ is the bounded derived category of coherent sheaves
$\xrDv{\xcX}$, its simplest objects being complexes of holomorphic
vector bundles on $\xX$, the duality functor acts trivially:
$\zdlv{\xs}=\xs$, and the functor $\xPhv{\xOot}$ is the
Fourier-Mukai transform corresponding to the object $\xOot$.

\subsection{The 3-category of holomorphic symplectic manifolds}
\label{ss.hcatintr}

For $d=3$ a natural class of \tsgmd\ comes from the Rozansky-Witten model \cx{RW}.
In\cx{KRS1} we studied this \tsgmd\ and its 2-category of boundary conditions  from the path integral viewpoint. In this paper we
attempt to present a mathematical description of the 3-category $\ctLLL$  formed by
these theories and formulate conjectures about it.


Objects of $\ctLLL$ are \hlsmm s $\xcX=\Xsom$,
where $\xX$ is a complex manifold and
$\som\in\Omega^{2,0}$ is a \hlsm\ form: it is non-degenerate at every point of $\xX$
and $d \som=0$. If the symplectic form is canonical,
we abbreviate the notation $\Xsom$ down to $\xX$.
The monoidal structure\rx{bag1.1} comes from the product of
manifolds and the sum of their symplectic structures:
$\Xsomo\otimes\Xsomt = \brb{\xXo\times\xXt,\yepiuo(\somo) +
\yepiut(\somt) }$, where $\yepio$ and $\yepit$ are the projections
of $\xXo\times\xXt$ onto $\xXo$ and $\xXt$.
The duality
functor $\hve$ acts on objects by switching the sign of the
symplectic form: $\zdlv{\Xsom} = \Xsomm$.

The main purpose of the paper is to investigate the 2-category
$\cCX$ associated to a \hlsmm\ $\xcX=\Xsom$. We denote it
as
$\ctLLXsom$. The definition of $\ctLLXsom$ for a
general \hlsmm\ is rather complicated and requires a construction
of a \mlcs\ of 2-categories on $\xX$. Therefore we devote much of
this paper to special $\Xsom$ and present an attempt at a general definition
only in Section\rw{s.mcl}.
%

%

\subsection{Algebraic approach}

The first approach towards the description of
the 2-category $\ctLLXsom$
is based on the fact
that when $\xX$ is a
cotangent bundle of a complex manifold $\xcA$, the category
$\ctLLTsU$ can be described in terms of the properties of $\xcA$.
This description is algebraic in nature and there is no reference
to $\TsU$, so by looking at the definitions one would not see
directly that symplectic structure is involved.

In Section\rw{s.sct3} we study a `toy'  2-category
$\xcMFdbx$,
$\bax=\lvar{\ax}{n}$, which after a minor modification should be equivalent to the
2-category $\ctLLv{(\Ts\IC^n)}$
associated with a \zsaff\ space, that is, with the cotangent bundle
$\Ts\IC^n$.

Recall that for a polynomial $\xcW\in\ICbx$, an object of the category of matrix
factorizations $\xcMFbxW$ is a free finite rank \Ztgrdd\ $\ICbx$-module
$\xmM$ with a degree-1 endomorphism (called a curved differential) $\xdD$  satisfying the
condition $\xdD^2 = \xcW\,\xIdv{\xmM}$. The polynomial $\xcW$ is called \emph{a \crvng}.
A \crvng\ of a tensor product of two matrix factorizations over $\ICbx$ is
the sum of their \crvng s.

The simplest \xzobj s of $\xcMFdbx$ are polynomials $\xcW\in\ICbx$.
The category of morphisms between two polynomials $\xcWo$ and
$\xcWt$ is the
category of matrix factorizations of their
difference
\xlee{aeq1.7}
\Hom(\xcWo,\xcWt) =
\xcMFvv{\bax}{\xcWt-\xcWo},
\yeee
and the composition of morphisms comes from the tensor product of
matrix factorizations over $\ICbx$.


In Section\rw{s.sct4} we extend the algebraic construction of
$\ctLLv{(\Ts\IC^n)}$ to the cotangent bundle $\TsU$ of a complex
manifold $\xcA$ by defining algebraically a 2-category $\rDDprfU$
which is supposed to be equivalent to $\ctLLTsU$.
 Similar to the
\zaffn\ case, the simplest objects of $\rDDprfU$ are labeled
by holomorphic functions $\xcW$ on $\xcA$, and a category of
morphisms between two such functions is
the \xacrv\ version of the derived category of coherent sheaves
$\rDbU$:
\xlee{aeq1.8}
\Hom_{\rDDprfU}(\xcWo,\xcWt) = \rDprfvv{\xcA}{\xcWt-\xcWo}.
\xeee
This category is an analog of the category of matrix
factorizations\rx{aeq1.7} when the algebra $\ICbx$ is replaced by
the differential graded Dolbeault algebra $(\xbOmbU,\dlb)$.
A \xper\ object of $\rDprfUW$ is a \Ztgrdd\ vector bundle
$\xE\rightarrow\xcA$ with a \xacrv\ $(0,1)$ differential $\nbb$ such that
$\nbb^2 = \xcW\,\xIdv{\xE}$, and the composition of morphisms of the
type\rx{aeq1.8} comes from the tensor product of vector bundles.

\subsection{Geometric approach: the case of a cotangent bundle}

Path-integral analysis in\cx{KRS1} indicates that `in the classical approximation'
the category $\ctLLXsom$ should contain special `geometric'
objects. These objects are
holomorphic fibrations $\ycY\rightarrow\yY$, where
$\yY\subset\xX$ is a lagrangian submanifold.
Generally, fibration objects $\ycY\rightarrow\yY$ have to be
deformed because of quantum corrections, but this is unnecessary in two special
cases.
The first case is when $\ycY$ is a \opfib,
that is, the fiber is a point and the object is just the lagrangian submanifold $\yY$
itself. The second case is when $\xX$ is isomorphic to a cotangent bundle:
$\xX\cong\TsU$.

In Section\rw{s.sct6} we study
morphisms between \xfob s of $\ctLLXsom$ for
$\xX=\TsU$.
Here is an overview of our conjectures
for the simplest objects of $\ctLLTsU$ which are lagrangian
submanifolds.

%

We say that two holomorphic submanifolds $\yZo,\yZt\subset\xX$
have \emph{a \gdint} if  any point $x\in\yZo\cap\yZt$ has has an open
neighborhood $U_x\subset\xX$ which is isomorphic to a neighborhood
of $0$ in $\Tng_x\xX$,
so that $\yZo$ and $\yZt$ are identified with $\Tng_x\yZo$ and
$\Tng_x\yZt$. This condition implies that the intersection
$\yZo\cap\yZt$ is smooth.

Suppose that two lagrangian submanifolds $\yYo,\yYt\subset \xX=\TsU$ are
\fCY,  their intersection is \xgd,\ and
%
the difference of dimensions $\dim \yYo - \dim(\yYo\cap\yYt)$ is even. Then
the category of morphisms between them
becomes fairly simple:
\xlee{aeq1.2a}
\Hom_{\ctLLXsom}(\yYo,\yYt) = \rDprf(\yYo\cap\yYt).
\xeee
%
Moreover, if all intersections between three \fCY\ lagrangian
submanifolds $\yYo$, $\yYt$ and $\yYh$ are \xgd, then
%
%
%
the composition of morphisms $\cEot\in\Hom(\yYo,\yYt)$ and
$\cEth\in\Hom(\yYt,\yYh)$ is a combination of
pull-backs, tensor product and push-forward
\xlee{aeq1.3}
\cE_{23}\circ\cE_{12} = (\xiosoh)_\ast\Big(
\xiosot^\ast(\cE_{12})\otimes \xiosth^\ast(\cE_{23})
\Big),
\xeee
where $\xiosv{ij}$ are injections
\ylee{aeq1.4}
\xymatrix{ & \yYothi
\ar@{_{(}->}[dl]_{\xiosot}\ar@{^{(}->}[dr]^{\xiosth} \ar@{_{(}->}[d]^{\xiosoh}
\\
\yYoit &\yYoih& \yYtih.
}
\yeee
%
In particular, the endomorphism category of a lagrangian
submanifold $\yY\subset\xX$ is its 2-periodic category:
\xlee{bah7.1}
\End_{\ctLLXsom}(\yY) = \rDprf(\yY),
\xeee
and the composition of endomorphisms is just the
tensor product:
\xlee{bah7.2}
\cEo\circ\cEt=\cEo\otimes\cEt
\xeee
for any
$\cEo,\cEt\in\rDprf(\yY)$.

In Section\rw{ss.dlv} we study the geometric description of the
category $\ctLLXsom$ in case of a general \hlsmm\ $\Xsom$. We show
that general features remain the same as for the cotangent bundle
$\xX=\TsU$, but almost everything is
deformed. As we have mentioned, lagrangian submanifolds $\yY\subset\xX$ remain
objects of $\ctLLXsom$, but the categories of
morphisms\rx{aeq1.2a} and the composition rules\rx{aeq1.3} are
deformed. These \tAinf\ deformations are determined by the symplectic geometry
of tubular neighborhoods of the lagrangian
submanifolds involved, as summarized
in subsection\rw{ss.geomgen}.

\subsection{Relation between algebraic and geometric approaches}
\label{ss.relout}
In subsection\rw{ss.reltan} we relate the
algebraic and geometric descriptions of the
2-category of a cotangent bundle.
We describe the restriction of the equivalence functor
\xlee{bah5.1}
\etfe\colon \rDDprfaU
\xmapta{\cong}
\ctLLTsU
\xeee
(where the 2-category $\rDDprfaU$ is a slight modification of
$\rDDprfU$ defined in subsection\rw{ss.aug} )
to the objects of $\rDDprfaU$ whose images in $\ctLLTsU$ admit a
geometric description as holomorphic fibrations.

Let us sketch this relation for
$\xX=\Ts\IC^n$.
To a polynomial $\xcW\in\IC[\bax]$, which is an object of the
2-category $\xcMFdbx$, we associate the graph of its holomorphic
differential
\xlee{bag1.11}
\yYW = \{(\bfx,\bfp)\in\Ts\IC^n\,|\, \bfp =
\boldsymbol{\del}\xcW\}.
\xeee
$\yYW$ is a lagrangian submanifold of $\xX$ and it represents
the object of $\ctLLTsCn$ corresponding to $\xcW$.

Let $\CrW$ denote the critical locus of the polynomial $\xcW$:
$\CrW = \{ \bfx\in\IC^n\,|\,\del\xcW(\bfx)=0\}$.
We say that the critical locus is \xgd, if $\CrW$ is a smooth
manifold and the Hessian of $\xcW$ is non-degenerate in the normal
directions.
A category $\xcMFbxW$ `localizes' to $\CrW$, and if
$\CrW$ is \xgd then
we conjecture that $\xcMFbxW$ is equivalent to $\rDprf(\CrW)$ up
to a certain categorical `shift' explained in subsection\rw{ss.lcl}. The intersection
$\yYWo\cap\yYWt\subset\Ts\IC^n$ projects onto
$\CrWtmo\subset\IC^n$ and the projection establishes an
isomorphism between them. The intersection of $\yYWo$ and $\yYWt$ is \xgd\
exactly when the difference $\xcWt-\xcWo$
has a \xgd\ critical locus.
Hence in this case the categories
$\Hom_{\xcMFdbx}(\xcWo,\xcWt)$ and $\Hom_{\ctLLTsCn}(\yYWo,\yYWt)$
are equivalent (up to a shift).

\subsection{The 2-category of a deformed cotangent bundle}
\label{ss.tcdcb}
%

Path integral analysis in paper\cx{KRS1} suggests that similarly to
the derived category of coherent sheaves and in contrast to the Fukaya category, the 2-category
$\ctLLXsom$ is local: the category of morphisms between two
lagrangian submanifolds $\Hom_{\ctLLXsom}(\yYo,\yYt)$ is
determined by a small tubular neighborhood of the intersection
$\yYo\cap\yYt$ and the composition of morphisms connecting $\yYo$,
$\yYt$ and $\yYh$ is determined by a small tubular neighborhood of
the triple intersection $\yYo\cap\yYt\cap\yYh$. Therefore, if we
knew the properties of the 2-category $\ctLL$ of a tubular
neighborhood of a lagrangian submanifold $\yY\subset\xX$, we would
know the morphisms involving $\yY$ and their compositions.

In real symplectic geometry, a small tubular neighborhood of a
lagrangian submanifold is symplectomorphic to a small tubular
neighborhood of the zero section of its cotangent bundle. This is
no longer the case in holomorphic symplectic geometry: the holomorphic symplectic
structure of the cotangent
bundle $\TsY$ may have non-trivial deformations and a tubular
neighborhood of $\yY\subset\xX$ may be isomorphic to a tubular
neighborhood of the zero section within this deformed bundle. Thus
in order to apply the locality principle to the study of
$\ctLLXsom$, in Section\rw{ss.dlv} we explore the 2-category $\ctLL$ of a deformed
cotangent bundle of a complex manifold $\xcA$.

The best way for us to describe the 2-category
$\ctLLTsU$ is through its equivalence to the 2-category $\rDDprfU$.
Hence we describe the category $\ctLL$ of the deformed
cotangent bundle of $\xcA$ by constructing a deformation of
$\rDDprfU$. We assume that the deformation parameter $\hdf$ of
$\rDDprfU$ is the same parameter that describes the deformation of
the holomorphic complex structure of $\TsU$ and that the simplest
objects of the deformed category $\rDDprfUSk$ are functions $\xcW$
on $\xcA$, such that the graphs of their holomorphic differentials
$\del\xcW$ are lagrangian submanifolds of the deformed cotangent
bundle $\mtdfTUSk$.

We find that the deformation parameter $\hdf$ is an element of
$\xbOmov{\xcA,\Sb\TU}$, $\deg_{\Sb}\hdf\geq 2$, satisfying the \CMe\ $\dlb\hdf +
\shlf\Pbrv{\hdf,\hdf}=0$, where $\Pbrv{\xblnk,\xblnk}$ is the
Poisson-Schouten bracket on $\xbOmov{\xcA,\Sb\TU}$. The functions
$\xcW$ parameterizing the simplest objects of $\rDDprfUSk$ must
satisfy the equation $\dlb\xcW=\hdf(\del\xcW)$, where $\hdf$ is
regarded as a $(0,1)$ form on $\TsU$ taking values in
polynomial functions on the fibers of $\TsU$. The category of morphisms
between two objects $\xcWo$ and $\xcWt$ of $\rDDprfUSk$ turns out
to be an \tAinf-deformation of\rx{aeq1.8}:
\xlee{bah5.1a}
\Hom_{\rDDprfUSk}(\xcWo,\xcWt) =
 \rDsprfUdot,
\xeee
where the deformation parameter
$\xdfmot=\xcWt-\xcWo+\cdots\in\xbOmv{\xmi}(\xcA,\wedge^{\bullet}\TU)$
satisfies the \CMe\ $\dlb\xdfmot + \shlf\,[\xdfmot,\xdfmot] = 0$
and the bracket
$[\xblnk,\xblnk]$ is the Schouten bracket. The composition
of morphisms turns out to be a non-commutative deformation of the
tensor product that described the composition of morphisms within
$\rDDprfU$.

\subsection{Geometric approach: the general case}
\label{ss.geomgen}

The locality principle applied to the formula\rx{bah5.1a} says that
%
 for a general \hlsmm\ $\Xsom$
the category of morphisms between lagrangian submanifolds $\yYo$
and $\yYt$ having a \gdint is given by a deformation of
\ex{aeq1.2a}:
\xlee{bah5.3}
\Hom_{\ctLLXsom}(\yYo,\yYt) = \rDprf(\yYo\cap\yYt;\adfmot),
\xeee
where $\adfmot$ is a deformation parameter of a special form:
$\adfmot = \adfmotv{2}+\adfmotv{3}+\cdots$ and
$\adfmoti\in\xbOmv{\xmi}\brb{\xcA,\wedge^{\xmi}\Tng(\yYocct)}$.
Properties of $\rDDprfUSk$ suggest that if one of the exact
sequences $\Tng\yYi\rightarrow\Tng\xX|_{\yYi}\rightarrow\Nrm\yYi$
$i=1,2$
(for
example, the one with $i=1$)
 splits and the other lagrangian submanifold $\yYt$
can be presented as the graph of
$\del\xcW$ with $\xcA=\yYo$ as described in subsection\rw{ss.odc}
\ftnt{We expect that such a presentation exists if
the holomorphic bundle $\Tng\yYt|_{\yYocct}/\Tng(\yYocct)$ admits an $\OnC$
structure.},
then $\adfmot=0$, so that the formula\rx{aeq1.2a}
holds true.

The non-split nature of the exact sequence
$\TY\rightarrow\Tng\xX|_{\yY}\rightarrow\Nrm\yY$
is measured by a class
$\cdfbtY\in\Ext^1_{\yY}(\strsY,\Stat\TY)\subset\Ext^1_{\yY}(\NY,\TY)$, where
$\strsY$ is the structure sheaf of $\yY$ and we used the fact that
$\yY$ is lagrangian, so $\NY=\TsY$. If the exact sequence does not
split or equivalently $\cdfbtY\neq 0$,
%
then the category of endomorphisms of $\yY$ is
deformed:
\ylee{bah5.4}
\End_{\ctLLXsom}(\yY) = \rDprf(\yY;\xdfmv{\yY}),
\yeee
where
$\xdfmv{\yY} = \xdfmv{\yY,3} + \cdots$,
$\xdfmv{\yY,i}\in \xbOmv{\xmi}\brb{\xcA,\wedge^{\xmi}\TY}$
and the leading term $\xdfmv{\yY,3}$
is
quadratic in $\cdfbtY$ and linear in the Atiyah class $\ccrR$ of the tangent bundle
$\TY$ (\cf \ex{beq2.42b}).

To illustrate the deformation of the composition rule\rx{aeq1.3},
consider the case when $\yYo=\yYt=\yYh=\yY$
and $\cdfbtY\neq 0$. If the Atiyah class of the tangent bundle
$\ccrR$ is zero, then, according to subsection\rw{ss.gdgs}, the
endomorphism category of $\yY$ remains undeformed as in
\ex{bah7.1}, but the composition rule\rx{bah7.2} is deformed. For
example, if $\eEo,\eEt\in\rDprf(\yY)$ are two
holomorphic vector bundles
on $\yY$, then their composition is the deformed tensor product
\xlee{bah7.4}
\eEo\circ\eEt = (\eEo\otimes\eEt)_{\dfalot},
\xeee
where $\dfalot$ is a deformation parameter
$\dfalot=\dfaloto+\dfalott+\cdots$, $\dfaloti \in
\xbOmv{2i+1}\brb{\End(\eEo\otimes\eEt)}$, satisfying the \CMe\
$\dlb\dfalot + \shlf[\dfalot,\dfalot]=0$. The cohomology class
of the leading component of
$\dfalot$ is proportional to $\cdfbtY$ and to the Atiyah classes
of the bundles $\eEo$ and $\eEt$:
\ylee{bah7.5}
\check{\dfal}_{12,3}=
\cdfbtY\spsmb (\hacFEo\hacFEt).
\yeee
Note that $\check{\dfal}_{21,3}=-\check{\dfal}_{12,3}$, so the
composition in the category $\End_{\ctLLXsom}(\yY)$ is
non-commutative due to the deformation\rx{bah7.4}.

If $\cdfbtY=0$, then $\dfalot=0$ and the composition
rule\rx{bah7.2} remains undeformed, however the associator
isomorphism
\ylee{bah7.5a}
\zasoth\colon(\eEo\otimes\eEt)\otimes\eEh\longrightarrow
\eEo\otimes(\eEt\otimes\eEh)
\yeee
may be non-trivial.

\subsection{A \prshf\ definition of the 2-category $\ctLLXsom$}

In Section\rw{s.mcl} we sketch an approach to a rigorous
definition of the 2-category $\ctLLXsom$ as the category of global
sections of a certain \prshf\ $\prsfXsom$ of 2-categories defined
on $\xX$.

Let $\ICnbax$ be the affine space $\IC^\zn$ with standard
coordinates $\bax=\ax_1,\ldots,\ax_{\zn}$ and let
$\xcAbx\subset\ICnbax$ be an open subset inheriting the coordinates.
A \emph{\smrc} is a product of two such subsets $\UxVy$; it has a natural \hlsms\  $\som = \sum_{i=1}^{\zn}d\ay_i\wedge d\ax_i$.
A \emph{\rcch} is a symplectic embedding
\xlee{bai1.1}
\mch\colon\UxVy\rightarrow\xX.
\xeee
The images of these charts form an open cover of
$\xX$.

Since the coordinates $\bax$ on $\xcAbx$ provide a trivialization
of the cotangent bundle $\TsU$, there is a canonical symplectic embedding
$\UxVy\subset\TsU$.
To a \smrc\ $\UxVy$ we associate a 2-category $\ctLLbv{\UxVy}$ which is a
`\mlcz' of the 2-category $\rDDprfU$.
The
2-category $\ctLLbv{\UxVy}$ is a full subcategory of $\rDDprfU$
and
its simplest objects
are holomorphic functions $\xcW$ on $\xcA$
satisfying the condition that the associated lagrangian
submanifolds\rx{bag1.11} should lie within $\UxVy\subset\TsU$: for
any point $u\in\xcAbx$, the differential $v=\del_{\bax}\xcW\in\ICnbay$ should
belong to  $\xcVby\subset\ICnbay$.

Thus to a \rcch\rx{bai1.1} we associate the 2-category
$\rDDprfUxVy$. The structure of the \mlc\ \prshf\ $\prsfXsom$
comes from two types of functors defined in subsection\rw{ss.tctf}:
the restriction functor and the \Ldrt. A \smrc\ $\UxVy$ has
a lagrangian `\qfib' formed by subspaces $u\times\xcVby$, where $u\in\xcAbx$.
The restriction functor
$\rsfe\colon\rDDprfUxVy\longrightarrow\ctLLbv{\UxVypp}$ is
associated to a symplectic embedding
$\sfq\colon\UxVypp\hookrightarrow\UxVy$, which preserves the
\qfib. The \Ldrt\ is a special equivalence functor
$\dLtp\colon\rDDprfUxVy \longrightarrow \ctLLbv{\VyUmx}$, which
permutes the lagrangian fibrations $u\times\xcVby$, $u\in\xcAbx$ and
$\xcAbx\times v$, $v\in\xcVby$ of the \smrc\ $\UxVy$.

\subsection{Derived categorical sheaves}

In Section\rw{sec:sheaves} we discuss the relationship between
the RW model and the theory of derived categorical sheaves introduced by B.~Toen
and G.~Vezzosi \cite{ToVe}.  This relationship emerges when the target manifold $X$
is the cotangent bundle of a complex manifold $Y$. In this special case
one can promote the $\ZZ_2$ grading of the RW model to a $\ZZ$-grading
by declaring that natural fiber coordinates of the cotangent bundle
sit in cohomological degree $2$. Objects of the corresponding 2-category
of boundary conditions are naturally associated with sheaves of DG-categories
over $Y$, i.e. with derived categorical sheaves. More precisely,
objects of the kind mentioned above (complex fibrations over $Y$)
correspond to rather special sheaves of DG-categories. However,
we argue that more general sheaves of DG-categories, such as skyscraper
sheaves, can also be related to boundary conditions in the RW model if
we allow fibrations whose fibers are graded manifolds. Conjecturally,
the 2-category of boundary conditions in the $\ZZ$-graded RW model with
target $\TsY $ is a full sub-2-category of the 2-category of derived
categorical sheaves over $Y$. We perform some simple checks of this conjecture.

\subsection*{Acknowledgements}

L.R. is indebted to D.~Arinkin for many patient explanations of the
properties of coherent sheaves. He is also grateful to
V.~Ginzburg for numerous discussions and encouragement.
 A.K. would like to thank D.~Orlov
for the same. A.K. is also grateful to D.~Ben-Zvi, V.~Ostrik, and
L.~Positselski for advice. Both authors would like to thank Natalia Saulina for collaboration on Part I of the paper. The work of A.K. was supported in part by
the DOE grant DE-FG03-92-ER40701. The work of L.R. was supported by
the NSF grant DMS-0808974.

\section{The 3-category of affine spaces}
\label{s.sct3}


\subsection{\Ztpdcat\ of a \cdga}
\label{ztpdcat}

In this section we define the 2-category of boundary conditions corresponding to the RW model whose target is a complex symplectic vector spaces. We also describe the 3-category of all such RW models. Definitions of categories of morphisms between two \xzobjs\
of our 2-categories follow the same general pattern that we are
going to review in this subsection.
We follow closely the exposition of J.~Block\cx{JB}, replacing
\Zgrdng\ with \Ztgrdng\ when needed.

A commutative curved differential graded algebra (\CDGA) is a triple $\ycdgaA$, where $\xdlA$ is a
\Zgrdd\ associative commutative algebra
%
$\xdlA = \bigoplus_{i=0}^{\infty} \xdlAuv{i}$
%
with an associated \Ztgrdng\
\ee
\label{bae1.1a} 
\xdlA = \xdlAe\oplus\xdlAo,\quad
\xdlAe = \bigoplus_{i=0}^\infty \xdlAuv{2i},\quad
\xdlAo = \bigoplus_{i=1}^\infty \xdlAuv{2i+1},
\eee
$\nbb$ is its differential of (possibly inhomogeneous) odd degree not less than 1:
\ee
\xlabel{bae1.2}
\nbb^2=0,\qquad \nbb(\xdlAuv{i})\subset\bigoplus_{j=0}^\infty
\xdlAuv{i+2j+1},
\eee
and a \crvng\ $\xcW$ is a $\nbb$-closed element of $\xdlA$ of even \Ztdgr:
$\xcW\in\xdlAe$, $\;\;\nbb\xcW=0$.
%
%

We adopt the notations $\zdgz{\xblnk}$ and $\zdgt{\xblnk}$ for
\Zdgr\ and \Ztdgr\ respectively and we denote the elements of
$\Zt$ as $\yev$ and $\yod$.

A \Ztdgm\ (\ZtDGM) over a \CDGA\ $\ycdgaA$  is a pair $\cM=\xdgmM$,
where $\xmM$ is a \Ztgrdd\ module over $\xdlA$, while $\nbbM$ is
its curved differential: $\nbbM$ is a $\IC$-linear map
$\xmM\xrarv{\nbbM}\xmM$, $\zdgt{\nbbM}=\yod$,
satisfying the Leibnitz
identity
\ee
\xlabel{bae1.4}
\nbbM(am) = (\nbb a)\,m + (-1)^{\zdgt{a}}\, a\,(\nbbM m),\qquad
\any a\in\xdlA,\quad \any m\in\xmM,
\eee
and having the \crvng\ $\xcW$:
\ee
\xlabel{bae1.5}
\nbbM\circ\nbbM =\xcW\, \xId_{\xmM},
\eee
where $\xId_{\xmM}$ is the identity endomorphism of $\xmM$.
The module $\xmM$ can be rolled out into a \tpd\  twisted
complex
\begin{gather}
\xlabel{bae1.5a}
\xymatrix{
\cdots \ar[r]^-{\nbbMo} &
\xmMuv{\yev} \ar[r]^-{\nbbMe} &
\xmMuv{\yod} \ar[r]^-{\nbbMo} &
\xmMuv{\yev} \ar[r]^-{\nbbMe} &
\cdots
}
\\
\nonumber
\nbbMe\circ\nbbMo = \xcW \xId_{\xmMuv{\yod}},\qquad
\nbbMo\circ\nbbMe = \xcW \xId_{\xmMuv{\yev}},
\end{gather}
hence the name of the category.

Suppose that
two \Ztgrdd\ modules (or vector spaces) $M_1$ and $M_2$ have
endomorphisms $A_1$ and $A_2$ of a similar nature. Then for a
linear map $f\colon M_1\rightarrow M_2$ we use the commutator
notation for the following expression:
\ee
\label{bae1.5a1} 
[\cxrA,f] = A_2 f - (-1)^{\zdgt{A}\zdgt{f}} f A_1.
\eee

For two \ZtDGM s
$\cMo$ and $\cMt$
over a \CDGA\ $\ycdgaA$, the space
of homomorphisms $\Hom_{\xdlA}(\xmMo,\xmMt)$ has a differential $d$:
\ee
\xlabel{bae1.6}
df = [\nbbv{\cxrmM},f],\qquad f\in\Hom_{\xdlA}(\xmMo,\xmMt).
\eee
Thus \ZtDGM s are objects of a DG-category.

%
%


We define the tensor product of a \ZtDGM\ $\cMo$ over a \CDGA\ $\ycdgaAo$ and a \ZtDGM\
$\cMt$ over a \CDGA\ $\ycdgaAt$ as a \ZtDGM\ over a \CDGA\
$\ycdgaAot$ by the formula
\ee
\label{bae1.8} 
\cMo\otdlA\cMt = \big(\xmMo\otdlA\xmMt,\nbbMxo\otimes\xId_{\xmM} +
(-1)^{\zdgt{\xblnk}}\otimes\nbbMxt\big).
\eee
Also we define the dual \ZtDGM\ as
\ee
\label{bae1.9} 
\xdulv{\xdgmM} = \big(\xdulv{\xmM},\xdulv{\nbbM}\big).
\eee
Note that $\xdulv{\xdgmM}$ is a \ZtDGM\ over the \CDGA\ $\ycdgaAd$.

A \ZtDGM\ $\cP=\xdgmP$ is called \emph{\xper} if the $\xdlA$-module
$\xmP$ has the form
\ee
\label{bae1.10} 
\xmP = \chmP\otimes_{\xdlAZz} \xdlA,
\eee
where $\chmP$ is a projective \Ztgrdd\ module over $\xdlAZz$. The
\emph{\Ztpdcat} $\rDprfbA$ of a \CDGA\ $\ycdgaA$ is defined as a
graded category whose objects are \xper\ \ZtDGM s, and morphisms
are defined by
\ee
\label{bae1.7} 
\HmB{
\cPo,\cPt } = \rmH_d^\bullet\,\Big( \Hom_{\xdlA}(\xmPo,\xmPt)\Big).
\eee

One may enhance the category $\rDprfbA$ by
adding new `\xad' objects which are declared isomorphic to the
existing \xper\ objects according to the following rule. A \ZtDGM\
$\cM$
is called \emph{\xad}, if there
exists a \xper\ \ZtDGM\
$\cP$ such that for any \xper\ \ZtDGM\
$\cPp$
there is an isomorphism
%
\ee
\label{bae1.10a}
\Hom(\cPp,\cM) = \Hom(\cPp,\cP)
\eee
and for any other \xad\ \ZtDGM\ $\cMp$ we define
\ee
\xlabel{bae1.10a1}
\Ext(\cM,\cMp) \edfn \Hom(\cP,\cMp).
\eee

\subsection{Categories of matrix factorizations}

\subsubsection{Definition of the category}

A category of matrix factorizations is a particular case of a
\Ztpdcat\ defined in subsection\rw{ztpdcat}. For a finite set of
commuting variables
\ee
\label{bae1.11p} 
\bax=\lvar{\ax}{n}
\eee
consider the algebra of polynomial functions $\xdlA=\xdlAZz = \ICbx$ regarded as  $\ZZ$-graded CDGA placed in zero degree, with zero differential $\nbbA=0$, and the
\crvng\ being a polynomial $\xcW\in\ICbx$. Then the \cxmf\
$\xcMFbxW$ is the corresponding \Ztpdcat:
\ee
\label{bae1.11} 
\xcMFbxW \edfn \rDprf\big(\ICbx,0,\xcW\big).
\eee

According to the general definition, an object of $\xcMFbxW$ is a pair
$\cfM=\xmdM$, where $\xmM$ is a free \Ztgrdd\ $\ICbx$-module, while
$\xdDM$ is its curved differential:
\ee
\xlabel{bae1.12}
\xdDM\in\EndICbx(\xmM),\qquad\zdgt{\xdDM}=\yod,\qquad \xdDM^2 =
\xcW\,
\xIdv{\xmM}.
\eee
%
Morphisms between objects are defined by eq. \rx{bae1.7}. The
tensor product\rx{bae1.8} gives a functor
\ee
\label{bae1.12a} 
\xymatrix@C=1.5cm{\xcMFbxWo\times\xcMFbxWt \ar[r]^-{\otCbx} &
\xcMFvv{\bax}{\xcWo+\xcWt}}.
\eee

Let $\bay=\lvar{\ay}{k}$ be another list of variables, generally
of a different length. For $\xcWo\in\ICbx$ and $\xcWt\in\ICby$,
a \xmf\ $\cfMot\in\xcMFvv{\bax,\bay}{\xcWt-\xcWo}$
determines a functor
\ee
\label{bae1.12a1} 
\xymatrix@C=1.5cm{\xcMFbxWo \ar[r]^-{\xPhMot} &\xcMFvv{\bay}{\xcWt}}
\eee
which acts by taking a tensor product with $\cfMot$: for a \xmf\
$\cfM\in\xcMFbxWo$,
\ee
\label{bae1.12a2} 
\xPhMot(\xmM) = \cfM\otCbx\cfMot\in\xcMFvv{\bay}{\xcWt}.
\eee
Note that since $\xcWo$ cancels from the \crvng\ of the tensor
product\rx{bae1.12a2}, we can forget its $\ICbx$-module structure,
thus turning it into an object of $\xcMFvv{\bay}{\xcWt}$.

All categories $\xcMFbxW$ can be unified into a single 2-category
$\xcMFd$ of Landau-Ginzburg B-models with affine
\tgsp s along
the lines explained in subsection\rw{ss.intr}. This 2-category should be thought of as a 2-category of boundary conditions for the RW model whose target is a point.
An \xzobj\ of $\xcMFd$ is a pair $\cWx$,
$\xcW\in\ICbx$
or, equivalently, a category of matrix factorizations $\xcMFbxW$.
Morphisms between two \xzobj s also form a \cxmf
\def\Hmo#1{ \Hmb{#1} }
\ee
\label{bae1.12a3} 
\Hom_{\xcMFd}\brb{\cWxo,\cWyt}=\xcMFvv{\bax,\bay}{\xcWt-\xcWo},
\eee
and the composition of morphisms corresponds to the composition of
the functors\rx{bae1.12a1}: for two morphisms
%
$\cfMot\in\Hmo{ \cWxo,\cWyt }$ and
$\cfMth\in\Hmo{ \cWyt,\cWzh }$
%
we define
\ee
\xlabel{bae1.12a5}
\cfMth\circ\cfMot = \cfMth\otCby\cfMot.
\eee

\subsubsection{\tKmf s}

A Koszul complex corresponding to a list of polynomials
$\bap=\ap_1,\ldots,\ap_{\xkk}\in\ICbx$ is the tensor product of complexes
\ee
\label{bae1.22a} 
\xKobp=\bigotimes_{i=1}^{\xkk}{}
\Big(\xymatrix{
\ICbxo \ar[r]^{\ap_i}& \ICbxz }
\Big),
\eee
where $\ICbxv{i}$ denotes a rank-1 $\ICbx$-module of
\Ztdgr\ $i$. A Koszul \xmf\ is defined similarly:
for two sequences of polynomials $\bap,\baq\in\ICbx$ of equal length
$\xkk$, a Koszul \xmf\ $\xKmfbpq$ is a tensor product of
rank-$(1,1)$ \xmfs
\ee
\label{bae1.23} 
\xKmfbpq =
\begin{pmatrix}
\ap_1 & \aq_1
\\
\ap_2 & \aq_2
\\
\hdotsfor{2}
\\
\ap_k & \aq_k
\end{pmatrix}
=
\bigotimes_{i=1}^{\xkk}
\Big(\xymatrix{
\ICbxo \ar@<0.5ex>[r]^{\ap_i}& \ICbxz \ar@<0.5ex>[l]^{\aq_i}
}
\Big).
\eee
%
Obviously, $\xKmfbpq\in\xcMFvv{\bax}{\pdq}$, where
we use the notation
\ee
\xlabel{eab1.24a1}
\pdq = \sum_{i=1}^{\xkk} \ap_i\aq_i.
\eee

Suppose that an ideal $(\bap)\subset\ICbx$ is generated by a
regular sequence $\bap$ and $\xcW\in (\bap)$. Then the polynomial
$\xcW$ has a presentation $\xcW=\pdq$, where $\baq\in\ICbx$.
%
It is easy to check that for fixed $\xcW$, the isomorphism class of the
\xmf\rx{bae1.23} does not depend on the choice of the polynomials
$\baq$, so we can use an abbreviated notation
\ee
\label{e.koza} 
\xKpvv{\xcW}{\bap} = \xKmfbpq
\eee
%
for the \xmf\ $\xKmfbpq$ of \ex{bae1.23}.


The identity endofunctor of a \xmf\ category $\xcMFbxW$ can
be presented in the form\rx{bae1.12a1} with the help of a \tKmf.
%
%
For a list of
variables $\bax$, consider another list $\baxp$ of the same
length. The difference $\xcW(\baxp)-\xcW(\bax)$ belongs to the ideal
$(\baxp-\bax)\subset\ICv{\bax,\baxp}$, and the corresponding
\tKmf\
\ylee{e.koza1}
\IdbxW\edfn\xKpvv{\xcW(\baxp)-\xcW(\bax)}{\baxp-\bax}
\yeee
determines the
functor\rx{bae1.12a1}
\ee
\xlabel{e.kid}
\xymatrix
@C=2cm
{\xcMFbxW
\ar[r]^-{\xPhv{\IdbxW}}
&\xcMFbxpW},
\eee
which becomes the identity functor, if we identify the categories
$\xcMFbxW$ and $\xcMFbxpW$ by identifying the variables $\bax$ and
$\baxp$.

\subsubsection{\Knrp\ and the translation 2-functor}
\label{ss.ttran}

If the list of variables $\bax$ is empty and the  curving $\xcW$
is zero, then the corresponding category $\xcMFnz$
is equivalent to the category of \Ztgrdd\ vector spaces. The
latter has only two indecomposable objects: the 1-dimensional vector space
$\IC$ in degree 0 and its translation $\IC[\hat 1]$.

The category $\xcMFyot$ also has only two indecomposable objects:
the \tKmf\
\ylee{bag2.1a}
\cfMyot\edfn \xKpvv{\ayots}{\ayo-\sqrt{-1}\,\ayt}
\yeee
and its translation
$\cfMyot[\hat 1] = \xKpvv{\ayots}{\ayo+\sqrt{-1}\,\ayt}$.

The \Knrp\ theorem states that there is an equivalence of
categories
\ylee{bag2.1}
\xcMFnz \cong \xcMFyot,
\yeee
established by the functor
\ee
\xlabel{bag2.2}
\xymatrix@C=2.5cm{\xcMFnz
\ar[r]^-{\xPhv{\cfMyot}}
&\xcMFv{y_1,y_2;y_1^2 + y_2^2}}.
\eee
More generally, for any \crvngp\ $\xcW\in\ICbx$ there is an
equivalence of categories
\ee
\xlabel{bag2.3}
\xcMFbxyWt\cong\xcMFbxW,
\eee
established by the functor
\ee
\label{bag2.4}
\xymatrix@C=3.5cm{\xcMFbxW
\ar[r]^-{\xPhv{\IdbxW\otimes\cfMyot}}
&\xcMFbxpyWt}.
\eee

The translation 2-functor $\btrno\colon\xcMFd\rightarrow\xcMFd$ acts on a
matrix factorization category $\xcMFbxW$ by adding an extra variable
to the list $\bax$ and adding its square to the \crvngp\ $\xcW$:
\ee
\label{e.trn1}
\xcMFbxW\btrno \edfn \xcMFvv{\bax,\ay}{\xcW(\bax) + \ay^2}.
\eee
The action of $\btrno$ on categories of morphisms is provided by the
\Knrp\ functors\rx{bag2.4}. \Knrp\ also implies that the square of the translation
2-functor is equivalent to the identity 2-functor:
$$\btrnt\edfn \btrno\circ\btrno \simeq
\xIdv{\xcMFd}.$$

%
%
%
%


The definition\rx{e.trn1} of the translation 2-functor $\btrno$ can be
adapted to
the general setting of the \Ztpdcat\ of a \cdga\ (see
subsection\rw{ztpdcat}). Consider a special \CDGA\
$\xdlAx=(\IC[\shx],0,\shx^2)$, so that according to \ex{bae1.11}
$\rDprf(\xdlAx)=\xcMFvv{\shx}{\shx^2}$. For a \CDGA\ $\xdlA$ we
define the translation of its \Ztpdcat\ as
\ee
\label{bal1.1}
\rDprf(\xdlA)\btrno = \rDprf(\xdlA\otimes\xdlAx).
\eee
\Knrp\ establishes an equivalence of categories
\aee
\xlabel{e.knp3}
\rDprf(\xdlA)\btrnt \cong \rDprf(\xdlA).
\aeee

For a list of variables $\shbx=\lvar{\ay}{n}$
define a \CDGA\ $\xdlAbx=(\IC[\shbx],0,\shbx^2)$, so that by our definition
$\rDprfcA\btrnn=\rDprfcAAbx$.
%
The category $\rDprfcAAbx$ has an equivalent `intrinsic'
description in terms of objects and morphisms of $\rDprfcA$.
Namely, consider a category $\rDprfcAdn$, whose objects are pairs
$(\cP,\bafcP)$, where $\cP=\xdgmP$ is a \xper\ module of $\rDprfcA$,
while
$$\bafcP = \afcPv{1},\ldots\afcPv{n}\in\Exttod(\cP,\cP)$$
satisfies the property
$$\{\afcPv{i},\afcPv{j}\}= \delta_{ij}\,\xId_{\cP}.$$
Morphisms between two objects $(\cP,\bafcP)$ and
$(\cPp,\bafcPp)$ are $\rDprfcA$-morphisms
$\ag\in\Extub(\cP,\cPp)$ which
intertwine the lists $\bafcP$ and $\bafcPp$: $\ag\,\bafcP=\bafcPp\ag$.
An equivalence functor between the categories
$\rDprfcAdn$ and $\rDprfcAAbx$ maps an object $(\cP,\bafcP)$ of $\rDprfcAdn$
into an object $\big(\xmP\otimes\IC[\shbx],\nbbv{\xmP}+\xcd{\shbx}{\bafcP}
\big)$ of $\rDprfcAAbx$.

\subsection{The  \tcrca}

Fix a finite set of variables $\bax$ of length $n$. The 2-category of
relative curved differential graded (\yCDG) polynomial algebras
$\xcMFdbx$ is
a result of `fibering' the 2-category $\xcMFd$ over the algebra
$\ICbx$. One should regard this 2-category as a 2-category of boundary conditions for the RW model whose target is $\Ts \CC^n$. (These are not the most general boundary conditions: more general ones will be described in the next section.) Objects of $\xcMFdbx$
are pairs $\cWy$,
where
$\bay=\lvar{\ay}{k}$ is a list of `extra' variables of arbitrary length
and the \crvng\ $\xcW$ is an element of the algebra $\ICbxy$ over the ring $\ICbx$.
The category of morphisms between two \xzobj s is defined as
the \cxmf\ of the difference of \crvng s:
\ee
\label{bae1.13} 
\Hom_{\xcMFdbx}\brb{\cWyo,\cWzt} \edfn
\xcMFvv{\bax,\bay,\baz}{\xcWt(\bax,\baz)-\xcWo(\bax,\bay)}
\eee
(\cf \ex{bae1.12a3}).
The composition of morphisms between \xzobj s is given by the tensor
product functor\rx{bae1.12a}: for
$\cfMot\in\Hmo{\cWyo,\cWzt}$ and $\cfMth\in\Hmo{\cWzt,\cWuh}$
we define
\ee
\xlabel{bae1.15}
\cfMth\circ\cfMot =
\cfMth\otCbxz\cfMot\in\xcMFvv{\bax,\bay,\bau}{\xcWh-\xcWo}
\eee
(\cf \ex{bae1.12a2}).

The simplest objects of $\xcMFdbx$ are the ones without extra
variables, and we denote them as $\xcW \edfn \cWe$, where
$\xcW\in\ICbx$. The category of morphisms between such objects is
\ylee{bag3.0a}
\Hom_{\xcMFdbx}(\xcWo,\xcWt) = \xcMFvv{\bax}{\xcWt-\xcWo}.
\yeee

An object $\cWzot\in\xcMFdbxy$ determines a
\cfun
\ee
\label{bae1.16} 
\xymatrix@C=1.5cm{ \xcMFdbx \ar[r]^-{\dPhWzot} & \xcMFdby },
\eee
which turns an object $\cWu\in\xcMFdbx$ into an
object
\ee
\label{bae1.17}
\dPhWzot \cWu  = \xvprvv{\bax,\bau,\baz}{\xcW+\xcWot}.
\eee

The action of the \cfun\rx{bae1.16} on categories of morphisms
between \xzobj s is defined with the help of \tKmf s. For
\xzobj s $\cWuo,\cWwt\in\xcMFdbx$  we have to define the functor
%
%
\ee
\xlabel{bae1.18}
\xymatrix@C=2cm{
\Hmo{ \cWuo,\cWwt }
\ar[r]^-{\xPhuotWzot} &
\HmB{ \dPhWzot\cWuo,\dPhWzot\cWwt }
},
\eee
%
or more explicitly, according to \eex{bae1.13} and\rx{bae1.17},
\ee
\label{bae1.19} 
\xymatrix@C=2cm{
\xcMFvv{\bax,\bau,\baw}{\xcWt(\bax,\baw)-\xcWo(\bax,\bau)}
\ar[r]^-{
\xPhuotWzot
} &
\xcMFvv{\bay,\baxp,\baup,\bazp,\baxpp,\bawpp,\bazpp}
{\xcWttt}
},
\eee
where
\ee
\xlabel{bae1.20}
\xcWttt =
\xcWt(\baxpp,\bawpp)+ \xcWot(\baxpp,\bay,\bazpp)
-
\xcWo(\baxp,\baup) - \xcWot(\baxp,\bay,\bazp)
\eee
and for some lists of variables we used primed and double-primed
lists of the same length in order to change the names of
variables. The functor\rx{bae1.19} can be written in the form\rx{bae1.12a2}
\ylee{bae1.21}
\xPhuotWzot = \xPhv{\cfMot}
\yeee
%
%
for a certain matrix factorization
$\cfMot\in\xcMFvv{\xRtt}{\xcWttot},$
%
where
%
$$\xRtt =
\ICv{\bax,\bau,\baw,\bay,\baxp,\baup,\bazp,\baxpp,\bawpp,\bazpp}$$
%
and
\begin{multline}
\xlabel{bae1.22}
\xcWttot = \Big(\xcWt(\baxpp,\bawpp) - \xcWt(\bax,\baw)\Big)
-\Big(\xcWo(\baxp,\baup)-\xcWo(\bax,\bau)\Big)
\\
\nonumber
+ \Big(
\xcWot(\baxpp,\bay,\bazpp) - \xcWot(\baxp,\bay,\bazp)
\Big).
\end{multline}
The full formula for the \xmf\ $\cfMot$ is rather bulky, so we
describe it indirectly in terms of the \tKmf\rx{e.koza}.
Denote $$\bap =
(\baxpp-\bax,\bawpp-\baw,\baxp-\bax,\baup-\bau,\bazpp-\bazp).$$
Then $\xcWttot\in(\bap)$, and we set
\ee
\xlabel{bae1.27}
\cfMot=\xKpWttbp.
\eee

When the lists $\bax$ and $\bay$ have equal number of variables,
there exist two important equivalence \xxtf s of
type\rx{bae1.16}. The first one is the \xxtf
\xlee{bag3.1}
\Idba=\dPhv{\baa;\baa\cdot(\bay-\bax)}.
\xeee
If we identify $\xcMFdbx$ with
$\xcMFdby$ by identifying the variables $\bax$ and $\bay$, then
$\Idba$ becomes equivalent to the identity \xxtf. We leave the details of the
proof to the reader, while illustrating this statement by the
following example: after the identification of $\bax$ and $\bay$,
the action of $\Idba$ on an object $\ocW\in\xcMFdbx$ becomes
$\Idba(\ocW) = \xvprvv{\baxp,\baa}{\tcW}$, where $\tcW(\bax,\baxp,\baa)=\xcW(\baxp)+
\baa\cdot(\bax-\baxp)$, and the isomorphism between $\ocW$ and
$\Idba(\ocW)$ is established by the \tKmf\
$\xKpvv{\tcW-\xcW}{\baxp-\bax}$.

The second equivalence \xxtf\ is the `\Ldrt' and it has two
versions:
\xlee{bag3.1a}
\dLtp = \dPhv{\varnothing;\bax\cdot\bay},\qquad
\dLtm=\dPhv{\varnothing;-\bax\cdot\bay}.
\xeee
%
Again, we leave the verification of their
equivalence nature  to the reader. Note that the equivalence of morphism
categories $\Hom_{\xcMFdbx}\brb{\ocWo,\ocWt}$
and $\Hom_{\xcMFdby}\brb{\dLtpm(\ocWo),\dLtpm(\ocWt)}$ is a
corollary of the \Knrp.

The composition of two \Ldrt s with opposite signs is equivalent to the identity 2-functor:
\ylee{bag3.2}
\dLtp\circ\dLtm \simeq \dLtm\circ\dLtp \simeq \dId.
\yeee

The translation 2-functor is an equivalence 2-functor
$\ttrno\colon\xcMFdbx\rightarrow\xcMFdbx$, which acts on an object
$\cWy$ by adding a new variable $\aa$ to the extra variable list
and adding its square to the polynomial $\xcW$:
\xlee{bag2.1b}
\cWy\ttrno \edfn \xvprvv{\bay,\aa}{\xcW+\aa^2}.
\xeee
An equivalence
between the morphism categories
$\HmMFbx\brb{\xvprvv{\bay}{\xcWo},\xvprvv{\baz}{\xcWt}}$ and\\
$\HmMFbx\brb{\xvprvv{\bay}{\xcWo}\ttrno,\xvprvv{\baz}{\xcWt}\ttrno}$
is established by the \Knrp\
functor in view of an obvious
equivalence of categories
\ee
\label{bae1.27a6}
\Hom_{\xcMFdbx}\brB{
\xvprbvv{\bay}{\xcWo}\ttrno,\xvprbvv{\baz}{\xcWt}} =
\Hom_{\xcMFdbx}\brB{
\xvprbvv{\bay}{\xcWo},\xvprbvv{\baz}{\xcWt}}\btrno.
\eee
The \xtran\ by 2 is isomorphic to the identity endofunctor:
$\ttrnt\cong \xIdv{\xcMFdbx}$.

\subsection{The 3-category $\xcMFdd$ of polynomial algebras}

The 2-categories described in the previous subsection can be combined into a single 3-category. This 3-category should be thought of as the 3-category of RW models whose target spaces have the form $\Ts \CC^n$ for some nonnegative integer $n$.

An \xzobj\ of the 3-category $\xcMFdd$ is a list of variables
$\xlobx$.
The 2-category of morphisms between two \xzobj s $\xlobx,\xloby\in\xcMFdd$
is the correspondence 2-category $\xcMFdbxy$:
%
\ee
\xlabel{bae1.28}
\Hmb{\xlobx,\xloby} = \xcMFdbxy.
\eee
Each correspondence determines a 2-functor \rx{bae1.16}, and the
composition of correspondences as morphisms of $\xcMFdd$ is defined
to agree with the composition of the corresponding 2-functors. Namely, the
composition of two correspondences
%
$\xvprvv{\bau}{\xcWot}\in\xcMFdbxy$ and 
$\xvprvv{\baw}{\xcWth}\in\xcMFdbyz$
%
is the correspondence
\ee
\xlabel{bae1.30}
\xvprvv{\baw}{\xcWth}\circ\xvprvv{\bau}{\xcWot} =
\xvprvv{\bau,\baw,\bay}{\xcWot+\xcWth}\in\xcMFdbxz.
\eee

The identity endomorphism of an \xzobj\ $\xlobx$ can be represented by
the correspondence
\ee
\label{bae1.31} 
\xIdbx \simeq \xvprbvv{ \baa}{\xcd{\baa}{(\baxp-\bax)} 
}\in\xcMFdv{\bax,\baxp} = \End(\xlobx),
\eee
the lists $\baxp$ and $\baa$ having the same length as $\bax$
(\cf
\ex{bag3.1}).


The 3-category $\xcMFdd$ has a symmetric monoidal structure corresponding to
\ex{bag1.1}. The
product of \xzobj s corresponds to the concatenation of lists
\ee
\xlabel{bae1.31a}
\xlobx
\times
\xloby = \xlobxy
\eee
and the
unit element
is the empty list $\xlobe$. The duality endofunctor $\hve$ of
\ex{bag1.3} acts on $\xcMFdd$ as the identity.

\section{The 3-category of complex manifolds}
\label{s.sct4}
\label{ss.3catlv}

\subsection{The 2-periodic derived category of a curved complex manifold}
\label{tZtgdcs}

The \cxmf\rx{bae1.11} is based on a polynomial algebra $\ICbx$.
One can define a similar `analytic' category $\xcMFAbxW$ based on the algebra of holomorphic
functions on $\IC^n$.
The 2-periodic derived category of a curved complex manifold $\rDsprfUW$, which is the \Ztpdcat\ of its curved \Dlb\ algebra,
generalizes the category
$\xcMFAbxW$ from $\IC^n$ to a general complex manifold $\xcA$, so
that if $\xcA=\IC^n$, then there is an equivalence of categories
\ee
\xlabel{bae1.35e}
\rDsprfvv{\IC^n}{\xcW} \simeq \xcMFAbxW.
\eee
{}From the physical point of view $\rDsprfUW$ is the category of boundary conditions for the B-model with target $\xcA$ deformed by a curving $W$. In the special case when $W$ is a holomorphic function on $\xcA$, this theory is the topological Landau-Ginzburg model with target $\xcA$ and the superpotential $W$.

Following the general construction of subsection\rw{ztpdcat},
let us define the curved \Dlb\ algebra of
a complex manifold $\xcA$. The \Zgrdd\ \CDGA\ $\xdlA$ of \ex{bae1.1a}
is  the algebra of \xahfs\ $\xbOmbU$, the differential $\nbb$
is $\dlb$, and
%
the \crvng\ $\xcW$ is a
$\dlb$-closed even form:
%
\ee
\label{bae1.39} 
\nbb = \dlb,\qquad\xcW\in\xbOmeU,\qquad \dlb\xcW=0.
\eee

Let $\xE$ be a smooth (not necessarily holomorphic) \Ztgrdd\ vector
bundle over $\xcA$, and let $\xbOmbE$ be the space of \xahfs\
with values in $\xE$:
\ee
\xlabel{bae1.40}
\xbOmbE = \Gamma\big(\xE\otimes\wedge^\bullet\bTs\xX \big)
= \Gamma(\xE)\otimes_{\xbOmz(\xcA)}\xbOmbU,
\eee
where $\Gamma(\xE)$ is the space of sections of $\xE$.
The space $\xbOmbE$ is a \Ztgrdd\ module over $\xbOmbU$ of the
form\rx{bae1.10}.
%
A \emph{\cqhlmvb} is a pair $(\xE,\nbbE)$, where
$\nbbE$ is a curved \ydah-differential acting on $\xbOmbE$, that is,
$\nbbE$ is a $\IC$-linear operator 
\ee
\xlabel{bae1.41}
\nbbE\in\End_{\IC}\big(\xbOmbE\big),
\eee
which satisfies the following properties:
\begin{gather}
\label{bae1.42} 
\zdgt{\nbbE} = \yod,\\
\label{bae1.42a} 
\nbbE\,(\smu\wedge\sigma) = (\dlb\smu)\wedge \sigma +
(-1)^{\zdgt{\smu}}
\smu\wedge (\nbbE\,\sigma),
\\
\label{bae1.42a1} 
\nbbE^2\,\sigma = \xcW\wedge\sigma,
\end{gather}
where $\smu\in\xbOmbU$ and $\sigma\in\xbOmbE$.

We call the pair $(\xE,\nbbE)$ \qhlm, because even if $\xcW=0$, the bundle $\xE$ is not necessarily
holomorphic: if we split the differential $\nbbE$ according to the
Dolbeault degree:
\ylee{baj6.1}
\nbbE = \sum_{i=0}^{\dim\xcA} \nbbEai,\qquad
\nbbEai\colon \xbOmbE\longrightarrow\xbOmv{\bullet + i}(\xE),
\yeee
then $\nbbEao^2=-\{\nbbEaz,\nbbEat\}$ rather than $\nbbEao^2=0$,
so in general $\nbbEao$ does not determine a holomorphic structure on $\xE$.

If $(\xE,\nbbE)$ is a \cqhlmvb,
then the pair
\ee
\label{bae1.44} 
\cE=\xdgmE
\eee
is a \xper\ \ZtDGM\ over the \CDGA\ $\ycdgaBU$. In fact,
all \xper\ \ZtDGM s over this \CDGA\ originate in this way from vector bundles
with curved differentials.

The pair $\cmfUW$ will be called a \emph{\ccm}. We
define its 2-periodic derived category $\rDsprfUW$ as the
\Ztpdcat\ 
of its curved \Dlb\ algebra: 
\ee
\label{bae1.45} 
\rDsprfUW= \rDprf\ycdgaBU,
\eee
%
its \xper\ objects being the pairs\rx{bae1.44} and
morphisms defined according to the general formula\rx{bae1.7}.
The monoidal structure and the action of the duality functor
$\dulv{}$ also follow the general definitions\rx{bae1.8}
and\rx{bae1.9}.

For a \xper\ object\rx{bae1.44} we use an abbreviated
notation
\ee
\label{bae1.45b}
\cE=\adgmE.
\eee
If $\xcW=0$, then we will abbreviate the notation\rx{bae1.45} down
to
\ee
\label{bae1.45a} 
\rDprfU \edfn \rDprf(\xcA,0).
\eee
The latter category contains a full subcategory
which is equivalent to the bounded derived category of coherent
sheaves $\rDbU$:
%
\ee
\label{bae1.45cm1} 
\rDbU\hookrightarrow\rDprfU.
\eee
An object of $\xrDX$, represented by
a chain complex of holomorphic vector bundles
\ee
\label{bae1.45c}
\xymatrix{\xEv{0} \ar[r]^-{\sigma_0} &\xEv{1} \ar[r]^-{\sigma_1}
&\ldots \ar[r]^-{\sigma_{k-1}} & \xEv{k} },
\eee
corresponds to an object $(\xE,\dlb+\sigma)$ of $\rDprfU$, where $\xE$ is
the total
\Ztgrdd\ vector bundle
\ee
\xlabel{bae1.45c1}
\xE= \bigoplus_{i-\rm{even}} \xEv{i} \oplus\bigoplus_{i-\rm{odd}}
\xEv{i},
\eee
while $\dlb$ is the \ydah\ differential for holomorphic vector bundles and
$\sigma$ is the combined differential of the complex\rx{bae1.45c}:
%
$\sigma = \sum_{i=1}^k \sigma_i.$
%

The category $\rDsprfUW$ admits a certain `\Dlbf'. Consider
two \xper\ \ZtDGM s, which share the same vector bundle $\xE$:
$ \cE=\adgmE$ and $\cE\p=\adgmpE$.
We say
that the objects $\cE$ and $\cE\p$
are isomorphic up to \zord\ $k$, if the difference between their
connections is of higher degree as an element of $\xbOmbE$:
\ee
\xlabel{bae1.45a1} 
\nbbE\p-\nbbE=\dOk,\qquad\dOk\in\bigoplus_{i>k}\xbOmv{i}(\xE).
\eee

The tensor product of \xper\ \ZtDGM s over $\xbOmbU$ corresponds
to the tensor product of vector bundles:
\ee
\label{bae1.45a2} 
\cEo
\otimes
\cEt=\brb{
\xEo\otimes\xEt
,\nbbv{\xEo}+\nbbv{\xEt}}.
\eee
Since
\ee
\xlabel{bae1.45a3}
(\nbbv{\xEo} + \nbbv{\xEt})^2 = (\xcWo+\xcWt)\,\xIdv{\cEo\otimes\cEt},
\eee
the tensor product\rx{bae1.45a2} gives rise to a functor
\ee
\xlabel{bae1.45a4}
\rDsprfUWao \times \rDsprfUWat \xrarv{\otimes}
\rDsprfvv{\xcA}{\xcWo+\xcWt}.
\eee

For a holomorphic map $\mF\colon \xcAp\longrightarrow\xcA$ and for
a \xahf\ $\xcW$ of\rx{bae1.39} let $\mF^\ast(\xcW)$ denote its \pb\ to
$\xcAp$. We introduce a `derived' \pb\ functor $\mF^\ast$ and a
\pf\ functor $\mF_\ast$:
\ee
\xlabel{bae1.46}
\xymatrix@C=1cm{ \rDsprfBvv{\xcAp}{\mF^\ast(\xcW)}
\ar@<0.5ex>[r]^-{\mF_\ast}
& \;\rDsprfUW
\ar@<0.5ex>[l]^-{F^\ast}
}.
\eee
The \pb\ functor $\mF^\ast$ acts on \xper\ objects\rx{bae1.44} by pulling back
quasi-holomorphic vector bundles. The definition of the \pf\ functor $\mF_\ast$ is a bit
tricky. There is a \pb\ homomorphism of \CDGA s
\ee
\xlabel{bae1.47}
\xymatrix@C=1cm{
\Big(\xbOmbUp,\dlb,\mF^\ast(\xcW) \Big)& \ar[l]\ycdgaBU},
\eee
which turns a \xper\ \ZtDGM\ $\cE\p$ over the \CDGA\ $\Big(\xbOmbUp,\dlb,\mF^\ast(\xcW)
\Big)$ into a \ZtDGM\ over the \CDGA\ $\ycdgaBU$. We conjecture that the latter
\ZtDGM\ is \xad\ (\cf \ex{bae1.10a}) and use it as the definition of $\mF_\ast(\cE\p)$.

\subsection{The 2-category of \cfbs}
\label{ssctU}
The 2-category of \cfbs\ $\rDDprf$ is a generalization of the
2-category of \xanl\ matrix factorizations $\xcMFAd$. From the physical viewpoint, it is the 2-category of curved B-models, or equivalently the 2-category of boundary conditions for the RW model whose target is a point.
An object of $\rDDprf$ is a \ccm\ $\cmfUW$. A morphism between two
\ccm s $\cmfUWo$ and $\cmfUWt$ is a \xper\ (or an \xad) \ZtDGM
\ee
\xlabel{bae1.48}
\ycF\in \rDprf\cmfBv{ \xcAo\times\xcAt }{\;\yepiut(\xcWt)-\yepiuo(\xcWo)},
\eee
where $\yepio$ and $\yepit$ are projections
\ee
\label{bae1.49} 
\xymatrix@C=0.5pc{
& \xcAo\times\xcAt \ar[dl]_{\yepio} \ar[dr]^{\yepit}
\\
\xcAo &&\xcAt
}
\eee
Such an object $\ycF$ determines
a \FMtr\ functor
\ee
\xlabel{bae1.50}
\xymatrix@C=1.5cm{
\rDprfUWo \ar[r]^-{\xPhcF} & \rDprfUWt,
}
\eee
with the standard action on objects $\ycE\in\rDprfUWo$:
\ee
\label{bae1.51} 
\xPhcF(\ycE) = \yepidt\Big(\ycF\otimes\yepiuo(\ycE) \Big).
\eee
We define the composition of morphisms $\ycF$ as the
composition of the corresponding functors, so
for
%
$\ycFot\in\HmB{\cmfUWo,\cmfUWt}$ and
$\ycFth\in\HmB{\cmfUWt,\cmfUWh}$
%
the composition is
\ee
\label{bae1.53} 
\ycFth\circ\ycFot = \yepidoh \Big(
\yepiuot(\ycFot)\otimes\yepiuth(\ycFth)
\Big),
\eee
where the maps $\yepiij$ are the projections
\ee
\label{bae1.54} 
\xymatrix{ & \xcAo\times\xcAt\times\xcAh
\ar[dl]_{\yepiot}\ar[dr]^{\yepioh} \ar[d]^{\yepith}
\\
\xcAo\times\xcAt & \xcAt\times\xcAh &\xcAo\times\xcAh
}
\eee

\subsection{The 2-category of \xcfbs}
\label{xcfbs}
\def\xcA{ U }

Let us fix a complex manifold $\xcA$. A \hlfb\ over $\xcA$ is a
smooth \fbl
\ee
\label{bae1.55} 
\xfbrv{\Vfb}{\xcU}{\xcA}{\xcp}
\eee
where $\xcU$ is a complex manifold and the projection $\xcp$ is
holomorphic. The bundle\rx{bae1.55} does not have to be
holomorphic, so the complex structure of the fiber $\Vfb$ may be
different over different points of the base $\xcA$.

Let $\ycapU$ denote the fibered product of two fibrations with the
same base $\xcA$.
%
It has natural projections
\ee
\label{bae1.57} 
\xymatrix@C=0.5pc{
& \xcUo\ycapU\xcUt \ar[dl]_{\yepio} \ar[dr]^{\yepit}
\\
\xcUo &&\xcUt
}
\eee

The 2-category of \xcfbs\ $\rDDprfU$ is obtained by `fibering' the 2-category $\rDDprf$
over $\xcA$, the fibers $\Vfb$ being the analogs of the
manifolds $\xcA$
appearing in subsection\rw{ssctU}.
This category is also a generalization of the \xanl\ 2-category $\xcMFAdbx$:
\ee
\xlabel{bae1.57e}
\rDDprfv{\IC^n} = \xcMFAdbx.
\eee
{}From the physical viewpoint, $\rDDprfU$ is the 2-category of boundary conditions for the RW model with target $\sT U$.

An \xzobj\ of  
$\rDDprfU$ is a \xcfb\
$\cmfcUW$ where, according to the definition\rx{bae1.39},
$\xcW\in\xbOmecU$ and $\dlb\xcW=0$. The category of morphisms
between two \xzobj s is the \tpd\ derived category of their curved fibered
product:
\ee
\label{bae1.58} 
\HmDDU\brB{\cmfcUWo,\cmfcUWt} = \rDprf\cmfBv{ \xcUo\ycapU\xcUt
}{\;\yepiut(\xcWt)-\yepiuo(\xcWo)}.
\eee
An object $\ycF$ of the category\rx{bae1.58} generates a \FMtr\
functor
\ee
\xlabel{bae1.59}
\xymatrix@C=1.5cm{
\rDprfcUWo \ar[r]^-{\xPhcF} & \rDprfcUWt
}
\eee
which acts on an object $\ycEo\in\rDprfcUWo$ according to the
formula\rx{bae1.51}.
The composition of morphisms
%
$\ycFot\in\HmB{\cmfcUWo,\cmfcUWt}$ and
$\ycFth\in\HmB{\cmfcUWt,\cmfcUWh}$
%
is defined again
by the formula\rx{bae1.53},
where the maps $\yepiij$ are projections
\ee
\xlabel{bae1.61}
\xymatrix{ & \xcUo\ycapU\xcUt\ycapU\xcUh
\ar[dl]_{\yepiot}\ar[dr]^{\yepioh} \ar[d]^{\yepith}
\\
\xcUo\ycapU\xcUt & \xcUt\ycapU\xcUh &\xcUo\ycapU\xcUh
}
\eee
so that this composition agrees with the composition
of \FM\ functors.


The simplest \xzobj s of $\rDDprfU$ are \copfib s $\TrfUW$, where
\ee
\label{bae1.61b} 
\TrfU = 
\vcenter{
\xymatrix@C=1.5pc@R=1.5pc{
\{\text{1-point}\}\ar[r] & \xcA \ar[d]
\\
&\xcA
}
}
\eee
is a \opfib, that is, a fibration whose fiber is a single point,
and $\xcW$ is a holomorphic function on $\xcA$. We denote the
objects $\TrfUW$ simply as $\xcW$.
%
%
According to the general definition\rx{bae1.58}, the category of
morphisms between two \copfib s is the curved 2-periodic derived category
\ee
\label{bae1.63ap}
\Hom(\xcWo,\xcWt) = \rDsprfUWtmo,
\eee
and, in particular, the
endomorphisms of a \copfib\ $\xcW$ form the \tpd\ derived category of
$\xcA$:
\ee
\label{bae1.63a} 
\End(\xcW) = \rDprfU
\eee
containing the bounded derived category of coherent sheaves
 $\xrDX$ as a subcategory. The
composition of endomorphisms in $\End(\xcW)$ corresponds to the
tensor product\rx{bae1.45a2}, in other words, to the standard monoidal structure on $\rDprf(U)$.

The 2-category $\rDDprfU$ has a `\psmon' structure. For two objects
$\cmfcUWo$ and $\cmfcUWt$ we define
\ee
\xlabel{bae1.62}
\cmfcUWo\zcapU\cmfcUWt =
\Big(\xcUo\ycapU\xcUt,\yepiuo(\xcWo)+\yepiuo(\xcWt) \Big).
\eee
%
The
main property of the \psmon\ structure is that for the object $\TrfUZ$
consisting of the \opfib\ over $\xcA$ and $\xcW=0$, the following isomorphism holds:
\ee
\xlabel{bae1.63}
\HmB{\cmfcUWo\zcapU\cmfcUmWt,(\TrfU,0)} \cong
\HmB{\cmfcUWo,\cmfcUWt}.
\eee
%


The 2-category $\rDDprfU$ has a translation endo-2-functor
similar to\rx{bag2.1b}, which acts on a \zobj\ $\cmfcUW$ by
adding a space $\ICa$ (which is $\IC$ with the standard coordinate $\aa$) to
fibers of $\xcU$ and adding $\aa^2$ to the \crvng\ $\xcW$:
\ee
\xlabel{bae1.63a1}
\cmfcUW \ttrno = \cmfv{\xcU\times\ICa}{\xcW+\aa^2}.
\eee
%

For a holomorphic map $\mF\colon \xcAp\longrightarrow\xcA$
there exists a pull-back \xxtf
\ee
\xlabel{bae1.64}
\xymatrix@C=1cm{
\rDDprfUp & \rDDprfU \ar[l]-_{\dFua}
}
\eee
which acts on an object $\cmfcUW\in\rDDprfU$ by pulling back
the fibration $\xcU$ and the Dolbeault cohomology class $\xcW$:
\ee
\xlabel{bae1.65}
\dFua\cmfcUW =
\cmfBv{\mFua(\xcU)}{\mFua(\xcW)}
\eee
If $\xcAp$ is a \hlfb\ over $\xcA$ and the map $\mF$ is its projection,
then there is also a push-forward \xxtf
\ee
\xlabel{bae1.66}
\xymatrix@C=1.5cm{
\rDDprfUp \ar[r]^{\dFda}& \rDDprfU
}.
\eee
Indeed, for an object $\cmfcUWp\in\rDDprfUp$, a fibration
$\xcUp\xrarv{\xcp\p}\xcAp$ can be pushed forward
to a fibration $\mFda(\xcUp)$ over $\xcA$:
%
$\xcUp\xrarv{\mF\circ\xcp\p}\xcA$,
%
so we can keep
the form $\xcWp\in\xbOmecUp$
and  define
\ee
\xlabel{bae1.67}
\dFda\cmfcUWp = \Big( \mFda(\xcUp), \xcWp \Big).
\eee

\subsection{The 3-category of complex manifolds}
The 2-categories $\rDDprfU$ can be assembled  into a 3-category
$\rDDDprf$. It should be thought of as the 3-category of RW models with target-spaces of the form $\Ts\xcA$, for all complex manifolds $\xcA$.
Objects of $\rDDDprf$ are complex manifolds $\xcA$ (or,
equivalently, categories $\rDDprfU$). The category of morphisms
between two \xzobj s is
\ee
\label{bae1.68} 
\Hom(\xcAo,\xcAt) = \rDDprf(\xcAo\times\xcAt).
\eee
Objects of the category\rx{bae1.68} are called \emph{\corrs}. A
\corr\ $\cmfcUWot\in\Hom(\xcAo,\xcAt)$ determines a \xxtf
\ee
\label{bae1.69}
\xymatrix@C=2cm{
\rDDprf(\xcAo) \ar[r]^-{\xPhUWot} & \rDDprf(\xcAt)
}
\eee
acting on an object $\cmfcUWo\in\rDDprf(\xcAo)$ according to the
formula
\ee
\xlabel{bae1.70}
\xPhUWot\cmfcUWo = \yepidt\Big(\cmfcUWot\;\zcapv{(\xcAo\times\xcAt)}\;\yepiuo\cmfcUWo \Big),
\eee
where $\yepio$ and $\yepit$ are the projections\rx{bae1.49}.
The composition of correspondences
$\cmfcUWot\in\Hom(\xcAo,\xcAt)$ and
$\cmfcUWth\in\Hom(\xcAt,\xcAh)$
as morphisms in the 3-category $\rDDDprf$ is defined to
agree with the composition of their functors:
\ee
\xlabel{bae1.72}
\xcUth\circ\xcUot =
\yepidoh \Big(
\yepiuot\cmfcUWot\;\zcapv{(\xcAo\times\xcAt\times\xcAh)}\;\yepiuth\cmfcUWth
\Big),
\eee
where the maps $\yepiij$ are the projections\rx{bae1.54}.

The 3-category $\rDDDprf$ has a symmetric monoidal structure corresponding to that of
\ex{bag1.1}. The
product of \xzobj s corresponds to the product of the underlying complex manifolds
$\xcAo\times\xcAt$
and the unit
\xzobj\
is the complex manifold $\xcAopt$ consisting of a single point.
%
The duality endofunctor $\hve$ of \ex{bag1.3}
acts on $\rDDDprf$ as the identity.

\subsection{\xAug\ categories}
\label{ss.aug}

{}The curving $W$ enters the path-integral formulation of the RW model with boundaries only through it derivative $\partial W$. This suggests that one should define the 2-category of boundary conditions in such a way that $W$ is defined only up to addition of a locally constant function. Below we describe such a modification of the 2-category $\rDDprfaU$. It is also necessary for a geometric interpretation of $\rDDprfaU$ in terms of the cotangent bundle $\Ts U$, as we will see in the next section.

For an element
$\xcW\in\xbOmecU$, such that $\dlb\xcW=0$, define an \xaug\
category $\rDsprfaUW$ as a formal union over all locally constant
functions $\xcWlc$:
\ee
\xlabel{bae1.81a3}
\rDsprfaUW = \bigcup_{\del\xcWlc=0}
\rDsprfvv{\xcA}{\xcW+\xcWlc}.
\eee
We define the 2-category $\rDDprfaU$ in exactly the same way as
$\rDDprfU$, except that in the definition of morphisms\rx{bae1.58}
we replace the 2-periodic category $\rDprf$ with its \xaug\
version $\rDprfa$:
\ee
\label{bae1.58b} 
\HmDDaU\brB{\cmfcUWo,\cmfcUWt} = \rDprfa\cmfBv{ \xcUo\ycapU\xcUt
}{\;\yepiut(\xcWt)-\yepiuo(\xcWo)}.
\eee
Two objects $\cmfv{\xcU}{\xcWo}$ and $\cmfv{\xcU}{\xcWt}$ are
isomorphic within $\rDDprfaU$ if the difference $\xcWt-\xcWo$ is
locally constant on $\xcU$, that is, if $\del\xcWo=\del\xcWt$.

The \xaug\ 3-category $\rDDDprfa$ is defined in the same way as
$\rDDDprf$, except that we replace the categories $\rDDprf$
appearing in its definition with \xaug\ categories $\rDDprfa$.

The \xaug\ matrix factorization category is defined as the formal
union of categories
\ee
\xlabel{bae1.81a}
\xcMFabxW = \bigcup_{\shC\in\IC} \xcMFvv{\bax}{\xcW+\shC},
\eee
and the \xaug\ categories $\xcMFadbx$ and $\xcMFad$ are defined
similar to $\rDDprfaU$ and $\rDDDprfa$.

\def\xcWz{ \xcWv{0} }

\section{The 2-category $\ctLLTsU$: a geometric description and a
relation to $\rDDprfaU$}
\label{s.sct6}

%
\subsection{A geometric description of the 2-category
$\ctLLTsU$}
\label{ss.tcthlsmm}

Let $(X,\omega)$ be a holomorphic symplectic manifold. The RW model associates to $(X,\omega)$ a 2-category of boundary conditions $\ctLLXsom$.
Path integral arguments suggest that a certain part of $\ctLLXsom$ can be described in geometric terms.
In this subsection we consider the geometric description
when $\Xsom$ is the cotangent bundle of a complex manifold $\xcA$:
$\xX= \TsU$. The description uses only the \hlsm\
structure of $\xX$, and in our definitions we never refer to the cotangent
bundle structure. Conjecturally, this property should hold also for the whole 2-category $\ctLLTsU$, in the sense that it should be acted upon by the group of symplectic automorphisms of $\TsU$. This is far from obvious from the algebraic definition of $\ctLLTsU$ as the 2-category $\rDDprfaU$ given in the previous section.


\subsubsection{$\OnC$ bundles and matrix factorizations}

A holomorphic $\OnC$ vector bundle $\bnB$ over a complex manifold $\xcA$
determines  a `$\bnB$-twisted' version $\rDsprfUW\ytrnB$ of the
category $\rDsprfUW$.
%
The $\OnC$
structure determines, up to a non-zero constant factor,  a
holomorphic function $\xcWq$ on the total space of $\bnB$, which
is quadratic along the fibers. The
$\bnB$-twisting
replaces $\xcA$
with the total space of the bundle $\bnB$ and adds $\xcWq$ to the
\crvng:
\ee
\label{bae1.86a} 
\rDsprfUW\ytrnB = \rDsprfvv{\bnB}{\xcW+\xcWq}.
\eee
It is easy to see that the composition of \ytran s corresponds to
the sum of vector bundles:
$\ytrnv{\bnB_1}\ytrnv{\bnB_2}=\ytrnv{\bnB_1\oplus\bnB_2}$, and if
the bundle $\bnB$ is trivial, then
$
\rDsprfUW\ytrnB = \rDsprfUW\btrnv{\rnk\bnB},
$
where
$\btrno$
is the translation 2-functor\rx{bal1.1}.

A line bundle $\vcL\rightarrow\xcA$ has an $\rmO(1,\IC)$ structure
if and only if it is \sd, that is, if it is isomorphic to its
dual: $\vcL\cong\dulv{\vcL}$.
The top exterior power $\wdtpB$ of an $\OnC$ bundle $\bnB$ is \sd,
%
and there is a canonical equivalence of categories\footnote{We thank
M.~Kontsevich for pointing this out.}
\xlee{baf1.2}
\rDsprfUW\ytrnB = \rDsprfUW\ytrnv{\wdtpB}{\btrnv{\rnk\bnB-1}}.
\xeee
%


Similar to the untwisted \ytran\ discussed in
subsection\rw{ss.ttran}, the category
\rx{bae1.86a}
has an
alternative `intrinsic' description in terms of objects of $\rDsprfUW$. Consider the case when $\bnB$
is a  line bundle $\vcL$.
Let
$\ycL=\big( \xbOmbv{\vcL},\dlb\big)$ be the \xper\ \ZtDGM\
corresponding to $\vcL$.
The self-duality of $\vcL$
determines an isomorphism $\fvL$ between $\ycL\otimes\ycL$
and the structure sheaf $\strsU$:
\ee
\xlabel{bae1.87}
\fvL\in\Exttev\brb{\ycL\otimes\ycL,\strsU}.
\eee
An object of
$\rDsprfUW\ztrnL$
is a pair $\yobcE$, where $\ycE=\adgmE$
is a \xper\ \ZtDGM\rx{bae1.44} and the extension
$\fvcE\in\Exttod(\ycE,\ycE\otimes\ycL)$ satisfies the property
\ee
\xlabel{bae1.88}
\fvL\circ\fvcE\circ\fvcE = \xIdv{\ycE}.
\eee
Morphisms between two objects $\yobcE,\yobcEp\in\rDsprfmUWL$
are morphisms $g\in\Extub(\ycE,\ycEp)$ which intertwine the
extensions: $g\fvcE = \fvcEp g$.
We conjecture that the categories
$\rDsprfUW\ytrnL$
and $\rDsprfUW\ztrnL$
are equivalent.
%


\subsubsection{A holomorphic fibration with a lagrangian base as
a geometric object}

Let $\cnKU$ denote the canonical line bundle of a complex manifold
$\xcA$: $\cnKU\edfn \wdtpv{\TsU}$.
Let $\yY\subset\xX$ be a \hlgrsm, that is, $\yY$ is a
holomorphic submanifold of $\xX$, such that $\dim_{\IC} \yY =
\shlf\dim_{\IC}\xX$ and $\xom|_{\yY}=0$.
We are going to consider `geometric' objects of $\ctLLXsom$ which are pairs $\goYL$, where $\ycY$ is
a fibration
\ee
\label{bae1.90}
\xymatrix@C=1.5pc@R=1.5pc{
\yZ \ar[r] & \ycY \ar[d]^{\ypY} &
\\
& \yY \ar@{}[r]|<<<{\subset} & \xX
}
\eee
with a \hlgr\ base $\yY$, and $\vcLY$ is a holomorphic line bundle
on $\ycY$, whose square is the pull-back of the canonical bundle of $\yY$:
$\vcLY^{\otimes 2} = \ypY^\ast\cnKY.$

A particularly simple type of a holomorphic fibration\rx{bae1.90}
is a \opfib
\ee
\label{bae1.90b} 
\TrfY = 
\vcenter{
\xymatrix@C=1.5pc@R=1.5pc{
\yY \ar[d]
\\
\yY \ar@{}[r]|<<<{\subset}
&
\xX
}
}
\eee
Unless there is a danger of confusion, we denote such a fibration
simply as $\yY$. The pairs $\gYL$, where $\vLY\rightarrow\yY$ is a line bundle
such that $\yY\ott=\cnKY$,
are the simplest objects of the
type $\goYL$.


\subsubsection{Morphisms between geometric objects}

We say that two holomorphic submanifolds $\yYo,\yYt\subset\xX$
have a \emph{\gdint}, if any point $x\in\yYo\cap\yYt$ has an open
neighborhood $U_x$ which is isomorphic to a neighborhood of the
origin of the tangent space $\Tngx\xX$ in such a way that $\yYo$
and $\yYt$ correspond to their tangent spaces
$\Tngx\yYo,\Tngx\yYt\subset\Tngx\xX$.  This condition
guarantees that the intersection $\yYoct\edfn\yYo\cap\yYt$ is a disjoint union of holomorphic
submanifolds of $\xX$.

Define the $\xX$-product of two fibrations $\ycYo$ and $\ycYt$ as
a fibration over the intersection of their bases:
%
\ee
\xlabel{bae1.91}
\ycYo \ycapX \ycYt \edfn \ycYo|_{\yYoct} \ycapv{\yYoct}
\ycYt|_{\yYoct},\qquad
\xumap{\ycYo\ycapX \ycYt}{\xcpot}{\yYoct}.
\eee
There are obvious projections
\ylee{bae1.19b} 
\xymatrix@C=0.5pc{
& \ycYo \ycapX \ycYt \ar[dl]_{\pi_1} \ar[dr]^{\pi_2}
\\
\ycYo|_{\yYoct} &&\ycYt|_{\yYoct}
}
\yeee

Suppose that
the lagrangian bases $\yYo,\yYt\subset\xX$ of the fibrations of
two objects $\goYLo$, $\goYLt$
have a clean intersection, so the intersection $\yYot$ is a
complex manifold. The line bundle
\xlee{bae1.19b1}
\vcLot \edfn \pi_1^\ast(\vcLYo|_{\yYoct})\otimes
\pi_2^\ast(\vcLYt|_{\yYoct})
\otimes\xcpot^\ast(\cnKYot^{-1})
\xeee
is \sd. Indeed, for $i=1,2$ we have $\xcpot= \ypYi\circ\pi_i$, and since
$\vcLYi^2 = \ypYi^\ast\cnKYi$, the square of $\vcLot$ can be
presented as the pull-back of the product of canonical bundles:
\xlee{bae1.19b2}
\vcLot^2 =
\xcpot^\ast\brb{\cnKYo|_{\yYoct}\otimes\cnKYt|_{\yYoct}\otimes\cnKYot^{-2}}.
\xeee
Now consider
the quotient bundles
\xlee{bae1.90a} 
\vccLi = \Tng\yYi|_{\yYoct}/\Tng\yYoct,\qquad i = 1,2.
\xeee
The \hlsm\ form $\som$ produces a non-degenerate pairing between
$\vccLo$ and $\vccLt$, so their top exterior powers are dual to
each other and, as a result, the tensor product
$\wdtpv{\vccLo}\otimes\wdtpv{\vccLt}$ is a trivial line bundle. At the same
time,
$\cnKYi|_{\yYoct}=\wdtpv{\vccLdi}\otimes\cnKYot$, so
$\cnKYo|_{\yYoct}\otimes\cnKYt|_{\yYoct} = \cnKYot^2$ and the
tensor square\rx{bae1.19b2} is trivial, that is, $\vcLot$ is \sd.

Having established the self-duality of $\vcLot$, we propose
that the category of morphisms between the objects, whose
lagrangian bases have a \gdint, is the shifted 2-periodic derived category
of the $\xX$-product $\ycYo \ycapX \ycYt$:
\ee
\label{bae1.92} 
\!\!\!\!\!\!\!\!\!\Hom_{\ctLLX}\brB{\goYLo,\goYLt} =
\rDsprfcYoct\,\ytrnLot\,\btrnv{\shlf\dim \xX - \dim\yYot-1}.
\eee
Roughly speaking, the category of morphisms
$\Hom_{\ctLLX}\brB{\goYLo,\goYLt}$ is the 2-perio\-dic derived category of
coherent sheaves on the product $\ycYoct$. The origin of the shifts
in the \rhs of \ex{bae1.92} will become clear when we compare
\eex{bae1.92} and\rx{bae1.58}.

In the special case when $\ycYo$ and $\ycYt$ are \opfib
s\rx{bae1.90} with the same base $\yYo=\yYt=\yY$ and the accompanying line bundles
are the same, the formula\rx{bae1.92} becomes
\xlee{baj3.1}
\End_{\ctLLX}(\yY,\vcLY) = \rDprfY.
\xeee

\subsubsection{Composition of morphisms}

We describe the geometric composition of morphisms under the
simplifying assumption that
the lagrangian bases $\yYi$ of the fibrations $\ycYi$, $i=1,2,3$ are
\fCY\ and the accompanying line bundles $\vcLYi$ are trivial. This
implies that a clean intersection of two lagrangian submanifolds $\yYij=\yYi\cap\yYj$
is `semi-\fCY', that is, $\cnKYij\ott$ is trivial. Let us make a
stronger assumption that $\yYij$ is \fCY.
Then the general
formula\rx{bae1.92} simplifies:
\ee
\label{bae1.93}
\Hom(\ycYi,\ycYj) =
\rDprf\brb{\ycYi \ycapX \ycYj }
\eee
and we suggest that the composition of morphisms is a combination
of derived pull-backs, push-forwards and the tensor product: for two
morphisms $\cE_{12}\in\xMor(\ycY_1,\ycY_2)$ and
$\cE_{23}\in\xMor(\ycY_2,\ycY_3)$
their composition is
\ee
\label{bae1.97}
\cE_{23}\circ\cE_{12}=
(\yepi_{13})_\ast\Big(
\yepi_{12}^\ast(\cE_{12})\otimes \yepi_{23}^\ast(\cE_{23})
\Big),
\eee
where $\yepiv{ij}$ are the \xeps\
\ylee{bae1.98}
\xymatrix{ & \ycY_1\ycapX\ycY_2\ycapX\ycY_3
\ar[dl]_{\yepi_{12}}\ar[dr]^{\yepi_{23}} \ar[d]^{\yepi_{13}}
\\
\ycY_1\ycapX\ycY_2 &\ycY_1\ycapX\ycY_3& \ycY_2\ycapX\ycY_3
}
\yeee
%

In the special case when all $\ycYi$ are \opfib s\rx{bae1.90} with
the same base
$\yYo=\yYt=\yYh=\yY$, their categories of morphisms are given by
\ex{baj3.1} and the composition rule\rx{bae1.97} reduces to the
tensor product within $\rDprfY$: for $\cE,\cE\p\in\rDprfY$
\xlee{baj3.2}
\cE\circ\cE\p = \cE\otimes\cE\p.
\xeee

\subsection{Holomorphic lagrangian correspondences and
the  3-category of \hlsmm s}

In this subsection we describe part of the 3-category of RW models in geometric terms.
Throughout this subsection we will ignore the line bundles $\vcLY$
in the definition of the objects $\goYL$ of $\ctLLXsom$. To be
more precise, we may assume that all complex manifolds appearing
here are \fCY\ and all these bundles are trivial. Moreover, we
assume that all intersections are \xgd.

The statements of this subsection apply when $\Xsom$
are cotangent bundles: $\xX=\TsU$, but in the case of \opfib s the
statements apply to general \hlsmm s $\Xsom$.

\subsubsection{Lagrangian correspondence 2-functors}

Let us forget for a moment that the complex manifold $\xX$ has a
\hlsm\ structure and that the base $\yY$ of a fibration $\ycY$ must
be lagrangian. Then we may  define pull-back and
push-forward functors associated with a holomorphic map
$\mF\colon \xX\longrightarrow\xXp$. A pull-back of a fibration
$\ycYp\rightarrow\yYp\subset\xXp$ is a fibration $\mFua(\ycYp)\rightarrow \mF^{-1}(\yYp)$
constructed by pulling back $\ycYp$ by the restriction
$\mF|_{\mF^{-1}(\yYp)}$. In order to define a push-forward of a
fibration $\ycY\rightarrow\yY\subset\xX$ we assume that the
restricted map $\mF|_{\yY}\colon\yY\rightarrow\mF(\yY)$ represents a
holomorphic fibration. Then we define the fibration $\mFda(\ycY)$
as $\ycY\rightarrow\mF(\yY)$, whose projection is the
composition of projections
$\ycY\rightarrow\yY\rightarrow\mF(\yY)$.

A holomorphic fibration $\ycYaot\in\ctLLcrot$ determines a lagrangian correspondence 2-functor
\ee
\label{bae1.107}
\xymatrix@C=1.5cm{
\ctLLXsomo \ar[r]^-{\dPhYot} & \ctLLXsomt
}
\eee
defined by the formula
\ee
\label{bae1.108} 
\dPhYot= (\yepi_2)_\ast\Big(\ycYaot\ycapXott
\yepi_1^\ast
\Big),
\eee
where $\yepi_1$ and $\yepi_2$ are projections onto the factors
of the product $\xX_1\times\xX_2$:
\ee
\label{bae1.109} 
\xymatrix@C=0.5pc{
& \xX_1\times\xX_2 \ar[dl]_{\pi_1} \ar[dr]^{\pi_2}
\\
\xX_1 &&\xX_2
}
\eee
The action of the 2-functor\rx{bae1.108} on the bases of holomorphic
fibrations is described by a simple set-theoretic formula: if
$\ycYt=\xPhYot(\ycYo)$, then
\xlee{bah10.1}
\yYt = \yepit\brb{\yYot\cap\yepio^{-1}(\yYo)},
\xeee
where $\yYo$ and $\yYt$ are the bases of the corresponding
fibrations.

Although the operations $\yepi_1^\ast$ and
$(\yepi_2)_\ast$ do not correspond to well-defined 2-functors for 2-categories $\ctLL$,
their composition\rx{bae1.108} is \xwd, because if the base $\yYo$
of the fibration $\ycYo$ is lagrangian then so is the base $\yYt$
of its image determined by \ex{bah10.1}.


The \opfib\
\ee
\label{bae1.109b} 
\TrfDlX = 
\vcenter{
\xymatrix@C=1.5pc@R=1.5pc{
\DlX \ar[d]
\\
\DlX \ar@{}[r]|<<<{\subset}
&
\xX\times\xX
}
}
\eee
over the diagonal
$\DlX\subset\Xsomm\times\Xsom$ determines the identity endo-2-functor
$\dPhv{\TrfDlX}$ of the category $\ctLLXsom$.

\subsubsection{A geometric description of the 3-category $\ctLLL$}

As we mentioned in subsection\rw{ss.hcatintr}, objects of the
3-category $\ctLLL$ are \hlsmm s $\Xsom$. The duality functor $\hve$
switches the sign of the symplectic form: $\zdlv{\Xsom} = \Xsomm$,
and we define the 2-category of morphisms between two objects in
accordance with the general formula\rx{bag1.4}:
\ee
\xlabel{bae1.110}
\Hom_{\ctLLL}\brB{\Xsomo,\Xsomt } = \ctLLcrot.
\eee

The composition of morphisms represented by holomorphic fibrations
with lagrangian bases
%
is defined in such a way that it
would agree with the composition of their correspondence
functors\rx{bae1.108}: the composition of two morphisms
\ee
\xlabel{bae1.111}
\ycYaot\in\Hom\brB{\Xsomo,\Xsomt}
,\qquad\ycYath\in\Hom\brB{\Xsomt,\Xsomh}
\eee
is
\ee
\xlabel{bae1.112}
\ycYath\circ\ycYaot = (\yepi_{13})_\ast\Big(
\yepi_{12}^\ast(\ycYaot)\ycapv{\xXo\times\xXt\times\xXh} \yepi_{23}^\ast(\ycYath)
\Big),
\eee
where $\yepi_{ij}$ are the projections
\ee
\xlabel{bae1.113}
\xymatrix{ & \xXo\times\xXt\times\xXh
\ar[dl]_{\yepi_{12}}\ar[dr]^{\yepi_{23}} \ar[d]^{\yepi_{13}}
\\
\xXo\times\xXt &\xXo\times\xXh& \xXt\times\xXh
}
\eee

The identity endomorphism $\xIdXsom\in\End\Xsom$ is
the \opfib\rx{bae1.109b}:
\ee
\label{bae1.113a} 
\xIdXsom = \TrfDlX.
\eee

As we mentioned in subsection\rw{ss.intr},
the 3-category $\ctLLL$ has a symmetric monoidal structure:
%
\ee
\xlabel{bae1.114}
\Xsomo\times\Xsomt = \brb{\xXo\times\xXt,\yepiuo(\somo) +
\yepiut(\somt) },
\eee
where $\yepio$ and $\yepit$ are projections of the
diagram\rx{bae1.109}. The
unit object is the \hlsmm\ $\xXopt$ consisting of a single point.


\subsection{A relation between the geometric and the algebraic
descriptions}
\label{ss.reltan}

We have already outlined the equivalence of categories\rx{bah5.1}
%
%
in subsection\rw{ss.relout}.
Here we provide a more detailed description of the equivalence
\xxtf\ $\etfe$.
%
%
%

\subsubsection{Localization}
\label{ss.lcl}
Let us split the \crvng\ $\xcW$, which determines the category
$\rDsprfUW$, into zero degree and positive degree parts:
\ee
\label{bae1.89a} 
\xcW=\xcWz + \xcWpl,\qquad\xcWz\in\xbOmz(\xcA),\qquad
\xcWpl\in\bigoplus_{i\geq 1}\xbOmv{i}(\xcA).
\eee
We say that the set of critical points of $\xcWz$
\ee
\xlabel{bae1.89a1}
\xcAWz = \{ x\in\xcA\,|\, d\xcWz(x)=0\}
\eee
is \emph{\ygd} if it is
a smooth holomorphic submanifold of $\xcA$ and
the quadratic form induced by
the Hessian of
$\xcWz$ (that is, by the quadratic form of its second derivatives)
on the normal bundle $\Nrm\xcAWz$ is non-degenerate.
The non-degenerate Hessian gives rise to an $\OnC$ structure on
$\Nrm\xcAWz$,
and we conjecture the following equivalence of categories:
\ee
\label{bae.89a2}
\rDsprfUW =
\rDsprfmvv{ \xcAWz }{ \xcW|_{\xcAWz} }{\Nrm\xcAWz}.
\eee
In other words, we expect that the category $\rDsprfUW$
localizes to a small tubular neighborhood of $\xcAWz$ and that if
the dominant terms in the expansion of $\xcWz$ in the normal
directions to $\xcAWz$ are quadratic then lower degree terms do
not matter.

Since $\xcWz$ is locally constant on its critical set, the
relation\rx{bae.89a2} implies the following category equivalence:
\ee
\label{bae.89a2x}
\rDsprfaUW =
\rDsprfmvv{ \xcAWz }{ \xcWpl|_{\xcAWz} }{\Nrm\xcAWz}.
\eee
Note that we did not have to \xau\ the \rhs\ category, because
the connected parts of the critical set $\xcAWz$ will contribute
to it only when the constant value of $\xcW$ on them is zero.

In view of \ex{baf1.2}, the equivalence\rx{bae.89a2x} can be
simplified:
\xlee{bae.89a2y}
\rDsprfaUW = \rDsprfmvv{\xcAWz}{ \xcWpl|_{\xcAWz} }{
\wdtpv{\Nrm\xcAWz} }
\btrnv{\rnk\Nrm\xcAWz-1}.
\xeee
Finally, if $\xcWpl=0$, that is, if $\xcW$ is just a holomorphic function on $\xcA$,
then the equivalence takes the form
\ee
\label{bae.89a2x1} 
\rDsprfaUW =
\rDprf\lrbc{\xcAW}\ytrnv{\wdtpv{\Nrm\xcAW}}\btrnv{\rnk\Nrm\xcAW-1}.
\eee

\subsubsection{The relation between objects}
\label{ss.relobj}
Choose a
line bundle $\vcLz\rightarrow \xcA$ such that $\vcLz\ott=\cnKU$.
For simplicity we will consider only curved fibrations
$\cmfcUW\in\rDDprfaU$, for which $\xcWpl=0$ (see \ex{bae1.89a}),
that is, $\xcW$ is just a holomorphic function on $\xcU$. Recall that $\xcU$ is a
fibration\rx{bae1.55}: $\xcp\colon\xcU\rightarrow\xcA$. For a point $\xfu\in\xcU$ let
$\xfVu\subset\xcU$ be the fiber to which $\xfu$ belongs: $\xfVu=\xcp^{-1}(\xcp(\xfu))$.
Let
$\crcUW$ be the set of `fiber-critical' points of
$\xcW$: $\crcUW = \{\xfu\in\xcU\,\,|\,\,\,\del\xcW|_{\Tng_{\xfu}\xfVu} = 0
\}$. In other words, there is an exact sequence
$\Ts_{\xcp(\xfu)}\xcA\xrightarrow{a}\Ts_{\xfu}\xcU\xrightarrow{b}\Ts_{\xfu}\xfVu$
and $\xfu\in\crcUW$ if $b\big(\del\xcW(\xfu)\big) = 0$. The latter
condition means that
$\del\xcW(\xfu)$ is in the image of $a$. 
Hence
there is a map $\mpf\colon\crcUW\rightarrow\TsU$
such that $\mpf(\xfu) =
a^{-1}\big(\del\xcW(\xfu))\in\Ts_{\xcp(\xfu)}$.
The \emph{\spprt} of the object $\cmfcUW$ is defined to be the
image of $\mpf$, and we denote it suggestively
as $\yYUW$:
\xlee{bah8.2}
\yYUW=\xsupp\cmfcUW \edfn \mpf(\crcUW)\subset\TsU.
\xeee
%


Generally, $\yYUW$ is an isotropic submanifold with respect to the
symplectic structure $ \TsU$. Assume that for all $x\in\xcA$, the critical locus of the
function $\xcW|_{\xcp^{-1}(x)}$
is \xgd\ and that
$\crcUW$ is a complex manifold. Then $\yYUW\subset\TsU$ is a
lagrangian submanifold and the map $\mpf\colon\crcUW\rightarrow\yYUW$ is a
fibration. Let $\bnB_x$ denote the  normal bundle to the
critical set of $\xcW$ restricted to the fiber $\xcp^{-1}(x)$. This bundle
has an $\OnC$ structure given by the Hessian of $\xcW|_{\xcp^{-1}(x)}$, and all
these bundles together form a holomorphic $\OnC$ bundle $\bnB$ over
$\crcUW$.
Thus we define the action of the equivalence \xxtf\rx{bah5.1} on
the curved fibration $\cmfcUW$ as follows:
\ylee{bah5.2}
\etfe\cmfcUW = (\crcUW, (\xcp^\ast\vcLz)|_{\crcUW}\otimes\wdtpv{\bnB})\ttrnv{\rnk \bnB
-1},
\yeee
that is, the pair in the \rhs of this equation
%
is the object of $\ctLLTsU$ corresponding to the object
$\cmfcUW\in\rDDprfaU$.

%


The geometric object corresponding to the pair $\cmfcUW$ is particularly simple, if $\xcU$ is a \opfib\ over $\xcA$. Then
$\xcW$ is just a holomorphic function on $\xcA$ and the object of
$\ctLLTsU$ corresponding to $\xcW$ is the pair $(\yYW,\zzf^\ast\vcLz)$, where $\yYW$ is the
graph of $\del\xcW$:
%
\xlee{bah1.1}
\yYW = \{ p\in\Ts_x\xcA\,|\,x\in\xcA,\; p = \del\xcW|_x \}
\xeee
and
$\zzf\colon\yYW\rightarrow\xcA$ is the restriction of the
projection $\TsU\rightarrow\xcA$ to $\yYW$ (it establishes the
isomorphism between $\yYW$ and $\xcA$ as complex manifolds).

\subsubsection{The relation between categories of morphisms}

We will compare the categories of morphisms within 2-categories $\rDDprfaU$ and
$\ctLLTsU$ only for the simplest objects. Let $\xcWo$ and
$\xcWt$ be holomorphic functions on $\xcA$ such that their
difference $\xcWot=\xcWt-\xcWo$ has a \xgd\ set of critical
points. This is equivalent to saying that $\yYWo$ and $\yYWt$ have a \gdint.

According to the definition\rx{bae1.63ap} and the
equivalence\rx{bae.89a2x1}, the category of morphisms within
$\rDDprfaU$ is
\xlee{baf2.1}
\Hom_{\rDDprfaU}(\xcWo,\xcWt)   = 
\rDsprfaUWot =
\rDprf\lrbc{\xcAWot}\ytrnv{\Nrm\xcAWot}\btrnv{\rnk\Nrm\xcAWot-1}.
\xeee
At the same time, according to \ex{bae1.92},
\xlee{baf2.2}
\Hom_{\ctLLTsU}\brB{\yobYWo,\yobYWt} = \rDsprfYot\ytrnLot
\btrnv{\dim\xcA - \dim\yYot - 1},
\xeee
where $\yYot \edfn \yYWo\cap\yYWt$, the maps
$\zzfi\colon\yYWi\rightarrow\xcA$ are the restrictions of the
projection $\TsU\rightarrow\xcA$ and
\xlee{baf2.3}
\vcLot = (\zzf_1^\ast\vcLz)|_{\yYot}\otimes(\zzf_2^\ast\vcLz)|_{\yYot}\otimes
\cnKYot^{-1}.
\xeee
%

The maps $\zzfo$ and $\zzft$, as well another projection restriction
$\zzfot\colon \yYot\rightarrow \xcAWot$, establish isomorphisms
between the corresponding complex manifolds.
Therefore, there is an equivalence of categories
$$\rDsprfYot\ytrnLot =
\rDsprfv{\xcAWot}\ytrnv{\zzfv{12,\ast}\vcLot},$$ and according to
\ex{baf2.3}, the push-forward of the line bundle $\vcLot$ is
$$\zzfv{12,\ast}\vcLot = \vcLz^2\otimes\cnKYot^{-1}= \cnKU\otimes\cnKYot^{-1}
=\wdtpv{\Nrm\xcAWot}.$$
Since
$$ \rnk\Nrm\xcAWot = \dim\xcA - \dim\xcAWot,\qquad \xcAWot=  \yYot,$$
we established the equivalence of categories\rx{baf2.1}
and\rx{baf2.2} provided by the \xxtf\ $\etfe$.

\subsubsection{The relation between 2-functors}

Let $\som$ be the canonical symplectic form of the cotangent
bundle $\TsU$. The symplectomorphism $\xtsm\colon
(\TsU,\som)\rightarrow(\TsU,-\som)=\zdlv{(\TsU)}$ reverses the cotangent
vectors: for $p\in\Ts_{q}\xcA$ we define $\xtsm(p)\edfn-p$.
For
two cotangent spaces we define the symplectomorphism
$\xtsmo\colon(\TsUo,\somo)\times(\TsUt,\somt)\rightarrow
(\TsUo,-\somo)\times(\TsUt,\somt)$ which reverses the orientation
of the first cotangent vector: $\xtsmo\edfn\xtsm\times \xId$.

The symplectomorphism $\xtsm$ acts as a \xxtf\
$\ddxtsm\colon\ctLLTsU\rightarrow\ctLL\zdlv{(\TsU)}$ by transforming the bases
of holomorphic fibrations $\ycY\rightarrow\yY\subset\TsU$.
Similarly, $\xtsmo$ acts as a \xxtf\ $\ddxtsmo\colon\ctLL(\TsUo\times\TsUt)\rightarrow
\ctLL\brb{\zdlv{(\TsUo)}\times\TsUt}$. Now consider a composition
of 2-functors
\ylee{bah7.1}
\ddxtsmo\circ\etfeot\colon\rDDprfav{\TsUo\times\TsUt}\longrightarrow
\ctLL\brb{\zdlv{(\TsUo)}\times\TsUt}.
\yeee
We leave it for the reader to check that the 2-functors\rx{bae1.69} and\rx{bae1.107}
coming from the objects related by\rx{bah7.1} are intertwined by
\xxtf s $\etfe$, that is, the diagram
\ylee{bah7.2}
\xymatrix@C=3cm@R=1.2cm{
\rDDprf(\xcAo) \ar[d]^{\etfeo} \ar[r]^-{\xPhUWot} & \rDDprf(\xcAt)
\ar[d]^{\etfet}
\\
\ctLLv{(\TsUo)} \ar[r]^-{\dPhYot} & \ctLLv{(\TsUt)}
}
\yeee
is commutative, if $\ycYaot = \ddxtsmo\circ\etfeot \cmfcUW$. The
easiest part of this commutativity is the verification that
the support of a \xcfb  from $\rDDprf(\xcAo)$ is transformed as in
\ex{bah10.1}:
\xlee{bah7.3}
\yYv{\xPhUWot\cmfcUWo} =
\yepit\brb{
\yYv{\cmfcUWot}\cap
\yepio^{-1}\brb{\yYv{\cmfcUWo}}
},
\xeee
where $\yepio$ and $\yepit$ are the projections
\ylee{bah7.4}
\xymatrix@C=0.5pc{
& \TsUo\times\TsUt \ar[dl]_{\pi_1} \ar[dr]^{\pi_2}
\\
\TsUo &&\TsUt
}
\yeee
%



\section{The 2-category of a \zdctngb}
\label{s.sct5}
\label{ss.dlv}
\label{ss.3catdfrm}

\subsection{Outline}
\label{ss.outline}

The geometric description of the 2-category $\ctLLXsom$ provided
in subsection\rw{ss.tcthlsmm} is completely correct only when
$\Xsom$ is a cotangent bundle: $\xX=\TsU$. However, path-integral
considerations\cx{KRS1} suggest that the main feature of that
description holds true for a general \hlsmm\ $\Xsom$: a pair $\gYL$, where
$\yY\subset \xX$ is a lagrangian submanifold and
the
line bundle $\vLY$ is a square root
of the
canonical bundle of $\yY$, always represents an object in
$\ctLLXsom$.

Path integral arguments\cx{KRS1} also suggest that the 2-category
$\ctLLXsom$ is local. From the physics perspective, locality
means that there are no instanton corrections to the path
integral, that is, there are no 3-dimensional analogs of
A-model holomorphic disks which lie at the heart of the Floer homology and the
Fukaya category. From the mathematical perspective, locality means
that the category of morphisms between two objects of $\ctLLXsom$
is determined by the structure of $\xX$ in the formal neighborhood of
the intersection of their \spprt s, that is,
$\Hom_{\ctLLXsom}\brb{\gYLo,\gYLt}$ is determined by the formal neighborhood of $\yYo\cap\yYt\subset \xX$ (when $\xX$ is
a cotangent bundle and the intersection of supports is \xgd,
the category of morphisms is determined just by the intersection itself, as
suggested by \ex{bae1.92}). The composition of
morphisms is determined by the structure of $\xX$ near the triple
intersection of supports (\cf \ex{bae1.97}).

Let $\yY\subset\xX$ be a lagrangian submanifold of a general
\hlsmm\ $\Xsom$. Locality of $\ctLLXsom$ means that if we knew the
structure of the 2-category $\ctLL$ of the formal neighborhood of
$\yY$, we would know exactly all categories of morphisms involving
the objects
$\gYL$
as well as the compositions of
such morphisms.

In real symplectic geometry, a sufficiently small tubular
neighborhood of a lagrangian submanifold $\yY$
is symplectomorphic to a tubular neighborhood
of the zero section of the cotangent bundle $\TsY$. However, in
holomorphic case this is no longer true: there may be non-trivial
deformations of the holomorphic symplectic structure of (the formal
neighborhood of the zero section) of the cotangent bundle $\TsY$ and the
formal neighborhood of $\yY\subset\xX$ may be isomorphic to a
deformed formal neighborhood of the zero section of $\TsY$. Therefore, in
order to gain information about the morphisms involving the
object
$\gYL$
of $\ctLLXsom$
we have to explore the 2-category
corresponding to a deformed cotangent bundle $\mtdfTYSk$, where $\hdf$ is a
deformation parameter of the holomorphic symplectic structure of $\TsY$.

To understand the 2-category $\mtdfTYSk$ we follow the
algebraic approach:
for a complex manifold $\xcA$, which plays the role of $\yY$,
we construct a deformation $\rDDprfUSk$ of the 2-category
$\rDDprfU$. The construction of this deformation is based on two
path-integral-motivated assumptions: the deformation parameter $\hdf$ of the 2-category is
the same as the deformation parameter of the holomorphic symplectic structure
of $\TsU$ and the simplest objects of the deformed category
$\rDDprfUSk$ (that is, the objects corresponding to \opfib s and
described by holomorphic functions $\xcW$ on $\xcA$ in the
category $\rDDprfU$)
should be related to the lagrangian submanifolds of the deformed
cotangent bundle $\mtdfTUSk$ in the same way as in the undeformed
case discussed in subsection\rw{ss.reltan}.

\subsection{Differential Lie-Gerstenhaber algebras and the \CMe}

Let us review some well-known facts about algebras
governing the deformations of objects and categories appearing in
this paper.

\subsubsection{General definitions}

A \DPGa\ $\xdlL$ is a \Ztgrdd\  vector space endowed with
a differential $\pgD$ and a compatible graded Lie bracket $\pgbdd$  which may be even or odd. Let $\pgdg$ be the \Ztdgr\ of the Lie
bracket. If $\pgdg=\yev$ then $\xdlL$ is called a differential Lie algebra. If in addition $\xdlL$ has a supercommutative associative product compatible both with $\pgD$ and the Lie bracket, then $\xdlL$ is called a differential Poisson algebra. If $\pgdg=\yod$ and $\xdlL$ has a supercommutative associative product compatible both with $\pgD$ and $\pgbdd$, then $\xdlL$ is called a \DGa.

The graded Lie bracket of $\xdlL$ descends to its
$\pgD$-homology $\rmH_{\pgD}(\xdlL)$.

If an element $\pga\in\xdlL$ of \Ztdgr\ $\pgdg+1$ satisfies the
\CMe\
\xlee{bcea2.1}
\pgD\pga + \shlf\,\pgbvv{\pga}{\pga} = 0,
\xeee
then the operator
\xlee{bcea2.1a}
\pgDa = \pgD + \pgbvv{\pga}{\cdot}
\xeee
is also a
differential and it determines a deformed \DPGa\ $\xdlLa$. If two
\CMlt s are related by a `gauge transformation' $\pgb$, whose
infinitesimal form is
\xlee{bcea2.2}
\delta\pga = \pgDa\pgb,
\qquad \pgb\in\xdlL,\quad\dgZt\pgb=\pgdg,
\xeee
then the corresponding deformed algebras are isomorphic.

If a \CMlt\ is presented as a formal power series in a parameter
$\dfe$:
\ylee{bcea2.2a}
\pgae =
\sum_{i=1}^\infty \pgai\,\dfe^i,
\yeee
then the leading coefficient
$\pgao$ is $\pgD$-closed and, due to the gauge symmetry,
the corresponding deformation of $\xdlL$ is
determined by its class $\cpgao\in\rmH_{\pgD}(\xdlL)$. The $\dfe^2$ part
of the \CMe\rx{bcea2.1} says that
\ylee{bcea2.3}
\pgD\pgat + \shlf\,\pgbvv{\pgao}{\pgao} = 0,
\yeee
so $\cpgao$ satisfies the condition
\xlee{bcea2.4}
\pgbvv{\cpgao}{\cpgao}= 0. 
\xeee

The importance of \DPGa s stems from the fact that they
appear as deformation complexes of objects in categories and Hochschild complexes of categories and 2-categories.
Equivalence classes of \CMlt s parameterize deformations
of those objects, categories, and 2-categories.
We need three particular examples
:
the differential Lie algebra$\zdrcEnb$ which governs deformations of a \cqhlmvb\ $\adgmE$,
the \DGa\
$\zdgaU$ governing \tAinf\ deformations of the
2-periodic derived category
$\rDprfU$ of a complex manifold $\xcA$, and
the differential Poisson algebra $\zdpaXsom$ which, according to
path integral considerations\cx{KRS1}, governs deformations of the
2-category $\ctLLXsom$.



\subsubsection{The \DLa\ of a \cqhlmvb}

Let the pair $\adgmE$, where $\nbbE^2=\xcW\xIdv{\xE}$, be a \cqhlmvb\
defined in subsection\rw{tZtgdcs}.
The corresponding \darc\ $\zdrcEnb$
is the algebra of Dolbeault forms $\xbOmbzeE$ (with total grading),
the differential is the covariant Dolbeault differential $\nbbE$, and the
\xLie\ bracket is the supercommutator:
$\Lbrv{\zdfmo,\zdfmt} \edfn \zdfmo\zdfmt - (-1)^{\zdgt{\zdfmo}\zdgt{\zdfmt} }
\zdfmt\zdfmo$.

A \CMlt\ $\zdfm$ determines a deformed \qhlmvb\
$$\admgEzdfm\edfn(\xE,\nbbE+\zdfm)$$ (in the \rhs of this formula $\zdfm$
denotes an odd bundle map
$\zdfm\colon\xbOmbE\rightarrow\xbOmbE$).

\subsubsection{The \DGa\ of a complex manifold}
The well-known \DGa\ $\zdgaU$ of a complex
manifold $\xcA$ is the algebra $\xbOmbUUWT$, the
\Ztgrdng\ coming from the total degree of forms and
wedge-powers, with the differential $\dlb$ and the \NSb\
$\Lbrv{\cdot,\cdot}$. In fact, $\zdgaU$ has \Zgrdng, and its
degree 2 \CMlt s parameterize deformations of the derived category of
coherent sheaves $\rDbU$. More generally, even \CMlt s of $\zdgaU$
parameterize \tAinf\ deformations of $\rDprfU$.

A holomorphic function $\xcW\in\xbOmz(\xcA)$, $\dlb\xcW=0$
obviously satisfies the \CMe\ and hence determines a deformation of $\zdgaU$
which we denote as $\zdgalWU$. It has a new differential $\dlbW=\dlb+\Lbrv{\xcW,\cdot}$
(\cf \ex{bcea2.1a}). The corresponding deformation of $\rDprfU$ is
the curved category $\rDsprfUW$.

Define a \emph{\xrl} grading on $\zdgaU$ as
\ylee{beq2.1a1}
\dgrl\xbOmv{\ydgn}(\xcA,\wedge^{\ydgm}\TU) = \ydgn-\ydgm,
\yeee
then, obviously,
\ylee{beq2.1a2}
\dgrl \dlb = \dgrl [\cdot,\cdot ] = 1.
\yeee
We say that a \CMlt\ $\xdfm$ is \emph{\xrlnn} if $\dgrl\xdfm\geq
0$ and \emph{\xrlb} if $\dgrl\xdfm=0$.

\subsubsection{The \DPa\ of a \hlsmm}
\label{ss.dpahlsmm}
The
\DPa\ $\zdpaXsom$ of a \hlsmm\ $\Xsom$
is defined as the algebra $\xbOmbX$
of $(0,\bullet)$-forms on $\xX$ with the differential $\dlb$ and
with the Poisson bracket coming from $\som$. If $\xX=\TsU$, where $\xcA$ is a complex manifold,
then we may consider a simpler version of this algebra, which we
denote as $\zdpaTU$:
it is the algebra of $\SbTU$-valued $(0,\bullet)$ forms
$\xbOmbUUST$, where $\Sb$ is the symmetric algebra, and its
differential is $\dlb$. There is a natural injection
\xlee{bcea2.5}
\xbOmbUUST \hookrightarrow \xbOmbUST,
\xeee
which turns an element of $\xbOmbUUST$ into a $(0,\bullet)$ differential form
on $\TsU$ having a polynomial dependence on fiber coordinates and
restricting to zero on all fibers.
The bracket of $\zdpaTU$ is a well-defined restriction of the
Poisson bracket of $\xbOmbUST$, so
the injection\rx{bcea2.5} becomes an injection of \DPa s
\xlee{bcea2.6}
\zdpaTU\hookrightarrow \zdpaTsU.
\xeee

\subsubsection{\Ainfalg\ and its modules}

Let us recall the main definitions. An \Ainfalg\ is a \Ztgrdd\
vector space $\aA$ endowed with a series of $\xmn$-linear maps (\xmnmlt
s) $\xmmbf=\xmmz,\xmmo,\ldots$,
\ylee{beq2.1b}
\xmmn\colon \aA^{\otimes \xmn}\rightarrow \aA,\qquad \dgZt\xmmn = \xmn,
\qquad \xmn=0,1,2,\ldots,
\xeee
satisfying the relations
\xlee{beq2.2b}
\sum_{\substack{m_1,m_2,\xmn\geq 0 \\ m_1+m_2+\xmn=N}}
(-1)^{m_1 + \xmn m_2}\,
\xmmv{m_1+m_2+1}(\xId^{\otimes m_1}\otimes\xmmn\otimes\xId^{\otimes
m_2})=0
\xeee
for all $N\geq 0$.
Among other things, these relations indicate
that if $\xmmz=0$, then $\xmmo$ is a differential ($\xmmo^2=0$) and it satisfies the
usual Leibnitz rule with respect to the multiplication $\xmmt$.

If $\xmmz\neq 0$ then the \Ainfalg\ $\aA$ is called \emph{\xwk} or \emph{\xacrv}.

A module over an \Ainfalg\ $\aA$ is a vector space $\xmM$ endowed
with a series of $\xmn$-linear maps (\xmnact s) $\zmmbfM=\zmmMo,\zmmMt,\ldots$
\ylee{beq2.3b}
\zmmMn\colon \xmM\otimes\aA^{\otimes (\xmn-1)}\rightarrow \aA,\qquad \dgZt\zmmMn = \xmn,
\qquad \xmn=1,2,\ldots,
\xeee
satisfying the relations
\begin{multline}
\label{beq2.4b}
\sum_{\substack{m_2,\xmn\geq 0 \\ m_2+\xmn=N}}
(-1)^{\xmn m_2}\,\zmmMv{m_2+1}(\zmmMn\otimes\xId^{\otimes
m_2})
\\
+
\sum_{\substack{m_1\geq 1\\m_2,\xmn\geq 0 \\ m_1+m_2+\xmn=N}}
(-1)^{m_1 + \xmn m_2}\,
\zmmMv{m_1+m_2+1}(\xId^{\otimes m_1}\otimes\xmmn\otimes\xId^{\otimes
m_2})=0
\end{multline}
for all $N\geq 0$.

For two $\aA$-modules $\xmMo$ and $\xmMt$, the vector space
$\tHom(\xmMo,\xmMt)$ is formed by sequences $\zmbf=(\zmfo,\zmft,\ldots) $,
$\zmfn$ being $\xmn$-linear maps
\ylee{beq2.5b}
\zmfn\colon \xmMo\otimes \aA^{\otimes(\xmn-1)}\rightarrow\xmMt.
\yeee
Define a differential $d$ acting on $\tHom(\xmMo,\xmMt)$ by the
following
formula for each term in $d\zmbf$:
\begin{multline}
\label{beq2.6b}
(d\zmbf)_{N} = \sum_{\substack{m_2,\xmn\geq 0 \\ m_2+\xmn=N}}
(-1)^{\xmn m_2}\,
\zmmuMt_{m_2+1} (\zmfn\otimes\xId^{\otimes m_2} )
\\
- \sum_{\substack{m_1\geq 1\\m_2,\xmn\geq 0 \\ m_1+m_2+\xmn=N}}
(-1)^{m_1 + \xmn m_2}\,
\zmfv{m_1+m_2+1}(\xId^{\otimes m_1}\otimes\xmmn\otimes\xId^{\otimes
m_2})
\\
-
\sum_{\substack{m_2,\xmn\geq 0 \\ m_2+\xmn=N}}
(-1)^{\xmn m_2}\,
\zmfv{m_2+1} (\zmmuMo_{\xmn}\otimes\xId^{\otimes m_2} ).
\end{multline}

\subsubsection{A \fDAinfalg\ and the perfect homotopy category of its modules}
Now we adapt the general definitions to the Dolbeault setting.

First of all, note that the Dolbeault algebra
$(\xbOmbU,\dlb)$ associated with a complex manifold $\xcA$ has a
canonical sequence of \xmnmlt s
$\xmmbfuz$
which turn it into an
\Ainfalg: $\xmmuzo=\dlb$, $\xmmuzt$ is the wedge-product, and $\xmmuzn=0$ for $\xmn\neq 1,2$.

We define \emph{a \fDAinfalg} (\FDAi) on a complex manifold $\xcA$ as
the space $\xbOmbU$ endowed with \xmnmlt s $\xmmbf$ satisfying the
relation\rx{beq2.2b} and the following restriction on Dolbeault
degrees of their deviation from the canonical \xmnmlt s:
\xlee{beq2.7b}
\dgDlbv{(\xmmn-\xmmuzn)}\geq\xmn,\qquad \any\xmn\geq 0.
\xeee

If $\xdfm\in\xbOmbUUWT$ is a \xrlnn\ \CMlt, then the corresponding
deformation
$\ycdgaBUl$
of the Dolbeault algebra is a
\fDAinfalg, the relation between the components of $\xdfm$ and
\xmnmlt s being fairly complicated. If the deformation parameter
$\xdfm$ is \xrlb, that is,
\ylee{beq2.8b}
\xdfm = \sum_{\xmn=0}^\infty\xdfm_{\xmn},\qquad
\xdfm_{\xmn}\in\xbOmv{\xmn}(\xcA,\wedge^{\xmn}\TU),
\yeee
then the dominant part of the deviations $\xmmn-\xmmuzn$ is
determined by the formula
\xlee{beq2.9b}
(\xmmn-\xmmuzn)(\smu_1,\ldots,\smu_{\xmn})
= \xdfm_{\xmn}\spsmb(\del\smu_1,\ldots,\del\smu_{\xmn}) + \cdots,
\yeee
where $\smu_1,\ldots,\smu_{\xmn}\in\xbOmbU$ and the correction
terms have higher Dolbeault degree than the first term in the
\rhs of this equation.

A simple example of a \xrlb\ deformation of the Dolbeault algebra
$(\xbOmbU,\dlb)$ is $\xdfm = \xcW$, where $\xcW$ is a holomorphic
function on $\xcA$. Then the formula\rx{beq2.9b} has no correction terms and
the deformation results in the \xacrv\
Dolbeault algebra
$\ycdgaBU$ discussed already in subsection\rw{tZtgdcs}.

Consider again the Dolbeault algebra $(\xbOmbU,\dlb)$ as a
\fDAinfalg\ with \xmnmlt s $\xmmbfuz$. Its
\xper\ \ZtDGM\rx{bae1.44} with $\nbbE^2=0$ has a canonical
structure of an \tAinf-module if we endow it with
\xmnact s $\xmmbfuzE$ such that $\xmmuzEo=\nbbE$, $\xmmuzEt$ is the
standard multiplication and $\xmmuzEn=0$ for $n>2$. We define a
\xprmi\ over a \fDAinfalg\ as the vector space $\xbOmbE$ endowed
with a sequence of \xmnact s $\zmmbfE$ satisfying the
relations\rx{beq2.4b} and the restriction
\ylee{beq2.10b}
\dgDlbv{(\zmmEn-\xmmuzEn)}\geq\xmn,\qquad \any\xmn\geq 1.
\yeee

Finally, we define the 2-periodic perfect derived category of a \fDAinfalg\ $(\xbOmbU;\xmmbf)$:
its objects are \xper\ \tAinf-modules $(\xbOmbE;\zmmbfE)$
and morphisms between two modules are homologies of the
differential\rx{beq2.6b}.

If $\xdfm\in\xbOmbUUWT$ is a \xrlnn\ \CMlt, then $\rDsprfUd$
denotes the 2-periodic perfect derived category corresponding to the deformed \tAinf-algebra
$\ycdgaBUl$. In other words, $\rDsprfUd$ is the result of deforming $\rDsprfU$
with $\xdfm$. In particular, if $\xdfm=\xcW$, where $\xcW$ is a
holomorphic function on $\xcA$, then $\rDsprfUW$ is the
category\rx{bae1.45}.

\subsection{Deformations of \hlsms s}

\subsubsection{The general case}

Let $\Xsom$ be a \hlsmm. Deformations of its \hlsms\
which preserve the \dR\ cohomology class of $\som$ are
parameterized up to gauge equivalence by \CMlt s of the \DPa\
$\zdpaXsom$ defined in subsection\rw{ss.dpahlsmm}. Namely, if an
element
\ylee{beq2.11b}
\hdf\in\xbOmov{\xX}\subset\zdgav{\xX}
\yeee
satisfies the \CMe
\xlee{bbea2.3}
\label{bbea2.2a}
\dlb\hdf + \shlf\,\Pbrv{\hdf,\hdf} = 0,
\xeee
where $\Pbrv{\ ,\ }$ is the Poisson bracket corresponding
to the symplectic form $\som$, then the corresponding deformation
of the complex structure of $\xX$ is described by the
Beltrami differential
\xlee{bbea2.2a1}
\bdmu = \som^{-1}(\del\hdf),
\xeee
that is, the $(0,1)$ part of the deformed Dolbeault differential is
\xlee{beq2.12b}
\dlb\p = \dlb + \som^{-1}(\del\hdf)\spsmb\del,
\xeee
while the symplectic form $\som$ is replaced by
\ylee{beq2.13b}
\som\p=\som +
d\hdf,
\yeee
so that it remains of type $(2,0)$ relative to the new
complex structure. In the formula\rx{beq2.12b}
we defined
$\som^{-1}\colon\Gamma(\Ts\xX)\rightarrow\Gamma(\Tng\xX)$
as the inverse of $\iota_{\xblnk}\,\som$.


If the deformation of $\Xsom$ is perturbative, that is, if $\hdf$ is a formal power series
\xlee{bbea2.3a1}
\hdfe =
\sum_{i=1}^\infty \hdfi\,\dfe^i,
\xeee
then the relation\rx{bbea2.2a} says that the leading coefficient $\hdfo$
must be $\dlb$-closed, and its gauge equivalence class is
determined by its Dolbeault cohomology class
$\hcdfo\in\Hdlbo(\xX)$, while the relation
\ylee{bbea2.3a1}
\dlb\hdftw + \shlf\,\Pbrv{\hdfo,\hdfo} = 0
\yeee
implies the  `integrability condition' for $\hcdfo$:
%
%
\ylee{bbea2.4}
\Pbrv{\hcdfo,\hcdfo} = 0.
\yeee

\subsubsection{Deformations of the \hlsm\ structure of a cotangent bundle}

If $\xX=\TsU$, then we restrict ourselves to \CMlt s $\hdf$ belonging to the subalgebra
$\zdpaTU\hookrightarrow \zdpaTsU$ defined in
subsection\rw{ss.dpahlsmm}.
Moreover, we consider only the deformations which do
not deform the complex structure of the zero section $\xcAz\subset\TsU$,
so we impose the condition $\bdmu|_{\xcAz}=0$ on the Beltrami
differential\rx{bbea2.2a1}.
This condition is satisfied if $\hdf$ is at least quadratic as a
function of holomorphic coordinates on fibers of $\TsU$ :
\xlee{bbea2.5} 
\label{bbea2.4a}
\hdf = \hdf_2 + \hdf_3 +\cdots,
\qquad
\hdf_i\in \xbOmov{\xcA,\Stai\TU}.
%
\xeee
The first two terms in this sum play a particularly important role in what follows and we give
them special names:
\xlee{bbea2.5b}
\hdf_2 = \hdfbt,\qquad\hdf_3 = \hdfgm.
\xeee
According to \ex{bbea2.2a}, they satisfy the equations
\xlee{beq2.25b}
\dlb\hdfbt=0,\qquad\dlb\hdfgm+ \shlf\Pbrv{\hdfbt,\hdfbt} = 0.
\xeee
Thus $\hdfbt$ is $\dlb$-closed, and its gauge equivalence class is
determined by the Dolbeault cohomology class that it represents:
\ylee{beq2.25b1}
\cdfbt\in\Hdlbo(\xcA,\Stat\TU),\qquad \Pbrv{\cdfbt,\cdfbt}=0.
\yeee

The class $\cdfbt$ has a simple geometric interpretation. For a
holomorphic submanifold $\yY\subset\xX$ of a complex manifold
$\xX$, the exact sequence of vector bundles on $\yY$
\xlee{beq2.25b2}
\TY\longrightarrow\Tng\xX|_{\yY} \longrightarrow\NY
\xeee
determines an extension class $\tdfbtY\in\Ext^1(\NY,\TY)$.
\ftnt{If $\xX$ is \Khl, then the class $\tdfbtY$ may be
represented by the anti-holomorphic part of the second fundamental
form of $\yY$ contracted with the \Khl\ metric and with its
inverse in order to turn two anti-holomorphic indices on the second fundamental form
into the holomorphic ones.}
If
$\yY$ is a lagrangian submanifold of a \hlsmm\ $\xX$ then the symplectic form $\som$
establishes an isomorphism $\NY\simeq\TsY$, so
$\tdfbtY\in\Ext^1(\TsY,\TY)$. The zero-section $\xcA\subset\TsU$
is a lagrangian submanifold and its exact sequence\rx{beq2.25b2}
splits, so in this case $\tdfbtU=0$. However, if we consider the
zero-section of the deformed bundle $\TsUhdf$ then the
sequence\rx{beq2.25b2} does not have to split. The injection
$\Hdlbo(\xcA,\Stat\TU)\hookrightarrow\Ext^1(\TsU,\TU)$ turns
$\cdfbt$ into an extension class within $\Ext^1(\TsU,\TU)$ and, in
fact,
\xlee{beq2.25b3}
\cdfbt = \tdfbtU.
\xeee
In other words, the leading coefficient $\cdfbt$ in the
expansion\rx{bbea2.4a} of $\hdf$ reflects the fact that the
sequence\rx{beq2.25b2} for the zero-section of the deformed
cotangent bundle $\TsUhdf$ does not split.



The injection\rx{bcea2.5} turns
an element $\hdf\in\xbOmbUUST$
into a $\bar{\Tng}^\vee\xcA$-valued function
(or, rather, a formal power series)
on the total space of $\TsU$. We denote this function by the same letter
$\hdf$.
The evaluation of $\hdf$ on a section of $\TsU$ gives a map
%
\xlee{bae2.0c1}
\hdf\colon\Gamma(\TsU)\rightarrow \xbOmoU.
\xeee

The restriction of the $(1,0)$ part of the differential $\del\hdf$
of an element $\hdf\in\xbOmbUST$
to the fibers of
$\TsU$ determines a vertical holomorphic differential map
\xlee{bae2.0c2}
\dfib\hdf\colon\Gamma(\TsU)\rightarrow \xbOmv{1}(\xcA,\TU).
\xeee

Recall that if $\xcW$ is a function on $\xcA$, then
$\yYW\subset\TsU$ denotes the graph of $\del\xcW$ defined by \ex{bah1.1}.
If $\xcW$ is holomorphic, then $\yYW$ is a \hlgrsm.
Let $\mtdfTUh$ denote the total space of the cotangent bundle
$\TsU$ whose \hlsm\ structure is deformed by
the \CMlt\rx{bbea2.4a}.
The deformed cotangent bundle $\mtdfTUh$ is canonically
diffeomorphic to $\TsU$, so for an arbitrary function $\xcW$, the graph
$\yYW$ is still a submanifold in $\mtdfTUh$, but this time $\yYW$ is
lagrangian if $\xcW$ satisfies the equation
%
\xlee{bae2.0a3}
\dlb\xcW = \hdf(\del\xcW),
\xeee
where $\hdf(\cdot)$ is the map\rx{bae2.0c1}.
If we consider a perturbative deformation\rx{bbea2.3a1}, then the
generating function becomes a formal power series
\xlee{bae2.0b1}
\xcWe = \xcWzz + \sum_{i=1}^{\infty}\xcWi\,\dfe^i.
\xeee
The leading term $\xcWzz$ is a holomorphic function describing
a \hlgrsm\  $\yY\subset\xX$ and it has a special property
\ylee{bae2.0b2}
\hcdfo(\del\xcWzz) = 0,
\yeee
which guarantees that $\yY$ can be deformed to the first order in $\dfe$, while the first order perturbation $\xcWo$
satisfies the equation
\ylee{bae2.0b3}
\dlb\xcWo - \hdfo(\del\xcWzz) = 0.
\yeee

The complex structure of the \hlgrsm\ $\yYW$ determined by the
function $\xcW$ satisfying the condition\rx{bae2.0a3} can be
described by saying that the bundle projection of $\TsU$
establishes an isomorphism between $\yYW$ and the base $\xcA$,
whose complex structure is deformed by the Beltrami differential
\xlee{bae2.0b4}
\mu_{\xcW} = -\dfib\hdf(\del\xcW),
\xeee
where $\dfib\hdf$ is the map\rx{bae2.0c2}.

\subsection{Deformation of the 2-category of \copfib s:
deformation of the category of morphisms}
\label{ss.dfmtcmr}

\subsubsection{Objects of the deformed category}
\label{ss.odc}

Following the outline of subsection\rw{ss.outline},
we conjecture that the \CMlt\  $\hdf$ parameterizing the
deformations of the \hlsms\ of $\TsU$, parameterizes also the
deformations of the 2-category $\rDDprfU$. We are going to discuss the
structure of the deformed category $\rDDprfUSk$, but we will limit
ourselves to simplest \xzobj s in it, which are the analogs of \copfib\
objects $\TrfUW$ denoted simply as $\xcW$.


Recall that
in the undeformed case the function $\xcW$ labeling an object of
$\rDDprfU$ is holomorphic,
so that the graph of its holomorphic
differential\rx{bah1.1} is a lagrangian submanifold of $\TsU$.
%
We conjecture that in the case of $\rDDprfUSk$, a
similar  object
is (parameterized by) a function $\xcW$ on $\xcA$, which satisfies
the equation\rx{bae2.0a3}, because then the graph of its
holomorphic differential $\yYW$ defined by the same
equation\rx{bah1.1} is again a lagrangian submanifold of the
deformed cotangent bundle $\mtdfTUSk$, and this is in line with
our conjecture that lagrangian submanifolds represent the objects
of $\ctLLXsom$ not only when $\Xsom$ is an undeformed cotangent bundle,
but also when it is a general \hlsmm.

%

\subsubsection{The universal \CMlt}

Consider the tensor algebra over $\IC$ of \emph{Dolbeault tensor fields}
\ylee{bah3.1}
\xTflU \edfn \bigoplus_{\ixk,\ixl=1}^\infty \TflklU,\qquad
\TflklU \edfn \xbOmb(\Tng^{\ixk}\xcA\otimes\Tng^{\vee,\ixl}\xcA).
\yeee
Fix a $\del$-connection
$\nabla\colon\TflbbU\rightarrow\Tflvv{\bullet}{\bullet+1}(\xcA)$.
For a tensor field $\tnsct\in\xTflU$, let $\bnbl\tnsct$ denote a sequence of
multiple covariant derivatives:
$\bnbl\tnsct\edfn\tnsct,\nabla\tnsct,\nabla^2\tnsct,\ldots$. For
tensor fields $\tnsct_1,\ldots,\tnsct_k$, let
$\Tflnv{\tnsct_1,\ldots,\tnsct_k}$ denote a subalgebra of $\xTflU$
generated by all tensor fields
$\bnbl\tnsct_1,\ldots,\bnbl\tnsct_k$ and by all possible contractions
within their tensor products.

Let $\hdf_i$ denote an element of $\xbOmov{\xcA,\Stai\TU}$. The Poisson-Schouten
bracket $\Pbrv{\hdf_i,\hdf_j}$ of two such elements
is an element of $\Tflnv{\hdf_i,\hdf_j}$
and it is universal in the sense that the coefficients of its expression in terms
of the appropriate contractions of $\hdf_i\otimes\nabla\hdf_j$ and
$\hdf_j\otimes\nabla\hdf_i$ are universal constants. The same
holds true for elements
$\xdfm_{ij}\in\xbOmv{\xmi}(\xcA,\wedge^{\xmj}\TU)$ and their
\NSb\ $[\xblnk,\xblnk]$.

For a \CMlt\ $\hdf\in\xbOmv{1}(\xcA,\Sb\TU)$ satisfying
\ex{bbea2.2a} and for two functions $\xcWo$, $\xcWt$ on
$\xcA$ satisfying \ex{bae2.0a3}, denote
\xlee{bah4.1}
\Tflnbot\edfn\Tflnv{\del\xcWo,\del\xcWt,\crR,\hdf_2,\hdf_3,\ldots},
\xeee
where $\hdf_i$ are the components\rx{bbea2.4a}, while
$\crR\edfn[\dlb,\nabla]\in\xbOmov{\rmS^2\TsU\otimes\TU}$
is the curvature of the tangent bundle $\TU$ corresponding
to the connection $\nabla$. The Dolbeault differential $\dlb$ acts
universally on the elements of $\Tflnbot$. Indeed, its action on
the components of $\hdf$ is prescribed by the \CMe\rx{bbea2.2a},
its action on $\del\xcWi$ is prescribed by \ex{bae2.0a3}, $\dlb\crR=0$, and
a permutation of $\dlb$ and $\nabla$ generates the curvature tensor $\crR$.

\begin{conjecture}
\label{cnj.univ}
For a \CMlt\ $\hdf\in\xbOmv{1}(\xcA,\Sb\TU)$ satisfying
\ex{bbea2.2a} and for two functions $\xcWo$, $\xcWt$ on
$\xcA$ satisfying \ex{bae2.0a3} there exists a universal \xrlb\ \CMlt\
\begin{gather}
\label{beq2.15b}
\xdfmot = \sum_{\xmi=0}^\infty \xdfmoti,\qquad
\xdfmoti\in\xbOmv{\xmi}(\xcA,\wedge^{\xmi}\TU),
\\
\label{beq2.17b}
\dlb\xdfmot + \shlf\,[\xdfmot,\xdfmot] = 0
\end{gather}
%
%
such that
\xlee{beq2.16b}
\xdfmotz=\xcWot\edfn
\xcWt-\xcWo
\xeee
and $\xdfmoti\in\Tflnbot$ for $i\geq 1$,
where $\hdf_i$ are the components\rx{bbea2.4a}.
The universality of $\xdfmot$ means
that the coefficients in the expression of its components
$\xdfmoti$ in terms of the tensor fields and their derivatives
are constants that do not depend on $\xcA$. The universal element
$\xdfmot$ is unique up to gauge equivalence\rx{bcea2.1a}, and different choices of
$\nabla$ also lead to gauge equivalent elements $\xdfmot$.
\end{conjecture}

Simply put, if two functions $\xcWo$, $\xcWt$
satisfy \ex{bbea2.2a} then their difference is not necessarily
holomorphic and hence it cannot serve as a deformation parameter
of the category $\rDprfU$. However, we conjecture that there is a
special unique correction to $\xcWot$ which turns it into a
\CMlt\ suitable for deforming  $\rDprfU$.
Hence we conjecture that the category of morphisms between the
objects of $\rDDprfUSk$ represented by $\xcWo$ and $\xcWt$ is the
deformed category $\rDprfU$:
%
%
\xlee{beq2.14b}
\Hom_{\rDDprfUSk}(\xcWo,\xcWt) =
 \rDsprfUdot,
\xeee
where $\xdfmot$ is the unique universal deformation parameter of
Conjecture\rw{cnj.univ}. Note that for a fixed manifold $\xcA$,
the sum in \ex{beq2.15b} is effectively finite, the highest value
of $i$ being the complex dimension of $\xcA$.

\subsubsection{Perturbative construction of the universal \CMlt}
\label{ss.pcucmlt}

We construct the universal expression for $\xdfmot$ perturbatively
in the Dolbeault degree $\dgDlb$ defined in an obvious way:
\xlee{e.dgdlb}
\qquad\dgDlb \xcWi=0,\qquad \dgDlb\crR = 1,\qquad
\dgDlb\hdfi=1,\qquad\dgDlb\nb=0.
\xeee
We substitute the
expansion\rx{beq2.15b} into the \CMe\rx{beq2.17b} and find the
equation determining $\xdfmotn$ in terms of $\xdfmoti$ with
$\xmi<\xmn$:
\xlee{beq2.18b}
[\xcWot,\xdfmotn] = - \dlb\xdfmotnmo - \hlf\sum_{i=1}^{\xmn-1}
[\xdfmoti,\xdfmotv{\xmn-i}].
\xeee

We will find expressions for the first three corrections in the
expansion\rx{beq2.15b}. We introduce a special notation for them:
\ylee{beq2.18ba1}
\xdfmoto = \bdmu,\qquad
\xdfmott = \bdnu,\qquad
\xdfmoth = \bdxi.
\yeee
(the indices $12$ at $\bdmu$, $\bdnu$ and $\bdxi$ are dropped temporarily in this subsection).
We are particularly interested in the third correction, because,
as we will see shortly, this is the dominant (that is, the lowest Dolbeault degree)
term in $\xdfmot$
when $\xcWo=\xcWt=0$, that is, from the
$\ctLLTUSk$ perspective
it describes the leading deformation
of the category of endomorphisms of the zero-section of $\TsU$.

According to \ex{beq2.18b}, the first three corrections are determined by
the equations
\begin{align}
\label{beq2.18ba2}
[\xcWot,\bdmu] & = - \dlb\xcWot,
\\
\label{beq2.18ba3}
[\xcWot,\bdnu] & = -\dlb \bdmu - \shlf\,[\bdmu,\bdmu],
\\
\label{beq2.18ba4}
[\xcWot,\bdxi] & = -\dlb \bdnu - [\bdmu,\bdnu].
\end{align}
%
%

First of all, we find an exact universal solution for $\bdmu$.
Equation\rx{beq2.18ba2} can be rewritten simply as
\xlee{beq2.19b}
\dlb\xcWot + \bdmu \spsmb \del \xcWot = 0.
\xeee
%
In order to solve it,
we
have to introduce \ddf\ notations. Let $\xaV$ and $\xbV$ be  vector
spaces. An element $\xaa\in\Sb\xaV\,\otimes\,\xbV$ determines a
polynomial function (or a formal power series) $\xaa\colon
\Vd\rightarrow\xbV$. The first \ddf\ of $\xaa$ is
a symmetric map $\fsd\xaa\colon \Vd \times\Vd\rightarrow\xaV\otimes\xbV$
defined by the property
\ylee{beq2.20b}
\fsd\xaa(\xavo,\xavt) \spsmb (\xavt - \xavo) = \xaa(\xavt) -
\xaa(\xavo),\qquad\forall\,\xavo,\xavt\in\xaV.
\yeee
The second \ddf\ of $\xaa$ is a totally symmetric
map $\ssd\xaa\colon\xaV\times\xaV\times\xaV\rightarrow
\Stav{2}\xaV\,\otimes\,\xbV$ defined by the property
\ylee{beq2.21b}
\ssd\xaa(\xavo,\xavt,\xavh) \spsmb (\xavh-\xavt)=
\fsd\xaa(\xavo,\xavh) - \fsd\xaa(\xavo,\xavt)
,\qquad\forall\,\xavo,\xavt,\xavh\in\xaV.
\yeee
In application to an element
$\hdf\in\xbOmov{\xcA,\;\Sb\TU}$ \ddfs\ produce symmetric maps
\ylee{beq2.22b}
\fsd\hdf\colon \Gamma(\TsU)^{\times 2}\rightarrow
\xbOmov{\xcA,\TU},\qquad
\ssd\hdf\colon \Gamma(\TsU)^{\times 3}\rightarrow
\xbOmov{\xcA,\Stav{2}\TU}.
\yeee
According to the condition\rx{bae2.0a3},
\ylee{beq2.24b}
\dlb\xcWot = \hdf(\del\xcWt) - \hdf(\del\xcWo) =
\fsd\hdf(\del\xcWo,\del\xcWt)\spsmb \del\xcWot.
\yeee
Hence the universal solution to \ex{beq2.19b} is the \ddf\
of $\hdf$ evaluated on the differentials $\del\xcWo$ and
$\del\xcWt$:
\xlee{beq2.23b}
\bdmu = - \fsd\hdf(\del\xcWo,\del\xcWt).
\xeee
%

We will find the universal expressions for $\bdnuot$  and
$\bdxiot$ only approximately, up to certain powers of $\xcWo$ and
$\xcWt$:
\xlee{beq2.23c}
\bdmu = \bdmupr + \OWh,\qquad
\bdnu = \bdnupr + \OWt,\qquad
\bdxi = \bdxipr + \OWo,
\xeee
where $\OWn$ denotes an expression which is at least of
combined degree $n$ in $\xcWo$ and $\xcWt$. According to
\ex{beq2.23b},
\ylee{beq2.24c}
\bdmupr = - \fsd\hdfbt(\del\xcWo,\del\xcWt)
- \fsd\hdfgm(\del\xcWo,\del\xcWt),
\yeee
where $\hdfbt$
and $\hdfgm$ are
defined by \ex{bbea2.5b}.
By
substituting \eex{beq2.23c} into \eex{beq2.18ba3}
and\rx{beq2.18ba4} we find
\begin{align}
\label{beq2.18bb3}
[\xcWot,\bdnupr] & = -\dlb \bdmupr - \shlf\,[\bdmupr,\bdmupr], 
\\
\label{beq2.18bb4}
[\xcWot,\bdxipr] & = -\dlb \bdnupr,
\end{align}
%
which determine the universal expressions for
$\bdnupr$ and
$\bdxipr$.

In order to simplify the calculations required to derive the
universal formula for $\bdnupr$ and $\bdxipr$, we present the
formula\rx{beq2.23b} in a different form. Consider the relation
between the Lie bracket on a manifold, and
the Poisson bracket on the total space of its cotangent bundle.
For a function $\xcW$ on a complex manifold $\xcA$, let $\tcW$
denote its pull-back to the total space of the cotangent bundle
$\TsU$. For an element $\mu\in\xbOmbv{\xcA,\TU}$ let $\tmu$ denote
the corresponding $(0,\bullet)$-form on the total space of $\TsU$ which is
linear along the fibers. In our previous notations,
$\mu=\dfib\tmu$. The Lie bracket on $\xcA$ and the Poisson bracket
on $\TsU$ are related as follows:
\ylee{beq2.28b}
\widetilde{\Lbrv{\mu,\xcW} } = -\Pbrv{\tmu,\tcW},\qquad
\widetilde{\Lbrv{\mu,\mu\p}} = -\Pbrv{\tmu,\tmu\p}.
\yeee

For a function $\xcW$ on $\xcA$, let $\hcW=\Pbrv{\tcW,\cdot}$ be a
linear operator acting on functions on the total space of $\TsU$.
The operators $\hcW$ commute with each other:
for two functions $\xcWo$ and $\xcWt$
\ylee{beq2.29b}
[\hcWo,\hcWt] = \widehat{ \Pbrv{\xcWo,\xcWt}   } = 0.
\yeee

For $\hdf\in\xbOmbUUST$ let $\hdf\xcmi$ denote its component in
$\xbOmbv{\xcA,\Stai\TU}$.
It is easy to see that in our new notations the \rhs of the
equations\rx{bae2.0a3} and\rx{beq2.23b} can be presented as
\begin{gather}
\label{beq2.30b}
\dlb\xcW = \hdf(\del\xcW) = \left.\lrbc{e^{\hcW}\hdf}\right|_{0},
\\
\bdmuot = - \fsd\hdf(\del\xcWo,\del\xcWt)
=-\dfib\left.\lrbc{\frac{e^{\hcWt}-e^{\hcWo}}{\hcWt-\hcWo}\,\hdf}\right|_1
\end{gather}
(in the \rhs of these formulas $\hdf$ is considered to be a
$\bar{\Tng}^\vee\xcA$-valued function
on the total space of $\TsU$).
According to the first formula,
\xlee{beq2.30c}
\dlb\xcW = \hdfbt(\dlb\xcW) + \OWh = \shlf\,\hcW^2\hdfbt + \OWh.
\xeee
According to the second formula,
\ylee{beq2.31c}
\bdmupr = -\dfib\Big(\shlf\, (\hcWo + \hcWt)\hdfbt
+ \tfrac{1}{6}\,(\hcWo^2 + \hcWo\hcWt + \hcWt^2)\,\hdfgm
\Big).
\yeee

Both sides of \ex{beq2.18bb3} are elements of
$\xbOmv{2}(\xcA,\TU)$,
so applying $\;\tilde{ }\;$ to them (that is, turning them into
$(0,2)$-forms on the total space of $\TsU$, which are linear along
fibers), we find
%
\begin{equation}
\label{beq2.35b}
\begin{split}
\widetilde{\Lbrv{\xcWot,\bdnupr} }
& =
- \dlb\tdmupr + \shlf\,\Pbrv{\tdmupr,\tdmupr} + \OWh
\\
& = \shlf\,\Pbrv{(\dlb\xcWo+\dlb\xcWt),\hdfbt}
+\tfrac{1}{6}\,(\hcWo^2 + \hcWo\hcWt + \hcWt^2)\,\dlb\hdfgm
\\
&\qquad + \tfrac{1}{8}\,
\Pbrv{ (\hcWo + \hcWt)\hdfbt,(\hcWo + \hcWt)\hdfbt } + \OWh
\\
& = - \tfrac{1}{8}\,\Pbrv{\hcWot\hdfbt,\hcWot\hdfbt}
+ \tfrac{1}{24}\,\hcWot^2\Pbrv{\hdfbt,\hdfbt}.
\end{split}
\end{equation}
We used the formulas\rx{beq2.25b}
and\rx{beq2.30c}
as well as the
Jacobi identity for the Poisson bracket in order to derive
the last line in this equation.

In order to solve the equation\rx{beq2.35b} for $\bdnupr$, we
express its \rhs in terms of a torsionless covariant $(1,0)$-differential on
the tangent bundle $\TU$:
$$\nb\colon\Gamma(\TU)\rightarrow\Omega^{1,0}(\xcA,\TU).$$
We use index notations: let $\vrxI$,
$\inI=1,\ldots,\dim_{\IC}\xcA$ be local holomorphic coordinates on
$\xcA$. The corresponding frames in $\TU$ and $\TsU$ are formed by
$\dlI$ and $d\vrxI$.
In our formulas we assume summation over repeated indices
appearing on opposite levels (this corresponds to applying
contraction to tensor products). Anti-holomorphic indices are
hidden. Thus in our notations
%
\begin{gather}
\nonumber
\hdfbt = \hdfbt^{\inI\inJ}\,\dlI\dlJ,
\qquad \hdfbt^{\inI\inJ} = \hdfbt^{\inJ\inI},
\qquad
\bdnu = \bdnu^{\inI\inJ}\,\dlI\wedge\dlJ,
\qquad
\bdnu^{\inI\inJ} = - \bdnu^{\inJ\inI}
\\
\label{beq2.37b}
\Pbrv{\xcW,\hdfbt} =
2\hdfbt^{\inI\inJ}\,(\dlI\xcW)\,\dlJ,\qquad
\Pbrv{\hdfbt,\hdfbt} =
-4\hdfbt^{\inI\inL}\,(\nbcL\hdfbt^{\inJ\inK})\,\dlI\dlJ\dlK,
\qquad
\Lbrv{\xcW,\bdnu} = 2\bdnu^{\inI\inJ}(\dlJ\xcW)\,\dlI.
\end{gather}
%
A straightforward computation shows that if we lift the tilde in
\ex{beq2.35b} by applying $\dfib$ to both sides, then its
\rhs can be rewritten as
\begin{multline}
\nonumber
\Lbrv{\xcWot,\bdnupr}
=
\\
\Bigg(
\hdfbt^{\inJ\inK}\hdfbt^{\inI\inL}\,(\dlJ\xcWot)\,(\nbcK\dlL\xcWot)
-\tfrac{1}{3}\Big( \hdfbt^{\inI\inL}\,(\nbcL\hdfbt^{\inJ\inK})
-
\hdfbt^{\inJ\inL}\,(\nbcL\hdfbt^{\inI\inK})\Big)(\dlJ\xcWot)(\dlK\xcWot)
\Bigg)\dlI.
\end{multline}
Comparing this expression with the last equation of\rx{beq2.37b}
we find a formula for the leading term in $\bdnu$:
%
\ylee{beq2.38b}
\bdnupr = \frac{1}{2}\;
\Bigg(
\hdfbt^{\inJ\inK}\hdfbt^{\inI\inL}\,(\nbcK\dlL\xcWot)
-\frac{2}{3}\; \hdfbt^{\inI\inL}\,(\nbcL\hdfbt^{\inJ\inK})
(\dlK\xcWot)
\Bigg)\dlI\wedge \dlJ .  
\yeee

Finally, we solve \ex{beq2.18bb4} for $\bdxipr$.
In order to compute its \rhs, we introduce the
Riemann curvature tensor $\crR\in\xbOmov{\xcA,\Stav{2}\TsU\otimes\TU}$
of the connection $\nb$ by the
formula $\crR = [\dlb,\nb]$. In index notations
\ylee{beq2.40b}
\crR = \crR^{\inI}_{\inJ\inK}\, d\vrxJ d\vrxK\,\dlI,\qquad
\crR^{\inI}_{\inJ\inK}=\crR^{\inI}_{\inK\inJ}.
\yeee
Then a straightforward computation shows that
\ylee{beq2.41b}
\dlb \bdnupr
 = \frac{1}{3}
\Big(
\hdfbt^{\inI\inL}\hdfbt^{\inJ\inM}\crR^{\inK}_{\inL\inM}
+
\hdfbt^{\inJ\inL}\hdfbt^{\inK\inM}\crR^{\inI}_{\inL\inM}
+
\hdfbt^{\inK\inL}\hdfbt^{\inI\inM}\crR^{\inJ}_{\inL\inM}
\Big)(\dlK\xcWot)\,\dlI\wedge\dlJ.
\yeee
Since in index notations
\ylee{beq2.42b}
\bdxi = \bdxi^{\inI\inJ\inK}\,\dlI\wedge\dlJ\wedge\dlK,\qquad
\Lbrv{\xcW,\bdxi} = 3\,
\bdxi^{\inI\inJ\inK}(\dlK\xcW)\,\dlI\wedge\dlJ,
\yeee
we find that
\xlee{beq2.42b}
\bdxipr = \frac{1}{3}\,\hdfbt^{\inI\inL}\hdfbt^{\inJ\inM}\crR^{\inK}_{\inL\inM}
\;\dlI\wedge\dlJ\wedge\dlK.
\xeee

\subsubsection{Semi-classical grading}
\label{ss.scgr}

The algebra $\Tflnbot$ of \ex{bah4.1} has an important
\emph{\smcl} grading defined as follows:
\xlee{bah4.2}
\spdg \xcW_i = -2,\qquad\spdg\crR=0 ,\qquad\spdg
\hdf_i=i-3,\qquad\spdg\nabla=1.
\xeee
The defining relation $\crR=[\dlb,\nabla]$ implies $\spdg\dlb=-1$.
If we set
\xlee{bah4.4}
\spdg\xdfmot=-2,
\xeee
then all three defining
relations of our deformation construction:
\ylee{bah4.3}
\dlb\hdf + \shlf\,\Pbrv{\hdf,\hdf} = 0,
\qquad
\dlb\xcW = \hdf(\del\xcW),
\qquad
\dlb\xdfmot + \shlf\,[\xdfmot,\xdfmot] = 0
\yeee
respect the \smclgrd. Hence the universal \CMlt, determined recursively
by \ex{beq2.18b}, must have the \smcldgr\ -2: its zeroth term
$\xdfmz=\xcWot$ has degree -2 in virtue of $\spdg\xcWot=-2$ and the
degree of higher terms is expressed through \ex{beq2.18b} in
terms of the degrees of the lower terms.

Conjecture\rw{cnj.univ} together with the relation\rx{bah4.4} has
three easy corollaries.

Consider a new grading on the algebra
$\Tflnbot$:
\xlee{bah2.4}
\dgdW\bnbl\hdf_i=\dgdW\bnbl\crR=0,\qquad
\dgdW \nabla^k \del \xcW =
\begin{cases}
1, & \text{if $k=0$,}
\\
0, & \text{if $k\geq 1$.}
\end{cases}
\xeee
\begin{corollary}
\label{crl1}
If $\hdfbt=0$, then $\dgdW\xdfmi\geq 2$ for $i\geq 1$.
\end{corollary}
\begin{corollary}
\label{crl2}
If $\hdfbt=0$ and $\xcWo=\xcWt=0$, then $\xdfm=0$.
\end{corollary}
Let $\ccrR\in\Ext^1(\rmS^2\TU,\TU)$ denote the Atiyah class of the
tangent bundle $\TU$ (it is the class represented by the curvature tensor
$\crR$).
\begin{corollary}
\label{crl3}
If $\ccrR=0$ and $\xcWo=\xcWt=0$, then $\xdfm = 0$.
\end{corollary}
%

Observe that among all generators
$\bnbl\hdf_i$, $\bnbl\crR$ and $\bnbl\del\xcWi$ of $\Tflnbot$ only
$\hdfbt$ and
$\del\xcW_i$ have negative \smcldgr s:
$\spdg\hdfbt=\spdg\del\xcW_i=-1$. At the same time, $\spdg\xdfm=-2$, so if $\hdfbt=0$,
then each
term in the expression of $\xdfmi$, $i\geq 1$ must have at least
two powers of $\del\xcW$. This proves Corollary\rw{crl1}. If $\xcWo=\xcWt=0$, then, obviously,
$\xdfmi=0$ for $i\geq 1$ and, at the same time,
$\xdfmz=\xcWot=0$. This proves Corollary\rw{crl2}.

In order to prove Corollary\rw{crl3}, we introduce two more
gradings on the algebra
$\Tflnbot$. The first grading reflects the difference between the
total numbers of upper and lower indices in a tensor field:
\ylee{bah6.2}
\dgbl\nabla=-1,\qquad\dgbl\hdf_i=i-1,\qquad\dgbl\del\xcWi=-1,
\qquad\dgbl\crR=-2.
\yeee
Since the universal deformation parameter $\xdfm$ is \xrlb, its
degree must be zero: $\dgbl\xdfm=0$.

The second degree is the sum: 
$\dgdf=\spdg + \dgbl$, so
\ylee{bah6.3}
\dgdf\nabla=0,\qquad\dgdf\hdf_i=2i-4,\qquad\dgdf\del\xcWi=-2,
\qquad\dgdf\crR=-2.
\yeee
Since $\spdg\xdfm=-2$ and $\dgbl\xdfm=0$, then $\dgdf\xdfm=-2$.
However, $\dgdf\hdf_i\geq 0$ for $i\geq 2$, so
if we set $\xcWo=\xcWt=0$ and $\crR=0$, then all remaining tensor
fields in $\Tflnbot$ must have non-negative degree, so $\xdfm$
must be zero. This proves Corollary\rw{crl3}.


\subsection{Deformation of the 2-category of \copfib s:
deformation of the composition of morphisms}

Describing the deformation of the composition functor requires the
following steps: first, for a pair of \xzobj s $\xcWo$, $\xcWt$ we
have to describe the \fDAinfalg\ $\brb{\xbOmbU,\dlb,\xdfmot}$
corresponding to the \CMlt\ $\xdfmot$ found in
subsection\rw{ss.dfmtcmr},
by working out the expressions for its \xmnmlt s.
Second, we have to describe the objects and morphisms of the
\tpdprc\ of $\brb{\xbOmbU,\dlb,\xdfmot}$. After that we can
define the composition between objects of
$\Hom(\xcWo,\xcWt)$ and $\Hom(\xcWt,\xcWh)$.

\subsubsection{A first order deformation of the \hlsm\ structure}

The procedure is simplified if we stay within the realm of
categories $ \rDsprfUWtmo$ which are very similar to derived
categories of coherent sheaves. We achieve this by considering only infinitesimal
deformations of $\rDDprfU$ by $\hdf$. In other words, we introduce the algebra
$\ICv{\dfe}/(\dfe^2)$ and consider a deformation of the \hlsms\ of
$\TsU$ by an element $\dfe\hdf$, where $\hdf$ is of the
form\rx{bbea2.4a}. The quadratic term in the \CMe\rx{bbea2.2a}
drops out, hence $\hdf$ must be holomorphic:
\ylee{\bbf1}
\dlb\hdf = 0.
\yeee
A function $\xcW$ describing an \xzobj\ of the deformed category has
the form
\ylee{bbf2}
\xcW=\acWz+\dfe\acWo.
\yeee
It must satisfy \ex{bae2.0a3}, which reduces to two
relations
\xlee{bbf3}
\dlb\acWz = 0,\qquad\dlb\acWo = \hdf(\del\acWz).
\yeee

Let us introduce two shortcut notations related to
functions $\xcW_i$ satisfying the conditions\rx{bbf3}:
\xlee{bbf3a}
\shdfij \edfn \fsd\hdf(\del\xcWvz{i},\del\xcWvz{j}),\qquad
\shdfijk \edfn \ssd\hdf(\del\xcWvz{i},\del\xcWvz{j},\del\xcWvz{k}).
\xeee

For two functions $\xcWo$, $\xcWt$ satisfying the
conditions\rx{bbf3} and thus defining \xzobj s of
the deformed category $\rDDprfUhae$, the corresponding \CMlt\ has
the form
\xlee{bbf4}
\xdfmot = \xcWotz + \dfe(\xcWoto + \bdmuot),
\yeee
where, according to \ex{beq2.23b},
\ylee{bbf5}
\bdmuot =
-\shdfot
= - \fsd\hdfbt(\del\xcWoz+ \del\xcWtz)
+ \OWt.
\yeee
This \xpBd\ satisfies the equations
\ylee{bbf6}
\dlb\bdmuot = 0,\qquad\dlb\xcWoto + \bdmuot \spsmb \del\xcWotz=0,
\yeee
but generally does not satisfy the integrability condition
$\Lbrv{\bdmuot,\bdmuot}=0$.

\subsubsection{Deformation of the \tZtgdcs}
\def\xcWz{ \xcW }
According to \ex{bbf4}, the category of morphisms between two \xzobj s $\xcWo$, $\xcWt$
of the deformed 2-category $\rDDprfUSek$ is
\xlee{bbf7}
\Hom_{\rDDprfUSek}(\xcWo,\xcWt) = \rDsprfBvv{\xcA}{\xcWotz + \dfe(\xcWoto +
\bdmuot)}
= \rDsprfUeWot,
\yeee
where $\xcAekot$ denotes the complex manifold $\xcA$ whose complex
structure is deformed by the Beltrami differential $\dfe\bdmuot$.
Hence the category\rx{bbf7} is defined along the lines of
subsection\rw{tZtgdcs}. However, before we go into specifics, let
us recall the definition of the \cAc.

Recall that a \xper\ object\rx{bae1.44} of a 2-periodic category $\rDsprfUW$
defined in
subsection\rw{tZtgdcs} is determined by a pair $\cE=(\xE,\nbbE)$, where
$\xE$ is a
\cqhlmvb\
%
over a complex manifold $\xcA$,
while $\nbbE$ is its curved differential satisfying the
properties\rx{bae1.42}--(\ref{bae1.42a1}).
Let us endow $\xE$ also with a (possibly curved) \ydh\ covariant
differential
\ee
\xlabel{bae2.2b7}
\xbOmxvb{i}(\xE) \xrarv{\nbE} \xbOmxvb{i+1}(\xE),\qquad
\zdgt{\nbE} = \yev,
\eee
which satisfies the analog of \ex{bae1.42a}:
\ee
\label{bae2.2b8}
\nbE\,(\smu\wedge\sigma) = (\del\smu)\wedge \sigma +
(-1)^{\dgZt\smu}\smu\wedge (\nbE\,\sigma).
\eee
The choice of $\nbE$ is not unique, and the difference of two differentials
$\nbE$,  $\nbEp$ satisfying\rx{bae2.2b8} is a
differential form
\ee
\label{bae2.2b9} 
\nbEp = \nbE + \sea,\qquad\sea
\in
\xbOmxob(\End\xE).
\eee
In other words, all possible differentials $\nbE$ form an affine
space based on a vector space $\xbOmoeE$. The commutator of
\ydh\ and \ydah\ differentials is the $(1,\hat 1)$-curvature
\ylee{bbf8}
[\nbbE,\nbE ]=\acFE\in
\xbOmxob(\End\xE),
\yeee
which satisfies the curved Bianchi identity
\xlee{bbf9}
\nbbE\acFE = -(\del\xcW)\,\xIdv{\xE}.
\yeee
The curvature $\acFE$ is determined by the object $\cE$ (that is, by
$\nbbE$) up to a $\nbbE$-exact element: if we replace $\nbE$ by
$\nbEp$ of \ex{bae2.2b9}, then $\acFE$ is replaced by
$\acFpE= \acFE + \nbbE\sea$. Hence $\cE$ determines the
\emph{\cAc}
\ylee{bbf10}
\hacFE\in
\xbOmxob(\End\xE)
\Big/ \nbbE \lrbc{
\xbOmxob(\End\xE)
}.
\yeee
If $\xcW=0$, then the Bianchi identity\rx{bbf9} implies that
\ylee{bbf11}
\hacFE\in\HnbbE(\End\xE) = \Ext(\xcE,\xcE).
\yeee

A perfect object of the deformed category $\rDsprfUeWot$ is a pair
\ylee{bbf12}
\cE=\adgmE,\qquad
\nbbE = \nbbzE + \dfe\nbboE,
\yeee
(\cf
\ex{bae1.45b}), such that the pair $\xcEbz=(\xE,\nbbzE)$ is an
object of the undeformed category $\rDsprfUWot$, while
\xlee{bae2.2a10}
\nbboE = \ydfmotv{\nbE} + \seb,\qquad
\seb\in\xbOmbv{\End\xE}
\xeee
and $\seb$ satisfies the condition
\ee
\label{bae2.2a9} 
\nbbzE\,\seb = \xcWoto\,\xIdE - \ydfmotv{\acFE}.
\eee
Here $\acFE$ is the $(1,1)$-curvature of $\xcEbz$, so $\nbbE$
satisfies the condition\rx{bae1.42a1}: $\nbbE^2 =
\xcWot\xIdv{\xE}$.
A change\rx{bae2.2b9} in the choice of $\nbE$ is
compensated by the corresponding replacement of $\seb$ by
$\seb\p=\seb - \ydfmotv{\sea}$.

The space of morphisms 
$\Hom_{\rDsprfUeWot}(\xcEo,\xcEt)$
between two \xper\ objects is defined by means of an obvious deformation of the general
formula\rx{bae1.7}. A morphism between two objects
\wlee{bae2.2a11}
\msgm
\in\Hmo{\cEo,\cEt}
\weee
is represented by a $\nbbE$-closed sum
%
\begin{gather}
\nonumber
\msgm  = \msgmbz + \dfe\msgmbo,\qquad
\msgmbz,\msgmbo\in\xbOmbv{\xEt\otimes\xEo^\ast},
\\
\label{bae2.2a14}
\nbbzcxE\,\msgmbz = 0,\qquad
\nbbzcxE\,\msgmbo = - \nbbocxE\,\msgmbz
\end{gather}
up to $\nbbE$-exact elements. Note that the dominant component $\msgmbz$
defines a morphism between the undeformed objects $\xcEobz$ and
$\xcEtbz$.

%

\subsubsection{Deformation of the composition of morphisms}
\label{ss.dfmcmpf}
The composition of morphisms between three \xzobj s
$\xcWo$, $\xcWt$ and $\xcWh$ of the deformed
2-category $\rDDprfUSek$
 is a
bi-functor
\ee
\label{bae2.8} 
\rDsprfUeWot \times \rDsprfUeWth \longrightarrow \rDsprfUeWoh.
\eee
The composition of two morphisms $\xcEot\in\rDsprfUeWot$ and
$\xcEth\in\rDsprfUeWth$
is the appropriately deformed tensor product:
%
\xlee{bae2.10} 
\xcEth\circ\xcEot=
\adgmeEth
\circ
\adgmeEot =
(\xEth\otimes\xEot,\nbbv{\xEot\otimes\xEth} + \dfe\nboth),
\xeee
where the deformation term is
\xlee{bae2.11}
\begin{split}
\nboth & =
\shdfoth
\spsmb
\brB{(\del\xcWotz)\nbEth - (\del\xcWthz)\nbEot+ \acFEot\acFEth}
=
\\
& =
\hdfbt\spsmb (\acFEot\acFEth) + \OWo,
\end{split}
\xeee
and $\shdfoth$ is a shortcut notation defined by \ex{bbf3a}.

The first two terms in the \rhs of this equation are related to the fact
that each of three categories in\rx{bae2.8}
has its own deforming \xpBd. Hence we had
to add  correction terms to $\nbbv{\xEoh}$
so that it
would satisfy the
condition\rx{bae2.2b8}
or, more precisely, the
condition\rx{bae2.2a10}, that is, that the difference
%
\ee
\xlabel{bae2.12}
\nbbv{\xEoh}
-\dfe\bdmuoha{\nabla_{E_{12}\otimes E_{23}}}
\eee
must be just an odd element of $\xbOmbv{\End\xEoh}$. The third correction term
in \ex{bae2.11} is required to comply with the
condition\rx{bae2.2a9}.

The composition of morphisms $\xcEth\circ\xcEot$ defined by \ex{bae2.10}
is independent (up to an isomorphism) of the choice of \ydh\
differentials $\nbEot$ and $\nbEth$. Indeed, if we replace
$\nbEot$ with $\nbEot\p=\nbEot+\sea$ as in \ex{bae2.2b9},
then $\nboth$ is replaced by
%
%
\begin{equation}
\nonumber
\begin{split}
\nboth\p
&=
\nboth -
\shdfoth
\spsmb
\brB{  (\del\xcWthz)\sea - (\nbbzEot\;\sea)\acFEth}
\\
&=
\nboth + \nbbzEot\, \brB{ \shdfoth \spsmb (\sea\,\acFEth) },
\end{split}
\end{equation}
%
%
%
so it changes by a $\nbbzEoh$-exact term.

The bi-functorial nature of the map\rx{bae2.8} means that an
object
%
$\xcEth\in\rDsprfUeWth$
%
determines a functor
\ee
\xlabel{bae2.14}
\xymatrix@C=2cm{
\rDsprfUeWot
\ar[r]^-{\xPhEth} &
\rDsprfUeWoh
}.
\eee
Its action on objects is defined by the composition\rx{bae2.10}.
Consider now its action on morphisms. For
$\msgmot\in\Hom(\xcEot,\xcEotp)$, where
$\xcEot,\xcEotp\in\rDsprfUeWot$,
we define
\ee
\xlabel{bae2.16}
\xPhEth(\msgmot) = \msgmot\otimes\xIdv{23}
+ \dfe\, (\shdfoth) \spsmb \brb{\nbEot\msgmotz\otimes\acFEth}
.
\eee
The correction term
is required to satisfy the
relation\rx{bae2.2a14}.
Similarly, an object $\xcEot\in\rDsprfUmWot$
determines a functor
\ee
\xlabel{baez2.17}
\xymatrix@C=2cm{
\rDsprfUeWth
\ar[r]^-{\xPhEot} &
\rDsprfUeWoh
},
\eee
which maps a morphism $\msgmth\in\Hom(\xcEth,\xcEth\p)$,
where $\xcEth,\xcEth\p\in\rDsprfUeWth$
into a morphism
\ee
\xlabel{bae2.19}
\xPhEot(\msgmth) = \xIdv{12}\otimes\msgmth
- \dfe\, (\shdfoth) \spsmb \brb{\acFEot\otimes\nbEth\msgmthz}.
\eee

The images of morphisms $\msgmote$ and $\msgmthe$ commute in the
following sense:
\begin{multline}
\xlabel{bae2.20}
\xPhEpot(\msgmth)\circ\xPhEth(\msgmot) -
\xPhEpth(\msgmot)\circ\xPhEot(\msgmth)
=0
\\
\text{in}\quad
\Hom\brB{
\xcEth
\circ
\xcEot,\xcEth\p
\circ
\xcEot\p
},
\end{multline}
because
\begin{multline}
\xlabel{bae2.21}
\xPhEpot(\msgmth)\circ\xPhEth(\msgmot) -
\xPhEpth(\msgmot)\circ\xPhEot(\msgmth)
\\
=
 \dfe\, \nbbv{\xEoh|0}
\brB{(\shdfoth) \spsmb \brb{\nbEot\msgmotz\otimes\nbEth\msgmthz}}.
\end{multline}

In the special case $\xcWo=\xcWt=\xcWh=0$, the
formula\rx{bae2.11} says the following. Let $0$ denote the object of $\rDDprfUSek$ corresponding to the trivial fibration over $U$ with $W=0$. The endomorphism category
$\End_{\rDDprfUSek}(0)$ is a monoidal category which is equivalent to $\rDprfU$ as a category, but with a monoidal structure given by the deformed tensor product
\xlee{baj4.1}
\adgmeE
\circ
\adgmeEp =
\brb{\xE\otimes\xEp,\nbbv{\xE\otimes\xEp} + \dfe\hdfbt\spsmb (\acFE\acFv{\xEp})},
\xeee

\subsubsection{Deformation of the monoidal structure beyond the
first order}
\label{ss.dmsbfo}

Now we return to the 2-category $\rDDprfUSk$ for a general \CMlt\
$\hdf$ and consider the category
$\xEndsp$ of endomorphisms of the \opfib\ with $\xcW=0$ denoted here
as $\zsU$. This
category has a monoidal structure corresponding to the composition
of endomorphisms. According to the general formula\rx{beq2.14b},
the endomorphism category itself is a deformation of the category
$\rDDprfU$: $\xEndsp=\rDsprfUd$, where
$\xdfm = \xdfmv{3}+\xdfmv{4}+\cdots$ and
$\xdfmi\in\xbOmv{\xmi}\brb{\xcA,\wedge^{\xmi}\TU}$, while its
monoidal structure is a deformation of the monoidal structure of
$\rDDprfU$, the latter being the tensor product\rx{bae1.45a2}.

Let us assume that the Atiyah class $\ccrR$ of the tangent bundle
$\TU$ is zero. Then, according to Corollary\rw{crl3}, the deformation
parameter is zero: $\xdfm=0$,
so we have an equivalence of categories
\xlee{bai5.1}
\xEndsp\simeq\rDprfU.
\xeee
However, as the study of the first order perturbation in subsection\rw{ss.dfmcmpf}
demonstrated, the monoidal structure of $\xEndsp$ is still a
non-trivial deformation of the tensor product monoidal structure
of $\rDprfU$. The relatively simple nature of the
category\rx{bai5.1} allows us to discuss the properties of this
deformation without invoking \Ainfalg s and their modules.

A deformation of the monoidal structure of the category $\rDprfU$
is described by two sets of data. First, for every pair of \qhlmvb s
$\adgmeEo$, $\adgmeEt$ there is a \CMlt\
$\zdfmot\in\xbOmbz\brb{\End(\xEo\otimes\xEt)}$,
\xlee{bai5.1a}
\dlb\zdfmot + \shlf\,[\zdfmot,\zdfmot]=0,
\xeee
which determines the deformed monoidal bifunctor of the composition within the endomorphism
category $\xEndsp$:
\xlee{bai5.2}
\adgmeEo\ycrc\adgmeEt = (\xEo\otimes\xEt,\bar\nabla_{E_1\otimes E_2}  +
\zdfmot).
\xeee
Second, for every triple of \qhlmvb s $\adgmeEo$, $\adgmeEt$,
$\adgmeEh$ there is an associator
$\zasoth\in\xbOmbz\brb{\End(\xEo\otimes\xEt\otimes\xEh)}$ which
establishes the associativity isomorphism
\ylee{bai5.3}
\zasoth\colon \brB{\adgmeEo\ycrc\adgmeEt}\ycrc\adgmeEh
\xmapta{\cong}
\adgmeEo\ycrc\brB{\adgmeEt\ycrc\adgmeEh}.
\yeee
If both sides of the associativity isomorphism have the presentation
\begin{equation}
\label{bai5.4}
\begin{split}
\brB{\adgmeEo\ycrc\adgmeEt}\ycrc\adgmeEh
&=
(\xEo\otimes\xEt\otimes\xEh,\nbboth+\zdfmoth),
\\
\adgmeEo\ycrc\brB{\adgmeEt\ycrc\adgmeEh}
&=
(\xEo\otimes\xEt\otimes\xEh,\nbboth+\zdfmothp),
\end{split}
\end{equation}
where $\nbboth \edfn \bar\nabla_{E_1\otimes E_2\otimes E_3}$,
then $\zasoth$ is an invertible element satisfying the equation
\xlee{bai5.5}
\nbboth\zasoth + \zdfmothp\,\zasoth - \zasoth \, \zdfmoth = 0.
\xeee

We conjecture that there exist unique universal formulas for the element
$\zdfmot$ and for the associator $\zasoth$ related to the deformation of the tensor product
monoidal structure of $\rDprfU$ into the monoidal structure
of the endomorphism category $\xEndsp$. These universal formulas
express
$\zdfmot$ and $\zasoth$ in terms of the deformation parameter
$\hdf$, \ooc s $\acFEi$ and their holomorphic covariant
derivatives:
\xlee{bai5.6}
\zdfmot\in
\Tflnv{\acFEo,\acFEt,\hdf_2,\hdf_3,\ldots}
 , \qquad
\zasoth\in
\Tflnv{\acFEo,\acFEt,\acFEh,\hdf_2,\hdf_3,\ldots}.
\xeee

We propose to derive the universal formulas perturbatively. Define
the Dolbeault degree by \ex{e.dgdlb} and by the additional formula
$\dgDlb\acFEi=1$ (note that generally $\acFEi\in\xbOmb(\End\xEi)$
and its Dolbeault degree coincides with $j$ of $\xbOmv{j}$ only when
$\xEi$ is a holomorphic vector bundle with the operator $\nbbv{\xEi}$ not containing forms of degree other than $1$).
We present the deformation parameter $\zdfmot$
and the associator $\zasoth$ as the sums
\ylee{bai5.7}
\zdfmot = \sum_{i=1}^{\infty}\zdfmoti,\qquad
\zasoth = \xIdEm + \sum_{i=2}^\infty\zasothi,
\yeee
where
\ylee{bai5.8}
\dgDlb\zdfmoti = 2i + 1,\qquad \dgDlb\zasothi = 2i
\yeee
(the reason for assuming that the Dolbeault degree of $\zdfmot$ is
even and the Dolbeault degree of $\zasoth$ is odd will become
clear shortly).
The \CMe\rx{bai5.1a} splits:
\xlee{bai5.8}
\dlb\zdfmotn + \sum_{i=1}^{\xmn-1} \zdfmoti\,\zdfmotv{\xmn-i}=0,
\xeee
and the associativity equation\rx{bai5.5} splits:
\xlee{bai5.9}
\dlb\zasothn + \zdfmothnp - \zdfmothn +
\sum_{i=2}^{\xmn-1} (\zdfmothv{\xmn-i}\p\,\zasothi -
\zasothi\,\zdfmothv{\xmn-i})=0.
\xeee
The action of the Dolbeault differential $\dlb$ on the
elements of the algebras\rx{bai5.6} follows from its action on the
elementary tensor fields prescribed by the \CMe\rx{bbea2.2a} and
by the Bianchi identity $\nbbEi\acFEi=0$
\ftnt{We assume for simplicity that the curvatures of the $\partial$-connections $\nbv{\xEi}$
are zero.}, and from the defining
equation of the curvature tensor $\acFEi = [\nbbEi,\nbv{\xEi}]$.

We introduce the notation $\zdfmdv{\xblnk,\xblnk}$ to emphasize
the dependence of the universal deformation parameter $\zdfmot$ on
curvatures: $\zdfmot = \zdfmdv{\acFEo,\acFEt}$. The
parameters $\zdfmoth$ and $\zdfmothp$ of \ex{bai5.4} can be
expressed in terms of $\zdfmdv{\xblnk,\xblnk}$:
\begin{equation}
\xlabel{bai5.10}
\begin{split}
\zdfmoth &= \zdfmdv{\acFEo,\acFEt} + \zdfmdv{\acFEo+\acFEt+\nbv{\xEo\otimes\xEt}
\zdfmdv{\acFEo,\acFEt},\acFEh},
\\
\zdfmothp &=\zdfmdv{\acFEt,\acFEh} + \zdfmdv{\acFEo,\acFEt+\acFEh
+\nbv{\xEo\otimes\xEt}\zdfmdv{\acFEt,\acFEh} }.
\end{split}
\end{equation}
These formulas allow us to present the difference $\zdfmothnp -
\zdfmothn$ appearing in \ex{bai5.9} in the form
\xlee{bai5.11}
\zdfmothnp - \zdfmothn = \adltothzn + \tzdfmoth,
\xeee
where
\ylee{bai5.12}
\zdfmothn = \zdfmdnv{\acFEt,\acFEh} +
\zdfmdnv{\acFEo,\acFEt+\acFEh} -
\zdfmdnv{\acFEo,\acFEt} -\zdfmdnv{\acFEo+\acFEt,\acFEh}
\yeee
and the expression $\tzdfmoth$ contains the deformation parameter
components $\zdfmi$ only with $i<n$.

After the substitution\rx{bai5.11}, the associativity
equation\rx{bai5.9} becomes
\xlee{bai5.13}
\dlb\zasothn + \adltothzn + \tzdfmoth +
\sum_{i=2}^{\xmn-1} (\zdfmothv{\xmn-i}\p\,\zasothi -
\zasothi\,\zdfmothv{\xmn-i})=0.
\xeee

We conjecture that the \CMe\rx{bai5.8} together with the associativity
equation\rx{bai5.13} can be solved perturbatively
over the Dolbeault degree, thus producing the unique universal solutions
$\zdfmot$ and $\zasoth$ if, following \ex{baj4.1}, we set
\xlee{bai5.14}
\zdfmoto=\hdfbt\spsmb (\acFEo\acFEt)=
\hdfbt^{\inI\inJ} \acFv{\xEo,\inI}\acFv{\xEt,\inJ},
\xeee
where
we used the index notations explained at the end of
subsection\rw{ss.pcucmlt}, as well as the notation $\acFE =
\acFv{\xE,\inI}\,dx^{\inI}$ for the \ooc\ tensor components.
The parity of Dolbeault degrees of $\zdfmoti$ and $\zasothi$ is
dictated by these equations.


It is easy to verify that the expression\rx{bai5.14} satisfies
\eex{bai5.8} and\rx{bai5.13} for $\xmn=1$.
We leave it for the reader to verify that the following
expressions
%
\begin{align}
\xlabel{bai5.15}
\zdfmott &=
\tfrac{1}{3}\,
\hdfbt^{\inJ\inL}(\nb_{\inL}\hdfbt^{\inI\inK})
\acFv{\xEo,\inI}(\acFv{\xEo,\inJ}-\acFv{\xEt,\inJ})\acFv{\xEh,\inK}
\\
&\qquad\qquad
+\shlf\,\hdfbt^{\inI\inJ}\hdfbt^{\inK\inL}\brB{
(\nb_{\inI}\acFv{\xEo,\inK})\, \acFv{\xEt,\inJ} \acFv{\xEt,\inL}
+
(\nb_{\inI}\acFv{\xEt,\inK})\, \acFv{\xEo,\inJ} \acFv{\xEo,\inL}
},
\\
\label{bai5.15a}
\zasotht &= \tfrac{2}{3}\,\hdfgm^{\inI\inJ\inK}\acFv{\xEo,\inI}\acFv{\xEt,\inJ}\acFv{\xEh,\inK}.
\end{align}
satisfy these equations
for $\xmn=2$.

We extend the \smcl\ grading of subsection\rw{ss.scgr} to the
algebras\rx{bai5.6} by setting $\spdg \acFEi=0$.
Solving the system of equations\rx{bai5.8} and\rx{bai5.9}
recursively over $\zn$ with the initial condition\rx{bai5.14}
determines the \smcl\ degrees of the deformation parameter
$\zdfmot$ and of the associator $\zasoth$:
\ylee{bai5.16}
\qquad \spdg\zdfmot = -1,\qquad \spdg\zasoth = 0.
\yeee

Let us consider what happens if we set $\hdfbt=0$ in the universal formulas for $\zdfmot$ and
$\zasoth$. Since $\hdfbt$ is the only generator of the
algebras\rx{bai5.6} with negative \smcl\ grading, then
$\spdg\zdfmot=-1$ implies
\ylee{bai5.17}
\zdfmot|_{\hdfbt=0}=0,
\yeee
that is, the composition part\rx{bai5.2} of the monoidal structure
remains undeformed.
However, the equation\rx{bai5.15a} indicates that if $\hdfgm\neq 0$, then
$\zasoth|_{\hdfbt=0}\neq \xId$. This means that if for a complex
manifold $\xcA$ there exists a non-trivial \CMlt\
$\hdf\in\xbOmbUUST$ such that $\hdfbt\equiv\hdf_2=0$ while
$\hdfgm\equiv\hdf_3\neq 0$, then the tensor product monoidal structure
of the category $\rDprfU$ has a non-trivial associator
$\zasoth\neq \xId$ in addition to the standard one. This situation is realized, for example, when $U$ is a holomorphic symplectic manifold $X$ \cx{Kapranov}.
The element $\hdf$ in this case describes the formal neighborhood of the diagonal in $X\times X$. It follows that the $\ZZ_2$-graded derived category of any holomorphic symplectic manifold admits a non-trivial monoidal structure with a deformed associator. This provides an underlying reason  for the results of J. Roberts and S. Willerton \cx{RobWil}.

If $\hdfbt=0$, then all remaining generators of the
algebras\rx{bai5.6} have non-negative \smcl\ degrees. Among them, only
$\acFEi$ and $\hdfgm$ have zero degrees, and all others, including
holomorphic derivatives, have positive degrees. Since
$\spdg\zasoth=0$, this means that $\zasoth$ belongs to
the algebra generated by $\hdfgm$ and $\acFEi$:
\ylee{bai5.18}
\zasoth|_{\hdfbt=0}\in\Tfl[\hdfgm,\acFEo,\acFEt,\acFEh].
\yeee
In fact, we conjecture that if $\hdfbt=0$, then the associator is a pure exponential:
\ylee{bai5.19}
\zasoth|_{\hdfbt=0} =
\exp\brb{\tfrac{2}{3}\,\hdfgm^{\inI\inJ\inK}\acFv{\xEo,\inI}\acFv{\xEt,\inJ}\acFv{\xEh,\inK}}.
\yeee

\subsection{A geometric description of the 2-category $\ctLLXsom$}
\label{ss.gdgs}

Following the outline of subsection\rw{ss.outline}, we apply the
results of the previous subsection to formulate conjectures about
a geometric description of the category $\ctLLXsom$, where $\Xsom$
is a general \hlsmm. Our goal is to
 explain how the statements
of subsection\rw{ss.tcthlsmm} referring to the case of $\xX=\TsU$, should be modified for
a general $\Xsom$.

The pairs $\gYL$, where $\yY\subset\xX$ is a lagrangian submanifold
and $\vLY\rightarrow \yY$ is a line bundle such that
$\vLY\ott=\cnKY$, are still objects of the 2-category
$\ctLLXsom$. We conjecture that the analogs of holomorphic
fibration
objects $\goYL$ also appear, but this time $\ycY\rightarrow\yY$ is
not a holomorphic fibration, as in the case of $\xX=\TsU$, but
rather a special `non-holomorphic' deformation of a holomorphic fibration. The
reason for this deformation is similar to the non-holomorphicity
of the functions $\xcW$ which solve the equation\rx{bae2.0a3}, but
we will not explore this subject further.

Suppose that two lagrangian submanifolds $\yYo,\yYt\subset\xX$
have a \gdint. We conjecture that the category of morphisms between them
is the deformed and shifted 2-periodic category of their
intersection:
\ee
\label{bae1.92z} 
\!\!\!\!\!\!\!\!\!\Hom_{\ctLLXsom}\brB{\gYLo,\gYLt} =
\rDsprfYadoct\ytrnLot\btrnv{\shlf\dim \xX - \dim\yYot-1},
\eee
where $\yYoct\edfn\yYocct$, the line bundle $\vcLot\rightarrow\yYoct$ is defined by
\ex{bae1.19b1} adapted to the case of \opfib s:
\ylee{bae1.92z1}
\vcLot \edfn
\vLYo|_{\yYoct}\otimes
\vLYt|_{\yYoct}
\otimes\cnKYot^{-1}
\yeee
and
 $\adfmot\in\xbOmb\bro{\xtbw^{\bullet} \Tng\yYoct}$
is a special \CMlt\ which determines the \tAinf-deformation
of the category $\rDsprfv{\yYoct}$.

Based on the results of the previous subsection, we make
the following conjectures about $\adfmot$:
\begin{enumerate}

\item

The \CMlt\ $\adfmot$ is \xrlb\ and $\deg \adfmot\geq 2$:
%
\ylee{bah2.1a}
\adfmot = \sum_{\xmi=2}^\infty \adfmoti,\qquad
\adfmoti\in\xbOmv{\xmi}(\xcA,\wedge^{\xmi}\yYoct).
\yeee

\item

If at least one of the classes $\tdfbtYo$, $\tdfbtYt$ determined by the exact
sequences\rx{beq2.25b2} is zero
and  the other lagrangian submanifold has a presentation of
subsection\rw{ss.odc} as the
graph of a differential $\del\xcW$, where $\xcW$ is a function on the first
lagrangian surface, then $\xdfmot=0$.

\item

If $\yYo=\yYt=\yY$, then
$\adfmotv{2}=0$
and $\adfmotv{3}$
is given by the formula\rx{beq2.42b}, where $\hdfbt$ represents
$\tdfbtY$ and $\crR$ is the curvature of the tangent bundle $\Tng\yY$.

\item If $\yYo=\yYt=\yY$ and the Atiyah class $\ccrR$ of the
tangent bundle $\TY$ is zero, then $\adfmot=0$.

\end{enumerate}

In order to derive these conjectures from \ex{beq2.18b},
we consider a tubular neighborhood of
$\yYo$ (or $\yYt$) as a tubular neighborhood of the zero section of
a deformed cotangent bundle $\mtdfTYoSk$ with an appropriate
deformation parameter $\hdf$. Then the object $\yYo$ corresponds to
the zero section and hence it is represented by the holomorphic function
$\xcWo = 0$. We assume that within $\mtdfTYoSk$ the second object $\yYt$
is of the form $\yYW$ for an appropriate function $\xcW$ on
$\yYo$. Generally, this is not true, but we expect that conjecture 1 holds
true independently of whether such a presentation
exists, while conjectures 3 and 4 correspond to the case $\xcW=0$.
Finally, we assume that the line bundle $\vLYt$ is the
pull-back of the deformation of $\vLYo$ under the projection of $\yYt$ onto the
zero-section of $\TsY_1$
(recall that the complex structure of the projection of $\yYt$
onto the base $\yYo$ of the cotangent bundle has a complex
structure corresponding to the Beltrami differential\rx{bae2.0b4},
hence the bundle $\vLYo$ has to be deformed in order to be
holomorphic with respect to it). Under these assumptions
\xlee{bah2.1}
\Hom_{\ctLLXsom}\brB{\gYLo,\gYLt} =
\Hom_{\rDDprfYoSk}(0,\xcW) = \rDsprfYoxdoct
\xeee
(\cf \ex{beq2.14b}), where $\xdfmot$ is the deformation parameter
determined by \ex{beq2.18b}, in which we set $\xcWo=0$,
$\xcWt=\xcW$ and, consequently, $\xcWot=\xcW$. Hence $\xdfmot$ has
the expansion\rx{beq2.15b}:
$\xdfmot = \sum_{\xmi=0}^\infty \xdfmoti$,
$\xdfmoti\in\xbOmv{\xmi}(\xcA,\wedge^{\xmi}\TY_1)$. Here $\xdfmotz
= \xcW$, while all other terms $\xdfmoti$ depend on $\xcW$ by
being polynomials in $\del\xcW$ and its covariant holomorphic
differentials $\nabla^k\del\xcW$, $k\geq 1$.

Since we assumed that
$\yYo$ and $\yYt$ have a \gdint, it follows that $\xcW$ has a
\xgd\ critical locus $\CrW$ which is isomorphic to the intersection
$\yYoct$. We conjecture that the category $\rDsprfYoxdoct$
localizes to $\CrW$:
\ylee{bah2.3}
\rDsprfYoxdoct = \rDsprfYadoct\ytrnLot\btrnv{\shlf\dim \xX -
\dim\yYot-1},
\yeee
and the deformation parameter $\adfmot$ is determined somehow by the
restriction $\xdfmot\rCrW$. We do not understand this relation
precisely, but we can still make conjectures about $\adfmot$ based
on the properties of $\xdfmot\rCrW$.

Consider the degree $\dgdW$ defined by \ex{bah2.4}.
%
The critical locus $\CrW$ is determined by
the condition $\del\xcW=0$, hence if $\dgdW\xdfmoti\geq 1$, then
$\xdfmoti\rCrW=0$.
%
Then explicit formula\rx{beq2.23b} for $\xdfmoto=\mu$
implies that $\xdfmo\rCrW=0$. Since $\xcW$ is locally constant at
$\CrW$, we may also assume that $\xdfmotz=0$. Thus our first
conjecture is that $\adfmotz=\adfmoto=0$. We also conjecture that
$\adfmot$ is \xrlb, because the same is true for $\xdfmot$.

The formula\rx{beq2.25b3} states that $\cdfbt = \tdfbtYo$, so if
$\tdfbtYo=0$, then $\cdfbt=0$ and we can use the gauge transformation of the
\CMlt\ $\hdf$
in order to set $\hdfbt=0$. Now
Corollary\rw{crl1} says that $\dgdW\xdfmi\geq 2$ for $i\geq 2$, so
$\xdfmoti\rCrW=0$ for all $i$. Hence we conjecture that if
$\tdfbtYo=0$ and $\yYt$ has a presentation as the graph of $\del\xcW$, then $\adfmot=0$.

If $\hdfbt=0$ and $\yYt$ is presented as the graph of $\del\xcW$
for a function $\xcW$ satisfying the equation\rx{bae2.0a3}, then
the normal bundle $\vccLo\edfn\Tng\yYo|_{\yYoct}/\Tng(\yYoct)$
appearing in \ex{bae1.90a} admits an $\OnC$ structure. Indeed,
$\yYoct=\CrW$, so $\del\xcW|_{\yYoct}=0$ and there is a
well-defined Hessian
$\hsnW\in\Gamma(\rmS^2\dulv{\vccLo})$. This Hessian is
non-degenerate, because we assumed that $\yYo$ and $\yYt$ have a
\gdint. Equation\rx{bae2.0a3} implies that generally it satisfies
the equation
\ylee{bai2.1}
\dlb\hsnWb = \hdfbt\spsmb \brb{\hsnWb\,\hsnWb},
\yeee
but since we assumed that $\hdfbt=0$ we find that the Hessian is
holomorphic: $\dlb\hsnWb=0$. The holomorphic non-degenerate
Hessian provides the $\OnC$ structure for the bundle $\vccLo$.
In fact, we suspect that the converse is also true: if $\hdfbt=0$
and the bundle $\vccLo$ has an $\OnC$ structure then the
lagrangian submanifold $\yYt$ has a presentation as the graph of
$\del\xcW$ at least in a tubular neighborhood of $\yYocct$.

If $\yYo=\yYt=\yY$, then $\yYoct=\yY$ and $\xcW=0$, so
\ex{bah2.1} says that $\adfmot=\xdfmot$. Hence
$\deg \adfmot\geq 3$ and $\adfmotv{3}$
is given by the formula\rx{beq2.42b}. Also, if $\tdfbtYo=0$, then
$\adfmot=0$ follows directly from Corollary\rw{crl1} without any
further conjectures regarding the localization properties of the
deformed category $\rDsprfYoxdoct$.

Finally, if $\yYo=\yYt=\yY$ and the Atiyah class $\ccrR$ of $\TY$
is zero, then Corollary\rw{crl3} says that $\xdfmot=0$. Since in
this case $\adfmot=\xdfmot$, then $\adfmot=0$.

We cannot say much about the deformation of the
composition\rx{bae1.93} except that when all
$\ycYi$ are \opfib s with the same base $\yY=\yYo=\yYt=\yYh$, and the Atiyah class
$\ccrR$ of $\TY$ is zero, then the deformation of
the composition rule\rx{baj3.2} is described by the formulas
of subsection\rw{ss.dmsbfo} in which we replace $\xcA$ with $\yY$.

\section{\Mlc\ definition of the 2-category $\ctLLXsom$}
\label{s.mcl}

\subsection{Symplectic rectangles}

Let $\xcAbx$ denote an $\zn$-dimensional Stein complex manifold $\xcA$ equipped with
holomorphic coordinate functions $\bax=\ax_1,\ldots\ax_n$. The
functions $\bax$ determine an embedding
$\xcAbx\hookrightarrow\ICnbax$, where $\ICnbax$ is the affine
space $\IC^{\zn}$ equipped with the standard coordinates $\bax$.
In other words, $\xcAbx$ is just an open subspace of $\ICnbax$
with inherited coordinates.

A \emph{\smrc} is a product $\UxVy$
with the \hlsms\ determined by the 2-form
$\som = \sum_{i=1}^{\zn} d\ay_i\wedge d\ax_i$.
The identity map establishes an isomorphism between $\UxVy$ and
$\UmxVmy$. The permutation map
$\prms\colon\xcA\times\xcV\rightarrow \xcV\times\xcA$ establishes
the isomorphisms $\UxVy\rightarrow\VmyUx$ and
$\UxVy\rightarrow\VyUmx$.

A \smrc\
$\UxVy$ has a pair of transversal lagrangian fibrations:
a \qfib\ $\au\times\xcVby$ for $\au\in\xcA$ and a \pfib\
$\xcAbx\times\av$ for $\av\in\xcV$.
A \emph{\qemb}
\xlee{bah8.0}
\sfq\colon\UxVypp\xemq\UxVy
\xeee
is a
symplectic embedding such that there exists an embedding
$\rfq\colon\xcAp\hookrightarrow\xcA$ for which the diagram
\ylee{bah8.1}
\xymatrix{
\sUVpp \ar@{^{(}->}[r]^{\sfq}_>{q} \ar[d]& \sUV \ar[d]
\\
\xcAp \ar@{^{(}->}[r]^{\rfq} & \xcA
}
\yeee
is commutative. In other words, a \qemb\ must preserve the \qfib.
A composition of \qemb s 
\xlee{bah8.1b}
\xymatrix@C=1.2cm{
\UxVyh
\ar@{^{(}->}[r]^-{\sfqth}_>{q}
&
\UxVyt
\ar@{^{(}->}[r]^-{\sfqot}_>{q}
&
\UxVyo
}
\xeee
is a \qemb.

The cotangent bundle $\TsUbx$ has a canonical structure of a \smrc,
because
the holomorphic differentials $\del\bax$ form a frame of the
cotangent bundle $\TsUbx$ thus providing an isomorphism
\xlee{bah8.1a}
\TsUbx\xmapta{\cong}\xcAbx\times\ICnbay.
\xeee
Moreover, an embedding
$\xcVby\hookrightarrow\ICnbay$ generates an embedding
$\UxVy\hookrightarrow\TsUbx$, which preserves the symplectic
structure as well as both lagrangian fibrations.

\subsection{2-categories of \smrc s and their functors}
\label{ss.tctf}

\hyphenation{ca-te-go-ry}
We define the 2-cate\-go\-ry $\rDDprfaUxVy$ as a full subcategory of
$\rDDprfaU$. A \xcfb\ $\cmfcUW\in\rDDprfaU$ is an object of
$\rDDprfaUxVy$ if its support\rx{bah8.2} lies within
$\UxVy$ as embedded into $\Ts\xcAbx$:
\ylee{bah8.3}
\yYUW\subset\UxVy\subset\Ts\xcAbx.
\yeee
The isomorphism\rx{bah8.1a} implies the equivalence of categories
\xlee{bah8.3a}
\ctLLbv{\xcAbx\times\ICnbay} \simeq \rDDprfU.
\xeee

A curved fibration $\cmfcUWot\in\rDDprfav{\xcAo\times\xcAt}$
determines the \xxtf\ $\xPhUWot$ of \ex{bae1.69}. The formula\rx{bah7.3} describing
the transformation of the support of a curved fibration under the
action of $\xPhUWot$ implies that if the support of $\cmfcUWot$
fits within the product of \smrc s:
\xlee{bah8.4}
\yYv{\cmfcUWot}\subset
(\UxVyo)\times(\xcAxt\times\xcVyt),
\xeee
then the \xxtf\ $\xPhUWot$ restricts to the \xxtf
\xlee{bah8.5}
\xPhUWot\colon\ctLLbv{\UxVyo}\longrightarrow\ctLLbv{\UxVyt}.
\yeee

A particular example of the \xxtf\rx{bah8.5} is the analog of
\Ldrt s\rx{bag3.1a}. This time the \Ldrt s are the \xxtf s
\xlee{bah8.6}
\dLtp\colon\rDDprfaUxVy \longrightarrow \rDDprfaVyUmx,
\qquad
\dLtm\colon\rDDprfaUxVy\longrightarrow\rDDprfaVmyUx
\xeee
determined through \ex{bae1.69} by the \opfib\ and the curving
$\bax\cdot\bay\edfn  \sum_{i=1}^\zn\ax_i\ay_i$:
\wlee{bah8.7}
\dLt_{\pm} \edfn \dPhv{\pm\bax\cdot\bay}.
\weee
It is easy to see that the curvings $\pm\bax\cdot\bay$ satisfy the
condition\rx{bah8.4} and, moreover, the \Ltf s essentially do not change the
supports of objects: for a \xcfb\ $\cmfcUW\in\rDDprfaUxVy$
\ylee{bah8.8}
\yYv{\dLt_{\pm}\cmfcUW} = \prms\brb{ \yYUW }.
\yeee

We conjecture that the composition of \Ltf s yields the identity
\xxtf:
\wlee{bah8.9}
\dLtp\circ\dLtm \simeq \dLtm\circ\dLtp \simeq \xIdv{\rDDprfaUxVy},
\weee
so the \Ltf s themselves establish equivalences of 2-categories in
\ex{bah8.6}.

An important class of \xxtf s related to 2-categories
$\rDDprfaUxVy$ are restrictions. Suppose that $\xcAp$ is a
submanifold of $\xcA$ of the same dimension. Then there is the restriction
\xxtf\ $\rsf\colon\rDDprfaU\rightarrow\rDDprfaUp$, which acts on
\xcfb s and their morphisms by restricting them from $\xcA$ to
$\xcAp$. The \xxtf\ $\rsf$ can be restricted to the subcategory:
\xlee{bah8.10}
\rsf\colon \rDDprfaUxVy\longrightarrow \rDDprfaUp.
\xeee
If the subset $\xcAp\subset\xcA$ inherits the coordinates $\bax$
then the image of the \xxtf\rx{bah8.10} lies within
$\ctLLbv{\UpxVy}$:
\xlee{bah8.11}
\rsf\colon \rDDprfaUxVy\longrightarrow \rDDprfaUpxVy.
\xeee

For a \qemb\rx{bah8.0} we define the restriction \xxtf
\ylee{bah8.12}
\rsfe\colon\rDDprfaUxVy\longrightarrow\rDDprfaUxVypp
\yeee
as the composition of five \xxtf s:
%
\ylee{bah8.13}
\xymatrix@C=1.5cm@R=1.5cm{
%
\rDDprfaUxVy
\ar[r]^-{\rsfo}
\ar@/^3pc/[rrr]^-{\rsfe}
&
\rDDprfaUp
\ar[r]^-{\xId}
&
\ctLLbv{\xcApbxp\times\IC_{\bayp}}
\ar[d]^-{\dLtp}
&
\rDDprfaUxVypp
\\
& &
\ctLLbv{\IC_{\bayp}\times\xcApmbxp}
\ar[r]^-{\rsft}
&
\ctLLbv{\xcVpbyp\times\xcApmbxp}
\ar[u]^-{\dLtm}
}
\yeee
In this diagram the restriction \xxtf\ $\rsfo$ is of the
type\rx{bah8.10}, the restriction \xxtf\ $\rsft$ is of the
type\rx{bah8.11} and the equivalence $\xId$ is of the
type\rx{bah8.3a}.

We conjecture that the restriction \xxtf\ of the composition of
\qemb s is isomorphic to the composition of individual restrictions, that is,
for a chain of \qemb s\rx{bah8.1b}
\xlee{bah8.13}
\rsfv{\sfqot\circ\sfqth}\simeq\rsfv{\sfqot}\circ\rsfv{\sfqth}.
\xeee


\subsection{A \prshf\ of 2-categories}

A \emph{\rcch} in a \hlsmm\ $\Xsom$ is a symplectic map
\xlee{bah8.14}
\mch\colon\UxVy\rightarrow\xX
\xeee
To every \rcch\ we associate the
2-category $\rDDprfaUxVy$. The relation\rx{bah8.13} suggests that these
chart 2-categories form a \prshf\ $\prsfXsom$. An object $\zzO$ of the category
$\ctLLXsom$ is defined to be a global section of this \prshf: to
every \rcch\rx{bah8.14} we associate an object
$\zzOf\in\rDDprfaUxVy$ with two conditions:
for any commutative triangle
\ylee{bah8.15}
\xymatrix{
\UxVy \ar[rr]^-{\prms}
\ar[rd]_-{\mch}
&&
\VyUmx
\ar[ld]^-{\mch\p}
\\
&\xX
}
\yeee
there is a relation $\zzOfp \cong \dLtp \zzOf$, and for any
commutative triangle
\ylee{bah8.16}
\xymatrix{
\UxVypp
\ar[rd]_-{\mch\p}
\ar@{^{(}->}[rr]^-{\sfq}_>{q}
&&
\UxVy
\ar[ld]^-{\mch}
\\
&\xX
}
\yeee
there should be a relation $\zzOfp \cong \rsfe\zzOf$.

Two global sections $\zzOo,\zzOt\in\ctLLXsom$ determine a \prshf\
of categories $\mathcal{H}om(\zzOo,\zzOt)$: to every
\rcch\rx{bah8.14} we associate the category
$\Hom(\zzOfo,\zzOft)$ and we define
$\Hom_{\ctLLXsom}(\zzOo,\zzOt)$ as the category of global sections
of this \prshf.

\section{Categorified algebraic geometry and the RW model}\label{sec:sheaves}

\subsection{RW model of a graded cotangent bundle}

In the case when $X=\Ts Y$ one can promote the RW model from a
$\ZZ_2$-graded \TQFT\ to a $\ZZ$-graded one, as explained in \cite{KRS1}.
To this end, one assigns cohomological degree $2$ to linear coordinates on the
fiber of the projection $\Ts Y\raa Y$. From the physical viewpoint,
the degree is the weight with respect to a $U(1)$ ghost number
symmetry. We will call the resulting graded manifold $\Ts Y[2]$. The
sheaf of holomorphic functions on $\Ts Y[2]$ is a quasicoherent
sheaf of graded algebras on $Y$:
$$
\cO_X=\oplus_p \Sym^p {\rm T} Y.
$$
The RW model with the target $\Ts Y[2]$ has $U(1)$ ghost number
symmetry, and it is natural to consider boundary conditions and
topological defects which preserve this symmetry. This gives a
$\ZZ$-graded version of the model.

The 2-category of boundary conditions supported on $Y$ has a
distinguished object: the zero section of $\Ts Y[2]$. It is easy to
see that this boundary condition is invariant with respect to the
$U(1)$ ghost number symmetry. The corresponding endomorphism
category is $\sDb(Y)$, the bounded derived category of coherent
sheaves on $Y$. From the physical viewpoint, it arises as the
homotopy category of a DG-category $\Dperf(Y)$. Objects of
$\Dperf(Y)$ are perfect DG-modules over the $\ZZ$-graded Dolbeault
DG-algebra $(\obul(Y),\bpartial)$, with morphisms being the usual
morphisms of DG-modules. $\sDb(Y)$ is a symmetric monoidal
DG-category; as discussed in \cite{KRS1}, the monoidal structure is
the standard one (this is easy to see on the classical level, but it
takes some work to show that there are no quantum corrections). The
algebra of boundary local operators for the distinguished boundary
condition (i.e. the endomorphism algebra of the unit object in the
endomorphism category) is isomorphic to $H^*(\cO_Y)$.

In the $\ZZ$-graded case, infinitesimal deformations of a boundary
condition correspond to degree-2 elements in the algebra of local
boundary operators. Thus infinitesimal deformations of the
distinguished boundary condition are parameterized by $H^2(\cO_Y)$.
If $Y$ is compact and K\"ahler, such deformations are unobstructed.
Indeed, we can choose a harmonic representative $B$ of a class in
$H^2(\cO_Y)$, and then the deformation of the boundary action is
simply
$$
\int_{\partial M} \phi^*B,
$$
where $\phi$ is a map from the space-time $M$ to the target $X$.
Since the form $B$ is closed, such deformation is obviously
BRST-invariant and does not affect BRST-transformations of any
fields. We will call such a deformation a B-field deformation, by
analogy with the 2d sigma-models.

Let $(Y,B)$ denote the distinguished boundary condition deformed by
$B$. The category of morphisms from $(Y,B_1)$ to $(Y,B_2)$ is the
bounded derived category of twisted coherent sheaves on $Y$, where
the twist is given by the class of $B_2-B_1$. We will denote this
category $\sD(Y,B_2-B_1)$. The composition of morphisms is the
obvious one (tensor product of twisted coherent sheaves).
Physically, $\sDb(Y,B)$ arises as the homotopy category of a certain
DG-category which we denote $\Dperf(Y,B)$. It is the category of
perfect CDG-modules over the CDGA $(\obul(Y),\bpartial,B)$.

More complicated boundary conditions can be obtained by considering
complex fibrations $\Z\raa Y$ equipped with a B-field $B\in
H^2(\cO_\Z)$. The category of morphisms from $(\Z_1,B_1)$ to
$(\Z_2,B_2)$ is the bounded derived category of twisted coherent
sheaves on $\Z_1\times_Y \Z_2$ with the twist given by
$\pi_2^*B_2-\pi_1^* B_1$, where $\pi_s$ is the projection from
$\Z_1\times_Y \Z_2$ to $\Z_s$, $s=1,2$.

To understand the resulting 2-category better, note that an object
of $\sDb(\Z_1\times_Y\Z_2,B_2-B_1)$ defines a functor from
$\sDb(\Z_1,B_1)$ to $\sDb(\Z_2,B_2)$. Composition of morphisms in
the 2-category of boundary conditions is simply the composition of
functors. Moreover, this functor intertwines the natural action of
$\sDb(Y)$, regarded as a monoidal category, on $\sDb(\Z_1,B_1)$ and
$\sDb(\Z_2,B_2)$. That is, if we regard the categories
$\sDb(\Z_s,B_s),$ $s=1,2$ as modules over the monoidal category
$\sDb(Y)$, then this functor defines a morphism in the 2-category of
modules.

Note that for a $\CC$-linear (or DG) monoidal category $\cC$ there
are two very different notions of a module: a module over $\cC$
regarded simply as a $\CC$-linear (or DG) category, and a module
over $\cC$ regarded as a monoidal $\CC$-linear (or monoidal DG)
category. The former is a functor from $\cC$ to the category of
complex vector spaces $\Vect_\CC$ (or the category of differential
graded complex vector spaces); the latter is a $\CC$-linear (or DG)
category which is acted upon by $\cC$. To avoid confusion, we will
call the latter notion a 2-module over $\cC$. This terminology is
not standard,\footnote{The more standard name for a 2-module is a
module category.} but natural, if we think about a monoidal category
as a 2-algebra, i.e. a categorification of an algebra. 2-modules
over a monoidal category $\cC$ form a 2-category.

One could hope that any morphism in the 2-category of 2-modules is
represented by an object of $\sDb(\Z_1\times_Y\Z_2,B_2-B_1)$. Then
the 2-category of boundary conditions in the RW model would be a
full sub-2-category of the 2-category of 2-modules. This statement
is incorrect as formulated, however, apparently it does become
correct if we replace the derived category of (twisted) coherent
sheaves with its enhancement. Recall that an enhancement of a
triangulated category $\cC$ is a DG-category $\frC$ whose homotopy
category $H^0(\frC)$ is triangulated and an equivalence of
$H^0(\frC)$ and $\cC$. From the physical viewpoint, a natural
enhancement of $\sDb(Y)$ is the DG-category $\Dperf(Y)$. Similarly,
a natural enhancement of $\sDb(Y,B)$ is the DG-category
$\Dperf(Y,B)$ of perfect CDG-modules over the CDGA
$(\obul(Y),\bpartial,B)$. The category $\Dperf(Y)$ is a monoidal
DG-category which acts by DG-functors on the DG-category
$\Dperf(\Z,B)$. Any object of $\Dperf (\Z_1\times_Y\Z_2,B_2-B_1)$
determines a DG-functor from $\Dperf (\Z_1,B_1)$ to
$\Dperf(\Z_2,B_2)$ which intertwines the action of $\Dperf(Y)$. The
improved version of the conjecture is that any such DG-functor is
represented by an object of $\Dperf (Y)$.

In \cite{Toen} this conjecture was proved for $Y$ being a point. In
\cite{BFN} the proof was extended to the case when $Y$ is a more
general scheme.

The conclusion is that the 2-category of boundary conditions in the
$\ZZ$-graded version of the RW model with target $\Ts Y[2]$ is the
homotopy category of a full sub-2-category in the 2-category of
2-modules over the monoidal DG-category $\Dperf (Y)$.

\subsection{Derived categorical sheaves}

Complex fibrations over $Y$ play a role in the RW model similar to
that played by holomorphic vector bundles in the B-model with target
$Y$. But it is well-known that more general coherent sheaves also
arise as B-branes, and it is natural to ask if boundary conditions
in the RW model can be similarly generalized.

It is convenient to take a more algebraic viewpoint and replace
complex fibrations over $Y$ with families of algebras or DG-algebras
over $Y$. Likely this entails no essential loss of generality. For
example, it is known that for any sufficiently nice (quasi-compact
and quasi-separated) scheme $Z$ the derived category of complexes of
sheaves on $Z$ with quasicoherent cohomology is equivalent to the
derived category of modules over some DG-algebra with bounded
cohomology \cite{BvB}. Thus we will replace the fibration $\Z$ with
a sheaf of DG-algebras over $Y$. One may conjecture that any sheaf
of DG-algebras over $Y$ can be interpreted as a boundary condition
in the RW model.

To test this conjecture, we need to have a reasonable definition of
the category of morphisms between sheaves of DG-algebras. A natural
definition has been sketched by B.~Toen and G.~Vezzosi \cite{ToVe}. They
work with more general objects called derived categorical sheaves
over $Y$. A derived categorical sheaf is a sheaf of DG-categories
over $Y$. This means that to any affine open subscheme $\Spec\,
A=U\subset Y$ one attaches a DG-category $\frC(U)$ over $A$, to any
inclusion of affine open subschemes $U'\subset U$ one attaches a
morphism of DG-categories $r_{U'U}:\frC(U)\raa\frC(U')$, and to any
inclusion of affine open subschemes $U''\subset U'\subset U$ one
attaches an invertible 2-morphism from $r_{U''U'}\circ r_{U'U}$ to
$r_{U''U}$. These data must satisfy a number of conditions which are
spelled out, for example, in \cite{catsheaf}. A sheaf of DG-algebras
can be thought of as a special case of this, with the DG-category
$\frC(U)$ having a single object for any $U$.

The category of morphisms from the derived categorical sheaf $\sT_1$
to the derived categorical sheaf $\sT_2$ is defined as follows.
First of all, one can define the derived tensor product of two
derived categorical sheaves which is again a sheaf of DG-categories. In \cite{ToVe} it is denoted $T_1\otimes^{\mathbb L} T_2$.
The category of morphisms from $\sT_1$ to $\sT_2$ is defined to be
the derived category of modules over the sheaf of DG-categories $\sT_1^{op}\otimes^{\mathbb L} T_2$, where $T_1^{op}$ is the opposite of $T_1$. In
this way one gets a 2-category of derived categorical sheaves over
$Y$. There are versions of this definition which depend on which
modules precisely one considers.

The simplest object in this 2-category is the structure sheaf
$\cO_Y$ regarded as a sheaf of DG-algebras with zero differential.
It corresponds to the distinguished boundary condition in the
2-category of boundary conditions for the RW model with target $\Ts
Y[2]$. Its endomorphism category is $\sDb(Y)$; this agrees with the
endomorphism category of the distinguished boundary condition in the
RW model. Given any other derived categorical sheaf $\sT$ over $Y$,
the category of morphisms from the distinguished object to $\sT$ is
a 2-module over the monoidal category $\sDb(Y)$. Thus the 2-category
of derived categorical sheaves over $Y$ is embedded into the
2-category of 2-modules over $\sDb(Y)$.

A simple but interesting example of a derived categorical sheaf is a
skyscraper sheaf, i.e. a sheaf of DG-categories such that $\frC(U)$
is quasi-equivalent to the trivial category if $U$ does not contain
a point $p\in Y$, and is quasi-equivalent to a fixed DG-category
$\frC_0$ otherwise. We may call this a skyscraper sheaf with stalk
$\frC_0$. We now explain how to construct the corresponding boundary
condition in the RW model by allowing the fibration $\Z$ over $Y$ to
carry a nontrivial curving $W\in H^0(\cO_\Z)$. Such boundary
conditions should be regarded as 3d analogues of 0-branes. For
simplicity we will assume that the DG-category $\frC_0$ is simply a
DG-algebra $\wA$, and is moreover of a geometric origin, i.e. its
derived category of modules $\sD(\wA)$ is equivalent to the derived
category of coherent sheaves on some complex manifold $V$.

First we note that in order for a curving $W$ to preserve the ghost
number symmetry, we have to allow the fiber $\Z$ to be a graded
manifold with a nontrivial $\CC^*$ action. Then the space
$H^0(\cO_Z)$ is also graded, and $W$ must sit in its degree-2
component. We will call such $W$ a superpotential. A graded
fibration $\Z\raa Y$ equipped with a superpotential $W$ of degree 2
defines a boundary condition for the RW model.

The category of morphisms from the distinguished boundary condition
to the boundary condition $(\Z,W)$ is $H^\xbulx(\Dperf
(\Z,W))=\sDb(\Z,W)$. Note that this category is equivalent to a
trivial one if $W$ has no critical points \cite{KLi,Orlov:MF}. This
is a local statement: given an open set $U\subset Y$ we may consider
the restriction $(\Z_U,W_U)$ of $(\Z,W)$ to $U$ and the category
$\sDb(\Z_U,W_U)$; this category is trivial if $\Z_U$ does not
contain critical points of $W$. Therefore a natural candidate for an
analogue of a skyscraper sheaf is a pair $(\Z,W)$ such that all
critical points of $W$ are contained in the fiber over a point $p\in
Y$.

To be concrete, let us consider the case when $Y$ is the
$n$-dimensional affine space $\AA_n$ with coordinates
$y^1,\ldots,y^n$. We will describe a boundary condition in the RW
model with target $\Ts Y[2]$ which corresponds to a skyscraper sheaf
over $Y$ with the stalk at $y=0$ being a DG-algebra $\wA$ of a
geometric origin. Let $\Z=\AA_n[2]\times Y\times V$, where
$\AA_n[2]$ denotes the affine space with linear coordinates
$a_1,\ldots,a_n$ of cohomological degree 2. The graded manifold $\Z$ is a trivial fibration over $Y$.
The superpotential will be
$$
W=\sum_i y^i a_i,
$$
We may think of $y^i$ and $a_i$ as coordinates on $\Ts Y[2]$.

The category of morphisms from the distinguished boundary condition
to $(\Z,W)$ is
\\
$\sDb(\Z,W)$. By Kn\"orrer periodicity, it is
equivalent to $\sDb(V)$. Furthermore, $W$ has a single critical
point $y=a=0$, so the category $\sDb(\Z_U,W_U)$ is equivalent to a
trivial one if $U$ does not contain the point $y=0$.

We propose that this boundary condition corresponds to the
skyscraper sheaf with the stalk $\wA$ at $y=0$. By definition, the
category of morphisms from the derived categorical sheaf $\cO_Y$ to
this skyscraper is the category $\sDb(\wA)\simeq\sDb(V)$, which
agrees with the RW model.

To check this proposal further, let us compare the endomorphism
category of $(\Z,W)$ regarded as a boundary condition in the RW
model and the endomorphism category of the skyscraper sheaf. The
former is the category $\sDb(\AA_n[2]\times \AA_n[2]\times V\times
V\times Y,\tilde W)$. The superpotential $\tilde W$ is given by
$$
\tilde W= y^i(a_i-\tilde a_i),
$$
where $\tilde a_i$ denote the coordinates on the second copy of
$\AA_n[2]$. By Kn\"orrer periodicity, this category is equivalent to
$\sDb(\AA_n[2]\times V\times V)$.

To compute the endomorphism category of a skyscraper sheaf, we first
need to compute its derived tensor product with itself. Since the
base is an affine space $\AA_n$, we can think about sheaves of
DG-algebras in algebraic terms, i.e. as DG-algebras over the ring
$\CC[y^1,\ldots,y^n]$. From this point of view, the skyscraper sheaf
with a stalk $\wA$ is simply the DG-algebra $\wA$ made into a
$\CC[y^1,\ldots,y^n]$-module by letting all $y^i$ act trivially.
Equivalently, it is a tensor product over $\CC$ of the DG-algebra
$\wA$ over $\CC$ and $\CC$ regarded as a DG-algebra over
$\CC[y^1,\ldots,y^n]$ with a trivial action of $y^i$ for all $i$ and
a trivial differential.

Since such a module is not flat over $\CC[y^1,\ldots,y^n]$, to
compute its derived tensor product with itself we need a flat
resolution for it.\footnote{We are grateful to Dima Orlov for
explaining to us the content of this paragraph.} Consider a
DG-algebra
$$
\KK_n=\left(\CC[y^1,\ldots,y^n\vert
\theta^1,\ldots,\theta^n],Q\right),
$$
where $\theta^1,\ldots,\theta^n$ are anticommuting odd variables of degree $-1$,
and the differential $Q$ is the Koszul differential
$$
Q=y^i \frac{\partial}{\partial \theta^i}
$$
It is quasi-isomorphic to $\CC$ regarded as a DG-algebra over
$\CC[y^1,\ldots,y^n]$. Hence we can obtain the desired flat
resolution by tensoring over $\CC$ the DG-algebra $\wA$ with
$\KK_n$. The derived tensor product is now computed by tensoring
with $\wA$ over $\CC[y^1,\ldots,y^n]$. The result is a DG-algebra
\begin{equation}\label{wA}
\wA_\theta=\wA\otimes_\CC\wA\otimes_\CC
\CC[\theta^1,\ldots,\theta^n],
\end{equation}
with a trivial action of the variables $y^i$. By definition, the
endomorphism category of the skyscraper is a suitable version of the
derived category of modules over this DG-algebra.

The algebra $\CC[\theta^1,\ldots,\theta^n]$ is Koszul-dual to the
algebra
\begin{equation}\label{CCalg}
\CC[a_1,\ldots,a_n],
\end{equation}
where the variables $a_i$ are even and have degree $2$, and the
differential is zero. Consequently, suitably defined derived
categories of the DG-algebra (\ref{wA}) and the DG-algebra
\begin{equation}\label{wwA}
\wA_a=\wA\otimes_\CC\wA\otimes_\CC \CC[a_1,\ldots,a_n].
\end{equation}
are equivalent. This agrees with what we got from the RW model and
Kn\"orrer periodicity.

Note that the resolution of the skyscraper categorical sheaf used
above is in some sense Koszul-dual to the trivial fibration $\Z=\AA_n[2]\times Y\times V$;
the role of the Koszul differential is played by the superpotential
$W$.

Let us consider a slightly more complicated example: a sheaf of
algebras over $\AA_1=\Spec\, \CC[y]$ which in algebraic terms is the
algebra $\CC[y]/y^k$ over the ring $\CC[y]$. For $k=1$, this is a
special case of the previous example (with $n=1$). We will argue
that there exists a boundary condition in the RW model equivalent to
such a sheaf of algebras.

Note first that the above sheaf of DG-algebras can be deformed into
a collection of $k$ skyscrapers by replacing $y^k$ with $P_k(y)$,
where $P_k$ is a degree-$k$ polynomial without multiple roots. This
corresponds to the following boundary condition in the RW model:
$\ZZ=\AA_1\times \CC[2]$, $W=a P_k(y)$. In the limit when $P_k(y)$
degenerates to $y^k$, we get $W=a y^k$. Therefore we propose that
the boundary condition with $\ZZ=\AA_1\times \CC[2]$, $W=a y^k,$
corresponds to the DG-algebra $\CC[y]/y^k$ over $\CC[y]$.

The category of morphisms from the distinguished boundary condition
to this one is the category of $\CC^*$-equivariant matrix
factorizations of $W=a y^k$. If the proposal is correct, then this
category must be equivalent to the derived category of DG-modules
over $\CC[y]/y^k$. The equivalence presumably arises from the
following matrix factorization:
\begin{align}
D=\begin{pmatrix} 0 & a \\ y^k & 0\end{pmatrix}
\end{align}
Its endomorphism algebra is a DG-algebra quasi-isomorphic to
$\CC[y]/y^k$ with the zero differential. Thus we get a bimodule
which defines a functor from the category of equivariant matrix
factorizations to the derived category of DG-modules over
$\CC[y]/y^k$. With suitable definitions, this functor should be an
equivalence of categories \cite{Orlov:priv}.

We note in passing that this construction allows one to think about
the derived category of modules over $\CC[y]/y^k$ as a category of
B-branes in some physical theory (namely, the Landau-Ginzburg model
on $\CC\times\CC[2]$ with the superpotential $W=a y^k$). In other
words, the Landau-Ginzburg model whose target is a graded manifold allows one to give meaning to such
a singular-looking theory as a sigma-model with target
$\Spec(\CC[y]/y^k)$. Similarly many other graded Landau-Ginzburg models can
be thought of as representing sigma-models whose targets are
singular schemes or even DG-schemes.

\begin{bibdiv}
\begin{biblist}


\bib{BFN}{article}
{
author={Ben-Zvi, David}
author={Francis, John}
author={Nadler, David}
title={Integral transforms and Drinfeld centers in derived
algebraic geometry}
eprint={arXiv:0805.0157}
}


\bib{JB}{article}
{
author={Block, Jonathan}
title={Duality and equivalence of module
categories in noncommutative geometry I}
eprint={arXiv:math/0509284}
}


\bib{BvB}{article}
{
author={Bondal, Alexei}
author={van den Bergh, Michel}
title={Generators and representability of functors in commutative and
noncommutative geometry}
journal={Moscow Mathematical Journal}
volume={3}
year={2003}
pages={1-36}
eprint={arXiv:math.AG/0204218}
}


\bib{BondalRosly}{misc}
{
author={Bondal, Alexei}
author={Rosly, Alexei}
note={in preparation}
}

\bib{Kapranov}{article}
{
AUTHOR = {Kapranov, Mikhail},
     TITLE = {Rozansky-{W}itten invariants via {A}tiyah classes},
    JOURNAL = {Compositio Mathematica},
    VOLUME = {115},
      YEAR = {1999},
    NUMBER = {1},
     PAGES = {71--113},
 }

\bib{KLi}{article}
{
author={Kapustin, Anton}
author={Li, Yi}
title={D-branes in Landau-Ginzburg models and algebraic geometry}
journal={Journal of High Energy Physics}
volume={0312}
year={2003}
pages={005-049}
eprint={arXiv:hep-th/0210296}
}


\bib{KaLi}{article}
{
author={Kapustin, Anton}
author={Li, Yi}
title={Topological correlators in Landau-Ginzburg models with boundaries}
journal={Advances in Theoretical and Mathematical Physics}
volume={7}
date={2004}
pages={727}
eprint={arXiv:hep-th/0305136}
}


\bib{KRS1}{article}
{
author={Kapustin, Anton}
author={Rozansky, Lev}
author={Saulina, Natalia}
title={Three-dimensional topological field theory and symplectic
algebraic geometry I}
journal={Nuclear Physics B}
volume={816}
date={2009}
pages={295-355}
eprint={arXiv:0810.5415}

}

\bib{Ke1}{article}
{
author={Keller, Bernard}
title={Introduction to A-infinity algebras and modules}
journal={Homoogy, Homotopy and Applications}
volume={3}
year={2001}
pages={1-35}
eprint={arXiv:math/9910179}
}




\bib{catsheaf}{article}
{
author={Moerdijk, Ieke}
title={Introduction to the language of stacks and sheaves}
eprint={arXiv:math.AT/0212266}
}


\bib{Orlov:MF}{article}
{
author={Orlov, Dmitri}
title={Triangulated categories of singularities and equivalences
between Landau-Ginzburg models}
journal={Matematicheskii Sbornik}
volume={197}
year={2006}
pages={117-132}
translation={
journal={Sbornik Mathematics}
volume={197}
year={2006}
pages={1827-1840}
}
eprint={arXiv:math.AG/0503630}
}


\bib{Orlov:priv}{misc}
{author={Orlov, Dmitri}
note={private communications}
}


\bib{RobWil}{article}
{
author = {Justin Roberts and Simon Willerton},
  title = {On the Rozansky-Witten weight systems},
  eprint={arXiv:math/0602653}
}

\bib{RW}{article}
{
author={Rozansky, Lev}
author={Witten, Edward}
title={Hyper-K\"{a}hler geometry and invariants of three-manifolds}
journal={Selecta Mathematica}
volume={3}
year={1997}
pages={401-458}
eprint={arXiv:hep-th/9612216}
}


\bib{Toen}{article}
{
author={To\"{e}n, Bertrand}
title={The homotopy theory of dg-categories and derived Morita
theory}
journal={Inventiones Mathematicae}
volume={167}
year={2007}
pages={615-667}
eprint={arXiv:math.AG/0408337}
}


\bib{ToVe}{article}
{
author={To\"{e}n, Bertrand}
author={Vezzosi, Gabrielle}
title={A note on Chern character,
loop spaces and derived algebraic geometry}
eprint={arXiv:0804.1274}
}


\end{biblist}
\end{bibdiv}
\end{document}